\newtheoremstyle{plainsl}%
       {\topsep}
       {\topsep}
       {\slshape} 
       {}
       {\normalfont\bfseries}	
       {.}
       { }
       {}	
\theoremstyle{plainsl}
\newtheoremstyle{break}
   {\topsep}{\topsep}%
   {\slshape}{}%
   {\bfseries}{}%
   { }
   {\thmname{#1}\thmnumber{\@ifnotempty{#1}{ }\@upn{#2}}%
    \thmnote{ {\bfseries(#3)}}{.}}%
\theoremstyle{break}
\newtheorem{theorem}{Theorem}[section]
\newtheorem{thm}[theorem]{Theorem}
\newtheorem{lem}[theorem]{Lemma}
\newtheorem{cor}[theorem]{Corollary}
\newtheorem{prop}[theorem]{Proposition}
\newtheorem{conj}[theorem]{Conjecture}
\newtheorem*{thma}{{E}rd{\H o}s-{K}o-{R}ado Theorem}
\newtheorem{example}[theorem]{Example}
\let\oldexample\example
\renewcommand{\example}{\oldexample\normalfont}
\newcommand\txtsl[1]{\textsl{#1}\index{#1}}
\newcommand{\p}{\mathbb{P}}
\newcommand{\F}{\mathbb{F}}
\newcommand{\xx}{\mathcal{X}}
\newcommand{\zz}{\mathbb{Z}}
\DeclareMathOperator\id{id}
\DeclareMathOperator\fix{fix}
\DeclareMathOperator\Fix{Fix}
\DeclareMathOperator\proj{proj}
\DeclareMathOperator\ID{\mathbf{id}}
\DeclareMathOperator\diam{diam}
\DeclareMathOperator\alt{Alt}
\DeclareMathOperator\sym{Sym}
\DeclareMathOperator\PGL{PGL}
\DeclareMathOperator\PSL{PSL}
\DeclareMathOperator\SL{SL}
\DeclareMathOperator\hl{hl}
\DeclareMathOperator\rank{rank}
\DeclareMathOperator\nul{null}
\DeclareMathOperator\Tr{tr}
\DeclareMathOperator\sgn{sgn}
\DeclareMathOperator\irr{Irr}
\DeclareMathOperator\im{Im}
\DeclareMathOperator\Spec{Spec}
\DeclareMathOperator\cc{\mathcal{C}}
\DeclareMathOperator\dd{\mathcal{D}}
\title{Maximum Intersecting Families of Permutations}
\author{Bahman Ahmadi}
\begin{document}
\beforepreface
\addcontentsline{toc}{chapter}{Abstract}

\chapter*{Abstract}
In extremal set theory, the  {E}rd{\H o}s-{K}o-{R}ado (EKR) theorem  gives an upper bound  on the size of intersecting $k$-subsets of the set $\{1,\dots,n\}$. Furthemore, it classifies the maximum-sized families of intersecting $k$-subsets.  It has been shown that similar theorems can be proved for other mathematical objects with a suitable notion of ``intersection''.  Let $G\leq \sym(n)$ be a permutation group with its permutation action on the set $\{1,\dots,n\}$. The intersection for the elements of $G$ is defined as follows: two permutations $\alpha,\beta\in G$ are \textsl{intersecting} if $\alpha(i)=\beta(i)$ for some $i\in\{1,\dots,n\}$. A subset $S$ of $G$ is, then, \textsl{intersecting} if  any pair of its elements is intersecting. We say $G$ has the \textsl{EKR property} if the size of any intersecting subset of $G$ is bounded above by the size of a point stabilizer in $G$. If, in addition, the only maximum-sized  intersecting subsets are the cosets of the point-stabilizers in $G$, then $G$ is said to have the \textsl{strict EKR property}. It was first shown by Cameron and Ku \cite{MR2009400} that the group $G=\sym(n)$ has the strict EKR property. Then Godsil and Meagher presented an entirely different proof of this fact using some algebraic properties of the symmetric group. A similar method was employed to prove that  the projective general linear group $\PGL(2,q)$, with its natural action on the projective line $\p_q$, has the strict EKR property. The main objective in this thesis is to formally introduce this method, which we call the module method, and show that this provides a standard way to prove {E}rd{\H o}s-{K}o-{R}ado theorems for other permutation groups. We then, along with proving {E}rd{\H o}s-{K}o-{R}ado theorems for various groups, use this method to prove some permutation groups have the strict EKR property. We will also show that this method can be useful in characterizing the maximum independent sets of some Cayley graphs. To explain the module method, we need some facts from representation theory of groups, in particular, the symmetric group. We will provide the reader with a sufficient level of background from representation theory as well as graph theory and linear algebraic facts about graphs.
\addcontentsline{toc}{chapter}{Acknowledgments}
\chapter*{Acknowledgments}
I would like to express my special appreciation and gratitude  to my  advisor  Dr. Karen Meagher who not only has been guiding me in my PhD research by her wonderful ideas and sharp knowledge, but also has been extremely supportive to me and my family during the hard days of the past four years. Dear Karen, I will never forget your sincerity and kindness.

I would also like to thank the committee members of my PhD, professor Shaun M. Fallat, professor Allen Herman and  Dr. Sandra Zilles for serving as my committee. 
As well, a special thanks to professor  Chris Godsil and  Dr. Mike Newman for some of their valuable  comments.

My research has been financially supported by the Faculty of Graduate Studies and Research, Department of Mathematics and Statistics, Dr. Karen Meagher, professor Steve Kirkland and the Pprovince of Saskatchewan. I appreciate all of their supports.

Finally I would like to  thank my wife. Fatemeh, words can not express how grateful I am to you for all of the sacrifices that you have made on my behalf. 


 
\addcontentsline{toc}{chapter}{Dedication}

\chapter*{Dedication}
I lovingly dedicate this thesis to  my beloved wife, {\bf Fatemeh}, who has sincerely  supported me in all steps of my life, and to our lovely son, {\bf Ilia}, who has brought lots of joys and happiness to our life since February 2012!
 \tableofcontents
\listoffigures 
 \listoftables
 
 \afterpreface

\chapter{Introduction}\label{introduction}

The celebrated {E}rd{\H o}s-{K}o-{R}ado (abbreviated EKR) theorem is a very fundamental and important theorem in combinatorics which was essential to the development of extremal set theory. To state this theorem, let $k$ and $n$ be positive integers with $n>2k$. A family $\mathcal{A}$ of $k$-subsets (i.e. subsets of size $k$) of $\{1,\ldots,n\}$  is said to be an \textsl{intersecting system}\index{intersecting set system} if any two sets from $\mathcal{A}$ have non-trivial intersection. The {E}rd{\H o}s-{K}o-{R}ado Theorem, then, is as follows.

\begin{thma}\label{original_EKR}~\cite{TheOriginalEKR} 
Let $n>2k$. If $\mathcal{A}$ is an intersecting system of $k$-subsets of the set $\{1,\ldots,n\}$, then
\begin{equation}\label{EKR}
|\mathcal{A}|\leq \binom{n-1}{k-1}.
\end{equation}
Moreover, $\mathcal{A}$ meets the bound if and only if $\mathcal{A}$ is the collection of all $k$-subsets that contain a fixed $i\in \{1,\ldots,n\}$.\qed
\end{thma}

A family $\mathcal{A}$ of $k$-subsets of $\{1,\ldots,n\}$ all of whose elements contain a fixed element is usually called a \txtsl{trivially intersecting} family. There is, also, a generalized version of this theorem which classifies the \textsl{$t$-intersecting systems} of $k$-subsets; that is, the families of $k$-subsets of $\{1,\ldots,n\}$ in which every pair of sets has intersection at least of size $t$. For both the special and general cases of the EKR theorem, there are many different proofs. For a survey of some of these proofs see   \cite{doi:10.1137/0604042} or \cite{MR1059690}. 

The most astonishing characteristic of the EKR theorem is that similar results occur in many other situations. In other words, if we replace ``sets'' in the EKR theorem with some other objects and then define a suitable ``intersection'' property, we may encounter a similar behavior as in the EKR theorem. The following is a list of some of the theorems which were proved for different objects with relevant notion of intersections. We call them ``extensions'' of the EKR theorem or other ``versions'' of this theorem.
\begin{itemize}
\item In \cite{MR0382015} a version of  EKR theorem was proved for intersecting subspaces of a vector space;
\item In \cite{MR1722210} a version of the EKR theorem was proved for intersecting integer sequences;
\item In \cite{MR657053} a version of EKR theorem was proved for  intersecting blocks in a design;
\item In \cite{MR2009400} a version of EKR theorem was proved for intersecting permutations;
\item In \cite{MR2156694} a version of EKR theorem was proved for uniform set partition systems;
\item In \cite{MeagherS11} a version of EKR theorem was proved for intersecting permutations of the action of PGL$(2,q)$ on the projective line.
\end{itemize}
\bigskip
{\bf EKR for the intersecting permutations}

In the version of the EKR theorem for intersecting permutations given in  \cite{MR2009400}, the ``objects'' are all the permutations on the elements $\{1,\dots,n\}$, i.e. the elements of the symmetric group $\sym(n)$, and two permutations are said to ``intersect'' if they map $i$ to the same point, for some $i\in\{1,\dots,n\}$. A set of permutations is called ``intersecting'' if any pair of its elements intersect. It was proved that the maximum size of an intersecting set is $(n-1)!$ and then the only intersecting sets of this size were characterized to be the sets $S_{i,j}$ of all the permutations mapping $i$ to $j$, for any $i,j\in\{1,\dots,n\}$. The sets $S_{i,j}$ are, in fact, cosets of the point-stabilizers in $\sym(n)$ under the permutation action of $\sym(n)$ on $\{1,\dots,n\}$. In \cite{Karen}, the authors presented a new proof of this theorem which  relies mainly on the algebraic properties of the symmetric group, especially the irreducible representations of this group. It would, then,  be very natural to generalize this result to other permutation groups, i.e. proper subgroups of $\sym(n)$.  A similar method was used in \cite{MeagherS11} to prove a version of EKR theorem for the projective linear group $\PGL(2,q)$. The usefulness of this method, which is the core idea of what we will call  the ``module method'',  was the main motivation for the author of this thesis to generalize it to any $2$-transitive permutation group. This method, then  can be applied to different permutation groups. Using this formulation, then, along with proving EKR theorems for various groups,  we present a new proof for the version of the EKR theorem for the alternating group which was initially proved in \cite{KuW07}. Furthermore, we provide an important generalization of this method to the case where the intersection of two elements of a permutation group $G$ is defined with respect to any union of conjugacy classes of $G$ (in contrast to the old definition of the  intersection where the adjacency is defined with respect to the union of all the conjugacy classes of the derangements of $G$). Then we show that the module method can be generalized to this case. Using this generalization of the method, we prove an interesting version of EKR theorem for the alternating group $\alt(n)$ with respect to the conjugacy class of all $n$-cycles.
\\\\
{\bf Graph interpretations}

Any system $\mathcal{A}$ for which the bound in (\ref{EKR}) is achieved is called a \txtsl{maximum intersecting system}. In any version of the EKR theorem, one can associate an appropriate graph to the problem so that the problem of classifying the  maximum intersecting families is equivalent to the classification of maximum  independent sets in the graph. For instance, to the original EKR theorem we can associate the graph whose vertices are all the $k$-subsets of $\{1,\ldots,n\}$ and two vertices are adjacent if their corresponding sets do not intersect. This  graph is the well-known \textsl{Kneser graph}\index{Kneser graph} (see \cite[Chapter 7]{MR1829620} for a detailed discussion about this graph). A set of vertices of this graph is independent if and only if the corresponding sets form an intersecting system of $k$-subsets. 
In \cite{Newman}, there is an algebraic proof of the {E}rd{\H o}s-{K}o-{R}ado theorem using Kneser graphs. 

In the case of  permutation groups, we will use the so called ``derangement graphs'' whose vertices are all the elements of the group we are considering, and two vertices of the graph are adjacent if they don't intersect. The derangement graphs are in the family of normal Cayley graphs. This is  where the problem connects to representation theory as the irreducible characters of the group provide the required information on the eigenvalues of the derangement graphs. In addition to representation theory, two well-known bounds on the size of independent sets of the graphs, namely the ``ratio'' bound and the ``clique-coclique'' bound provide useful machinery we need in our method.
\\\\
{\bf Overview of the document}

This thesis consists of eight chapters, the first one being this introduction. In Chapter~\ref{graph theory} we provide the reader with a short review of the facts we will need from graph theory. We will also provide a brief introduction to spectral graph theory and will present a proof of the ratio bound theorem. In addition, we prove some new and useful  results in spectral graph theory which will be employed in later chapters. 

Chapter~\ref{rep. theory} is an introduction to  representation theory of finite groups. There, we mainly study some basic facts about representations, characters, irreducible representations and some examples of different types of representations. Then we turn our attention to the irreducible representations of the symmetric and the alternating groups which are of great significance in this research work. There are several references for representation theory of these groups. Using some of them, we provide a compact introduction to this topic which will give an  overview of the subject and the fundamental  facts which we need throughout the thesis. 

Chapter~\ref{evalues of Cayley} is devoted to introducing  Cayley graphs, investigating some of their basic properties and explaining how we can use the irreducible representations of a group to evaluate the eigenvalues of Cayley graphs based on that group; in other words we give a detailed proof of Theorem~\ref{Diaconis} which is a very nice and strong connection  between the character theory of finite groups and spectral graph theory. The paper \cite{MR626813}, where this result first appeared, is mainly in probability theory and the theme of the work is not very compatible with the literature of spectral graph theory.  The main purpose of Section~\ref{evals_of_normal} is to provide a detailed proof of this result which can be used as a standard proof for the researchers in this field. 
We will also discuss  some connections between the eigenvalues of a Cayley graph based on a given group and those of the corresponding quotient groups. This will provide some useful machinery for some future work in this area.

In Chapter~\ref{EKR_perm_groups} we officially define the concepts of EKR and strict EKR property for permutation groups, which correspond to the first and the second conditions in the EKR theorem. Furthermore, we prove  that some famous groups have  the EKR or strict EKR property. The terms EKR and strict EKR properties are not new  and have been used in many recent research works in this area; however, for the first time in the literature, we investigate these properties for a large list of families of groups and provide very detailed discussions in this subject. For example, we prove that all the cyclic and dihedral groups have the strict EKR property and all the Frobenius groups have the EKR property. In addition, we discuss the EKR and strict EKR properties for some group products.

We further investigate the EKR and strict EKR property of groups in Chapter~\ref{module_method}, where we introduce the proof method we call the  ``module method''. Then, using the module method, we prove that the alternating group, the $2$-transitive Mathieu groups and all  $4$-transitive groups have the strict EKR property. Further, we show that the projective special linear group $\PSL(2,q)$ has the EKR property and show that, provided some matrix related to this group has full rank, $\PSL(2,q)$, in fact, has the strict EKR property. 

In Chapter~\ref{single_CC} we turn our attention to characterizing the maximum independent sets of some Cayley graphs on the symmetric group with respect to a single conjugacy class of derangements. To do this, we first generalize the module method discussed in Chapter~\ref{module_method}. We then prove a sequence of interesting results  regarding some connections between the algebraic properties of the underlying group and the graph theoretical properties of the corresponding Cayley graph. Then using the generalized module method, we prove that the alternating group has the strict EKR property.

The concluding chapter is Chapter~\ref{future} where we provide a list of open questions and conjectures we have come up with during this research work.
\\\\
{\bf General notation}

In this thesis, all the graphs are assumed to be simple and finite and all  the groups are assumed to be finite. We will denote any cyclic group of size $r$ with $\mathbb{Z}_r$. 
 For the set $\{1,\dots,n\}$, we will use the notation $[n]$. For any subset $\Omega\subseteq [n]$, the symmetric group\index{symmetric group} on $\Omega$ will be denoted by $\sym(\Omega)$. In particular, if $\Omega=[n]$, then the notation $\sym(n)$ is used for $\sym(\Omega)$.  Any subgroup $G$ of $\sym(n)$ is called a \txtsl{permutation group} of degree $n$. The alternating group\index{alternating group} on $[n]$ is denoted by $\alt(n)$. Furthermore, the identity element of any group $G$ is denoted by $\id_G$ and if $G$ is clear from the context, we simply write $\id$.
Let $S$ be a subset of a set $T$. Then we denote the characteristic vector of $S$ in $T$ by $v_S$ where there is no confusion about $T$.
\chapter{Graph Theory}\label{graph theory}
In this chapter we provide the reader with some  concepts and facts from graph theory which will be needed throughout this  thesis. We refer the reader to the books \cite{MR2159259} and  \cite{MR600654} for definitions and the basic facts for  graph theory and algebra, respectively.

For any graph $X$, a non-empty subset $S$ of the vertex set $V(X)$  is called \textsl{independent}\index{independent set of vertices} (or a  \textsl{coclique}) if no pair of its elements are adjacent. The maximum size of an independent set in $X$ is called the \txtsl{independence number} of $X$ and is denoted by $\alpha(X)$. Any independent set of maximum size is simply called a \textsl{maximum independent set}. The concept of independent sets is very old and well-studied in  graph theory as well as computer science and other related fields of discrete mathematics. We refer the reader to the books \cite{MR2159259}, \cite{MR1829620}  for  discussions related to the independence number.  The concept of independent sets is very essential in this thesis  as the characterization of maximum independent sets of vertices in some Cayley graphs is the core concept in Chapters~\ref{EKR_perm_groups} through \ref{single_CC}.

This chapter includes four sections. In the first section we introduce two graph products which will be useful in later chapters.
Section~\ref{matrix_theory} provides some well-known facts from matrix theory which will be employed in the thesis, especially in  Section~\ref{spectral_graph_theory} where we present  a brief introduction to algebraic graph theory and some basic concepts related to  linear algebraic aspects of graphs. Finally in Section~\ref{bounds_on_indy} the famous ratio bound for the  independent sets of regular graphs as well as some new bounds will be proved.
Note that if two vertices $x$ and $y$ are adjacent in a graph, then we write $x\sim y$.

\section{Graph products}\label{graph_basics}
In this section we define two products on graphs. Let $X$ and $Y$ be two graphs. The \textsl{direct product}\index{direct product of graphs} of $X$ and $Y$ is the graph $X\times Y$ whose vertex set is
\[
V(X\times Y)=V(X)\times V(Y),
\]
and in which two vertices $(x_1,y_1)$ and $(x_2,y_2)$ are adjacent if $x_1\sim x_2$ in $X$ and $y_1\sim y_2$ in $Y$. 
Note that for any independent set $S$ in $X$, the set $S\times V(Y)$ is an independent set in $X\times Y$; hence, $\alpha(X\times Y)\geq \alpha(X)|V(Y)|$. Similarly $\alpha(X\times Y)\geq \alpha(Y)|V(X)|$. We conclude that  $\alpha(X\times Y)\geq\max\{\alpha(X)|V(Y)|\,,\,\alpha(Y)|V(X)|\}$. This inequality can be strict for general graphs (see \cite{klavzar}), but Tardif \cite{Tardif} asked if the equality holds if both $X$ and $Y$ are vertex-transitive graphs. This question was answered by Zhang in \cite{Zhang2012832}. He proved
\begin{thm} If $X$  and $Y$ are vertex-transitive  graphs, then 
\[
\alpha(X\times Y)=\max\{\alpha(X)|Y|\,,\,\alpha(Y)|X|\}.\qed
\]
\end{thm}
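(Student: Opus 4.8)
\noindent The lower bound $\alpha(X\times Y)\ge\max\{\alpha(X)|V(Y)|,\alpha(Y)|V(X)|\}$ has already been recorded above, so the entire content is the matching upper bound. The plan is to pass from independence numbers to \emph{fractional chromatic numbers} $\chi_f$, under which the direct product is well behaved. Two standard facts drive everything. First, for any vertex-transitive graph $G$ one has $\alpha(G)\,\chi_f(G)=|V(G)|$: the inequality $\chi_f(G)\ge|V(G)|/\alpha(G)$ is immediate, and averaging a single maximum independent set over $\mathrm{Aut}(G)$ produces a fractional colouring of total weight $|V(G)|/\alpha(G)$, giving the reverse. Second, $\chi_f$ is monotone under graph homomorphisms, because the preimage of an independent set under a homomorphism is independent, so a fractional colouring of the target pulls back with the same total weight. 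I would first check that $X\times Y$ is itself vertex-transitive: the group $\mathrm{Aut}(X)\times\mathrm{Aut}(Y)$ acts coordinatewise, preserves the product adjacency, and is transitive since each factor is. Hence $\alpha(X\times Y)=|V(X)|\,|V(Y)|/\chi_f(X\times Y)$, and the problem is reduced to computing $\chi_f(X\times Y)$.

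The two projections $\pi_X\colon X\times Y\to X$ and $\pi_Y\colon X\times Y\to Y$ are graph homomorphisms straight from the definition of the direct product, since every edge of $X\times Y$ projects to an edge in each coordinate. Monotonicity of $\chi_f$ then yields the easy inequality
\[
\chi_f(X\times Y)\le\min\{\chi_f(X),\chi_f(Y)\}.
\]
Substituting the vertex-transitive identities $\chi_f(X)=|V(X)|/\alpha(X)$ and $\chi_f(Y)=|V(Y)|/\alpha(Y)$ into $\alpha(X\times Y)=|V(X)|\,|V(Y)|/\chi_f(X\times Y)$ shows that the stated equality is \emph{equivalent} to the reverse inequality
\[
\chi_f(X\times Y)\ge\min\{\chi_f(X),\chi_f(Y)\}.
\]

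This last inequality is the crux and the main obstacle; it is precisely the fractional relaxation of Hedetniemi's conjecture, which (unlike the chromatic version) is true in general and is due to Zhu. To keep the argument self-contained in the vertex-transitive setting I would attack it through LP duality, exhibiting a fractional clique of the product of the required weight: weights $x_{(u,v)}\ge 0$ with $\sum_{(u,v)\in I}x_{(u,v)}\le 1$ for every independent set $I$ of $X\times Y$ and $\sum_{u,v}x_{(u,v)}\ge\min\{\chi_f(X),\chi_f(Y)\}$. Product weights $x_{(u,v)}=a_u b_v$ built from optimal fractional cliques of $X$ and $Y$ are the natural candidate, and the real work is to verify the independent-set constraint for the product.

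A more hands-on route, better matched to the fibre structure, records for each $y\in V(Y)$ the fibre $S_y=\{x:(x,y)\in S\}$ of a maximum independent set $S$. The defining property of the direct product is that whenever $y_1\sim y_2$ there are \emph{no} $X$-edges between $S_{y_1}$ and $S_{y_2}$; note that a single fibre need \emph{not} be independent, which is exactly what makes the bound delicate. One then wants $\sum_y|S_y|\le\max\{\alpha(X)|V(Y)|,\alpha(Y)|V(X)|\}$. The difficulty is that individual fibres can exceed $\alpha(X)$, so no fibre-by-fibre estimate suffices; the cross-independence of adjacent fibres must be exploited globally, which I expect to require an averaging argument over $\mathrm{Aut}(Y)$ to turn the local edge constraints into a single density bound. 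This passage from local to global is where I anticipate the genuine work lies, and it is the same difficulty that appears as the reverse inequality above.
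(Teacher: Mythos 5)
First, a point of reference: the paper does not prove this theorem at all --- it is quoted from Zhang \cite{Zhang2012832} with a \qed, so there is no internal proof to measure you against; the only question is whether your proposal itself constitutes a proof. Your reduction is correct and clean. For a vertex-transitive graph $G$ one indeed has $\chi^*(G)=|V(G)|/\alpha(G)$, the product $X\times Y$ is again vertex-transitive under the coordinatewise action of $\mathrm{Aut}(X)\times\mathrm{Aut}(Y)$, and the two projections are homomorphisms, so monotonicity of $\chi^*$ gives $\chi^*(X\times Y)\le\min\{\chi^*(X),\chi^*(Y)\}$. You are also right that, after these substitutions, the theorem is equivalent to the reverse inequality $\chi^*(X\times Y)\ge\min\{\chi^*(X),\chi^*(Y)\}$, i.e.\ to the fractional Hedetniemi equality. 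This is a legitimate and recognized route to the result.

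The gap is that you never establish that reverse inequality; everything you say about it is either an attribution to Zhu or an explicit admission that the work remains ("the crux and the main obstacle", "where I anticipate the genuine work lies"). Moreover, the one concrete candidate you propose --- the product fractional clique $x_{(u,v)}=a_u b_v$ --- does not work as stated: for an independent set of the form $I=J\times V(Y)$ with $J$ independent in $X$ and $\sum_{u\in J}a_u=1$, one gets $\sum_{(u,v)\in I}a_u b_v=\sum_{v}b_v=\chi^*(Y)$, so the constraint $\le 1$ fails badly; and after dividing all weights by $\max\{\chi^*(X),\chi^*(Y)\}$ to fix the total weight at $\min\{\chi^*(X),\chi^*(Y)\}$, the constraint one must verify for an arbitrary independent set $I$ of $X\times Y$ is an unproven claim of essentially the same depth as the theorem itself. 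The fibre-by-fibre route in your last paragraph stops at the same point. Both Zhu's proof of the fractional Hedetniemi equality and Zhang's independent argument for the vertex-transitive case require a genuinely new idea beyond averaging over automorphism groups (Zhang, for instance, works with the no-homomorphism lemma and a structural analysis of how a maximum independent set meets the fibres, splitting into cases according to whether some fibre attains $\alpha(X)$). As written, your proposal is a correct and useful reformulation of the problem in fractional-chromatic language, but not a proof.
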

The following can, then, be easily derived.
\begin{cor}\label{alpha_of_direct_prod_of_graphs}
If $X_1,\dots,X_k$ are vertex-transitive graphs, then 
\[
\alpha(X_1\times \cdots\times X_k)=\max_i\{\,\alpha(X_i)\prod_{\substack{j=1,\dots,n\\ j\neq i}}|V(X_j)|\,\}.\qed
\]
\end{cor}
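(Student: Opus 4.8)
The plan is to prove this by induction on $k$, using the preceding theorem as the engine at each step. The base case $k=2$ is precisely the statement of that theorem, so nothing needs to be done there. For the inductive step I would first record two structural facts about the direct product that make the induction go through: that the product is associative, in the sense that both $(X\times Y)\times Z$ and $X\times(Y\times Z)$ carry the vertex set $V(X)\times V(Y)\times V(Z)$ with two triples adjacent exactly when they are adjacent in every coordinate; and that the direct product of vertex-transitive graphs is again vertex-transitive, since if the automorphism groups act transitively on each factor, their coordinatewise product consists of automorphisms of the product graph and acts transitively on its vertex set.

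Granting these, I would set $Y=X_1\times\cdots\times X_{k-1}$, which is vertex-transitive by the second fact and satisfies $|V(Y)|=\prod_{j=1}^{k-1}|V(X_j)|$. By the induction hypothesis,
\[
\alpha(Y)=\max_{1\le i\le k-1}\Bigl\{\alpha(X_i)\prod_{\substack{1\le j\le k-1\\ j\neq i}}|V(X_j)|\Bigr\},
\]
and by associativity $X_1\times\cdots\times X_k\cong Y\times X_k$, so the theorem applied to the two vertex-transitive graphs $Y$ and $X_k$ gives
\[
\alpha(X_1\times\cdots\times X_k)=\max\{\alpha(Y)\,|V(X_k)|,\ \alpha(X_k)\,|V(Y)|\}.
\]

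It then remains to match the two quantities inside this maximum with the appropriate terms of the desired expression. The second is immediate: $\alpha(X_k)\,|V(Y)|=\alpha(X_k)\prod_{j\neq k}|V(X_j)|$, which is the $i=k$ term. For the first, I would push the positive constant $|V(X_k)|$ inside the maximum coming from the hypothesis and absorb it into each product, using that for $i\le k-1$ one has $|V(X_k)|\prod_{j\neq i,\,j\le k-1}|V(X_j)|=\prod_{j\neq i}|V(X_j)|$; this rewrites $\alpha(Y)\,|V(X_k)|$ as $\max_{1\le i\le k-1}\{\alpha(X_i)\prod_{j\neq i}|V(X_j)|\}$. Taking the maximum of the two pieces recovers $\max_{1\le i\le k}\{\alpha(X_i)\prod_{j\neq i}|V(X_j)|\}$, which closes the induction.

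There is essentially no analytic content here; the only points that require genuine care are the two structural lemmas about the product and the bookkeeping that fuses the maximum over $i\le k-1$ from the hypothesis, after multiplication by $|V(X_k)|$, with the single $i=k$ term. I expect the most delicate of these routine steps to be checking that the coordinatewise action really lands in the automorphism group of the direct product, so that vertex-transitivity propagates and the theorem remains applicable at every stage, though this is standard.
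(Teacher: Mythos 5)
Your proof is correct and is essentially the argument the paper intends: the corollary is stated as an easy consequence of Zhang's two-factor theorem, and induction on $k$ using associativity of the direct product and the fact that a direct product of vertex-transitive graphs is vertex-transitive is exactly the natural derivation. The structural facts you flag (associativity and preservation of vertex-transitivity under the coordinatewise action) do hold and are the only things needing verification beyond the bookkeeping, which you carry out correctly.
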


Our second product is the lexicographic product of graphs. Let $X$ and $Y$ be two graphs. Then their \txtsl{lexicographical product} $X[Y]$ is a graph with vertex set $V(X)\times V(Y)$ in which two vertices $(x_1,y_1),(x_2,y_2)$ are adjacent if and only if $x_1\sim x_2$ in $X$ or $x_1=x_2$ and $y_1\sim y_2$ in $Y$. An easy interpretation of $X[Y]$ is as follows: to construct $X[Y]$, replace any vertex of $X$ with a copy of $Y$, and if two vertices $x_1$ and $x_2$ in $X$ are adjacent, then in $X[Y]$ all the vertices which replace $x_1$ will be adjacent to all the vertices which replace $x_2$. For example,
\[
K_n[\overline{K_m}]\cong K_{\underbrace{m,m,\ldots,m}_{n\,\,\text{times}}},
\]
where $K_n$ is the complete graph on $n$ vertices.
Note that if $S_X$ and $S_Y$ are independent sets in $X$ and $Y$, respectively, then  $S_X[S_Y]$ is an independent set of vertices of $X[Y]$. This implies that 
\[
\alpha(X[Y])\geq \alpha(X)\alpha(Y).
\]
In fact, we can say more:
\begin{prop}[{{see \cite{Geller197587}}}]\label{independence-of-lex} Let $X$ and $Y$ be graphs. Then 
\[
\alpha(X[Y])=\alpha(X)\alpha(Y).\qed
\]
\end{prop}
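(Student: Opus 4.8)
The lower bound $\alpha(X[Y])\ge\alpha(X)\alpha(Y)$ is already established in the discussion preceding the statement (taking $S_X[S_Y]$ for maximum independent sets $S_X,S_Y$ of $X,Y$), so the plan is to prove the reverse inequality $\alpha(X[Y])\le\alpha(X)\alpha(Y)$ by a fiber-and-projection argument. First I would fix a maximum independent set $S$ in $X[Y]$, so that $|S|=\alpha(X[Y])$. For each $x\in V(X)$ I would form the \emph{fiber} $S_x=\{\,y\in V(Y):(x,y)\in S\,\}$, and I would let $P=\{\,x\in V(X):S_x\neq\emptyset\,\}$ be the set of first coordinates actually occurring in $S$. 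Since the fibers partition $S$, we have $|S|=\sum_{x\in P}|S_x|$.

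The argument then rests on two observations, each using one clause of the lexicographic adjacency rule. First, each fiber $S_x$ is an independent set of $Y$: if $y_1,y_2\in S_x$ with $y_1\sim y_2$ in $Y$, then the vertices $(x,y_1)$ and $(x,y_2)$ share their first coordinate and have adjacent second coordinates, so they are adjacent in $X[Y]$ by the second clause, contradicting independence of $S$; hence $|S_x|\le\alpha(Y)$ for every $x$. Second, the projection $P$ is an independent set of $X$: if $x_1,x_2\in P$ were adjacent in $X$ (note $x_1\neq x_2$, as the graphs are simple and loopless), then choosing any $y_1\in S_{x_1}$ and $y_2\in S_{x_2}$ gives adjacent first coordinates, so $(x_1,y_1)\sim(x_2,y_2)$ in $X[Y]$ by the first clause, again contradicting independence; hence $|P|\le\alpha(X)$.

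Combining the two bounds gives
\[
|S|=\sum_{x\in P}|S_x|\le|P|\,\alpha(Y)\le\alpha(X)\,\alpha(Y),
\]
which is exactly the upper bound, and together with the known lower bound it yields the claimed equality.

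I do not expect a genuine obstacle here: the whole proof is elementary once $S$ is decomposed into fibers over its projection. The only points requiring care are that the two clauses of the adjacency definition must be invoked separately in the two observations, and that simpleness of the graphs guarantees $x_1\neq x_2$ whenever $x_1\sim x_2$, so that the ``same first coordinate'' and ``adjacent first coordinates'' cases are genuinely disjoint. The mild conceptual step is simply recognizing that the correct object to bound is the projection $P$ together with the individual fibers, rather than trying to analyze $S$ globally.
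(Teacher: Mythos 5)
Your argument is correct: decomposing a maximum independent set $S$ of $X[Y]$ into fibers over its projection, showing each fiber is independent in $Y$ and the projection is independent in $X$, and summing gives the upper bound $\alpha(X)\alpha(Y)$, which combines with the lower bound already noted in the text. The paper itself supplies no proof for this proposition --- it is stated with a reference to Geller --- and your fiber-and-projection argument is precisely the standard one from that source, so there is nothing to reconcile.
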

For any $x\in V(X)$, let $Y_x=\{x\}\times Y$. Then it is easy to see that $Y_x\cong Y$.  In order to see what the maximum independent sets in $X[Y]$ look like, for any subset $S$ of the vertices of $X[Y]$, we define the projection of $S$ to $X$ as
\[
\proj_X(S)=\{x\in X\,\,|\,\, (x,y)\in S,\, \text{for some}\, y\in Y\}.
\]
Similarly, for any $x\in V(X)$ we define the projection of $S$ to  $Y_x$ as
\[
\proj_{Y_x}(S)=\{y\in Y\,\,|\,\,(x,y) \in S\}.
\]
We can, then, observe the following.
\begin{prop}\label{max_indy_in_lex} Let $X$ and $Y$ be graphs. If $S$ is an independent set in $X[Y]$ of size $\alpha(X)\alpha(Y )$,  then $\proj_X(S)$ is a maximum independent set  in $X$ and, for any $x\in V(X)$, $\proj_{Y_x}(S)$  is a maximum independent set in $Y_x$.\qed
\end{prop}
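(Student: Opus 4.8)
The plan is to reduce everything to a single chain of inequalities whose equality case forces both conclusions, exploiting the fact (Proposition~\ref{independence-of-lex}) that $\alpha(X[Y])=\alpha(X)\alpha(Y)$, so an independent set $S$ of this size is a \emph{maximum} independent set of $X[Y]$. Write $A=\proj_X(S)$ and, for each $x\in A$, abbreviate $S_x=\proj_{Y_x}(S)$. Since every element of $S$ has the form $(x,y)$ with $x\in A$ and $y\in S_x$, the set $S$ is the disjoint union $S=\bigcup_{x\in A}\{x\}\times S_x$, whence $|S|=\sum_{x\in A}|S_x|$.

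First I would record two structural facts that follow directly from the adjacency rule of the lexicographic product. If $x_1,x_2\in A$ are distinct, then picking $y_i\in S_{x_i}$ gives two distinct vertices $(x_1,y_1),(x_2,y_2)\in S$; as $S$ is independent they are non-adjacent, and since $x_1\neq x_2$ this forces $x_1\not\sim x_2$ in $X$, so $A$ is independent in $X$ and hence $|A|\le\alpha(X)$. Likewise, for a fixed $x\in A$ and distinct $y_1,y_2\in S_x$, the vertices $(x,y_1),(x,y_2)\in S$ share their first coordinate, so their non-adjacency forces $y_1\not\sim y_2$ in $Y$; thus each $S_x$ is independent in $Y_x\cong Y$ and $|S_x|\le\alpha(Y)$.

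Combining these gives $|S|=\sum_{x\in A}|S_x|\le|A|\,\alpha(Y)\le\alpha(X)\alpha(Y)$. The hypothesis $|S|=\alpha(X)\alpha(Y)$ forces both inequalities to be equalities. Equality in the second (using $\alpha(Y)\ge 1$) gives $|A|=\alpha(X)$, so $A=\proj_X(S)$ is a maximum independent set of $X$. Equality in the first, namely $\sum_{x\in A}|S_x|=|A|\,\alpha(Y)$ with each summand at most $\alpha(Y)$, forces $|S_x|=\alpha(Y)$ for every $x\in A$; since $S_x$ is independent in $Y_x$, it is a maximum independent set there. This yields both conclusions.

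I do not expect a serious obstacle: the argument is an equality analysis of a telescoping bound, and the only point demanding care is the boundary convention. With the definition in this thesis that independent sets are non-empty, the fibre $\proj_{Y_x}(S)$ is empty for $x\notin\proj_X(S)$, so the second assertion should be read (and can only hold) for $x\in\proj_X(S)$; I would phrase it that way to avoid the vacuous case. Everything else is a direct translation of the lexicographic adjacency rule into the two independence and size conditions above.
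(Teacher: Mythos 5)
Your argument is correct, and in fact the paper offers no proof at all: Proposition~\ref{max_indy_in_lex} is stated as an observation and closed immediately with a \qed, so your chain of inequalities $|S|=\sum_{x\in\proj_X(S)}|\proj_{Y_x}(S)|\le|\proj_X(S)|\,\alpha(Y)\le\alpha(X)\alpha(Y)$, together with the equality analysis, is exactly the standard argument the author is implicitly relying on. The two structural facts you extract from the adjacency rule of $X[Y]$ (the projection to $X$ is independent; each fibre is independent in $Y_x$) are the right ingredients, and the deduction from equality is sound since $\alpha(Y)\ge 1$. Your caveat about the quantifier is also warranted: as literally stated, ``for any $x\in V(X)$'' fails whenever $\alpha(X)<|V(X)|$, because the fibre over a vertex outside $\proj_X(S)$ is empty; the claim should be read as ranging over $x\in\proj_X(S)$, which is how it is used later in the thesis.
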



\section{Some matrix theory}\label{matrix_theory}
In this section, we recall some facts from matrix theory which we will use throughout the thesis. 
The reader may refer to \cite{MR0276251} or \cite{MR832183} for detailed discussions about these results.

For a matrix $A=[a_{i,j}]$, the \textsl{transpose}\index{transpose of a matrix} $A^\top$ of $A$ is the defined as $A^\top=[a'_{i,j}]$, where $a'_{i,j}=a_{j,i}$, for all $i$ and $j$. An $n\times n$ matrix $A$ is said to be \textsl{symmetric}\index{symmetric matrix} if $A^\top=A$. 
Also, $A$ is said to be \textsl{real orthogonal}\index{real orthogonal matrix}, if $A^{-1}=A^\top$; this is a particular case of \textsl{unitary} matrices, where we assume  the matrices to have real entries.

The \txtsl{trace} of a square matrix $A$, $\Tr(A)$, is defined to be the sum of diagonal entries of $A$. Two matrices $A$ and $B$ are said to be  \textsl{similar}\index{similar matrices} (\txtsl{orthogonally equivalent}) if there exists a non-singular (real orthogonal) matrix $S$, such that $B=S^{-1}AS$. A square matrix $A$ is said to be (\textsl{orthogonally}\index{diagonalizable!orthogonally}) \txtsl{diagonalizable} if it is similar (orthogonally equivalent) to a diagonal matrix $D$. The following fact is well-known (see \cite[Section 4.1]{MR832183} for  a proof).
\begin{thm}\label{diag}
Any symmetric matrix is orthogonally diagonalizable.\qed
\end{thm}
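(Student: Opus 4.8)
The plan is to prove the statement by induction on the size $n$ of the symmetric matrix $A$, peeling off one eigenvector at a time while staying entirely within the real orthogonal category. The case $n=1$ is immediate, since every $1\times 1$ matrix is already diagonal. For the inductive step, the heart of the argument is to produce a single \emph{real} unit eigenvector of $A$ and then to restrict attention to its orthogonal complement.

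First I would establish that every eigenvalue of a real symmetric matrix is real. Since $A$ is real with a characteristic polynomial of degree $n$, it has at least one (a priori complex) eigenvalue $\lambda$ with a nonzero eigenvector $v\in\mathbb{C}^n$, say $Av=\lambda v$. Considering the Hermitian form $\overline{v}^\top A v$ and using $A^\top=A$ together with the fact that $A$ has real entries, one checks that $\overline{v}^\top A v$ equals its own complex conjugate and is therefore real; since it also equals $\lambda\,\overline{v}^\top v=\lambda\|v\|^2$ with $\|v\|^2>0$, the eigenvalue $\lambda$ must be real. Because $\lambda$ is real and $A$ is a real matrix, the space $\ker(A-\lambda I)$ is defined over $\mathbb{R}$ and hence contains a real unit vector $u_1$.

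Next I would exploit the symmetry to show that the orthogonal complement $W=u_1^\perp$ is invariant under $A$: for any $w$ with $\langle w,u_1\rangle=0$, symmetry gives $\langle Aw,u_1\rangle=\langle w,Au_1\rangle=\lambda\langle w,u_1\rangle=0$, so $Aw\in W$. Extending $u_1$ to an orthonormal basis whose remaining vectors span $W$, the matrix of $A$ in this basis becomes block-diagonal, with a $1\times 1$ block $[\lambda]$ and an $(n-1)\times(n-1)$ block $B$ that is again symmetric. Applying the induction hypothesis to $B$ yields a real orthogonal matrix diagonalizing $B$, and assembling this with the change-of-basis matrix (itself real orthogonal, as it carries one orthonormal basis to another) produces a real orthogonal $S$ for which $S^{-1}AS=S^\top A S$ is diagonal.

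The step I expect to be the main obstacle is the reality of the eigenvalues together with the guarantee that a corresponding eigenvector can be chosen real; this is precisely where the symmetry hypothesis $A^\top=A$ is indispensable, and it is what keeps the whole induction inside the real orthogonal setting rather than drifting into complex unitary diagonalization. The invariance of $W$ and the resulting block reduction are then routine consequences of symmetry and of working with orthonormal bases.
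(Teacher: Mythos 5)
Your proof is correct and complete: the reality of the eigenvalues via the Hermitian form $\overline{v}^\top A v$, the existence of a real eigenvector for a real eigenvalue of a real matrix, the $A$-invariance of $u_1^\perp$, and the inductive block reduction are all sound, and this is the standard inductive proof of the real spectral theorem. The paper itself gives no proof of this statement — it is stated with a citation to a standard reference — so there is nothing to compare against; your argument is exactly the classical one that such references contain.
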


Throughout this thesis, we will denote the identity matrix of size $n$, by $I_n$ or simply $I$ if there is no confusion about the size. Similarly the $n\times n$ matrix all of whose entries are $1$ is denoted by $J_n$ or $J$.

For a square matrix $A$, the \txtsl{characteristic polynomial} of $A$ is defined to be the monic polynomial
\[
\phi(A;x)=\det(xI-A).
\]
For instance, the characteristic polynomials of the zero matrix of size $n$, $I_n$ and $J_n$ are $x^n$, $(x-1)^n$ and $(x-n)x^{n-1}$, respectively. It is not hard to see that $\phi(A,0)$ is the determinant of $A$. 
\begin{thm}[{{see \cite[Section 1.3]{MR832183}}}]\label{similar}
If two matrices $A$ and $B$ are similar, then $A$ and $B$ have the same characteristic polynomial. In particular, they have the same determinants and traces.\qed
\end{thm}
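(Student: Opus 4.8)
The plan is to establish the equality of characteristic polynomials directly from the definition of similarity, and then extract the statements about determinants and traces as immediate consequences. Suppose $B=S^{-1}AS$ for some non-singular matrix $S$. The key observation is that the scalar matrix $xI$ commutes with everything, so in particular $xI=S^{-1}(xI)S$. First I would use this to write
\[
xI-B=S^{-1}(xI)S-S^{-1}AS=S^{-1}(xI-A)S,
\]
exhibiting $xI-B$ as a product of three matrices.

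Next I would apply the multiplicativity of the determinant, together with $\det(S^{-1})\det(S)=\det(S^{-1}S)=\det(I)=1$, to obtain
\[
\phi(B;x)=\det(xI-B)=\det(S^{-1})\det(xI-A)\det(S)=\det(xI-A)=\phi(A;x).
\]
Here $x$ should be regarded either as an indeterminate over the relevant field or as an arbitrary scalar; under either reading the displayed identity is valid, so the two characteristic polynomials coincide.

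Finally, the claims about the determinant and the trace follow. One route is to read them off from the coefficients of the now-common characteristic polynomial: for an $n\times n$ matrix the coefficient of $x^{n-1}$ in $\phi(A;x)$ is $-\Tr(A)$, while the constant term $\phi(A;0)$ recovers the determinant, and equal polynomials have equal coefficients. Alternatively, and more directly, I would argue $\det(B)=\det(S^{-1}AS)=\det(A)$ again by multiplicativity, and $\Tr(B)=\Tr(S^{-1}AS)=\Tr(ASS^{-1})=\Tr(A)$ using the cyclic invariance of the trace.

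There is no serious obstacle in this argument: the whole proof rests on the single algebraic trick of inserting $xI=S^{-1}(xI)S$ and on the multiplicative behaviour of the determinant. The only point demanding a little care is the interpretation of $x$ when treating $\det(xI-A)$ as a polynomial identity, which is resolved by working over the polynomial ring, or equivalently by noting that a polynomial identity holding for all scalar values of $x$ holds formally.
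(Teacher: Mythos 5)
Your proof is correct: the identity $xI-B=S^{-1}(xI-A)S$ together with multiplicativity of the determinant is exactly the standard argument, and your two routes to the trace and determinant statements (reading off coefficients, or using $\det(S^{-1}AS)=\det(A)$ and cyclic invariance of the trace) are both valid. The paper itself offers no proof of Theorem~\ref{similar} — it is stated with a citation to \cite[Section 1.3]{MR832183} — so there is nothing to compare against; your argument is the one that reference gives.
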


The \textsl{eigenvalues} of $A$ are defined to be the roots $\lambda$ of the \txtsl{characteristic polynomial}. Equivalently, a complex number $\lambda$ is an eigenvalue of $A$, if the determinant of the matrix $\lambda I-A$ is zero. This is, in turn, equivalent to the fact that there is a non-zero vector $v$ in the null space of $\lambda I-A$. If $D=S^{-1}AS$ is a diagonalization  of a matrix $A$, then the diagonal entries of $D$ are the eigenvalues of $A$. Hence, one can deduce from Theorem~\ref{similar} that the trace of a matrix is equal to the sum of the eigenvalues of the matrix. 

If $\lambda$ is an eigenvalue of $A$, then the null space of $\lambda I -A$ and its non-zero elements are called the \textsl{eigenspace}\index{eigenspace} and the \textsl{eigenvectors} of $A$ corresponding to the eigenvalue $\lambda$, respectively. The (algebraic) \textsl{multiplicity}\index{multiplicity of an eigenvalue} of the eigenvalue $\lambda$ of a matrix $A$, is the maximum power of the factor $x-\lambda$ in the characteristic polynomial $\phi(A;x)$ and is denoted by $m(\lambda)$.
The following facts are  well-known  in matrix theory.

\begin{prop}[{{see \cite[Section 4.1]{MR832183}}}]
If a square matrix $A$ is symmetric, then all the eigenvalues of $A$  are real.\qed
\end{prop}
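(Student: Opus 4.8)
The plan is to argue directly with eigenvectors, working over $\mathbb{C}$ from the outset. This is the point that deserves attention: a priori an eigenvalue $\lambda$ of the real matrix $A$ is only guaranteed to be a root of the real polynomial $\phi(A;x)$, so it could be complex, with a correspondingly complex eigenvector. I would therefore fix an eigenvalue $\lambda$ and a nonzero $v\in\mathbb{C}^{n}$ with $Av=\lambda v$, and study the scalar $s=\overline{v}^{\top}Av$, where $\overline{v}$ denotes the entrywise complex conjugate of $v$.

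Evaluating $s$ by means of $Av=\lambda v$ gives $s=\lambda\,\overline{v}^{\top}v=\lambda\sum_{i}|v_{i}|^{2}$. The key step is to show that $s$ is in fact a real number. Since $s$ is a $1\times 1$ quantity it equals its own transpose, and taking complex conjugates while using that $A$ is real (so $\overline{A}=A$) together with the hypothesis $A^{\top}=A$ shows $\overline{s}=s$; that is, $s$ is real. Because $\sum_{i}|v_{i}|^{2}$ is a strictly positive real number (as $v\neq 0$), I can then conclude that $\lambda=s/\left(\sum_{i}|v_{i}|^{2}\right)$ is real, which is exactly the claim.

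The only delicate point, and the step I would be most careful about, is the passage to complex conjugates: one must use the conjugate transpose rather than the plain transpose, since otherwise the manipulation breaks down precisely when $v$ is genuinely complex. A completely different and perhaps shorter route is available from the material already in hand: by Theorem~\ref{diag} there is a real orthogonal $S$ with $S^{-1}AS=D$ diagonal, and since $S$ and $A$ are real, so is $D$; then by Theorem~\ref{similar}, $A$ and $D$ share a characteristic polynomial, so the eigenvalues of $A$ are exactly the real diagonal entries of $D$. I would nevertheless present the direct argument as the main proof, as it is self-contained and avoids the circularity that can arise when reality of eigenvalues is deduced from an orthogonal-diagonalization theorem.
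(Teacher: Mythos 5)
Your argument is correct and complete: the computation $\overline{s}=v^{\top}A\overline{v}=(v^{\top}A\overline{v})^{\top}=\overline{v}^{\top}A^{\top}v=\overline{v}^{\top}Av=s$ uses exactly the reality and symmetry of $A$, and dividing by $\sum_i|v_i|^2>0$ gives $\lambda\in\mathbb{R}$. The paper itself supplies no proof for this proposition (it is quoted from the cited reference with a \textsl{qed} symbol), so there is nothing to compare against; your direct conjugate-transpose argument is the standard self-contained one, and your remark that deducing the result from Theorem~\ref{diag} risks circularity at the level of the underlying proofs is well taken.
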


A symmetric matrix $A$ is \txtsl{positive semi-definite} (\txtsl{positive definite}) if all the eigenvalues of $A$ are non-negative (positive). 
\begin{lem}\label{sd_matrices} If a matrix $A$ is positive semi-definite, then for any vector $x$, $x^\top Ax\geq 0$. The equality holds if and only if $Ax=0$.
\end{lem}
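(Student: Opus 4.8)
The plan is to prove the standard characterization of positive semi-definite matrices via their quadratic form, using the orthogonal diagonalization guaranteed by Theorem~\ref{diag}. Since $A$ is symmetric, it is orthogonally diagonalizable, so there exists a real orthogonal matrix $S$ and a diagonal matrix $D$ with $A=SDS^\top$ (using $S^{-1}=S^\top$). The diagonal entries $\lambda_1,\dots,\lambda_n$ of $D$ are precisely the eigenvalues of $A$, hence all non-negative by the positive semi-definite hypothesis. The key observation is that the change of variables $y=S^\top x$ transforms the quadratic form $x^\top A x$ into a simple weighted sum of squares.

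First I would write $x^\top A x = x^\top S D S^\top x = (S^\top x)^\top D (S^\top x) = y^\top D y$, where $y=S^\top x$. Since $D$ is diagonal with entries $\lambda_i\geq 0$, this gives
\[
x^\top A x = \sum_{i=1}^n \lambda_i y_i^2 \geq 0,
\]
which establishes the inequality, as every term is a product of a non-negative eigenvalue and a square. This is the main content of the lemma and requires only the diagonalization together with non-negativity of the eigenvalues.

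For the equality case, I would argue both directions. If $Ax=0$, then clearly $x^\top A x = x^\top 0 = 0$. Conversely, suppose $x^\top A x = \sum_i \lambda_i y_i^2 = 0$. Since each $\lambda_i\geq 0$, every summand is non-negative, so each must vanish: $\lambda_i y_i^2 = 0$ for all $i$. This forces $\lambda_i y_i = 0$ for every $i$ (if $\lambda_i=0$ then $\lambda_i y_i=0$ trivially, and if $\lambda_i>0$ then $y_i=0$, so again $\lambda_i y_i=0$). Thus $D y = 0$, and therefore
\[
A x = S D S^\top x = S D y = S\,0 = 0.
\]

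The only subtlety, and the step I would be most careful about, is the equality case: one must pass from $\sum_i \lambda_i y_i^2 = 0$ not merely to $y_i=0$ (which fails when $\lambda_i=0$) but to the correct conclusion $\lambda_i y_i=0$, which is exactly what is needed to deduce $Dy=0$ and hence $Ax=0$ after multiplying back by $S$. Everything else is a routine consequence of orthogonal diagonalization, so I do not anticipate any genuine obstacle beyond keeping the role of the zero eigenvalues straight.
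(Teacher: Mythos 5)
Your proof is correct. For the inequality $x^\top Ax\ge 0$ you use exactly the paper's argument: orthogonal diagonalization from Theorem~\ref{diag} turns the quadratic form into $\sum_i\lambda_i y_i^2$ with $\lambda_i\ge 0$. The only divergence is in the equality case. The paper factors $A=B^\top B$ with $B=\sqrt{D}S$ and argues $x^\top Ax=0\Rightarrow \|Bx\|^2=0\Rightarrow Bx=0\Rightarrow Ax=B^\top Bx=0$, whereas you argue directly that a vanishing sum of non-negative terms forces $\lambda_i y_i=0$ for every $i$, hence $Dy=0$ and $Ax=SDy=0$. Both are valid; your version avoids introducing the square root of $D$ and handles the zero eigenvalues explicitly (the point you correctly flag as the one subtlety), while the paper's factorization is the slicker one-line deduction. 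Either argument suffices, and nothing is missing from yours.
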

\begin{proof}
By  Theorem~\ref{diag}, there is a real orthogonal matrix $S$ such that $A=S^\top DS$, where $D$ is a diagonal matrix with diagonal entries $\lambda_1,\dots,\lambda_n$, i.e. the eigenvalues of $A$. Hence we can write 
\[
x^\top Ax= x^\top S^\top DSx =  (Sx)^\top D(Sx) =  y^\top D y = \sum_{i=1}^n \lambda_i y_i^2\geq 0,
\]
where $y=Sx$. This proves the first part of the lemma.
For the second part, let $\sqrt{D}$ be the diagonal matrix with the diagonal elements $\sqrt{\lambda_1},\dots,\sqrt{\lambda_n}$. Then $A=B^\top B$, where $B=\sqrt{D} S$. Now if $x^\top A x=0$, then $x^\top B^\top Bx=0$. This implies that $(Bx)^\top (Bx)=0$; that is $||Bx||=0$, hence $Bx=0$. This yields $Ax=B^\top Bx=0$. The converse is trivial.
\end{proof}

We, next, introduce the ``tensor product'' of matrices  which is one of the well-known matrix operations and has many applications. Let $A$ and $B$ be $m\times n$ and $p\times q$ matrices, respectively. Then their \textsl{tensor product}\index{tensor product of matrices} $A\otimes B$ is defined to be the matrix
\[
\begin{bmatrix}
a_{11}B & \cdots & a_{1n} B\\
\vdots & \ddots & \vdots \\
a_{m1}B & \cdots & a_{mn} B\\
\end{bmatrix}.
\]
In other words, $A\otimes B$ is obtained from $A$ by replacing any entry $a_{ij}$ by the matrix $a_{ij} B$. Hence $A\otimes B$ is an $mp\times nq$ matrix. In particular, if $x$ and $y$ are column vectors of lengths $m$ and $n$, respectively, then $x\otimes y$ is a column vector of length $mn$. It is clear that $A\otimes B$ needs not to be equal to $B\otimes A$. It is easy to see that $(A\otimes B)^{\top}=A^{\top}\otimes B^{\top}$. Also the proof of the following is straight-forward.
\begin{lem}\label{tensor_and_regular_products} Let $A,B,C$ and $D$ be $m\times n$, $r\times s$, $n\times p$ and $s\times t$ matrices, respectively. Then
\[
(A\otimes B) (C\otimes D)= (AC\otimes BD).\qed
\]
\end{lem}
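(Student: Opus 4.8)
The plan is to verify the identity directly at the level of blocks, exploiting the block structure built into the definition of the tensor product. First I would record the dimensions: since $A$ is $m \times n$ and $B$ is $r \times s$, the product $A \otimes B$ is an $mr \times ns$ matrix consisting of an $m \times n$ array of $r \times s$ blocks, the $(i,j)$ block being $a_{ij} B$. Likewise $C \otimes D$ is $ns \times pt$, arranged as an $n \times p$ array of $s \times t$ blocks with $(j,\ell)$ block $c_{j\ell} D$. The outer block dimensions (the $n$ columns of blocks of the first factor against the $n$ rows of blocks of the second) match, and each block product $(a_{ij}B)(c_{j\ell}D)$ is a legitimate $r \times t$ matrix since $B$ is $r \times s$ and $D$ is $s \times t$; hence the block multiplication I am about to perform is well defined.

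Next I would compute the $(i,\ell)$ block of the product $(A \otimes B)(C \otimes D)$ using block matrix multiplication. It equals
\[
\sum_{j=1}^{n} (a_{ij} B)(c_{j\ell} D) = \left( \sum_{j=1}^{n} a_{ij} c_{j\ell} \right) BD = (AC)_{i\ell}\, BD,
\]
where I have pulled the scalars out of the matrix product and recognized the inner sum as the $(i,\ell)$ entry of $AC$. But $(AC)_{i\ell}\, BD$ is precisely the $(i,\ell)$ block of $AC \otimes BD$, so the two matrices agree block by block and therefore coincide.

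The computation is routine once block multiplication is justified, so the only genuine point requiring care is the bookkeeping of dimensions that makes the block product legal; I would therefore emphasize the dimension check at the outset. As an alternative that avoids block algebra entirely, one could index rows of a tensor product by pairs $(i,k)$ and columns by pairs $(j,l)$, so that $(A \otimes B)_{(i,k),(j,l)} = a_{ij} b_{kl}$, and then verify the identity entrywise: the $\bigl((i,k),(p,q)\bigr)$ entry of the left-hand side is $\sum_{j,l} a_{ij} b_{kl} c_{jp} d_{lq}$, which factors as $\bigl(\sum_j a_{ij} c_{jp}\bigr)\bigl(\sum_l b_{kl} d_{lq}\bigr) = (AC)_{ip}\,(BD)_{kq}$, the matching entry of the right-hand side. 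Either route is short; the block version is perhaps the more transparent one given the way the tensor product was defined above.
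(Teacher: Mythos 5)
Your proof is correct. The paper itself omits the argument entirely, stating only that the proof is ``straight-forward,'' so there is no authorial proof to compare against; your block-multiplication computation is the standard verification and meshes exactly with the paper's definition of $A\otimes B$ as the block matrix with $(i,j)$ block $a_{ij}B$, with the dimension bookkeeping ($A\otimes B$ of size $mr\times ns$ against $C\otimes D$ of size $ns\times pt$) correctly justifying the block product. The alternative entrywise argument you sketch is equally valid and nothing further is needed.
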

Using this, one can see that $(A\otimes B)^{-1}=A^{-1}\otimes B^{-1}$. In addition to these nice properties, the tensor product enjoys so many other properties. In the following proposition, we will see how the eigenvalues of $A\otimes B$ can be written in terms  of those of $A$ and $B$.
\begin{prop}\label{evalues_of_tensor} Let $A$ and $B$ be $m\times m$ and $n\times n$ matrices, respectively. Then the eigenvalues of $A\otimes B$ are $\lambda_i\mu_j$, where $\lambda_1,\dots,\lambda_m$ and $\mu_1,\dots,\mu_n$ are all the eigenvalues of $A$ and $B$, respectively.
\end{prop}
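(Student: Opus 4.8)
The plan is to prove Proposition~\ref{evalues_of_tensor} by directly constructing eigenvectors of $A \otimes B$ from the eigenvectors of $A$ and $B$, and then counting to confirm we have found all of them. First I would recall that by Lemma~\ref{tensor_and_regular_products}, the tensor product interacts multiplicatively with ordinary matrix multiplication: for compatible matrices we have $(A\otimes B)(C\otimes D)=(AC)\otimes(BD)$. This identity is the only real tool needed.

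The key computation is as follows. Suppose $u$ is an eigenvector of $A$ with $Au=\lambda_i u$, and $v$ is an eigenvector of $B$ with $Bv=\mu_j v$. Treating $u$ and $v$ as $m\times 1$ and $n\times 1$ matrices respectively, Lemma~\ref{tensor_and_regular_products} gives
\[
(A\otimes B)(u\otimes v)=(Au)\otimes(Bv)=(\lambda_i u)\otimes(\mu_j v)=\lambda_i\mu_j\,(u\otimes v).
\]
Thus $u\otimes v$ is an eigenvector of $A\otimes B$ with eigenvalue $\lambda_i\mu_j$, provided $u\otimes v\neq 0$, which holds whenever $u\neq 0$ and $v\neq 0$. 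This immediately shows that every product $\lambda_i\mu_j$ is an eigenvalue of $A\otimes B$.

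The remaining step is to argue that these are \emph{all} the eigenvalues, with the correct multiplicities, so that the $mn$ eigenvalues of the $mn\times mn$ matrix $A\otimes B$ are exactly the $mn$ products $\lambda_i\mu_j$ (counted with multiplicity). The cleanest route is to invoke that over $\mathbb{C}$ every square matrix is unitarily triangularizable (Schur's theorem): write $A=P^{-1}T_A P$ and $B=Q^{-1}T_B Q$ with $T_A,T_B$ upper triangular having the $\lambda_i$ and $\mu_j$ on their diagonals. Using $(P\otimes Q)^{-1}=P^{-1}\otimes Q^{-1}$ together with Lemma~\ref{tensor_and_regular_products}, one computes
\[
(P\otimes Q)(A\otimes B)(P\otimes Q)^{-1}=(PAP^{-1})\otimes(QBQ^{-1})=T_A\otimes T_B,
\]
and $T_A\otimes T_B$ is upper triangular (a tensor product of upper triangular matrices is upper triangular) with diagonal entries precisely the products $\lambda_i\mu_j$. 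Since similar matrices share the same characteristic polynomial by Theorem~\ref{similar}, the eigenvalues of $A\otimes B$ are exactly the diagonal entries of $T_A\otimes T_B$, namely all the $\lambda_i\mu_j$ with multiplicity.

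The main obstacle is the second half: the eigenvector construction alone only produces eigenvalues and does not by itself control multiplicities or rule out additional eigenvalues, so one genuinely needs the triangularization argument (or an equivalent characteristic-polynomial computation) to pin down the full spectrum. I would rely on Schur triangularization rather than diagonalizability, since $A$ and $B$ are arbitrary and need not be diagonalizable; the triangular form always exists over $\mathbb{C}$ and suffices because only the diagonal of a triangular matrix matters for its spectrum.
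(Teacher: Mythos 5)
Your proposal is correct, and its first half is exactly the paper's entire proof: the paper simply computes $(A\otimes B)(v\otimes w)=Av\otimes Bw=\lambda\mu(v\otimes w)$ via Lemma~\ref{tensor_and_regular_products} and stops there. Where you differ is that you correctly identify this as insufficient and supply the missing second half. The eigenvector construction only exhibits each product $\lambda_i\mu_j$ as \emph{some} eigenvalue; it does not show that the $mn$ eigenvalues of the $mn\times mn$ matrix $A\otimes B$ are \emph{precisely} the $mn$ products counted with multiplicity, nor does it exclude additional eigenvalues (and if $A$ or $B$ has repeated eigenvalues without a full set of independent eigenvectors, the tensored eigenvectors need not even span). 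Your Schur triangularization argument --- conjugating to $T_A\otimes T_B$, observing it is upper triangular with diagonal entries $\lambda_i\mu_j$, and invoking Theorem~\ref{similar} --- closes this gap cleanly and works for arbitrary (not necessarily diagonalizable) complex matrices. So your route is genuinely stronger than the paper's: the paper's proof establishes only the ``each product is an eigenvalue'' direction, whereas yours establishes the full statement including multiplicities, at the modest cost of appealing to Schur's theorem, which the paper does not state but which is standard and compatible with the matrix-theory background it cites.
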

\begin{proof} Let $\lambda$ and $\mu$ be eigenvalues of $A$ and $B$, with eigenvectors $v$ and $w$, respectively. Then $Av=\lambda v$ and $Bw=\mu w$. Therefore, according to Lemma~\ref{tensor_and_regular_products}, we have
\[
(A\otimes B)(v\otimes w)=Av\otimes Bw= \lambda v \otimes \mu w=\lambda\mu (v\otimes w).\qedhere
\]
\end{proof}
 
As a nice application of the tensor product, we point out that the adjacency matrix of the graph $X\times Y$ is the tensor product $A(X)\otimes A(Y)$; hence using Proposition~\ref{evalues_of_tensor}, one can obtain the spectrum of $X\times Y$ using those of $X$ and $Y$.

\section{Spectral graph theory}\label{spectral_graph_theory}

This section is a brief introduction to an important part of algebraic graph theory, namely  spectral graph theory. In this context, algebraic and linear algebraic tools are used to establish graph theoretical properties for graphs. For more details on spectral graph theory, the reader may refer to \cite{MR1271140} or  \cite{MR1829620}.  The starting point is the concept of  \txtsl{adjacency matrix}.

Assume that $X$ is a graph with vertex set $V(X)=\{v_1,\dots,v_n\}$. Then the adjacency matrix of $X$, which is denoted by $A(X)$, is the square 01-matrix of size $n$, whose entry $(i,j)$ is 1 if and only if $v_i$ is adjacent to $v_j$. Note that there are, also, some other important matrices associated to a graph (for example, the incidence matrix or the Laplacian matrix) which provide further connections with linear algebra and matrix theory. It follows immediately from the definition of the adjacency matrix of a graph that it is real symmetric and hence is diagonalizable and, since the graph $X$ has no loops, the trace of $A(X)$ is zero. Note also that if $\alpha\in \sym(V(X))$ is a permutation on the vertices of $X$, then the adjacency matrix of $X$ based on the labeling $V(X)=\{v_{\alpha(1)},\dots,v_{\alpha(n)}\}$, is $P^{-1}A(X)P$, where $P$ is the permutation matrix corresponding to $\alpha$. This implies that changing the order of the vertices of $X$ will result in distinct but similar matrices to the adjacency matrix. We deduce that the order on the vertices of $X$ is not important in this context.

The nullity and the rank of a graph $X$ is defined to be the nullity and the rank of $A(X)$, respectively. Further, the \textsl{eigenvalues}\index{eigenvalue of a graph} of the graph $X$ are defined to be the eigenvalues of the matrix $A(X)$ and for an eigenvalue $\lambda$ of the graph $X$, the \textsl{eigenspace}\index{eigenspace of a graph}  and the \textsl{eigenvectors}\index{eigenvector of a graph} of $X$ corresponding to $\lambda$ are defined to be the eigenspace and the eigenvectors of $A(X)$ corresponding to $\lambda$, respectively. Note these are well-defined since changing the order of the vertices of $X$ makes similar matrices. Because  the eigenvalues of a graph $X$ with $n$ vertices are real, we usually order them as $\lambda_n\geq \lambda_{n-1}\geq \cdots\geq \lambda_1$. The least eigenvalue, $\lambda_1$ (which is often denoted by $\tau$), plays an important role in the characterization of independent sets of regular graphs (see Section~\ref{EKR_for_psl}, Section~\ref{sporadic} and Section~\ref{EKR_cyclic_perms}). Since the trace of $A(X)$ is zero, one can observe that the least eigenvalue of any non-empty graph is negative.

 The \txtsl{spectrum} of a graph $X$ is the following array:
\[
\Spec(X) =
\left( {\begin{array}{cccc}
 \lambda_1 & \lambda_2 & \ldots & \lambda_s   \\
 m(\lambda_1) & m(\lambda_2) & \ldots & m(\lambda_s)\\
 \end{array} } \right),
\]
where $\lambda_s>\cdots>\lambda_1$ are the distinct eigenvalues of $X$.
For example, the spectra of the complete graph $K_n$ and the complete bipartite graph $K_{m,n}$ are as follows:
\begin{equation}\label{spec_of_K_n}
\Spec(K_n) =
\left( {\begin{array}{cccc}
 n-1 & -1   \\
 1 & n-1\\
 \end{array} } \right),
\end{equation}
\[
\Spec(K_{m,n}) =
\left( {\begin{array}{cccc}
 \sqrt{mn}& 0 & -\sqrt{mn}   \\
 1 & m+n-2& 1\\
 \end{array} } \right).
\]

For any pair of graphs $X$ and $Y$ with disjoint vertex sets  and disjoint edge sets, the \textsl{disjoint union}\index{disjoint union of graphs} of $X$ and $Y$, denoted by $X\cupdot Y$, is defined to be the graph whose set of vertices is the union of the vertex sets of $X$ and $Y$ and whose edge set is the union of the edge sets of $X$ and $Y$. It is not difficult to see that the spectrum of $X\cupdot Y$ is simply the union of those of $X$ and $Y$.


It is obvious that isomorphic graphs have the same spectrum but the converse is not true in general; for example, the star $K_{1,4}$ and the union of a $4$-cycle and  a single vertex, $C_4\cup K_1$, have the same spectrum. The problem of finding the graphs which are determined by their spectrum is one of the most interesting topics in spectral graph theory. One of the most surprising results in this field was shown by Schwenk in~\cite{MR0384582} which states that ``almost'' no tree is identified by its spectrum. However, the spectrum of a graph contains important information about the graph and, to some extent, describes some  graph theoretical parameters of the graph. In what follows we observe some of the applications of the spectrum.
The following result gives an upper bound for the diameter of  a graph $G$, $\diam(G)$, using $\Spec(G)$. 
\begin{thm}\label{distinct_evals} 
Let $G$ be a connected graph. Then $G$ has at least $\diam(G)+1$ distinct eigenvalues.\qed
\end{thm}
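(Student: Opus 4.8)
The plan is to bound the degree of the minimal polynomial of the adjacency matrix $A=A(G)$ from below by $\diam(G)+1$, and then to convert this into a statement about distinct eigenvalues using the fact that $A$ is symmetric, hence orthogonally diagonalizable by Theorem~\ref{diag}. The bridge I would rely on is the standard combinatorial interpretation of powers of the adjacency matrix: for any two vertices $x,y$ and any integer $\ell\geq 0$, the entry $(A^\ell)_{x,y}$ equals the number of walks of length $\ell$ from $x$ to $y$. In particular $(A^\ell)_{x,y}=0$ whenever $\ell$ is smaller than the distance between $x$ and $y$, while $(A^\ell)_{x,y}>0$ when $\ell$ equals that distance, since a shortest path is itself such a walk.

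First I would set $d=\diam(G)$ and pick two vertices $u,v$ at distance exactly $d$, together with a shortest path $u=w_0,w_1,\dots,w_d=v$; along this path one has $\mathrm{dist}(u,w_i)=i$ for each $i$. I then claim that the matrices $I=A^0,A^1,\dots,A^d$ are linearly independent. To see this, suppose $c_0 I+c_1 A+\cdots+c_d A^d=0$ and read off the $(u,w_i)$ entries. Because $(A^j)_{u,w_i}=0$ for $j<i$ while $(A^i)_{u,w_i}>0$, the $(u,w_d)$ entry of the combination reduces to $c_d (A^d)_{u,w_d}=0$, forcing $c_d=0$; feeding this back, the $(u,w_{d-1})$ entry forces $c_{d-1}=0$, and a downward induction yields $c_d=c_{d-1}=\cdots=c_0=0$. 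Hence $I,A,\dots,A^d$ are linearly independent, so no nonzero polynomial of degree at most $d$ annihilates $A$, and the minimal polynomial of $A$ has degree at least $d+1$.

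Finally I would translate this degree bound into a count of distinct eigenvalues. Since $A$ is real symmetric, Theorem~\ref{diag} guarantees it is diagonalizable, so its minimal polynomial is exactly $\prod_{\lambda}(x-\lambda)$ taken over the distinct eigenvalues $\lambda$ of $A$; its degree therefore equals the number of distinct eigenvalues of $G$. Combining this with the previous paragraph shows that $G$ has at least $d+1=\diam(G)+1$ distinct eigenvalues, as required.

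As for the main obstacle, the step needing the most care is the equality between the degree of the minimal polynomial and the number of distinct eigenvalues: this is what genuinely uses diagonalizability, since for a non-diagonalizable matrix the minimal polynomial can have degree strictly larger than the number of distinct eigenvalues, which would push the estimate in the wrong direction. The symmetry of $A(G)$, via Theorem~\ref{diag}, is precisely what closes this gap. By contrast, the walk-counting input and the triangular structure of the linear-independence argument are routine once the shortest path $w_0,\dots,w_d$ has been fixed.
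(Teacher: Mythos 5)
Your proof is correct and is precisely the standard argument that the paper itself defers to (it gives no proof, citing \cite[Corollary 2.7]{MR1271140}): linear independence of $I,A,\dots,A^{\diam(G)}$ via walk counting along a geodesic, a lower bound on the degree of the minimal polynomial, and diagonalizability of the symmetric matrix $A(G)$ to equate that degree with the number of distinct eigenvalues. Nothing to correct.
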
 
See \cite[Corollary 2.7]{MR1271140} for a proof. Using this, we observe the following fact which will be useful in Section~\ref{EKR_property}.
\begin{prop}\label{graphs_with_2_evalues} Let $G$ be a graph with exactly two distinct eigenvalues. Then
\[
G\cong \underbrace{K_n\cupdot\cdots\cupdot K_n}_{r \text{ times}},
\]
for some $r\geq 1$ and $n\geq 2$.
\end{prop}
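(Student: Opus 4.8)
The plan is to reduce to connected components and classify the possible components. Write $\theta_1>\theta_2$ for the two distinct eigenvalues of $G$. Since the spectrum of a disjoint union is the union of the spectra of its pieces (as noted just after the definition of $\cupdot$), the set of distinct eigenvalues of every connected component of $G$ is a subset of $\{\theta_1,\theta_2\}$, and so has size $1$ or $2$. Thus it suffices to determine all connected graphs whose adjacency matrix has one or two distinct eigenvalues, and then to reassemble $G$ from such pieces while respecting the constraint that only the two values $\theta_1,\theta_2$ may occur.

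First I would dispose of a connected component $H$ with a single distinct eigenvalue $c$. Being real symmetric, $A(H)$ is diagonalizable, so all eigenvalues equal to $c$ forces $A(H)=cI$; since $A(H)$ has zero diagonal (no loops), $c=0$ and $A(H)=0$, whence $H$ is a single vertex $K_1$ (with eigenvalue $0$). Next, for a connected component $H$ with exactly two distinct eigenvalues, Theorem~\ref{distinct_evals} gives $\diam(H)+1\le 2$, so $\diam(H)\le 1$; a connected graph of diameter at most $1$ is complete, hence $H\cong K_n$ with $n\ge 2$ (the value $n=1$ being excluded since $K_1$ has only one eigenvalue). By \eqref{spec_of_K_n} its eigenvalues are $n-1$ and $-1$, so $\{n-1,-1\}\subseteq\{\theta_1,\theta_2\}$, and as these are two distinct values in a two-element set we get $\{\theta_1,\theta_2\}=\{n-1,-1\}$.

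Finally I would reassemble. If every component of $G$ were a $K_1$, then $G$ would have $0$ as its only eigenvalue, contradicting the hypothesis of two distinct eigenvalues; hence at least one component is a complete graph $K_n$ with $n\ge2$. That component fixes $\{\theta_1,\theta_2\}=\{n-1,-1\}$ with $n-1\ge 1$, so in particular $0\notin\{\theta_1,\theta_2\}$, which rules out any $K_1$ component, and every complete component $K_m$ must satisfy $\{m-1,-1\}=\{n-1,-1\}$, forcing $m=n$. Therefore all components are copies of the same $K_n$ with $n\ge 2$, giving $G\cong K_n\cupdot\cdots\cupdot K_n$ for some $r\ge1$, as claimed. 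The only real content is the component analysis via Theorem~\ref{distinct_evals}; the main thing to be careful about is the bookkeeping that forces all nontrivial components to share the same order $n$ and excludes mixing in isolated vertices, together with ruling out the degenerate all-$K_1$ case that would collapse to a single eigenvalue.
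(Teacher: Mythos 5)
Your proof is correct and follows essentially the same route as the paper's: both use Theorem~\ref{distinct_evals} to force every connected component to be complete and then compare with the spectrum of $K_n$ in (\ref{spec_of_K_n}) to conclude all components have the same order. Your version is slightly more meticulous in passing to components explicitly and in ruling out isolated vertices and the all-$K_1$ case, but the underlying argument is identical.
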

\begin{proof} If there are vertices $u,v$ and $w$ in $G$ such that $u\sim v$ and $u\sim w$ but $v$ is not adjacent to $w$, then there will be an induced path of length two in $G$; that is, $\diam(G)\geq 2$ which is a contradiction, according to Theorem~\ref{distinct_evals}. This implies that all of the connected components of $G$ are complete graphs. To complete the proof, it suffices to note that if the sizes of these cliques are not the same, then according to (\ref{spec_of_K_n}), $G$ will have more than two distinct eigenvalues.
\end{proof}

Throughout the text, we denote by $\mathbf{1}$ the column vector all of whose entries are 1; that is, $\mathbf{1}=(1,1,\ldots,1)^\top$, where its length is clear from the context. If $X$ is a $k$-regular graph, then an easy calculation shows that  $A(X)\mathbf{1}=k\mathbf{1}$  which implies that $k$ is an eigenvalue of $X$ and that $\mathbf{1}$ is an eigenvector  of $A$ corresponding to $k$. We can say more:
\begin{thm}[{{see \cite[Chapter 3]{MR1271140}}}]\label{multiplicity}
If $X$ is a $k$-regular graph, then the multiplicity of $k$ as an eigenvalue of $X$ is equal to the number of connected components of $X$. Furthermore, for any eigenvalue $\lambda$ of $X$, we have $|\lambda|\leq k$.\qed
\end{thm}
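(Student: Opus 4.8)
The plan is to prove the two assertions separately: first I would reduce the multiplicity statement to the connected case, and then handle the bound $|\lambda|\le k$ by a direct maximum-modulus argument. If $X$ has connected components $X_1,\dots,X_r$, then $X=X_1\cupdot\cdots\cupdot X_r$ and, as noted just after the definition of disjoint union, $\Spec(X)$ is the union of the spectra of the $X_i$. Hence the multiplicity of $k$ in $X$ is the sum of its multiplicities in the components, and it suffices to show that $k$ is a simple eigenvalue of each connected $k$-regular graph. Since each component is $k$-regular, the remark preceding the theorem already gives $A(X_i)\mathbf{1}=k\mathbf{1}$, so $k$ is an eigenvalue of multiplicity at least one; the real work is to show the multiplicity is exactly one.

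The core step is to prove that any eigenvector $v$ of a connected $k$-regular graph $X$ with eigenvalue $k$ is a scalar multiple of $\mathbf{1}$. Because $A(X)$ is symmetric, the eigenspace for the real eigenvalue $k$ has a real basis, so I may take $v$ real and pick an index $i$ maximizing $v_i$. Reading off the $i$-th coordinate of $A(X)v=kv$ gives $\sum_{j\sim i}v_j=kv_i$, a sum of exactly $k$ terms each at most $v_i$; equality then forces $v_j=v_i$ for every neighbour $j$ of $i$. Connectivity lets me propagate this equality along paths to conclude $v_j=v_i$ for all vertices $j$, so $v$ is constant. Thus the eigenspace for $k$ is spanned by $\mathbf{1}$ and has dimension one, which completes the multiplicity claim.

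Finally, for the bound $|\lambda|\le k$, let $v$ be an eigenvector for an arbitrary eigenvalue $\lambda$ and choose $i$ with $|v_i|$ maximal, so that $|v_i|>0$. Taking absolute values in the $i$-th coordinate of $A(X)v=\lambda v$ and applying the triangle inequality yields
\[
|\lambda|\,|v_i|=\Bigl|\sum_{j\sim i}v_j\Bigr|\le\sum_{j\sim i}|v_j|\le k\,|v_i|,
\]
and dividing through by $|v_i|$ gives $|\lambda|\le k$. I expect the main obstacle to be the connectivity-propagation argument in the multiplicity step: one must argue carefully that, starting from a vertex where the maximum entry is attained, the equality $v_j=v_i$ spreads to the whole connected graph rather than merely to the immediate neighbourhood, which is precisely where the hypothesis of connectedness (as opposed to mere regularity) is used.
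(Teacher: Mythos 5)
The paper does not prove this theorem at all; it is stated with a \verb|\qed| and a citation to Chapter 3 of the referenced text, so there is no in-paper argument to compare against. Your proof is correct and complete: the reduction to connected components via the disjoint-union spectrum, the maximum-entry argument with connectivity propagation showing that the $k$-eigenspace of each connected component is spanned by $\mathbf{1}$, and the triangle-inequality bound $|\lambda|\le k$ at a coordinate of maximal modulus are all sound, and together they constitute the standard textbook proof that the cited reference supplies.
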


We say the spectrum of a graph is \textsl{symmetric about the origin} if for any $\lambda$ in the spectrum, $-\lambda$ is also in the spectrum. For example, the spectrum of $K_{m,n}$ is symmetric about the origin. This is, indeed, true for any bipartite graph. In fact, there is an even stronger result.
\begin{thm}[{{see \cite[Theorem 8.6.9]{MR1367739}}}]\label{symmetric_about_origin} A graph  is bipartite if and only if its spectrum is symmetric about the origin.\qed
\end{thm}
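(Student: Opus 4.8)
The plan is to prove the two implications separately: the forward direction (bipartite $\Rightarrow$ symmetric spectrum) by exhibiting an explicit symmetry of eigenvectors, and the harder converse by a trace-of-powers argument together with a walk-counting interpretation of the entries of powers of the adjacency matrix.

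For the forward direction, suppose $X$ is bipartite with parts $A$ and $B$. Ordering the vertices so that those of $A$ come first, the adjacency matrix takes the block form
\[
A(X)=\begin{pmatrix} 0 & N \\ N^{\top} & 0 \end{pmatrix}.
\]
I would check that if $(u,w)^{\top}$, split according to $A$ and $B$, is an eigenvector for the eigenvalue $\lambda$, i.e. $Nw=\lambda u$ and $N^{\top}u=\lambda w$, then $(u,-w)^{\top}$ satisfies $A(X)(u,-w)^{\top}=(-\lambda u,\lambda w)^{\top}=-\lambda(u,-w)^{\top}$, so it is an eigenvector for $-\lambda$. The map $(u,w)^{\top}\mapsto(u,-w)^{\top}$ is a linear involution carrying the $\lambda$-eigenspace isomorphically onto the $(-\lambda)$-eigenspace, whence $m(\lambda)=m(-\lambda)$ for every $\lambda$; in particular $-\lambda$ lies in the spectrum whenever $\lambda$ does.

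For the converse I want the identity $\Tr(A(X)^{k})=\sum_{i=1}^{n}\lambda_i^{k}$ for every positive integer $k$, where $\lambda_1,\dots,\lambda_n$ are the eigenvalues of $X$ listed with multiplicity. This follows from Theorem~\ref{diag}: writing $A(X)=S^{-1}DS$ with $D$ diagonal, one has $A(X)^{k}=S^{-1}D^{k}S$, and Theorem~\ref{similar} gives $\Tr(A(X)^{k})=\Tr(D^{k})=\sum_i\lambda_i^{k}$. If the spectrum is symmetric about the origin (with multiplicities preserved), then for every odd $k$ the terms $\lambda_i^{k}$ cancel in pairs $\{\lambda,-\lambda\}$, so $\Tr(A(X)^{k})=0$. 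I would then invoke the standard fact that the diagonal entry $(A(X)^{k})_{ii}$ equals the number of closed walks of length $k$ based at $v_i$, so each such entry is a nonnegative integer. Since $\Tr(A(X)^{k})=\sum_i (A(X)^{k})_{ii}$ is a sum of nonnegative integers equal to $0$ for every odd $k$, the graph has no closed walk of odd length. An odd cycle would produce one, so $X$ has no odd cycle, and a graph with no odd cycle is bipartite, completing the converse.

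The main obstacle is the converse, and within it two points deserve care. First, the phrase \emph{symmetric about the origin} must be read as preserving multiplicities, i.e. $m(\lambda)=m(-\lambda)$, since otherwise the cancellation $\sum_i\lambda_i^{k}=0$ can fail; this is the natural reading, given that the spectrum array records multiplicities. Second, one must secure the walk-counting interpretation of $(A(X)^{k})_{ii}$ — an elementary induction on $k$ using $A(X)_{ij}\in\{0,1\}$ — so that the vanishing of all odd-power traces genuinely rules out every odd closed walk, and hence every odd cycle, rather than giving information about only one vertex at a time.
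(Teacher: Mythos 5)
Your proof is correct. The paper itself gives no argument for this theorem: it is stated with a citation to Godsil and Royle and a \qed, so there is no internal proof to compare against. Your two directions are both sound: the block form $A(X)=\left(\begin{smallmatrix}0 & N\\ N^{\top} & 0\end{smallmatrix}\right)$ and the sign-flip involution $(u,w)\mapsto(u,-w)$ correctly carry the $\lambda$-eigenspace onto the $(-\lambda)$-eigenspace and give $m(\lambda)=m(-\lambda)$, and the converse via $\Tr(A(X)^{k})=\sum_i\lambda_i^{k}=0$ for odd $k$, combined with the nonnegativity of the closed-walk counts $(A(X)^{k})_{ii}$, correctly rules out odd closed walks and hence odd cycles. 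You were also right to flag the one genuine subtlety: the paper's prose definition of ``symmetric about the origin'' only asks that $-\lambda$ lie in the spectrum whenever $\lambda$ does, and under that literal reading the cancellation $\sum_i\lambda_i^{k}=0$ can fail; the reading with matched multiplicities, which your forward direction actually delivers and which is consistent with the paper's $\Spec(X)$ array recording multiplicities, is the one under which the theorem is true and is the version proved in the cited source.
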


Next we define the ``morphisms'' of the ``category'' of graphs, i.e. the maps between graphs which  preserve some of the structures of graphs. Let  $X$ and $Y$ be graphs. A \txtsl{graph homomorphism} (or simply a \textsl{homomorphism}) $\phi: X\to Y$ is a map
\[
\phi:V(X)\to V(Y),
\]
such that if  $u$ and $v$ are adjacent in $X$, then $\phi(u)$ and $\phi(v)$ are adjacent
in $Y$. The notions \textsl{monomorphism}, \textsl{epimorphism}, \textsl{isomorphism} and \textsl{automorphism} are, then, defined as usual.
It is easy to observe that two graphs $X$ and $Y$ are isomorphic if and only if there is a permutation matrix $P$ (i.e. a square 01-matrix whose every row and every column has exactly one 1), such that $A(Y)=P^\top A(X)P$. In particular, $A(X)$ and $A(Y)$ are similar and, therefore, have the same spectrum. Note that changing the order of the vertices of $X$ is, indeed, an isomorphism on $X$.

\section{Bounds on the independence number}\label{bounds_on_indy}

For any vertex-transitive graph $X$, we define the \txtsl{fractional chromatic  number} of $X$ to be 
\[
\chi^*(X)=\frac{|V(X)|}{\alpha(X)}.
\]
See \cite{MR1829620} for a general definition of the fractional chromatic number. The following inequality has been proved in \cite{MR1829620}.
\begin{prop}\label{fractional_chrom_ineq} If there is a homomorphism   from  a graph $X$ to a graph $Y$, then $\chi^*(X)\leq \chi^*(Y)$.\qed
\end{prop}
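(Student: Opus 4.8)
The plan is to work with the general linear-programming definition of the fractional chromatic number rather than the formula $\chi^*(X)=|V(X)|/\alpha(X)$, since that formula is only available for vertex-transitive graphs whereas Proposition~\ref{fractional_chrom_ineq} is stated for arbitrary $X$ and $Y$. Recall that a fractional coloring of a graph $Z$ is an assignment of nonnegative weights $w(I)$ to the independent sets $I$ of $Z$ such that $\sum_{I\ni z}w(I)\geq 1$ for every vertex $z$, and that $\chi^*(Z)$ is the minimum of the total weight $\sum_I w(I)$ over all such assignments.

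The single combinatorial fact I would isolate first is that a homomorphism pulls independent sets back to independent sets: if $\phi:X\to Y$ is a homomorphism and $I$ is an independent set of $Y$, then $\phi^{-1}(I)$ is an independent set of $X$. Indeed, if $u\sim v$ in $X$, then $\phi(u)\sim\phi(v)$ in $Y$ by the definition of a homomorphism, so $\phi(u)$ and $\phi(v)$ cannot both lie in the independent set $I$; hence $u$ and $v$ are not both in $\phi^{-1}(I)$.

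Granting this, I would take an optimal fractional coloring $w$ of $Y$, so that $\sum_I w(I)=\chi^*(Y)$, and push it forward to a fractional coloring $w'$ of $X$ by assigning the weight $w(I)$ to the independent set $\phi^{-1}(I)$, adding the contributions of any independent sets of $Y$ that share a preimage. To verify that $w'$ is a valid fractional coloring, fix a vertex $x\in X$; the pullback sets $\phi^{-1}(I)$ that contain $x$ are exactly those with $\phi(x)\in I$, so their total $w'$-weight is $\sum_{I\ni\phi(x)}w(I)\geq 1$, the last inequality holding because $w$ is a fractional coloring of $Y$ and $\phi(x)\in V(Y)$. Finally the total weight of $w'$ is at most $\sum_I w(I)=\chi^*(Y)$, and since $\chi^*(X)$ is by definition the minimum total weight of a fractional coloring of $X$, we obtain $\chi^*(X)\leq\chi^*(Y)$.

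I expect no serious obstacle here: the content lies entirely in the pullback lemma, which is immediate, and the only point requiring a little care is the bookkeeping when several independent sets of $Y$ collapse to the same preimage in $X$ — but since weights are merely added, this can only decrease the total weight and so never damages the inequality. An entirely parallel argument could instead be given using the equivalent definition $\chi^*(Z)=\inf_k \chi_k(Z)/k$ via $k$-fold colorings: composing a $k$-fold coloring of $Y$ with $\phi$ yields a $k$-fold coloring of $X$ using no more colors, so $\chi_k(X)\leq\chi_k(Y)$ for every $k$, and the desired inequality follows by taking the infimum.
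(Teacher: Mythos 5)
Your proof is correct. Note that the paper does not actually prove this proposition: it states it with a \qed and defers to the reference \cite{MR1829620}, so there is no in-paper argument to compare against. Your argument — pulling back an optimal fractional coloring of $Y$ along $\phi$ via the observation that $\phi^{-1}(I)$ is independent whenever $I$ is — is the standard proof and is essentially what the cited source does; the $k$-fold coloring variant you mention at the end is an equally standard equivalent route. One point in your favour worth keeping: you correctly refuse to use the formula $\chi^*(X)=|V(X)|/\alpha(X)$, which the paper only defines for vertex-transitive graphs, even though every application of the proposition in this thesis happens to involve vertex-transitive Cayley graphs where that formula would suffice.
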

This means that if $X$ and $Y$ are vertex-transitive graphs, then the existence of a homomorphism $f:X\longrightarrow Y$, implies that 
\[
\frac{|V(X)|}{\alpha(X)}\leq \frac{|V(Y)|}{\alpha(Y)},
\]
which provides a bound for $\alpha(X)$ or $\alpha(Y)$ provided that the other one is given.

Now we turn our attention to some discussions about the least eigenvalues of graphs and their applications. Recall that an independent set  in a graph is a set of vertices in which no pair of the vertices are adjacent. A \txtsl{clique} in a graph is a set of vertices in which every pair of vertices are adjacent. We first present the well-known \txtsl{ratio bound for independent sets}, which gives an upper bound for the size of independent sets in a regular graph. The main tool for this theorem is the least eigenvalue of the graph.  It is, therefore, of great importance to know what the least eigenvalue is or, to be able to approximate it.  

The ratio bound for independent sets is due to Delsarte who used a linear programming argument to prove the ratio bound in association schemes (see Section 3.2 of \cite{Newman}). There is another proof based on equitable partitions and interlacing which is due to Haemers (see Section 9.6 of \cite{MR1829620}). We will present a proof which  appears in \cite{Newman} and is based on
positive semi-definite matrices.

\begin{thm}[ratio bound for independent sets]\label{ratio2}
Let $X$ be a  $k$-regular graph on $n$ vertices with $\tau$ the least eigenvalue of $X$. For any independent set $S$ we have
\[
|S|\leq \frac{n}{1-\frac{k}{\tau}}.
\]
Furthermore, the equality holds if and only if
\[
A(X)\left(v_S-\frac{|S|}{n}\mathbf{1}\right)=\tau\left(v_S-\frac{|S|}{n}\mathbf{1}\right).
\]
\end{thm}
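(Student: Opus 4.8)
The plan is to exploit the positive semi-definiteness of the shifted adjacency matrix $M = A(X) - \tau I$. Since $\tau$ is the least eigenvalue of $A(X)$, every eigenvalue of $M$ equals some $\lambda_i - \tau \geq 0$, so $M$ is positive semi-definite and Lemma~\ref{sd_matrices} applies to it. The whole argument then amounts to feeding a single well-chosen vector into that lemma and reading off both the bound and its equality case from the two halves of the lemma.

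First I would center the characteristic vector: set $s = |S|$ and let $x = v_S - \frac{s}{n}\mathbf{1}$. A one-line check gives $\mathbf{1}^\top x = s - \frac{s}{n}\cdot n = 0$, so $x$ is orthogonal to $\mathbf{1}$. Since $X$ is $k$-regular, $A(X)\mathbf{1} = k\mathbf{1}$ (as noted before Theorem~\ref{multiplicity}), so this centering is exactly what strips the ``trivial'' eigenvalue $k$ out of the quadratic form and lets the remaining eigenvalues, all bounded below by $\tau$, do the work.

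Next I would evaluate $x^\top M x = x^\top A(X) x - \tau\, x^\top x$ term by term, using two facts: $A(X)\mathbf{1} = k\mathbf{1}$ from $k$-regularity, and $v_S^\top A(X) v_S = 0$ because $S$ is independent (this quantity counts twice the number of edges inside $S$, which is zero). Expanding gives $x^\top A(X) x = -ks^2/n$ and $x^\top x = s - s^2/n$, hence
\[
x^\top M x = -\frac{ks^2}{n} - \tau\left(s - \frac{s^2}{n}\right).
\]
Applying the first part of Lemma~\ref{sd_matrices}, the left-hand side is $\geq 0$; dividing through by $s > 0$ (independent sets here are non-empty by convention) and rearranging yields $s \leq \frac{-n\tau}{k-\tau}$, which is exactly $\frac{n}{1 - k/\tau}$ after clearing the fraction.

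For the equality statement I would invoke the sharp part of Lemma~\ref{sd_matrices}: $x^\top M x = 0$ holds if and only if $Mx = 0$, i.e. $A(X)x = \tau x$, which is precisely the displayed eigenvector condition $A(X)\bigl(v_S - \frac{s}{n}\mathbf{1}\bigr) = \tau\bigl(v_S - \frac{s}{n}\mathbf{1}\bigr)$. Since the only inequality used was $x^\top M x \geq 0$ and the subsequent step (dividing by the positive scalar $s$) preserves equality, the bound is tight exactly when this condition holds. The step demanding the most care is the sign bookkeeping: because $\tau < 0$ (the least eigenvalue of a non-empty graph is negative), one must track the direction of the inequality when rearranging and confirm that $k - \tau > 0$, then verify that the algebraic form $\frac{-n\tau}{k-\tau}$ genuinely coincides with the stated $\frac{n}{1 - k/\tau}$.
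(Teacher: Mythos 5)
Your proposal is correct and follows essentially the same route as the paper: both arguments rest on the positive semi-definiteness of $A(X)-\tau I$, apply Lemma~\ref{sd_matrices} to the centered vector $v_S-\frac{|S|}{n}\mathbf{1}$, and expand the quadratic form using $v_S^\top A v_S=0$ and $v_S^\top A\mathbf{1}=k|S|$. Your treatment of the equality case is if anything slightly cleaner, since observing that tightness of the bound is equivalent to the vanishing of the quadratic form handles both directions at once, whereas the paper verifies the converse by comparing entries of the eigenvector equation.
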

\begin{proof}
Let $A=A(X)$ and for convenience assume $z=v_S$. Since the matrix $A-\tau I$ is positive semi-definite, according to Lemma~\ref{sd_matrices}, for any vector $y$, we have
\[
y^\top(A-\tau I)y\geq 0,
\]
and equality holds if and only if $y$ is an eigenvector associated to $\tau$. Thus for the vector $y=z-\frac{|S|}{n}\mathbf{1}$ we must have
\begin{equation}\label{ratio2_non_neg}
\left(z-\frac{|S|}{n}\mathbf{1}\right)^\top(A-\tau I)\left(z-\frac{|S|}{n}\mathbf{1}\right)\geq 0.
\end{equation}
Since $S$ is an independent set, it is not hard to see that $z^\top Az=0$ and $z^\top A\mathbf{1}=k|S|$. Therefore, by expanding the terms in (\ref{ratio2_non_neg}), we  have
\[
|S|\leq \frac{n}{1-\frac{k}{\tau}},
\]
which completes the proof of the bound.
For the second statement in the theorem, first assume the theorem holds with equality. Then (\ref{ratio2_non_neg}) holds with equality. Since $A-\tau I$ is positive semi-definite, by Lemma~\ref{sd_matrices}, we have 
\[
(A-\tau I)\left(z-\frac{|S|}{n}\mathbf{1}\right)=0;
\]
hence
\begin{equation}\label{ratio2_evector}
A\left(z-\frac{|S|}{n}\mathbf{1}\right)= \tau \left(z-\frac{|S|}{n}\mathbf{1}\right).
\end{equation}
Note this means that vector $z-\frac{|S|}{n}\mathbf{1}$ is an eigenvector associated to $\tau$. For the converse, assume that (\ref{ratio2_evector}) holds. Let $x$ be a vertex of $X$ which is in $S$. Thus $z_x$, the component of $z$ corresponding to $x$, is 1. Therefore 
\[
\left(\tau\left(z-\frac{|S|}{n}\mathbf{1}\right)\right)_x=\tau \left(1-\frac{|S|}{n}\right).
\]
On the other hand, the component of the vector in the left hand side of (\ref{ratio2_evector}) corresponding to $x$ is
\[
\sum_{w\sim x}\left(z-\frac{|S|}{n}\mathbf{1}\right)_w=\sum_{w\sim x}\frac{-|S|}{n}=-\frac{k|S|}{n};
\]
because $w\sim x$ implies that $z_w=0$. Hence
\[
-\frac{k|S|}{n}=\tau \left(1-\frac{|S|}{n}\right),
\]
which is equivalent to
\[
|S|=\frac{n}{1-\frac{k}{\tau}},
\]
and the proof is complete.
\end{proof}

Note that, the second part of Theorem~\ref{ratio2} states that the characteristic vector of any maximum independent set lies in the direct sum of the $k$-eigenspace  and the $\tau$-eigenspace of $A(X)$. Note also that  Theorem~\ref{ratio2} gives  an upper bound for the least eigenvalue of a $k$-regular graph on $n$ vertices; namely
\[
\tau\leq -\frac{k|S|}{n-|S|},
\]
where $S$ is an independent set in the graph. Thus, in order to find a better bound for $\tau$, one should find an independent set of large size. Therefore, the best bound for $\tau$ using this method is obtained when the independence number of the graph is known:
\[
\tau\leq -\frac{k\cdot\alpha(X)}{n-\alpha(X)}.
\]
For certain graphs, it is also possible to establish a lower bound for the least eigenvalue of the graph, in terms of the size of cliques. The proof of the following theorem was originally done by Mike Newman (but has not been published elsewhere). 

\begin{prop}\label{w}
  Let $X$ be a $k$-regular graph and let $\tau$ be the least   eigenvalue of $X$. Assume that there is a collection $\cc$ of   cliques of $X$ of size $w$, such that every edge of $X$ is contained in a fixed number of elements of $\cc$. Then
\[
\tau\geq -\frac{k}{w-1}.
\]
\end{prop}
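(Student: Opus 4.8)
The plan is to exhibit a positive semi-definite matrix built from the clique collection $\cc$ and then read the bound off its spectrum, in the spirit of Lemma~\ref{sd_matrices}. For each clique $C \in \cc$, let $v_C$ denote its characteristic vector as a column vector indexed by $V(X)$, and form
\[
M = \sum_{C \in \cc} v_C v_C^\top .
\]
Each summand is positive semi-definite, since $x^\top(v_C v_C^\top)x = (v_C^\top x)^2 \ge 0$ for every vector $x$; hence $M$ is positive semi-definite and all its eigenvalues are non-negative. The whole argument rests on showing that $M$ is a nonnegative combination of the identity and $A = A(X)$.

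The heart of the proof is to identify $M$ entrywise. The $(i,j)$ entry of $M$ counts the cliques of $\cc$ containing both $i$ and $j$. If $i \neq j$ are non-adjacent, no clique contains both, so the entry is $0$; if $i \sim j$, the entry equals the fixed number $\lambda$ of cliques on the edge $ij$, which is exactly where the hypothesis is used. Thus the off-diagonal part of $M$ is precisely $\lambda A$. The diagonal entry $M_{ii}$ is the number of cliques through $i$, and the key claim is that this is constant in $i$. To establish it I would double-count incidences between edges at $i$ and cliques through $i$: each of the $k$ edges at $i$ lies in $\lambda$ cliques, while each clique through $i$ contributes exactly $w-1$ edges at $i$, so $(w-1)M_{ii} = k\lambda$ and therefore $M_{ii} = k\lambda/(w-1)$ for every $i$. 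Consequently
\[
M = \frac{k\lambda}{w-1}\, I + \lambda A .
\]

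Once $M$ is recognized in this form, the conclusion is immediate. Any eigenvector of $A$ for an eigenvalue $\mu$ is an eigenvector of $M$ for the eigenvalue $\frac{k\lambda}{w-1} + \lambda\mu$, so positive semi-definiteness of $M$ forces $\frac{k\lambda}{w-1} + \lambda\mu \ge 0$ for every eigenvalue $\mu$ of $X$. Taking $\mu = \tau$ and dividing by the positive constant $\lambda$ gives $\tau \ge -k/(w-1)$.

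The step I expect to require the most care is the constant-diagonal computation, because it is the only place where the $k$-regularity of $X$ and the uniform edge-covering hypothesis are used together; everything else is formal once $M$ is written as a nonnegative combination of $I$ and $A$. I would also note in passing that $\lambda > 0$ (otherwise $\cc$ covers no edge and the hypothesis is vacuous), which is what licenses the final division by $\lambda$.
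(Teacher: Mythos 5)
Your proof is correct and is essentially the paper's own argument: your matrix $M=\sum_{C\in\cc}v_Cv_C^\top$ is exactly the Gram matrix $NN^\top$ of the vertex--clique incidence matrix used there, and both proofs identify it as $\frac{k\lambda}{w-1}I+\lambda A(X)$ by the same vertex/edge double count before reading off the eigenvalue bound from positive semi-definiteness.
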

\begin{proof}
Assume that every edge of $X$ is contained exactly in $y$ cliques in   $\cc$. Then every vertex of $X$ is contained exactly in  $\frac{k}{w-1}y$ cliques in $\cc$. Define a $01$-matrix $N$ as follows: the   rows of $N$ are indexed by the vertices of $X$ and the columns are indexed by the members of $\cc$; the entry $N_{(x,C)}$ is 1   if and only if the vertex $x$ is in the clique $C$. We will, therefore, have 
\[
NN^\top=\frac{yk}{w-1}I+yA(X),
\]
where $I$ is the identity matrix and $A(X)$ is the adjacency matrix  of $X$. Thus 
\[
\frac{k}{w-1}I+A(X)=\left(\frac{1}{\sqrt{y}}N\right)\left(\frac{1}{\sqrt{y}}N\right)^\top,
\]
which implies that the matrix 
\[
A(X)-\frac{-k}{w-1}I
\]
is positive semi-definite and, hence 
\[
\tau\geq \frac{-k}{w-1}.\qedhere
\]
\end{proof}

We now define a new graph $X_n$, for $n>3$, which we call the \txtsl{pairs graph}. We will make use of this graph in Chapters \ref{module_method} and \ref{single_CC}.  For any $n>3$, the vertices of $X_n$ are all the ordered pairs $(i,j)$, where $i,j\in[n-1]$ and $i\neq j$; the vertices $(i,j)$ and $(k,l)$ are adjacent in $X_n$ if and only if either $\{i,j\}\cap\{k,l\}=\emptyset$, ($i=l$ and $j\neq k$) or ($i\neq l$ and $j=k$). The graph $X_n$ is regular of valency $(n-2)(n-3)$. Note that the vertices of the pairs graph $X_n$ are the pairs from $[n-1]$; so the notation might seem odd, but this is how the graphs arise in Chapters \ref{module_method} and \ref{single_CC}.

\begin{lem}\label{least_eval_of_X_n}
 For any $n>3$, the least eigenvalue of the pairs graph $X_n$ is at least $-(n-3)$.
\end{lem}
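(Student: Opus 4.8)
The plan is to invoke the clique-based lower bound of Proposition~\ref{w}, so that the entire task reduces to exhibiting a family of cliques of the right size that covers the edges of $X_n$ uniformly. Since $X_n$ is regular of valency $k=(n-2)(n-3)$ and we want $\tau\geq-(n-3)$, Proposition~\ref{w} dictates the clique size: we need $w$ with $-k/(w-1)=-(n-3)$, and $-(n-2)(n-3)/(w-1)$ equals $-(n-3)$ precisely when $w=n-1$. So the target is a collection of $(n-1)$-cliques covering every edge a fixed number of times.

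First I would set up a dictionary, reading each vertex $(i,j)$ of $X_n$ as the directed arc $i\to j$ on the vertex set $[n-1]$. Unwinding the three adjacency clauses shows that two vertices are adjacent exactly when their arcs are vertex-disjoint (clause one) or form a directed path of length two meeting head-to-tail without closing a $2$-cycle (clauses two and three). Under this dictionary a directed Hamiltonian cycle on $[n-1]$ has every pair of consecutive arcs meeting head-to-tail (and, since the cycle has length $n-1\geq 3$, never forming a $2$-cycle) and every pair of non-consecutive arcs vertex-disjoint; hence its $n-1$ arcs form a clique of size $n-1$ in $X_n$. I therefore take $\cc$ to be the collection of all directed Hamiltonian cycles on $[n-1]$.

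It remains to verify uniform covering, which is the real content of the argument. An edge of $X_n$ is an unordered pair of adjacent arcs, and by the dictionary it has exactly one of two shapes: a pair of disjoint arcs, or a directed path of length two. For each shape I would count the Hamiltonian cycles passing through it by a contraction argument: identifying a directed Hamiltonian cycle with a single $(n-1)$-cycle permutation $\sigma$ via $v\mapsto\sigma(v)$, committing to a length-two path collapses three vertices into one super-node (with fixed entry and exit), while committing to two disjoint arcs collapses four vertices into two super-nodes; in both cases the remaining choices amount to a directed Hamiltonian cycle on $n-3$ nodes, of which there are $(n-4)!$. Thus every edge lies in exactly $(n-4)!$ members of $\cc$, so the hypotheses of Proposition~\ref{w} hold with $w=n-1$, and the conclusion $\tau\geq-(n-3)$ follows.

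The delicate step is exactly this last count: one must confirm that the two structurally different edge shapes occur in the \emph{same} number of Hamiltonian cycles, which is where the contraction into super-nodes does its work and makes both tallies collapse to $(n-4)!$. A boundary case worth isolating is $n=4$, where $[n-1]$ has only three points and no pair of disjoint arcs exists, so the covering is uniform vacuously over the single surviving shape (and $(n-4)!=1$ matches the two disjoint triangles that $X_4$ then is).
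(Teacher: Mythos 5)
Your proposal is correct and follows essentially the same route as the paper: both invoke Proposition~\ref{w} with $w=n-1$, take the cliques to be those arising from cyclic permutations of $[n-1]$ (your directed Hamiltonian cycles), and verify that every edge lies in exactly $(n-4)!$ of them. The only difference is cosmetic — you establish the covering count by contracting the committed arcs into super-nodes, whereas the paper counts directly by fixing positions in the cyclic permutation; both tallies collapse to $(n-4)!$ for each of the two edge shapes.
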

\begin{proof}
First note that any cyclic permutation $\alpha=(i_1,\ldots, i_{n-1})$ of $[n-1]$ corresponds to a unique clique of size $n-1$ in $X_n$; namely the clique $C_\alpha$ induced by the vertices $\{(i_1,i_2), (i_2,i_3),\ldots, (i_{n-2},i_{n-1}), (i_{n-1},i_1)\}$. We claim that any edge of $X_n$ is contained in exactly $(n-4)!$ cliques of form $C_\alpha$. Consider the edge $\{(a,b),(c,d)\}$.  If $a\neq d$ and $b=c$, then there are $(n-4)!$ cyclic permutations of form $(a,b,d, -, - ,\cdots, -)$ and this edge is in exactly $(n-4)!$ of the cliques. The case  where $a=d$ and $b \neq c$ is similar. If $\{a,b\}\cap \{c,d\}=\emptyset$ then there are  again $(n-4)!$ cyclic permutation of form $(a,b, -,\cdots,-, c,d, -,\cdots,-)$ (as there are $(n-4)$ ways to assign a position for the  pair $c,d$, and then there are $(n-5)!$ ways to arrange other elements of $\{1,\ldots,n-1\}$ in the remaining spots). Thus the claim is proved. If $\tau$ denotes the least eigenvalue of $X_n$, then we can apply Proposition~\ref{w} to $X_n$ to get
\[
\tau\geq \frac{-k}{w-1}=-\frac{(n-2)(n-3)}{n-2}=-(n-3).\qedhere
\]
\end{proof}

We conclude this section with recalling the well-known \txtsl{clique-coclique bound}; the version we use here was originally proved by Delsarte \cite{MR0384310}. Assume $\mathcal{A}=\{A_0, A_1,\ldots, A_d\}$ is an association scheme on $v$ vertices. Note, then that any pair of the matrices $A_i$ commute; therefore they are ``simultaneously diagonalizable''; that is, they have the same eigenspaces (see \cite[Theorem 1.3.19]{MR832183}). Therefore, we let $\{E_0, E_1,\ldots, E_d\}$ be the set of projections to these common eigenspaces. Note that $E_i$ are idempotents. (For a detailed discussion about association schemes, the reader may refer to
\cite{MR2047311} or \cite{MR882540}.) 

\begin{thm}[clique-coclique bound]\label{clique_coclique_bound} Let  $X$ be the union of some of the graphs in an association scheme $\mathcal{A}$ on $v$ vertices. If $C$ is a clique and $S$ is an independent set in $X$, then
\[
|C||S|\leq v.
\]
If equality holds then
\[
v_C^\top\,E_j\,v_C\,\,v_S^\top\,E_j\,v_S = 0,\quad \text{for all} \quad j > 0.\qed
\]
\end{thm}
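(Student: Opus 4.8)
The plan is to derive the whole statement—bound and equality case together—from a single exact identity relating the two natural orthogonal bases of the algebra $\mathcal{A}$: the relation matrices $A_0=I,A_1,\dots,A_d$ and the idempotents $E_0,\dots,E_d$. Throughout I would write $\langle M,N\rangle=\Tr(M^\top N)$ for the trace inner product on matrices and set $c_j=v_C^\top E_j v_C$ and $s_j=v_S^\top E_j v_S$. Since each $E_j$ is a symmetric idempotent, $c_j=\|E_j v_C\|^2\ge 0$ and $s_j=\|E_j v_S\|^2\ge 0$, and $m_j=\Tr(E_j)>0$.

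First I would record the two facts that encode ``$C$ is a clique'' and ``$S$ is a coclique'' in the language of the scheme. Because every pair of distinct vertices of $C$ lies in one of the relations making up $X$, the quantity $v_C^\top A_i v_C$ (which counts ordered pairs of $C$ in relation $i$) vanishes for every $A_i$ that is neither $I$ nor a summand of $X$. Dually, because no two vertices of $S$ lie in a relation making up $X$, $v_S^\top A_i v_S=0$ for every $A_i$ that is a summand of $X$. Hence, when $v_Cv_C^\top$ and $v_Sv_S^\top$ are orthogonally projected onto the span of the $A_i$ and expanded in that basis, their supports overlap only in the term $A_0=I$.

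The key step is the identity
\[
\sum_{j=0}^{d}\frac{c_j s_j}{m_j}=\frac{|C|\,|S|}{v}.
\]
To prove it I would evaluate $\langle \Pi(v_Cv_C^\top),\Pi(v_Sv_S^\top)\rangle$ in two ways, where $\Pi$ is the orthogonal projection onto the algebra. Using the orthogonal basis $\{E_j\}$, together with $\langle E_j,E_j\rangle=m_j$ and $\langle v_Cv_C^\top,E_j\rangle=c_j$, gives the left-hand side. Using instead the orthogonal basis $\{A_i\}$ and the support computation above, every contribution except the one from $A_0=I$ drops out; since $\langle v_Cv_C^\top,I\rangle=|C|$, $\langle v_Sv_S^\top,I\rangle=|S|$ and $\langle I,I\rangle=v$, the surviving term is exactly $|C||S|/v$, which is the right-hand side.

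Once the identity is in hand the theorem is immediate. The $j=0$ summand equals $\tfrac{1}{v^2}|C|^2|S|^2$, because $E_0=\tfrac1v J$ forces $c_0=|C|^2/v$, $s_0=|S|^2/v$ and $m_0=1$; discarding the nonnegative summands with $j\ge 1$ yields $\tfrac{1}{v^2}|C|^2|S|^2\le |C||S|/v$, that is $|C||S|\le v$. Moreover equality holds precisely when every discarded summand vanishes, i.e. $c_j s_j=0$ for all $j>0$, which is the stated orthogonality condition. I expect the main obstacle to be the identity itself: one must recognise that the clique and coclique hypotheses are exactly the assertions that the algebra-projections of $v_Cv_C^\top$ and $v_Sv_S^\top$ have complementary support away from $A_0=I$, and one must track the normalising constants $\langle A_i,A_i\rangle$ and $m_j$ carefully so that the two evaluations of $\langle \Pi(v_Cv_C^\top),\Pi(v_Sv_S^\top)\rangle$ genuinely agree.
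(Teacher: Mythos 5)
Your proof is correct: the identity $\sum_j c_j s_j/m_j = |C||S|/v$, obtained by computing the trace inner product of the projections of $v_Cv_C^\top$ and $v_Sv_S^\top$ onto the Bose--Mesner algebra in the two orthogonal bases $\{A_i\}$ and $\{E_j\}$, together with $E_0=\frac1v J$ and the nonnegativity $c_j=\|E_jv_C\|^2$, $s_j=\|E_jv_S\|^2$, yields both the bound and the equality condition exactly as you describe. The paper itself omits the proof and defers to the cited reference, where the argument is essentially this same Delsarte-style projection computation, so your route matches the intended one.
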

Note also that هب the clique-coclique bound holds with equality, then any independent set of maximum size intersects with any clique of maximum size. We refer the reader to \cite{Karen} for a proof of Theorem~\ref{clique_coclique_bound}. We will also make use of the following straight-forward corollary of this result that was also proved in \cite{Karen}.

\begin{cor}\label{clique_vs_coclique}
Let $X$ be a union of graphs in an association scheme such that the clique-coclique bound holds with equality in $X$.  Assume   that $C$ is a maximum clique and $S$ is a maximum independent set in  $X$. Then, for $j > 0$, at most one of the vectors $E_jv_C$ and  $E_jv_S$ is not zero.\qed
\end{cor}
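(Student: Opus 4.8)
The plan is to read this corollary directly off the equality case of Theorem~\ref{clique_coclique_bound}, the only extra ingredient being that each projection $E_j$ is a \emph{symmetric} idempotent. First I would observe that, since the clique-coclique bound holds with equality in $X$, a maximum clique $C$ and a maximum independent set $S$ satisfy $|C|\,|S| = v$. Thus the hypothesis of the ``if equality holds'' clause of Theorem~\ref{clique_coclique_bound} is met, and we obtain
\[
v_C^\top E_j v_C \cdot v_S^\top E_j v_S = 0 \qquad \text{for all } j > 0.
\]

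The key step is then to rewrite each of the two factors as a squared norm. Because the matrices $A_0,\dots,A_d$ of the scheme are real symmetric, they are orthogonally diagonalizable by Theorem~\ref{diag}, so the projections $E_j$ onto their common eigenspaces are \emph{orthogonal} projections; in particular $E_j = E_j^\top = E_j^2$. Consequently, for any real vector $v$,
\[
v^\top E_j v = v^\top E_j^\top E_j v = (E_j v)^\top (E_j v) = \|E_j v\|^2.
\]
Applying this identity to $v_C$ and to $v_S$ converts the displayed product into $\|E_j v_C\|^2 \, \|E_j v_S\|^2 = 0$.

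Finally, since a product of two non-negative real numbers vanishes only when one of them does, for each fixed $j > 0$ at least one of $\|E_j v_C\|$ and $\|E_j v_S\|$ is zero; that is, at least one of $E_j v_C$ and $E_j v_S$ is the zero vector. Equivalently, at most one of them is nonzero, which is precisely the assertion. There is really no hard step here: the corollary is an immediate unwinding of Theorem~\ref{clique_coclique_bound}. The only point that requires a moment's care is justifying that the $E_j$ are symmetric, and not merely idempotent as stated in the setup; this is exactly what lets me pass from $v^\top E_j v$ to $\|E_j v\|^2$, and it follows because the scheme's adjacency matrices are real symmetric and hence their eigenspace projections are orthogonal.
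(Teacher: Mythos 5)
Your proof is correct and is exactly the intended argument: the paper itself omits the proof (citing \cite{Karen} and calling it straightforward), and the standard derivation is precisely the one you give, namely invoking the equality clause of Theorem~\ref{clique_coclique_bound} and using that each $E_j$ is a symmetric idempotent so that $v^\top E_j v = \|E_j v\|^2 \ge 0$, whence the vanishing product forces one factor, and hence one of $E_j v_C$, $E_j v_S$, to be zero. No gaps; your care in justifying the symmetry of the $E_j$ (as orthogonal projections onto the common eigenspaces of real symmetric commuting matrices) is exactly the one point worth making explicit.
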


\chapter{Representation Theory}\label{rep. theory}
This chapter is a brief introduction to representation  theory of finite groups, especially that of the symmetric group. Part of the results of this chapter, indeed, form a basis for the work in the next chapter where the eigenvalues of normal Cayley graphs are described using the representations of the underlying groups. For more details on representation theory of groups, we refer the reader to \cite{MR1153249} or \cite{MR1824028}. 

This chapter includes five sections. The first section contains the basic definitions and facts from representation theory of groups. In Section~\ref{rep_sym} we describe all the irreducible representations of the symmetric group. In Section~\ref{MNRule_and_hook_length_formula}, we recall  the ``Murnaghan-Nakayama rule'' and the ``hook-length formula''. Section~\ref{rep_alt} provides some facts about how to derive the irreducible representations of the alternating group from those of the symmetric group. The final section is devoted to the new concept of ``two-layer hooks'', the results of which will be of great importance in Section~\ref{EKR_for_alt}.

\section{Definitions and basic properties}\label{rep_basic}
We start with the definition of a representation. A \txtsl{representation} of a group  $G$ on a finite dimensional complex vector space $V$ is a homomorphism $\mathcal{X}: G\rightarrow GL(V)$ of $G$ to the group of automorphisms of $V$. Often $V$ itself is called the representation. The \txtsl{dimension} of $\mathcal{X}$ is defined to be the dimension of $V$. If there is a representation $\mathcal{X}:G\to GL(V)$, then $V$ has the structure of a $G$-\txtsl{module}.  Note also that, sometimes we may denote the automorphism $\mathcal{X}(g)$ simply by $g$, for every $g\in G$; that is, for a $g\in G$ and a vector $v\in V$, we may write $g\cdot v$ or $gv$ instead of $\mathcal{X}(g)(v)$.

Let $E$ be the identity element of $GL(V)$; then the \textsl{kernel}\index{kernel of a representation} of $\mathcal{X}$, denoted by $\ker (\mathcal{X})$, is defined as
\[
\ker (\mathcal{X}) = \{g\in G\,|\,\mathcal{X}(g)=E\}.
\]
It is not hard to see that this is a normal subgroup of $G$. In fact, representation theory provides a powerful machinery to prove the existence of normal subgroups (see \cite[Sections 15.3, 16.1, 17.9 and 45.1]{huppert1998character} for some examples of this approach). We say $\mathcal{X}$ is \textsl{faithful}\index{faithful representation} if $\ker (\mathcal{X}) =\{\id\}$.

The action of a group on a set is closely related to the concept of group representations. Roughly speaking, if one removes the vector space structure of $V$, in the definition of representation, then there will be an action of $G$ on the set $V$. On the other hand, if the group $G$ acts on the set $S$, then $\mathbb{C}[S]$, is a $G$-module; that is, we have found the representation $\mathcal{X}:G\to GL(V)=GL(\mathbb{C}[S])$ by setting $\mathcal{X}(g)(s)=g\cdot s$, for all $g\in G$ and all $s$ in the basis $S$, and then $\mathbb{C}$-linearly extending it. This representation is referred to as the \textsl{permutation representation}\index{representation!permutation} of the group $G$ with respect to this action. This representation is of dimension $|S|$.

\begin{example}
Consider the trivial action of a group $G$ on a singleton $S=\{s\}$; that is, the action defined by $g\cdot s=s$, for all $g\in G$. Then the 1-dimensional permutation representation of $G$ with respect to this action is called the \textsl{trivial representation}\index{representation!trivial} of $G$ and is denoted by $\ID_G$ or simply $\ID$.
\end{example}

\begin{example}
Consider the (left) multiplication action of $G$ on itself; that is, the action defined by $g\cdot h=gh$, for all $g,h\in G$. Then the $|G|$-dimensional permutation representation of $G$ with respect to this action is called the \textsl{(left-) regular representation}\index{representation!regular}.
\end{example}

\begin{example}
More generally, let $H$ be a subgroup of $G$ of index $m$, and consider the (left) coset action of $G$ on the set of all cosets $S=\{g_1H,\ldots,g_mH\}$; that is, the action defined by $g\cdot(g_iH)=(gg_i)H$. Then the $m$-dimensional permutation representation of $G$ with respect to this action is called \textsl{(left) coset representation}\index{representation!coset} of $G$ with respect to $H$.
\end{example}

\begin{example}
Let $G\leq \sym(n)$ be a permutation group. Consider the defining action of the group $G$ on the set $S=[n]$; that is, the action defined by $\sigma\cdot i=\sigma(i)$, for all $\sigma \in G$  and all $i\in S$. Then the $n$-dimensional permutation representation of $G$ with respect to this action is called the \textsl{defining representation}\index{representation!defining} of $G$.
\end{example}
In this thesis, we will also use the term ``natural action'' instead of ``defining action''. 

Let $G$ be a group and assume $V$ and $W$ are two representations of $G$. A $G$-homomorphism\index{representation homomorphism} (or $G$-map) $\phi$ from the representation $V$ to the representation $W$ is the vector space map (linear map) $\phi: V\to W$ such that
\[
g\cdot\phi(v)=\phi(g\cdot v),\quad\quad\text{for all} \,\, g\in G \,\,\text{and} \,\, v\in V;
\]
that is, the following diagram is commutative:
\begin{displaymath}
    \xymatrix{
        V \ar[r]^\phi \ar[d]_g & W \ar[d]^{g} \\
        V \ar[r]_\phi       & W }
\end{displaymath}
The terms $G$-monomorphism, $G$-epimorphism and $G$-isomorphism are, then, defined in the natural way. If two representations are $G$-isomorphic, we say that they are \textsl{equivalent}\index{equivalent representations}. If $\phi:V\to W$ is a $G$-homomorphism, it is not hard to see that $\ker(\phi)$ and $\im(\phi)$ are $G$-modules. For the following definition we declare that a  subset $S$ of a $G$-module $V$ is said to be \txtsl{invariant} under $G$ if $g\cdot s\in S$ for every $g\in G$ and $s\in S$. A \txtsl{subrepresentation} of a representation (so a $G$-module) $V$ is a vector subspace of $V$ which is invariant under $G$. For example, if $\phi:V\to W$ is a $G$-homomorphism, then $\ker(\phi)$ is a subrepresentation of $V$ and Im$(\phi)$ is a subrepresentation of $W$.

A representation $V$ of a group $G$ is \textsl{irreducible}\index{representation!irreducible} if $V$ has no proper nonzero subrepresentations.  Clearly the trivial representation of any group is irreducible since it has dimension $1$. If $n>1$, then the defining representation of $\sym(n)$ is not irreducible since the 1-dimensional subspace $W$ of $\,\mathbb{C}\{1,\ldots,n\}$ generated by the element $1+\cdots+n$ is $\sym(n)$-invariant.

There are various ways to construct new representations of a group using given representations. For instance, if $V$ and $W$ are representations of $G$, then the \txtsl{direct sum} $V\oplus W$ is a representation via
\[
g(v+w)=gv+gw,\quad\quad \text{for all}\,\, g\in G\,\,\text{and all} \,\, v\in V,\, w\in W,
\]
and the \textsl{tensor product}\index{tensor product of representations} $V\otimes W$ is another representation via
\[
g(v\otimes w)=gv\otimes gw,\quad\quad \text{for all}\,\, g\in G\,\,\text{and all} \,\, v\in V,\, w\in W.
\]
In particular, for a given representation $V$, the summation $\bigoplus_{i=1}^n V$ and the tensor power $V^{\otimes n}$ are also representations for each integer $n\geq 1$.

The irreducible representations, having no smaller representations inside them, roughly speaking, turn out to be the atomic objects in the category of representations of a group. This tempts one to prove that for a given group, all the representations can be written as a direct sum of the irreducible representations. This is, indeed, the content of the \txtsl{complete reducibility  theorem} (also called \textsl{semisimplicity theorem}) due to Maschke. To prove Maschke's theorem, we use the following lemma.
\begin{lem}\label{complement}
If $W$ is a subrepresentation of a representation $V$ for the group $G$, then there is a subrepresentation $W'$ of $V$ such that $V=W\oplus W'$.
\end{lem}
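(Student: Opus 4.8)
The plan is to construct a $G$-invariant complement to $W$ by the averaging trick (the standard route to Maschke's theorem). Since $V$ is a finite-dimensional complex vector space, I first pick \emph{any} vector-space complement, so that $V=W\oplus W_0$ as plain vector spaces, and let $\pi_0:V\to V$ be the linear projection onto $W$ along $W_0$; thus $\im(\pi_0)=W$ and $\pi_0$ restricts to the identity on $W$. The map $\pi_0$ is generally \emph{not} a $G$-homomorphism, because $W_0$ need not be invariant under $G$. The remedy is to symmetrize it over the group by defining
\[
\pi=\frac{1}{|G|}\sum_{g\in G} g\,\pi_0\,g^{-1},
\]
which makes sense precisely because $G$ is finite and we work over $\mathbb{C}$, so $|G|$ is invertible.

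Next I would verify three properties of $\pi$. First, $\pi$ maps $V$ into $W$: for each $g$, $\pi_0$ lands in $W$ and then $g$ preserves $W$ since $W$ is a subrepresentation, so every summand sends $V$ into $W$. Second, $\pi$ acts as the identity on $W$: if $w\in W$ then $g^{-1}w\in W$, hence $\pi_0(g^{-1}w)=g^{-1}w$ and $g\pi_0 g^{-1}w=w$ for every $g$, so averaging gives $\pi(w)=w$. These two facts together show $\pi$ is idempotent with $\im(\pi)=W$, so as a linear map it already yields a direct-sum decomposition $V=\im(\pi)\oplus\ker(\pi)=W\oplus\ker(\pi)$.

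Third, and this is the crux, I would show $\pi$ is a $G$-homomorphism. For $h\in G$ one computes $h\,\pi\,h^{-1}=\frac{1}{|G|}\sum_{g\in G}(hg)\pi_0(hg)^{-1}$, and reindexing the sum by $g'=hg$ (a bijection of $G$) shows this equals $\pi$; hence $h\pi=\pi h$ for all $h$, i.e. $\pi$ commutes with the action and is a $G$-map. Then $W'=\ker(\pi)$ is a subrepresentation of $V$, since the kernel of a $G$-homomorphism is a $G$-module, as already noted in this section. Combined with the direct-sum decomposition above, this gives $V=W\oplus W'$ with $W'$ invariant, completing the proof.

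The genuinely delicate step is the $G$-equivariance of $\pi$: the reindexing argument is short but is where the averaging pays off, and it is also where finiteness of $G$ and the characteristic-zero hypothesis (invertibility of $|G|$) are essential. Everything else is routine linear algebra about idempotent projections, and the fact that $\ker(\pi)$ is a subrepresentation is handed to us by the earlier observation that kernels of $G$-homomorphisms are $G$-modules.
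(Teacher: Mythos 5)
Your proof is correct and follows essentially the same route as the paper: both average an arbitrary linear projection onto $W$ over the group to obtain a $G$-equivariant idempotent $\pi$ with image $W$, and take $W'=\ker(\pi)$. The only cosmetic differences are that the paper obtains its initial complement from an inner product (any complement works, as you note) and asserts the $G$-equivariance of $\pi$ without proof, whereas you spell out the reindexing argument.
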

\begin{proof}
Since $V$ is finite dimensional, there is an inner product on $V$. Fix an inner product on $V$ and suppose $U$ is the complementary subspace to $W$ under this product; that is, $U=W^\perp$. Assume, also, that $\pi_0:V\to W$ is the projection given by the direct decomposition $V=W\oplus U$. Define the linear map $\pi: V\to W$ by
\[
\pi(v)=\frac{1}{|G|}\sum_{g\in G}g\cdot \pi_0(g^{-1}\cdot v),\quad\quad\text{for all}\,\, v\in V.
\]
It is easy to see that $\pi$ is a $G$-epimorphism. On the other hand, since $W$ is $G$-invariant, for all $w\in W$, we have
\[
\pi(w)=\frac{1}{|G|}\sum_{g\in G}g\cdot \pi_0(g^{-1}\cdot w)=\frac{1}{|G|}\sum_{g\in G}g\cdot g^{-1}\cdot w=\frac{1}{|G|}\sum_{g\in G}w=w.
\]
Therefore, $\pi$ is the identity on $W$ and satisfies $\pi^2=\pi$; that is, $\pi$ is a projection of $V$ on $W$.
Let $W'=\ker(\pi)$. We show that $V=W\oplus W'$. First, assume $w\in W\cap W'$. Then $w=\pi(w)$ and $\pi(w)=0$; which shows that $W\cap W'=0$. Furthermore, if $v\in V$, then one can write $v= \pi(v)+ \left(v-\pi(v)\right)$, where $\pi(v)\in W$ and since $\pi(v-\pi(v))=\pi(v)-\pi(\pi(v))=\pi(v)-\pi(v)=0$, we have that $\left(v-\pi(v)\right)\in W'$. The conclusion is that $V=W\oplus W'$.
\end{proof}

\begin{thm}[Maschke's Theorem]\label{maschke}
Any representation of a group is a direct sum of irreducible representations.
\end{thm}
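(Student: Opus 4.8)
The plan is to prove Maschke's theorem by induction on $\dim V$, where $V$ is the given (finite-dimensional, complex) representation. All the analytic work—the averaging over $G$ that produces a $G$-invariant projection—has already been packaged into Lemma~\ref{complement}, so the argument here is purely structural.

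For the base case, if $\dim V \leq 1$ then $V$ has no proper nonzero subspaces at all, so it is vacuously irreducible (or zero) and the claim holds trivially. For the inductive step, assume the result for all representations of dimension strictly less than $\dim V$. If $V$ itself is irreducible, there is nothing to prove. Otherwise, by definition of irreducibility, $V$ has a proper nonzero subrepresentation $W$. The key step is to invoke Lemma~\ref{complement}: since $W$ is a subrepresentation of $V$, there exists a subrepresentation $W'$ with
\[
V = W \oplus W'.
\]
Because $W$ is both proper and nonzero, both $W$ and $W'$ have dimension strictly between $0$ and $\dim V$; in particular each has dimension smaller than $\dim V$.

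I would then apply the inductive hypothesis separately to $W$ and to $W'$, writing each as a direct sum of irreducible subrepresentations. Concatenating these two decompositions and using the associativity of $\oplus$ exhibits $V$ as a direct sum of irreducible representations, completing the induction.

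I do not expect a genuine obstacle in this step, precisely because Lemma~\ref{complement} supplies the complemented subrepresentation that makes the induction go through; the heart of the matter (Weyl's averaging trick, which fails in positive characteristic) lives entirely in that lemma. The only point deserving care is bookkeeping in the induction: one must confirm that the chosen $W$ is a \emph{proper} nonzero subrepresentation so that both summands strictly drop in dimension, guaranteeing that the recursion terminates. This is exactly what the definition of ``not irreducible'' furnishes.
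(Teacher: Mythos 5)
Your proof is correct and follows essentially the same route as the paper: induction on $\dim V$, with Lemma~\ref{complement} supplying the complementary subrepresentation $W'$ so that the inductive hypothesis applies to both $W$ and $W'$. The only (harmless) difference is that you make the termination bookkeeping explicit, which the paper leaves implicit.
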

\begin{proof}
Let $V$ be a representation. We prove the theorem by induction on  $d$, the dimension of $V$. If $d=1$, then $V$ itself is irreducible and we are done. Now assume $d>1$. If $V$ is irreducible, then there is nothing to prove. Suppose, therefore, that $V$ is not irreducible, and let $W$ be a proper nontrivial subrepresentation of $V$. Using Lemma~\ref{complement}, there is a subrepresentation $W'$ of $V$ such that $V=W\oplus W'$. Since the dimensions of $W$ and $W'$ are less than $d$, the induction hypothesis applies for them and, therefore, the proof is complete.
\end{proof}

Let $\mathcal{X}^{reg}$ be the regular representation of a group $G$. By Maschke's Theorem, we can write 
\begin{equation}\label{all_irrs}
\mathcal{X}^{reg}=\bigoplus_{i} m_i V_i,
\end{equation}
where $V_i$ are all pairwise inequivalent irreducible representations and for any $i$, $m_i$ is the number of repetitions of  the representation $V_i$ (we will see in Example~\ref{some_characters} that $m_i$ is in fact $\dim V_i$). In fact, $V_i$ are all the irreducible representations of $G$; that is, all the irreducible representations of $G$ occur in the decomposition (\ref{all_irrs}); see \cite[Section 1.10]{MR1824028}. Furthermore 
\begin{thm}\label{sum_of_dim_of_irrs} Let $\{V_i\}$ be the set of all irreducible representations of a group $G$. Then
\[
\sum_i(\dim V_i)^2=|G|.\qed
\]
\end{thm}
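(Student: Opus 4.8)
The plan is to compute the dimension of the regular representation in two different ways and equate the results. Write $\mathcal{X}^{reg}$ for the (left-)regular representation of $G$, which acts on the $|G|$-dimensional space $\mathbb{C}[G]$. By construction its dimension is exactly $|G|$, so the whole theorem reduces to evaluating $\dim \mathcal{X}^{reg}$ through its decomposition into irreducibles.

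First I would invoke Maschke's Theorem~\ref{maschke} together with the decomposition (\ref{all_irrs}), writing $\mathcal{X}^{reg}=\bigoplus_i m_i V_i$, where $\{V_i\}$ ranges over all pairwise inequivalent irreducible representations of $G$ and $m_i$ is the multiplicity of $V_i$. Since dimension is additive over direct sums, taking dimensions of both sides gives
\[
|G| = \dim \mathcal{X}^{reg} = \sum_i m_i \dim V_i.
\]
It then remains only to identify each multiplicity $m_i$ with $\dim V_i$; substituting this into the display yields $\sum_i (\dim V_i)^2 = |G|$, as required.

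The crux is therefore the identity $m_i = \dim V_i$, which is precisely the fact recorded just after (\ref{all_irrs}). To establish it I would pass to characters. The character $\chi^{reg}$ of the regular representation is easy to read off from the action of $G$ on itself by left multiplication: for $g \neq \id$ the equation $gh = h$ forces $g = \id$, so $g$ fixes no basis vector and the permutation matrix $\mathcal{X}^{reg}(g)$ has zero trace, whereas $\mathcal{X}^{reg}(\id)$ is the identity of size $|G|$. Hence $\chi^{reg}(\id) = |G|$ and $\chi^{reg}(g) = 0$ for every $g \neq \id$. Applying the standard formula expressing the multiplicity of an irreducible constituent as an inner product,
\[
m_i = \frac{1}{|G|}\sum_{g\in G}\chi^{reg}(g)\,\overline{\chi_i(g)},
\]
only the identity term survives, and we obtain $m_i = \frac{1}{|G|}\cdot |G| \cdot \overline{\chi_i(\id)} = \chi_i(\id) = \dim V_i$.

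The main obstacle is exactly this last computation: it relies on character theory, namely the definition of the character of a representation, its additivity over direct sums, and the orthogonality relations that underpin the multiplicity formula. Within the logic of the thesis these tools are developed alongside Example~\ref{some_characters}, so the honest summary is that Theorem~\ref{sum_of_dim_of_irrs} is an immediate dimension count once $m_i = \dim V_i$ is in hand, and all the genuine content lies in establishing that multiplicity identity through the regular character.
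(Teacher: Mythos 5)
Your proof is correct and is exactly the argument the thesis itself assembles: the paper states Theorem~\ref{sum_of_dim_of_irrs} without proof, but its surrounding development --- the decomposition (\ref{all_irrs}) of the regular representation and the computation $m_i=\chi_i(\id)=\dim V_i$ in Example~\ref{some_characters} via the regular character and Theorem~\ref{orth_chars} --- is precisely your dimension count. Nothing is missing; your identification of $m_i=\dim V_i$ as the only genuine content is accurate.
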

Throughout this thesis, we denote the set of all irreducible representations of a group $G$ by $\irr(G)$.

The following theorem, which is known as Schur's Lemma for representations, states that the ring of all $G$-homomorphisms between irreducible representations is, in fact, a division ring.

\begin{thm}[{{Schur's Lemma}}]\label{schur}
Let $V$ and $W$ be irreducible representations of $G$ and let $\phi: V\to W$ be a $G$-homomorphism. Then
\\
{\bf (a)} $\phi$ is either zero or an isomorphism.
\\
{\bf (b)} If $V\cong W$, then $\phi=\lambda I$, for some $\lambda\in\mathbb{C}$, where $I$ is the identity map.
\end{thm}
\begin{proof}
To prove part (a), it is enough to note that $\ker(\phi)$ and Im$(\phi)$ are subrepresentations of $V$ and $W$, respectively. For part (b), since $\mathbb{C}$ is algebraically closed, the operator $\phi$ (or, equivalently the matrix corresponding to $\phi$ with respect to a basis of $V$) must have an eigenvalue $\lambda$ in $\mathbb{C}$. This implies that the map $\phi-\lambda I$ has a nonzero kernel. Therefore, by part (a), this map must be zero. Thus $\phi=\lambda I$.
\end{proof}

\begin{example}
In this example we show that, if $G$ is an abelian group, then all the irreducible representations of $G$ are 1-dimensional. To prove this, let $V$ be an irreducible representation of $G$. For each $g\in G$, consider the operator $g: V\to V$. Since $G$ is abelian, for all $h\in G$ we will have
$$g(h(v))=gh(v)=hg(v)=h(g(v)),\quad\quad\text{for all}\,\,v\in V,$$
so that $g$ is a $G$-isomorphism. Therefore, by Theorem~\ref{schur}, the map $g$ acts on $V$ by a scalar multiplication. This implies that all the subspaces of $V$ are $G$-invariant. But the assumption is that $V$ is irreducible; thus $V$ must be 1-dimensional.
\end{example}

Now we define one of the most important concepts of representation theory, namely the  characters.
Let $\mathcal{X}: G\to GL(V)$ be a representation of the group $G$. Then the \txtsl{character} $\chi^V$ (or simply $\chi$) of $\mathcal{X}$ is the complex-valued function on $G$ which maps every $g\in G$ to the trace of a matrix representation of $\mathcal{X}(g)$.

The characters of irreducible representations are, usually, called \textsl{irreducible characters}\index{character!irreducible}. Note that for any representation $V$ we have $\chi^V(\id)=\dim V$.
Note, also, that by the definition, $\chi(hgh^{-1})=\chi(g)$, for all $g,h\in G$. We deduce that the characters are \txtsl{class functions}; that is, they are constant on the conjugacy classes of groups. Furthermore, if two representations of a group $G$ are $G$-isomorphic, then their characters are the same. Surprisingly, the converse is also true. In order to prove this result, we first recall the notion of inner product of the characters. Let $\chi$ and $\psi$ be two characters of a group $G$. We define their \textsl{inner product}\index{inner product of characters} as 
\begin{equation}\label{inner_prod}
\left\langle \chi , \psi\right\rangle = \frac{1}{|G|} \sum_{g\in G} \chi(g) \psi(g^{-1}).
\end{equation}
The following fact shows that the irreducible characters are orthogonal to each other with respect to this inner product.
\begin{thm}[{{see \cite[Section 2.2]{MR1153249}}}]\label{orth_chars} Let $\chi$ and $\psi$ be all of the irreducible characters of a group. Then 
\[
\left\langle \chi , \psi\right\rangle =\begin{cases}
1, & \text{if}\quad \chi=\psi;\\
0, & \text{if} \quad\chi\neq\psi.\qed
\end{cases}\]
\end{thm}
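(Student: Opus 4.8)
The plan is to derive the character relations from Schur's Lemma by means of the same averaging construction already used in the proof of Lemma~\ref{complement}. Fix matrix representations $\rho\colon G\to GL(V)$ and $\sigma\colon G\to GL(W)$ affording $\chi$ and $\psi$, with $\dim V=n$ and $\dim W=m$. For an arbitrary linear map $h\colon W\to V$ (an $n\times m$ matrix) I would form the averaged map
\[
h^0=\frac{1}{|G|}\sum_{g\in G}\rho(g)\,h\,\sigma(g^{-1}).
\]
A one-line reindexing of the sum, identical to the computation showing that $\pi$ is a $G$-map in the proof of Lemma~\ref{complement}, shows that $\rho(s)h^0=h^0\sigma(s)$ for all $s\in G$; that is, $h^0$ is a $G$-homomorphism from $W$ to $V$.

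Next I would split into the two cases governed by Theorem~\ref{schur}. If $V$ and $W$ are inequivalent, part~(a) forces $h^0=0$ for every $h$; letting $h$ range over the matrix units $E_{bd}$ and reading off the entry $(a,c)$ yields the entrywise relations $\frac{1}{|G|}\sum_{g}\rho(g)_{ab}\,\sigma(g^{-1})_{dc}=0$ for all choices of indices. If instead $V=W$ and $\rho=\sigma$, part~(b) gives $h^0=\lambda(h)I$; comparing traces and using $\Tr(h^0)=\frac{1}{|G|}\sum_g\Tr(h)=\Tr(h)$ identifies $\lambda(h)=\Tr(h)/n$, so taking $h=E_{bd}$ (for which $\Tr(E_{bd})=\delta_{bd}$) gives $\frac{1}{|G|}\sum_{g}\rho(g)_{ab}\,\rho(g^{-1})_{dc}=\frac{1}{n}\delta_{bd}\delta_{ac}$. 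These are the Schur orthogonality relations for matrix coefficients.

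Finally I would assemble the character inner product by writing $\chi(g)=\sum_i\rho(g)_{ii}$ and $\psi(g^{-1})=\sum_j\sigma(g^{-1})_{jj}$, so that
\[
\langle\chi,\psi\rangle=\sum_{i,j}\frac{1}{|G|}\sum_{g}\rho(g)_{ii}\,\sigma(g^{-1})_{jj}.
\]
In the inequivalent case each inner summand is the entrywise relation with $a=b=i$ and $c=d=j$, hence $0$, giving $\langle\chi,\psi\rangle=0$. In the case $\rho=\sigma$ each summand equals $\frac{1}{n}\delta_{ij}$, so the double sum collapses to $\sum_{i=1}^{n}\frac{1}{n}=1$. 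Since equivalent representations have equal characters, distinct irreducible characters must come from inequivalent representations, so these two cases exhaust the statement and produce exactly the claimed dichotomy.

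The conceptual content is carried entirely by Schur's Lemma together with the averaging trick; the only place demanding care is the index bookkeeping of the last two steps, namely matching the diagonal coefficients $\rho(g)_{ii}$ and $\sigma(g^{-1})_{jj}$ appearing in $\langle\chi,\psi\rangle$ with the coefficients constrained by the vanishing or scalar form of $h^0$, and verifying $\Tr(h^0)=\Tr(h)$. I expect this routine but fiddly bookkeeping to be the main obstacle. A secondary point worth an explicit remark is the equivalence $\chi\neq\psi\iff V\not\cong W$, which is what licenses treating the equivalent and inequivalent cases as a genuine dichotomy.
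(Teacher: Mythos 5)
Your proof is correct: the averaging construction, the two applications of Schur's Lemma, the trace computation identifying $\lambda(h)=\Tr(h)/n$, and the final diagonal-coefficient bookkeeping all check out, and you correctly note that $\chi\neq\psi$ forces $V\not\cong W$ so the two cases are exhaustive. The paper itself gives no proof of this theorem — it only cites \cite[Section 2.2]{MR1153249} — and your argument is precisely the standard Schur-orthogonality proof found there, so there is nothing further to compare.
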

The following result states that the characters contain all the information of the representations. 
\begin{thm}\label{char-classification}
Any representation is determined up to $G$-isomorphism by its character.
\end{thm}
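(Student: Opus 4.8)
The plan is to recover, from the character alone, the multiplicity with which each irreducible representation occurs, and then to read off isomorphism type from these multiplicities. First I would record the elementary fact that characters are additive over direct sums: if a representation decomposes as $V = W \oplus W'$, then for each $g \in G$ the matrix of $g$ acting on $V$ is block diagonal with blocks the matrices of $g$ on $W$ and on $W'$, so its trace splits as $\chi^{V}(g) = \chi^{W}(g) + \chi^{W'}(g)$. Hence $\chi^{V} = \chi^{W} + \chi^{W'}$.

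Next I would apply Maschke's Theorem (Theorem~\ref{maschke}) to write an arbitrary representation $V$ as
\[
V \cong \bigoplus_{i} a_i V_i,
\]
where the $V_i$ range over the pairwise inequivalent irreducible representations of $G$ and $a_i \geq 0$ is the multiplicity of $V_i$. Writing $\chi_i$ for the (irreducible) character of $V_i$, the additivity just noted gives $\chi^{V} = \sum_i a_i \chi_i$.

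The key step is to extract these multiplicities. Using the orthonormality of the irreducible characters (Theorem~\ref{orth_chars}) together with the bilinearity of the inner product defined in (\ref{inner_prod}), I would compute, for each fixed index $j$,
\[
\left\langle \chi^{V} , \chi_j \right\rangle = \sum_i a_i \left\langle \chi_i , \chi_j \right\rangle = a_j .
\]
Thus every multiplicity is determined by the character alone, namely $a_j = \left\langle \chi^{V} , \chi_j \right\rangle$.

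Finally, suppose $V$ and $W$ are two representations with $\chi^{V} = \chi^{W}$. By the computation above, for each irreducible $V_j$ the multiplicity of $V_j$ in $V$ equals $\left\langle \chi^{V} , \chi_j \right\rangle = \left\langle \chi^{W} , \chi_j \right\rangle$, which is the multiplicity of $V_j$ in $W$. Since, by Maschke's Theorem, each of $V$ and $W$ is isomorphic to the direct sum of the $V_j$ taken with these (now equal) multiplicities, we conclude that $V \cong W$, i.e. $V$ and $W$ are equivalent. The only substantive ingredient here is the orthogonality relation of Theorem~\ref{orth_chars}; once that is available the argument is routine bookkeeping, so I do not anticipate a serious obstacle beyond keeping careful track of the fact that the $\chi_i$ form an orthonormal family and that the inner product is linear in each argument.
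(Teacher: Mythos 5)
Your proposal is correct and follows essentially the same route as the paper: decompose via Maschke's Theorem, express the character as a nonnegative integer combination of irreducible characters, and use the orthogonality relations of Theorem~\ref{orth_chars} to recover the multiplicities from inner products with each $\chi_j$. The only cosmetic difference is that you extract the multiplicities $a_j=\left\langle \chi^{V},\chi_j\right\rangle$ directly, whereas the paper equates the two character decompositions first and then pairs with $\chi_i$; these are the same computation.
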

\begin{proof} Let $V$ and $W$ be two representations of a group with the same character $\chi$. Without loss of generality we may assume $V$ and $W$ have the following decomposition to the irreducible representations:
\[
V= \bigoplus_{i=1}^k m_i V_i\quad \text{and}\quad W= \bigoplus_{i=1}^k n_i V_i,
\]
where $V_i$ are irreducible representations. Assume $\chi_i$ are the characters of $V_i$; hence 
\begin{equation}\label{2_char_decomposition}
\chi= m_1 \chi_1+\cdots+m_k\chi_k =  n_1 \chi_1+\cdots+n_k\chi_k.
\end{equation}
According to Theorem~\ref{orth_chars}, if we multiply both sides of the second equality on (\ref{2_char_decomposition}) by $\chi_i$ under the inner product (\ref{inner_prod}), we will get $m_i=n_i$, for all $i\in \{1,\ldots,k\}$. This means that $V$ and $W$ both have the same irreducible representations with the same multiplicities. By Theorem~\ref{orth_chars}, this completes the proof.
\end{proof}
Another useful consequence of Theorem~\ref{orth_chars} is the following which can be easily seen.
\begin{cor}\label{inner_prod_one_then_irr}
A character $\chi$ of a group is irreducible if and only if $\left\langle \chi , \chi\right\rangle=1$.\qed
\end{cor}
Because of this fact, in this thesis, we may sometimes abuse the notation and denote the set of all irreducible characters of a groups $G$ by $\irr(G)$.
The following theorem, which also uses the orthogonality of irreducible characters, is another important application of characters.

\begin{thm}[{{see \cite[Section 2.2]{MR1153249}}}]\label{char-conjugacy} The number of irreducible representations of a group is   the number of conjugacy classes of the group.\qed
\end{thm}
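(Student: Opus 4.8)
The plan is to count both sides by comparing them to the dimension of a single natural vector space, namely the space $CF(G)$ of complex-valued class functions on $G$. First I would observe that $CF(G)$ has dimension exactly the number of conjugacy classes of $G$: the indicator functions of the distinct conjugacy classes are visibly linearly independent and span $CF(G)$, since a class function is determined by its (constant) value on each class. Thus the theorem reduces to showing that the number of irreducible characters equals $\dim CF(G)$.

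One inclusion is immediate from the orthogonality relations. By Theorem~\ref{orth_chars} the irreducible characters form an orthonormal set with respect to the inner product (\ref{inner_prod}), hence they are linearly independent elements of $CF(G)$; this already gives that the number of irreducible representations is at most the number of conjugacy classes. The substance of the proof, and the step I expect to be the main obstacle, is the reverse inclusion: showing that the irreducible characters actually \emph{span} $CF(G)$.

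For this I would argue that any class function orthogonal to every irreducible character must vanish. Given such an $f\in CF(G)$ and an irreducible representation $\mathcal{X}:G\to GL(V)$ with character $\chi$, I would form the averaging operator
\[
T_f=\sum_{g\in G} f(g)\,\mathcal{X}(g).
\]
A short computation using that $f$ is constant on conjugacy classes shows $\mathcal{X}(h)\,T_f\,\mathcal{X}(h)^{-1}=T_f$ for all $h\in G$, so $T_f$ is a $G$-homomorphism from $V$ to itself; by Schur's Lemma (Theorem~\ref{schur}(b)) it equals $\lambda I$ for some scalar $\lambda$. Taking the trace gives $\lambda\dim V=\sum_g f(g)\chi(g)$, which up to the substitution $g\mapsto g^{-1}$ is $|G|$ times the inner product of $f$ against an irreducible character (the character of the dual representation, itself irreducible). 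Since $f$ is assumed orthogonal to all irreducible characters, $\lambda=0$, so $T_f=0$ on every irreducible $V$.

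Finally I would upgrade this to the conclusion $f=0$ by invoking Maschke's Theorem (Theorem~\ref{maschke}): the regular representation $\mathbb{C}[G]$ is a direct sum of irreducibles, so $T_f$ annihilates $\mathbb{C}[G]$ as well. Evaluating $T_f$ on the basis vector $\id$ yields $\sum_{g\in G} f(g)\,g=0$ in $\mathbb{C}[G]$, and since the group elements form a basis of $\mathbb{C}[G]$ this forces $f(g)=0$ for all $g$. Hence no nonzero class function is orthogonal to all irreducible characters, the irreducible characters span $CF(G)$, and the two counts agree. The only delicate point to get right is the bookkeeping with $g\mapsto g^{-1}$ and the dual character so that the trace computation really lands on the inner product (\ref{inner_prod}); everything else is a direct application of the results already established.
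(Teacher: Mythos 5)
Your proof is correct, but there is nothing in the paper to compare it against: the thesis states this theorem with a citation to Fulton--Harris and no argument of its own. What you have written is the standard completeness-of-characters proof, and it is sound. The count of conjugacy classes as $\dim CF(G)$ is right; linear independence of the irreducible characters follows from Theorem~\ref{orth_chars} exactly as you say (and works even though the paper's pairing $\frac{1}{|G|}\sum_g\chi(g)\psi(g^{-1})$ is bilinear rather than Hermitian, since pairing a vanishing linear combination against each $\chi_j$ still isolates the coefficient). Your handling of the delicate step is also correct: conjugation-invariance of $f$ makes $T_f$ a $G$-endomorphism, Schur's Lemma gives $T_f=\lambda I$, and the trace computation produces $|G|\langle f,\chi^*\rangle$ where $\chi^*$ is the (irreducible) dual character, so $\lambda=0$; then Maschke's Theorem kills $T_f$ on $\mathbb{C}[G]$ and evaluation at $e_{\id}$ forces $f=0$. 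The one point you pass over silently is the final implication ``no nonzero class function is orthogonal to all irreducible characters, hence they span'': with the paper's bilinear form this needs the observation that the pairing is non-degenerate on $CF(G)$ (pair $f$ against $g\mapsto\overline{f(g^{-1})}$, which is again a class function, to get a positive value). That is a one-line fix, not a gap. Your argument uses only results the thesis has already established (Theorems~\ref{orth_chars}, \ref{schur}, \ref{maschke}), so it could stand in place of the citation.
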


\begin{example}\label{some_characters}
Let $\chi$ be the character of the permutation representation of $G$ with respect to an action on a set $S$. It is not hard to see that $\chi(g)$ is the number of elements of $S$ fixed by $g$, for any $g\in G$.
In particular the character of the trivial representation is always 1, and the character $\chi^{reg}$ of the regular representation of $G$ assumes the following values:
\[
\chi^{reg}(g)=\begin{cases} |G|, & \, \text{if }\, g=\id;\\
0, & \,\text{otherwise.}
\end{cases}
\]
On the other hand, according to (\ref{all_irrs}), we have $\chi^{reg}=m_1\chi_1+\cdots+m_k \chi_k$, where $k$ is the number of non-equivalent irreducible representations of $G$ and $\chi_i$ are the characters of $V_i$. We have  
\[
m_i=\left\langle \chi^{reg} , \chi_i\right\rangle=\frac{1}{|G|} \sum_{g\in G} \chi^{reg}(g) \chi_i(g^{-1})=\frac{1}{|G|} \chi^{reg}(\id) \chi_i(\id)=\chi_i(\id);
\]
thus we have proved that $m_i=\dim V_i$.
\end{example}

We now introduce the standard representation which plays a very important role in this thesis. 
Let $G\leq \sym(n)$ be a permutation group and let $\chi$ be the character of its defining representation  on the set $S=[n]$. Then the \textsl{standard representation}\index{representation!standard} $V$ of $G$ is the representation whose character $\chi_V$ is defined as follows
\begin{equation}\label{char_of_standard}
\chi_V(g)=\chi(g)-1,\quad\text{for any}\,\,g\in G.
\end{equation}
In  other words, the character value of $V$ at a permutation $g\in G$ is the number of points fixed by $g$ minus one. Since the dimension of the defining representation of $G$ is $n$, one can deduce the following from (\ref{char_of_standard}).
\begin{lem}\label{dim_of_standard_char} The standard representation of any permutation group of degree $n$ has dimension $n-1$.\qed
\end{lem}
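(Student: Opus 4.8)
The plan is to compute the dimension directly from the character, using the general fact recalled earlier that the dimension of any representation equals the value of its character at the identity (i.e. $\chi^V(\id)=\dim V$). Since the standard representation $V$ is defined by prescribing its character via relation (\ref{char_of_standard}), evaluating that character at $g=\id$ should immediately yield the answer.

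First I would recall that the defining representation of $G$ on $S=[n]$ is a permutation representation, so by Example~\ref{some_characters} its character $\chi$ satisfies $\chi(g)=\fix(g)$, the number of points of $[n]$ fixed by $g$. At the identity every point is fixed, so $\chi(\id)=n$; this is consistent with the fact (already noted in the text) that the defining representation has dimension $n$. Then, applying (\ref{char_of_standard}) at $g=\id$, I obtain
\[
\dim V=\chi_V(\id)=\chi(\id)-1=n-1,
\]
which is the claim.

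There is no substantive obstacle in the dimension count itself; the one point genuinely worth a remark is whether the prescription (\ref{char_of_standard}) defines a bona fide representation, so that speaking of ``the standard representation $V$'' and of its dimension is legitimate. This is guaranteed because the defining representation contains the trivial representation $\ID$ as the subrepresentation spanned by $1+\cdots+n$ (as observed in the discussion of irreducibility of $\sym(n)$), whence by Maschke's Theorem (Theorem~\ref{maschke}) there is a complementary subrepresentation whose character is exactly $\chi-1$; by Theorem~\ref{char-classification} this determines $V$ up to $G$-isomorphism. For the purposes of the lemma, however, I would keep the proof to the single displayed computation above and relegate the existence remark to a parenthetical, since the statement only concerns the dimension.
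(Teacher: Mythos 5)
Your proof is correct and follows exactly the paper's (implicit) argument: the lemma is deduced from equation (\ref{char_of_standard}) by evaluating the character at the identity, giving $\chi_V(\id)=\chi(\id)-1=n-1$ since the defining representation has dimension $n$. Your additional remark on why (\ref{char_of_standard}) actually defines a representation is a sensible bonus but does not change the route.
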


For the next result, we need to recall the well-known Burnside's lemma. 
\begin{thm}[{{Burnside's lemma}}]\label{burnside} Let $G$ be a group acting on a set $X$. Then the number of orbits of this action is equal to 
\[
\frac{1}{|G|}\sum_{g\in G}|\fix(g)|,
\]
where $\fix(g)$ is the set of elements of $X$ fixed by $g$.\qed
\end{thm}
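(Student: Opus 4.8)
The plan is to prove the identity by counting, in two different ways, the cardinality of the set of fixed pairs
\[
P=\{(g,x)\in G\times X\,:\,g\cdot x=x\}.
\]
First I would count $P$ by summing over group elements: for a fixed $g\in G$, the number of $x$ with $g\cdot x=x$ is precisely $|\fix(g)|$, so that $|P|=\sum_{g\in G}|\fix(g)|$, which is exactly the quantity appearing in the statement. Then I would count $P$ by summing over points of $X$: for a fixed $x\in X$, the set of $g$ fixing $x$ is the stabilizer $G_x=\{g\in G\,:\,g\cdot x=x\}$, a subgroup of $G$, and so $|P|=\sum_{x\in X}|G_x|$. Equating the two counts gives
\[
\sum_{g\in G}|\fix(g)|=\sum_{x\in X}|G_x|.
\]

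The key ingredient in converting the right-hand side into a count of orbits is the orbit-stabilizer theorem, which asserts that for every $x\in X$ the orbit $\mathcal{O}_x$ of $x$ satisfies $|G_x|\cdot|\mathcal{O}_x|=|G|$, equivalently $|G_x|=|G|/|\mathcal{O}_x|$. Substituting this, I would obtain
\[
\sum_{x\in X}|G_x|=|G|\sum_{x\in X}\frac{1}{|\mathcal{O}_x|}.
\]
Finally I would regroup the last sum according to the partition of $X$ into orbits: within a single orbit $\mathcal{O}$ every point has orbit size $|\mathcal{O}|$, so the contribution of that orbit to the sum is $|\mathcal{O}|\cdot(1/|\mathcal{O}|)=1$, and hence the whole sum equals the number of orbits. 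Dividing through by $|G|$ then yields the claimed formula.

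The argument is almost entirely bookkeeping; the only genuine input is the orbit-stabilizer theorem, so the main (and essentially sole) obstacle is to have that result available. Everything else reduces to the double count of $P$ and the regrouping of a sum over the orbit partition, both of which are routine.
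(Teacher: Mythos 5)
Your double count of the set of fixed pairs $P=\{(g,x):g\cdot x=x\}$, combined with the orbit-stabilizer theorem and the regrouping of $\sum_{x}1/|\mathcal{O}_x|$ over the orbit partition, is correct and complete; this is the classical proof of Burnside's lemma. The paper itself supplies no proof of this statement --- it simply cites a reference --- so there is nothing to compare against, but your argument is exactly the standard one found in those sources, and the only nontrivial ingredient you invoke (orbit-stabilizer, in the form $|G_x|\cdot|\mathcal{O}_x|=|G|$ for a finite group) is the correct and sufficient one.
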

For a proof of this fact the reader may refer to \cite[Section 2.2]{MR1829620}.
 
We recall that a permutation group $G\leq \sym(n)$  is called $k$-transitive (for $k\geq 1$) if for any pair of ordered $k$-sets $x=(x_1,\dots,x_k)$ and $y=(y_1,\ldots,y_k)$ from $[n]$, there is an element $\sigma\in G$  such that $x^\sigma=y$. In particular, $G$ is 1-transitive (or simply, transitive) if for any $x,y\in [n]$ there is a $\sigma\in G$ with $x^\sigma=y$. For more discussions on transitivity and related concepts the reader may refer to \cite{cameron1999permutation}.

\begin{prop}\label{standard_is_irr}
If $G\leq\sym(n)$ is a $2$-transitive permutation group,  then the standard representation is irreducible.
\end{prop}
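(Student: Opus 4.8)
The plan is to use the character-theoretic criterion for irreducibility, namely Corollary~\ref{inner_prod_one_then_irr}: a character $\chi$ is irreducible if and only if $\langle\chi,\chi\rangle=1$. So it suffices to compute $\langle\chi_V,\chi_V\rangle$ and show it equals $1$, where $\chi_V$ is the character of the standard representation. First I would write $\chi_V=\chi-\mathbf{1}$, where $\chi$ is the character of the defining representation (so by Example~\ref{some_characters}, $\chi(g)=|\fix(g)|$ is the number of points of $[n]$ fixed by $g$) and $\mathbf{1}$ denotes the trivial character; this is exactly the content of~(\ref{char_of_standard}). Expanding bilinearly,
\[
\langle\chi_V,\chi_V\rangle=\langle\chi,\chi\rangle-2\langle\chi,\mathbf{1}\rangle+\langle\mathbf{1},\mathbf{1}\rangle,
\]
and it remains to evaluate the three terms on the right.

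The two easy terms come directly from Burnside's lemma (Theorem~\ref{burnside}). Clearly $\langle\mathbf{1},\mathbf{1}\rangle=1$. For the cross term, since $\chi(g)=|\fix(g)|$, we have
\[
\langle\chi,\mathbf{1}\rangle=\frac{1}{|G|}\sum_{g\in G}|\fix(g)|,
\]
which by Theorem~\ref{burnside} is the number of orbits of $G$ acting on $[n]$. Because $G$ is $2$-transitive it is in particular transitive, so this number of orbits is $1$; hence $\langle\chi,\mathbf{1}\rangle=1$.

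The crucial term is $\langle\chi,\chi\rangle=\frac{1}{|G|}\sum_{g\in G}|\fix(g)|^2$, and here is where I expect the one genuine idea to be needed. The point is to interpret $|\fix(g)|^2$ as the number of ordered pairs $(i,j)\in[n]\times[n]$ fixed by $g$ in the induced product action of $G$ on $[n]\times[n]$, defined by $g\cdot(i,j)=(g(i),g(j))$. Applying Burnside's lemma to this product action shows that $\langle\chi,\chi\rangle$ equals the number of orbits of $G$ on $[n]\times[n]$. For a $2$-transitive group these orbits are exactly two: the diagonal $\{(i,i)\}$, which is a single orbit by transitivity, and the set of pairs $\{(i,j):i\neq j\}$, which is a single orbit precisely because $2$-transitivity means $G$ acts transitively on ordered pairs of distinct points. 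Thus $\langle\chi,\chi\rangle=2$, and combining the three evaluations gives $\langle\chi_V,\chi_V\rangle=2-2\cdot 1+1=1$, so by Corollary~\ref{inner_prod_one_then_irr} the standard representation is irreducible. The main obstacle, such as it is, lies in recognizing the product-action reformulation of $\sum_g|\fix(g)|^2$ and in checking cleanly that $2$-transitivity yields exactly two orbits on ordered pairs; the rest is routine bilinear expansion.
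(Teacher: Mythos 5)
Your proposal is correct and follows essentially the same route as the paper: both arguments reduce to showing $\langle\chi,\chi\rangle=2$ by interpreting $\sum_g|\fix(g)|^2$ via Burnside's lemma applied to the induced action on ordered pairs, where $2$-transitivity gives exactly the two orbits $\{(i,i)\}$ and $\{(i,j):i\neq j\}$, and then conclude $\langle\chi_V,\chi_V\rangle=1$ from $\chi=\chi_V+\mathbf{1}$ using Corollary~\ref{inner_prod_one_then_irr}. The only cosmetic difference is that you expand the bilinear form fully into three terms while the paper absorbs the cross term implicitly; the mathematics is identical.
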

\begin{proof}
The group $G$ acts transitively on the set of ordered pairs $(x,y)$, where $x,y\in [n]$; hence this action has exactly two orbits, namely 
\[
O_1=\{(x,x)\,|\, x\in[n]\}\quad\text{and}\quad O_2=\{(x,y)\,|\, x,y\in [n], x\neq y\}.
\]
For any $g\in G$, let $\Fix(g)$ be the set of fixed points of $g$ under this action. It is easily seen that $|\Fix(g)|=|\fix(g)|^2$, for any $g\in G$,  where $\fix(g)$ is the set of fixed points of $g$ in $[n]$ under the natural action of $G$ on $[n]$. Thus according to Burnside's lemma, we have
\begin{equation}\label{Fix_and_fix}
2=\frac{1}{|G|}\sum_{g\in G}|\Fix(g)|= \frac{1}{|G|}\sum_{g\in G}|\fix(g)|^2.
\end{equation}
On the other hand, if $\chi$ is the character of the defining representation of $G$ on $[n]$, we have
\begin{align*}
\left\langle \chi , \chi\right\rangle & =\frac{1}{|G|}\sum_{g\in G} \chi(g)\chi(g^{-1})= \frac{1}{|G|}\sum_{g\in G} |\fix(g)|^2;
\end{align*}
therefore, using (\ref{Fix_and_fix}), $\left\langle \chi , \chi\right\rangle=2$. But $\chi=\chi_V+1$, which implies that
\[
\left\langle \chi_V , \chi_V\right\rangle+1=2,
\]
that is $\left\langle \chi_V , \chi_V\right\rangle=1$. Thus, the proposition follows from Corollary~\ref{inner_prod_one_then_irr}.
\end{proof}

Let $H$ be a subgroup of a group $G$. Then it is a natural question to ask if there are any relationships between the representations of $G$ and those of $H$. The answer for this question is nested in the concepts of restricted and induced representations. We close this section by recalling  their definitions. 
 
Let $\mathcal{X}$ be a representation of $G$ with  character $\chi$. Then the \textsl{restriction}\index{representation!restricted} of $\mathcal{X}$ to $H$, denoted by $\mathcal{X}\downarrow_H^G$, is the  representation of $H$ given by  
\[
\mathcal{X}\downarrow_H^G(h)=\mathcal{X}(h), \quad \text{for all }\, h\in H.
\]
Then the character of $\mathcal{X}\downarrow_H^G$, denoted by $\chi\downarrow_H^G$, is simply given by
\[
\chi\downarrow_H^G(h)=\chi(h),\quad \text{for all }\, h\in H.
\]
On the other hand, if $\mathcal{Y}$ is a representation of $H$ with the character $\psi$, then the corresponding \textsl{induced representation}\index{representation!induced} of $G$, denoted by $\mathcal{Y}\uparrow_H^G$, is the representation of $G$ whose character is given by
\begin{equation}\label{char_of_induced} 
\psi\uparrow_H^G(g)=\frac{1}{|H|}\sum_{x\in G} \psi(x^{-1} g x), \quad \text{for any} \, g\in G,
\end{equation}
where $\psi(g)$ is assumed to be zero if $g\notin H$. The reader may refer to \cite[Section 3.3]{MR1153249} or \cite[Section 1.12]{MR1824028} for alternative definitions and more detailed discussions.


\section{Representations of the symmetric group}\label{rep_sym}

In this section we investigate all the irreducible representations of the group $\sym(n)$. For  more detailed discussions, the reader may refer to \cite[Chapter 4]{MR1153249} or \cite[Chapter 2]{MR1824028}.
Note that, by Theorem~\ref{char-conjugacy}, the number of irreducible representations of a group is  equal to the number of conjugacy classes of the group. It is well-known that there is a one-to-one correspondence between the conjugacy classes of $\sym(n)$ and the \textsl{partitions}\index{partition} of $n$. A partition $\lambda$ of $n$, denoted by $\lambda \vdash n$, is a weakly decreasing sequence of positive integers $\lambda=[\lambda_1,\ldots,\lambda_k]$ such that $1\leq k \leq n$ and $\,\lambda_1+\cdots+\lambda_k=n$. All the elements of a conjugacy class of $\sym(n)$ have a fixed cycle structure which can be described by  a unique partition of $n$: a conjugacy class $C$ corresponds to  the partition $\lambda=[\lambda_1,\ldots,\lambda_k]$ if and only if every permutation $\pi\in C$ has a cycle decomposition of form $\pi=\pi_1\pi_2\cdots\pi_k$, where $\pi_1,\ldots,\pi_k$ are mutually disjoint and $\pi_i$ is a cyclic permutation of length $\lambda_i$.  For example, the conjugacy class of $\sym(9)$ containing  the element $(3\,\,9)(4\,\,1\,\,6\,\,7)(2\,\,8)$ corresponds to the partition $\lambda=[4,2,2,1]$.
In this section we present a method to associate an appropriate irreducible representation to a partition.

To a partition $\lambda=[\lambda_1,\ldots,\lambda_k]$ of $n$, we associate a \txtsl{Young diagram} (or Ferrers diagram or Young frame),  which is an array of $n$ boxes having $k$ left-justified rows with row $i$ containing $\lambda_i$ boxes, for $1\leq i \leq k$. This diagram is said to be of \txtsl{shape} $\lambda$.  As an example, the Young diagram of the partition  $\lambda=[5,3,3,2,1,1]$ of 15 is as follows:

\begin{figure}[H]
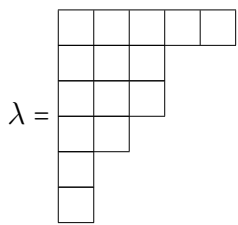

\[
\lambda=
\yng(5,3,3,2,1,1)
\]
\caption{Young diagram for $\lambda=[5,3,3,2,1,1]$}
\end{figure}
The \textsl{transpose}\index{transpose of a partition} partition $\hat{\lambda}$ of a partition $\lambda$ is defined by interchanging the rows and columns in the Young diagram of $\lambda$. For example, if $\lambda$ is as above, then its transpose will be $\hat{\lambda}=[6,4,3,1,1]$.

\begin{figure}[H]
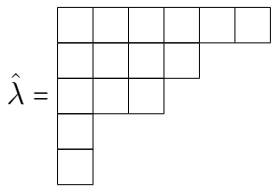

\[
\hat{\lambda}=\yng(6,4,3,1,1)
\]
\caption{Transpose of partition $\lambda=[5,3,3,2,1,1]$}
\end{figure}
Note that the transpose partition is also known as the conjugate partition, and is also denoted by $\lambda^*$. A partition $\lambda$ is said to be \textsl{symmetric}\index{partition!symmetric} if $\lambda=\hat{\lambda}$.

We often use the ``multiplicity'' notation for writing partitions; that is, for example, we may write $[5,3^2,2,1^2]$ instead of $[5,3,3,2,1,1]$.

We now  provide the tools to answer the main question of this section; that is, what are the irreducible representations of $\sym(n)$?
Given a partition $\lambda\vdash n$, a \txtsl{Young tableau}  $t$ of shape $\lambda$, is the Young diagram of $\lambda$ with its boxes  filled with the numbers $1,2,\ldots,n$ with some arrangement. Thus different  arrangements of the numbers $1,2,\ldots,n$ in the boxes of the Young diagram of $\lambda$ will result in a different Young tableaux. Two tableaux $t_1$ and $t_2$ of shape $\lambda$ are \txtsl{row equivalent}, denoted $t_1\sim t_2$, if the corresponding rows of the two tableaux contain the same elements (possibly in different orders). A \txtsl{tabloid} of shape $\lambda$, denoted by $\{t\}$ will, then, be defined as
\[
\{t\}=\{t_1\,:\, t_1\sim t\},
\]
where $t$ is a tableau of shape~$\lambda$.

Assume that  a tableau $t$ has columns $C_1,\ldots, C_l$. Define the subgroup $C_t$ of $\sym(n)$ as follows:
\[
C_t=\sym(C_1)\times\cdots\times \sym(C_l),
\]
where $\sym(C_i)$ is the group of all permutations on the set $C_i$. The subgroup $C_t$ is referred to as the \txtsl{column-stabilizer} of $t$.
Then define the element $\kappa_t$ in the group algebra $\mathbb{C}[\sym(n)]$ as follows:
\[
\kappa_t=\sum_{\pi\in C_t}\sgn(\pi)\pi,
\]
where sgn$(\pi)$ is the sign of the permutation $\pi$ (i.e. sgn$(\pi)$ is 1 if $\pi$ is even and $-1$ otherwise).
Furthermore, if we define the action of a permutation $\pi$ on a tabloid $\{t\}$ as $\{\pi(t)\}$, then the \txtsl{polytabloid} associated to $t$ is defined as
\[
\mathbf{e_t}=\kappa_t\{t\},
\]
which is a member of the group algebra on $\mathbb{C}$ generated by all tabloids $\{t\}$ of shape~$\lambda$.

To illustrate these concepts, consider the partition $\lambda=[3,2]\vdash 5$, and
\Yboxdim13pt
\[
t=\young(412,35)
\]
which is a tableau of shape $\lambda$. We have
\Yboxdim8pt
\[
\{t\}=\left\{ \tiny\young(124,35)\,,\, \young(124,53)\,,\,\young(142,35)\,,\, \young(142,53)\,,\, \young(214,35)\,,\, \young(214,53)\,,\,\young(241,35)\, ,\,\young(241,53)\,,\, \young(412,35)\, ,\, \young(412,53)\,,\, \young(421,35)\,,\, \young(421,53)\right\},	
\]
\Yboxdim13pt
and
\[
C_t=\sym(\{3,4\})\times \sym(\{1,5\})\times \sym(\{2\}).
\]
In addition,
\[
\kappa_t=\id-(3\,\,4)-(1\,\,5)+(3\,\,4)(1\,\,5),
\]
and, hence,
\[
\mathbf{e_t}=\left\{\,\small\young(412,35)\,\right\}-\left\{\,\small\young(312,45)\,\right\}-\left\{\, \small\young(452,31) \,\right\}+\left\{\,\small \young(352,41)  \, \right\}.
\]

Finally, for any partition $\lambda\vdash n$, we define the corresponding \textsl{Specht module}\index{module!Specht}, $S^\lambda$, as the algebra on $\mathbb{C}$ generated by all the polytabloids $\mathbf{e_t}$, where $t$ is of shape $\lambda$; that is,
\[
S^\lambda=\mathbb{C}\{\mathbf{e_t}\,\,|\,\, t\,\,\text{is of shape }\,\lambda\}.
\]
Then for any $\pi \in\sym(n)$, $\pi$ defines an endomorphism on $S^{\lambda}$ by $\pi(\mathbf{e_t})=\mathbf{e_{\pi(t)}}$.
The following important theorem is the answer for our main question (a proof of which appears in \cite[Section 2.4]{MR1824028}).
\begin{thm}
For any $\lambda\vdash n$, the corresponding Specht module $S^\lambda$ is an irreducible representation of $\sym(n)$. Furthermore, if $\lambda,\mu\vdash n$ and $\lambda\neq \mu$, then $S^\lambda\ncong S^\mu$.\qed
\end{thm}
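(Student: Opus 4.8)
The plan is to establish both claims via James's submodule theorem, which rests on an invariant inner product and a combinatorial sign lemma. First I would introduce the ambient permutation module
\[
M^\lambda=\mathbb{C}\{\,\{t\}\,:\,t\text{ a tableau of shape }\lambda\,\},
\]
so that $S^\lambda\subseteq M^\lambda$, and equip $M^\lambda$ with the bilinear form $\langle\cdot,\cdot\rangle$ in which the distinct tabloids form an orthonormal basis. Since $\sym(n)$ permutes the tabloids, this form is $\sym(n)$-invariant; over $\mathbb{C}$ it is positive definite on the real span and hence nondegenerate, so in particular $S^\lambda\cap(S^\lambda)^\perp=0$. I would also record that $\kappa_t$ is self-adjoint for this form, since $\kappa_t=\sum_{\pi\in C_t}\sgn(\pi)\pi$ and $\sgn(\pi)=\sgn(\pi^{-1})$.

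The combinatorial heart is the sign lemma. For a tableau $t$ of shape $\lambda$ and a tabloid $\{s\}$, I would prove the dichotomy: either two entries lying in a common column of $t$ occur in the same row of $\{s\}$, in which case the transposition $(a\,b)$ of those entries lies in $C_t$, fixes $\{s\}$, and the identity $\kappa_t(a\,b)=-\kappa_t$ forces $\kappa_t\{s\}=0$; or else no column of $t$ meets any row of $\{s\}$ twice, which pins down $\{s\}$ tightly enough that $\kappa_t\{s\}=\pm\mathbf{e_t}$. Extending linearly, this yields the key consequence: for every $u\in M^\lambda$ and every $\lambda$-tableau $t$, the element $\kappa_t u$ is a scalar multiple of the polytabloid $\mathbf{e_t}=\kappa_t\{t\}$.

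From this I would derive the submodule theorem: any $\sym(n)$-submodule $U\subseteq M^\lambda$ satisfies either $S^\lambda\subseteq U$ or $U\subseteq(S^\lambda)^\perp$. Take $u\in U$; for each tableau $t$ we have $\kappa_t u=c_t\,\mathbf{e_t}$. If some $c_t\neq 0$, then $\mathbf{e_t}\in U$, and since $\pi\,\mathbf{e_t}=\mathbf{e_{\pi(t)}}$ the submodule $U$ contains every polytabloid and hence all of $S^\lambda$. If instead $\kappa_t u=0$ for all $t$, then self-adjointness gives $\langle u,\mathbf{e_t}\rangle=\langle\kappa_t u,\{t\}\rangle=0$ for all $t$, so $u\in(S^\lambda)^\perp$. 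Applying this with $U$ a nonzero submodule of $S^\lambda$: the option $U\subseteq(S^\lambda)^\perp$ would give $U\subseteq S^\lambda\cap(S^\lambda)^\perp=0$, so instead $S^\lambda\subseteq U\subseteq S^\lambda$ and $U=S^\lambda$. Thus $S^\lambda$ is irreducible.

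For inequivalence I would strengthen the sign lemma to a dominance statement: any nonzero $\sym(n)$-homomorphism $S^\lambda\to M^\mu$ forces $\lambda$ to dominate $\mu$ in the dominance order (the partial order comparing the partial sums $\lambda_1+\cdots+\lambda_i$). The mechanism is again the column-to-row dichotomy: a polytabloid maps to a nonzero element of $M^\mu$ surviving the relevant $\kappa_t$ only when the entries of each column of a $\lambda$-tableau land in distinct rows of a $\mu$-tabloid, and such a column-to-row injection exists precisely when $\lambda$ dominates $\mu$. If now $S^\lambda\cong S^\mu$, then composing the isomorphism with the inclusion $S^\mu\hookrightarrow M^\mu$ gives a nonzero map $S^\lambda\to M^\mu$, so $\lambda$ dominates $\mu$; by symmetry $\mu$ dominates $\lambda$, and antisymmetry forces $\lambda=\mu$. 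The main obstacle throughout is the sign lemma together with its dominance refinement: once the statement that $\kappa_t\{s\}$ vanishes unless each column of $t$ meets each row of $s$ at most once --- and that non-vanishing forces the dominance inequality --- is in place, the invariant form, the submodule theorem, and Schur's Lemma combine routinely to finish both parts.
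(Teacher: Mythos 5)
Your proposal is correct, and it is essentially the argument the paper relies on: the paper does not prove this theorem itself but defers to \cite[Section 2.4]{MR1824028}, and what you have written — the invariant bilinear form on $M^\lambda$, the sign lemma giving $\kappa_t u \in \mathbb{C}\,\mathbf{e_t}$, James's submodule theorem, nondegeneracy of the form over $\mathbb{C}$ for irreducibility, and the dominance-order argument for pairwise inequivalence — is precisely that standard proof. No gaps worth noting beyond the (routine, and implicitly available via Maschke's theorem) extension of a nonzero map $S^\lambda\to M^\mu$ to one defined on all of $M^\lambda$ before invoking the dominance lemma.
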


Therefore, using Theorem~\ref{char-conjugacy} and the fact that there is a one-to-one correspondence between the conjugacy classes of $\sym(n)$ and the partitions $\lambda\vdash n$, we conclude that  the Specht modules are all the irreducible representations of $\sym(n)$. In other words
\begin{cor}\label{list_of_irrs_of_sym}
The representations $S^\lambda$ for $\lambda\vdash n$ form a complete list of irreducible representations of $\sym(n)$ over $\mathbb{C}$.\qed
\end{cor}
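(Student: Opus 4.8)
The plan is to give a pure counting argument that matches the number of Specht modules against the total number of inequivalent irreducible representations of $\sym(n)$, so the two constructions meet exactly and no irreducible can be missing.

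First I would invoke the one-to-one correspondence between the conjugacy classes of $\sym(n)$ and the partitions $\lambda\vdash n$ recalled in the text above: the number of conjugacy classes of $\sym(n)$ is exactly $p(n)$, the number of partitions of $n$. Combining this with Theorem~\ref{char-conjugacy}, which asserts that the number of pairwise inequivalent irreducible representations of any finite group equals its number of conjugacy classes, I conclude that $\sym(n)$ has precisely $p(n)$ inequivalent irreducible representations in total.

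Next I would appeal to the theorem immediately preceding this corollary. It delivers two facts simultaneously: every $S^\lambda$ is irreducible, and $S^\lambda\ncong S^\mu$ whenever $\lambda\neq\mu$. Hence the family $\{\,S^\lambda : \lambda\vdash n\,\}$ furnishes exactly $p(n)$ pairwise inequivalent irreducible representations of $\sym(n)$. Closing the argument is then a matter of cardinality: we have exhibited $p(n)$ distinct equivalence classes of irreducibles living inside a set of irreducibles whose total cardinality is also $p(n)$, so these classes must exhaust it. No irreducible representation of $\sym(n)$ can lie outside the list, which is exactly the assertion that the Specht modules form a complete list.

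I expect there to be no genuine obstacle here, since the heavy lifting — irreducibility and mutual inequivalence of the Specht modules — has already been carried out in the preceding theorem, and the character-theoretic count of irreducibles is Theorem~\ref{char-conjugacy}. The only content of the corollary is the bookkeeping observation that two independently computed quantities (the $p(n)$ irreducible representations counted via conjugacy classes, and the $p(n)$ pairwise inequivalent Specht modules) coincide; it is this coincidence of counts, rather than any further construction, that forces completeness, and so it is the one step I would state with care.
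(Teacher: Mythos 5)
Your argument is correct and is precisely the one the paper uses: it combines Theorem~\ref{char-conjugacy} with the bijection between conjugacy classes of $\sym(n)$ and partitions of $n$, and with the preceding theorem's guarantee that the Specht modules are irreducible and pairwise non-isomorphic, to conclude by counting. No differences worth noting.
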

In this thesis, for any partition $\lambda\vdash n$, we denote the character of the representation $S^{\lambda}$ of $\sym(n)$ by $\chi^{\lambda}$.
\begin{example}
Consider the partition $\lambda=[n]\vdash n$. The irreducible representation of $\sym(n)$ associated to $\lambda$ (i.e. the Specht module corresponding to $\lambda$) is the trivial representation as the only tabloid of shape $\lambda$ is
\[
\{t\}=\left\{\, \young(12\cdots n)\, \right\};
\]
thus $\mathbf{e_t}=\{\,\young(12\cdots n)\,\}$ and so $S^{\lambda}=\mathbb{C}\left\{\,\{\young(12\cdots n)\}\,\right\}$ and the action of $\sym(n)$ on this space will be the trivial action, $\pi \mathbf{e_t}=\mathbf{e_t}$.
\end{example}
\begin{example} Consider the partition $\lambda=[1^n]\vdash n$. One can see that the irreducible representation of $\sym(n)$ associated to $\lambda$ is
\[
S^{\lambda}=\mathbb{C}\{\mathbf{e_{t_0}}\},
\]
where
\newcommand{\tvd}{{\tiny \vdots}}
\[
t_0=\young(1,2,\,,n)\,,
\]
with the action $\pi \mathbf{e_{t_0}}=$sgn$(\pi)\mathbf{e_{t_0}}$, for any $\pi\in \sym(n)$. To see this, note that for any $\pi\in \sym(n)$, 
\[
\pi \mathbf{e_{t_0}} = \sum_{\sigma\in\sym(n)} (\sgn \sigma)\pi \sigma\{t_0\}= \sgn(\pi^{-1}) \sum_{\tau\in \sym(n)}(\sgn\tau)\tau\{t_0\}=\sgn (\pi) \mathbf{e_{t_0}}.
\]
 This representation is usually called the \textsl{sign}\index{representation!sign} or \textsl{alternating representation}\index{representation!alternating}. In this representation, every even permutation is mapped to the identity  automorphism  ${\bf Id}:\mathbb{C}\{\mathbf{e_{t_0}}\}\to  \mathbb{C}\{\mathbf{e_{t_0}}\}$ and every odd permutation is mapped to the automorphism ${\bf -Id}:\mathbb{C}\{\mathbf{e_{t_0}}\}\to  \mathbb{C}\{\mathbf{e_{t_0}}\}$, where $({\bf -Id})(\mathbf{e_{t_0}})=-\mathbf{e_{t_0}}$.
\end{example}

\begin{example}\label{standard rep} Consider the partition $\lambda=[n-1,1]\vdash n$. One can see that the irreducible representation of $\sym(n)$ associated to $\lambda$ is the $(n-1)$-dimensional space generated by  
\[
\{\alpha_2-\alpha_1,\alpha_{3}-\alpha_1,\ldots,\alpha_n-\alpha_1\},
\]
where $\alpha_k$ corresponds to the tabloid
\[
\{t_k\}=\left\{\,\young(1\cdots{k-1}{k+1}\cdots n,k)\,\right\}\,
\]
This representation is, in fact,  the standard representation of $\sym(n)$; that is, it can be shown that $\chi^{\lambda}(\pi)$ is the number of fixed points of $\pi$ minus 1, for all $\pi\in \sym(n)$.
\end{example}

\section{Two formulas}\label{MNRule_and_hook_length_formula}
In this section, we recall two well-known  formulas from representation theory of the symmetric group which will be very useful in  future chapters. The first one is the \txtsl{Murnaghan-Nakayama rule}.  Before stating this result, we need to introduce some notation.

Partitions $\lambda\vdash n$ of the form $\lambda=[\lambda_1,1^{n-\lambda_1}]$ and $[\lambda_1,2,1^{n-\lambda_1-2}]$,  for $\lambda_1>1$, are called \textsl{hooks}\index{hook} and \textsl{near-hooks}\index{hook!near}, respectively.
The $(i,j)$-block\index{block} in a Young diagram is the block in the $i$-th row (from the top) and the $j$-th column (from the left). If a Young diagram contains an $(i,j)$-block but not a $(i+1,j+1)$-block, then the $(i,j)$-block is part of what is called the \txtsl{boundary} of the Young diagram. A \textsl{skew hook}\index{hook!skew}  of $\lambda$ is an edge-wise connected part (meaning that all blocks are either side by side or one below the other)  of the boundary blocks with the property that removing them leaves a smaller proper Young diagram. Figure~\ref{skew} shows all the skew hooks of length $4$ in $\lambda=[5,4,4,2,1,1]$:

\begin{figure}[!ht]
\centering
\includegraphics[width=0.28\textwidth]{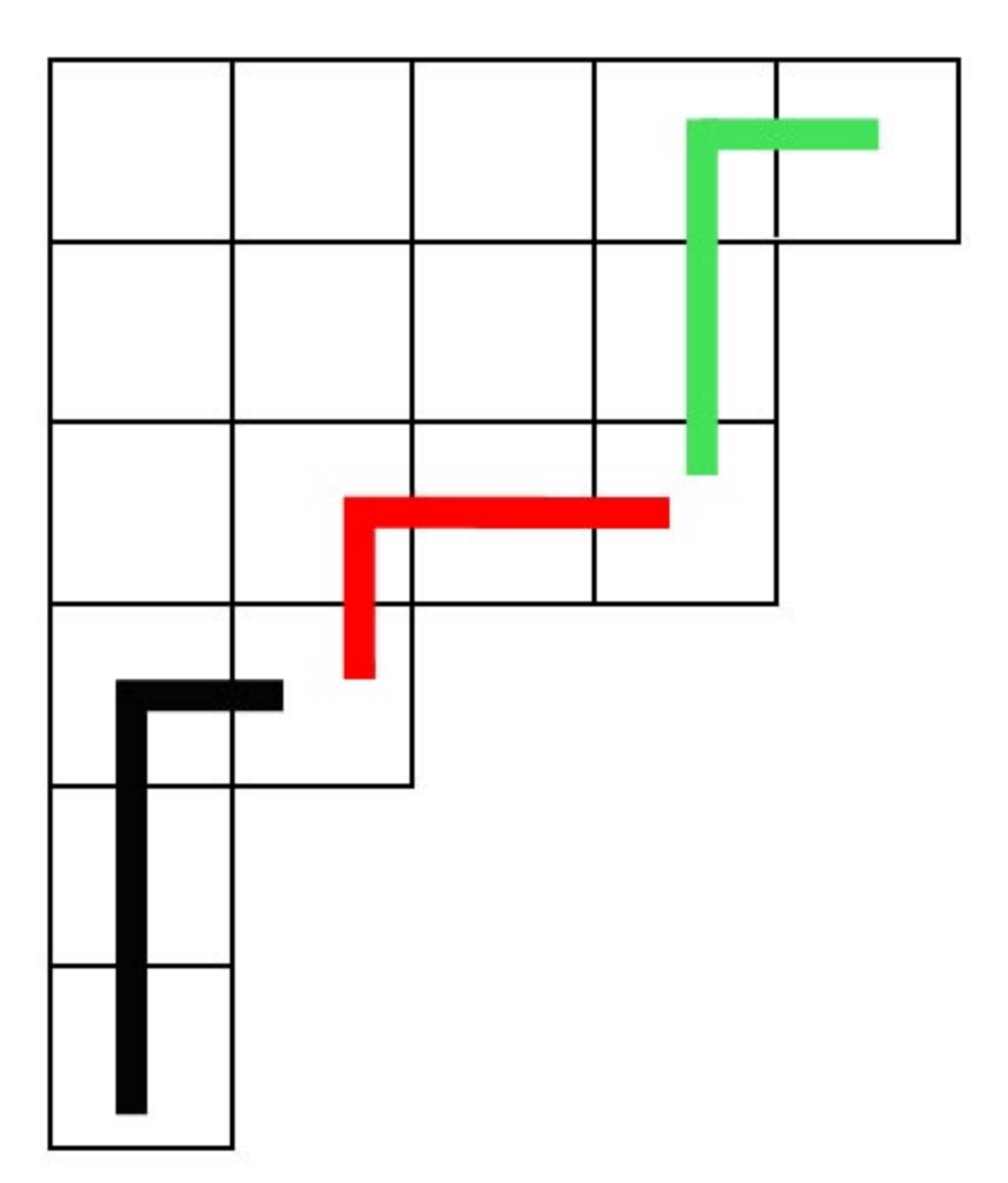}
\\
\caption{All skew hooks of length 4 in $\lambda=[5,4,4,2,1,1]$}
\label{skew}
\end{figure}

\begin{thm}\label{nakayama}(Murnaghan-Nakayama rule) If $\lambda \vdash n$ and $\sigma \in \sym(n)$ can be written as a product of an $m$-cycle and  a disjoint permutation $h\in \sym(n-m)$, then
\[
\chi^{\lambda}(\sigma)=\sum_{\mu} (-1)^{r(\mu)}\chi^{\mu}(h),
\]
where the sum is over all partitions $\mu$ of $n-m$ that are obtained from $\lambda$ by removing a skew hook of length $m$, and
$r(\mu)$ is one less than the number of rows of the removed skew hook.\qed
\end{thm}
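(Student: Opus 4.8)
The plan is to deduce the rule from the \emph{Frobenius character formula}, which evaluates $\chi^{\lambda}$ as a coefficient in a product of power sums; I will take that formula as known (a proof appears in \cite{MR1824028}). Fix notation as follows: let $k$ be the number of parts of $\lambda$ (padding with zero parts if convenient), put $\delta=(k-1,k-2,\dots,1,0)$, and for a cycle type $\rho=(\rho_1,\rho_2,\dots)$ write $p_\rho(x)=\prod_s p_{\rho_s}(x)$ where $p_r(x)=x_1^r+\cdots+x_k^r$. The Frobenius formula asserts that, when $\sigma$ has cycle type $\rho$,
\[
\chi^{\lambda}(\sigma)=\Big[\,x^{\lambda+\delta}\,\Big]\;\Big(\prod_{i<j}(x_i-x_j)\Big)\,p_\rho(x),
\]
where $[\,x^{\alpha}\,]F$ denotes the coefficient of the monomial $x^{\alpha}=x_1^{\alpha_1}\cdots x_k^{\alpha_k}$ in $F$.

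First I would peel off the $m$-cycle using multiplicativity of $p_\rho$. Since $\sigma$ is an $m$-cycle times a disjoint $h$ of cycle type $\rho'\vdash n-m$, its cycle type is $\rho=(m,\rho')$, so $p_\rho=p_m\,p_{\rho'}$. Writing $\ell=\lambda+\delta$, so that the entries $\ell_i=\lambda_i+k-i$ are the strictly decreasing ``beta numbers'' of $\lambda$, and substituting $p_m=\sum_{i=1}^{k}x_i^{m}$, the target monomial $x^{\ell}$ can only arise by raising exactly one exponent by $m$; hence
\[
\chi^{\lambda}(\sigma)=\sum_{i=1}^{k}\;\Big[\,x^{\,\ell-m e_i}\,\Big]\Big(\prod_{a<b}(x_a-x_b)\Big)\,p_{\rho'}(x),
\]
where $e_i$ is the $i$-th standard basis vector.

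The heart of the argument is to recognize each nonzero summand as a term $(-1)^{r(\mu)}\chi^{\mu}(h)$. The factor $\prod_{a<b}(x_a-x_b)$ is alternating while $p_{\rho'}$ is symmetric, so the product is alternating in $x_1,\dots,x_k$; therefore a monomial $x^{\beta}$ occurs only when the entries of $\beta$ are pairwise distinct, and its coefficient equals $\sgn(w)$ times the coefficient at the decreasing rearrangement $w\beta$. Applying this to $\beta=\ell-m e_i$, the $i$-th summand vanishes unless the entries of $\ell-m e_i$ are distinct and nonnegative; when they are, sorting into decreasing order yields a vector $\mu+\delta$ for a unique partition $\mu\vdash n-m$, at the cost of a sign $(-1)^{c_i}$, where $c_i$ is the number of indices $j$ with $\ell_i-m<\ell_j<\ell_i$. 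By the Frobenius formula applied to $\mu$ and $h$, that summand is exactly $(-1)^{c_i}\chi^{\mu}(h)$.

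It remains to translate this beta-number bookkeeping into the language of skew hooks, and this is where I expect the real work to lie. I would invoke the abacus model, in which $\ell_1>\cdots>\ell_k$ are recorded as bead positions: the claim is that lowering a single bead from $\ell_i$ to the empty position $\ell_i-m$ corresponds bijectively to deleting a skew hook of length $m$ from $\lambda$, that the partition left behind is precisely the $\mu$ read off above, and — the crucial point for the sign — that the number $c_i$ of beads the sliding bead passes equals the number of rows of the removed skew hook minus one, i.e. $c_i=r(\mu)$. I would establish this by tracking the boundary cells swept out as the bead slides $m$ steps left: each bead it passes forces the strip to step down one row, so the strip occupies exactly $c_i+1$ rows. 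Granting this correspondence and its sign, each index $i$ giving a nonzero summand matches a distinct removable skew hook and conversely, whence
\[
\chi^{\lambda}(\sigma)=\sum_{\mu}(-1)^{r(\mu)}\chi^{\mu}(h),
\]
the sum ranging over all $\mu$ obtained from $\lambda$ by removing a skew hook of length $m$. The main obstacle is thus precisely the last combinatorial lemma, namely that the algebraic move ``subtract $m$ from one beta number and re-sort'' is the abacus incarnation of skew-hook removal with matching sign $(-1)^{c_i}=(-1)^{r(\mu)}$; everything else is coefficient extraction from the Frobenius formula.
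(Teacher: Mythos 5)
The paper does not actually prove this theorem: it states the Murnaghan--Nakayama rule with a \verb|\qed| and refers the reader to \cite[Theorem 4.10.2]{MR1824028} for a proof, so there is no in-paper argument to compare against. Your proposal is a correct and standard route: extract $\chi^{\lambda}(\sigma)$ as a coefficient via the Frobenius formula, use $p_{\rho}=p_m p_{\rho'}$ and the antisymmetry of $\prod_{a<b}(x_a-x_b)\,p_{\rho'}$ to reduce to a signed sum of coefficients at $\ell-me_i$, and then identify ``subtract $m$ from one beta number and re-sort'' with border-strip removal via the abacus, with the sign $(-1)^{c_i}$ matching $(-1)^{r(\mu)}$ because the strip steps down one row for each bead the sliding bead passes. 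You correctly isolate that last combinatorial lemma as the place where the real work lies; it is the standard bead/border-strip correspondence (James--Kerber 2.7.13, or Macdonald I.1, Ex.~8) and your sketch of it is sound. For comparison, Sagan's cited proof works on the symmetric-function side, establishing $s_{\mu}\,p_m=\sum_{\lambda}(-1)^{\mathrm{ht}}s_{\lambda}$ and transporting it through the characteristic map; once $s_{\lambda}$ is written as a bialternant this is literally the same coefficient bookkeeping you perform, so the two approaches differ only in packaging. One small technicality worth stating explicitly if you write this up: fix the number of variables $k$ once (at least the number of parts of $\lambda$) and pad $\mu$ with zero parts when applying the Frobenius formula to $\mu$ and $h$; the formula is stable under such padding, and you also need $\ell_i-m\geq 0$ for the monomial to survive, which you do note.
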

For a proof of this theorem, the reader may refer to \cite[Theorem 4.10.2]{MR1824028}.  For general partitions $\lambda$ and permutations $\sigma$ of general cycle structure, often the Murnaghan-Nakayama rule is not so practical; however for some specific cases, one can easily derive the character value using this rule. The following, for example, are two easy cases; they will be used in Section~\ref{EKR_for_alt}.
\begin{cor}\label{cor:appofMN}
Let $\sigma$ is an $n$-cycle in $\sym(n)$. Then for any partition $\lambda\vdash n$, we have 
\[
\chi^\lambda(\sigma) =
 \begin{cases}
 (-1)^{n-\lambda_1},   & \textrm{if $\lambda=[\lambda_1,1^{n-\lambda_1}]$};\\
  0,                 &\textrm{otherwise.} \qed
 \end{cases}
\]
\end{cor}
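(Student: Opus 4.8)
The plan is to apply the Murnaghan-Nakayama rule (Theorem~\ref{nakayama}) directly, taking the full $n$-cycle $\sigma$ to be itself the distinguished $m$-cycle, with $m=n$. Then the disjoint complementary permutation $h$ lives in the trivial group $\sym(0)$, so $h=\id$, and the only partition of $n-m=0$ appearing in the formula is the empty partition $\emptyset$, for which $\chi^{\emptyset}(h)=1$. Consequently the rule collapses to
\[
\chi^{\lambda}(\sigma)=\sum_{\mu}(-1)^{r(\mu)}\chi^{\mu}(h)=\sum_{\mu\,=\,\emptyset}(-1)^{r},
\]
where the sum ranges over the ways of removing a skew hook of length $n$ from the Young diagram of $\lambda$ so as to leave the empty diagram. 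Thus everything reduces to counting such removals and computing the associated sign.

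The key combinatorial observation I would isolate next is that a skew hook of length $n$ can be removed from $\lambda\vdash n$ leaving $\emptyset$ if and only if the \emph{entire} Young diagram of $\lambda$ is a single skew hook. By the definition of skew hook recalled just before Theorem~\ref{nakayama}, this happens precisely when every box of $\lambda$ is a boundary box, i.e.\ when $\lambda$ contains no box $(i,j)$ for which $(i+1,j+1)$ is also present. I would then verify the elementary claim that this no-$2\times2$-block condition is equivalent to $\lambda$ being a hook, $\lambda=[\lambda_1,1^{\,n-\lambda_1}]$: a hook clearly has no such pair of boxes, and conversely if $\lambda$ is not a hook then $\lambda_2\geq 2$, so the boxes $(1,1),(1,2),(2,1),(2,2)$ all lie in $\lambda$ and witness a forbidden configuration. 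When $\lambda$ fails to be a hook the sum above is empty, giving $\chi^{\lambda}(\sigma)=0$, which is the second case of the statement.

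In the remaining hook case $\lambda=[\lambda_1,1^{\,n-\lambda_1}]$, there is exactly one removable skew hook (the whole diagram), so the sum has a single term and I need only evaluate the sign $(-1)^{r}$. The hook occupies one row of length $\lambda_1$ together with $n-\lambda_1$ further rows each consisting of a single box, for a total of $n-\lambda_1+1$ rows; since $r$ is one less than the number of rows of the removed skew hook, $r=n-\lambda_1$, yielding $\chi^{\lambda}(\sigma)=(-1)^{\,n-\lambda_1}$, as claimed. The only genuinely delicate point is the geometric observation that removing a length-$n$ skew hook forces the whole diagram to be a skew hook, together with its translation into the hook-shape condition; once that is in hand the sign bookkeeping is routine. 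A minor care point worth flagging explicitly is the degenerate invocation of the rule with $h\in\sym(0)$ and $\chi^{\emptyset}(\id)=1$, which should be stated so the base case of the recursion is unambiguous.
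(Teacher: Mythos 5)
Your argument is correct and is exactly the intended derivation: the paper states this corollary without proof as an immediate consequence of the Murnaghan--Nakayama rule, and your write-up simply fills in the standard details (the length-$n$ skew hook must be the whole diagram, which forces $\lambda$ to be a hook, and the sign count $r=n-\lambda_1$ is right). Nothing to correct.
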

\begin{cor}\label{cor:appofMNtwo}
Let $n$ be even and  $\sigma$ be the product of two disjoint $n/2$-cycles in $\sym(n)$. Then 
\[
\chi^\lambda(\sigma) \in \{0,\pm 1,\pm 2\},
\]
for any partition $\lambda\vdash n$.
\end{cor}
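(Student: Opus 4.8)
The plan is to apply the Murnaghan-Nakayama rule (Theorem~\ref{nakayama}) once to strip off one of the two $(n/2)$-cycles, evaluate the resulting inner sum using Corollary~\ref{cor:appofMN}, and then reduce everything to a counting statement about skew hooks. First I would write $\sigma = c\cdot h$, where $c$ is one of the two disjoint $(n/2)$-cycles and $h$ is the other, so that $h$ is a full $(n/2)$-cycle acting on the complementary $n/2$ points, i.e.\ an $(n/2)$-cycle in $\sym(n/2)$. Applying Theorem~\ref{nakayama} with $m = n/2$ gives
\[
\chi^\lambda(\sigma) = \sum_\mu (-1)^{r(\mu)}\,\chi^\mu(h),
\]
the sum being over all partitions $\mu$ of $n/2$ obtained from $\lambda$ by removing a skew hook of length $n/2$, with $r(\mu)$ one less than the number of rows of that skew hook.

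Since $h$ is an $(n/2)$-cycle in $\sym(n/2)$, Corollary~\ref{cor:appofMN} says $\chi^\mu(h) = 0$ unless $\mu$ is a hook, in which case $\chi^\mu(h) = \pm 1$. Hence every surviving term equals $(-1)^{r(\mu)}\chi^\mu(h) = \pm 1$, and
\[
\chi^\lambda(\sigma) = \sum_{\mu}(\pm 1),
\]
where $\mu$ now runs over the hooks $\mu \vdash n/2$ for which $\lambda/\mu$ is a skew hook of length $n/2$. Observe also that for a fixed $\mu \subseteq \lambda$ the skew shape $\lambda/\mu$ is determined, so distinct survivors correspond to distinct hooks $\mu$, with no double counting. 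Thus the entire statement reduces to the claim that \emph{at most two} such hooks $\mu$ arise; granting this, $|\chi^\lambda(\sigma)| \leq 2$ and we are done.

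The crux, and the main obstacle, is this combinatorial claim, and I would prove it by passing to the abacus (beta-number) description of partitions, in which removing a skew hook of length $n/2$ amounts to sliding a single bead down by $n/2$ into an empty nonnegative position. Fixing a large number $t$ of beads, a hook of size $n/2$ is exactly a configuration in which all positions $0,\dots,t-1$ are occupied except for a single gap, together with a single ``high'' bead in position $\geq t$; the key fact is that the number of high beads equals the side length $d(\lambda)$ of the Durfee square of $\lambda$ (since $\beta_i \geq t \iff \lambda_i \geq i$). A single down-slide changes the number of high beads by at most one, so producing a hook (one high bead) forces $d(\lambda) \leq 2$; and when $d(\lambda) = 2$ the only slides that yield a single-high-bead configuration are the (at most two) slides of the two high beads down into the dense region to fill a gap, while sliding a dense bead keeps two high beads and never gives a hook. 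This yields the bound of two survivors, with the extreme value $\pm 2$ occurring for genuine ``two-high-bead'' partitions (such as $\lambda = [2,2]$ when $n = 4$) and $\pm 1$ for the hook shapes $\lambda$.

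The only step demanding real care is this last one: one must check that each high bead contributes at most one admissible slide, that such a slide indeed lands on a gap and so produces a bona fide hook of size exactly $n/2$, and that partitions with $d(\lambda) \geq 3$ contribute nothing. I expect this to be routine on the abacus but a little fiddly to phrase directly with Young diagrams, which is why I would carry it out in beta-number language. As an alternative that stays closer to the diagrammatic setting of this section, one could instead argue that any $\lambda \vdash n$ admitting such a removal must have Durfee square of side at most $2$ (a hook or a ``double hook''), and then examine directly the finitely many border strips of length $n/2$ on its rim.
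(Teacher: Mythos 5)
Your proof is correct and follows the same route as the paper's own (very terse) proof: one application of the Murnaghan--Nakayama rule with $m=n/2$, Corollary~\ref{cor:appofMN} to see that each surviving term is $\pm1$, and a bound of two on the number of removable skew hooks of length $n/2$. The only difference is one of completeness rather than of method: the paper simply asserts that having more than two such skew hooks ``is not possible,'' whereas you correctly isolate this as the crux and give a sound abacus justification (a hook configuration has exactly one bead in position $\geq t$, a single slide changes the number of such beads by at most one, so $d(\lambda)\leq 2$ and only the at most two high-bead slides can produce a hook when $d(\lambda)=2$, with the $d(\lambda)=1$ case contributing at most one further term).
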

\begin{proof}
According to Corollary~\ref{cor:appofMN}, $\chi^{\mu}(h)$ in the Murnaghan-Nakayama rule is either $0$, $1$ or $-1$. Hence if $|\chi^\lambda(\sigma)|>2$, then $\lambda$ must have more than two skew hooks of length $n/2$ which is not possible.
\end{proof}

Let $\lambda \vdash n$. For any box of the Young diagram of $\lambda$, the corresponding \txtsl{hook length} is one plus the number of boxes horizontally to the right and vertically below the box.  Define $\hl(\lambda)$ to be the product of
all hook lengths of $\lambda$. We, next, state the \txtsl{hook length formula} which gives a way  to evaluate the dimension of the representation of $\sym(n)$ corresponding to $\lambda$, in terms of $\hl(\lambda)$. Its proof  relies on the Frobenius formula (see \cite[Section 4.1]{MR1153249}).

\begin{thm}[Hook length formula]\label{hl_formula}
If $\lambda\vdash n$, then the dimension of the irreducible representation of $\sym(n)$ corresponding to $\lambda$ is $n!/\hl(\lambda)$.\qed
\end{thm}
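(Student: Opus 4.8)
The plan is to derive the formula from the Frobenius dimension formula and to reduce the whole statement to a single combinatorial identity about hook lengths. Write $\lambda=[\lambda_1,\dots,\lambda_k]$, so that $k$ is the number of rows, and set $\ell_i=\lambda_i+k-i$ for $1\le i\le k$; these are strictly decreasing positive integers. A direct computation with the arm-plus-leg-plus-one description of hook lengths shows that $\ell_i$ is precisely the hook length of the box in row $i$ of the first column, since $(\lambda_i-1)+(k-i)+1=\ell_i$. Specializing the Frobenius character formula to the identity permutation amounts to extracting the coefficient of $x_1^{\ell_1}\cdots x_k^{\ell_k}$ in $\prod_{i<j}(x_i-x_j)\,(x_1+\cdots+x_k)^n$, which gives
\[
\dim S^\lambda=\frac{n!}{\ell_1!\cdots\ell_k!}\prod_{i<j}(\ell_i-\ell_j).
\]
Comparing with the desired value $n!/\hl(\lambda)$, it suffices to prove the identity
\[
\hl(\lambda)=\frac{\ell_1!\cdots\ell_k!}{\prod_{i<j}(\ell_i-\ell_j)}.
\]

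The key ingredient is the following lemma: for each fixed row $i$, the set of hook lengths of the boxes in row $i$ equals
\[
\{1,2,\dots,\ell_i\}\setminus\{\ell_i-\ell_j:i<j\le k\}.
\]
Granting this, and noting that the excluded numbers $\ell_i-\ell_j$ (for $j>i$) are distinct integers lying in $\{1,\dots,\ell_i\}$ by the strict monotonicity of the $\ell_j$, the product of the hook lengths in row $i$ equals $\ell_i!\big/\prod_{j>i}(\ell_i-\ell_j)$. Taking the product over all rows $i$ collects the numerators into $\ell_1!\cdots\ell_k!$ and the denominators into $\prod_{i<j}(\ell_i-\ell_j)$, which is exactly the identity above; combined with the Frobenius formula this proves the theorem.

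The main obstacle is the lemma, which I would prove using the beta-set (abacus) encoding of $\lambda$. Mark the positions $\ell_1>\cdots>\ell_k$ on $\{0,1,2,\dots\}$ as occupied and all other positions as empty. There is a classical bijection between the boxes of $\lambda$ and the pairs $(a,b)$ with $a$ empty, $b$ occupied and $a<b$, under which the hook length of the box equals $b-a$; this comes from reading the rim of the Young diagram as a lattice path of unit East and North steps, so that a box corresponds to the North step ending its row and the East step ending its column, with hook length equal to the number of boundary steps between them. The boxes in row $i$ are exactly those paired with the occupied position $b=\ell_i$, so their hook lengths are the numbers $\ell_i-a$ as $a$ ranges over the empty positions below $\ell_i$, namely over $\{0,1,\dots,\ell_i-1\}\setminus\{\ell_{i+1},\dots,\ell_k\}$. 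This yields precisely the set in the lemma, and verifying that the hook length of a box equals the boundary distance $b-a$ is the one genuinely combinatorial step on which the whole argument rests.
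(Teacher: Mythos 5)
Your proposal is correct, and it follows exactly the route the paper itself indicates: the paper states the hook length formula without proof, remarking only that it ``relies on the Frobenius formula'' and citing Fulton--Harris, which is precisely the derivation you carry out. Your reduction to the identity $\hl(\lambda)=\ell_1!\cdots\ell_k!\big/\prod_{i<j}(\ell_i-\ell_j)$ and the beta-set argument for the row-$i$ hook lengths being $\{1,\dots,\ell_i\}\setminus\{\ell_i-\ell_j: j>i\}$ are the standard and correct way to close that gap.
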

To illustrate this formula, consider the partition $\lambda=[5,3,3,2,1]\vdash 14$ below. The hook length of each box in the Young diagram  for $\lambda$ is written in the box.

\begin{figure}[ht!]
\[
\young(97521,642,531,31,1)
\]
\caption{Hook lengths $\lambda=[5,3,3,2,1]$}
\end{figure}
Therefore,
\[
\chi^{\lambda}(\id)=\frac{14!}{9\times7\times5\times 2\times6\times4\times2\times5\times3\times3}=64064.
\]
\smallskip

The following are easy consequences of the hook length formula.
\begin{cor}\label{dim_oftranspos} For any partition $\lambda$ of $n$, we have $\chi^{\lambda}(\id)=\chi^{\hat{\lambda}}(\id)$.\qed
\end{cor}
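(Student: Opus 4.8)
The plan is to derive this immediately from the hook length formula (Theorem~\ref{hl_formula}). By that theorem, $\chi^\lambda(\id) = n!/\hl(\lambda)$ and $\chi^{\hat\lambda}(\id) = n!/\hl(\hat\lambda)$, so it suffices to prove that $\hl(\lambda) = \hl(\hat\lambda)$; that is, the product of all hook lengths is invariant under transposition of the partition.

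The key step is to exhibit a hook-length-preserving bijection between the boxes of the Young diagram of $\lambda$ and those of $\hat\lambda$. Recall that $\hat\lambda$ is formed by interchanging rows and columns, so the box in position $(i,j)$ of $\lambda$ is sent to the box in position $(j,i)$ of $\hat\lambda$. Under this reflection across the main diagonal, the boxes lying horizontally to the right of $(i,j)$ in $\lambda$ correspond exactly to the boxes lying vertically below $(j,i)$ in $\hat\lambda$, and the boxes lying vertically below $(i,j)$ correspond to the boxes lying horizontally to the right of $(j,i)$. Since the hook length of a box is one plus the number of boxes to its right plus the number of boxes below it, these two contributions merely swap roles, and the hook length of $(i,j)$ in $\lambda$ equals the hook length of $(j,i)$ in $\hat\lambda$.

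Consequently the multiset of hook lengths of $\lambda$ coincides with that of $\hat\lambda$, whence $\hl(\lambda) = \hl(\hat\lambda)$ and the claimed equality of dimensions follows. There is essentially no obstacle here: the only point requiring care is to confirm that reflection across the main diagonal is genuinely a bijection of boxes that interchanges the ``arm'' and ``leg'' of each hook, which is immediate from the definition of the transpose diagram.
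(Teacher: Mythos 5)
Your argument is correct and matches the paper's intent exactly: the corollary is stated as an immediate consequence of the hook length formula (Theorem~\ref{hl_formula}), and the reflection of the Young diagram across the main diagonal, which swaps the arm and leg of each hook, is precisely the standard justification that $\hl(\lambda)=\hl(\hat{\lambda})$. Nothing further is needed.
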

\begin{cor}\label{dim_non-identities} Let $\lambda\vdash n$. Then $\chi^{\lambda}(\id)=1$ if and only if $\lambda=[n]$ or $[1^n]$.\qed
\end{cor}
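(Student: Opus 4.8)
The proof will rest entirely on the hook length formula (Theorem~\ref{hl_formula}), which gives $\chi^{\lambda}(\id)=n!/\hl(\lambda)$. Since this quantity is a positive integer, the condition $\chi^{\lambda}(\id)=1$ is equivalent to $\hl(\lambda)=n!$, and the whole statement becomes a purely combinatorial claim about hook lengths: the product of all hook lengths of $\lambda$ equals $n!$ if and only if $\lambda=[n]$ or $\lambda=[1^n]$.

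For the ($\Leftarrow$) direction I would simply compute. For $\lambda=[n]$ the diagram is a single row, so the box in column $j$ has hook length $n-j+1$; thus the hook lengths are exactly $n,n-1,\dots,1$ and $\hl([n])=n!$. For $\lambda=[1^n]$ I would avoid a second computation by invoking Corollary~\ref{dim_oftranspos} together with $[1^n]=\widehat{[n]}$, which gives $\chi^{[1^n]}(\id)=\chi^{[n]}(\id)=1$. (These two partitions are exactly the shapes carrying the trivial and the sign representations described in the examples of Section~\ref{rep_sym}.)

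The content is the ($\Rightarrow$) direction, i.e. showing $\hl(\lambda)<n!$ whenever $\lambda$ has at least two rows and at least two columns. My plan is to prove the stronger statement that the multiset of the $n$ hook lengths of any $\lambda\vdash n$ is \emph{majorized} by $\{1,2,\dots,n\}$: if $h_{(1)}\le h_{(2)}\le\cdots\le h_{(n)}$ are the sorted hook lengths, then $h_{(i)}\le i$ for every $i$. Granting this, $\hl(\lambda)=\prod_i h_{(i)}\le\prod_i i=n!$, with equality if and only if $h_{(i)}=i$ for all $i$, i.e. the hook lengths are exactly $1,2,\dots,n$. A short separate argument then shows that the latter forces a row or a column: the largest hook length $h(1,1)=\lambda_1+k-1$ (where $k$ is the number of rows) equals $n$ only when $\lambda$ is a hook $[\lambda_1,1^{k-1}]$, and among hooks the hook-length multiset $\{n\}\cup\{1,\dots,\lambda_1-1\}\cup\{1,\dots,k-1\}$ avoids repetitions only when $\lambda_1=1$ or $k=1$.

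The hard part will be the majorization inequality $h_{(i)}\le i$, equivalently the threshold count ``for every $t\ge 1$, at most $n-t+1$ boxes of $\lambda$ have hook length $\ge t$''. I would attack this by induction on $n$, removing a corner box $c$ (which always exists and has hook length $1$): deleting $c$ decreases by exactly $1$ the hook length of every box strictly to its left in its row and strictly above it in its column and leaves all other hook lengths unchanged, so one can compare the thresholds of $\lambda$ with those of $\lambda\setminus c\vdash n-1$ and close the induction. As a fallback that avoids this lemma, one can instead split into cases: for a proper hook $[a,1^{b}]$ with $a\ge 2$ and $b\ge 1$ the direct computation gives $\hl=n\,(a-1)!\,b!$, and $n\,(a-1)!\,b!<(a+b)!=n!$ follows from $\binom{a+b-1}{a-1}\ge 2$; for a non-hook $\lambda$ (one containing a $(2,2)$-box) one argues separately that hook lengths must repeat and the product falls short. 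I would present the majorization route as the main line, since it yields both the bound and the equality characterization uniformly.
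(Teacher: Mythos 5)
Your proof is correct, but note that the paper offers no argument for this corollary at all --- it is stated with a \qed{} as an immediate consequence of the hook length formula --- so there is no ``paper proof'' to match; what you have supplied is a complete and in fact stronger result. The reduction to $\hl(\lambda)=n!$, the computation for $[n]$ together with Corollary~\ref{dim_oftranspos} for $[1^n]$, the majorization $h_{(i)}\le i$ via corner removal (cleanest in threshold form: every box of $\lambda$ other than the corner $c$ is a box of $\lambda\setminus c$ whose hook length drops by at most one, so the number of boxes of $\lambda$ with hook length at least $t$ is at most the number of boxes of $\lambda\setminus c$ with hook length at least $t-1$, which is at most $n-t+1$ by induction), and the equality analysis via $h(1,1)=\lambda_1+\hat{\lambda}_1-1$ are all sound. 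Two remarks. First, the inequality $\hl(\lambda)\le n!$ comes for free, since $\chi^\lambda(\id)=n!/\hl(\lambda)$ is the dimension of a representation and hence a positive integer; only the equality case carries content, so your majorization proves more than the corollary needs (though it is a nice self-contained combinatorial fact). Second, there is a one-line alternative that avoids hook lengths entirely: a partition with $\chi^\lambda(\id)=1$ gives a one-dimensional representation of $\sym(n)$, i.e.\ a homomorphism $\sym(n)\to\mathbb{C}^{\times}$; all transpositions are conjugate and generate $\sym(n)$, so there are exactly two such homomorphisms (trivial and sign), and by the examples of Section~\ref{rep_sym} together with Corollary~\ref{list_of_irrs_of_sym} these correspond precisely to $[n]$ and $[1^n]$. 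Finally, your fallback for non-hook shapes (``hook lengths must repeat and the product falls short'') is the one place where the argument is not actually carried out --- repetition alone does not bound the product --- but since your main majorization line is complete, nothing is missing.
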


\section{Representations of the alternating group}\label{rep_alt}

This section includes some notes about the irreducible representations (or characters) of the alternating group $\alt(n)$. In general, if $H$ is subgroup of a group $G$ of index $2$, then one can obtain $\irr(H)$ using $\irr(G)$. In our particular case (i.e. $G=\sym(n)$ and $H=\alt(n)$), the method is explained below. Our main reference for this part is \cite[Section 5.1]{MR1153249}. We start with the following theorem which is a particular case of Proposition 5.1 in \cite{MR1153249}.  

\begin{thm}\label{reps_of_alt} Let $\lambda$ be a partition of $n$ and let $W$ and $\widehat{W}$ be the restrictions of $S^\lambda$ and $S^{\hat{\lambda}}$ to $\alt(n)$, respectively. Then
\begin{enumerate}[(a)]
\item if $\lambda$ is not symmetric, then $W$ is an irreducible representation of $\alt(n)$ and is isomorphic to $\widehat{W}$; and
\item if $\lambda$ is symmetric, then $W=W'\oplus W''$, where $W'$ and $W''$ are irreducible but not isomorphic representations of $\alt(n)$.
\end{enumerate}
All the irreducible representations of $\alt(n)$ arise uniquely in this way.\qed
\end{thm}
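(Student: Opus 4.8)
The plan is to treat this as the standard analysis of restricting the irreducible characters of $\sym(n)$ to the index-two subgroup $\alt(n)$. Write $G=\sym(n)$ and $H=\alt(n)$, and let $\sgn$ denote the sign character of $G$, whose kernel is exactly $H$, so that $\sgn(h)=1$ for every $h\in H$. The one external ingredient I would take as known (it is the standard twisting identity for Specht modules, and is compatible with Corollary~\ref{dim_oftranspos}, which already gives $\chi^\lambda(\id)=\chi^{\hat\lambda}(\id)$) is that $\chi^{\hat\lambda}=\sgn\cdot\chi^\lambda$ as class functions on $G$, i.e.\ $S^{\hat\lambda}\cong S^\lambda\otimes(\text{sign})$. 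Everything else I would build from the character theory already developed in the excerpt.

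First I would settle the dichotomy (a)/(b) by a short norm computation. Writing $\psi=\chi^\lambda\downarrow_H^G$, using that $\tfrac12(1+\sgn)$ is the indicator of $H$ in $G$ and that $\sym(n)$-characters are real, one computes
\[
\left\langle \psi,\psi \right\rangle_H = \left\langle\chi^\lambda,\chi^\lambda\right\rangle_G + \left\langle\sgn\cdot\chi^\lambda,\chi^\lambda\right\rangle_G .
\]
The first term equals $1$ by Corollary~\ref{inner_prod_one_then_irr}, and the second equals $\left\langle\chi^{\hat\lambda},\chi^\lambda\right\rangle_G$, which by the orthogonality of irreducible characters (Theorem~\ref{orth_chars}) is $1$ if $\lambda=\hat\lambda$ and $0$ otherwise. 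Hence $\left\langle\psi,\psi\right\rangle_H=1$ when $\lambda$ is not symmetric, so $W$ is irreducible, while $\left\langle\psi,\psi\right\rangle_H=2$ when $\lambda$ is symmetric; in the latter case $\sum_i m_i^2=2$ forces $\psi=\chi'+\chi''$ with two distinct irreducible constituents, each of multiplicity one, giving $W=W'\oplus W''$ with $W'\not\cong W''$. For the isomorphism $W\cong\widehat{W}$ in (a), note that for $h\in H$ we have $\chi^{\hat\lambda}(h)=\sgn(h)\chi^\lambda(h)=\chi^\lambda(h)$, so $S^\lambda$ and $S^{\hat\lambda}$ have identical restricted characters and therefore isomorphic restrictions by Theorem~\ref{char-classification}.

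The substantive part, and the main obstacle, is the final assertion that every irreducible of $H$ arises in this way and does so uniquely. For existence I would restrict the regular representation: the restricted character of $\mathbb{C}[G]$ agrees with that of $\mathbb{C}[H]^{\oplus 2}$ (both vanish off $\id$ and equal $|G|=2|H|$ at $\id$, by Example~\ref{some_characters}), so $\mathbb{C}[G]\downarrow_H^G\cong\mathbb{C}[H]^{\oplus2}$ by Theorem~\ref{char-classification}; since $\mathbb{C}[G]\cong\bigoplus_\lambda(\dim S^\lambda)\,S^\lambda$, every irreducible of $H$ occurs in some $S^\lambda\downarrow_H^G$. For uniqueness I would invoke Frobenius reciprocity, $\left\langle S^\lambda,\rho\uparrow_H^G\right\rangle_G=\left\langle S^\lambda\downarrow_H^G,\rho\right\rangle_H$ (derivable from the induced-character formula~(\ref{char_of_induced})), together with the dimension count $\dim(\rho\uparrow_H^G)=2\dim\rho$. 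If $\rho$ is a constituent of $S^\lambda\downarrow_H^G$, then $S^\lambda$ is a constituent of $\rho\uparrow_H^G$; a short case analysis then shows $\rho\uparrow_H^G=S^\lambda\oplus S^{\hat\lambda}$ when $\lambda$ is non-symmetric (using $\dim S^\lambda=\dim S^{\hat\lambda}$ from Corollary~\ref{dim_oftranspos}) and $\rho\uparrow_H^G=S^\lambda$ when $\lambda$ is symmetric. In either case the only $G$-irreducibles whose restriction contains $\rho$ are $\{S^\lambda,S^{\hat\lambda}\}$, so $\rho$ determines $\lambda$ up to transpose; this exactly accounts for the overlap in (a) and shows that distinct symmetric partitions and distinct transpose-pairs yield disjoint, pairwise non-isomorphic irreducibles of $H$. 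I expect the delicate step to be this Clifford-type disjointness bookkeeping; as an independent cross-check one can compare the resulting count $2s+p$ (with $s$ symmetric partitions and $p$ transpose-pairs) against the number of conjugacy classes of $\alt(n)$ via Theorem~\ref{char-conjugacy}.
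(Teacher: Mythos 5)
Your argument is correct, but note that the paper does not actually prove this theorem at all: it is stated with a \qed and attributed to Proposition~5.1 of \cite{MR1153249}, so there is no internal proof to compare against. What you have written is essentially the standard Clifford-theoretic argument that the cited reference carries out: the norm computation $\left\langle \chi^\lambda\downarrow_{\alt(n)}^{\sym(n)},\chi^\lambda\downarrow_{\alt(n)}^{\sym(n)}\right\rangle = \left\langle\chi^\lambda,\chi^\lambda\right\rangle+\left\langle\sgn\cdot\chi^\lambda,\chi^\lambda\right\rangle$ together with the twisting identity $\chi^{\hat\lambda}=\sgn\cdot\chi^\lambda$ gives the dichotomy cleanly, and restricting the regular representation gives surjectivity onto $\irr(\alt(n))$. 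The one step you should tighten is the ``short case analysis'' in the uniqueness argument: in the symmetric case, concluding $\rho\uparrow_{\alt(n)}^{\sym(n)}=S^\lambda$ from the dimension count requires knowing $\dim W'=\dim W''$, which is not yet established at that point (it follows from Clifford's theorem, since $W''$ is the conjugate of $W'$ by an odd permutation, but you have not said so). A cleaner route that avoids this entirely is to generalize your own norm computation to two partitions: $\left\langle \chi^\lambda\downarrow,\chi^\mu\downarrow\right\rangle_{\alt(n)}=\left\langle\chi^\lambda,\chi^\mu\right\rangle+\left\langle\chi^{\hat\lambda},\chi^\mu\right\rangle$, which vanishes unless $\mu\in\{\lambda,\hat\lambda\}$ by Theorem~\ref{orth_chars}; this gives the disjointness and uniqueness in one line. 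With that repair the proof is complete, and your closing cross-check against the conjugacy-class count of $\alt(n)$ via Theorem~\ref{char-conjugacy} is a sensible sanity check consistent with the split/non-split class description given later in the same section of the paper.
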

Throughout this thesis, we will use the notation of Theorem~\ref{reps_of_alt}.

For any conjugacy class $c$ of $\alt(n)$, either $c$ is also a conjugacy class in $\sym(n)$ or $c\cup c'$ is a conjugacy class in $\sym(n)$, where $c'=tct^{-1}$, for some $t\notin \alt(n)$.  The second type of conjugacy classes are said to be \textsl{split}\index{split conjugacy class}. A conjugacy class $c$ of $\alt(n)$ is split if and only if all the cycles in the cycle  decomposition of an element of $c$ have odd length and no two cycles have the same length.

Suppose $c$ is a conjugacy class of $\sym(n)$ that is not a conjugacy class in $\alt(n)$. Assume that the decomposition of an element of $c$ contains cycles of odd lengths $q_1>q_2>\cdots>q_r$. Then we say $c$ \textsl{corresponds} to the symmetric partition
$\lambda=[\lambda_1,\lambda_2,\ldots]$ of $n$ if $q_1=2\lambda_1-1$, $q_2=2\lambda_2-3$, $q_3=2\lambda_3-5,\ldots$. This is a
correspondence between a split conjugacy classes of $\alt(n)$ and the symmetric partitions  of $n$. 
\begin{example}
Consider the conjugacy class of $\sym(23)$ containing the element 
\[
(1\quad 2\quad\cdots \quad 11)\, (12\quad 13\quad\cdots\quad 20)\,(21\quad 22\quad 23).
\]
In this case, $q_1=11$, $q_2=9$ and $q_3=3$. Thus $\lambda_1=6$, $\lambda_2=6$ and $\lambda_3=4$. Since $\lambda$ must be symmetric, this implies that $\lambda=[6,6,4,3,2,2]$. See Figure~\ref{correspondence}.

\begin{figure}[ht!]
\centering
\[ c:\underbrace{(\quad\cdots\quad)}_{11} \underbrace{(\quad\cdots\quad)}_{9} 
  \underbrace{(\quad\cdots\quad)}_{3}\quad\longrightarrow \quad \yng(6,6,4,3,2,2)\]
\caption{Split conjugacy classes and symmetric partitions}
\label{correspondence}
\end{figure}
\end{example}
Using this correspondence, we can give equations for the irreducible characters of $\alt(n)$ in terms of the characters of $\sym(n)$.  This result is also proved in \cite[Section 5.1]{MR1153249}.

\begin{thm}\label{char_values_of_alt}
Let $\lambda$ be a partition of $n$ and let $\chi^\lambda$ be the character of $S^\lambda$ for $\sym(n)$. Assume $c$ is a non-split conjugacy   class  of $\alt(n)$ and $c'\cup c''$ is a pair of split conjugacy classes in  $\alt(n)$. Let $\sigma\in c$, $\sigma'\in c'$, $\sigma''\in c''$ and $\bar{\sigma}\in c'\cup c''$. 
\begin{enumerate}[(a)]
\item If $\lambda$ is not symmetric, let $\chi_\lambda$ be the character of $\alt(n)$ corresponding to $W$, then
\[ 
\chi_\lambda(\sigma)=\chi^\lambda(\sigma)\quad\text{and}\quad\chi_\lambda(\sigma')=\chi_\lambda(\sigma'')=\chi^\lambda(\bar{\sigma}).
\]
\item If $\lambda$ is symmetric, let $\chi_\lambda'$ and $\chi_\lambda''$ be the characters of $\alt(n)$ corresponding to $W'$ and $W''$, respectively, then
\[
\chi_\lambda'(\sigma)=\chi_\lambda''(\sigma)=\frac{1}{2}\chi^\lambda(\sigma),
\] 
and
\begin{enumerate}[(i)]
\item if $c'\cup c''$ does not correspond to $\lambda$ then 
\[
\chi_\lambda'(\sigma')= \chi_\lambda'(\sigma'')= \chi_\lambda''(\sigma')=\chi_\lambda''(\sigma'')=\frac{1}{2}\chi^\lambda(\bar{\sigma}).
\]
\item if $c'\cup c''$ corresponds to $\lambda$, then
\[ 
\chi_\lambda'(\sigma')= \chi_\lambda''(\sigma'')=x\quad\text{and}\quad \chi_\lambda'(\sigma'')=\chi_\lambda''(\sigma')=y.
\]
\end{enumerate}
The values of $x$ and $y$ are
\[
\frac{1}{2}\left[(-1)^m\pm\sqrt{(-1)^m q_1\dotsm q_r}\right],
\]
where $m=\frac{n-r}{2}$ and the cycle decomposition of an element of $c'\cup c''$ has cycles of odd lengths $q_1,\ldots,q_r$.\qed
\end{enumerate}
\end{thm}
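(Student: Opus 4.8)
The plan is to reduce the statement to the branching behaviour of Theorem~\ref{reps_of_alt} together with one elementary symmetry: conjugation by a fixed odd permutation swaps the two constituents of a split restriction. Fix $t\in\sym(n)\setminus\alt(n)$ and, for a class function $f$ on $\alt(n)$, write $f^t(g)=f(t^{-1}gt)$. Two remarks carry most of the weight: the restriction of $\chi^\lambda$ to $\alt(n)$ is $t$-invariant (it is the restriction of a $\sym(n)$-class function), and $\chi^\lambda$ is constant on the single $\sym(n)$-class $c'\cup c''$. Part (a) is then immediate: when $\lambda$ is not symmetric, $W$ is irreducible with character $\chi_\lambda=\chi^\lambda|_{\alt(n)}$, so $\chi_\lambda(\sigma)=\chi^\lambda(\sigma)$, and since $\sigma'$, $\sigma''$, $\bar\sigma$ all lie in $c'\cup c''$ we get $\chi_\lambda(\sigma')=\chi_\lambda(\sigma'')=\chi^\lambda(\bar\sigma)$.

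For part (b) write $\chi^\lambda|_{\alt(n)}=\chi_\lambda'+\chi_\lambda''$ and extract from the proof of Theorem~\ref{reps_of_alt} (the index-two case of Clifford theory) that the two constituents are conjugate, $(\chi_\lambda')^t=\chi_\lambda''$. Put $d=\chi_\lambda'-\chi_\lambda''$; then $d^t=-d$, while irreducibility and inequivalence of $W'$, $W''$ give $\langle d,d\rangle=2$. On a non-split class $c$, which is stable under conjugation by $t$, one has $d(\sigma)=d^t(\sigma)=-d(\sigma)$, so $d(\sigma)=0$ and hence $\chi_\lambda'(\sigma)=\chi_\lambda''(\sigma)=\tfrac12\chi^\lambda(\sigma)$, the first equation of (b). The same antisymmetry gives $d(\sigma'')=-d(\sigma')$ on each split pair, so the remaining content of (b) is entirely the determination of the class function $d$, which is supported on the split classes.

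To pin down $d$, let $\lambda$ range over the symmetric partitions and $i$ over the split pairs; both are indexed by the partitions of $n$ into distinct odd parts, hence have the same cardinality $s$, and the $d_\lambda$ form an orthogonal family of squared norm $2$ spanning the $s$-dimensional space of $t$-anti-invariant class functions, which is spanned by the signed indicators $\mathbf{1}_{c_i'}-\mathbf{1}_{c_i''}$. Expanding $d_\lambda=\sum_i d_\lambda(\sigma_i')(\mathbf{1}_{c_i'}-\mathbf{1}_{c_i''})$ and using $\langle d_\lambda,d_\mu\rangle=2\delta_{\lambda\mu}$ shows that the matrix with entries $d_\lambda(\sigma_i')/\sqrt{q_1^{(i)}\cdots q_r^{(i)}}$ is unitary, where $q_1^{(i)}\cdots q_r^{(i)}=|C_{\sym(n)}(\sigma_i')|$ because a permutation with distinct cycle lengths is centralised only by the product of the cyclic groups its cycles generate. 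Granting that each $d_\lambda$ is supported on a single split pair, unitarity forces $|d_\lambda(\sigma')|^2=q_1\cdots q_r$ there, so $|x-y|=\sqrt{q_1\cdots q_r}$ while $d$ vanishes off that pair, which is exactly statement (i). Finally I would evaluate $\chi^\lambda(\bar\sigma)=(-1)^m$ on the corresponding class by the Murnaghan--Nakayama rule (Theorem~\ref{nakayama} and Corollary~\ref{cor:appofMN}), giving $x+y=(-1)^m$, and fix the sign under the radical as $(-1)^m q_1\cdots q_r$.

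The hard part is the claim that $d_\lambda$ is supported on exactly the split class corresponding to $\lambda$, together with the exact value $\pm\sqrt{(-1)^m q_1\cdots q_r}$ there: the orthogonality bookkeeping above only constrains magnitudes once single-class support is known, and matching each symmetric partition to its class carries the real arithmetic. This is the classical computation of Frobenius, which I would either reproduce by the symmetric-function/power-sum argument or cite from the reference for Theorem~\ref{reps_of_alt}; everything else, namely part (a), the non-split values, and the reduction of (i) to the support claim, is forced purely by the conjugation symmetry and character orthogonality (Theorem~\ref{orth_chars}).
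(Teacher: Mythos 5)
The paper does not actually prove this theorem: it is stated with a bare citation to \cite[Section 5.1]{MR1153249}, so there is no in-paper argument to measure you against. Taken on its own terms, your sketch is the standard Clifford-theoretic derivation for an index-two subgroup, and the parts you carry out in full are sound: part (a) from irreducibility of $W$ and constancy of $\chi^\lambda$ on the $\sym(n)$-class $c'\cup c''$; the non-split half of (b) from the antisymmetry of $d_\lambda=\chi_\lambda'-\chi_\lambda''$ under conjugation by an odd $t$; the norm computation $\langle d_\lambda,d_\lambda\rangle=2$ and the dimension count identifying the $d_\lambda$ with an orthogonal basis of the $t$-anti-invariant class functions; and the centralizer order $q_1\cdots q_r$. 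Reducing (b)(i) and the magnitude in (b)(ii) to the single claim that $d_\lambda$ is supported on the split pair corresponding to $\lambda$ is exactly the right move, and deferring that claim (Frobenius's computation) to the same reference the paper itself relies on is legitimate here. This buys considerably more than the paper offers, since everything except the support-plus-sign statement is then forced by orthogonality alone.

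Two points deserve slightly more care than your wording gives them, though both fall under the deferral you already announce. First, unitarity only yields $|x-y|^2=q_1\cdots q_r$, whereas the stated formula needs $(x-y)^2=(-1)^m q_1\cdots q_r$; passing from the modulus to the square requires knowing that $x-y$ is real or purely imaginary (e.g.\ because $x+y$ is rational and $x,y$ are interchanged by complex conjugation or by a Galois automorphism according to the reality of the class), and this is part of what the classical computation supplies. Second, the evaluation $\chi^\lambda(\bar\sigma)=(-1)^m$ is not covered by Corollary~\ref{cor:appofMN}, which treats only $n$-cycles; it needs the iterated Murnaghan--Nakayama/principal-hook argument for a self-conjugate $\lambda$ against the class with distinct odd cycle lengths $q_1,\dots,q_r$, which is again the Frobenius step. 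As long as both are understood to be included in the cited computation, the proposal is a correct (and more informative) account of why the theorem holds.
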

We point out that  part (b) of the theorem  contains two cases: the case where  the conjugacy class is non-split (the conjugacy class is $c$) and the case where the conjugacy class is split (the conjugacy class is  $c'\cup c''$). The latter case, in turn, has two cases: the case where the split conjugacy class does not correspond to $\lambda$ and the case where it does correspond to $\lambda$. 
We will, also, use the notation of Theorem~\ref{char_values_of_alt} throughout the thesis and hence want to emphasize that for
representations of $\sym(n)$ we use $\lambda$ as a superscript and for representations of $\alt(n)$, the $\lambda$ is a subscript.

\section{Two-layer hooks}\label{sec:twolayerhooks}
In this section we  define a new type of partition, namely ``two-layer hooks'' and study some of their properties. This machinery will be useful in Section~\ref{EKR_for_alt}. Assume $\lambda=[\lambda_1,\ldots,\lambda_k]$ is a partition of $n$ such that $k\geq 3$, $\lambda_2+\hat{\lambda}_2\geq 5$, $\lambda_3\leq 2$ and $\lambda_1-\lambda_2=\hat{\lambda}_1-\hat{\lambda}_2>0$. Then we say $\lambda$ is a \textsl{two-layer hook}\index{hook!two-layer}.  In fact, a two-layer hook is a partition whose Young diagram is obtained by ``appropriately gluing'' two hooks of lengths greater than $1$.  See Figure~\ref{two_layer_hook} for some examples of two-layer hooks.

\begin{figure}[H]
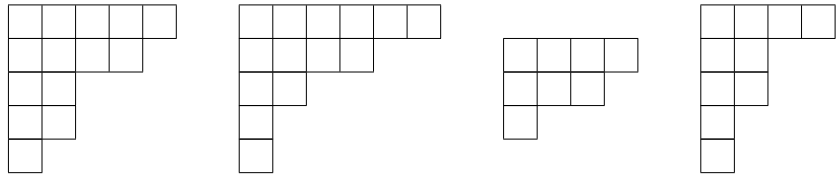

\[\yng(5,4,2,2,1)\quad\quad \yng(6,4,2,1,1)\quad\quad \yng(4,3,1)\quad\quad \yng(4,2,2,1,1) \]
\caption{Two-layer hooks}
\label{two_layer_hook}
\end{figure}
Note that if $\lambda \vdash n$ is a two-layer hook, then $\hat{\lambda}$ is also a two-layer hook and that by the definition,
\[
n=\lambda_1+\lambda_2+\hat{\lambda}_1+\hat{\lambda}_2-4=2(\lambda_1+\hat{\lambda}_2)-4,
\]
which implies that $n$ must be an even number greater than or equal to $8$. Note also that a near hook is not a two-layer hook. 

\begin{lem}\label{char_of_two_layer_hook}
Let $\lambda$ be a partition of $n$ and let $\sigma$ be a  permutation in $\sym(n)$ that is the product of two disjoint $n/2$-cycles. If $\chi^\lambda(\sigma)=-2$, then $\lambda$ is either a two-layer hook or a symmetric near hook.
\end{lem}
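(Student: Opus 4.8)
The plan is to evaluate $\chi^{\lambda}(\sigma)$ with the Murnaghan--Nakayama rule (Theorem~\ref{nakayama}). Put $p=n/2$ and write $\sigma$ as a $p$-cycle times a disjoint $p$-cycle. One application of the rule gives
\[
\chi^{\lambda}(\sigma)=\sum_{\mu}(-1)^{r(\mu)}\,\chi^{\mu}(h),
\]
where $h$ is a $p$-cycle in $\sym(p)$ and $\mu$ ranges over the partitions of $p$ obtained from $\lambda$ by deleting a skew $p$-hook. By Corollary~\ref{cor:appofMN}, the inner value $\chi^{\mu}(h)$ vanishes unless $\mu$ is a hook, in which case it equals $(-1)^{p-\mu_1}=\pm1$. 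So only those skew $p$-hooks whose removal leaves a \emph{hook} contribute, each contributing a sign $\pm1$.

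As already noted in the proof of Corollary~\ref{cor:appofMNtwo}, a partition of $n=2p$ has at most two removable skew $p$-hooks. Hence for $\chi^{\lambda}(\sigma)=-2$ there must be \emph{exactly} two removable skew $p$-hooks $\xi_1,\xi_2$, both complements $\lambda\setminus\xi_1$ and $\lambda\setminus\xi_2$ must be hooks of size $p$, and both signed terms must equal $-1$. The whole problem thus reduces to classifying the partitions $\lambda\vdash 2p$ that admit two distinct skew $p$-hook removals, each leaving a hook, and then imposing the sign condition.

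I would carry out this classification by bookkeeping the skew-hook removals through the first-column hook lengths of $\lambda$ (equivalently, the classical $p$-runner abacus, on which deleting a skew $p$-hook is a single downward bead-slide and $r(\xi)$ equals the number of beads strictly between the two positions). This analysis shows that the admissible $\lambda$ form exactly the two-parameter family
\[
\lambda=[\,r_2+1,\;r_1+2,\;\underbrace{2,\dots,2}_{p-r_2-1},\;\underbrace{1,\dots,1}_{r_2-r_1-1}\,],\qquad 0\le r_1<r_2\le p-1,
\]
and, crucially, that in every such case the two contributions are \emph{equal}, both being $-(-1)^{r_1+r_2}$. Consequently $\chi^{\lambda}(\sigma)\in\{+2,-2\}$ throughout this family (no cancellation is possible), and $\chi^{\lambda}(\sigma)=-2$ precisely when $r_1\equiv r_2\pmod 2$, i.e.\ when $r_2-r_1\ge 2$.

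It then remains to read off the shape. When $r_2-r_1\ge 2$ one checks the defining inequalities of a two-layer hook directly: $\lambda_3\le 2$ is automatic, $\lambda_1-\lambda_2=\hat{\lambda}_1-\hat{\lambda}_2=r_2-r_1-1>0$, and $r_1\le r_2-2\le p-3$ forces at least three rows; the only inequality that can fail is $\lambda_2+\hat{\lambda}_2\ge 5$, and it fails exactly when $r_2-r_1=p-1$, that is $(r_1,r_2)=(0,p-1)$, where $\lambda=[p,2,1^{p-2}]$ is precisely the symmetric near hook. This gives the claimed dichotomy. The main obstacle is the middle step: isolating the exact family and verifying that the two Murnaghan--Nakayama signs always coincide, which is where the hook-length (abacus) computation of the heights $r(\xi_i)$ must be done with care; by contrast, the concluding inequality check is routine.
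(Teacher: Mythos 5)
Your strategy is sound and genuinely different from the paper's. The paper never classifies the admissible partitions: after the same first reduction (two skew $n/2$-hooks, each removal leaving a hook, each signed term equal to $-1$), it proceeds by eliminating against the \emph{definition} of a two-layer hook one inequality at a time --- ruling out $k\le 2$ and the case $\lambda_1=\lambda_2$ by computing the character value to be $+2$, showing $\lambda_3\le 2$ because otherwise no removal leaves a hook, deriving $\lambda_1-\lambda_2=\hat{\lambda}_1-\hat{\lambda}_2$ from the two rim hooks having equal length, and disposing of $\lambda_2+\hat{\lambda}_2<5$ by noting it forces a hook (impossible) or a near hook (which must be symmetric to carry two skew hooks). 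That route is lighter precisely because it never needs the explicit two-parameter family or a closed-form sign: only the single sign computation for $\lambda=[\lambda_1,\lambda_1,2,\dots,2]$ is required. Your route, by contrast, buys more --- the exact family $[r_2+1,r_1+2,2^{p-r_2-1},1^{r_2-r_1-1}]$, the value $\chi^\lambda(\sigma)=-2(-1)^{r_1+r_2}$, and hence a converse classification the lemma does not ask for --- and your parametrization and sign formula check out on examples (e.g.\ $[3,2,2,1]$ and $[4,3,1]$ give $-2$; $[4,4]$, $[3,3,2]$ and $[4,2,1,1]$ give $+2$; $[5,2,1,1,1]$ gives $-2$). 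The concluding translation into the two-layer-hook inequalities is also correct.

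Two caveats. First, the heart of your argument --- that the admissible $\lambda$ are exactly that family and that the two Murnaghan--Nakayama heights give equal signs $-(-1)^{r_1+r_2}$ --- is announced as the output of an abacus computation that you do not actually perform; since the entire content of the lemma lives in that step, as written this is an outline rather than a proof, and you should either execute the bead-slide bookkeeping or adopt the paper's elimination argument, which avoids it. Second, the phrase ``$r_1\equiv r_2\pmod 2$, i.e.\ when $r_2-r_1\ge 2$'' is not an equivalence: $(r_1,r_2)=(0,3)$ has $r_2-r_1\ge 2$ but odd parity sum, and indeed $[4,2,1,1]$ has character value $+2$. The implication you actually need (parity equality forces $r_2-r_1\ge2$, hence the shape) survives, so this is cosmetic, but the ``precisely when'' claim should be stated in terms of the parity condition only.
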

\begin{proof}
According to the Murnaghan-Nakayama rule and  Corollary~\ref{cor:appofMN}, $\lambda$ should have two  skew-hooks of length $n/2$ and deleting each of them should leave a hook of length $n/2$. If we denote $\lambda =[\lambda_1,\ldots,\lambda_k]$, then this obviously implies that $k>1$.

If $k=2$, then $\lambda$ must be the  partition $[\frac{n}{2},\frac{n}{2}]$ (since if $\lambda=[\lambda_1,\lambda_2]$, where $\lambda_1>\lambda_2$, then $\lambda$ will not have two skew-hooks of  length $n/2$); in this case we can calculate the character value at $\sigma$ to be $2$.  Thus $k\geq 3$.  

If $\lambda_3>2$, then the partition $\lambda'$ obtained from $\lambda$ by deleting any skew-hook will have $\lambda'_2\geq 2$   which implies that $\lambda'$ is not a hook. Thus $\lambda_3\leq 2$.

Let $\lambda_1-\lambda_2=s$ and $\hat{\lambda}_1-\hat{\lambda}_2=t$. Assume $\mu$ and $\nu$ are the two skew hooks of $\lambda$ of length $n/2$. Since they have length $n/2$, we may assume that $\mu$ contains the last box of the first row and $\nu$ contains the last box of the first column. The lengths  of $\mu$ and $\nu$ being both equal to $n/2$ implies that 
\[
(s+1) +(\lambda_2-1) +(\hat{\lambda}_2-1)-1= (t+1)+(\hat{\lambda}_2-1)+(\lambda_2-1)-1,
\]
which yields $s=t$.

If $s=t=0$ then $\lambda=[\lambda_1,\lambda_1, 2, \dots 2]$. If we  denote the number of rows in $\lambda$ by $k$, then according to the Murnaghan-Nakayama rule,
\begin{align*}
\chi^\lambda(\sigma) &= (-1)^{r(\mu)} (-1)^{r(\lambda\backslash \mu)}\,+\, (-1)^{r(\nu)} (-1)^{r(\lambda\backslash \nu)}\\
&= (-1)^{k} (-1)^{k}\,+\, (-1)^{k-1} (-1)^{k-1}\\ 
&=2. 
\end{align*}

Finally, note that if $\lambda_2+\hat{\lambda}_2<5$, then either $\lambda_2+\hat{\lambda}_2=2$ or $4$. In the former case, $\lambda$ is a hook and obviously it cannot have two skew-hooks of length $n/2$. In the latter case, $\lambda$ must be a near hook. If it is not symmetric then it cannot have two skew-hooks.

These imply that if $\lambda$ is neither a symmetric near hook nor a two layer hook, then $\chi^\lambda(\sigma)\neq -2$; this completes the proof. 
\end{proof}

The following lemma provides a lower bound on the dimension of a symmetric near hook.
\begin{lem}\label{dimensions_of_near_hooks} 
If a symmetric partition $\lambda$ of $n\geq 8$ is a near hook, then  $\chi^\lambda(\id)> 2n-2$.
\end{lem}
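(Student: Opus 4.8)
The plan is to pin down exactly which partition $\lambda$ can occur, evaluate $\chi^\lambda(\id)=\dim S^\lambda$ with the hook length formula (Theorem~\ref{hl_formula}), and then compare the resulting dimension with $2n-2$.

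First I would identify $\lambda$. A near hook has the form $[\lambda_1,2,1^{\,n-\lambda_1-2}]$, and reading its Young diagram by columns shows that its transpose is $[\,n-\lambda_1,2,1^{\,\lambda_1-2}]$. Requiring $\lambda=\hat\lambda$ therefore forces $\lambda_1=n-\lambda_1$, so $n$ is even and
\[
\lambda=[\,m,2,1^{\,m-2}],\qquad m=\tfrac n2\ge 4.
\]
Thus there is a single candidate to analyze, and the whole statement reduces to a dimension estimate for this one family.

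Next I would compute the hook lengths of $\lambda=[m,2,1^{m-2}]$ box by box. The corner $(1,1)$ has arm $m-1$ and leg $m-1$, hence hook length $2m-1$; the box $(1,2)$ and the box $(2,1)$ each have hook length $m$; the box $(2,2)$ has hook length $1$; and the remaining boxes of the first row and of the first column carry, in each case, the hook lengths $m-2,m-3,\dots,1$. Multiplying these gives $\hl(\lambda)=(2m-1)\,m^2\,[(m-2)!]^2$, so by Theorem~\ref{hl_formula},
\[
\chi^\lambda(\id)=\frac{(2m)!}{(2m-1)\,m^2\,[(m-2)!]^2}=\frac{2(m-1)^2}{m}\binom{2m-2}{m-1},
\]
where the last equality is a routine cancellation using $(2m)!=(2m)(2m-1)(2m-2)!$ and $(m-1)!=(m-1)(m-2)!$.

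Finally I would bound this from below. The quantity $\tfrac1m\binom{2m-2}{m-1}$ is the Catalan number $C_{m-1}$, a positive integer, so it is at least $1$; equivalently one may note that $\binom{2m-2}{m-1}-\binom{2m-2}{m}=\tfrac1m\binom{2m-2}{m-1}\ge 1$, since the central column strictly dominates. Hence $\chi^\lambda(\id)\ge 2(m-1)^2$, and
\[
2(m-1)^2-(2n-2)=2m^2-8m+4=2(m^2-4m+2)>0\qquad\text{for }m\ge 4,
\]
which yields $\chi^\lambda(\id)>2n-2$. I do not expect a genuine obstacle here: the content is the careful hook-length bookkeeping together with the simplification that turns the estimate into the transparent quadratic inequality above. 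The crude bound $C_{m-1}\ge 1$ already suffices, precisely because the true dimension grows exponentially in $m$ while $2n-2$ is only linear, so no sharper estimate on the binomial coefficient is needed.
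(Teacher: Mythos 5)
Your proof is correct and follows essentially the same route as the paper: both identify the symmetric near hook as $[n/2,2,1^{n/2-2}]$, compute the hook-length product $(n-1)(n/2)^2[(n/2-2)!]^2$ exactly, and then combine the trivial bound that a central binomial coefficient is at least $1$ with a quadratic inequality valid for $n\geq 8$. Your packaging via the Catalan number $C_{m-1}\geq 1$ is a pleasant way to state the same estimate the paper obtains by bounding $[(n/2-2)!]^2\leq(n-4)!$.
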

\begin{proof}
Since $\lambda$ is a symmetric near hook we know that $\lambda=[n/2,2,1^{\frac{n}{2}-2}]$ and we can calculate the hook lengths directly 
\begin{align*}
\hl(\lambda)&=(n-1)\left(\frac{n}{2}\,\right)^2\left[\left(\frac{n}{2}-2\right)!\right]^2\\[.2cm]
&\leq (n-1)\,\frac{n^2}{4}\,(n-4)!=\frac{n(n-1)}{2(n-2)(n-3)}\,\,\frac{n(n-2)!}{2}\\[.2cm]
&<\frac{n(n-2)!}{2}
\end{align*}
since $n\geq 8$. Putting this bound into the hook length formula (Theorem~\ref{hl_formula}) gives the lemma.
\end{proof}

Next we prove that the same lower bound holds for the dimension of a two-layer hook. 
\begin{lem}\label{dimensions_of_2_layer_hooks}
If a partition $\lambda$ is a two-layer hook, then $\chi^\lambda(\id)> 2n-2$.
\end{lem}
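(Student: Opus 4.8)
The plan is to mirror the proof of Lemma~\ref{dimensions_of_near_hooks}. By the hook length formula (Theorem~\ref{hl_formula}) we have $\chi^\lambda(\id)=n!/\hl(\lambda)$, and since $n!/(2n-2)=n(n-2)!/2$, the assertion $\chi^\lambda(\id)>2n-2$ is equivalent to
\[
\hl(\lambda)<\frac{n\,(n-2)!}{2}.
\]
So it suffices to bound the product of hook lengths from above. First I would fix a parametrization. From the defining conditions one checks that $\lambda_2\geq 2$ (otherwise $\lambda$ is a genuine hook and $\lambda_2+\hat\lambda_2=2<5$), so I can write $\lambda=[a,b,2^{p},1^{q}]$ with $a=\lambda_1>b=\lambda_2\geq 2$, where there are $p\geq 0$ rows of length $2$ and $q\geq 1$ rows of length $1$. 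The balance condition $\lambda_1-\lambda_2=\hat\lambda_1-\hat\lambda_2$ forces $q=a-b$, while $\lambda_2+\hat\lambda_2\geq 5$ reads $b+p\geq 3$; counting boxes then gives $n=2s$ with $s:=b+p+q$, recovering that $n$ is even.

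Next I would compute $\hl(\lambda)$ directly, column by column. The four corner boxes $(1,1),(2,1),(1,2),(2,2)$ have hook lengths $a+p+q+1$, $b+p+q$, $a+p$, $b+p-1$; every other box lies in an arm or a leg of one of the two nested hooks, and after collecting these contributions reduce to $(a-1)!\,(p+q+1)!\,p!\,(b-2)!/(q+1)^2$. Using $a=b+q$ and $s=b+p+q$ the corner factors simplify to $s^2\bigl(s^2-(q+1)^2\bigr)$, so
\[
\hl(\lambda)=s^2\bigl(s^2-(q+1)^2\bigr)\cdot\frac{(a-1)!\,(p+q+1)!\,p!\,(b-2)!}{(q+1)^2}.
\]
The crucial observation is that the four factorial arguments $a-1,\ p+q+1,\ p,\ b-2$ sum to exactly $n-2$, so $(a-1)!\,(p+q+1)!\,p!\,(b-2)!=(n-2)!/M$, where $M=\binom{n-2}{a-1,\ p+q+1,\ p,\ b-2}$ is a multinomial coefficient. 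Substituting and dividing by $(n-2)!$, the target inequality $\hl(\lambda)<s\,(n-2)!$ collapses to the purely combinatorial statement
\[
(q+1)^2\,M> s\bigl(s^2-(q+1)^2\bigr).
\]

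Finally I would prove this. Since the right-hand side is at most $s^3$, it suffices to show $(q+1)^2M\geq s^3$. The largest of the four parts of $M$ is at least $(n-2)/4=(s-1)/2$, so $M\geq\binom{n-2}{\lceil (s-1)/2\rceil}$, which grows exponentially in $s$ and dominates $s^3$ once $s$ is moderately large (in fact for $s\geq 8$); the finitely many remaining values of $n$ are checked directly, where the full multinomial together with the factor $(q+1)^2$ leaves ample room (for instance $\lambda=[3,2,2,1]$ gives $s=4$, $M=60$, and $(q+1)^2M=240>48$). I expect this last step to be the main obstacle: unlike the symmetric near hook of Lemma~\ref{dimensions_of_near_hooks}, which is a one-parameter family, a two-layer hook depends on the three parameters $b,p,q$, so the bound on $M$ must be made uniform in all of them, and one must take care with the degenerate cases $p=0$ or $b=2$ where a part of the multinomial vanishes. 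A cleaner but less self-contained alternative is to use the branching rule $\chi^\lambda(\id)=\sum_{c}\chi^{\lambda-c}(\id)$ over the (at least two) removable corners $c$: none of the partitions $\lambda-c$ can be one of the four smallest-degree shapes $[n-1],[1^{n-1}],[n-2,1],[2,1^{n-3}]$ of $\sym(n-1)$, so each summand is at least $n$ and hence $\chi^\lambda(\id)\geq 2n>2n-2$; this route, however, invokes the classification of the minimal degrees of $\sym(n-1)$ rather than only the hook length formula.
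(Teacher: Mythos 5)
Your proposal is correct but follows a genuinely different route from the paper's. The paper also reduces the claim, via the hook length formula, to $\hl(\lambda)<n(n-2)!/2$, but it then argues by induction on $n$: it strips the last box from each of the first two rows to obtain a smaller two-layer hook or a symmetric near hook $\tilde{\lambda}$, writes $\hl(\lambda)$ as $\hl(\tilde{\lambda})$ times $(\lambda_1-1)(\lambda_2-2)$ and four explicit ratio factors, and bounds each factor separately (invoking Lemma~\ref{dimensions_of_near_hooks} for the near-hook base case). You instead parametrize $\lambda=[a,b,2^p,1^q]$ and compute $\hl(\lambda)$ in closed form; I checked your corner hook lengths, the reduction of the remaining boxes to $(a-1)!\,(p+q+1)!\,p!\,(b-2)!/(q+1)^2$, the identity $(a-1)+(p+q+1)+p+(b-2)=n-2$, and the simplification of the corner product to $s^2\bigl(s^2-(q+1)^2\bigr)$, and they are all correct, so the lemma really does collapse to $(q+1)^2M>s\bigl(s^2-(q+1)^2\bigr)$. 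Your route buys an exact formula and avoids the induction; the paper's route buys uniform, fully written-out estimates with no residual case analysis. Two points in your last step should be made explicit: first, the bound $M\geq\binom{n-2}{\lceil(s-1)/2\rceil}$ needs the observation that every one of the four parts is at most $s-1=(n-2)/2$ (true, since $a-1=s-p-1$ and $p+q+1=s-b+1\leq s-1$, etc.), without which the monotonicity of the binomial coefficient cannot be applied; second, the finite verification for $s\in\{4,5,6,7\}$ is asserted with only one instance exhibited --- it is routine, since for each such $n$ there are only a handful of two-layer hooks and the multinomial is enormous compared with $s^3$, but as written it is a claim rather than a proof. Your branching-rule alternative is also sound, though, as you note, it imports the classification of the minimal degrees of $\sym(n-1)$ rather than staying within the hook length formula.
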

\begin{proof}
Let $\lambda=[\lambda_1,\lambda_2,\ldots,\lambda_k]$. According to the hook length formula, it suffices to show that $\hl(\lambda)<n(n-2)!/2$.  We proceed by induction on $n$. It is easy to see the lemma is true for $n=8$. Let $n\geq 10$ and without loss of generality assume that $\lambda_1>\hat{\lambda}_1$. This implies $\lambda_2\geq 3$. We compute
\begin{align*}
\hl(\lambda)=&(\lambda_1+\hat{\lambda}_1-1)(\lambda_1+\hat{\lambda}_2-2) (\lambda_2+\hat{\lambda}_1-2) (\lambda_2+\hat{\lambda}_2-3)\\ &\cdot\frac{(\lambda_1-1)!}{s+1}\frac{(\hat{\lambda}_1-1)!}{s+1}(\lambda_2-2)! (\hat{\lambda}_2-2)!
\end{align*}
where $s$ is as in Lemma~\ref{char_of_two_layer_hook}. One can re-write this as
\begin{align}
\hl(\lambda)
=& \frac{\lambda_1+\hat{\lambda}_1\!-\!1}{\lambda_1+\hat{\lambda}_1\!-\!2}\,\cdot\,\frac{\lambda_1+\hat{\lambda}_2\!-\!2}{\lambda_1+\hat{\lambda}_2\!-\!3} \,\cdot\,\frac{\lambda_2+\hat{\lambda}_1\!-\!2}{\lambda_2+\hat{\lambda}_1\!-\!3}\,\cdot\, \frac{\lambda_2+\hat{\lambda}_2\!-\!3}{\lambda_2+\hat{\lambda}_2\!-\!4} 
 \cdot(\lambda_1\!-\!1)(\lambda_2\!-\!2) \nonumber \\
& \left[\vphantom{\frac12}(\lambda_1+\hat{\lambda}_1-2)(\lambda_1+\hat{\lambda}_2-3)(\lambda_2+\hat{\lambda}_1-3)(\lambda_2+\hat{\lambda}_2-4)\right.\nonumber \\
& \quad \left.\cdot   \frac{(\lambda_1-2)!}{s+1}\frac{(\hat{\lambda}_1-1)!}{s+1}(\lambda_2-3)! (\hat{\lambda}_2-2)!\right]\nonumber \\
=& \frac{\lambda_1+\hat{\lambda}_1-1}{\lambda_1+\hat{\lambda}_1-2}\,\cdot\,\frac{\lambda_1+\hat{\lambda}_2-2}{\lambda_1+\hat{\lambda}_2-3} \,\cdot\,\frac{\lambda_2+\hat{\lambda}_1-2}{\lambda_2+\hat{\lambda}_1-3}\,\cdot\, \frac{\lambda_2+\hat{\lambda}_2-3}{\lambda_2+\hat{\lambda}_2-4} \nonumber \\ 
& \cdot(\lambda_1-1)(\lambda_2-2)  \hl(\tilde{\lambda}),\label{tilde_lambda}
\end{align}
where $\tilde{\lambda}=[\lambda_1-1,\lambda_2-1,\lambda_3,\ldots,\lambda_k]$ is the partition whose Young diagram is obtained from that of $\lambda$ by removing the last boxes of the first and the second rows. We can simplify (\ref{tilde_lambda}) as

\begin{align}
\hl(\lambda)=& \left(1+\frac{1}{\lambda_1+\hat{\lambda}_1-2}\right)\,\left(1+\frac{1}{\lambda_1+\hat{\lambda}_2-3} \right) \,\left(1+\frac{1}{\lambda_2+\hat{\lambda}_1-3}\right)\nonumber\\
 &  \left(1+\frac{1}{\lambda_2+\hat{\lambda}_2-4}\right)(\lambda_1-1)(\lambda_2-2)\cdot \hl(\tilde{\lambda}).\label{hl_lambda}
\end{align}

The partition $\tilde{\lambda}$ is either a near hook or a two-layer hook. In the first case, because $\lambda$ is a two-layer hook, we have
\[
\tilde{\lambda}_1-2=\tilde{\lambda}_1-\tilde{\lambda}_2=\lambda_1-\lambda_2=\hat{\lambda}_1-\hat{\lambda}_2=\hat{\lambda}_1-2=\widehat{\tilde{\lambda}}_1-2,
\]
that is, the sizes of the first row and the first column of $\tilde{\lambda}$ are equal which implies that $\tilde{\lambda}$ is
symmetric and, thus, according to Lemma~\ref{dimensions_of_near_hooks},
\[
\hl(\tilde{\lambda})<\frac{(n-2)(n-4)!}{2}.
\]
If $\tilde{\lambda}$ is a two layer hook, then the same bound holds by the induction hypothesis.

We now observe the following facts:
\begin{enumerate}
\item $\lambda_1+\hat{\lambda}_1-2> n/2$; thus
\[
1+\frac{1}{\lambda_1+\hat{\lambda}_1-2}\,<\,\frac{n+2}{n}.
\]
\item By Lemma~\ref{char_of_two_layer_hook} and the definition of a two-layer hook, $\lambda_1-\lambda_2=\hat{\lambda}_1-\hat{\lambda}_2$; hence $\lambda_1+\hat{\lambda}_2=\lambda_2+\hat{\lambda}_1$. On the other hand
\[
\lambda_1+\hat{\lambda}_2+\lambda_2+\hat{\lambda}_1-5=n-1,
\]
hence
\[
1+\frac{1}{\lambda_1+\hat{\lambda}_2-3}=\frac{n}{n-2},
\quad
1+\frac{1}{\lambda_2+\hat{\lambda}_1-3}=\frac{n}{n-2}.
\]
\item Since $\lambda_2+\hat{\lambda}_2\geq 5$, we have
\[
1+\frac{1}{\lambda_2+\hat{\lambda}_2-4}\,<\,2.
\]
\item Since $\lambda_1+\lambda_2\leq n-1$, we have
\[
(\lambda_1-1)(\lambda_2-2)\leq \frac{(n-4)^2}{4}.
\]
\end{enumerate} 

All of these facts together with (\ref{hl_lambda}) yield 
\begin{align*}
\hl(\lambda)\,&<\, \frac{n+2}{n}\, \frac{n}{n-2}\, \frac{n}{n-2}\,\, 2\, \frac{(n-4)^2}{4}\, \frac{(n-2)(n-4)!}{2}\\[.3cm]
&=\frac{n(n+2)(n-4)^2(n-4)!}{4(n-2)}\\[.3cm]
&=\frac{(n+2)(n-4)^2}{2(n-2)^2(n-3)}\,\,\frac{n(n-2)! }{2}\\[.3cm]
& <\,\frac{n(n-2)! }{2}. \qedhere
\end{align*}
\end{proof}

\chapter{Cayley Graphs}\label{evalues of Cayley}
This chapter is devoted to describing the eigenvalues of normal Cayley graphs using the irreducible  representations of the underlying groups. Some of the facts proved in this chapter will be used in the next chapters. First in Section~\ref{cayley} we introduce the Cayley graphs and point out some of their basic properties. Then in Section~\ref{evals_of_normal} we explain the proof of the well-known Theorem~\ref{Diaconis} which is due to Diaconis and Shahshahani \cite{MR626813}. This is a beautiful connection between the character theory of groups and the spectral graph theory and has attracted the attention of many researchers in the field of algebraic combinatorics. Using this machinery, then, in the other section of this chapter, we will try to establish relationships between the spectra of Cayley graphs on groups and those of the related quotient groups.

\section{Basic facts}\label{cayley}
Let $G$ be a group and $S$ be a subset of $G$ which is closed under inversion and does not contain the identity. The \txtsl{Cayley
graph} of $G$ with respect to $S$, denoted by $\Gamma(G;S)$, is the graph whose vertices are the elements of $G$ and
two vertices $g$ and $h$ are adjacent if $gh^{-1}\in S$. If $S$ is closed under conjugation, then $\Gamma(G;S)$ is said to be a {normal Cayley graph}\index{Cayley graph!normal}. The set $S$ is called the \txtsl{connection set}.
\begin{example}
The Cayley graph $\Gamma(\mathbb{Z}_n;\{\pm1\})$ is isomorphic to the cycle $C_n$, for $n\geq 3$. More generally, if $k\in \mathbb{Z}_n$, then $\Gamma(\mathbb{Z}_n;\{\pm k\})$ is isomorphic to $C_n$ if and only gcd$(n,k)=1$, for $n\geq 3$. The Cayley graph $\Gamma(\sym(3);\{(1\,\,2\,\,3),(1\,\,3\,\,2)\})$ is isomorphic to the  disjoint union of two 3-cycles. The general cases of this example and their structures will be studied in Chapter~\ref{single_CC}.
\end{example}

Clearly $\Gamma(G;S)$ is an $|S|$-regular graph. In fact, it is easy to see that Cayley graphs are vertex transitive; however, not every vertex-transitive graph can be considered as a Cayley graph.  It has been shown \cite{MR1397877} that the Petersen graph\index{Petersen graph} is the smallest vertex-transitive graph which is not a Cayley graph.

\begin{prop}\label{index}
Consider the Cayley graph $\Gamma(G;S)$. Let $H=\langle S \rangle$ be the subgroup of $G$ generated by $S$, and assume $i=[G:H]$, the index of $H$ in $G$. Then $\Gamma(H;S)$  is connected and
\[
\Gamma(G;S)\cong \underbrace{\Gamma(H;S)\,\,\bigcupdot\cdots\bigcupdot\,\, \Gamma(H;S)}_{i\,\,\text{times}}.
\]
\end{prop}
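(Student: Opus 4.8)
The plan is to identify the connected components of $\Gamma(G;S)$ with the right cosets of $H=\langle S\rangle$, and then to show that each such component is isomorphic to $\Gamma(H;S)$ via a suitable translation map. Both assertions of the proposition will then follow from elementary coset counting.

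First I would analyze the walks in $\Gamma(G;S)$. By the definition of the Cayley graph (and since $S$ is closed under inversion, which makes adjacency symmetric), the neighbours of a vertex $g$ are exactly the elements $sg$ with $s\in S$. Consequently, the vertices reachable from $g$ along a walk are precisely the products $s_k\cdots s_1\, g$ with each $s_j\in S$; because $S=S^{-1}$, the collection of all such products $s_k\cdots s_1$ is exactly the subgroup $\langle S\rangle=H$ (the empty product giving $\id$). Hence the connected component of $g$ is the right coset $Hg$. Since the right cosets of $H$ partition $G$ into $[G:H]=i$ blocks, $\Gamma(G;S)$ has exactly $i$ connected components, one for each coset. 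Specializing this to $g=\id$ (equivalently, taking $G=H$) shows that the single component $H\id=H$ is all of the vertex set of $\Gamma(H;S)$, so $\Gamma(H;S)$ is connected; this is the first assertion.

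For the decomposition, I would fix a coset $Hg$ and define the translation map $\phi_g\colon H\to Hg$ by $\phi_g(h)=hg$, which is clearly a bijection on vertex sets. To check that it preserves adjacency, observe that for $h_1,h_2\in H$ we have $(h_1 g)(h_2 g)^{-1}=h_1 h_2^{-1}$, so $h_1 g$ and $h_2 g$ are adjacent in $\Gamma(G;S)$ if and only if $h_1 h_2^{-1}\in S$, which is exactly the adjacency condition for $h_1$ and $h_2$ in $\Gamma(H;S)$. Thus $\phi_g$ is a graph isomorphism from $\Gamma(H;S)$ onto the subgraph of $\Gamma(G;S)$ induced on $Hg$. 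Selecting one representative $g$ from each of the $i$ cosets exhibits $\Gamma(G;S)$ as a disjoint union of $i$ pairwise-disjoint copies of $\Gamma(H;S)$, which is the desired conclusion.

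The only delicate point is keeping the conventions straight: because adjacency is defined by $gh^{-1}\in S$, the components come out as \emph{right} cosets and the matching isomorphism is \emph{right} translation, so that the $g$ and $g^{-1}$ cancel in the middle and the connection set $S$ appears unchanged. Apart from this bookkeeping the argument is routine, requiring nothing beyond the definition of a Cayley graph and the elementary theory of cosets.
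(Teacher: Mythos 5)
Your proof is correct and follows essentially the same route as the paper: the paper likewise identifies the components with the cosets of $H$ and uses right multiplication by a coset representative as the graph isomorphism onto $\Gamma(H;S)$, merely declaring the connectivity of $\Gamma(H;S)$ trivial where you spell out the walk argument. Your careful bookkeeping of right cosets versus right translation (so that $g$ and $g^{-1}$ cancel and the connection set is preserved) is exactly the point the paper leaves implicit.
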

\begin{proof}
The first part is trivial. For the second part, it is enough to note that the right multiplication by a $g\in G\backslash H$ is a graph  isomorphism between $\Gamma(H;S)$ and the component of $\Gamma(G;S)$ which has the vertex $g$.
\end{proof}
Furthermore, the following fact follows from the definition of the Cayley graphs.
\begin{prop}\label{alpha_of_Cayley_subgraphs}
Let $T$ and $S$ be non-empty subsets of a group $G$ which are closed under inversion such that $\id\notin T\subseteq S$. Then 
\[
\alpha(\Gamma(G;T))\geq \alpha(\Gamma(G;S)).\qed
\]
\end{prop}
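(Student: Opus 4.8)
The plan is to observe that $\Gamma(G;T)$ is a \emph{spanning subgraph} of $\Gamma(G;S)$, obtained by deleting edges while keeping every vertex, and then to invoke the elementary monotonicity of the independence number under edge deletion. Since the statement is entirely structural, the argument is short and requires no eigenvalue or representation-theoretic machinery.

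First I would record that both Cayley graphs share the same vertex set, namely $G$ itself. By the definition in Section~\ref{cayley}, two vertices $g,h\in G$ are adjacent in $\Gamma(G;S)$ precisely when $gh^{-1}\in S$, and adjacent in $\Gamma(G;T)$ precisely when $gh^{-1}\in T$. Because $T\subseteq S$, every edge of $\Gamma(G;T)$ is also an edge of $\Gamma(G;S)$; equivalently, $\Gamma(G;T)$ arises from $\Gamma(G;S)$ by possibly removing some edges. The hypotheses that $T$ and $S$ are closed under inversion and avoid the identity serve only to guarantee that both $\Gamma(G;T)$ and $\Gamma(G;S)$ are well-defined simple Cayley graphs, so no separate verification of that point is needed.

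Next I would transfer a maximum independent set across. Let $I$ be a maximum independent set in $\Gamma(G;S)$, so that $|I|=\alpha(\Gamma(G;S))$. For any two distinct $g,h\in I$ we have $gh^{-1}\notin S$, and hence $gh^{-1}\notin T$ as well; therefore no two elements of $I$ are adjacent in $\Gamma(G;T)$. Thus $I$ is an independent set of $\Gamma(G;T)$, and consequently
\[
\alpha(\Gamma(G;T))\geq |I|=\alpha(\Gamma(G;S)),
\]
which is exactly the claimed inequality.

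There is essentially no obstacle in this proof; the only point demanding a moment of care is the \emph{direction} of the inequality. Shrinking the connection set removes adjacency constraints, so independence becomes \emph{easier} to achieve, and the independence number can only grow — hence the larger family is $\Gamma(G;T)$, matching $T\subseteq S$ with $\alpha(\Gamma(G;T))\geq\alpha(\Gamma(G;S))$.
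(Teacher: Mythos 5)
Your proof is correct, and it is exactly the argument the paper has in mind: the paper states this proposition without proof as an immediate consequence of the definition of Cayley graphs, relying on the same observation that $T\subseteq S$ makes $\Gamma(G;T)$ a spanning subgraph of $\Gamma(G;S)$, so every independent set of the latter is independent in the former. Nothing further is needed.
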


\section{Eigenvalues of Cayley graphs}\label{evals_of_normal}

In this section we prove the following important theorem which states how the eigenvalues of normal Cayley graphs are related to the irreducible representations of the underlying groups. This theorem is, indeed, the most fundamental theorem   of this thesis.

\begin{thm}\label{Diaconis} The eigenvalues of a normal Cayley graph $\Gamma(G;S)$ are given by
\[
\eta_{\chi}=\frac{1}{\chi(\id)}\sum_{s\in S}\chi(s),
\]
where $\chi$ ranges over all irreducible characters of $G$. Moreover, the multiplicity of $\eta_{\chi}$ is $\chi(\id)^2$.
\end{thm}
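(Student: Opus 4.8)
The plan is to identify the adjacency matrix of $\Gamma(G;S)$ with the image of a single \emph{central} element of the group algebra $\mathbb{C}[G]$ under the regular representation, and then to diagonalize it one irreducible block at a time by means of Schur's Lemma. First I would form the element $a=\sum_{s\in S}s$ of $\mathbb{C}[G]$ and extend the regular representation $\mathcal{X}^{reg}$ linearly to $\mathbb{C}[G]$, so that $\mathcal{X}^{reg}(a)=\sum_{s\in S}\mathcal{X}^{reg}(s)$. Writing $\mathcal{X}^{reg}(g)$ for the permutation of the basis $\{h:h\in G\}$ given by $h\mapsto gh$, the $(g,h)$-entry of the matrix of $\mathcal{X}^{reg}(a)$ counts the $s\in S$ with $sh=g$, and this is $1$ precisely when $gh^{-1}\in S$ and $0$ otherwise. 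Hence $\mathcal{X}^{reg}(a)=A(\Gamma(G;S))$, and it suffices to compute the eigenvalues of the operator $\mathcal{X}^{reg}(a)$.

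Next I would use the hypothesis that $\Gamma(G;S)$ is normal, that is, that $S$ is closed under conjugation. For every $g\in G$ we then have $gag^{-1}=\sum_{s\in S}gsg^{-1}=\sum_{s\in S}s=a$, so $a$ lies in the centre of $\mathbb{C}[G]$. By Maschke's Theorem (Theorem~\ref{maschke}) together with the decomposition (\ref{all_irrs}), the regular representation splits as $\bigoplus_i(\dim V_i)V_i$, and this splitting is respected by $\mathcal{X}^{reg}(a)$. Because $a$ is central, the induced operator $\mathcal{X}_i(a)=\sum_{s\in S}\mathcal{X}_i(s)$ on each irreducible constituent $V_i$ commutes with the entire $G$-action, so Schur's Lemma (Theorem~\ref{schur}) forces $\mathcal{X}_i(a)=\eta_i I$ for some scalar $\eta_i$.

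To pin down $\eta_i$ I would take traces on $V_i$: on one hand $\Tr(\mathcal{X}_i(a))=\sum_{s\in S}\chi_i(s)$, and on the other hand $\Tr(\eta_i I)=\eta_i\,\chi_i(\id)$, whence
\[
\eta_i=\frac{1}{\chi_i(\id)}\sum_{s\in S}\chi_i(s)=\eta_{\chi_i}.
\]
For the multiplicities, $V_i$ occurs $\dim V_i=\chi_i(\id)$ times in the regular representation, and on each occurrence $a$ acts as the scalar $\eta_{\chi_i}$ on a space of dimension $\chi_i(\id)$; thus $\eta_{\chi_i}$ is contributed with multiplicity $\chi_i(\id)^2$, and the identity $\sum_i\chi_i(\id)^2=|G|$ (Theorem~\ref{sum_of_dim_of_irrs}) confirms that every one of the $|G|$ eigenvalues has been accounted for.

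The crux of the argument, and the step I expect to be the only genuine obstacle, is recognizing that normality of $S$ is exactly the condition that makes $a$ central, since it is centrality that licenses the use of Schur's Lemma and hence the conclusion that $a$ acts as a scalar on each irreducible. Once this is in place, the identification $\mathcal{X}^{reg}(a)=A$, the trace computation, and the bookkeeping behind the factor $\chi_i(\id)^2$ (arising as $\chi_i(\id)$ copies, each of dimension $\chi_i(\id)$) are all routine.
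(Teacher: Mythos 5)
Your proposal is correct and follows essentially the same route as the paper: identify the adjacency matrix with the regular representation applied to $\sum_{s\in S}s$, decompose via Maschke's Theorem, apply Schur's Lemma to conclude that this central element acts as a scalar on each irreducible constituent, determine the scalar by taking traces, and count multiplicities from the $\chi(\id)$ copies of each $\chi(\id)$-dimensional irreducible in the regular representation. The paper merely packages the Schur/trace step as a separate lemma about class functions (Lemma~\ref{sh_lemma} and Corollary~\ref{sh_cor}), whereas you phrase it directly in terms of the centrality of $\sum_{s\in S}s$ in $\mathbb{C}[G]$; the content is the same.
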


The proof is due to Diaconis and Shahshahani \cite{MR626813}. We provide a proof which is essentially a modification of the method
applied in \cite{MR626813} to our special case. First we introduce the following notation. 
Let $G$ be a group and $P:G\rightarrow\mathbb{C}$ be a function with complex values. For each representation $\xx$ of $G$, define 
\[
\xx(P)=\sum_{x\in G}P(x)\xx(x).
\]
\begin{lem}\label{sh_lemma}
Let $\xx$ be an irreducible representation of the group $G$ with character $\chi$, and $P:G\rightarrow \mathbb{C}$ be constant on conjugacy classes. For the $i$-th conjugacy class $C_i$, let $P_i$ and  $\chi_i$ be the values of $P$ and $\chi$, respectively, on $C_i$ and  $n_i$ be the size of $C_i$. Then
\[
\xx(P)=kI,
\]
where $I$ is the identity matrix and the constant $k$ is as follows
\[
k=\frac{1}{\chi(\id)}\sum_{i}P_in_i\chi_i.
\]
\end{lem}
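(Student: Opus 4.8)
The plan is to use Schur's Lemma (Theorem~\ref{schur}) to show that $\xx(P)$ commutes with the whole representation and is therefore a scalar multiple of the identity, and then to compute that scalar by taking traces. The key observation is that $\xx(P)$ is a $G$-homomorphism from $\xx$ to itself once we know $P$ is a class function.

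First I would verify that $\xx(P)$ intertwines the $G$-action, i.e. that $\xx(g)\,\xx(P)=\xx(P)\,\xx(g)$ for every $g\in G$. Writing $\xx(P)=\sum_{x\in G}P(x)\xx(x)$ and using that $\xx$ is a homomorphism, one conjugates the sum:
\[
\xx(g)\,\xx(P)\,\xx(g)^{-1}=\sum_{x\in G}P(x)\,\xx(gxg^{-1})=\sum_{y\in G}P(g^{-1}yg)\,\xx(y),
\]
where I have substituted $y=gxg^{-1}$. Since $P$ is constant on conjugacy classes, $P(g^{-1}yg)=P(y)$, so the right-hand side equals $\xx(P)$. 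Hence $\xx(g)\,\xx(P)=\xx(P)\,\xx(g)$ for all $g$, which is precisely the statement that $\xx(P)\colon V\to V$ is a $G$-homomorphism from the irreducible module $\xx$ to itself.

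By Schur's Lemma, part (b) (Theorem~\ref{schur}), any such $G$-endomorphism of an irreducible representation is a scalar multiple of the identity; thus $\xx(P)=kI$ for some $k\in\mathbb{C}$. It remains to identify $k$, which I would do by comparing traces. On one hand $\Tr(kI)=k\,\dim V=k\,\chi(\id)$. On the other hand, by linearity of the trace,
\[
\Tr\bigl(\xx(P)\bigr)=\sum_{x\in G}P(x)\,\Tr\bigl(\xx(x)\bigr)=\sum_{x\in G}P(x)\,\chi(x).
\]
Grouping the sum by conjugacy classes, and using that $P$ and $\chi$ take the constant values $P_i$ and $\chi_i$ on the class $C_i$ of size $n_i$, this becomes $\sum_i P_i n_i \chi_i$. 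Equating the two expressions for the trace gives $k\,\chi(\id)=\sum_i P_i n_i \chi_i$, and dividing by $\chi(\id)$ (nonzero since it equals $\dim V$) yields the claimed formula.

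There is no serious obstacle here; the only point requiring care is the index bookkeeping in the conjugation step—correctly tracking that the substitution $y=gxg^{-1}$ is a bijection of $G$ and that class-invariance of $P$ is exactly what makes the conjugated sum return to $\xx(P)$. Everything else is a routine application of Schur's Lemma and the trace computation.
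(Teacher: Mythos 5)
Your proof is correct and follows essentially the same route as the paper's: show the operator is a $G$-endomorphism of the irreducible module, invoke Schur's Lemma to get a scalar, and identify the scalar by a trace computation. The only cosmetic difference is that you apply Schur's Lemma once to the whole operator $\xx(P)$ (using class-invariance of $P$ in the conjugation step), whereas the paper applies it to each class sum $M_i=\sum_{x\in C_i}\xx(x)$ separately and then combines; both are equally valid.
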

\begin{proof}
Assume $V$ is the vector space corresponding to the representation $\xx$ and let $C_1,\ldots,C_r$ be the conjugacy classes of $G$ and, for each $i=1,\dots,r$, define the matrix $M_i$ as follows
\[
M_i=\sum_{x\in C_i}\xx(x).
\]
Then
\[
\xx(P)=\sum_{x\in G} P(x)\xx(x)=\sum_{i=1}^rP_iM_i.
\]
For any $x\in G$, we have $\xx(x)M_i\xx(x^{-1})=M_i$; because
\begin{align*}
\xx(x)M_i\xx(x^{-1})&=\sum_{y\in C_i}\xx(x)\xx(y)\xx(x^{-1})=\sum_{y\in C_i}\xx(xyx^{-1})\\
&=\sum_{y\in C_i}\xx(y)=M_i.
\end{align*}
This means that the operator induced by the matrices $M_i$  are $G$-homomorphisms from $V$ to itself. Therefore, by Schur's lemma (Theorem~\ref{schur}), $M_i=k_iI$, for some $k_i\in \mathbb{R}$, $i=1,\ldots,r$. Taking traces, then, we have both
\[
\Tr(M_i)=k_i \,\dim(\xx)\quad \text{and}\quad \Tr(M_i)=n_i\chi_i,
\]
which implies
\[
k_i=\frac{1}{\chi(\id)}n_i\chi_i.
\]
Thus
\[
M_i=\frac{1}{\chi(\id)}n_i\chi_i I.
\]
We conclude that
\[
\xx(P)=\left(\frac{1}{\chi(\id)}\sum_iP_in_i\chi_i \right)I,
\]
and the proof is complete.
\end{proof}
\begin{cor}\label{sh_cor}
If $S\subseteq G$ is closed under conjugation, then for any irreducible representation $\xx$ with character $\chi$, we have
\[
\sum_{s\in S}\xx(s)=\left(\frac{1}{\chi(\id)}\sum_{s\in S}\chi(s) \right) I_d,
\]
where $d=\chi(\id)$ is the dimension of $\xx$.
\end{cor}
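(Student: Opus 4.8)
The plan is to recognize this as an immediate specialization of Lemma~\ref{sh_lemma}, with all the work residing in the bookkeeping of which conjugacy classes contribute. First I would take $P:G\to\mathbb{C}$ to be the indicator function of $S$, defined by $P(x)=1$ if $x\in S$ and $P(x)=0$ otherwise. The key observation is that since $S$ is closed under conjugation, it is a disjoint union of entire conjugacy classes of $G$; consequently $P$ is constant on each conjugacy class (taking the value $1$ on the classes contained in $S$ and $0$ on the others), so $P$ is exactly the kind of class function to which Lemma~\ref{sh_lemma} applies.

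With this choice, I would compute directly that
\[
\xx(P)=\sum_{x\in G}P(x)\,\xx(x)=\sum_{s\in S}\xx(s),
\]
so the left-hand side of the corollary is literally $\xx(P)$. Lemma~\ref{sh_lemma} then gives $\xx(P)=kI$ with $k=\frac{1}{\chi(\id)}\sum_i P_i n_i\chi_i$, where the index $i$ runs over all conjugacy classes $C_i$, and $P_i,\chi_i,n_i$ are the values of $P$, of $\chi$, and the size of $C_i$, respectively.

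The remaining step is to simplify the constant $k$. Since $P_i=1$ precisely when $C_i\subseteq S$ and $P_i=0$ otherwise, only the classes lying inside $S$ survive, and for such a class $n_i\chi_i=\sum_{x\in C_i}\chi(x)$ because $\chi$ is constant on $C_i$. Summing over the classes contained in $S$ therefore reassembles the full sum over $S$:
\[
\sum_i P_i n_i\chi_i=\sum_{\substack{i\,:\,C_i\subseteq S}}\;\sum_{x\in C_i}\chi(x)=\sum_{s\in S}\chi(s).
\]
Substituting this into the expression for $k$, and noting that the identity matrix here has size $d=\chi(\id)=\dim\xx$ so that $I=I_d$, yields exactly the claimed identity. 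There is no genuine obstacle in this argument: the only point requiring care is justifying that ``closed under conjugation'' makes $P$ a class function, after which the result is pure substitution into Lemma~\ref{sh_lemma}.
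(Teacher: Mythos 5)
Your proposal is correct and is essentially identical to the paper's own proof: both take $P=\delta_S$, note that closure under conjugation makes $P$ a class function, apply Lemma~\ref{sh_lemma}, and regroup the constant $\frac{1}{\chi(\id)}\sum_i P_i n_i\chi_i$ into $\frac{1}{\chi(\id)}\sum_{s\in S}\chi(s)$. No differences worth noting.
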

\begin{proof}
Let $P=\delta_S$, the characteristic function of $S$. Since $S$ is stable under conjugation, $P$ is constant on conjugacy classes. Thus, using Lemma~\ref{sh_lemma}, we have
\begin{align*}
\xx(P)&=\sum_{s\in S}\xx(s)=\frac{1}{\chi(\id)}\sum_{i}P_in_i\chi_i I\\[.2cm]
&=\left(\frac{1}{\chi(\id)}\sum_{i:\, C_i\subseteq S}n_i\chi_i\right)\,I=\left(\frac{1}{\chi(\id)}\sum_{s\in S}\chi(s)\right)\,I,
\end{align*}
which completes the proof.
\end{proof}

Now we are ready to prove Theorem~\ref{Diaconis}.
\begin{proof} (Theorem~\ref{Diaconis}.)
Consider the group algebra $\mathbb{C}[G]$ with the basis $\{e_g\,|\, g\in G \}$ whose multiplication is defined as the $\mathbb{C}$-linear extension of the multiplication
\[
e_g\cdot e_h=e_{gh},\quad\quad\text{for each}\,\, g,h\in G.
\]
Define the linear transformation $T:\mathbb{C}[G]\rightarrow\mathbb{C}[G]$ by
\[
T(x)=\left(\sum_{s\in S}e_s\right)x.
\]
If we let $Q$ be the matrix associated to the transformation $T$ with respect to the basis $\{e_g\,|\, g\in G\}$, then $Q$ will be
the adjacency matrix of $\Gamma(G;S)$.

On the other hand, assume that $\xx:G\rightarrow GL(\mathbb{C}[G])$ is the left regular representation of $G$ (thus $\dim\xx=|G|$) and let $\chi$ be the character of $\xx$. Define the matrix  $\xx(A)$ to be
\[
\xx(A)=\sum_{s\in S}\xx(s).
\]
Then it is not hard to see that the action of $\xx(A)$ on $\mathbb{C}[G]$ is identical to the action of $Q$ on $\mathbb{C}[G]$.  Therefore, in order to find the eigenvalues of the Cayley graph $\Gamma(G;S)$, it suffices to find the eigenvalues of $\xx(A)$.

By Maschke's theorem (Theorem~\ref{maschke}), we have
\[
V=\mathbb{C}[G]=\bigoplus_{\rho}V_\rho,
\]
where $\rho$ ranges over all irreducible representations of $G$ and for each $\rho$,
\[
V_\rho=\underbrace{W_\rho\oplus\cdots\oplus W_\rho}_{\dim(\rho)\,\,\text{times}},
\]
where $W_\rho$ is the vector  space corresponding to the irreducible representation $\rho$. Let $d_\rho=\dim(\rho)=\chi_\rho(\id)$. We have 
\begin{align*}
\xx(A)&=\sum_{s\in S}\xx(s)\\[.2cm]
&=\sum_{s\in S}\,\,\bigoplus_{\rho\in\irr(G)}\left(\bigoplus_{i=1}^{d_\rho}\rho(s)\right)\\[.2cm]
&=\bigoplus_{\rho\in\irr(G)}\,\,\bigoplus_{i=1}^{d_\rho}\left(\sum_{s\in S} \rho(s)\right).
\end{align*}
Using Corollary~\ref{sh_cor}, therefore, we have
\[
\xx(A)=\bigoplus_{\rho\in \irr(G)}\bigoplus_{i=1}^{d_\rho}\left[\left(\frac{1}{\chi_\rho(\id)}\sum_{s\in S} \chi_\rho(s)\right)I_{d_\rho} \right].
\]
Thus
\[
\xx(A)=\begin{bmatrix}
 \frac{1}{\chi_{\rho_1}(\id)}\sum_{s} \chi_{\rho_1}(s)\, I_{d_{\rho_1}} & & & & & &&\\
  & \hspace{-3cm}\ddots & &&&&&\\
 & & \hspace{-3cm}\frac{1}{\chi_{\rho_1}(\id)}\sum_{s} \chi_{\rho_1}(s) \, I_{d_{\rho_1}} & & &&& \\
 &&\hspace{-1cm} \frac{1}{\chi_{\rho_2}(\id)}\sum_{s} \chi_{\rho_2}(s)\, I_{d_{\rho_2}} && &&&\\
 &&& \hspace{-3cm}\ddots & &&&\\
&&&& \hspace{-3cm}\frac{1}{\chi_{\rho_2}(\id)}\sum_{s} \chi_{\rho_2}(s) \, I_{d_{\rho_2}}&&&\\
&&&&\ddots&&&\\
&&&&&\hspace{-1cm}\frac{1}{\chi_{\rho_t}(\id)}\sum_{s} \chi_{\rho_t}(s)\, I_{d_{\rho_t}}  &&\\
  &&&&&& \hspace{-3cm}\ddots &\\
 & &&&&&& \hspace{-3cm}\frac{1}{\chi_{\rho_t}(\id)}\sum_{s} \chi_{\rho_t}(s)\, I_{d_{\rho_t}}
\end{bmatrix},
\]
where $t$ is the number of distinct irreducible representations of $G$. Therefore the proof is complete.
\end{proof}


\section{Cayley graphs of quotient groups}\label{quotient}
In this section, for a given group $G$, we investigate the connections between Cayley graphs of quotient groups $G/N$, where
$N$ is a normal subgroup of $G$, and Cayley graphs of $G$. For a subset $S$ of $G$, let $S/N=\{sN\,:\, s\in S\}$.

\begin{prop}\label{quotient is cayley}
If $\Gamma(G;S)$ is a (normal) Cayley graph and $N\lhd G$ such that $N\cap S=\emptyset$, then  $\Gamma(G/N; S/N)$ will be a  (normal) Cayley graph.
\end{prop}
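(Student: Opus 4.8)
The plan is to check directly that $S/N$ is a legitimate connection set on the quotient group $G/N$, namely that it is closed under inversion and avoids the identity coset; once this is established, $\Gamma(G/N;S/N)$ is a Cayley graph by definition. For the parenthetical (normal) assertion I would then separately verify that $S/N$ is closed under conjugation in $G/N$ whenever $S$ is closed under conjugation in $G$.

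First I would recall that the identity element of $G/N$ is the coset $N$ itself, and observe that a coset $sN$ with $s\in S$ equals $N$ precisely when $s\in N$. Hence the hypothesis $N\cap S=\emptyset$ is exactly what guarantees $\id_{G/N}=N\notin S/N$; this is the one place where the assumption is genuinely used. For closure under inversion, given $sN\in S/N$ I would use $(sN)^{-1}=s^{-1}N$ together with the fact that $s^{-1}\in S$ (since $S$ is already a connection set in $G$) to conclude $(sN)^{-1}\in S/N$. These two observations together show that $S/N$ satisfies both defining conditions, so $\Gamma(G/N;S/N)$ is a Cayley graph.

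For the normal case, given any $gN\in G/N$ and $sN\in S/N$, I would compute $(gN)(sN)(gN)^{-1}=(gsg^{-1})N$ and invoke the hypothesis that $S$ is closed under conjugation, so $gsg^{-1}\in S$ and hence the conjugate coset again lies in $S/N$. This shows $S/N$ is closed under conjugation, so in this case the quotient graph is a normal Cayley graph.

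There is no substantial obstacle here; the argument is a direct unwinding of the definitions from Section~\ref{cayley}. The only point requiring care is recognizing that the hypothesis $N\cap S=\emptyset$ is precisely (and only) what is needed to keep the identity coset out of $S/N$: without it, an element $s\in S\cap N$ would collapse to the identity of $G/N$ and $S/N$ would fail to be a valid connection set.
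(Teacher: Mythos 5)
Your proof is correct and follows essentially the same route as the paper's: both verify directly that $S/N$ contains no identity coset (using $N\cap S=\emptyset$), is closed under inversion, and, in the normal case, is closed under conjugation via $(xN)^{-1}(sN)(xN)=(x^{-1}sx)N$. Your additional remark that the hypothesis $N\cap S=\emptyset$ is exactly equivalent to keeping the identity coset out of $S/N$ is a correct and slightly more explicit observation than the paper makes.
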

\begin{proof}
First $N\cap S=\emptyset$ implies that $S/N$ does not include the identity, and since $S$ is closed under inversion, $S/N$ is also closed under inversion. Thus $\Gamma(G/N; S/N)$ is a Cayley graph. Second, if $\Gamma(G;S)$ is normal, then for every  $x\in G$ and $s\in S$, we have that $x^{-1}sx\in S$. Therefore $(xN)^{-1}(sN)(xN)\in S/N$ for all $xN\in G/N$  and $sN\in S/N$. This shows that $\Gamma(G/N;S/N)$ is a normal Cayley graph as well. 
\end{proof}

The following lemma is an easy consequence of Proposition~\ref{quotient is cayley}.
\begin{lem}
The canonical group epimorphism $\pi:G\to G/N$ induces a graph epimorphism
\[
\tilde{\pi}:\Gamma(G;S)\to\Gamma(G/N;S/N)
\]
for every $N\lhd G$ with $N\cap S=\emptyset$. Moreover, $\tilde{\pi}$ preserves the degrees of vertices if and only if $N\cap \{st^{-1}\,:\, s,t\in S, s\neq t\}=\emptyset$. \qed
\end{lem}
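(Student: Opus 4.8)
The plan is to build $\tilde{\pi}$ directly from the quotient map and then read off both claims from the coset arithmetic. By Proposition~\ref{quotient is cayley}, the hypotheses $N\lhd G$ and $N\cap S=\emptyset$ already guarantee that $\Gamma(G/N;S/N)$ is a genuine (normal) Cayley graph, so $S/N$ is closed under inversion and avoids the identity; this is what makes the target graph well-defined, and it is the only place the assumption $N\cap S=\emptyset$ enters the first part.

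First I would define $\tilde{\pi}$ on vertices to be the group epimorphism itself, $\tilde{\pi}(g)=\pi(g)=gN$. Surjectivity on vertices is then immediate from surjectivity of $\pi$. To see that $\tilde{\pi}$ is a graph homomorphism, suppose $g\sim h$ in $\Gamma(G;S)$, i.e. $gh^{-1}\in S$. Then
\[
\tilde{\pi}(g)\,\tilde{\pi}(h)^{-1}=(gN)(hN)^{-1}=gh^{-1}N\in S/N,
\]
and moreover $gN\neq hN$, because $gh^{-1}\in S$ together with $N\cap S=\emptyset$ forces $gh^{-1}\notin N$; hence $\tilde{\pi}(g)\sim\tilde{\pi}(h)$ and no edge collapses to a loop. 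Thus $\tilde{\pi}$ is a graph epimorphism. If one also wants surjectivity on edges, then given an edge $\{aN,bN\}$ with $ab^{-1}N=sN$ for some $s\in S$, the edge $\{sb,b\}$ maps onto it, since $s\in ab^{-1}N$ gives $sbN=aN$.

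For the second statement, recall that every Cayley graph is regular: $\Gamma(G;S)$ has valency $|S|$ and $\Gamma(G/N;S/N)$ has valency $|S/N|$, where $|S/N|$ counts the distinct cosets $sN$ with $s\in S$. Since both graphs are regular, $\tilde{\pi}$ preserves degrees if and only if these two valencies coincide, i.e. $|S|=|S/N|$. The surjection $s\mapsto sN$ from $S$ onto $S/N$ satisfies $|S|=|S/N|$ exactly when it is injective, which means no two distinct $s,t\in S$ satisfy $sN=tN$. Using normality of $N$ one has $sN=tN\iff st^{-1}\in N$ (if $st^{-1}=n\in N$ then $t^{-1}s=t^{-1}nt\in N$, whence $sN=tN$, and conversely), so injectivity is precisely the condition that $st^{-1}\notin N$ for every pair of distinct $s,t\in S$; that is, $N\cap\{st^{-1}:s,t\in S,\ s\neq t\}=\emptyset$. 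This gives the biconditional.

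I do not expect a deep obstacle here; the lemma is essentially bookkeeping in coset multiplication. The one point needing genuine care is the equivalence $sN=tN\iff st^{-1}\in N$, where normality of $N$ is what converts the left-coset equality into the quotient form $st^{-1}$ appearing in the statement. The only other thing to watch is the role of $N\cap S=\emptyset$ in the first part: it is exactly what prevents an edge of $\Gamma(G;S)$ from being sent to a loop, so that $\tilde{\pi}$ really lands in the simple graph $\Gamma(G/N;S/N)$.
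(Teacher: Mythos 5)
Your proof is correct, and since the paper omits the argument entirely (declaring the lemma an easy consequence of Proposition~\ref{quotient is cayley}), your write-up simply supplies the intended bookkeeping: $\tilde{\pi}=\pi$ on vertices, the coset computation $(gN)(hN)^{-1}=gh^{-1}N$ together with $N\cap S=\emptyset$ to rule out loops, and the regularity comparison $|S|=|S/N|$ with the normality-based equivalence $sN=tN\iff st^{-1}\in N$ for the degree statement. Nothing further is needed.
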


Now we investigate how the eigenvalues of $\Gamma(G;S)$ and $\Gamma(G/N;S/N)$ are related. Note that if $\xx:G\to GL(V)$ is a representation of $G$ of dimension $n$, then since $GL(V)\cong M_n(\mathbb{C})$, the representation $\xx$ extends $\mathbb{C}$-linearly to an algebra homomorphism $\xx:\mathbb{C}[G]\to M_n(\mathbb{C})$ via
\[
\xx(\sum_{i=1}^kc_ig_i)=\sum_{i=1}^kc_i\xx(g_i),
\]
where $g_i\in G$, $c_i\in \mathbb{C}$ and $k\in \mathbb{N}$.

\begin{lem}\label{irr}
Let $G$ be a group. Then a  representation $\xx:G\to M_n(\mathbb{C})$ is irreducible if and only if $\xx(\mathbb{C}[G])=M_n(\mathbb{C})$.
\end{lem}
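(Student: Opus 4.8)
The plan is to recognize this as Burnside's theorem on irreducibility. Write $A:=\xx(\mathbb{C}[G])$; this is a subalgebra of $M_n(\mathbb{C})$ containing the identity, since $\xx(\id)=I$. Because $\xx$ was extended $\mathbb{C}$-linearly to $\mathbb{C}[G]$, a subspace $W\subseteq V:=\mathbb{C}^n$ is invariant under every $\xx(g)$ if and only if it is invariant under all of $A$; thus $\xx$ is irreducible exactly when $V$ has no proper nonzero $A$-invariant subspace. The one fact I would use repeatedly is that for any nonzero $\xi\in V$ the set $A\xi=\{a\xi:a\in A\}$ is an $A$-invariant subspace, nonzero because $I\in A$; hence if $\xx$ is irreducible then $A\xi=V$ for every $\xi\neq 0$.

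The direction $\xx(\mathbb{C}[G])=M_n(\mathbb{C})\Rightarrow$ irreducible is immediate: given a nonzero invariant subspace $W$, pick $0\neq w\in W$; for any $v\in V$ there is a linear map $M$ with $Mw=v$, and since $M\in M_n(\mathbb{C})=A$ and $W$ is $A$-invariant, $v=Mw\in W$, forcing $W=V$.

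For the converse I would prove $A=M_n(\mathbb{C})$ by manufacturing every rank-one operator. First choose a nonzero $T\in A$ of minimal rank $r$, with image $U=\im T$. For $a\in A$ define $\psi_a\in\operatorname{End}(U)$ by $\psi_a(u)=T(au)$; a short computation gives $\psi_a\psi_b=\psi_{aTb}$, so $B:=\{\psi_a:a\in A\}$ is a subalgebra of $\operatorname{End}(U)$. Since $TaT=\psi_a\circ T$ has $\rank(TaT)\le r$, minimality forces each $\psi_a$ to have rank $0$ or $r$, i.e. to be $0$ or invertible on $U$; a finite-dimensional algebra over the algebraically closed field $\mathbb{C}$ in which every nonzero element is invertible must equal $\mathbb{C}\,I_U$ (here one uses that any operator has an eigenvalue, exactly as in Schur's Lemma, Theorem~\ref{schur}). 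Hence $T(au)=\lambda_a u$ for a scalar $\lambda_a$ depending only on $a$; letting $a$ range over $A$ and using $Au_1=V$ for a fixed nonzero $u_1\in U$ shows $U=\{T(au_1):a\in A\}\subseteq\mathbb{C}u_1$, so $r=1$.

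Finally, with a rank-one $T_0\colon x\mapsto f_0(x)\,u_0$ in $A$, the elements $aT_0b\colon x\mapsto f_0(bx)\,(au_0)$ all lie in $A$ and range over every rank-one operator: the vectors $au_0$ fill out $V$ because $Au_0=V$, and the functionals $f_0\circ b$ fill out $V^{*}$ because their common kernel $\{x:f_0(Ax)=0\}$ is an $A$-invariant subspace that cannot be all of $V$ (else $f_0=0$), hence is $0$ by irreducibility. Since rank-one operators span $M_n(\mathbb{C})$ and $A$ is a subspace, $A=M_n(\mathbb{C})$. The hard part is this converse, and within it the minimal-rank step showing $r=1$, where the division-algebra/Schur argument does the real work; the easy direction and the bootstrap from a single rank-one operator to all of them are routine.
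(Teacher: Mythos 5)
Your proof is correct, but it takes a genuinely different route from the paper's. For the converse (irreducible $\Rightarrow$ surjective) the paper invokes the semisimple decomposition $\mathbb{C}[G]=\bigoplus_\rho \mathbb{C}[G]e_\rho$ of the group algebra and identifies $\xx(\mathbb{C}[G])$ with the Wedderburn block $\mathbb{C}[G]e_{\xx}\cong M_n(\mathbb{C})$; you instead prove Burnside's density theorem from scratch, using only that $A=\xx(\mathbb{C}[G])$ is a unital subalgebra of $M_n(\mathbb{C})$ acting irreducibly (so $A\xi=V$ for every $\xi\neq 0$). Your argument is more elementary, needing no structure theory of $\mathbb{C}[G]$ beyond the eigenvalue trick already used in Schur's Lemma, and it is more general: it applies verbatim to any irreducible subalgebra of $M_n(\mathbb{C})$ containing $I$, whether or not it arises from a group. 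The price is length: the minimal-rank step and the rank-one bootstrap must be carried out by hand, whereas the paper gets the conclusion in two lines once semisimplicity is granted. (Your easy direction is just the contrapositive of the paper's block-matrix argument; both are fine.)

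One step deserves a patch. You deduce $B=\mathbb{C}I_U$ from ``every nonzero element of $B$ is invertible'' via the eigenvalue trick, but that trick needs $\psi-\lambda I_U\in B$, i.e.\ it presupposes $I_U\in B$, which you have not checked: $\psi_{\id}=T|_U$ need not be the identity. The repair is short. Pick $a_0$ with $\psi_{a_0}\neq 0$ (possible since $Au=V$ and $T\neq 0$), and let $\lambda\neq 0$ be an eigenvalue of the invertible map $\psi_{a_0}$ with eigenvector $w_0$. Then $\psi_{a_0}^2-\lambda\psi_{a_0}=\psi_{a_0Ta_0-\lambda a_0}$ lies in $B$ and annihilates $w_0$, hence is $0$; invertibility of $\psi_{a_0}$ then gives $\psi_{a_0}=\lambda\,\mathrm{id}_U$, so $\mathrm{id}_U\in B$ and your argument proceeds as written.
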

\begin{proof}
If $\xx$ is not irreducible, then 
\[
\xx(g)=\left[ \begin{array}{cc}
A(g) & 0  \\
0 & B(g) \end{array} \right],
\] 
for every $g\in G$. Moreover, $A(g)$ is a square matrix of  size $n'\in \{1,\ldots, n-1\}$. This would, then, imply that for all $\alpha\in \mathbb{C}[G]$,
\[
\xx(\alpha)=\left[ \begin{array}{cc}
A'(\alpha) & 0  \\
0 & B'(\alpha) \end{array} \right],
\] 
where  $A'(\alpha)$ is a square matrix of size $n'$. But this means that $\xx(\mathbb{C}[G])\neq M_n(\mathbb{C})$.

For the converse, consider $\mathbb{C}[G]$ as the vector space for the  left-regular representation of $G$. Since $\mathbb{C}[G]$ is
semisimple, we will have the decomposition 
\[
\mathbb{C}[G]=\bigoplus_{\rho} \,\, \mathbb{C}[G]e_{\rho},
\]
where $\rho$ ranges over all irreducible representations of $G$ and $e_{\rho}$ are idempotents of $\mathbb{C}[G]$ corresponding to the irreducible representations $\rho$ (see \cite{MR2270898} and \cite{MR0450380} for more details). Note that all the irreducible
representations of $G$ appear in this decomposition. Now applying $\xx$ and assuming that $\xx$ is irreducible, we get 
\[
\xx(\mathbb{C}[G])\cong\mathbb{C}[G]e_{\xx}.
\]
On the other hand, since $\im(\xx)\subseteq M_n(\mathbb{C})$, we have
\[
\mathbb{C}[G]e_{\xx}\cong M_n(\mathbb{C}),
\]
and, therefore, $\xx(\mathbb{C}[G])=M_n(\mathbb{C})$.
\end{proof}

The following lemma states that irreducible representations of $G/N$ are, indeed, a subset of the irreducible representations of $G$.

\begin{lem}\label{induces}
If $N \lhd G$, then every irreducible representation of $G/N$ produces an irreducible representation for $G$.
\end{lem}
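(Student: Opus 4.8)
The plan is to construct the representation of $G$ by composing the given irreducible representation of $G/N$ with the canonical projection, and then to verify irreducibility by appealing to the characterization in Lemma~\ref{irr}.

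First I would fix an irreducible representation $\rho:G/N\to M_n(\mathbb{C})$ and let $\pi:G\to G/N$ be the canonical epimorphism. Setting $\xx=\rho\circ\pi$ gives a homomorphism $\xx:G\to M_n(\mathbb{C})$, hence a representation of $G$; this is the representation that $\rho$ ``produces''. It remains only to show $\xx$ is irreducible.

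Next I would pass to the group algebras. The map $\pi$ extends $\mathbb{C}$-linearly to a surjective algebra homomorphism $\tilde{\pi}:\mathbb{C}[G]\to\mathbb{C}[G/N]$, and the extension of $\xx$ to $\mathbb{C}[G]$ factors as $\xx=\rho\circ\tilde{\pi}$. Since $\tilde{\pi}$ is onto, we have $\xx(\mathbb{C}[G])=\rho(\tilde{\pi}(\mathbb{C}[G]))=\rho(\mathbb{C}[G/N])$. Because $\rho$ is irreducible, Lemma~\ref{irr} gives $\rho(\mathbb{C}[G/N])=M_n(\mathbb{C})$, and therefore $\xx(\mathbb{C}[G])=M_n(\mathbb{C})$. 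Applying Lemma~\ref{irr} in the other direction, $\xx$ is irreducible, as required.

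There is no serious obstacle here; the only point needing care is the surjectivity of $\tilde{\pi}$, which is exactly what guarantees that the full-matrix-algebra image is inherited by $\xx$. Alternatively one can avoid Lemma~\ref{irr} altogether: since $\pi$ is onto, the two operator families $\{\xx(g):g\in G\}$ and $\{\rho(gN):gN\in G/N\}$ coincide, so a subspace of $\mathbb{C}^n$ is invariant under $\xx$ if and only if it is invariant under $\rho$. Hence $\xx$ has a proper nonzero invariant subspace precisely when $\rho$ does, and irreducibility of $\rho$ forces irreducibility of $\xx$.
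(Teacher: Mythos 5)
Your proof is correct and follows essentially the same route as the paper: compose the irreducible representation of $G/N$ with the canonical projection and apply Lemma~\ref{irr} in both directions via surjectivity onto $M_n(\mathbb{C})$. Your explicit treatment of the group-algebra extension is a bit more careful than the paper's, but the argument is the same.
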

\begin{proof}
Let $\xx:G/N\to M_n(\mathbb{C})$ be an irreducible representation of $G/N$. Consider the canonical projection $\pi:G\to G/N$. The group homomorphism $\hat{\xx}=\xx \circ \pi$ is, therefore, a representation of $G$. In other words for any $g\in G$,
\[
\hat{\xx}(g)=\xx(gN).
\]
Since $\xx $ is irreducible, by Lemma~\ref{irr}, $\xx(\mathbb{C}[G/N])=M_n(\mathbb{C})$, that is, $\xx$ is onto. Thus $\hat{\xx}$ is onto, which again using the Lemma~\ref{irr} implies that $\hat{\xx}$ is irreducible.
\end{proof}
Now we can prove the following.
\begin{thm}\label{main1}
 Let $\Gamma(G;S)$ be a normal Cayley graph for which $S$ is contained in a conjugacy class of $G$ and $N \lhd G$ for which $N\cap S=\emptyset$. Assume $\lambda$ is an eigenvalue of the Cayley  graph $\Gamma(G/N;S/N)$. Then $c\lambda$ is an eigenvalue of $\Gamma(G;S)$, where $c=\frac{|S|}{|S/N|}$.
\end{thm}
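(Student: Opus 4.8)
The plan is to read both spectra off Theorem~\ref{Diaconis} and to match the given eigenvalue $\lambda$ of $\Gamma(G/N;S/N)$ with a specific eigenvalue of $\Gamma(G;S)$. First I would note that $\Gamma(G/N;S/N)$ is itself a normal Cayley graph by Proposition~\ref{quotient is cayley}, so Theorem~\ref{Diaconis} applies and there is an irreducible character $\psi$ of $G/N$ with
\[
\lambda=\frac{1}{\psi(\id)}\sum_{\bar{s}\in S/N}\psi(\bar{s}).
\]
The bridge between the two groups is Lemma~\ref{induces}: the composition $\hat\psi=\psi\circ\pi$ with the canonical projection $\pi\colon G\to G/N$ is an irreducible character of $G$ satisfying $\hat\psi(g)=\psi(gN)$ for all $g\in G$, and in particular $\hat\psi(\id)=\psi(\id)$. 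By Theorem~\ref{Diaconis} the number
\[
\eta_{\hat\psi}=\frac{1}{\hat\psi(\id)}\sum_{s\in S}\hat\psi(s)=\frac{1}{\psi(\id)}\sum_{s\in S}\psi(sN)
\]
is then an eigenvalue of $\Gamma(G;S)$, and the goal reduces to showing $\eta_{\hat\psi}=c\lambda$.

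This is where the hypothesis that $S$ lies in a single conjugacy class does the work. Fix any $s_0\in S$ and set $v=\psi(s_0N)$. Since $\hat\psi$ is a class function on $G$ and all elements of $S$ are conjugate in $G$, I get $\hat\psi(s)=\hat\psi(s_0)=v$ for every $s\in S$, hence $\sum_{s\in S}\psi(sN)=|S|\,v$. On the other hand, a homomorphism sends conjugate elements to conjugate elements, so $S/N$ is contained in a single conjugacy class of $G/N$; as $\psi$ is a class function and $s_0N\in S/N$, this forces $\psi(\bar{s})=v$ for every $\bar{s}\in S/N$, whence $\sum_{\bar{s}\in S/N}\psi(\bar{s})=|S/N|\,v$. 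Substituting these two evaluations gives
\[
\eta_{\hat\psi}=\frac{|S|\,v}{\psi(\id)}=\frac{|S|}{|S/N|}\cdot\frac{|S/N|\,v}{\psi(\id)}=\frac{|S|}{|S/N|}\,\lambda=c\lambda,
\]
so $c\lambda$ is an eigenvalue of $\Gamma(G;S)$, as required.

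The computation itself is short; the real content is recognizing that the single-conjugacy-class assumption forces one common character value $v$ to govern $S$ (viewed in $G$) and $S/N$ (viewed in $G/N$), so that both Diaconis--Shahshahani sums collapse to a multiple of $v$ and the factor $c=|S|/|S/N|$ emerges purely as the ratio of the two cardinalities. The one step I expect to need care — and the only genuine obstacle here — is verifying that $S/N$ really does sit inside a single conjugacy class of $G/N$ and that $s_0N$ is a valid representative lying in $S/N$, so that the two constants genuinely coincide; without the single-class hypothesis the fibres of $s\mapsto sN$ need not carry equal character contributions and no clean scalar multiple would appear.
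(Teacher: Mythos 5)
Your proposal is correct and follows essentially the same route as the paper: both pass the eigenvalue through Lemma~\ref{induces} to lift the irreducible character $\psi$ of $G/N$ to $\hat\psi=\psi\circ\pi$ on $G$, apply Theorem~\ref{Diaconis} to both graphs, and use the single-conjugacy-class hypothesis to collapse each sum to a common value $v=\psi(s_0N)$, yielding the ratio $c=|S|/|S/N|$. Your explicit remark that $\Gamma(G/N;S/N)$ is a normal Cayley graph (via Proposition~\ref{quotient is cayley}), so that every eigenvalue $\lambda$ does arise from some irreducible $\psi$, is a detail the paper leaves implicit but is the same argument.
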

\begin{proof}
Let $\chi$ be the character of $G/N$ corresponding to the irreducible representation $\xx$ of $G/N$. Using the notation of  Lemma~\ref{induces}, $\hat{\xx}$ is an irreducible representation of $G$ with the character $\hat{\chi}$. By Theorem~\ref{Diaconis}, then, $\eta_{\hat{\chi}}$ will be an eigenvalue of $\Gamma(G;S)$. Furthermore 
\[
\eta_{\hat{\chi}}=\frac{1}{\hat{\chi}(\id)}\sum_{s\in S}\hat{\chi}(s)=\frac{1}{\chi(N)}\sum_{s\in S}\chi(sN).
\]
But note that since $S$ is closed under conjugation and is contained in a single conjugacy class, $S$ is a conjugacy class of $G$. With  the same reasoning $S/N$ is a conjugacy class of $G/N$. Thus $\chi$ has a fixed value on $S/N$. Therefore
\[
\eta_{\hat{\chi}}=\frac{|S|}{\chi(N)}\chi(s_0N),
\]
where $s_0$ is an arbitrary element of $S$. On the other hand
\[
\eta_{\chi}=\frac{1}{\chi(N)}\sum_{sN\in S/N}\chi(sN)=\frac{|S/N|}{\chi(N)}\chi(s_0N);
\]
thus $\eta_{\hat{\chi}}=c\eta_{\chi}$.
\end{proof}

\begin{thm}\label{long thm}
Let $\Gamma(G;S)$ be a normal Cayley graph and $N \lhd G$ for which $N\cap S=\emptyset$ and $N\cap \{st^{-1}\,:\, s,t\in S, s\neq t\}=\emptyset$. Then the spectrum of $\Gamma(G/N;S/N)$  is a subset of the spectrum of $\Gamma(G;S)$.
\end{thm}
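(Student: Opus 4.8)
The plan is to apply the Diaconis--Shahshahani formula (Theorem~\ref{Diaconis}) to both graphs and then match their eigenvalues through the character-lifting correspondence of Lemma~\ref{induces}. The first step is to observe that the hypothesis $N\cap S=\emptyset$ allows Proposition~\ref{quotient is cayley} to guarantee that $\Gamma(G/N;S/N)$ is itself a normal Cayley graph, so that Theorem~\ref{Diaconis} is available for \emph{both} $\Gamma(G;S)$ and $\Gamma(G/N;S/N)$. This is the structural prerequisite that lets us describe each spectrum purely in terms of irreducible characters.

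The essential role of the second hypothesis $N\cap\{st^{-1}:s,t\in S,\ s\neq t\}=\emptyset$ is that it makes the reduction map $S\to S/N$, $s\mapsto sN$, \emph{injective}: for distinct $s,t\in S$ one has $sN=tN$ if and only if $st^{-1}\in N$, which the hypothesis forbids. Hence $s\mapsto sN$ is a bijection of $S$ onto $S/N$, and consequently, for any function $\psi$ on $G/N$,
\[
\sum_{s\in S}\psi(sN)=\sum_{t\in S/N}\psi(t).
\]
This identity is precisely the bookkeeping device that will force the two Diaconis--Shahshahani eigenvalues to agree on the nose (rather than differing by a scaling factor as in Theorem~\ref{main1}).

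The main argument then proceeds as follows. An arbitrary eigenvalue of $\Gamma(G/N;S/N)$ has, by Theorem~\ref{Diaconis}, the form $\eta_\psi=\frac{1}{\psi(\id)}\sum_{t\in S/N}\psi(t)$ for some $\psi\in\irr(G/N)$. By Lemma~\ref{induces} the lift $\hat\psi=\psi\circ\pi$, where $\pi:G\to G/N$ is the canonical projection, is an irreducible character of $G$; moreover $\hat\psi(\id)=\psi(\id)$ and $\hat\psi(s)=\psi(sN)$ for each $s\in S$. Applying Theorem~\ref{Diaconis} to $\Gamma(G;S)$ with the character $\hat\psi$ gives the eigenvalue
\[
\eta_{\hat\psi}=\frac{1}{\hat\psi(\id)}\sum_{s\in S}\hat\psi(s)=\frac{1}{\psi(\id)}\sum_{s\in S}\psi(sN)=\frac{1}{\psi(\id)}\sum_{t\in S/N}\psi(t)=\eta_\psi,
\]
where the third equality uses the bijection established above. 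Thus $\eta_\psi$ is an eigenvalue of $\Gamma(G;S)$.

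Since $\psi$ was arbitrary, every eigenvalue of $\Gamma(G/N;S/N)$ occurs among the eigenvalues of $\Gamma(G;S)$, which is the asserted spectral containment. The only genuinely delicate point is recognizing that the second hypothesis is exactly the condition that renders $s\mapsto sN$ injective, so that the two character sums coincide; once that is in hand the conclusion is a direct substitution into Theorem~\ref{Diaconis}. One may optionally remark that, because $\hat\psi(\id)^2=\psi(\id)^2$, the multiplicity of each such eigenvalue in $\Gamma(G;S)$ is at least its multiplicity in $\Gamma(G/N;S/N)$, so the containment respects multiplicities as well.
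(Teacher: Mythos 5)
Your proof is correct and follows essentially the same route as the paper: lift an irreducible character $\psi$ of $G/N$ to $\hat\psi=\psi\circ\pi$ via Lemma~\ref{induces}, note that the hypothesis $N\cap\{st^{-1}:s,t\in S,\ s\neq t\}=\emptyset$ makes $s\mapsto sN$ a bijection of $S$ onto $S/N$, and conclude $\eta_{\hat\psi}=\eta_\psi$ from Theorem~\ref{Diaconis}. Your added observation about multiplicities is a harmless (and correct) bonus not present in the paper.
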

\begin{proof}
Using the notation of the proof of Theorem~\ref{main1}, we have
\[
\eta_{\hat{\chi}}=\frac{1}{\hat{\chi}(\id)}\sum_{s\in S}\hat{\chi}(s)=\frac{1}{\chi(N)}\sum_{s\in S}\chi(sN).
\]
Now, note that the condition  $N\cap \{st^{-1}\,:\, s,t\in S, s\neq t\}=\emptyset$ implies that $sN\neq tN$ for $s,t\in S$ and $s\neq t$. Therefore
\[
\eta_{\hat{\chi}}=\frac{1}{\chi(N)}\sum_{sN\in S/N}\chi(sN)=\eta_{\chi},
\]
which completes the proof.
\end{proof}

\begin{example}
Let $n>2$; Then the cycle $C_{2n}\cong \Gamma(\mathbb{Z}_{2n}; \{\pm1\})$. Now the (normal) subgroup $N=\{0,n\}$ satisfies the conditions of Corollary~\ref{long thm}. Thus the spectrum of $\Gamma(\mathbb{Z}_{2n}/N;\{\pm1\}/N)$ is involved in the spectrum of $\Gamma(\mathbb{Z}_{2n}; \{\pm1\})$. But
\[
\Gamma(\mathbb{Z}_{2n}/N;\{\pm1\}/N)\cong\Gamma(\mathbb{Z}_n;\{\pm1\})\cong C_n.
\]
Thus we have shown that the spectrum of $C_n$ is a subset of the spectrum of $C_{2n}$, for~$n>2$.
\end{example}

\chapter{EKR for Permutation Groups}\label{EKR_perm_groups}

Recall from Chapter~\ref{introduction} that the  Erd\H{o}s-Ko-Rado theorem gives bounds for the sizes of intersecting set systems and characterizes the systems that achieve the bound. Recall, also, that many similar theorems have been proved for other mathematical ``objects'' with a relevant concept of ``intersection''. In this chapter, in which our most fundamental research work  starts, we assume these objects to be the permutations and define the intersection as follows: two permutations intersect if a point has the same image under both permutations. In this chapter we establish versions of the Erd\H{o}s-Ko-Rado theorem  for this situation. Note that in the first part of the original Erd\H{o}s-Ko-Rado theorem for $k$-subsets of an $n$-set, the bound $\binom{n-1}{k-1}$ is, indeed, the size of a ``trivially intersecting'' family of $k$-subsets; that is, the family off all $k$-sets that contain a common single point. Its equivalent in the permutations category, is the ``cosets of the point-stabilizers''. Note also, that the second part of the Erd\H{o}s-Ko-Rado theorem states that the only maximum-sized families are the trivially intersecting ones; therefore, the equivalent  statement in   our situation is that the only sets of permutations of the maximum size are cosets of the point-stabilizers. If a permutation group satisfies  in the first condition we will say it has the ``EKR property''. If it also satisfies the second condition, then we say it has the ``strict EKR property''.

In Section~\ref{EKR_property} we present precise definitions for the EKR and strict EKR properties. In Section~\ref{EKR_induced} we discuss how these properties can be induced to a group by its subgroups. In Section~\ref{EKR_some_families}, we try to prove the EKR  or the strict EKR property holds for some well-known  families of groups. The chapter is concluded by Section~\ref{EKR_products} where  the EKR problem is studied for certain  group products. The reader is expected to be familiar with the basic concepts from the permutation groups; see for instance \cite{cameron1999permutation}.


\section{Erd\H{o}s-Ko-Rado property}\label{EKR_property}

Let $G\leq \sym(n)$ be a permutation group with the natural action on the set $[n]$. 
Throughout this thesis, we denote the set of all derangement elements of $G$ by $\dd_G$.
Two permutations $\pi,\sigma\in G$ are said to \textsl{intersect}\index{intersecting permutations} if $\pi\sigma^{-1}$ has a fixed point in $[n]$. In other words, $\sigma$ and $\pi$ do not intersect if $\pi\sigma^{-1}\in \dd_G$. A subset $S\subseteq G$ is, then, called \textsl{intersecting}\index{intersecting set of permutations} if any pair of its elements intersect. Clearly, the stabilizer of a point is an intersecting set in $G$ (as is any coset of the stabilizer of a point).  

We say the group $G$ has the \txtsl{EKR property}, if the size of any intersecting subset of $G$  is bounded above by the size of the largest point-stabilizer in $G$. Further, $G$ is said to have the \txtsl{strict EKR property} if the only maximum intersecting subsets of $G$ are the cosets of the point-stabilizers. It is clear from the definition that if a group has the strict EKR property  then it will have the EKR property.

Obviously the first group to consider is the symmetric group. In 1977 Frankl and Deza \cite{Frankl1977352} proved $\sym(n)$ has the EKR property and conjectured that it had the strict EKR property. In 2003, Cameron and Ku \cite{MR2009400} proved this conjecture. That is, they proved that
\begin{thm}\label{EKR_for_sym}
For any $n\geq 2$, $\sym(n)$ has the strict EKR property. \qed
\end{thm}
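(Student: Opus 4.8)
The plan is to realize intersecting families of permutations as cocliques in the \emph{derangement graph} $\Gamma_n=\Gamma(\sym(n);\dd_{\sym(n)})$, whose vertices are the elements of $\sym(n)$ and in which $\pi\sim\sigma$ exactly when $\pi\sigma^{-1}$ is a derangement; by definition $S\subseteq\sym(n)$ is intersecting if and only if it is a coclique of $\Gamma_n$. Since $\dd_{\sym(n)}$ is a union of conjugacy classes, $\Gamma_n$ is a normal Cayley graph, so Theorem~\ref{Diaconis} gives its eigenvalues as
\[
\eta_\lambda=\frac{1}{\chi^\lambda(\id)}\sum_{\sigma\in\dd_{\sym(n)}}\chi^\lambda(\sigma),\qquad \lambda\vdash n,
\]
where the trivial partition $[n]$ yields the valency $d_n=|\dd_{\sym(n)}|$. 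The crucial spectral input is that the least eigenvalue is $\tau=-d_n/(n-1)$, afforded by the standard module $S^{[n-1,1]}$. The value itself is immediate: by Example~\ref{standard rep} the character of $[n-1,1]$ is $\chi^{[n-1,1]}(\sigma)=\fix(\sigma)-1$, which equals $-1$ on every derangement, so $\eta_{[n-1,1]}=\tfrac{1}{n-1}\sum_{\sigma\in\dd_{\sym(n)}}(-1)=-d_n/(n-1)$. The genuinely hard part, and the first main obstacle, is proving that no partition gives a smaller eigenvalue, i.e.\ that $\eta_\lambda\ge -d_n/(n-1)$ for all $\lambda\vdash n$; this requires a careful estimate of the character sums over derangements (for instance via the Murnaghan--Nakayama rule of Theorem~\ref{nakayama}).

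Granting this, the ratio bound (Theorem~\ref{ratio2}) applied with $k=d_n$ and $\tau=-d_n/(n-1)$ yields
\[
\alpha(\Gamma_n)\le\frac{n!}{1-\frac{d_n}{\tau}}=\frac{n!}{1+(n-1)}=(n-1)!.
\]
Since each point-stabilizer coset $S_{i,j}=\{\sigma\in\sym(n):\sigma(i)=j\}$ is an intersecting set of size $(n-1)!$, equality holds, $\alpha(\Gamma_n)=(n-1)!$, and $\sym(n)$ has the EKR property.

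For the strict version I would pass to the equality clause of Theorem~\ref{ratio2}: if $S$ is a maximum coclique then $v_S-\tfrac1n\mathbf 1$ is a $\tau$-eigenvector, so $v_S$ lies in the direct sum $U_{[n]}\oplus U_{[n-1,1]}$ of the trivial and standard isotypic components. To confine $v_S$ to exactly these two components (and rule out contributions from any other module tied for the least eigenvalue), I would use the clique--coclique bound (Theorem~\ref{clique_coclique_bound}) with a sharply transitive clique $C$ of size $n$, such as $\langle(1\,2\,\cdots\,n)\rangle$: then $|C||S|=n\cdot(n-1)!=n!$, equality holds, and Corollary~\ref{clique_vs_coclique} forces $E_jv_S=0$ on every module where $E_jv_C\ne0$, leaving $v_S\in U_{[n]}\oplus U_{[n-1,1]}$. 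This module has a concrete description through permutation matrices: $U_{[n]}\oplus U_{[n-1,1]}$ is the span of the indicators $v_{S_{i,j}}$, and every element of it is a function of the form $\sigma\mapsto\sum_i C_{i,\sigma(i)}$ for some $n\times n$ matrix $C=(C_{ij})$, unique up to adding constants along the rows and columns.

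The final step, which I regard as the second crux, is the characterization lemma: a $\{0,1\}$-valued function $\sigma\mapsto\sum_i C_{i,\sigma(i)}$ that takes the value $1$ exactly $(n-1)!$ times must, after absorbing the row/column freedom, have $C$ equal to a single-entry indicator, forcing $S=S_{i,j}$. The difficulty is entirely combinatorial: one must show the constraint $\sum_i C_{i,\sigma(i)}\in\{0,1\}$ for \emph{all} $\sigma\in\sym(n)$ is rigid enough to eliminate every matrix $C$ except the cell indicators. I would approach this by comparing the function's values on permutations differing by a single transposition or $3$-cycle, which expresses small differences of entries of $C$ as integers in $\{-1,0,1\}$ and propagates to force all but one row of $C$ to be constant; normalizing then leaves precisely the indicator of a coset $S_{i,j}$, completing the proof of the strict EKR property.
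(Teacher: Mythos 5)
Your route is genuinely different from the paper's. The paper does not attack $\sym(n)$ head-on: it first proves the strict EKR property for $\alt(n)$, $n\geq 5$ (Theorem~\ref{main_Alt}), running the module method with the cliques $C^o$ and $C^e$ obtained from decompositions of the complete digraph, and then transfers the result to $\sym(n)$ by Theorem~\ref{2transitive_subgroup}, which lifts strict EKR from a $2$-transitive subgroup to any $2$-transitive overgroup. What you propose is instead the direct argument of \cite{Karen} applied to $\sym(n)$ itself: ratio bound for the upper bound, ratio-bound equality plus clique--coclique for the module condition, and a combinatorial rigidity lemma for the final characterization. That skeleton is legitimate, but as written it has genuine gaps.

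First, both your EKR step and your module step rest on the assertion that $\tau=-|\dd_{\sym(n)}|/(n-1)$ is the least eigenvalue of $\Gamma_{\sym(n)}$, attained only by the standard module. You flag this as the first main obstacle and leave it unproved; it is Renteln's theorem (cited in the paper as \cite{Renteln2007}) and is hard -- the paper deliberately arranges its proof so as not to need even the analogous statement for $\alt(n)$ (Conjecture~\ref{least_eval_of_alt} is left open there). Second, the clique--coclique patch does not work with your clique $C=\langle(1\,2\,\cdots\,n)\rangle$: Corollary~\ref{clique_vs_coclique} only forces $E_\lambda v_S=0$ on modules where $\chi^\lambda(C)\neq 0$, and for this clique one computes, for $n$ odd, $\chi^{[2,1^{n-2}]}(C)=(n-1)+\sum_{k=1}^{n-1}\sgn(\sigma^k)\bigl(\fix(\sigma^k)-1\bigr)=(n-1)-(n-1)=0$ (and $\chi^{[1^n]}(C)=0$ for $n$ even), so those modules are not excluded by this argument. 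You must either verify $\chi^\lambda(C)\neq 0$ module by module for a better-chosen clique -- which is exactly the content of Lemmas~\ref{n_odd} and~\ref{n_even} for the paper's cliques -- or fall back entirely on uniqueness of the least eigenvalue, which returns you to the first gap. Third, your closing rigidity lemma for functions $\sigma\mapsto\sum_i C_{i,\sigma(i)}$ is only sketched; the paper replaces that combinatorial step by condition (c) of Theorem~\ref{module_method_thm}, the full-rank condition on the matrix $M$, which it verifies cleanly via the eigenvalue bound on the pairs graph (Lemma~\ref{least_eval_of_X_n}, Proposition~\ref{fullrank_alt}). In short: the outline is faithful to the known direct proof, but both load-bearing steps are deferred, and the one bridging step you spell out in detail fails for the particular clique you chose.
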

This result caught the attention of several researchers, indeed, the result was proved with vastly different methods in \cite{Karen,MR2061391} and \cite{MR2419214}. In Chapter~\ref{module_method} we will explain the method used in \cite{Karen}. Further, researchers have also worked on finding other subgroups of $\sym(n)$ that have the strict EKR property. For example in \cite{KuW07} it is shown that $\alt(n)$ has the strict EKR property, provided that $n\geq 5$ and that all the ``Young subgroups'', except a few of them, have the strict EKR property.

It is a  natural question to ask if every permutation group has the EKR property. The answer is no; for instance, it is shown in Section~\ref{sporadic}, that the Mathieu group $M_{20}$ does not have the EKR property. In Section~\ref{sporadic} we also give an example of groups with EKR property which fails to have the strict EKR property.  It is therefore an important problem to classify the groups which have the EKR or the strict EKR property. We address this general question in Chapter~\ref{future}.

We point out that the group action is essential for the concepts of EKR and strict EKR properties. In other words, a group can have the EKR or the strict EKR property under some action on a set while it fails to have this property under another action. As an example, there is a $2$-transitive subgroup of $\sym(5)$ which is isomorphic to $G=\zz_5\rtimes \zz_4$ and does not have the strict EKR property (see Table~\ref{small_perm_groups}). On the other hand, $\sym(5)$ is the stabilizer of $6$ in $\sym(6)$; that is,  $\sym(5)=(\sym(6))_6$. Thus $G$ can be considered as a subgroup of  $\sym(6)$. Then, since all the elements of $G$ fix $6$, under the natural action of $\sym(6)$ on $\{1,\dots,6\}$, the whole set $G$ is intersecting; that is, the only maximum intersecting set in $G$ under this action is $G$, which is the stabilizer of $6$. This means that $G$ trivially  has the strict EKR property under the natural action of $\sym(6)$ on $\{1,\dots,6\}$. This is the reason that in this thesis, we always consider the ``permutation groups'' (i.e. the subgroups of $\sym(n)$ with their natural action on $[n]$) rather than ``groups''.


The Cayley graph $\Gamma(G,\dd_G)$ is called the \txtsl{derangement graph} of $G$ and is denoted by $\Gamma_G$.   
Note that two permutations in $G$ are  intersecting if and only if their corresponding vertices are not adjacent in $\Gamma_G$. Therefore, the problem of classifying the maximum intersecting subsets of $G$ is equivalent to characterizing the maximum independent sets of vertices in $\Gamma_G$.  According to Section~\ref{cayley}, $\Gamma_G$ is vertex transitive. 
Let $CC(G)$ denote the set of all derangement conjugacy classes of $G$. The following is a consequence of Theorem~\ref{Diaconis}.
\begin{cor}\label{evals_of_der_graph} The eigenvalues of the derangement graph $\Gamma_{\sym(n)}$ are given by
\[
\eta_{\lambda}=\sum_{c\in CC(G)}\frac{|c|}{\chi^{\lambda}(\id)}\chi^{\lambda}(\sigma), 
\]
where  $\lambda$ ranges over all the partition of $n$, and $\sigma\in c$. Moreover, the multiplicity of $\eta_\lambda$ is $\chi_{\lambda}(\id)^2$.\qed
\end{cor}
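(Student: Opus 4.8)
The plan is to derive this as a direct specialization of Theorem~\ref{Diaconis} to the group $G=\sym(n)$ with connection set equal to the set of derangements $\dd_G$. First I would verify that the derangement graph $\Gamma_{\sym(n)}=\Gamma(\sym(n);\dd_G)$ is a \emph{normal} Cayley graph, so that Theorem~\ref{Diaconis} applies. The set $\dd_G$ does not contain the identity (the identity fixes every point), and it is closed under inversion, since $\sigma$ and $\sigma^{-1}$ have the same fixed points, so $\sigma\in\dd_G$ if and only if $\sigma^{-1}\in\dd_G$. Moreover, whether a permutation is a derangement depends only on its cycle type, hence only on its conjugacy class; therefore $\dd_G$ is a union of conjugacy classes of $\sym(n)$, namely exactly the derangement conjugacy classes in $CC(G)$, and in particular $\dd_G$ is closed under conjugation.

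Next I would invoke Theorem~\ref{Diaconis} verbatim: the eigenvalues of $\Gamma(\sym(n);\dd_G)$ are
\[
\eta_{\chi}=\frac{1}{\chi(\id)}\sum_{s\in\dd_G}\chi(s),
\]
as $\chi$ ranges over the irreducible characters of $\sym(n)$, each with multiplicity $\chi(\id)^2$. By Corollary~\ref{list_of_irrs_of_sym}, the irreducible characters of $\sym(n)$ are precisely the $\chi^\lambda$ for $\lambda\vdash n$, so the indexing set becomes the partitions of $n$, the eigenvalue associated to $\lambda$ is $\eta_\lambda$, and its multiplicity is $\chi^\lambda(\id)^2$, as claimed.

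Finally I would rewrite the sum over $\dd_G$ as a sum over derangement conjugacy classes. Since each $\chi^\lambda$ is a class function and $\dd_G$ is the disjoint union of the conjugacy classes $c\in CC(G)$, I can group the terms according to class membership to obtain
\[
\sum_{s\in\dd_G}\chi^\lambda(s)=\sum_{c\in CC(G)}|c|\,\chi^\lambda(\sigma),\qquad\sigma\in c,
\]
and dividing by $\chi^\lambda(\id)$ gives the stated formula for $\eta_\lambda$. There is no genuine obstacle in this argument; the only points meriting explicit mention are that $\dd_G$ is a union of conjugacy classes (so the normality hypothesis of Theorem~\ref{Diaconis} is satisfied) and that regrouping the character sum class by class is legitimate precisely because characters are constant on conjugacy classes.
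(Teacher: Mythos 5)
Your proposal is correct and is exactly the derivation the paper intends: the corollary is stated with a \qed as an immediate consequence of Theorem~\ref{Diaconis}, and the only content is verifying that $\dd_{\sym(n)}$ is a valid, conjugation-closed connection set and then regrouping the character sum by conjugacy classes, which you do. No further comment is needed.
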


\section{EKR property induced by subgroups}\label{EKR_induced}
In this section we prove that if a $2$-transitive group $G\leq \sym(n)$ has the strict EKR property, then any permutation group of degree $n$ containing $G$ also has the strict EKR property. This shows that the ``minimal'' $2$-transitive permutation groups are  ``core'' objects for studying the strict EKR property.

We first show how  transitive groups can inherit EKR the  property from their subgroups. These two facts were first pointed out by Pablo Spiga. The proof of the first result that we provide here is due to Chris Godsil.

\begin{thm}\label{H_EKR_then_G_EKR}
Let $G$ be a transitive subgroup of $\sym(n)$ and let $H$ be a transitive subgroup of $G$. If $H$ has the EKR property, then $G$ has the EKR property. 
\end{thm}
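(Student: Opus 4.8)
The plan is to reduce the EKR bound for $G$ to the one for $H$ by partitioning $G$ into left cosets of $H$ and observing that the intersection relation is preserved under left translation. First I would record the two targets. Since $G$ and $H$ are both transitive of degree $n$, every point-stabilizer in $G$ has size $|G|/n$ and every point-stabilizer in $H$ has size $|H|/n$. Thus the hypothesis that $H$ has the EKR property says precisely that every intersecting subset of $H$ has size at most $|H|/n$, and the goal is to show that every intersecting subset $T\subseteq G$ has size at most $|G|/n$.

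Let $m=[G:H]=|G|/|H|$ and fix coset representatives so that $G=\bigsqcup_{i=1}^{m} g_iH$. For an intersecting set $T\subseteq G$, write $T_i=T\cap g_iH$, so that $|T|=\sum_{i=1}^{m}|T_i|$. The key observation is that left translation by a fixed element preserves intersection: if $\pi=g_ih_1$ and $\sigma=g_ih_2$ with $h_1,h_2\in H$, then $\pi(x)=\sigma(x)$ holds for some $x\in[n]$ if and only if $g_i(h_1(x))=g_i(h_2(x))$, which, since $g_i$ is a bijection of $[n]$, is equivalent to $h_1(x)=h_2(x)$. Hence $\pi$ and $\sigma$ intersect in $G$ if and only if $h_1$ and $h_2$ intersect in $H$. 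Consequently the translated set $g_i^{-1}T_i=\{\,g_i^{-1}\tau : \tau\in T_i\,\}$ is an intersecting subset of $H$.

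Now the EKR property of $H$ applies to each translated piece, giving $|T_i|=|g_i^{-1}T_i|\le |H|/n$ for every $i$. Summing over the $m$ cosets yields
\[
|T|=\sum_{i=1}^{m}|T_i|\le m\cdot\frac{|H|}{n}=\frac{|G|}{|H|}\cdot\frac{|H|}{n}=\frac{|G|}{n},
\]
which is exactly the size of a point-stabilizer in $G$. Therefore $G$ has the EKR property.

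This argument has essentially no hard step; its entire content is the translation-invariance of the intersection relation. Structurally, the same fact follows from noting that the derangement graph $\Gamma_G=\Gamma(G;\dd_G)$ is a Cayley graph, so left multiplication by $g_i^{-1}$ is an automorphism of $\Gamma_G$ that carries the independent set $T_i$ to an independent set contained in the vertex set $H$; since $\dd_H=\dd_G\cap H$, the subgraph of $\Gamma_G$ induced on $H$ is precisely $\Gamma_H$, so this image is an independent set of $\Gamma_H$, i.e.\ an intersecting subset of $H$. The only place the transitivity hypotheses enter is the identification of the two EKR bounds with $|G|/n$ and $|H|/n$, so the proof is really just a coset-counting estimate on top of that identification.
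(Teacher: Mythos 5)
Your proof is correct. The key step — that left translation by $g_i^{-1}$ carries the intersecting set $T_i=T\cap g_iH$ to an intersecting subset of $H$, because $g_ih_1$ and $g_ih_2$ agree at $x$ iff $h_1$ and $h_2$ do — is sound, and the transitivity of $H$ (resp.\ $G$) is used exactly where it must be, namely to identify the EKR bound for $H$ with $|H|/n$ and the target bound for $G$ with $|G|/n$.

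Your route is genuinely different from the paper's. The paper argues via fractional chromatic numbers: the embedding $\Gamma_H\hookrightarrow\Gamma_G$ is a graph homomorphism, so $\chi^*(\Gamma_H)\leq\chi^*(\Gamma_G)$ by Proposition~\ref{fractional_chrom_ineq}, and since both derangement graphs are vertex-transitive this reads $|H|/\alpha(\Gamma_H)\leq |G|/\alpha(\Gamma_G)$, whence $\alpha(\Gamma_G)\leq |G|/n$. Your coset-decomposition argument is essentially the elementary combinatorial proof hiding behind that no-homomorphism inequality in this special case: it avoids invoking the fractional chromatic number machinery entirely and only uses translation-invariance of the intersection relation plus a sum over $[G:H]$ cosets. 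What the paper's approach buys is uniformity with the rest of Chapter~\ref{graph theory} and reusability of the homomorphism bound in other settings; what yours buys is self-containedness and the fact that it sets up precisely the decomposition $S_i=S\cap x_iH$, $x_i^{-1}S_i$ independent in $\Gamma_H$, that the paper itself then deploys in the proof of Theorem~\ref{2transitive_subgroup} for the strict version. So your argument would let the two theorems share a single framework.
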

\begin{proof} The group $H$ has the EKR property and is transitive, so the size of the maximum independent set is $|H|/n$. Further, according to Section~\ref{cayley} the graph $\Gamma_H$ is vertex transitive so its fractional chromatic number is $n$.
The embedding $\Gamma_H \hookrightarrow \Gamma_G$ is a homomorphism, so according to Proposition~\ref{fractional_chrom_ineq}, the fractional chromatic number of $\Gamma_G$ is at least the fractional chromatic number of $\Gamma_H$. The graph $\Gamma_G$ is also vertex transitive, so 
\[
n \leq \frac{|G|}{\alpha(\Gamma_G)}
\]
where $\alpha(\Gamma_G)$ is the size of a maximum independent set. Thus $\alpha(\Gamma_G) \leq \frac{|G|}{n}$, and since $G$ is
transitive, the stabilizer of a point achieves this bound.
\end{proof}


\begin{thm}\label{2transitive_subgroup} Let $G$ be a $2$-transitive subgroup of $\sym(n)$ and let $H$ be a $2$-transitive subgroup of $G$. If $H$ has the strict EKR property, then $G$ has the strict EKR property.
\end{thm}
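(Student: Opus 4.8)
The plan is to bootstrap from Theorem~\ref{H_EKR_then_G_EKR}. Since the strict EKR property implies the EKR property and both $G$ and $H$ are transitive, that theorem already gives that $G$ has the EKR property; equivalently $\alpha(\Gamma_G)=|G|/n$, so a maximum intersecting set $S\subseteq G$ has size exactly $|G|/n$. It remains to prove strictness. First I would normalize. Left multiplication by a fixed element and inversion are automorphisms of the normal Cayley graph $\Gamma_G$ (left multiplication conjugates the ``difference'' $st^{-1}$, and $\dd_G$ is a union of conjugacy classes), and both carry cosets of point-stabilizers to cosets of point-stabilizers. Hence, replacing $S$ by $s_0^{-1}S$ for some $s_0\in S$, I may assume $\id\in S$; it then suffices to show the normalized $S$ equals the stabilizer $G_{a_0}$ of some point $a_0$, since translating back preserves the conclusion.

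Next I would exploit the subgroup structure through the right cosets of $H$. Right multiplication by any $g\in G$ is an automorphism of $\Gamma_G$ carrying $H$ onto $Hg$, so the induced subgraph on $Hg$ is isomorphic to $\Gamma_H$ for every coset. Because $H$ is transitive with the EKR property, $\alpha(\Gamma_H)=|H|/n$, whence $|S\cap Hg|\le |H|/n$ for each of the $[G:H]$ cosets. Summing,
\[
\frac{|G|}{n}=|S|=\sum_{Hg}|S\cap Hg|\le [G:H]\,\frac{|H|}{n}=\frac{|G|}{n},
\]
so equality holds in every coset: each $S\cap Hg$ is a \emph{maximum} independent set of the copy of $\Gamma_H$ living on $Hg$. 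Now the hypothesis enters: since $H$ has the strict EKR property, each $S\cap Hg$ pulls back (under $h\mapsto hg$) to a coset of a point-stabilizer of $H$. For the identity coset, $S\cap H$ is such a coset containing $\id$, and a coset of a point-stabilizer that contains the identity is a point-stabilizer itself; thus $S\cap H=H_{a_0}$, the stabilizer in $H$ of some point $a_0\in[n]$.

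The crux is the gluing step: showing that the ``local'' point-stabilizer data attached to the various cosets are all governed by the single point $a_0$, so that $S=G_{a_0}$. Since $|S|=|G|/n=|G_{a_0}|$, it is enough to prove that every $s\in S$ fixes $a_0$. The tool is cross-intersection: the subgroup $H_{a_0}=S\cap H$ and the element $s$ both lie in the intersecting set $S$, so $s$ meets every $w\in H_{a_0}$; because $H_{a_0}$ is closed under inversion, this is equivalent to the statement that the coset $H_{a_0}\,s$ contains no derangement. I would then argue the contrapositive: if $s(a_0)\ne a_0$, one produces $w\in H_{a_0}$ with $ws$ a derangement, contradicting that $H_{a_0}s$ is derangement-free. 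Here two-transitivity is essential, since it makes $H_{a_0}$ transitive on $[n]\setminus\{a_0\}$, which is what lets one steer the values of $w$ so as to disagree with $s^{-1}$ on the remaining points while keeping $a_0$ fixed. Carrying out this construction carefully---arranging a \emph{single} $w\in H_{a_0}$ that disagrees with $s^{-1}$ everywhere---is the main technical obstacle, and is precisely where more than mere transitivity of $H$ is needed.

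Once every $s\in S$ is shown to fix $a_0$, we obtain $S\subseteq G_{a_0}$ and hence $S=G_{a_0}$ by the size count. Undoing the normalization, the original maximum intersecting set is $s_0G_{a_0}=\{x\in G: x(a_0)=s_0(a_0)\}$, a coset of a point-stabilizer. Therefore the only maximum intersecting subsets of $G$ are cosets of point-stabilizers, so $G$ has the strict EKR property.
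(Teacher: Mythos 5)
Your setup matches the paper's: reduce to a normalized maximum intersecting set $S$ containing $\id$, decompose $S$ over the cosets of $H$, use the equality forced by the count $|S|=[G:H]\cdot|H|/n$ to conclude each piece is a maximum coclique in a copy of $\Gamma_H$, invoke strict EKR for $H$ to get $S\cap H=H_{a_0}$, and then try to show every $s\in S$ fixes $a_0$. (The paper uses left cosets $x_iH$ and the maps $s\mapsto x_i^{-1}s$, which preserve adjacency because $\dd_G$ is conjugation-closed; your right-coset version works equally well.) The problem is that you stop at exactly the point where the theorem is actually proved. You correctly reduce to the statement that if $s(a_0)\neq a_0$ then the coset $H_{a_0}s^{-1}$ must contain a derangement, but you then describe producing ``a single $w\in H_{a_0}$ that disagrees with $s^{-1}$ everywhere'' as ``the main technical obstacle'' and leave it unresolved. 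That is the whole content of the strictness claim; without it you have not proved anything beyond what Theorem~\ref{H_EKR_then_G_EKR} already gives. Moreover, the constructive route you sketch (exhibiting one explicit witness $w$) is not the natural way to use $2$-transitivity and there is no evident direct construction.

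The paper closes this gap with a counting argument rather than a construction. Suppose $\pi\in S$ with $\pi(a_0)\neq a_0$. Every element of $H_{a_0}\pi^{-1}$ must fix some point, and that point can be neither $a_0$ nor $\pi(a_0)$ (an element $\sigma\pi^{-1}$ with $\sigma\in H_{a_0}$ cannot fix $a_0$ since that would force $\pi(a_0)=a_0$, and it cannot fix $\pi(a_0)$ for the analogous reason). Hence
\[
H_{a_0}\pi^{-1}\;=\;\bigcup_{\beta\neq a_0,\;\beta\neq\pi(a_0)}\bigl(G_\beta\cap H_{a_0}\pi^{-1}\bigr).
\]
Now $\sigma\pi^{-1}$ fixes $\beta$ and $\sigma$ fixes $a_0$ exactly when $\sigma$ maps the ordered pair $(a_0,\pi^{-1}(\beta))$ to $(a_0,\beta)$, and by $2$-transitivity of $H$ the number of such $\sigma\in H$ is exactly $|H|/\bigl(n(n-1)\bigr)$. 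Summing over the $n-2$ admissible values of $\beta$ gives $|H_{a_0}\pi^{-1}|\le (n-2)|H|/\bigl(n(n-1)\bigr)<|H|/n=|H_{a_0}|$, a contradiction. This union-bound over possible fixed points, with the fiber sizes pinned down by $2$-transitivity of $H$, is the step your proposal is missing.
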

\begin{proof}
Since $H$ has the strict EKR property, it also has the EKR property and by Theorem~\ref{H_EKR_then_G_EKR}, $G$ also has the EKR property. Assume  that $S$ is an independent set in $\Gamma_G$ of size $|G|/n$ that contains the identity; we will prove that $S$ is the stabilizer of a point. Let $\{x_1=\id,\dots,x_{[G:H]}\}$ be a left transversal of $H$ in $G$ and set $S_i = S \cap x_iH$.  Then for each $i$ the set $x_i^{-1}S_i$ is an independent set in $\Gamma_H$ with size $|H|/n$. Since $H$ has the strict EKR property each $x_i^{-1}S_i$ is the coset of a stabilizer of a point. Since $x_1 =\id$, the identity is in $S_1$ which means that $S_1$ is the stabilizer of a point and we can assume that $S_1 = H_\alpha$ for some $\alpha \in [n]$. We need to show that every   permutation in $S$ also fixes the point $\alpha$. Assume that there is a $\pi \in S$ that does not fix $\alpha$.  Since $S$ is   intersecting, for every $\sigma \in S_1$ the permutation $\sigma\pi^{-1}$ fixes some element (but not $\alpha$ and not $\pi(\alpha)$), from this it follows that
\[
H_\alpha \pi^{-1}  = \bigcup_{\stackrel{\beta \neq \alpha}{\beta \neq \pi(\alpha)}} ( G_\beta \cap H_\alpha \pi^{-1} ).
\]
Assume that $\sigma \pi^{-1} \in G_\beta \cap H_\alpha \pi^{-1}$, then $\beta^{\sigma \pi^{-1}} = \beta$ and $\alpha^{\sigma} = \alpha$.  The permutation $\sigma \pi^{-1}$ must map $(\alpha, \beta)$ to $(\alpha^{\pi^{-1}}, \beta)$.  Since the group $H$ is $2$-transitive there are exactly $|H|/n(n-1)$ such permutations and we have that
\[
| G_\beta \cap H_\alpha \pi^{-1}| = \frac{|H|}{n(n-1)}.
\]
From this we have that the size of $H_\alpha \pi^{-1}$ is
\[
\sum_{\stackrel{\beta \neq \alpha}{\beta \neq \pi(\alpha)}}\frac{|H|}{n(n-1)} = (n-2) \frac{|H|}{n(n-1)},
\]
but since this is strictly less that $\frac{|H|}{n}$,  which  is a contradiction.
\end{proof}


\section{EKR for some families of groups}\label{EKR_some_families}

In this section we show that the EKR and the strict EKR property holds for some important families of groups. The first  groups we consider are  cyclic groups.
\begin{thm}\label{cyclic_groups}
For any permutation $\sigma\in \sym(n)$, the cyclic group $G$ generated by $\sigma$ has the strict EKR property.
\end{thm}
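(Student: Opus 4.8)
The plan is to pass to the abstract group $G=\langle\sigma\rangle\cong\mathbb{Z}_m$, where $m$ is the order of $\sigma$, and to carry out everything inside $\mathbb{Z}_m$. Writing the cycle lengths of $\sigma$ as $\ell_1,\dots,\ell_k$, a power $\sigma^t$ fixes a point iff some cycle length divides $t$; hence $\sigma^t$ is a derangement iff no cycle length divides $t$. So the derangement graph $\Gamma_G$ is the circulant graph on $\mathbb{Z}_m$ in which $i$ and $j$ are adjacent iff $i-j$ is divisible by no cycle length, and an intersecting set is exactly a set $S\subseteq\mathbb{Z}_m$ every nonzero difference of which is divisible by some cycle length. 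The stabilizer of a point lying in a cycle of length $\ell_i$ is the subgroup $\{t:\ell_i\mid t\}$ of order $m/\ell_i$, so the largest point stabilizer has order $m/\ell$, where $\ell:=\min_i\ell_i$.

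For the EKR bound I would exhibit a clique cover of $\Gamma_G$ of the right size. Consider $C=\{\id,\sigma,\dots,\sigma^{\ell-1}\}$, i.e. the interval $\{0,1,\dots,\ell-1\}$ of $\mathbb{Z}_m$. Every nonzero difference of two elements of $C$ has absolute value strictly between $0$ and $\ell$, hence is divisible by no cycle length (all of which are $\geq\ell$); so $C$ is a clique of size $\ell$. Its translates $C+b$ for $b\in\langle\ell\rangle=\{0,\ell,2\ell,\dots\}$ are again cliques (translation is an automorphism of a Cayley graph) and partition $\mathbb{Z}_m$ into $m/\ell$ intervals. An independent set meets each clique in this cover at most once, so $\alpha(\Gamma_G)\leq m/\ell$; since the subgroup $B:=\{t:\ell\mid t\}$ is intersecting of exactly this size (all its differences are divisible by $\ell$), equality holds and $G$ has the EKR property. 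One may instead invoke the clique--coclique bound (Theorem~\ref{clique_coclique_bound}) applied to the thin conjugacy-class scheme of the abelian group $G$, but the clique cover is more convenient since it also drives the strictness argument.

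For strictness I would show that any intersecting $S$ with $|S|=m/\ell$ is a coset of $B$. By the clique cover, $S$ meets each interval $I_a=\{a\ell,\dots,a\ell+\ell-1\}$, for $a=0,\dots,p-1$ with $p=m/\ell$, in exactly one point $s_a=a\ell+r_a$, where $r_a\in\{0,\dots,\ell-1\}$. The crucial observation is that any difference of two elements of $S$ must be divisible by some cycle length $\geq\ell$, so among the values $\{1,\dots,2\ell-1\}$ only those $\geq\ell$ can occur. Applying this to the consecutive differences $s_{a+1}-s_a=\ell+(r_{a+1}-r_a)$ forces $r_{a+1}\geq r_a$, while the wrap-around difference $s_0-s_{p-1}\equiv\ell+(r_0-r_{p-1})\pmod m$ forces $r_0\geq r_{p-1}$. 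Together these give $r_0\leq r_1\leq\cdots\leq r_{p-1}\leq r_0$, so all $r_a$ equal a common value $c$; hence $S=B+c$ is a coset of the stabilizer of a point in an $\ell$-cycle, and $G$ has the strict EKR property.

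I expect the strictness step to be the only genuine obstacle: the bound and the extremal construction are immediate once the clique cover is spotted, whereas forcing the coset shape needs the sorting/monotonicity argument above. The essential structural input is that all cycle lengths are at least $\ell$, so ``short'' differences (of absolute value below $\ell$) can never be non-derangements, which is precisely what excludes any offset other than a constant. I would also dispose of the degenerate cases ($\ell=1$, i.e. $\sigma$ fixes a point, and $\sigma=\id$) in a line, noting that the same argument covers them with $B=G$ and a singleton clique.
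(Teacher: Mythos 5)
Your proposal is correct. The skeleton matches the paper's: both proofs hinge on the interval clique $C=\{\id,\sigma,\dots,\sigma^{\ell-1}\}$ built from the shortest cycle length $\ell=r_1$, and both identify the extremal example as the stabilizer of a point on that shortest cycle. Where you diverge is in the execution. For the bound, the paper embeds $\Gamma_H\cong K_{r_1}$ into $\Gamma_G$ and invokes the fractional chromatic number inequality (Proposition~\ref{fractional_chrom_ineq}), whereas you partition $\mathbb{Z}_m$ into the $m/\ell$ disjoint translates $C+b$, $b\in\langle\ell\rangle$, and count directly; your version is more elementary and self-contained. For strictness, the paper first notes that clique--coclique holds with equality, so a maximum independent set meets \emph{every} maximum clique in exactly one point, and then walks along the chain of overlapping sliding-window cliques $C_i=\{\sigma^i,\dots,\sigma^{i+r_1-1}\}$, using the fact that consecutive cliques differ in a single element to propagate membership from $\sigma^0$ to all of $G_{a_1}$. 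You instead use only the partition cliques, write the unique representative in the $a$-th block as $a\ell+r_a$, and observe that since no nonzero difference of absolute value below $\ell$ can be divisible by a cycle length, the consecutive and wrap-around differences force $r_0\le r_1\le\cdots\le r_{p-1}\le r_0$, hence a constant offset. The two mechanisms encode the same structural fact (no element of $S$ can sit strictly between consecutive multiples of $\ell$ shifted by the common offset), but yours avoids the clique--coclique machinery entirely, while the paper's overlapping-clique argument is the template it reuses in settings with richer families of maximum cliques. Your handling of the degenerate cases ($\ell=1$, $\sigma=\id$, all cycles of equal length) is also sound and mirrors the paper's closing remark.
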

\begin{proof}
Let $\sigma=\sigma_1\sigma_2\cdots\sigma_k$, where $\sigma_i$ are disjoint cycles. Assume that $\sigma_i$ has order $r_i$, and that $1\leq r_1\leq\cdots\leq r_k$. Note that the subgroup $H\leq G$ generated by $\sigma_1$ is of order $r_1$ and  the graph $\Gamma_H$ is isomorphic to the complete graph $K_{r_1}$.
On the other hand, it is not hard to see that the set $C=\{\sigma, \sigma^2,\ldots,\sigma^{r_1}\}$ induces a clique of size $r_1$ in the graph $\Gamma_G$. Therefore, there is a graph embedding $\Gamma_H \hookrightarrow \Gamma_G$. Since $\Gamma_H$ and $\Gamma_G$ are vertex transitive, according to Proposition~\ref{fractional_chrom_ineq}, we have
\[
\frac{|V(\Gamma_H)|}{\alpha(\Gamma_H)}=\chi^*(H)\leq \chi^*(G)= \frac{|V(\Gamma_G)|}{\alpha(\Gamma_G)};
\]
thus
\[
\frac{r_1}{1}\leq \frac{|G|}{\alpha(\Gamma_G)};
\]
that is,
\[
\alpha(\Gamma_G)\leq \frac{|G|}{r_1}.
\]
Note, in addition, that if $\sigma_1=(a_1,\ldots,a_{r})$, then the stabilizer of $a_1$ in $G$ is
\[
G_{a_1}=\{\sigma^{r_1}, \sigma^{2r_1},\ldots, \sigma^{|G|} \};
\]
therefore
\[
\alpha(\Gamma_G)=|G_{a_1}|=\frac{|G|}{r_1}.
\]
It is clear that this is the largest size of a point-stabilizer in $G$. This proves that $G$ has the EKR property.

To show the second part, first note that the clique $C$ and the independent set $G_{a_1}$ together show that the clique-coclique bound (Theorem~\ref{clique_coclique_bound}) for $\Gamma_G$ holds with equality. Hence any maximum independent set must intersect with any maximum clique in $\Gamma_G$. Let $S$ be any maximum independent set and without loss of generality assume $\id\in S$. We  show that $S=G_{a_1}$. For any $i\geq 0$, set 
\[
C_i=\{\sigma^{i},\sigma^{i+1},\dots,\sigma^{i+r_1-1}\}.
\]
Note that $C_i$ are cliques in $\Gamma_G$ of maximum size (i.e. of size $r_1$) and that $C_1=C$. Furthermore,  for any $0\leq t\leq |G|/r_1$, we have $C_{tr_1}\backslash C_{tr_1+1}=\{\sigma^{tr_1}\}$ and $C_{tr_1+1}\backslash C_{tr_1}=\{\sigma^{(t+1)r_1}\}$. Since for any $0\leq t\leq |G|/r_1$, the independent set $S$ intersects with each of the cliques  $C_{tr_1+1} $ and $ C_{tr_1}$ in exactly one point, and since $\sigma^0\in S$, we conclude that $\sigma^0,\sigma^{r_1}, \sigma^{2r_1},\ldots, \sigma^{|G|}\in S$; that is, $S=G_{a_1}$.
\end{proof}
Note that,  if $\sigma$ has a fixed point in $[n]$, then $\Gamma_{G}$ is the empty graph on $|G|$ vertices. Also in the case where $\sigma$ is an $n$-cycle, then $\Gamma_{G}$ is the complete graph on $n$ vertices. In both of these cases, the strict EKR holds trivially.

\begin{prop}
Any permutation group of degree $n$ with an $n$-cycle  has the EKR property.
\end{prop}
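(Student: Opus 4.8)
The plan is to show that the derangement graph $\Gamma_G$ has independence number exactly $|G|/n$, which is the size of every point-stabilizer. Since $G$ contains an $n$-cycle, the cyclic subgroup it generates already acts transitively on $[n]$, so $G$ is transitive and, by the orbit-stabilizer theorem, every point-stabilizer has order $|G|/n$. Thus the EKR property for $G$ is equivalent to the inequality $\alpha(\Gamma_G)\le |G|/n$, and this upper bound is attained by any point-stabilizer, which is an intersecting set and hence an independent set in $\Gamma_G$.

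First I would produce a clique of size $n$ in $\Gamma_G$ from the $n$-cycle. Let $\sigma$ be an $n$-cycle in $G$ and set $H=\langle\sigma\rangle$. For $1\le k\le n-1$ the power $\sigma^k$ decomposes into $\gcd(k,n)$ cycles each of length $n/\gcd(k,n)>1$, so $\sigma^k$ is a derangement. Consequently, for any two distinct elements $\sigma^i,\sigma^j\in H$, the product $\sigma^i(\sigma^j)^{-1}=\sigma^{i-j}$ lies in $\dd_G$, so the vertices $\sigma^i$ and $\sigma^j$ are adjacent in $\Gamma_G$. Hence $H$ is a clique of size $n$ in $\Gamma_G$.

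Next I would invoke the clique-coclique bound. Being a derangement depends only on cycle type, which is a conjugacy invariant, so $\dd_G$ is a union of conjugacy classes of $G$; therefore $\Gamma_G=\Gamma(G;\dd_G)$ is a union of graphs in the conjugacy-class association scheme of $G$, and Theorem~\ref{clique_coclique_bound} applies. Taking the clique $H$ of size $n$ together with any independent set $S$, the bound gives $n\,|S|=|H|\,|S|\le |G|$, whence $|S|\le |G|/n$. Combined with the point-stabilizer attaining this value, we obtain $\alpha(\Gamma_G)=|G|/n$, which is precisely the EKR property.

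The only genuine content is the verification that the cyclic group generated by the $n$-cycle is a clique in $\Gamma_G$ and that $\Gamma_G$ embeds in an association scheme, both of which are routine; so there is no serious obstacle. This is exactly the EKR half of the proof of Theorem~\ref{cyclic_groups}, with the single added observation that the clique furnished by the $n$-cycle survives unchanged in the (possibly much larger) derangement graph of $G$.
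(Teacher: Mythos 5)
Your proof is correct, but it takes a different route from the paper. The paper's own argument is a two-step inheritance: the cyclic subgroup $H=\langle\sigma\rangle$ is transitive and has the EKR property by Theorem~\ref{cyclic_groups}, and then Theorem~\ref{H_EKR_then_G_EKR} (EKR passes from a transitive subgroup to any transitive overgroup, proved via the fractional chromatic number and the homomorphism $\Gamma_H\hookrightarrow\Gamma_G$) transfers the property to $G$. You instead observe directly that every nontrivial power of an $n$-cycle is a derangement, so $H$ itself is a clique of size $n$ in $\Gamma_G$, and then apply the clique-coclique bound (Theorem~\ref{clique_coclique_bound}) to get $\alpha(\Gamma_G)\le|G|/n$, matched by a point-stabilizer. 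The two arguments are close in spirit --- for vertex-transitive graphs the clique-coclique bound and the fractional-chromatic-number inequality applied to an embedded $K_n$ give the same conclusion --- but yours is more self-contained: it bypasses both Theorem~\ref{cyclic_groups} and Theorem~\ref{H_EKR_then_G_EKR}, at the cost of invoking the association-scheme machinery behind clique-coclique. A side benefit of your version is that exhibiting an explicit maximum clique meeting the clique-coclique bound with equality is exactly the setup the paper later exploits (via Corollary~\ref{clique_vs_coclique}) to attack condition (b) of the module method, whereas the paper's inheritance argument yields only the bound.
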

\begin{proof}
Let $G$ be such group and let $\sigma\in G$ be an $n$-cycle. Then the  subgroup $H$ generated by $\sigma$ is transitive and, according to Theorem~\ref{cyclic_groups}, it has the EKR property. Therefore, according to Theorem~\ref{H_EKR_then_G_EKR}, $G$ also has the EKR property.
\end{proof}

Recall that for any $n\geq 3$, the \txtsl{dihedral group} of degree $n$, denoted by $D_n$, is the group of symmetries of a regular $n$-gon, including both rotations and reflections. Note that $D_n\leq \sym(n)$ is a permutation group acting on $[n]$. 
\begin{prop}\label{dihedral}
All the dihedral groups have the strict EKR property.
\end{prop}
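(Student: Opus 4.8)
The plan is to work directly with the group structure of $D_n$ rather than through eigenvalue estimates. Write $D_n = R \cup F$, where $R$ is the cyclic group of $n$ rotations and $F$ is the set of $n$ reflections. Since the action on $[n]$ is transitive and $|D_n| = 2n$, every point stabilizer $(D_n)_i$ has order $2$, and hence every coset of a point stabilizer (every set $S_{i,j} = \{g \in D_n : g(i) = j\}$) has exactly two elements. Two elementary facts will drive the whole argument: first, every nontrivial rotation moves all $n$ vertices and so is a derangement; second, the product of two distinct reflections is a nontrivial rotation, hence also a derangement. The only elements of $D_n$ with a fixed point are therefore the identity together with those reflections whose axis passes through a vertex.

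First I would use that $\Gamma_{D_n}$ is a Cayley graph, and hence vertex transitive, to reduce to intersecting sets containing $\id$: if $S$ is any maximum intersecting set, then $\sigma^{-1}S$ is too, for any $\sigma \in S$. So assume $\id \in S$. Every other $g \in S$ must intersect $\id$, that is, $g = g\,\id^{-1}$ must have a fixed point; by the facts above this forces $g$ to be a reflection fixing some vertex. Next I would observe that two \emph{distinct} reflections never intersect: if $r \neq s$ are reflections, then $r s^{-1} = rs$ is a nontrivial rotation, which is a derangement. Consequently $S \setminus \{\id\}$ contains at most one reflection, so $|S| \le 2$. Since a point stabilizer is itself an intersecting set of size $2$, this shows $\alpha(\Gamma_{D_n}) = 2$, which equals the order of the largest point stabilizer, giving the EKR property.

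For the strict statement, a maximum intersecting set containing $\id$ has the form $\{\id, r\}$ with $r$ a reflection fixing some vertex $i$; this is exactly the stabilizer $(D_n)_i = \{\id, r\}$. Translating back by the vertex-transitivity reduction, every maximum intersecting set is a coset of a point stabilizer, which is the strict EKR property. Alternatively, once $\alpha = 2$ is known, strictness is immediate: any intersecting pair $\{\alpha,\beta\}$ agrees at some point $k$, so both members lie in the two-element coset $S_{k,\alpha(k)}$ and therefore equal it. One could also run the bound through the clique--coclique inequality of Theorem~\ref{clique_coclique_bound}, applied to the clique $R$ of size $n$, which gives the same estimate $\alpha \le 2n/n = 2$.

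There is no serious obstacle here; the content is entirely structural. The one point that needs care is to treat both parities of $n$ uniformly: for odd $n$ each reflection fixes a single vertex, while for even $n$ the vertex-reflections fix two vertices and the edge-reflections fix none. The argument sidesteps this case split by invoking only the two facts that nontrivial rotations are derangements and that distinct reflections multiply to a nontrivial rotation, both of which hold regardless of the parity of $n$.
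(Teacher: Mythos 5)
Your proof is correct and follows essentially the same route as the paper's: reduce to a maximum intersecting set containing $\id$, note that every other element must then be a reflection with a fixed point, and observe that the quotient of two distinct reflections is a nontrivial rotation and hence a derangement, forcing $|S|\le 2 = |(D_n)_i|$. Your explicit handling of the even/odd parity of $n$ and the closing observation that any intersecting pair already lies in (hence equals) a two-element coset $S_{k,\alpha(k)}$ are slightly tidier than the paper's wording, but the substance is identical.
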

\begin{proof}
Assume $D_n$ is generated by the permutations $\sigma$, the rotation, and $\pi$, the reflection through the antipodal points of the $n$-gon. Then $\sigma$ is of order $n$, $\pi$ is of order 2 and $\pi\sigma=\sigma^{-1}\pi$ (see \cite[Theorem I.6.13]{MR600654}). Since $\{\id,\pi\}$ is an intersecting set, we have $\alpha(\Gamma_{D_n})\geq 2$. To prove the proposition, we show that any maximum independent set in $\Gamma_{D_n}$ is a coset of a point-stabilizer. Assume $S$ is a maximum independent set in $\Gamma_{D_n}$ and, without loss of generality, assume $\id\in S$. Clearly, $\sigma^i\notin S$, for any $1\leq i<n$. If $\sigma^i\pi, \sigma^j\pi\in S$, for some $1\leq j<i<n$, then their division,
\[
\sigma^i\pi(\sigma^j\pi)^{-1}=\sigma^{i-j}
\]
must have a fixed-point, which is a contradiction. Therefore,  $\alpha(\Gamma_{D_n})=2$ and $S=\{e,\sigma^i\pi\}$, for some $1\leq i<n$. Note, finally, that since no pair $\sigma^i\pi, \sigma^j\pi$ have any common fixed-point, $S$ is indeed the stabilizer of any of the points fixed by $\sigma^i\pi$. This completes the proof.
\end{proof}
Note that the dihedral group $D_n$ can be written as $D_n=\mathbb{Z}_n \mathbb{Z}_2$, where $\mathbb{Z}_n\triangleleft D_n$ corresponds to the subgroup generated by the rotations and $\mathbb{Z}_2$ is the subgroup generated by a reflection. More precisely, $D_n=\mathbb{Z}_n \rtimes\mathbb{Z}_2$ when the non-identity element of $\mathbb{Z}_2$ acts on $\mathbb{Z}_n$ by inversion. When $n$ is odd, this is a particular case of a Frobenius group. A transitive permutation group $G\leq \sym(n)$ is called a \txtsl{Frobenius group} if no non-trivial element fixes more than one point and some non-trivial element fixes a point.
An alternative definition  is as follows: a group $G\leq \sym(n)$ is a Frobenius group if it has  a non-trivial proper subgroup $H$ with the condition that $H\cap H^g=\{\id\}$, for all $g\in G\backslash H$, where $H^g=g^{-1}Hg$. This subgroup is called a \txtsl{Frobenius complement}. 
Define the \txtsl{Frobenius kernel} $K$ of $G$ to be 
\[
K=\left(G\backslash \bigcup_{g\in G} H^g\right)\cup\{\id\}.
\]
In fact, the non-identity elements of $K$ are  all  the derangement elements of $G$. There is a significant result due to Frobenius which states that $K$ is a normal subgroup of $G$. The proof mainly relies on  character theory and is one of the earliest major applications of this theory. Moreover, he showed that $G=K\rtimes H$. The reader may refer to \cite[Theorem 5.9]{ledermann1987introduction} for a proof.  Frobenius groups and their properties have been widely studied; we refer the interested readers to  \cite[Section 10.2]{berkovich1999characters}, \cite[Section 3.4]{dixon2012permutation} and \cite{Flavell2000367} for further details. In the rest of this section we study the EKR problem for the Frobenius groups.

First note that if  $G=KH\leq \sym(n)$ is a Frobenius group with  kernel $K$, then $|K|=n$ and $|H|$ must divide $n-1$; see \cite[Section 3.4]{dixon2012permutation} for proofs. This implies that the Frobenius groups are relatively small transitive subgroups of $\sym(n)$. In particular, if $n-1$ is prime, then $|G|=n(n-1)$. We also observe the following.
\begin{lem}\label{size_of_G_x_frobenius}
If $G=KH\leq \sym(n)$ is a Frobenius group with  kernel $K$, then $|G_x|=|H|$, for any $x\in[n]$.
\end{lem}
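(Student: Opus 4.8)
The plan is to reduce the computation of $|G_x|$ to a simple order count, using the transitivity of $G$ together with the semidirect product decomposition supplied by Frobenius's theorem. First I would invoke the orbit--stabilizer theorem: since $G$ acts transitively on $[n]$, the orbit of any point $x$ is all of $[n]$, which has size $n$. Hence for every $x\in[n]$,
\[
|G_x|=\frac{|G|}{n}.
\]
In particular, all point stabilizers have the same order, so it suffices to evaluate $|G|/n$.

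Next I would use the structure of $G$ as a Frobenius group. By Frobenius's theorem, recalled just above the statement, the kernel $K$ is normal and $G=K\rtimes H$, so that $K\cap H=\{\id\}$ and $|G|=|K|\,|H|$. Combining this with the already-stated fact that $|K|=n$ gives $|G|=n\,|H|$. Substituting into the orbit--stabilizer identity yields
\[
|G_x|=\frac{|G|}{n}=\frac{n\,|H|}{n}=|H|,
\]
for every $x\in[n]$, which is exactly the claim.

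There is essentially no serious obstacle here beyond correctly marshalling the cited results: the genuinely substantial content (that $K$ is a normal complement to $H$, giving $G=K\rtimes H$, and that $|K|=n$) is packaged in Frobenius's theorem and the preceding order relations, which we are entitled to assume. The only point worth emphasizing is that \emph{transitivity} is what guarantees that the stabilizer order is independent of $x$; without it one could conclude only that some particular point stabilizer, rather than every one, has order $|H|$.
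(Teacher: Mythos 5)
Your proof is correct and follows essentially the same route as the paper: both apply the orbit--stabilizer theorem together with transitivity to get $|G_x|=|G|/n$, and then use $|G|=|K||H|$ with $|K|=n$ to conclude $|G_x|=|H|$. No issues.
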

\begin{proof}
Let $x\in[n]$. By the ``orbit-stabilizer'' theorem we have
\[
|x^G|=[G:G_x]=\frac{|G|}{|G_x|}=\frac{|K||H|}{|G_x|}=\frac{n|H|}{|G_x|},
\]
where $x^G$ is the orbit of $x$ under the action of $G$ on $[n]$. Since this action is transitive,  $x^G=[n]$; therefore the lemma follows.
\end{proof}
In order to find the maximum intersecting subsets of a Frobenius group, we first describe their derangement graphs. We will make use of the following classical result (see \cite[Theorem 18.7]{huppert1998character}). 

\begin{thm}\label{irrs_of_frobenius}
Let $G=KH$ be a Frobenius group with the kernel $K$. Then the irreducible representations of $G$ are the following two types:
\begin{enumerate}[(a)]
\item Any irreducible representation $\Psi$ of $H$ gives an irreducible representation of $G$ using the quotient map $H\cong G/K$. These give the irreducible representations of $G$ with $K$ in their kernel. 
\item If $\Xi$ is any non-trivial irreducible representation of $K$, then the corresponding induced representation of $G$ is also irreducible. These give the irreducible representations of $G$ with $K$ not in their kernel. \qed
\end{enumerate}
\end{thm}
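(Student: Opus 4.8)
The plan is to run Clifford theory relative to the normal subgroup $K$, using only the structural facts recorded above: $K\lhd G$ with $G/K\cong H$, $|K|=n$, and $|H|\mid n-1$, so that $\gcd(|H|,|K|)=1$ and the conjugation action of $H$ on $K$ is fixed-point-free (only $\id\in H$ fixes a non-identity element of $K$). I would split $\irr(G)$ according to whether $K$ lies in the kernel. If $K\subseteq\ker\xx$ then $\xx$ factors through $G/K\cong H$, and conversely every irreducible representation of $H$ inflates to an irreducible representation of $G$ containing $K$ in its kernel; this is exactly Lemma~\ref{induces} applied to $N=K$. These are precisely the representations of type (a), and distinct irreducibles of $H$ give distinct irreducibles of $G$.

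For type (b) I would take any non-trivial $\theta\in\irr(K)$ and show the induced representation $\theta\uparrow_K^G$ is irreducible. Since $K\cap H=\{\id\}$, the set $H$ is a transversal of $K$ in $G$, and Frobenius reciprocity together with Mackey's restriction formula (which is transparent here because $K$ is normal, so the double cosets $K\backslash G/K$ are just $G/K\cong H$) gives $\langle\theta\uparrow_K^G,\theta\uparrow_K^G\rangle_G=\sum_{g\in H}\langle\theta,\theta^g\rangle_K$, where $\theta^g(k)=\theta(g^{-1}kg)$. The right-hand side collapses to $1$ provided the characters $\theta^g$, $g\in H$, are pairwise distinct, i.e.\ provided $H$ acts \emph{freely} on $\irr(K)\setminus\{1_K\}$. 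This freeness is the crux of the whole argument, and is where I expect the real work to be.

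To establish it I would argue that for $1\neq h\in H$ the automorphism of $K$ given by conjugation by $h$ fixes no non-principal irreducible character. Brauer's permutation lemma reduces this to showing that $h$ fixes no non-trivial conjugacy class of $K$, and that follows from standard coprime-action theory: if $h$ stabilised a class $C\neq\{1\}$, then since $\gcd(|h|,|K|)=1$ the class $C$ would contain an $h$-fixed element (pick $x\in C$, use $x^{h}=k^{-1}xk$ to produce an element of $C_{K\rtimes\langle h\rangle}(x)$ over $h$, then apply Schur--Zassenhaus conjugacy of complements in $K\rtimes\langle h\rangle$ to conjugate it back to $h$), contradicting fixed-point-freeness. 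Granting this, each $H$-orbit on $\irr(K)\setminus\{1_K\}$ has full length $|H|$, the $\theta^g$ are distinct, and $\theta\uparrow_K^G$ is irreducible; moreover $G$-conjugate characters induce isomorphic modules, so it is the $H$-orbits, not the individual characters, that index the type-(b) representations.

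Finally I would verify completeness by a degree count rather than re-deriving Clifford's theorem. The type-(a) representations contribute $\sum_{\psi\in\irr(H)}\psi(\id)^2=|H|$ to the sum of squares of degrees. Each non-trivial $\theta$ gives $\dim\theta\uparrow_K^G=|H|\,\theta(\id)$; grouping the non-trivial characters of $K$ into $H$-orbits of size $|H|$ and using $\sum_{\theta\in\irr(K)}\theta(\id)^2=|K|$ (Theorem~\ref{sum_of_dim_of_irrs} applied to $K$), the type-(b) representations contribute $|H|(|K|-1)$. The total is $|H|+|H|(|K|-1)=|H|\,|K|=|G|$. Since the listed representations are irreducible and pairwise inequivalent — type (a) and type (b) are separated by whether $K$ acts trivially, type (a) by $\irr(H)$, and distinct type-(b) orbits by their disjoint restrictions to $K$ — Theorem~\ref{sum_of_dim_of_irrs} applied to $G$ forces them to be the complete list, which is exactly the assertion of the theorem. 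The only delicate point throughout is the freeness of the $H$-action on the non-trivial characters of $K$; everything else is bookkeeping with Frobenius reciprocity and dimensions.
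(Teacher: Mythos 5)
Your argument is correct, but note that the thesis does not actually prove Theorem~\ref{irrs_of_frobenius}: it is quoted as a classical result with a reference to Huppert (Theorem 18.7 of \cite{huppert1998character}) and closed with \qed, so there is no in-paper proof to match against. What you have written is a complete, self-contained derivation along the standard Clifford-theoretic lines, and all the pieces check out: the inflation/kernel dichotomy for type (a) is exactly Lemma~\ref{induces} with $N=K$; the identity $\langle\theta\uparrow_K^G,\theta\uparrow_K^G\rangle_G=\sum_{h\in H}\langle\theta,\theta^h\rangle_K$ follows from Frobenius reciprocity plus normality of $K$; and the degree count $|H|+|H|(|K|-1)=|G|$ correctly closes the completeness argument via Theorem~\ref{sum_of_dim_of_irrs} without needing the full machinery of Clifford's theorem. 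You are also right to flag the freeness of the $H$-action on $\irr(K)\setminus\{1_K\}$ as the only genuinely non-formal step; your route through Brauer's permutation lemma and the coprime-action/Schur--Zassenhaus argument (valid here because $|H|$ divides $|K|-1$, so $\gcd(|H|,|K|)=1$, and the quotient $\langle h\rangle$ is cyclic, so the conjugacy part of Schur--Zassenhaus applies without Feit--Thompson) is the standard way to get it from the fixed-point-freeness of $H$ on $K$, which itself follows from $H\cap H^k=\{\id\}$ for $k\in K\setminus\{\id\}$. The only cosmetic caution is that your centralizer element should be $hk^{-1}$ (so that $x^{hk^{-1}}=x$), and that one should note $C_L(x)$ surjects onto all of $\langle h\rangle$, not just contains one element over $h$, before invoking Schur--Zassenhaus inside $C_L(x)$; neither affects the validity of the proof.
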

Now we can describe the derangement graphs of the Frobenius groups.
\begin{thm}\label{gamma_of_frobenius} Let $G=KH\leq \sym(n)$ be a Frobenius group with the kernel $K$. Then $\Gamma_G$ is the disjoint union of $|H|$ copies of the complete graph on $n$ vertices.
\end{thm}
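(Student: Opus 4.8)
The plan is to recognize the derangement graph $\Gamma_G$ as a Cayley graph whose connection set is exactly $K\setminus\{\id\}$ and then to apply Proposition~\ref{index}. First I would recall, from the description of the Frobenius kernel given just above, that the derangement elements of $G$ are precisely the non-identity elements of $K$; that is, $\dd_G = K\setminus\{\id\}$. Hence
\[
\Gamma_G = \Gamma(G;\dd_G) = \Gamma(G; K\setminus\{\id\}),
\]
which is an $(n-1)$-regular graph, since $|K| = n$.

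Next I would compute the subgroup generated by the connection set. Because $K\setminus\{\id\}$ together with $\id$ is all of $K$, we have $\langle \dd_G\rangle = K$, and the index is $[G:K] = |G|/|K| = |H|$. Applying Proposition~\ref{index} with $S = \dd_G$ then yields that $\Gamma(K;\dd_G)$ is connected and that
\[
\Gamma_G \cong \underbrace{\Gamma(K;\dd_G)\,\cupdot\cdots\cupdot\,\Gamma(K;\dd_G)}_{|H|\text{ times}}.
\]

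It then remains to identify the typical component $\Gamma(K;\dd_G) = \Gamma(K; K\setminus\{\id\})$. In this Cayley graph any two distinct vertices $g,h\in K$ satisfy $gh^{-1}\in K\setminus\{\id\}$, so they are adjacent; thus each component is the complete graph on $|K| = n$ vertices, namely $K_n$. Substituting this into the previous display completes the proof.

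The argument is essentially a direct application of Proposition~\ref{index}, so there is no serious obstacle. The only points requiring care are the identification $\dd_G = K\setminus\{\id\}$, which rests on the structure theorem for Frobenius groups already recorded in the text, and the elementary but necessary observation that the non-identity elements of $K$ generate $K$, so that $K$ is exactly the subgroup $\langle S\rangle$ appearing in the statement of Proposition~\ref{index}.
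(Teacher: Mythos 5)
Your proof is correct, and it takes a genuinely different route from the one in the text. The paper computes the spectrum of $\Gamma_G$: it splits the irreducible characters of $G$ into the two types given by Theorem~\ref{irrs_of_frobenius} (those with $K$ in the kernel and those induced from non-trivial characters of $K$), evaluates the eigenvalue $\eta_\chi$ for each via Theorem~\ref{Diaconis} — obtaining $n-1$ with multiplicity $|H|$ and $-1$ with multiplicity $|H|(n-1)$ — and then invokes Proposition~\ref{graphs_with_2_evalues} to conclude that a graph with exactly two distinct eigenvalues is a disjoint union of equal complete graphs. Your argument bypasses all of the representation theory: once you observe that $\dd_G = K\setminus\{\id\}$ (which the text records as a consequence of Frobenius's theorem) and that this set generates $K$, Proposition~\ref{index} immediately decomposes $\Gamma_G$ into $[G:K]=|H|$ copies of $\Gamma(K;K\setminus\{\id\})=K_n$. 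Your route is shorter and more elementary, and the paper itself concedes as much, remarking that its proof ``may not be the easiest method to describe the graph $\Gamma_G$'' but is included as an illustration of character-theoretic technique. The only thing the spectral proof buys beyond that pedagogical point is that it records $\Spec(\Gamma_G)$ explicitly along the way — though this also follows from your structural description, since the spectrum of a disjoint union is the union of the spectra and $\Spec(K_n)$ is given in (\ref{spec_of_K_n}). All the ingredients you rely on ($|K|=n$, $\dd_G=K\setminus\{\id\}$, and the hypotheses of Proposition~\ref{index} on the connection set) are verified or stated in the text, so there is no gap.
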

\begin{proof}
According to Theorem~\ref{Diaconis}, the eigenvalues  of $\Gamma_G$ are given by 
\[
\eta_{\chi}=\frac{1}{\chi(\id)}\sum_{\sigma\in \dd_G} \chi(\sigma),
\]
where $\chi$ runs through the set of all irreducible characters of $G$. First assume $\chi$ is the character of an irreducible representation  of $G$ of type (a) in Theorem~\ref{irrs_of_frobenius}. Then we have
\[
\eta_{\chi}=\frac{1}{\chi(\id)}\sum_{\sigma\in \dd_G} \chi(\sigma)=\frac{1}{\chi(\id)}\sum_{\sigma\in \dd_G} \chi(\id)=|\dd_G|=|K|-1=n-1.
\]
According to Theorems \ref{Diaconis} and \ref{sum_of_dim_of_irrs}, the multiplicity of $\eta_{\chi}=n-1$ is 
\[
\sum_{\Psi\in\irr(H)}(\dim \Psi)^2=|H|.
\]
Furthermore, assume $\Xi$ is an irreducible representation  of $G$ of type (b) in Theorem~\ref{irrs_of_frobenius}, whose character is $\xi$ and let $\chi$ be the character of the corresponding induced representation of $G$.  If $\sigma\in G\backslash K$, then $\sigma\in H^g$, for some $g\in G$. Thus $x^{-1}\sigma x\in H^{gx}$, for any $x\in G$; hence $x^{-1}\sigma x \notin K$. According to (\ref{char_of_induced}), the formula for the character of an induced representation, this implies that $\chi(\sigma)=0$. On the other hand, let $\chi_{\ID}$ be the trivial character of $G$. Since $\chi\neq \chi_{\ID}$, according to Theorem~\ref{orth_chars}, the inner product of $\chi$ and $\chi_{\ID}$ is zero. Hence
\[
0=\left\langle \chi , \chi_{\ID}\right\rangle = \frac{1}{|G|} \sum_{\sigma\in G} \chi(\sigma) \chi_{\ID}(\sigma^{-1})=\frac{1}{|G|} \sum_{\sigma\in G} \chi(\sigma)=\frac{1}{|G|}\left(\chi(\id)+\sum_{\sigma\in \dd_G} \chi(\sigma)\right);
\]
hence 
\[
\sum_{\sigma\in \dd_G} \chi(\sigma)=-\chi(\id).
\] 
This yields
\[
\eta_{\chi}=\frac{1}{\chi(\id)}\sum_{\sigma\in \dd_G} \chi(\sigma)=\frac{-\chi(\id)}{\chi(\id)}=-1.
\]
 We have, therefore, shown that 
\[
\Spec(\Gamma_G)=\left( {\begin{array}{cc}
 n-1 & -1 \\
 |H| & |H|(n-1)\\
 \end{array} } \right).
\]
Now the theorem follows from Proposition~\ref{graphs_with_2_evalues}.
\end{proof}
We point out that this proof may not be the easiest method to describe the graph $\Gamma_G$; however it is a nice example of an application of character theoretical facts in graph theory. We will see other similar applications in Chapters~\ref{module_method} and \ref{single_CC}.  Now we establish the EKR property for the Frobenius groups.
\begin{thm}\label{EKR_for_frobenius} Let $G=KH\leq\sym(n)$  be a Frobenius group with kernel $K$. Then $G$ has the EKR property. Furthermore, $G$ has the strict EKR property if and only if $|H|=2$.
\end{thm}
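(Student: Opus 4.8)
The plan is to read everything off the structure of the derangement graph $\Gamma_G$ established in Theorem~\ref{gamma_of_frobenius}, namely that $\Gamma_G$ is the disjoint union of $|H|$ copies of $K_n$, and then to settle the strict case by a counting argument. First I would record that since each component $K_n$ contributes at most one vertex to an independent set and there are exactly $|H|$ components, we have $\alpha(\Gamma_G)=|H|$. By Lemma~\ref{size_of_G_x_frobenius} every point stabilizer $G_x$ has order $|H|$, so the largest point stabilizer has size $|H|=\alpha(\Gamma_G)$; since a point stabilizer is an intersecting set, this shows at once that $G$ has the EKR property, and moreover it identifies the maximum intersecting subsets of $G$ with the maximum independent sets of $\Gamma_G$.

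For the strict statement, the idea is to count the two families and compare. On the graph side, a maximum independent set must select exactly one vertex from each of the $|H|$ copies of $K_n$, so there are exactly $n^{|H|}$ maximum independent sets. On the group side, I would identify the left coset $\sigma G_x$ with the set $S_{x,\sigma(x)}=\{g\in G : g(x)=\sigma(x)\}$; as $x$ ranges over $[n]$ and $\sigma(x)$ ranges over $[n]$ (using the transitivity of $G$), the cosets of the point stabilizers are exactly the sets $S_{i,j}$ with $i,j\in[n]$. Each $S_{i,j}$ is a coset of $G_i$, hence an intersecting set of size $|H|=\alpha(\Gamma_G)$, so it is a maximum independent set of $\Gamma_G$. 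The crux is that these $n^2$ sets are pairwise distinct: if $S_{i,j}=S_{i',j'}$ then the common coset forces $G_i=G_{i'}$, whereas the defining property of a Frobenius group gives $G_i\cap G_{i'}=\{\id\}$ for $i\neq i'$, so $G_i=G_{i'}$ would force $|H|=1$, contradicting that $H$ is nontrivial; and for fixed $i$, distinct cosets of $G_i$ are disjoint. Thus there are exactly $n^2$ distinct cosets of point stabilizers.

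Putting these together, the cosets of point stabilizers form a subfamily of the maximum independent sets, consisting of $n^2$ sets out of $n^{|H|}$. Hence every maximum intersecting set is a coset of a point stabilizer if and only if $n^{|H|}=n^2$, and since $n=|K|\geq 3$ for a Frobenius group (as $|H|$ divides $n-1$ and $|H|\geq 2$), this happens exactly when $|H|=2$. The main obstacle I anticipate is not any single inequality but making the distinctness of the $n^2$ cosets fully rigorous, which is precisely the step that forces the use of the Frobenius hypothesis $G_i\cap G_{i'}=\{\id\}$; everything else is bookkeeping with the component structure of $\Gamma_G$ supplied by Theorem~\ref{gamma_of_frobenius}.
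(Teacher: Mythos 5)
Your proof is correct. The EKR half and the identification $\alpha(\Gamma_G)=|H|=|G_x|$ are exactly what the paper does, resting on Theorem~\ref{gamma_of_frobenius} and Lemma~\ref{size_of_G_x_frobenius}. For the strict half you diverge from the paper's official argument: the paper proves the ``only if'' direction constructively, taking a point stabilizer $S=\{\id=s_1,\dots,s_{|H|}\}$ with $|H|>2$ and replacing $s_3$ by another element $s_3'$ of the same coset of $K$ that does not fix the common fixed point, thereby exhibiting an explicit maximum independent set that is not a coset of a point stabilizer. Your counting argument ($n^{|H|}$ maximum independent sets versus $n^2$ cosets of point stabilizers) is precisely the alternative the paper sketches in the remark immediately following its proof, and you supply the one detail that remark glosses over, namely that the $n^2$ sets $S_{i,j}$ are pairwise distinct; your appeal to the Frobenius condition $G_i\cap G_{i'}=\{\id\}$ for $i\neq i'$ handles this correctly (together with $n=|K|\geq 3$, which rules out $n^{|H|}=n^2$ for $|H|>2$). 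What each approach buys: the paper's swap argument produces a concrete counterexample set and needs no enumeration, while your count settles both directions of the ``if and only if'' in one stroke and makes the failure of strictness quantitatively visible ($n^{|H|}-n^2$ non-canonical maximum intersecting sets).
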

\begin{proof}
Using Theorem~\ref{gamma_of_frobenius}, the independence number of $\Gamma_G$ is $|H|$. This along with Lemma~\ref{size_of_G_x_frobenius} shows that $G$ has the EKR property. For the second part of the theorem, first note that if $|H|=2$ and $S$ is an intersecting subset of $G$ of size two, then $S$ is, trivially, a point stabilizer.
To show the converse, we note that the cliques of $\Gamma_G$ are induced by the $|H|$ cosets of $K$ in $G$. Now suppose $|H|>2$ and let $S$ be a maximum intersecting subset of $G$ which is a coset of a point-stabilizer in $G$. Without loss of generality we may assume $S=\{\id=s_1, s_2,\dots, s_{|H|}\}$ and, hence, $S=G_x$, for some $x\in [n]$. Since $S$ is independent in $\Gamma_G$, no two elements of $S$ are in the same coset of $K$ in $G$. Note that, the only fixed point of any non-identity element of $S$ is $x$. Let $s_3$ be in the coset $gK$. If all the elements of $gK$ fix $x$, then all the elements of $K$ will have fixed points, which is a contradiction. Hence there is an $s'_3\in gK$ which does not fix $x$. Now the maximum intersecting set $S'=\left(S\backslash \{s_3\}\right)\cup\{s'_3\}$ is not a point-stabilizer.
\end{proof}
Note that one can show the second part of Theorem~\ref{EKR_for_frobenius} by a counting argument as follows. There are $n^2$ cosets of point-stabilizers in $G$. Since $\Gamma_G$ is the union of $|H|$ copies of the complete graph on $n$ vertices, the total number of maximum independent sets is $n^{|H|}$. Therefore, in order for all the maximum independent sets of $\Gamma_G$ to be cosets of point-stabilizers, the necessary and sufficient condition is $|H|=2$.

We conclude the section with noting that Theorem~\ref{EKR_for_frobenius} provides an alternative proof for the fact that the dihedral group $D_n$ has the strict EKR property, when $n\geq 3$ and odd.


\section{EKR for some group products}\label{EKR_products}

Now we turn our attention to the products of groups and investigate how a product of some groups can have the EKR or the strict EKR property when the initial groups do so. We consider three types of group products, namely the so-called external and internal direct products and the wreath product. 

Given any sequence of permutation groups $G_1\leq \sym(n_1),\,\dots\,, G_k\leq \sym(n_k)$, their \txtsl{external direct product} is defined to be the group $G_1\times \cdots\times G_k$, whose elements are $(g_1,\ldots,g_k)$, where $g_i\in G_i$, for $1\leq i\leq k$, and the binary operation is, simply, the ``component-wise'' multiplication. This group has  a natural action on the set $\Omega=[n_1]\times\cdots\times [n_k]$ induced by the natural actions of $G_i$ on $[n_i]$; that is, for any tuple $(x_1,\dots, x_k)\in \Omega$ and any element $(g_1,\dots,g_k)\in G_1\times\cdots\times G_k$, we have
\[
(x_1,\dots, x_k)^{(g_1,\dots,g_k)}:=(x_1^{g_1},\dots, x_k^{g_k}).
\]
Let $G=G_1\times\cdots\times G_k$. Then the derangement graph $\Gamma_G$ of $G$  is the graph with vertex set $G$ in which two vertices $(g_1, \dots,g_k)$ and $(h_1,\dots,h_k)$ are adjacent if and only if $g_ih_i^{-1}$ is a derangement, for some  $1\leq i\leq k$. Recall that if $X$ and $Y$ are graphs, then $X\times Y$ is their direct product (see Section~\ref{graph_basics}) and that $\overline{X}$ is the complement graph of $X$. We observe the following.
\begin{lem} Let the group $G=G_1\times\cdots\times G_k$ be the external direct product of the groups $G_1,\dots, G_k$. Then
\[
\Gamma_G=\overline{\overline{\Gamma_{G_1}}\times\cdots\times\overline{\Gamma_{G_k}}}.
\]
\end{lem}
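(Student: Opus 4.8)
The plan is to prove the identity at the level of adjacency relations: I will show that a pair of distinct vertices $g=(g_1,\dots,g_k)$ and $h=(h_1,\dots,h_k)$ of $G$ is joined by an edge in $\Gamma_G$ if and only if it is joined by an edge in $\overline{\overline{\Gamma_{G_1}}\times\cdots\times\overline{\Gamma_{G_k}}}$. Since both graphs have vertex set $G$, matching the edge sets suffices.

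First I would analyse the left-hand side. By definition $g$ and $h$ are adjacent in $\Gamma_G$ exactly when $gh^{-1}=(g_1h_1^{-1},\dots,g_kh_k^{-1})$ is a derangement of $G$ in its action on $\Omega=[n_1]\times\cdots\times[n_k]$. The key observation is that a product permutation $(f_1,\dots,f_k)$ fixes a tuple $(x_1,\dots,x_k)$ precisely when each $f_i$ fixes $x_i$; hence $(f_1,\dots,f_k)$ admits a fixed point if and only if \emph{every} $f_i$ has a fixed point, and therefore $(f_1,\dots,f_k)\in\dd_G$ if and only if $f_i\in\dd_{G_i}$ for at least one index $i$. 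Applied to $f_i=g_ih_i^{-1}$, this says that $g\sim h$ in $\Gamma_G$ if and only if $g_i\sim h_i$ in $\Gamma_{G_i}$ for some $i$.

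Next I would unwind the right-hand side by De Morgan duality. Write $P=\overline{\Gamma_{G_1}}\times\cdots\times\overline{\Gamma_{G_k}}$. By the definition of the direct product, $g\sim h$ in $P$ if and only if $g_i\sim h_i$ in $\overline{\Gamma_{G_i}}$ for every $i$, that is, if and only if for every $i$ the permutations $g_i,h_i$ intersect (equivalently $g_ih_i^{-1}\notin\dd_{G_i}$). Passing to the complement, $g\sim h$ in $\overline P$ if and only if it fails that every coordinate intersects, i.e.\ if and only if $g_ih_i^{-1}\in\dd_{G_i}$ for some $i$. This is exactly the condition obtained for $\Gamma_G$, so the two edge sets coincide.

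The step I expect to be delicate is the bookkeeping of the diagonal, i.e.\ the coordinates in which $g_i=h_i$. In a simple graph both the complement and the direct product discard loops, so a naive reading of $\overline{\Gamma_{G_1}}\times\cdots$ would demand $g_i\neq h_i$ in \emph{every} coordinate, and the matching above must correctly treat the coordinates where $g_i=h_i$ (there $g_ih_i^{-1}=\id\notin\dd_{G_i}$ counts, properly, as ``intersecting''). The cleanest way to make this rigorous, and to absorb the loop convention once and for all, is to pass to adjacency matrices and prove the equivalent identity
\[
A(\Gamma_G)=J-\bigl(J-A(\Gamma_{G_1})\bigr)\otimes\cdots\otimes\bigl(J-A(\Gamma_{G_k})\bigr),
\]
where $J$ denotes the all-ones matrix of the appropriate size. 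Here each $J-A(\Gamma_{G_i})$ is the reflexive intersection matrix of $G_i$ (with $1$'s on the diagonal, since $\id$ has fixed points), the fact noted after Proposition~\ref{evalues_of_tensor} identifies the tensor product with the adjacency matrix of a direct product, and the fixed-point factorisation from the second step is precisely what shows that $J-A(\Gamma_G)$ factors as $\bigotimes_i\bigl(J-A(\Gamma_{G_i})\bigr)$. This matrix form both proves the lemma and exhibits the complement in the reflexive sense implicit in the statement.
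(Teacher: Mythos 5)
Your proof is correct and follows the same route as the paper's: factor the fixed points of a product permutation coordinatewise, so that $gh^{-1}\in\dd_G$ iff $g_ih_i^{-1}\in\dd_{G_i}$ for some $i$, then match this against the De Morgan dual of the direct product of complements. Where you go beyond the paper is the third paragraph: the paper's proof simply asserts that ``$g_ih_i^{-1}$ has a fixed point'' is equivalent to ``$g_i$ is adjacent to $h_i$ in $\overline{\Gamma_{G_i}}$,'' which fails on the diagonal under the standard loopless conventions (take $g_i=h_i$ in one coordinate and a non-derangement quotient in another: the two sides of the claimed graph identity then disagree on that edge). Your adjacency-matrix formulation $A(\Gamma_G)=J-\bigl(J-A(\Gamma_{G_1})\bigr)\otimes\cdots\otimes\bigl(J-A(\Gamma_{G_k})\bigr)$, using $J-A$ rather than $J-I-A$ so that the identity's fixed points put $1$'s on the diagonal, is exactly the right repair and makes the statement unambiguous; it is the version of the lemma that should be read into the text.
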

\begin{proof}
By the definition of the external direct product, the vertices $(g_1, \dots,g_k)$ and $(h_1,\dots,h_k)$ of $\overline{\Gamma_G}$  are adjacent if and only if $g_ih_i^{-1}$ has a fixed point, for any $1\leq i\leq k$. This is equivalent to the case where $g_i$ is adjacent to $h_i$ in $\overline{\Gamma_{G_i}}$, for any $1\leq i\leq k$. This occurs if and only if $(g_1, \dots,g_k)$ and $(h_1,\dots,h_k)$ are adjacent in $\overline{\Gamma_{G_1}}\times\cdots\times\overline{\Gamma_{G_k}}$. This completes the proof.
\end{proof}
In the next lemma, we evaluate the independence number of $\Gamma_G$.

\begin{lem}\label{independence_of_external_prod} With the notation above, we have
\[
\alpha(\Gamma_G)=\alpha(\Gamma_{G_1})\cdot\cdots\cdot \alpha(\Gamma_{G_k}).
\]
\end{lem}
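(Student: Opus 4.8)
The plan is to establish the equality by proving the two inequalities separately, working directly with the adjacency rule for $\Gamma_G$ that was used in the previous lemma: two tuples $(g_1,\dots,g_k)$ and $(h_1,\dots,h_k)$ are \emph{non-adjacent} in $\Gamma_G$ precisely when $g_ih_i^{-1}$ has a fixed point for every $i$, i.e. when $g_i$ and $h_i$ intersect in $G_i$ in every coordinate $i$.

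For the lower bound, I would choose a maximum independent set $S_i$ in each $\Gamma_{G_i}$, so that $|S_i|=\alpha(\Gamma_{G_i})$, and take the product set $S=S_1\times\cdots\times S_k$. Given two elements of $S$, in each coordinate $i$ their components either coincide or form a non-adjacent pair of $\Gamma_{G_i}$; in both cases $g_ih_i^{-1}$ fixes a point, so the two tuples are non-adjacent in $\Gamma_G$. Hence $S$ is independent and $\alpha(\Gamma_G)\ge\prod_i\alpha(\Gamma_{G_i})$.

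For the upper bound, I would take an arbitrary independent set $S$ in $\Gamma_G$ and consider its coordinate projections $\pi_i(S)=\{g_i : (g_1,\dots,g_k)\in S\}$. Any two distinct elements of $\pi_i(S)$ arise from two elements of $S$ that differ in the $i$-th coordinate, hence are distinct tuples; by independence of $S$ they are non-adjacent, so their $i$-th components intersect. Thus $\pi_i(S)$ is an intersecting set in $G_i$, i.e. independent in $\Gamma_{G_i}$, giving $|\pi_i(S)|\le\alpha(\Gamma_{G_i})$. Since $S\subseteq\pi_1(S)\times\cdots\times\pi_k(S)$, we obtain $|S|\le\prod_i|\pi_i(S)|\le\prod_i\alpha(\Gamma_{G_i})$, which together with the lower bound completes the proof.

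I expect no serious obstacle here; the argument is essentially bookkeeping. The only points requiring a little care are the two observations in the upper-bound step: that two elements of $S$ whose $i$-th components differ are genuinely distinct tuples, so that independence of $S$ applies, and that the containment $S\subseteq\prod_i\pi_i(S)$ holds. As an alternative, one could route the whole argument through the identity $\overline{\Gamma_G}=\overline{\Gamma_{G_1}}\times\cdots\times\overline{\Gamma_{G_k}}$ just established, reducing the claim to the fact that the clique number of a direct product of graphs is the product of the clique numbers; but since that fact is not recorded earlier in the text, the direct projection argument above is preferable and self-contained.
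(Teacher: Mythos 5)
Your proof is correct and follows essentially the same route as the paper's: the lower bound via the product $S_1\times\cdots\times S_k$ of maximum independent sets, and the upper bound by showing each coordinate projection $p_i(S)$ is independent in $\Gamma_{G_i}$ and using $S\subseteq p_1(S)\times\cdots\times p_k(S)$. The extra care you take in checking that differing $i$-th components give genuinely distinct tuples is a fine (if implicit in the paper) detail, but nothing substantive differs.
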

\begin{proof}
Let $S_i$  be a maximum independent set in $\Gamma_{G_i}$, for any $1\leq i\leq k$. Then the set $S=S_1\times\cdots\times S_k$ is an independent set in $\Gamma_G$, proving that $\alpha(\Gamma_G)\geq \alpha(\Gamma_{G_1})\cdot\cdots\cdot \alpha(\Gamma_{G_k})$. On the other hand, let $p_i:G\rightarrow G_i$ be the projection of $G$ onto the component $G_i$, for  $1\leq i \leq k$ and let $S$ be a maximum independent set in $\Gamma_G$. Then for any $1\leq i\leq k$, the set $p_i(S)$ is an independent set in $G_i$; hence $|p_i(S)|\leq \alpha(\Gamma_{G_i})$. Since $S\subseteq p_1(S)\times\cdots\times p_k(S)$ the lemma follows.
\end{proof}

\begin{thm}\label{EKR_for_external_prod} With the notation above, if all the $G_i$ have the (strict) EKR property, then $G$ has the (strict) EKR property.
\end{thm}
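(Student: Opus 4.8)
The plan is to handle the two assertions separately, in both cases exploiting the fact that a maximum independent set in $\Gamma_G$ must be a full ``box product'' of maximum independent sets in the factors.

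For the EKR property, first observe that the point-stabilizer of a tuple $(x_1,\dots,x_k)\in\Omega$ is exactly $(G_1)_{x_1}\times\cdots\times(G_k)_{x_k}$, by the definition of the natural action of $G$ on $\Omega$. Hence the size of the largest point-stabilizer of $G$ equals $\prod_{i=1}^k m_i$, where $m_i$ is the size of the largest point-stabilizer of $G_i$. Since each $G_i$ has the EKR property, $\alpha(\Gamma_{G_i})=m_i$; combining this with Lemma~\ref{independence_of_external_prod} gives
\[
\alpha(\Gamma_G)=\prod_{i=1}^k\alpha(\Gamma_{G_i})=\prod_{i=1}^k m_i,
\]
which is precisely the size of the largest point-stabilizer of $G$. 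As any point-stabilizer is intersecting, this shows $G$ has the EKR property.

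For the strict EKR property, let $S$ be a maximum intersecting (i.e.\ maximum independent in $\Gamma_G$) subset, and let $p_i\colon G\to G_i$ be the projections used in the proof of Lemma~\ref{independence_of_external_prod}. Each $p_i(S)$ is independent in $\Gamma_{G_i}$, so $|p_i(S)|\leq\alpha(\Gamma_{G_i})$, while $S\subseteq p_1(S)\times\cdots\times p_k(S)$. The chain
\[
\prod_{i=1}^k\alpha(\Gamma_{G_i})=|S|\leq\prod_{i=1}^k|p_i(S)|\leq\prod_{i=1}^k\alpha(\Gamma_{G_i})
\]
forces equality throughout; hence each $p_i(S)$ is a maximum independent set in $\Gamma_{G_i}$ and, comparing cardinalities of the finite sets $S$ and $p_1(S)\times\cdots\times p_k(S)$, we conclude $S=p_1(S)\times\cdots\times p_k(S)$. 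This ``box'' structure is the crucial step, and it is really just the equality case of the counting in Lemma~\ref{independence_of_external_prod}.

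Now I would apply the strict EKR property of each $G_i$: each $p_i(S)$ is a coset $g_i(G_i)_{x_i}$ of a point-stabilizer in $G_i$. Therefore
\[
S=\prod_{i=1}^k g_i(G_i)_{x_i}=(g_1,\dots,g_k)\big((G_1)_{x_1}\times\cdots\times(G_k)_{x_k}\big)=(g_1,\dots,g_k)\,G_{(x_1,\dots,x_k)},
\]
a coset of the stabilizer of the point $(x_1,\dots,x_k)\in\Omega$. Thus every maximum intersecting subset of $G$ is a coset of a point-stabilizer, so $G$ has the strict EKR property. The only point requiring care is the passage from the inclusion $S\subseteq p_1(S)\times\cdots\times p_k(S)$ to equality, which I expect to be the main (though still elementary) obstacle.
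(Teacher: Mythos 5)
Your proof is correct and follows essentially the same route as the paper: both identify the point-stabilizer of a tuple as the product of the factor stabilizers, invoke Lemma~\ref{independence_of_external_prod} for the EKR property, and for strictness project $S$ onto each factor and apply the strict EKR property there. Your write-up is slightly more careful than the paper's in two spots — you justify the equality $S=p_1(S)\times\cdots\times p_k(S)$ via the cardinality chain, and you correctly treat each $p_i(S)$ as a \emph{coset} of a point-stabilizer rather than the stabilizer itself — but these are refinements of the same argument, not a different approach.
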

\begin{proof}
First note that the stabilizer of any point $(x_1,\dots,x_k)\in \Omega$ in $G$ is 
\[
(G_1)_{x_1}\times\cdots\times (G_k)_{x_k}.
\]
 On the other hand, if all the groups $G_i$  have the EKR property, according to Lemma~\ref{independence_of_external_prod}, the maximum size of an independent set in $\Gamma_G$ will be equal to
\[
|(G_1)_{x_1}|\cdot\cdots\cdot |(G_k)_{x_k}|,
\]
for some $(x_1,\dots,x_k)\in \Omega$; this proves that $G$ has the EKR property. Furthermore, assume all the $G_i$ have the strict EKR property and let $S$ be a maximum independent set in $\Gamma_{G}$. This implies that $p_i(S)$ is a maximum independent set in $\Gamma_{G_i}$, for each $1\leq i\leq k$; hence $p_i(S)=(G_i)_{x_i}$, for some $x_i\in [n_i]$. Therefore, $S=G_{(x_1,\dots, x_k)}$.
\end{proof}

The next product is the internal direct product. Assume $\Omega_1,\dots,\Omega_k$ are pair-wise disjoint non-empty subsets of $[n]$,  and consider the sequence $G_1\leq \sym(\Omega_1),\,\dots\,, G_k\leq \sym(\Omega_k)$. Then their \txtsl{internal direct product} is defined to be the group $G_1\cdot G_2\cdot \cdots\cdot G_k$, whose elements are $g_1g_2\cdots  g_k$, where $g_i\in G_i$, for $1\leq i\leq k$ and the binary operation is defined as follows: for the elements $g_1g_2\cdot \cdots \cdot g_k$ and $h_1h_2\cdot \cdots \cdot h_k$ in $G_1\leq \sym(\Omega_1),\,\dots\, G_k\leq \sym(\Omega_k)$, 
\begin{equation}\label{binary_operation_internal_prod}
g_1g_2\cdots g_k\,\,\cdot \,\, h_1h_2\cdots h_k := (g_1h_1)(g_2h_2) \cdots (g_kh_k).
\end{equation}
Note that since the $\Omega_i$ don't intersect, any permutation in $G_i$ commutes with any permutation in $G_j$, for any $1 \leq i\neq j\leq k$; hence the multiplication (\ref{binary_operation_internal_prod}) is well-defined. 
This group also has  a natural action on the set $\Omega=\Omega_1\cup\cdots\cup \Omega_k$ induced by the natural actions of $G_i$ on $\Omega_i$; that is, for any $x\in \Omega$ and any element $g_1 g_2\cdots g_k\in G_1\cdot G_2\cdot\cdots\cdot G_k$, we have
\[
x^{g_1 g_2\cdots g_k}:= x^{g_i},\quad \text{where } x\in \Omega_i.
\]
Let $G=G_1\cdot G_2\cdot\cdots\cdot G_k$. Then the derangement graph of $G$  is the graph $\Gamma_G$ with vertex set $G$ in which two vertices $g_1g_2\cdot \cdots \cdot g_k$ and $h_1h_2\cdot \cdots \cdot h_k$ are adjacent if and only if $g_ih_i^{-1}$ is a derangement, for all  $1\leq i\leq k$.  In other words, $\Gamma_G$ is the direct product of $\Gamma_{G_1},\dots,\Gamma_{G_k}$; that is
\begin{equation}\label{gamma_of_internal_prod}
\Gamma_G=\Gamma_{G_1}\times\cdots\times\Gamma_{G_k}.
\end{equation}
Hence according to Corollary~\ref{alpha_of_direct_prod_of_graphs}, we get the independence number of $G$.
\begin{lem}\label{independence_of_internal_prod} With the notation above, we have
\[
\alpha(\Gamma_G)=\max_j\{\,\alpha(\Gamma_{G_j})\prod_{\substack{i=1,\dots,n\\ i\neq j}}|G_i|\,\}.\qed
\]
\end{lem}

\begin{thm}\label{EKR_for_internal_prod} With the notation above, if all the $G_i$ have the EKR property, then $G$ also has the EKR property.
\end{thm}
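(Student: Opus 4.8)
The plan is to show that the maximum size of an intersecting subset of $G$—equivalently, $\alpha(\Gamma_G)$—coincides with the size of the largest point-stabilizer in $G$, and to compute both quantities explicitly. The independence number is already furnished by Lemma~\ref{independence_of_internal_prod} (which rests on the identification $\Gamma_G=\Gamma_{G_1}\times\cdots\times\Gamma_{G_k}$ from (\ref{gamma_of_internal_prod})), so the remaining work lies entirely on the point-stabilizer side.

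First I would describe the point-stabilizers of $G$. Fix $x\in\Omega$ and let $j$ be the index with $x\in\Omega_j$. Since the action of $g_1g_2\cdots g_k$ on a point of $\Omega_j$ is governed solely by the component $g_j$, an element $g_1\cdots g_k$ fixes $x$ if and only if $g_j\in(G_j)_x$, while the remaining components range freely over their groups. Hence $G_x=G_1\cdots G_{j-1}\cdot(G_j)_x\cdot G_{j+1}\cdots G_k$, so that
\[
|G_x|=|(G_j)_x|\prod_{\substack{i=1,\dots,k\\ i\neq j}}|G_i|.
\]

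Next I would invoke the EKR hypothesis on each factor. A point-stabilizer is itself an intersecting set, so $\alpha(\Gamma_{G_i})$ is at least the size of the largest point-stabilizer of $G_i$; the EKR property of $G_i$ supplies the reverse inequality. Thus $\alpha(\Gamma_{G_i})=\max_{y}|(G_i)_y|$ for every $i$. Maximizing the displayed formula for $|G_x|$ first over $x\in\Omega_j$ and then over $j$ therefore yields
\[
\max_{x\in\Omega}|G_x|=\max_j\Bigl(\alpha(\Gamma_{G_j})\prod_{\substack{i=1,\dots,k\\ i\neq j}}|G_i|\Bigr),
\]
which is exactly the value of $\alpha(\Gamma_G)$ recorded in Lemma~\ref{independence_of_internal_prod}. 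Consequently every intersecting subset of $G$ has size at most $\alpha(\Gamma_G)=\max_{x}|G_x|$, and this bound is attained by a point-stabilizer, establishing the EKR property.

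I do not anticipate a serious obstacle, since once (\ref{gamma_of_internal_prod}) and Lemma~\ref{independence_of_internal_prod} are in hand the argument reduces to bookkeeping. The one point requiring care is the identity $\alpha(\Gamma_{G_i})=\max_y|(G_i)_y|$: the EKR property alone gives only the upper bound, and one must use that a point-stabilizer is an intersecting set to pin down equality. I would also take care to maximize over points within each block $\Omega_j$ rather than assume transitivity, since the statement permits arbitrary permutation groups $G_i$ whose stabilizers may differ in size from point to point.
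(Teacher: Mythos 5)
Your proposal is correct and follows essentially the same route as the paper: compute $|G_x|=|(G_j)_x|\prod_{i\neq j}|G_i|$ for $x\in\Omega_j$ and match the maximum of this over $x$ with the value of $\alpha(\Gamma_G)$ given by Lemma~\ref{independence_of_internal_prod}. You merely make explicit the bookkeeping (in particular that $\alpha(\Gamma_{G_i})=\max_y|(G_i)_y|$ requires both the EKR upper bound and the fact that point-stabilizers are intersecting) that the paper's terse proof leaves implicit.
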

\begin{proof}
For any $x\in \Omega$, the stabilizer of $x$ in $G$ is $G_1\cdot\cdots\cdot G_{j-1}\cdot (G_j)_x\cdot G_{j+1}\cdot \cdots\cdot G_k$, where $x\in \Omega_j$. Hence 
\[
|G_x|=|(G_j)_x|\prod_{\substack{i=1,\dots,n\\ i\neq j}}|G_i|.
\]
Therefore, using Lemma~\ref{independence_of_internal_prod}, if all the $G_i$ have the EKR property, then $G$ also has the EKR property.
\end{proof}
Using Theorem~\ref{cyclic_groups}, one can observe the following.
\begin{cor} 
For any sequence $r_1,\dots,r_k$ of positive integers, the internal direct product $\mathbb{Z}_{r_1}\cdot\mathbb{Z}_{r_2}\cdot\cdots\cdot \mathbb{Z}_{r_k}$ has the EKR property.\qed
\end{cor}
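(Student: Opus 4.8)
The plan is to combine the two results established immediately before this corollary: Theorem~\ref{cyclic_groups}, which gives the (strict) EKR property for any cyclic group generated by a single permutation, and Theorem~\ref{EKR_for_internal_prod}, which transfers the EKR property from the factors of an internal direct product to the product itself. The corollary is a direct specialization obtained by feeding the former into the hypotheses of the latter.

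First I would observe that each factor $\mathbb{Z}_{r_i}$ is, by definition, a cyclic group; realized as a permutation group on its ground set $\Omega_i$, it is generated by a single permutation $\sigma_i \in \sym(\Omega_i)$ of order $r_i$. Hence Theorem~\ref{cyclic_groups} applies directly and shows that each $\mathbb{Z}_{r_i}$ has the strict EKR property; in particular, each $\mathbb{Z}_{r_i}$ has the EKR property.

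With all factors having the EKR property, the hypotheses of Theorem~\ref{EKR_for_internal_prod} are met: the ground sets $\Omega_1,\dots,\Omega_k$ are pairwise disjoint (as required by the definition of the internal direct product), and every $\mathbb{Z}_{r_i}$ has the EKR property. Applying that theorem yields at once that $\mathbb{Z}_{r_1}\cdot\mathbb{Z}_{r_2}\cdot\cdots\cdot\mathbb{Z}_{r_k}$ has the EKR property, which is the claim.

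Since this corollary is a direct consequence of two already-proved theorems, there is no genuine obstacle to overcome; the only point requiring a moment's thought is recognizing that a cyclic permutation group is always singly generated, so that Theorem~\ref{cyclic_groups} is applicable to each factor. No new computation is needed, and in particular one does not need to invoke the stronger \emph{strict} EKR conclusion of Theorem~\ref{cyclic_groups}, since Theorem~\ref{EKR_for_internal_prod} only requires the (non-strict) EKR property of the factors.
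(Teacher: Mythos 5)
Your proposal is correct and matches the paper's intended argument exactly: the corollary is stated immediately after Theorem~\ref{EKR_for_internal_prod} with the remark ``Using Theorem~\ref{cyclic_groups}, one can observe the following,'' so the paper's proof is precisely the composition of those two results that you describe. Your added observation that only the non-strict EKR conclusion of Theorem~\ref{cyclic_groups} is needed is accurate and consistent with the paper.
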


Let $\lambda=[\lambda_1,\dots,\lambda_k]$ be a partition of $n$ (see Section~\ref{rep_sym}). Define a set partition of $[n]$ by 
$[n]=\Omega_1\cup\cdots\cup\Omega_k$, where $\Omega_i=\{\lambda_1+ \cdots+\lambda_{i-1}+1,\dots,\lambda_1+\cdots+\lambda_i\}$. Then the internal direct product $\sym(\Omega_1)\cdot \sym(\Omega_2)\cdot\cdots\cdot \sym(\Omega_k)$  is called the \txtsl{Young subgroup} of $\sym(n)$ corresponding to $\lambda$ and is denoted by $\sym(\lambda)$. An easy consequence  of Theorem~\ref{EKR_for_internal_prod} and Theorem~\ref{EKR_for_sym} is the following.
\begin{cor} Any Young subgroup has the EKR property.\qed
\end{cor}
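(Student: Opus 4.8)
The plan is to read the result directly off the structure of a Young subgroup as an internal direct product of symmetric groups, and then invoke the two cited theorems. Recall that by definition $\sym(\lambda)=\sym(\Omega_1)\cdot\sym(\Omega_2)\cdots\sym(\Omega_k)$, where the blocks $\Omega_i$ partition $[n]$ and $|\Omega_i|=\lambda_i$. Thus $\sym(\lambda)$ is exactly an internal direct product in the sense of the paragraph preceding Theorem~\ref{EKR_for_internal_prod}, with factors $G_i=\sym(\Omega_i)$; the blocks being pairwise disjoint and non-empty is immediate from the construction, so the structural hypotheses of that theorem are met.

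First I would check that each factor has the EKR property. Since $\Omega_i$ has size $\lambda_i$, the group $\sym(\Omega_i)$ is, as a permutation group on $\Omega_i$, a copy of $\sym(\lambda_i)$. When $\lambda_i\geq 2$, Theorem~\ref{EKR_for_sym} gives that $\sym(\lambda_i)$ has the strict EKR property, and hence in particular the EKR property. With all factors $G_i$ having the EKR property, Theorem~\ref{EKR_for_internal_prod} applies directly and yields that $G=\sym(\lambda)$ has the EKR property, which is the whole of the argument.

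There is essentially no obstacle here; the only point requiring a word of care is the degenerate factors with $\lambda_i=1$, which fall outside the range $n\geq 2$ of Theorem~\ref{EKR_for_sym}. For such a block $\sym(\Omega_i)$ is the trivial group: its derangement graph is the empty graph on one vertex, so its independence number is $1$, which equals the size of its unique (full) point-stabilizer. Hence these trivial factors satisfy the EKR property vacuously, since the whole group is itself a point-stabilizer, and they may be kept among the $G_i$ without affecting the application of Theorem~\ref{EKR_for_internal_prod}.
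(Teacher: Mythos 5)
Your proof is correct and follows exactly the route the paper intends: decompose the Young subgroup as the internal direct product of the $\sym(\Omega_i)$, apply Theorem~\ref{EKR_for_sym} to each factor, and conclude via Theorem~\ref{EKR_for_internal_prod}. Your extra remark handling the degenerate blocks with $\lambda_i=1$ is a sensible bit of care that the paper leaves implicit.
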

It is not difficult to see that $\Gamma_{\sym(n)}$ is connected if and only if $n\neq 3$,  $\Gamma_{\sym(3)}$ is the disjoint union of two complete graphs $K_3$ and $\Gamma_{\sym([2,2,2])}$ is disconnected. From this we can deduce that if $\lambda=[3,2,\dots,2], [3,3]$ or $[2,2,2]$, then $\Gamma_{\sym(\lambda)}$  will be disconnected and one can find maximum independent sets which do not correspond to cosets of  point-stabilizers. More generally, if $\lambda$ is any partition of $n$ which ``ends'' with one of these three cases, then $\sym(\lambda)$ fails to have the strict EKR property. In \cite{KuW07} the authors have shown that these are the only Young subgroups which don't have the strict EKR property. In other words, they have proved the following.
\begin{thm}\label{strict_EKR_for_Young}
Let $\lambda=[\lambda_1,\dots,\lambda_k]$ be a partition of $n$ with all parts larger than one. Then $\sym(\lambda)$ has the strict EKR property unless one of the following hold
\begin{enumerate}[(a)]
\item $\lambda_j=3$ and $\lambda_{j+1}=\cdots= \lambda_k= 2$, for some $1\leq j<k$;
\item $\lambda_k = \lambda_{k-1} = 3$;
\item $\lambda_k = \lambda_{k-1} = \lambda_{k-2} = 2$. \qed
\end{enumerate}
\end{thm}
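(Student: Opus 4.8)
The plan is to restate the property in terms of the derangement graph and then reduce it, through the direct--product structure, to a short list of ``tail'' configurations. Write $\sym(\lambda)=\sym(\Omega_1)\cdots\sym(\Omega_k)$ with $|\Omega_i|=\lambda_i$, so that by (\ref{gamma_of_internal_prod}) its derangement graph factors as
\[
\Gamma_{\sym(\lambda)}=\Gamma_{\sym(\lambda_1)}\times\cdots\times\Gamma_{\sym(\lambda_k)}.
\]
Each $\sym(\lambda_i)$ has the strict EKR property by Theorem~\ref{EKR_for_sym}, so $\alpha(\Gamma_{\sym(\lambda_i)})=(\lambda_i-1)!$ and $\chi^*(\Gamma_{\sym(\lambda_i)})=\lambda_i$; Corollary~\ref{alpha_of_direct_prod_of_graphs} then gives $\alpha(\Gamma_{\sym(\lambda)})=|\sym(\lambda)|/\lambda_k$, the value attained by the largest point stabilizer (cf. Theorem~\ref{EKR_for_internal_prod}). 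A coset of a point stabilizer is exactly a set that is free on every block and equal to a coset of a point stabilizer of $\sym(\lambda_j)$ on one minimal block (a block with $\lambda_j=\lambda_k$); I shall call these the \emph{canonical} maximum independent sets. The task is therefore to decide when every maximum independent set of $\Gamma_{\sym(\lambda)}$ is canonical.

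First I would record the component structure of the factors, which is the source of the three exceptions: $\Gamma_{\sym(2)}\cong K_2$ is connected and bipartite, $\Gamma_{\sym(3)}\cong K_3\sqcup K_3$ has two non-bipartite components, and $\Gamma_{\sym(t)}$ for $t\ge 4$ is connected and non-bipartite (it contains even derangements such as $(1\,2)(3\,4)$, so its connection set is not contained in $\alt(t)$, the unique index-$2$ subgroup). Using the standard criterion (Weichsel's theorem) that a direct product of connected graphs is connected iff not all factors are bipartite, one computes that $\Gamma_{\sym(\lambda)}$ splits into $2^{q}\cdot 2^{\max(p-1,0)}$ components, where $p$ and $q$ are the numbers of parts equal to $2$ and to $3$. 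A maximum independent set is a disjoint union of maximum independent sets of the components, and the extra freedom created by many components is precisely what produces non-canonical sets.

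For the negative direction I would exhibit explicit non-canonical maximum independent sets in cases (a), (b), (c). In each of these the graph has at least two components, and within one component the restriction of a canonical set is a maximum independent set that fixes the coordinate on the chosen minimal block to one of several admissible values; replacing this by a \emph{different} admissible value on just one component gives a set of the same maximum size that is no longer the restriction of any single coset, hence is non-canonical. A tail $[3,2^r]$, $[3,3]$ or $[2,2,2]$ always forces two components of the relevant kind, so this ``mismatched gluing'' is available, proving that strict EKR fails exactly in (a), (b), (c). Conversely, for the remaining tails $[2]$, $[2,2]$ and $[3]$ a direct count shows that the number of maximum independent sets equals the number of canonical ones ($2$, $4$ and $9$ respectively); in the $[3]$ case this equality is precisely the assertion that every even/odd pair in $K_3\sqcup K_3$ is a coset, i.e. the strict EKR property of $\sym(3)$, so no non-canonical set arises.

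It remains to prove the positive direction in full, and this is the hard part. The reduction is to repeatedly peel off a connected factor $\Gamma_{\sym(p)}$ with $p\ge 4$ whose fractional chromatic number $p$ strictly exceeds that of the remaining product, showing that every maximum independent set is free on that factor; this collapses an arbitrary $\lambda$ either to an equal-parts product $\sym([t^m])$ with $t\ge 4$, or to a $\{2,3\}$-tail already treated above. The key lemma -- that for a \emph{connected} factor $X$ with $\chi^*(X)>\chi^*(Y)$ every maximum independent set of $X\times Y$ has the form $V(X)\times S_Y$, together with the analogous rigidity for $\sym([t^m])$, $t\ge 4$ -- is exactly the obstacle: for disconnected $X$ (the $t=3$ case) it is false, and for connected $X$ it cannot be reached by counting alone. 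I expect to establish it with the eigenvalue and ratio-bound ``module method'' of Chapter~\ref{module_method}: the derangement graph of $\sym(t)$ is ratio-tight, so by Theorem~\ref{ratio2} the characteristic vector of any maximum independent set lies in the sum of the all-ones eigenspace and the least eigenspace, which for these connected non-bipartite factors is spanned by the canonical sets; transporting this through the tensor product of adjacency matrices (Proposition~\ref{evalues_of_tensor}) then forces the product's maximum independent sets to be canonical. The principal difficulty is controlling these least eigenspaces under the direct product and excluding ``mixed'' eigenvectors, which is where the representation-theoretic input becomes essential.
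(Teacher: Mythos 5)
The paper does not actually prove this theorem: it is stated with a citation to \cite{KuW07} and a \qed, so there is no internal argument to match your proposal against. Judged on its own terms, your write-up is a plan rather than a proof, and the gap is exactly where you say it is. The decomposition $\Gamma_{\sym(\lambda)}=\Gamma_{\sym(\lambda_1)}\times\cdots\times\Gamma_{\sym(\lambda_k)}$, the computation of $\alpha$ via Corollary~\ref{alpha_of_direct_prod_of_graphs}, the component count, and the ``mismatched gluing'' constructions showing that strict EKR fails in cases (a), (b), (c) are all sound (and worth writing out: one should still verify that a set obtained by altering the choice on a single component is not a coset for some \emph{other} stabilized point, which is easy but not automatic). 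What is missing is the entire positive direction, i.e.\ the claim that for every $\lambda$ avoiding (a)--(c) the only maximum independent sets are cosets. You reduce this to the lemma that for a connected vertex-transitive factor $X$ with $\chi^*(X)>\chi^*(Y)$ every maximum independent set of $X\times Y$ equals $V(X)\times S_Y$, plus a rigidity statement for $\sym([t^m])$ with $t\ge 4$, and then write ``I expect to establish it'' via the ratio bound and the module method. That lemma is not a routine consequence of Zhang's theorem (which only gives the \emph{size} $\alpha(X\times Y)$, not the structure of the extremal sets), and the difficulty you flag is real: the least eigenspace of $A(X)\otimes A(Y)$ is generally a sum of several tensor factors, so Theorem~\ref{ratio2} only places $v_S$ in a space that can contain vectors not coming from canonical sets, and one still needs a full-rank argument in the style of Theorem~\ref{module_method_thm} on each such summand. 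Since this step carries essentially all of the content of the theorem, the proposal as it stands does not constitute a proof.

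Two smaller points. First, even granting the key lemma, the peeling argument does not dispose of all remaining cases cleanly: partitions such as $[4,2]$ or $[4,2,2]$ reduce to showing that a connected bipartite graph (the bipartite double cover $\Gamma_{\sym(4)}\times K_2$, or a two-component analogue) has \emph{only} its two colour classes as maximum independent sets, which is false for general connected bipartite graphs with a perfect matching and must be argued from the spectrum of the specific factors; the equal-parts case $[t^m]$ is not covered by the lemma at all and needs its own argument. Second, your parenthetical justification of non-bipartiteness of $\Gamma_{\sym(t)}$ for $t\ge4$ is stated backwards: the relevant fact is that $\dd_{\sym(t)}$ contains even permutations, hence is not contained in the complement of the unique index-two subgroup $\alt(t)$, which is the criterion for a connected Cayley graph on $\sym(t)$ to fail to be bipartite.
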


Finally we introduce the wreath product and probe whether  it has either the EKR or the strict EKR property. 
Let $G\leq \sym(m)$ and $H\leq \sym(n)$. Then the \txtsl{wreath product} of $G$ and $H$, denoted by $G\wr H$ is the group whose set of elements is
\[
(\underbrace{G\times \cdots\times G}_{n\,\,\text{times}})\times H,
\]
and the binary operation is defined as follows:
\[
(g_1,\dots,g_n,h)\cdot (g'_1,\dots,g'_n,h'):= (g_1 g'_{h(1)},\dots,g_n g'_{h(n)}\,,\,hh').
\]
It is a straight-forward exercise to show that this, indeed, defines a group. In particular, note that the identity element of $G\wr H$ is $(\id_G,\dots,\id_G,\id_H)$ and for any $(g_1,\dots,g_n,h)\in G\wr H$, 
\[
(g_1,\dots,g_n,h)^{-1} = (g_{h^{-1}(1)}^{-1},\dots,g_{h^{-1}(n)}^{-1},h^{-1}).
\]
Note also that the size of $G\wr H$ is $|G|^n|H|$. 
We point out that  $G\wr H$ is in fact the ``semi-direct product''
\[
(\underbrace{G\times \cdots\times G}_{n\,\,\text{times}})\rtimes H,
\]
when the action of $H$ on $G\times \cdots\times G$ is defined as simply permuting the positions of copies of $G$ (see \cite[Section 2.5]{dixon2012permutation} for a more detailed discussion on semi-direct products). It is not hard to see that this group is the stabilizer of a partition of the set $[nm]$ into $n$ parts each of size $m$. 

Now assume $\Omega=[m]\times[n]$. Then we observe the following.
\begin{lem}\label{wreath_action}
 The group $G\wr H$ acts on $\Omega$ in the following fashion: 
\begin{equation}\label{wreath_action_eq}
(x,j)^{(g_1,\dots,g_n,h)}:=(x^{g_j},j^h)=(g_j(x), h(j)),
\end{equation}
for any $(x,j)\in \Omega$ and $(g_1,\dots,g_n,h)\in G\wr H$. 
\end{lem}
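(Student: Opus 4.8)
The plan is to verify directly that the formula in (\ref{wreath_action_eq}) satisfies the two defining axioms of a (right) group action of $G\wr H$ on $\Omega$: that the identity element of $G\wr H$ acts as the identity map, and that the action respects the group multiplication, i.e. $\left((x,j)^{\sigma}\right)^{\tau}=(x,j)^{\sigma\tau}$ for all $\sigma,\tau\in G\wr H$ and all $(x,j)\in\Omega$. The first axiom is immediate from the explicit identity element recorded above: since the identity of $G\wr H$ is $(\id_G,\dots,\id_G,\id_H)$, the formula gives $(x,j)^{(\id_G,\dots,\id_G,\id_H)}=(x^{\id_G},j^{\id_H})=(x,j)$.

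For the compatibility axiom I would write $\sigma=(g_1,\dots,g_n,h)$ and $\tau=(g'_1,\dots,g'_n,h')$ and compute each side separately, using the paper's convention $x^{ab}=(x^a)^b$ and $j^h=h(j)$. Applying $\sigma$ first yields $(x,j)^{\sigma}=(x^{g_j},j^h)$, and then applying $\tau$ to this pair — whose second coordinate has now become $j^h$, so that the first coordinate gets acted on by the $G$-component indexed by $j^h$ — gives
\[
\left((x,j)^{\sigma}\right)^{\tau}=\left(\left(x^{g_j}\right)^{g'_{j^h}},\,\left(j^h\right)^{h'}\right)=\left(x^{g_j\,g'_{j^h}},\,j^{hh'}\right).
\]
On the other hand, the wreath-product multiplication stated above gives $\sigma\tau=(g_1g'_{h(1)},\dots,g_ng'_{h(n)},hh')$, whose $j$-th $G$-component is $g_jg'_{h(j)}$ and whose $H$-component is $hh'$; hence
\[
(x,j)^{\sigma\tau}=\left(x^{g_jg'_{h(j)}},\,j^{hh'}\right).
\]
Since $j^h=h(j)$, the two displayed expressions coincide, which establishes the compatibility axiom and completes the verification.

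The only point requiring genuine care — and the step I expect to be the main obstacle in presentation rather than in substance — is the index bookkeeping hidden in the semidirect-product twist of the wreath multiplication. One must check that the $G$-component picked up when acting by $\tau$ is indexed by the \emph{already-moved} point $j^h$, which is precisely the index $h(j)$ appearing in the definition of the product $\sigma\tau$; it is exactly this matching that forces the twist into the action and makes the two computations agree. As a consistency check on the convention, I would note that the inverse formula stated immediately before the lemma satisfies $\sigma\sigma^{-1}=(\id_G,\dots,\id_G,\id_H)$ under the same multiplication rule, confirming that the left-to-right composition convention used in the computation is the correct one.
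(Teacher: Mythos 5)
Your proof is correct and follows essentially the same route as the paper's: verify that the identity $(\id_G,\dots,\id_G,\id_H)$ acts trivially, then compute $(x,j)^{\sigma\tau}$ and $\bigl((x,j)^{\sigma}\bigr)^{\tau}$ separately and match them via the identification $j^h=h(j)$ of the twisted index. The extra remarks on the index bookkeeping and the inverse-formula consistency check are harmless additions but not needed beyond what the paper already does.
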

\begin{proof} It is obvious that the pair on the right hand side of (\ref{wreath_action_eq}) is in $\Omega$. We also see that for any $(x,j)\in \Omega$,
\[
(x,j)^{\id_{G\wr H}}=(x,j)^{(\id_G,\dots,\id_G,\id_H)}=(\id_G(x),\id_H(j))=(x,j).
\]
Finally assume $(g_1,\dots,g_n,h) , (g'_1,\dots,g'_n,h') \in G\wr H$. Then 
\begin{align*}
(x,j)^{(g_1,\dots,g_n,h) \cdot (g'_1,\dots,g'_n,h')}&=(x,j)^{(g_1 g'_{h(1)},\dots,g_n g'_{h(n)}\,,\,hh')}\\[.2cm]
& = \left( x^{g_j g'_{h(j)}},j^{hh'}\right)= \left(g'_{h(j)}(g_j(x))\,,\,h'(h(j))\right);
\end{align*}
on the other hand
\begin{align*}
\left((x,j)^{(g_1,\dots,g_n,h)}\right)^{ (g'_1,\dots,g'_n,h')}&=(x^{g_j},j^h)^{ (g'_1,\dots,g'_n,h')}\\[.2cm]
&=\left((x^{g_j})^{g'_{h(j)}} , (j^h)^{h'} \right)= \left(g'_{h(j)}(g_j(x)) , h'(h(j)) \right).\qedhere
\end{align*}
\end{proof}
Note this implies that if $(g_1,\dots,g_n,h)$ has a fixed point $(x,j)$, then $h(j)=j$ and $g_j(x)=x$. Thus, it is not difficult to verify the following.
\begin{lem}\label{wreath_stabilizer}
For any pair $(x,j)\in \Omega$, the stabilizer of $(x,j)$ in $G\wr H$ is 
\[
\left(G\times\cdots \times\underset{j\text{th position}}{(G)_x}\times\cdots\times G\right)\times H_j.\qed
\]
\end{lem}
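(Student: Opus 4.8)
The plan is to read the stabilizer off directly from the description of the action established in Lemma~\ref{wreath_action}. First I would note that, by (\ref{wreath_action_eq}), an arbitrary element $(g_1,\dots,g_n,h)\in G\wr H$ satisfies
\[
(x,j)^{(g_1,\dots,g_n,h)}=(g_j(x),h(j)).
\]
Hence $(g_1,\dots,g_n,h)$ fixes the point $(x,j)$ if and only if the pair $(g_j(x),h(j))$ equals $(x,j)$, that is, if and only if the two conditions $g_j(x)=x$ and $h(j)=j$ hold simultaneously.

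Next I would translate these two equations into membership statements. The condition $g_j(x)=x$ says precisely that $g_j$ lies in $(G)_x$, the stabilizer of $x$ in $G$, and the condition $h(j)=j$ says precisely that $h$ lies in $H_j$, the stabilizer of $j$ in $H$. The essential observation is that these are the only constraints imposed: they involve solely the $j$-th coordinate $g_j$ and the last coordinate $h$, while the remaining coordinates $g_i$ with $i\neq j$ appear nowhere in the fixing conditions and may therefore be arbitrary elements of $G$.

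Finally I would conclude that the stabilizer of $(x,j)$ consists exactly of those tuples $(g_1,\dots,g_n,h)$ with $g_i\in G$ for every $i\neq j$, with $g_j\in (G)_x$, and with $h\in H_j$; that is, it equals
\[
\left(G\times\cdots \times\underset{j\text{th position}}{(G)_x}\times\cdots\times G\right)\times H_j,
\]
as claimed. Because the conditions constraining distinct coordinates are independent of one another, the stabilizer genuinely has this direct-product shape, and being a point-stabilizer it is automatically a subgroup of $G\wr H$. There is no real obstacle here: once the action is written out explicitly the whole argument is a direct verification, and the only point deserving a little care is to confirm that the fixing conditions decouple across the coordinates, so that the $n-1$ unconstrained copies of $G$ really do contribute full factors.
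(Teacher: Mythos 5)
Your proof is correct and follows exactly the route the paper intends: Lemma~\ref{wreath_action} gives $(x,j)^{(g_1,\dots,g_n,h)}=(g_j(x),h(j))$, so fixing $(x,j)$ is equivalent to $g_j\in (G)_x$ and $h\in H_j$ with all other coordinates unconstrained, which is precisely the verification the paper leaves to the reader. Nothing is missing.
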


\begin{thm}\label{EKR_for_wreath}
If $G\leq \sym(m)$ and $H\leq \sym(n)$ have the EKR property, then $G\wr H$ also has the EKR property.
\end{thm}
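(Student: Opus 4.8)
The plan is to bound an arbitrary intersecting set by decomposing it according to its $H$-coordinate and applying the EKR hypotheses for $G$ and $H$ separately, glued together by the independence number of a direct product of derangement graphs. First I would record the target bound. By Lemma~\ref{wreath_stabilizer} the stabilizer of $(x,j)\in\Omega$ has order $|G|^{n-1}\,|G_x|\,|H_j|$, so the largest point-stabilizer of $G\wr H$ has order $|G|^{n-1}\,\alpha(\Gamma_G)\,\alpha(\Gamma_H)$, where I have used that $G$ and $H$ have the EKR property to identify $\max_x|G_x|=\alpha(\Gamma_G)$ and $\max_j|H_j|=\alpha(\Gamma_H)$. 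Thus it suffices to show that every intersecting $S\subseteq G\wr H$ satisfies $|S|\le |G|^{n-1}\,\alpha(\Gamma_G)\,\alpha(\Gamma_H)$.

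The crucial preliminary step is to unwind the intersection condition. Using Lemma~\ref{wreath_action}, two elements $a=(g_1,\dots,g_n,h)$ and $b=(g_1',\dots,g_n',h')$ agree on $(x,j)$ exactly when $h(j)=h'(j)$ and $g_j(x)=g_j'(x)$; hence $a$ and $b$ intersect if and only if there is a coordinate $j$ with $h(j)=h'(j)$ and $g_j,g_j'$ intersecting in $G$. From this criterion two facts fall out. Let $T=\{h\in H : (g_1,\dots,g_n,h)\in S \text{ for some } g_i\in G\}$ be the projection of $S$ onto $H$, and for each $h\in T$ let $S_h=\{(g_1,\dots,g_n): (g_1,\dots,g_n,h)\in S\}$. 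If $h,h'\in T$, choosing witnesses in $S$ and applying the criterion forces $h(j)=h'(j)$ for some $j$, so $T$ is an intersecting set in $H$ and the EKR property of $H$ gives $|T|\le\alpha(\Gamma_H)$.

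On the other hand, for fixed $h$, two members $g,g'$ of $S_h$ correspond to elements of $S$ with equal $H$-part, so the criterion collapses to the statement that $g_j,g_j'$ intersect in $G$ for some $j$. Equivalently, no pair of elements of $S_h$ is adjacent in all $n$ coordinates of the direct product $\Gamma_G\times\cdots\times\Gamma_G$, i.e. $S_h$ is an independent set of that graph. Since $\Gamma_G$ is vertex-transitive (Section~\ref{cayley}), Corollary~\ref{alpha_of_direct_prod_of_graphs} gives $\alpha(\Gamma_G\times\cdots\times\Gamma_G)=\alpha(\Gamma_G)\,|G|^{n-1}$, whence $|S_h|\le\alpha(\Gamma_G)\,|G|^{n-1}$. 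Summing over the fibers then yields
\[
|S|=\sum_{h\in T}|S_h|\le |T|\cdot\alpha(\Gamma_G)\,|G|^{n-1}\le \alpha(\Gamma_H)\,\alpha(\Gamma_G)\,|G|^{n-1},
\]
which matches the order of the largest point-stabilizer and finishes the argument.

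The one place demanding care — and the main obstacle — is the translation of the wreath-product intersection condition into the ``some coordinate'' form, together with the correct identification of the fiber graph as the \emph{direct} product of copies of $\Gamma_G$ (so that Corollary~\ref{alpha_of_direct_prod_of_graphs} applies) rather than an external or lexicographic product. Once this dictionary is set up correctly, the EKR hypotheses for $G$ and $H$ plug in directly and the two bounds multiply as required; note that the argument gives the EKR property only (the characterization of extremal sets, i.e. the strict property, would require separately analyzing when the projection and fiber bounds are simultaneously tight).
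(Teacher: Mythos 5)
Your argument is correct, but it takes a genuinely different route from the paper's. The paper proves this by exhibiting a graph homomorphism $f:\Gamma_H[\Gamma_P]\to\Gamma_W$ from the lexicographic product of $\Gamma_H$ with the derangement graph of the internal direct product $P=G\times\cdots\times G$, and then invoking the fractional chromatic number inequality (Proposition~\ref{fractional_chrom_ineq}) together with $\alpha(X[Y])=\alpha(X)\alpha(Y)$ (Proposition~\ref{independence-of-lex}) and Theorem~\ref{EKR_for_internal_prod} to conclude $\alpha(\Gamma_W)\le\alpha(\Gamma_P)\alpha(\Gamma_H)$. You instead unwind the intersection condition on $\Omega=[m]\times[n]$ explicitly and slice an arbitrary intersecting set $S$ into fibers over its $H$-projection $T$: the EKR property of $H$ bounds $|T|$, and each fiber is an independent set of $\Gamma_G\times\cdots\times\Gamma_G$, bounded via Corollary~\ref{alpha_of_direct_prod_of_graphs}. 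Both arguments rest on the same deep input --- Zhang's theorem on independence numbers of direct products of vertex-transitive graphs, applied to $n$ copies of $\Gamma_G$ --- but your fiber decomposition is in effect the elementary proof of the inequality $\alpha(X[Y])\le\alpha(X)\alpha(Y)$ specialized to this setting, so it bypasses the fractional chromatic number machinery entirely and makes the combinatorics transparent. What the paper's packaging buys is reusability: the same homomorphism $f$ and Proposition~\ref{max_indy_in_lex} are used again immediately afterwards to prove the \emph{strict} EKR property for $\sym(m)\wr\sym(n)$, which your counting argument (as you correctly note) does not by itself deliver.
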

\begin{proof}
For convenience we let $W:=G\wr H$ and $P=G\times\cdots\times G$. Note that by the definition of the wreath product, $P$ is in fact the internal direct product of  $G_1,\dots, G_n$, where $G_i\cong G$ and $G_i\leq \sym([m]\times \{i\})$, for any $1\leq i\leq n$. Hence according to (\ref{gamma_of_internal_prod}), we have
\[
\Gamma_P=\underbrace{\Gamma_G\times \cdots\times \Gamma_G}_{n\,\,\text{times}}.
\]
Consider the lexicographic product $\Gamma=\Gamma_H[\Gamma_P]$. Define the map $f: \Gamma\to \Gamma_W$ by
\[
f(h,(g_1,\dots,g_n))=(g_1,\dots,g_n,h).
\]
We claim that $f$ is a homomorphism.  To prove this, assume $(h,(g_1,\dots,g_n))$ and $(h',(g'_1,\dots,g'_n))$ are adjacent in $\Gamma$. We should show that
\begin{equation}\label{f-is-hom}
(g'_1,\dots,g'_n,h')\cdot(g_1,\dots,g_n,h)^{-1}=
(g'_1  g_{h'h^{-1}(1)}^{-1},\dots,g'_n g_{h'h^{-1}(n)}^{-1},h'h^{-1})
\end{equation}
has no fixed point. By the definition of the lexicographic product, either $h\sim h'$ in $\Gamma_H$ or $h=h'$ and $(g_1,\dots,g_n)\sim (g'_1,\dots,g'_n)$ in $G$. In the first case, $h'h^{-1}$ has no fixed point. Thus $(g'_1,\dots,g'_n,h')\cdot(g_1,\dots,g_n,h)^{-1}$ cannot have a fixed point. In the latter case, $(g'_1,\dots,g'_n)(g_1,\dots,g_n)^{-1}$ has no fixed point; thus, according to (\ref{f-is-hom}),
\[
(g'_1,\dots,g'_n,h')\cdot(g_1,\dots,g_n,h)^{-1}= (g'_1  g_1^{-1},\dots,g'_n g_n^{-1},\id_H)
\]
cannot have a fixed point. Thus the claim is proved. 

We  can, therefore, apply Proposition~\ref{fractional_chrom_ineq} to get
\[
\frac{|V(\Gamma)|}{\alpha(\Gamma)}\leq \frac{|V(\Gamma_W)|}{\alpha(\Gamma_W)}.
\]
Therefore, using Proposition~\ref{independence-of-lex}, we have
\begin{equation}\label{alphas}
\alpha(\Gamma_W)\leq \alpha(\Gamma_P)\alpha(\Gamma_H).
\end{equation}
But since $G$ has the EKR property, according to Theorem~\ref{EKR_for_internal_prod}, $P$ has the EKR property; this means that there is  a point $x\in [m]$ such that 
\[
\alpha(\Gamma_P)=|P_x|.
\]
Similarly, since $H$ has the EKR property, there exists a $j\in[n]$ such that 
\[
\alpha(\Gamma_H)=|H_j|.
\]
This, along with  Lemma~\ref{wreath_stabilizer}, implies that $\alpha(\Gamma_W)= |W_{(x,j)}|$.
\end{proof}
In the case of symmetric groups, we can say more.
\begin{prop} The group $\sym(m)\wr \sym(n)$ has the EKR property. Furthermore, if  $m\geq 4$, then $\sym(m)\wr \sym(n)$ has the strict EKR property.
\end{prop}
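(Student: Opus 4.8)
The plan is to prove the two statements in sequence, using the machinery already assembled for the wreath product. The EKR property for $\sym(m)\wr\sym(n)$ is immediate: both $\sym(m)$ and $\sym(n)$ have the strict (hence ordinary) EKR property by Theorem~\ref{EKR_for_sym}, so Theorem~\ref{EKR_for_wreath} applies directly and gives the EKR property. The real content is the strict statement under the hypothesis $m\geq 4$, and here I would aim to upgrade the homomorphism argument of Theorem~\ref{EKR_for_wreath} into a characterization of the maximum independent sets.

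First I would recall the structural facts from the proof of Theorem~\ref{EKR_for_wreath}. Writing $W=\sym(m)\wr\sym(n)$, $P=\sym(m)\times\cdots\times\sym(m)$ ($n$ copies, realized as an internal direct product), we have $\Gamma_P=\Gamma_{\sym(m)}\times\cdots\times\Gamma_{\sym(m)}$ and the inequality $\alpha(\Gamma_W)\leq\alpha(\Gamma_P)\alpha(\Gamma_{\sym(n)})$ from~(\ref{alphas}), which is met with equality because the point stabilizer $W_{(x,j)}$ achieves it (Lemma~\ref{wreath_stabilizer}). The key observation is that this equality plus the homomorphism $f:\Gamma_{\sym(n)}[\Gamma_P]\to\Gamma_W$ lets me transfer information about maximum independent sets in the lexicographic product $\Gamma_{\sym(n)}[\Gamma_P]$ to those in $\Gamma_W$. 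So let $S$ be a maximum independent set in $\Gamma_W$, which I may assume contains $\id_W$. I would pull it back to $f^{-1}(S)$, a maximum independent set in $\Gamma_{\sym(n)}[\Gamma_P]$, and invoke Proposition~\ref{max_indy_in_lex}: its projection to $\Gamma_{\sym(n)}$ must be a maximum independent set in $\Gamma_{\sym(n)}$, and each fiber-projection must be a maximum independent set in a copy of $\Gamma_P$.

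Now I would use strictness at each level. Since $\sym(n)$ has the strict EKR property, the projection to the $\Gamma_{\sym(n)}$ factor is a coset of a point stabilizer $(\sym(n))_j$, and because $\id_W\in S$ I may take it to be $(\sym(n))_j$ itself. Next I need that each fiber is a maximum independent set in $\Gamma_P$ of the \emph{strict} form, which requires $P=\sym(m)^{\,n}$ to have the strict EKR property; this follows from Theorem~\ref{EKR_for_external_prod} (internal and external direct products agree here on the relevant graph structure via~(\ref{gamma_of_internal_prod})) precisely because each $\sym(m)$ has the strict EKR property for $m\geq 2$, so the maximum independent sets of $\Gamma_P$ are exactly the cosets of point stabilizers $P_x$. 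Combining the two, the coordinate permutation $h$ fixes $j$, and in the $j$-th block the factor must be a point stabilizer $(\sym(m))_x$, so $S\subseteq W_{(x,j)}$; equality of sizes forces $S=W_{(x,j)}$.

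The hard part will be handling the direct-product factor carefully: Corollary~\ref{alpha_of_direct_prod_of_graphs} shows $\alpha(\Gamma_P)=\alpha(\Gamma_{\sym(m)})\cdot|\sym(m)|^{n-1}$, attained on a single coordinate, and I must verify that a maximum independent set of $\Gamma_P$ is \emph{forced} to be trivial in all but one coordinate—this is exactly where the hypothesis $m\geq 4$ enters, ensuring $\Gamma_{\sym(m)}$ is connected so that the direct-product maximum independent sets do not split in exotic ways (contrast the degenerate $m=3$ behaviour noted after Theorem~\ref{EKR_for_internal_prod}, where $\Gamma_{\sym(3)}$ is disconnected and extra maximum independent sets appear). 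I would therefore isolate, as the crux lemma, the claim that for $m\geq 4$ the group $\sym(m)^{\,n}$ has the strict EKR property, prove it via the strictness statement in Proposition~\ref{max_indy_in_lex} together with connectivity of $\Gamma_{\sym(m)}$, and then feed it into the lexicographic-product argument above to conclude.
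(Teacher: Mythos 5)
Your overall skeleton matches the paper's proof: reduce to the lexicographic product via the homomorphism $f$ from Theorem~\ref{EKR_for_wreath}, apply Proposition~\ref{max_indy_in_lex}, and use strictness of the factors. But there are two genuine gaps. First, you justify the strict EKR property of $P=\sym(m)^{\,n}$ by citing Theorem~\ref{EKR_for_external_prod}, claiming the internal and external direct products ``agree on the relevant graph structure.'' They do not. The external product acts on $[m]^n$ with point stabilizers $(G_1)_{x_1}\times\cdots\times(G_n)_{x_n}$ and derangement graph $\overline{\overline{\Gamma_{G_1}}\times\cdots\times\overline{\Gamma_{G_n}}}$, whereas $P$ here is the \emph{internal} product (the Young subgroup $\sym([m,\dots,m])$) acting on the disjoint union, with stabilizers that restrict only one factor and derangement graph $\Gamma_{G_1}\times\cdots\times\Gamma_{G_n}$ as in (\ref{gamma_of_internal_prod}). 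Your inference ``each $\sym(m)$ has strict EKR for $m\geq2$, so the product does'' is exactly the pattern that fails for internal products: $\sym([3,3])$ and $\sym([2,2,2])$ do not have the strict EKR property even though their factors do. The correct tool is Theorem~\ref{strict_EKR_for_Young}, which gives strict EKR for $\sym([m^n])$ precisely when $m\geq4$ (avoiding the exceptional partitions); this is the first place the hypothesis enters. Your proposed crux lemma via ``Proposition~\ref{max_indy_in_lex} together with connectivity of $\Gamma_{\sym(m)}$'' does not repair this, since that proposition concerns lexicographic products, not the direct product $\Gamma_P$.

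Second, even granting that each fiber $\proj_{Y_s}(S)$ is a coset $P_{(x_s,j)}$ of a point stabilizer in $P$, Proposition~\ref{max_indy_in_lex} gives no control over whether the point $x_s$ is the same for every $s\in S_{j,j}$; the lexicographic product has \emph{no} edges between fibers over non-adjacent base vertices, so independence in $\Gamma_{\sym(n)}[\Gamma_P]$ cannot force consistency. Your write-up jumps from ``each fiber is a point stabilizer'' to ``$S\subseteq W_{(x,j)}$'' without addressing this. The paper closes the gap with a separate argument: if $x_s\neq x_t$, then because $m\geq4$ one can exhibit an element of $P_{(x_s,j)}$ adjacent \emph{in $\Gamma_W$} (using edges not present in the lexicographic subgraph) to an element of $P_{(x_t,j)}$, contradicting independence of $S$. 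This is a second, independent use of the hypothesis $m\geq4$ that your proposal omits entirely.
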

\begin{proof} The first part follows from Theorem~\ref{EKR_for_wreath}. For the second part, as in the proof of Theorem~\ref{EKR_for_wreath}, we let $W=\sym(m)\wr \sym(n)$ and 
\[
P=\sym([m]\times\{1\})\times\cdots\times \sym([m]\times\{n\}).
\]
Let $S$ be an intersecting subset of $W$ of maximum size, i.e. $S$ has the size of a point-stabilizer in $W$. Without loss of generality we assume that $S$ contains the identity element of $W$. Consider the homomorphism $f:\Gamma_{\sym(n)}[\Gamma_P]\to \Gamma_W$ defined in the proof of Theorem~\ref{EKR_for_wreath}. It is obvious that $f$ is an injection; hence there is a copy of $\Gamma_{\sym(n)}[\Gamma_P]$ in $\Gamma_W$. This implies that $S$ is an independent set in $\Gamma_{\sym(n)}[\Gamma_P]$ of size $\alpha(\Gamma_{\sym(n)})\alpha(\Gamma_P)$. Then, according to Proposition~\ref{max_indy_in_lex} and the fact that $\sym(m)$ and $P$ have the strict EKR property (see Theorem~\ref{strict_EKR_for_Young}), we have that the projection of $S$ to $\Gamma_{\sym(n)}$ is  the  stabilizer of a point $j$ in $\sym(n)$, i.e. $S_{j,j}$, and the projection of $S$ in each copy of $\Gamma_P$ is  a point-stabilizer in $P$. Therefore
\[
S=\bigcup_{s\in S_{j,j}} P_{(x_s,j)}, 
\]
where $x_s\in[m]$, for any $s\in S_{j,j}$ and $P_{(x_s,j)}$ is the stabilizer of $(x_s,j)$ in $P$. Now if $x_{s}\neq x_{t}$, for some $s,t\in S_{j,j}$, then since $m\geq 4$, there will be an element in $P_{(x_{s},j)}$ which is adjacent to some element in $P_{(x_{t},j)}$ in the graph $\Gamma_W$, which contradicts the fact that $S$ is independent in $\Gamma_W$. Hence we must have
\[
S=\bigcup_{s\in S_{j,j}} P_{(x,j)}=\left(\sym(m)\times\cdots \times\underset{j\text{th position}}{(\sym(m))_x}\times\cdots\times \sym(m)\right)\times S_{j,j},
\]
for some $x\in [m]$. Now the proposition follows from Lemma~\ref{wreath_stabilizer}.
\end{proof}

\chapter{Module Method}\label{module_method}

As we mentioned in Chapter~\ref{introduction}, the approach in \cite{Karen} to the EKR problem of the symmetric group was vastly different from the one applied in \cite{MR2009400} where the theorem was first proved. Further, this new proof uses information from  the irreducible representations of the symmetric group. This algebraic proof opened a new way to approach the EKR problem for permutation groups; for example Meagher and Spiga in \cite{MeagherS11} used a similar method to solve the EKR problem for the projective general linear group $\PGL(2,q)$. They also questioned if one can apply this method for the projective special linear group $\PSL(2,q)$. In this chapter we will state this approach as a theorem, called the ``module method'', and will show how this will be useful in proving EKR theorems for permutation groups. 
Then using the module method, we will establish the strict EKR property  for the alternating group in Section~\ref{EKR_for_alt}. In Section~\ref{EKR_for_psl} we will approach the EKR theorem for $\PSL(2,q)$ and will show how the module method proves the strict EKR property for this group, provided  that Conjecture~\ref{M_fullrank_PSL} is true. In Section~\ref{sporadic}, we will apply this method to show that some of the sporadic permutation groups have the strict EKR property.


\section{Introduction of the module method}\label{module_method_intro}

Throughout this chapter we assume $G\leq \sym(n)$ to be a $2$-transitive permutation group, unless otherwise declared.  Recall from Chapter~\ref{EKR_perm_groups} that the problem of characterizing the maximum intersecting subsets of $G$ is equivalent to characterizing the maximum independent sets of the  graph $\Gamma_G$; hence in what follows we will use the graph interpretation of the problem rather than the original problem.  In order to explain the module method, first we define the canonical independent sets of $\Gamma_G$. For any $i,j\in [n]$, we define the \textsl{canonical independent sets}\index{independent set!canonical} $S_{i,j}$ as 
\begin{equation}\label{canonical}
S_{i,j}=\{\pi\in G\,\,|\,\, \pi(i)=j\}.
\end{equation}
The subset $S_{i,j}$  of the vertices of $\Gamma_G$ are, indeed, cosets of the point-stabilizers in $G$ under the natural action of $G$ on $[n]$. Obviously, $S_{i,j}$ is an independent set and since $G$ is transitive, $|S_{i,j}|=\frac{|G|}{n}$, for each $i,j\in [n]$. The sets $S_{i,j}$ form a collection of independent sets for $\Gamma_G$. The goal of the module method is to prove that these are the only maximum independent sets. For any $i,j\in [n]$, we denote the characteristic vector of $S_{i,j}$ with $v_{i,j}$. We will make use of the following lemma in the module method.

\begin{lem}\label{basis_for_standard}If for all $i,j\in[n]$, the vector $v_{i,j}$ lies in the direct sum of the standard and the trivial modules of $G$, then  the set
\[
B:=\{v_{i,j}-\frac{1}{n}\mathbf{1}\,|\, i,j\in[n-1]\} 
\]
is a basis for the standard module $V$ of $G$.
\end{lem}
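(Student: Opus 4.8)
The plan is to show that every element of $B$ lies in $V$ and that $B$ spans $V$; since $V$, the standard module, is the isotypic component of the standard representation in $\mathbb{C}[G]$, its dimension is $(n-1)^2$ (the standard representation is irreducible by Proposition~\ref{standard_is_irr}, has dimension $n-1$ by Lemma~\ref{dim_of_standard_char}, and occurs in the regular representation with multiplicity $n-1$). As $|B|=(n-1)^2$ as well, spanning will immediately force $B$ to be a basis.

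First I would pin down the trivial-module component of each $v_{i,j}$. The decomposition of $\mathbb{C}[G]$ into its isotypic modules is orthogonal for the usual inner product $\langle x,y\rangle=\sum_{g\in G}x_g\,\overline{y_g}$, so the projection onto the trivial module $\langle\mathbf{1}\rangle$ is the orthogonal one. Since $\langle v_{i,j},\mathbf{1}\rangle=|S_{i,j}|=|G|/n$, that projection is exactly $\tfrac1n\mathbf{1}$. By hypothesis $v_{i,j}$ lies in $\langle\mathbf{1}\rangle\oplus V$, so its standard component is $v_{i,j}-\tfrac1n\mathbf{1}$, which therefore lies in $V$. Hence $B\subseteq V$.

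Next I would show that the full family $\{v_{i,j}:i,j\in[n]\}$ spans $\langle\mathbf{1}\rangle\oplus V$. The key tool is the Gram matrix $\Gamma_{(i,j),(k,l)}=\langle v_{i,j},v_{k,l}\rangle=|S_{i,j}\cap S_{k,l}|$, which $2$-transitivity lets me evaluate in closed form: it equals $|G|/n$ when $(i,j)=(k,l)$; it is $0$ when $i=k,\,j\ne l$ or $i\ne k,\,j=l$ (no permutation can witness both conditions); and it equals $|G|/(n(n-1))$ when $i\ne k$ and $j\ne l$ (the number of permutations carrying the ordered pair of distinct points $(i,k)$ to $(j,l)$). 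Writing $a=|G|/n$ and $b=|G|/(n(n-1))$, this says $\Gamma=a\,(I\otimes I)+b\,(J-I)\otimes(J-I)$ on $\mathbb{C}^n\otimes\mathbb{C}^n$. Diagonalizing through the eigenvectors of $J-I$ (eigenvalue $n-1$ on $\mathbf{1}$, eigenvalue $-1$ on $\mathbf{1}^{\perp}$) shows the nonzero eigenvalues of $\Gamma$, namely $|G|$ on $\mathbf{1}\otimes\mathbf{1}$ and $|G|/(n-1)$ on $\mathbf{1}^{\perp}\otimes\mathbf{1}^{\perp}$, occur with total multiplicity $1+(n-1)^2$, while the eigenvalue $0$ occurs with multiplicity $2(n-1)$. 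Thus $\rank\Gamma=1+(n-1)^2$; since the span of the $v_{i,j}$ sits inside $\langle\mathbf{1}\rangle\oplus V$, which has the same dimension, equality holds, and projecting away $\mathbf{1}$ shows $\{v_{i,j}-\tfrac1n\mathbf{1}:i,j\in[n]\}$ spans $V$.

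Finally I would cut the index range down to $[n-1]$. Because $\sum_{j=1}^{n}v_{i,j}=\mathbf{1}=\sum_{i=1}^{n}v_{i,j}$, the shifted vectors satisfy $\sum_{j=1}^{n}(v_{i,j}-\tfrac1n\mathbf{1})=0$ and $\sum_{i=1}^{n}(v_{i,j}-\tfrac1n\mathbf{1})=0$. These two relations express each $v_{i,n}-\tfrac1n\mathbf{1}$, each $v_{n,j}-\tfrac1n\mathbf{1}$, and $v_{n,n}-\tfrac1n\mathbf{1}$ as combinations of the vectors in $B$, so $B$ already spans $V$; with $|B|=(n-1)^2=\dim V$ it is a basis. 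The step I expect to be the real obstacle is the Gram-matrix computation: verifying the four cases of $|S_{i,j}\cap S_{k,l}|$ (this is precisely where $2$-transitivity enters) and then reading off that the nullity is exactly $2(n-1)$. Everything else is bookkeeping with the row/column relations and the orthogonality of the isotypic decomposition.
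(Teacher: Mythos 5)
Your proof is correct, but it is organized differently from the paper's. The paper works only with the $(n-1)^2$ vectors $v_{i,j}$, $i,j\in[n-1]$, forms their Gram matrix $L^\top L=\tfrac{|G|}{n}I+\tfrac{|G|}{n(n-1)}\,A(K_{n-1})\otimes A(K_{n-1})$, and shows via the tensor-product eigenvalues that it is positive definite; this gives linear independence directly, and the dimension count $\dim V=(n-1)^2$ finishes the argument. You instead compute the Gram matrix of the full family of $n^2$ vectors, read off that its rank is exactly $1+(n-1)^2$, conclude that the $v_{i,j}$ span $\langle\mathbf{1}\rangle\oplus V$, and then use the row- and column-sum relations $\sum_j v_{i,j}=\sum_i v_{i,j}=\mathbf{1}$ to discard the indices involving $n$ -- i.e.\ you prove spanning plus a dimension count rather than independence plus a dimension count. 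The core computation is the same in spirit (a matrix of the form $aI+b(J-I)\otimes(J-I)$ diagonalized through the eigenspaces of $J-I$), but your route needs two extra steps the paper avoids (identifying the $2(n-1)$-dimensional kernel and carrying out the reduction to $[n-1]$), while it buys you an explicit identification of that kernel and of the trivial component $\tfrac1n\mathbf{1}$ of each $v_{i,j}$, which the paper asserts more tersely. One small point in your favour: you keep the Gram entries as $|G|/n$ and $|G|/(n(n-1))$ for a general $2$-transitive $G$, whereas the displayed formula in the paper's proof uses the $\alt(n)$-specific values $(n-1)!/2$ and $(n-2)!/2$ even though the lemma is stated for arbitrary $2$-transitive $G$; your level of generality is the right one for the statement as given.
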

\begin{proof}
 Since  the vectors $v_{i,j}-\frac{1}{n}\mathbf{1}$ are orthogonal to the all ones vector, we have $B\subset V$ and since the dimension of $V$ is equal to $|B|=(n-1)^2$, it suffices to show that $B$ is linearly    independent. Note, also, that since $\mathbf{1}$ is not in the span  of $v_{i,j}$ for $i,j\in[n-1]$, it is enough to prove that the set $\{v_{i,j}\,|\, i,j\in[n-1]\}$ is linearly independent.  Define a matrix $L$ to have the vectors $v_{i,j}$, with $i,j\in[n-1]$, as its columns.  Then the rows of $L$ are indexed by the elements of $G$ and the columns are indexed by the ordered pairs $(i,j)$, where $i,j\in [n-1]$; we will also assume that the ordered pairs are listed in lexicographic order.  It is, then, easy to see that
\[
L^\top L=\frac{(n-1)!}{2}\,I_{(n-1)^2}\,+\, \frac{(n-2)!}{2}\left( A(K_{n-1})\otimes A(K_{n-1})\right),
\]
where $I_{(n-1)^2}$ is the identity matrix of size $(n-1)^2$, $A(K_{n-1})$ is the adjacency matrix of the complete graph
$K_{n-1}$ and $\otimes$ is the tensor product (see Section~\ref{spectral_graph_theory}).  The distinct eigenvalues of $A(K_{n-1})$ are $-1$ and $n-2$; thus according to Proposition~\ref{evalues_of_tensor}, the eigenvalues of $A(K_{n-1})\otimes A(K_{n-1})$ are $-(n-2), 1, (n-2)^2$. This implies that the least eigenvalue of $L^\top L$ is
\[
\frac{(n-1)!}{2}-\frac{(n-2)(n-2)!}{2}>0.
\]
This proves that $L^\top L$ is non-singular and hence full rank. This, in turn, proves that $L$ is full rank and that  $\{v_{i,j}\,|\, i,j\in[n-1]\}$ is linearly independent.
\end{proof}

Define the $|G|\times n^2$ matrix $H$ to be the matrix whose columns are the vectors $v_{i,j}$, for all $i,j\in [n]$. Note that since $H$ has constant row-sums, the vector $\mathbf{1}$ is in the column space of $H$.
We denote by $H_{(i,j)}$ the column of $H$ indexed by the pair $(i,j)$, for any $i,j\in [n]$. Define the matrix $\overline{H}$ to be the matrix obtained from $H$ by deleting all the columns $H_{(i,n)}$ and $H_{(n,j)}$ for any $i,j\in[n-1]$. With a similar method as in the proof of \cite[Proposition 10]{MeagherS11}, we prove the following.
\begin{lem}\label{col_H_bar}
The matrices $H$ and $\overline{H}$ have the same column space.
\end{lem}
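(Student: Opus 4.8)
The plan is to prove the two inclusions of column spaces, the trivial one being $\operatorname{col}(\overline H)\subseteq\operatorname{col}(H)$, since every column of $\overline H$ is a column of $H$. Thus the entire content is to show $\operatorname{col}(H)\subseteq\operatorname{col}(\overline H)$, i.e. that each deleted column---namely $v_{i,n}$ and $v_{n,j}$ for $i,j\in[n-1]$---already lies in the span of the retained columns $\{v_{i,j}:i,j\in[n-1]\}\cup\{v_{n,n}\}$.

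First I would record the two families of relations that hold simply because each $\pi\in G$ is a bijection of $[n]$: for every $i$ we have $\sum_{j=1}^n v_{i,j}=\mathbf 1$ (each $\pi$ sends $i$ to a unique image), and for every $j$ we have $\sum_{i=1}^n v_{i,j}=\mathbf 1$ (each $\pi$ has a unique preimage of $j$). Rearranging these yields $v_{i,n}=\mathbf 1-\sum_{j=1}^{n-1}v_{i,j}$ for $i\in[n-1]$ and $v_{n,j}=\mathbf 1-\sum_{i=1}^{n-1}v_{i,j}$ for $j\in[n-1]$, so that every deleted column is expressed through $\mathbf 1$ and retained columns. Hence the whole problem reduces to showing that $\mathbf 1\in\operatorname{col}(\overline H)$.

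The key step, and the place where the surviving corner column $v_{n,n}$ is essential, is the identity $(n-2)\,\mathbf 1=\sum_{i,j\in[n-1]}v_{i,j}-v_{n,n}$, which I would verify by evaluating both sides at an arbitrary $\pi\in G$. The coordinate of $\sum_{i,j\in[n-1]}v_{i,j}$ at $\pi$ counts $\{\,i\in[n-1]:\pi(i)\in[n-1]\,\}$, which equals $n-1$ when $\pi(n)=n$ and $n-2$ otherwise, while $v_{n,n}$ contributes $1$ precisely when $\pi(n)=n$; in both cases the difference is $n-2$, matching the left-hand side. Since the right-hand side is a combination of retained columns, and assuming $n>2$ so that $n-2\neq 0$, dividing by $n-2$ gives $\mathbf 1\in\operatorname{col}(\overline H)$. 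Substituting back into the expressions for $v_{i,n}$ and $v_{n,j}$ places all deleted columns in $\operatorname{col}(\overline H)$, completing $\operatorname{col}(H)\subseteq\operatorname{col}(\overline H)$ and hence the equality.

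I do not anticipate a serious obstacle: the argument is elementary double counting combined with the two bijection identities, mirroring the corresponding step in the proof of Proposition~10 of \cite{MeagherS11}. The only point requiring genuine care is that discarding the ``boundary'' columns $v_{i,n}$ and $v_{n,j}$ leaves $\mathbf 1$ a priori outside the span, so one must exploit the single retained corner column $v_{n,n}$ to recover it---this is exactly what the displayed identity accomplishes. As a sanity check one may also reconcile this with the dimension count $\operatorname{rank}(H)=(n-1)^2+1$: since $\overline H$ has precisely $(n-1)^2+1$ columns, equality of the column spaces is equivalent to $\overline H$ having full column rank, which the argument above establishes.
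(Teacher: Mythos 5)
Your proof is correct. It is worth noting that, after unwinding, your linear combination is literally the one the paper writes down: substituting your identity $(n-2)\mathbf 1=\sum_{i,j\in[n-1]}H_{(i,j)}-H_{(n,n)}$ into $(n-2)H_{(1,n)}=(n-2)\mathbf 1-(n-2)\sum_{j\in[n-1]}H_{(1,j)}$ yields exactly the paper's $v-w$ with $v=\sum_{i\neq 1,n}\sum_{j\neq n}H_{(i,j)}$ and $w=(n-3)\sum_{j\neq n}H_{(1,j)}+H_{(n,n)}$. The difference is organizational, and in your favor: the paper verifies the coordinates of $v$ and $w$ at each $\pi$ directly for the single column $H_{(1,n)}$ and then appeals to $2$-transitivity to cover the remaining deleted columns, whereas you factor everything through the bijection identities $\sum_j v_{i,j}=\sum_i v_{i,j}=\mathbf 1$ plus the single observation that $\mathbf 1\in\operatorname{col}(\overline H)$. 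This handles all the columns $H_{(i,n)}$ and $H_{(n,j)}$ uniformly, uses only that $G$ acts by bijections of $[n]$ (no transitivity hypothesis at all), and isolates the one place where the retained corner column $H_{(n,n)}$ is genuinely needed. The only hypothesis you should state explicitly is $n>2$, which both proofs need in order to divide by $n-2$.
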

\begin{proof}
Obviously, the column space of $\overline{H}$ is a subspace of the column space of $H$; thus we only need to show that the vectors $H_{(i,n)}$ and $H_{(n,j)}$ are in the column space of $\overline{H}$, for any $i,j\in[n-1]$. Since $G$ is $2$-transitive, it suffices to show this for $H_{(1,n)}$. Define the vectors $v$ and $w$ as follows:
\[
v:=\sum_{i\neq 1,n} \sum_{j\neq n} H_{(i,j)}\quad\text{and}\quad w:=(n-3) \sum_{j\neq n} H_{(1,j)}\,+H_{(n,n)}.
\]
The vectors $v$ and $w$ are in the column space of $\overline{H}$. It is easy to see that for any $\pi \in G$, 
\[
v_\pi=
\begin{cases}
n-2,& \quad\text{if}\quad \pi(1)=n;\\
n-2, & \quad \text{if} \quad\pi(n)=n;\\
n-3,& \quad\text{otherwise},
\end{cases}
\quad\quad\quad
w_\pi=
\begin{cases}
0,& \quad\text{if}\quad \pi(1)=n;\\
n-2, &  \quad\text{if}\quad\pi(n)=n;\\
n-3,& \quad\text{otherwise}.
\end{cases}
\]
Thus 
\[
(v-w)_\pi=
\begin{cases}
n-2,& \quad\text{if}\quad \pi(1)=n;\\
0, & \quad \text{if} \quad\pi(n)=n;\\
0,& \quad\text{otherwise},
\end{cases}
\]
which means that $(n-2)H_{(1,n)}=v-w$. This completes the proof.
\end{proof}

If the columns of $\overline{H}$ are arranged so that the first $n$ columns correspond to the pairs $(i,i)$, for $i\in [n]$, and
the rows are arranged so that the first row corresponds to the identity element, and the next $|\dd_G|$ rows correspond
to the elements of $\dd_G$, then $\overline{H}$ has the following block structure:
\[
\begin{bmatrix}
1& 0\\
0 & M \\
B & C\\
\end{bmatrix}.
\]
Note that the rows and columns of $M$ are indexed by the elements of $\dd_G$ and the pairs $(i,j)$ with $i,j\in[n-1]$ and
$i\neq j$, respectively; thus $M$ is a $|\dd_G|\times (n-1)(n-2)$ matrix. Throughout the thesis, we will refer to this matrix simply as ``the matrix $M$ for $G$''.

\begin{prop}\label{condition_d} Let $G\leq \sym(n)$ be $2$-transitive. Then for any $x\in [n]$, there is an element in $G$ which has only $x$ as its fixed point.
\end{prop}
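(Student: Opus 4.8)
The plan is to reduce the statement to the classical fact (essentially Jordan's theorem) that a transitive permutation group of degree at least two contains a fixed-point-free element, and then to establish that fact directly by a Burnside-counting argument based on Theorem~\ref{burnside}.

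First I would reformulate the goal. Finding $g\in G$ whose unique fixed point is $x$ is the same as finding $g$ in the point-stabilizer $G_x$ that fixes no point of $[n]\setminus\{x\}$. So the whole question concerns the action of $K:=G_x$ on $\Omega:=[n]\setminus\{x\}$, a set of size $n-1$. The crucial structural input is that this action is transitive, and this is exactly where $2$-transitivity is used: given $y,z\in\Omega$, apply the $2$-transitivity of $G$ to the ordered pairs $(x,y)$ and $(x,z)$ to obtain $\sigma\in G$ with $\sigma(x)=x$ and $\sigma(y)=z$; such $\sigma$ lies in $K$ and sends $y$ to $z$.

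Next I would count. Since $K$ acts transitively on $\Omega$, the number of orbits is $1$, so Burnside's lemma (Theorem~\ref{burnside}) yields
\[
\sum_{g\in K}|\fix(g)\cap\Omega|=|K|.
\]
The identity alone contributes $|\Omega|=n-1$ to this sum. Suppose, for contradiction, that every element of $K$ fixes at least one point of $\Omega$. Then the $|K|-1$ non-identity elements contribute at least $|K|-1$, so the left-hand side is at least $(n-1)+(|K|-1)$, which forces $(n-1)+(|K|-1)\le|K|$, i.e. $n\le 2$. Hence, as soon as $n\ge 3$, some $g\in K$ satisfies $\fix(g)\cap\Omega=\emptyset$; since $g\in G_x$, this $g$ fixes $x$ and no other point of $[n]$, which is precisely what we want.

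Finally I would dispose of the degenerate case $n=2$: there $G=\sym(2)$ and no element fixes exactly one point, so the statement must be read with $n\ge 3$, which is the only range relevant in what follows. The main obstacle here is conceptual rather than computational: recognizing that $2$-transitivity is exactly the hypothesis making the point-stabilizer transitive on the complement, after which the orbit-counting estimate is immediate. The one point requiring care is the small-degree boundary, since the counting inequality degenerates precisely at $n=2$.
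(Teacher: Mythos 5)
Your argument is correct and is essentially the paper's own proof: both reduce to finding a derangement in the point-stabilizer acting on the complement $[n]\setminus\{x\}$, invoke transitivity of that action (a consequence of $2$-transitivity), and derive a contradiction via Burnside's lemma under the assumption that every element fixes a point. Your write-up is just slightly more explicit about why the stabilizer acts transitively and about the degenerate case $n=2$, which the paper handles only implicitly with the remark that the inequality needs $n>2$.
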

\begin{proof}
Since $G$ is transitive, it suffices to show it for $x=1$. We need to show that the stabilizer of $1$ in $G$, denoted $G_1$, has a derangement in its action on $\{2,\dots,n\}$. Suppose for every element $g\in G_1$, we have  $|\fix(g)|\geq 1$. This means that 
\[
\frac{1}{|G_1|} \sum_{g\in G_1} |\fix(g)|  \geq \frac{(n-1) +|G_1| -1}{|G_1|} = \frac{(n-2 + |G_1|)}{|G_1|},
\]
which is greater than $1$, if $n>2$. Hence by  Burnside's lemma (Theorem~\ref{burnside}), the number of orbits of the action of $G_1$ on $\{2,3,\dots,n\}$ is more than one which is a contradiction since   $G_1$ acts transitively on $\{2,3,\dots,n\}$. Thus there must be a derangement in $G_1$ and we are done.
\end{proof}

Now we are ready to prove the main theorem of this  section. Recall  that for any $2$-transitive group $G$, the standard representation of $G$ is irreducible (Proposition~\ref{standard_is_irr}).
\begin{thm}[Module method]\label{module_method_thm} Let $G\leq \sym(n)$ be 2-transitive and assume the following conditions hold:
\begin{enumerate}[(a)]
\item $G$ has the EKR  property;
\item for any maximum intersecting set $S$ in $G$, the vector $v_S$ lies in the direct sum of the trivial and the standard modules of $G$; and 
\item the matrix $M$ for $G$ has full rank.
\end{enumerate}
Then $G$ has the strict EKR property.
\end{thm}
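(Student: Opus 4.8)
The plan is to show that every maximum intersecting set coincides with one of the canonical sets $S_{i,j}$, by combining the module hypothesis (b) with the rank hypothesis (c). First I would reduce to the case $\id\in S$: given an arbitrary maximum intersecting set $S$, pick $g\in S$ and replace $S$ by $g^{-1}S$. Conjugation preserves the number of fixed points, so $(g^{-1}s_1)(g^{-1}s_2)^{-1}=g^{-1}(s_1s_2^{-1})g$ has a fixed point exactly when $s_1s_2^{-1}$ does; hence $g^{-1}S$ is again intersecting, has the same (maximum) size, and contains $\id$. If I can prove $g^{-1}S$ is a point-stabilizer $G_k=S_{k,k}$, then $S=gG_k=S_{k,g(k)}$ is a coset of a point-stabilizer, as required. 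The crucial consequence of $\id\in S$ is that $S$ contains no derangement: if $\pi\in S\cap\dd_G$, then $\pi=\pi\,\id^{-1}$ would have to have a fixed point, contradicting $\pi\in\dd_G$. Thus $v_S$ vanishes on every coordinate indexed by a derangement.

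Next I would set up $v_S$ in the correct linear-algebraic framework. By hypothesis (b) the vector $v_S$ lies in the direct sum of the trivial and standard modules. Each canonical vector $v_{i,j}$ is itself the characteristic vector of a maximum intersecting set and so also lies in this module; by Lemma~\ref{col_H_bar} the matrices $H$ and $\overline H$ have the same column space, and combining the linear independence from Lemma~\ref{basis_for_standard} with the fact that $\mathbf 1$ lies in this column space shows that the $(n-1)^2+1$ columns of $\overline H$ form a basis of the trivial-plus-standard module. Hence there is a unique $c$ with $v_S=\overline H c$. Splitting $c=(c_0,c_M)$ along the partition of the columns of $\overline H$ into the $n$ diagonal pairs $(i,i)$ and the $(n-1)(n-2)$ off-diagonal pairs, the block form
\[
\overline H=\begin{bmatrix}1&0\\0&M\\B&C\end{bmatrix}
\]
tells us that the entries of $v_S$ indexed by derangements are precisely $M c_M$ (the diagonal columns $v_{G_i}$ vanish on every derangement row, since a derangement fixes no point). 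These entries all vanish, so $M c_M=0$, and the full (column) rank hypothesis (c) forces $c_M=0$.

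With $c_M=0$ we have $v_S=\sum_{i=1}^n c_{ii}\,v_{G_i}$, a combination of the point-stabilizer vectors, and to finish I would read off the coefficients by evaluating at well-chosen group elements. Evaluating at $\id$, which fixes every point, gives $\sum_i c_{ii}=(v_S)_{\id}=1$. For each $k$, Proposition~\ref{condition_d} supplies an element $g_k\in G$ whose only fixed point is $k$; since $g_k\in G_i$ iff $i=k$, evaluating at $g_k$ gives $(v_S)_{g_k}=c_{kk}$, and because $v_S$ is a $0/1$-vector we conclude $c_{kk}\in\{0,1\}$. A sum of nonnegative integers equal to $1$ has exactly one term equal to $1$, so $v_S=v_{G_k}$ for a single $k$; that is, $S=G_k$, and undoing the translation gives the canonical set.

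I expect the main obstacle to be the second paragraph: one must be careful that ``full rank'' is being used as full \emph{column} rank, that the $n$ diagonal columns (including the extra $(n,n)$ column retained in $\overline H$) really do vanish on all derangement rows, and that the coefficient vector $c$ is genuinely unique, so that the single relation $M c_M=0$ can be isolated. Everything after that is a short evaluation argument; the reduction to $\id\in S$ together with the ``no derangements in $S$'' observation is the conceptual glue that makes the derangement block of $\overline H$ the right object to exploit.
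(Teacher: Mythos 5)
Your proposal is correct and follows essentially the same route as the paper's proof: reduce to $\id\in S$, use hypothesis (b) with Lemmas~\ref{basis_for_standard} and \ref{col_H_bar} to write $v_S$ in the column space of $\overline H$, observe that the derangement block forces $Mc_M=0$ so that full rank kills the off-diagonal coefficients, and then use Proposition~\ref{condition_d} to read off that exactly one diagonal coefficient equals $1$. Your coefficient vector $(c_0,c_M)$ is the paper's $(z,w)$, and your evaluation at the elements $g_k$ is exactly the paper's $B=\bigl[\begin{smallmatrix}I_n\\ B'\end{smallmatrix}\bigr]$ step, so there is nothing substantive to add.
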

\begin{proof}  Since $G$ has the EKR property, the maximum size of an intersecting subset of $G$ is $|G|/n$, i.e. the size of a point-stabilizer. Suppose that $S$ is of maximum size. It is enough to show  that $S=S_{i,j}$, for some $i,j\in [n]$.
Without loss of generality, we may assume that $S$ includes the identity element. 
By the assumption (b) and Lemma~\ref{basis_for_standard}, $v_S$ is in the column space of $H$; thus according to  Lemma~\ref{col_H_bar}, $v_S$ belongs to the column space of  $\overline{H}$; therefore
\[
\begin{bmatrix}
1 & 0\\
0 & M \\
B & C\\
\end{bmatrix}\begin{bmatrix} z \\ w  \end{bmatrix}
=v_S
\]
for some vectors $z$ and $w$. Since the identity is in $S$, no elements from $\dd_G$ are in $S$, thus the characteristic vector of
$S$ has the form
\[
v_S= \begin{bmatrix} 1 \\ 0 \\ t  \end{bmatrix}
\]
for some vector $01$-vector $t$. Thus we have $1^\top z=1$, $Mw=0$ and $Bz+Cw=t$. Since $M$ has full rank, $w=0$ and so $Bz=t$. Furthermore, according to Proposition~\ref{condition_d},  for any $x\in [n]$, there is a permutation $g_x\in G$  which has only $x$ as its fixed point; thus by a proper permutation of the rows of $B$, one can write
\[
B=\begin{bmatrix} I_{n} \\[.2cm] B'  \end{bmatrix}\quad 
\text{and}\quad 
Bz=\begin{bmatrix} z \\[.2cm] B'z  \end{bmatrix}. 
\]
Since $Bz$ is equal to the $01$-vector $t$, the vector $z$ must also be a $01$-vector. But, on the other hand, $1^\top z=1$, thus we conclude that exactly one of the entries of $z$ is equal to $1$. This means that $v_S$ is the characteristic vector of the stabilizer of a point.
\end{proof}
We point out that the module condition (b) is the reason we call this method the module method.

\section{EKR for the alternating group}\label{EKR_for_alt}
Recall that it has been proved in \cite{KuW07} that the alternating group $\alt(n)$ has the strict EKR property for $n\geq 5$. 
In this section we will apply the module method to present an entirely new proof for this result.
\begin{thm}\label{main_Alt}
For $n\geq 5$, any intersecting subset of $\alt(n)$ has size at most
\[
\frac{(n-1)!}{2}.
\]
An intersecting subset of $\alt(n)$ achieves this bound if and only if it is a coset of a point-stabilizer.
\end{thm}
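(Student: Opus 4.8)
The plan is to verify the three hypotheses of the module method (Theorem~\ref{module_method_thm}) for $G=\alt(n)$, which is $2$-transitive (indeed $(n-2)$-transitive) for $n\geq 5$, so that its standard module is irreducible by Proposition~\ref{standard_is_irr} and coincides with the restriction of $S^{[n-1,1]}$ by Theorem~\ref{reps_of_alt}. The conclusion of Theorem~\ref{main_Alt} is then immediate once conditions (a), (b) and (c) are checked. My strategy is to obtain conditions (a) and (b) simultaneously from a single spectral computation, and to treat (c) as a separate rank argument.

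First I would compute the spectrum of the derangement graph $\Gamma_{\alt(n)}$ using the character formula of Corollary~\ref{evals_of_der_graph} (a specialization of Theorem~\ref{Diaconis}), summing only over the conjugacy classes of \emph{even} derangements. For the standard module $\lambda=[n-1,1]$ one has $\chi^{\lambda}(\sigma)=\fix(\sigma)-1=-1$ on every derangement, so its eigenvalue is $\tau_0=-|\dd_{\alt(n)}|/(n-1)$. The key claim is that $\tau_0$ is the least eigenvalue of $\Gamma_{\alt(n)}$ and that it is attained by the standard module alone. Granting this, the ratio bound (Theorem~\ref{ratio2}) with $k=|\dd_{\alt(n)}|$ and $N=n!/2$ gives $\alpha(\Gamma_{\alt(n)})\leq N/(1-k/\tau_0)=(n-1)!/2$, which is condition (a); and its equality clause forces $v_S-\tfrac{|S|}{N}\mathbf 1$ to lie in the $\tau_0$-eigenspace, that is, in the standard module, so that $v_S$ lies in the direct sum of the trivial and standard modules, which is condition (b).

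The hard part will be establishing that $\tau_0$ is the unique minimizing eigenvalue. Here I would translate the characters of $\alt(n)$ back to those of $\sym(n)$ via Theorem~\ref{char_values_of_alt}, reducing the estimate to bounds on $\chi^{\mu}(\sigma)/\chi^{\mu}(\id)$ over partitions $\mu$ and even-derangement classes. The delicate case is $n$ even, where the class of a product of two disjoint $n/2$-cycles can produce large negative character values; Corollary~\ref{cor:appofMNtwo} bounds these by $\pm 2$, and Lemma~\ref{char_of_two_layer_hook} shows that the extremal value $-2$ occurs only for two-layer hooks and symmetric near-hooks. Since Lemmas~\ref{dimensions_of_near_hooks} and~\ref{dimensions_of_2_layer_hooks} guarantee that such partitions have dimension exceeding $2n-2$, their normalized contribution is too small to beat $\tau_0$; the remaining partitions are controlled by Corollary~\ref{cor:appofMN} together with direct dimension bounds, and small values of $n$ are handled by inspection.

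Finally, for condition (c) I would form $M^{\top}M$, whose rows and columns are indexed by the ordered pairs $(i,j)$ with $i,j\in[n-1]$ and $i\neq j$---exactly the vertices of the pairs graph $X_n$. Counting the even derangements that realize two prescribed pairs expresses $M^{\top}M$ as a linear combination of $I$, of $A(X_n)$, and of a rank-controlled correction; the bound $\tau(X_n)\geq-(n-3)$ from Lemma~\ref{least_eval_of_X_n} (itself obtained from the clique decomposition in Proposition~\ref{w}) then shows this matrix is positive definite, hence nonsingular, so that $M$ has full column rank. With (a), (b) and (c) in hand, Theorem~\ref{module_method_thm} yields the strict EKR property and completes the proof. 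The two serious obstacles are the spectral uniqueness claim and the positivity of $M^{\top}M$; the two-layer-hook estimates and the clique bound on $X_n$ are precisely the tools assembled earlier for these two purposes.
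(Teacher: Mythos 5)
Your outline for condition (c) is essentially sound, and your identification of the two-layer-hook lemmas and the pairs graph $X_n$ as the relevant tools is correct, but the way you propose to get conditions (a) and (b) contains a genuine gap. Your ``key claim'' --- that the eigenvalue $\tau_0=-|\dd_{\alt(n)}|/(n-1)$ coming from the standard module is the least eigenvalue of $\Gamma_{\alt(n)}$ and is attained by the standard module alone --- is precisely Conjecture~\ref{least_eval_of_alt}, which the paper states explicitly it cannot prove (``Solving Conjecture~\ref{least_eval_of_alt} does not seem easy even using the fact that the standard representation of $\sym(n)$ gives the least eigenvalue of $\Gamma_{\sym(n)}$''). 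The obstruction is that the eigenvalue attached to a character $\chi$ of $\Gamma_{\alt(n)}$ is a sum $\sum_{c}|c|\chi(\sigma_c)/\chi(\id)$ over \emph{all} even derangement classes $c$, whereas the tools you invoke --- Corollary~\ref{cor:appofMN}, Corollary~\ref{cor:appofMNtwo}, Lemma~\ref{char_of_two_layer_hook} and the dimension bounds of Lemmas~\ref{dimensions_of_near_hooks} and~\ref{dimensions_of_2_layer_hooks} --- control $\chi^\lambda(\sigma)$ only on the two specific classes of $n$-cycles and of products of two disjoint $n/2$-cycles. They say nothing about classes such as $[n-3,3]$ or $[3,3,\dots,3]$, so they cannot bound the full sum; a uniform bound of the kind you would need is essentially Conjecture~\ref{character_conj}, which is also left open.

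The paper's actual route avoids the spectrum of $\Gamma_{\alt(n)}$ entirely. It constructs a maximum clique $C$ of size $n$ (the set $C^o$ of $n-1$ pairwise non-intersecting $n$-cycles plus the identity when $n$ is odd, and the set $C^e$ built from products of two disjoint $n/2$-cycles when $n$ is even, both obtained from Hamiltonian decompositions of the complete digraph $K_n^\ast$). The clique--coclique bound (Theorem~\ref{clique_coclique_bound}) then gives condition (a) directly. For condition (b) it shows, in Lemmas~\ref{n_odd} and~\ref{n_even}, that $\chi(C)\neq 0$ for every irreducible character $\chi$ of $\alt(n)$ other than the trivial and standard ones --- and \emph{this} is where the Murnaghan--Nakayama corollaries and the two-layer-hook dimension estimates are actually used, since $\chi(C)$ only involves character values on the single class containing $C\setminus\{\id\}$. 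Corollary~\ref{at_most_one_non-zero} then forces $E_\chi v_S=0$ for every maximum independent set $S$, which is condition (b). You should redirect your character estimates to this purpose.

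A smaller point on condition (c): if you form $M^\top M$ using \emph{all} even derangements, the entry indexed by $\bigl((i,j),(j,i)\bigr)$ counts even derangements swapping $i$ and $j$, which is nonzero in general, and the pairs $(i,j),(j,i)$ are not adjacent in $X_n$; so $M^\top M$ is not of the form $aI+bA(X_n)$ and your ``rank-controlled correction'' would itself need a nontrivial eigenvalue estimate. The paper sidesteps this by passing to the submatrix $M_1$ whose rows are indexed by a single class ($n$-cycles, or products of two $n/2$-cycles) on which no element swaps two points, so that $M_1^\top M_1=(n-2)!\,I+(n-3)!\,A(X_n)$ exactly, and Lemma~\ref{least_eval_of_X_n} gives positive definiteness; full column rank of $M_1$ implies full rank of $M$.
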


For any group  $G$,  since $\dd_{G}$ is a union of conjugacy classes of $G$, the graph  $\Gamma_{G}$ is a union of graphs in the conjugacy class scheme of $G$ (see \cite[Example 2.1 (2)]{MR882540} or \cite[Example 2.4(2)]{delsarte1973algebraic}). Thus the clique-coclique bound (Theorem~\ref{clique_coclique_bound}) applies to $\Gamma_{G}$.  Let $c_1,\dots,c_k$ be the derangement conjugacy classes of $G$. Then the matrices $A_1,\dots, A_k$ in the conjugacy class scheme of $G$ are, in fact, $|G|\times |G|$ matrices such that, for any $1\leq i\leq k$, the entry $(g,h)$ of $A_i$ is $1$ if $hg^{-1}\in c_i$, and $0$ otherwise. Recall from Section~\ref{bounds_on_indy} that these $A_i$ are simultaneously diagonalizable and, hence,  have common eigenspaces. The idempotents of this scheme are $E_\chi$, where $\chi$ runs through the set of all irreducible characters of $G$; the entries of $E_\chi$ are given by
\begin{equation}\label{idempotent}
(E_\chi)_{\pi,\sigma}=\frac{\chi(1)}{|G|}\chi(\pi^{-1}\sigma).
\end{equation}
To show that $E_{\chi}$ are, indeed, the projections to the common eigenspaces, it can be shown that, for any $1\leq i\leq k$, 
\[
A_i E_{\chi} = \frac{|c_i|\chi(\sigma_i)}{\chi(1)} E_\chi, \quad \sigma_i\in c_i.
\]
This also shows that the eigenvalue of $A_i$, corresponding to the eigenspace arising from $\chi$, is $\frac{|c_i|\chi(\sigma_i)}{\chi(1)}$, for any $1\leq i\leq k$. (See~\cite{MR546860} or \cite[Sections 2.2 and 2.7]{MR882540} for a proof of this.) The vector space generated by the columns of $E_\chi$ is called the \textsl{module corresponding to $\chi$}\index{module corresponding to a character} or simply the \textsl{$\chi$-module} of $\Gamma_G$. For any character $\chi$ of $G$ and any subset $X$ of $G$ define
\[
\chi(X)=\sum_{x\in X}\chi(x).
\]
Using Corollary~\ref{clique_vs_coclique} and Equation (\ref{idempotent}) one observes the following.
\begin{cor}\label{at_most_one_non-zero}
 Assume the clique-coclique bound holds with equality for the graph $\Gamma_G$ and let $\chi$ be an irreducible character of $G$ that is  not the trivial character. If there is a clique $C$ of maximum size in $\Gamma_G$ with $\chi(C)\neq 0$, then
\[
E_\chi\,v_S = 0
\]
for any maximum independent set $S$ of $\Gamma_G$.\qed
\end{cor}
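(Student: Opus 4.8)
The plan is to connect the hypothesis $\chi(C)\neq 0$ to the non-vanishing of the vector $E_\chi v_C$, and then to invoke Corollary~\ref{clique_vs_coclique} to transfer that information to $v_S$. Throughout I would use that $\Gamma_G$ is a union of graphs in the conjugacy class scheme of $G$, so that Corollary~\ref{clique_vs_coclique} genuinely applies to it, and that a non-trivial irreducible character $\chi$ indexes one of the projections $E_j$ with $j>0$ (the trivial character being the one indexing the all-ones eigenspace).

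Rather than analyse the whole vector $E_\chi v_C$, I would extract a single well-chosen coordinate, namely the one indexed by the identity element. Using the explicit formula~(\ref{idempotent}) for the entries of the idempotent $E_\chi$, this coordinate is
\[
(E_\chi v_C)_{\id}=\sum_{\sigma\in C}(E_\chi)_{\id,\sigma}=\frac{\chi(1)}{|G|}\sum_{\sigma\in C}\chi(\id^{-1}\sigma)=\frac{\chi(1)}{|G|}\,\chi(C).
\]
Since $\chi(1)=\chi(\id)$ is the dimension of the corresponding representation, hence a positive integer, and since $\chi(C)\neq 0$ by hypothesis, this entry is non-zero; therefore $E_\chi v_C\neq 0$. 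The point worth stressing is that this computation requires nothing about the internal structure of the clique $C$: it need not be a subgroup, nor even contain the identity. Evaluating at the identity coordinate is precisely what turns the sum $\sum_{\sigma}\chi(\id^{-1}\sigma)$ into the character sum $\chi(C)$, which is why the statement may be phrased for an arbitrary maximum clique.

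Finally I would apply Corollary~\ref{clique_vs_coclique}. Because the clique-coclique bound holds with equality for $\Gamma_G$, and $C$ is a maximum clique while $S$ is a maximum independent set, that corollary asserts that for every $j>0$ at most one of $E_j v_C$ and $E_j v_S$ is non-zero. Taking $j$ to be the index of the non-trivial character $\chi$ and combining this with $E_\chi v_C\neq 0$ forces $E_\chi v_S=0$, which is exactly the assertion.

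The argument is short, so I do not expect a serious obstacle; the only place needing care is the bookkeeping, i.e. confirming that the idempotent $E_\chi$ appearing here is indeed the scheme projection $E_j$ with $j>0$ to which Corollary~\ref{clique_vs_coclique} refers, and that the identity coordinate of $E_\chi v_C$ is read off correctly from~(\ref{idempotent}). Should one prefer to stay with the scalar $v_C^\top E_\chi v_C$ that literally appears in Theorem~\ref{clique_coclique_bound}, one can instead note that $E_\chi$ is a Hermitian idempotent, so that $v_C^\top E_\chi v_C=\|E_\chi v_C\|^2$ vanishes if and only if $E_\chi v_C=0$; but the single-entry evaluation above is cleaner and sidesteps computing the full quadratic form $\sum_{\pi,\sigma\in C}\chi(\pi^{-1}\sigma)$.
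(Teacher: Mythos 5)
Your proof is correct and is essentially the paper's own argument: the paper derives this corollary directly from Equation~(\ref{idempotent}) and Corollary~\ref{clique_vs_coclique}, exactly as you do by reading off the identity coordinate of $E_\chi v_C$ to see it equals $\frac{\chi(1)}{|G|}\chi(C)\neq 0$ and then invoking the "at most one non-zero" dichotomy. Nothing further is needed.
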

In other words, provided that the clique-coclique bound holds with equality, for any module of $\Gamma_G$ (other than the trivial  module) the projection of at most one of the vectors $v_C$ and $v_S$ will be non-zero, where $S$ is any maximum independent set and $C$ is any maximum clique.  In this section we let $G_n=\alt(n)$ for simplicity. In what follows we will show that the conditions of Theorem~\ref{module_method_thm} hold. We will find cliques $C$ such that $E_{\chi} v_C\neq 0$, for all irreducible characters $\chi$ of $G_n$ except the trivial and the standard characters to prove that condition (b) of the module method holds.

\subsection{The standard module}\label{standard_module}

Recall from Section~\ref{rep_sym} that the representation of $\sym(n)$ corresponding to $[n]$ is the trivial representation, the character of which  is equal to $1$ for every permutation. Also if $\lambda= [n-1,1]$, then the irreducible
representation $S^\lambda$ of  $\sym(n)$ is  the standard representation. For $n\geq 5$, $\lambda=[n-1,1]$ is not
symmetric; hence according to Theorem~\ref{char_values_of_alt}, the restriction $V$ of $S^\lambda$ to $G_n$ is also irreducible. We also deduce from Theorem~\ref{char_values_of_alt} that this representation is the standard representation of $G_n$ and $V$ is the standard module of $G_n$. Recall from Section~\ref{rep_basic} that the value of the character of the standard representation on a permutation $\sigma$ is the number of elements of $[n]$ fixed by $\sigma$ minus $1$ and that the dimension of this representation is $n-1$ (see Lemma~\ref{dim_of_standard_char}).

In this subsection we prove that the conditions (a) and (b) of Theorem~\ref{module_method_thm} hold for the alternating group. 
Ku and Wong conjectured \cite{KuW07} that the least eigenvalue of the derangement graph $\Gamma_{\sym(n)}$ of the symmetric group is given by the standard representation (see Theorem~\ref{Diaconis}). This was proved by Renteln in \cite{Renteln2007}. Based on our observations of several examples, we feel that there is a similar situation for the case of the alternating group. In other words, we propose the following.
\begin{conj}\label{least_eval_of_alt} The least eigenvalue of $\Gamma_{G_n}$ is given only by the standard representation of $G_n$.
\end{conj}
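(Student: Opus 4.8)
The plan is to reduce the spectrum of $\Gamma_{\alt(n)}$ to that of $\Gamma_{\sym(n)}$, where Renteln's theorem \cite{Renteln2007} is available, by establishing a clean ``averaging'' identity between the two sets of eigenvalues. Write $D_n=|\dd_{\sym(n)}|$ and, for a partition $\mu\vdash n$, let $\eta^{\sym}_\mu=\frac{1}{\chi^\mu(\id)}\sum_{\sigma\in\dd_{\sym(n)}}\chi^\mu(\sigma)$ be the eigenvalue of $\Gamma_{\sym(n)}$ afforded by $S^\mu$, as in Corollary~\ref{evals_of_der_graph}. First I would show that, for \emph{every} $\lambda\vdash n$, the eigenvalue $\eta^{\alt}_\lambda$ of $\Gamma_{\alt(n)}$ afforded by the character(s) attached to $\lambda$ via Theorem~\ref{reps_of_alt} is
\[
\eta^{\alt}_\lambda=\tfrac12\bigl(\eta^{\sym}_\lambda+\eta^{\sym}_{\hat\lambda}\bigr).
\]
For non-symmetric $\lambda$, Theorem~\ref{char_values_of_alt}(a) gives $\chi_\lambda=\chi^\lambda|_{\alt(n)}$ and $\chi_\lambda(\id)=\chi^\lambda(\id)$. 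Using $\chi^{\hat\lambda}=\sgn\cdot\chi^\lambda$ (so $\chi^\lambda$ and $\chi^{\hat\lambda}$ coincide on the even derangements $\dd_{\alt(n)}$) together with the expansion of the even-part indicator as $\tfrac12(1+\sgn)$, one gets $\sum_{\sigma\in\dd_{\alt(n)}}\chi^\lambda(\sigma)=\tfrac12\sum_{\sigma\in\dd_{\sym(n)}}\bigl(\chi^\lambda+\chi^{\hat\lambda}\bigr)(\sigma)$; dividing by $\chi^\lambda(\id)=\chi^{\hat\lambda}(\id)$ (Corollary~\ref{dim_oftranspos}) yields the identity.

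The symmetric case $\lambda=\hat\lambda$ needs a short check that the splitting $\chi^\lambda|_{\alt(n)}=\chi'_\lambda+\chi''_\lambda$ does not disturb the identity. Using Theorem~\ref{char_values_of_alt}(b): on every non-split class $\chi'_\lambda=\tfrac12\chi^\lambda$; on every split class \emph{not} corresponding to $\lambda$ both characters equal $\tfrac12\chi^\lambda(\bar\sigma)$; and on the unique split class corresponding to $\lambda$ the irrational values satisfy $x+y=(-1)^m=\chi^\lambda(\bar\sigma)$. Since the two halves $c',c''$ of that class have equal size, the would-be irrational correction cancels upon summing, so $\sum_{\dd_{\alt(n)}}\chi'_\lambda=\tfrac12\sum_{\dd_{\alt(n)}}\chi^\lambda=\sum_{\dd_{\alt(n)}}\chi''_\lambda$. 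As $\chi'_\lambda(\id)=\tfrac12\chi^\lambda(\id)$ and $\lambda=\hat\lambda$, the displayed identity holds verbatim here too.

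Next I would pin down the standard value and restate the conjecture. For $\lambda=[n-1,1]$ one has $\chi^\lambda(\sigma)=|\fix(\sigma)|-1=-1$ on every derangement, so $\eta^{\sym}_{[n-1,1]}=-D_n/(n-1)$; and $\chi^{[2,1^{n-2}]}(\sigma)=-\sgn(\sigma)$ on derangements, while $\sum_{\sigma\in\dd_{\sym(n)}}\sgn(\sigma)=\det(J_n-I_n)=(-1)^{n-1}(n-1)$, giving $\eta^{\sym}_{[2,1^{n-2}]}=(-1)^n$. Hence the standard eigenvalue of $\Gamma_{\alt(n)}$ is $\tfrac12\bigl(-D_n/(n-1)+(-1)^n\bigr)$. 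Because $[n-1,1]$ and its transpose $[2,1^{n-2}]$ afford the \emph{same} (standard) character of $\alt(n)$, and the trivial character arises from $[n]$ and $[1^n]$, the conjecture becomes: the map $\lambda\mapsto\eta^{\sym}_\lambda+\eta^{\sym}_{\hat\lambda}$ attains its minimum over all non-trivial $\lambda$ uniquely on $\{[n-1,1],[2,1^{n-2}]\}$.

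This last step is where Renteln's theorem enters, and also where the main obstacle lies. Renteln's result gives $\eta^{\sym}_\mu\ge-D_n/(n-1)$ with equality only at $\mu=[n-1,1]$, so whenever $\lambda\notin\{[n-1,1],[2,1^{n-2}]\}$ both $\eta^{\sym}_\lambda$ and $\eta^{\sym}_{\hat\lambda}$ are \emph{strictly} larger than $-D_n/(n-1)$. Unfortunately this alone only gives $\eta^{\sym}_\lambda+\eta^{\sym}_{\hat\lambda}>-2D_n/(n-1)$, which is weaker than the required $>-D_n/(n-1)+(-1)^n$. What is genuinely needed is a quantitative bound on the \emph{second} smallest eigenvalue of $\Gamma_{\sym(n)}$, namely that for $\mu\notin\{[n],[n-1,1]\}$ one has $|\eta^{\sym}_\mu|=O(D_n/n^2)$, so every non-extreme eigenvalue is in absolute value below $\tfrac12 D_n/(n-1)$ once $n$ is large. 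Granting such an estimate (extractable from the Renteln/Ku--Wales analysis of the derangement-graph spectrum), each average $\eta^{\alt}_\lambda$ is then $O(D_n/n^2)$ and therefore strictly exceeds the very negative standard value; the finitely many small cases $5\le n\le n_0$ are settled directly from the character tables of $\alt(n)$. I expect this quantitative second-eigenvalue estimate to be the crux of the whole argument, the remainder reducing to bookkeeping through the averaging identity.
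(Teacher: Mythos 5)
First, be aware that the paper offers \emph{no} proof of this statement: it is posed as Conjecture~\ref{least_eval_of_alt}, and the text immediately afterwards says that solving it ``does not seem easy even using the fact that the standard representation of $\sym(n)$ gives the least eigenvalue of $\Gamma_{\sym(n)}$''. The thesis then deliberately sidesteps the conjecture, obtaining conditions (a) and (b) of the module method for $\alt(n)$ via maximum cliques and the clique--coclique bound (Lemmas~\ref{n_odd} and~\ref{n_even}, Proposition~\ref{max_indy_is_in_standard_module}) rather than via the ratio bound. So there is no proof in the paper to measure yours against; the question is only whether your argument closes the conjecture, and it does not.

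The good part of your proposal is sound: the averaging identity $\eta^{\alt}_\lambda=\tfrac12\bigl(\eta^{\sym}_\lambda+\eta^{\sym}_{\hat\lambda}\bigr)$ is correct for all $\lambda$, including the symmetric case (the irrational contributions on the split class corresponding to $\lambda$ do cancel, since $|c'|=|c''|$ and $x+y=(-1)^m=\chi^\lambda(\bar\sigma)$, and that class may not even consist of derangements), and your value $\tfrac12\bigl(-D_n/(n-1)+(-1)^n\bigr)$ for the standard eigenvalue agrees with the paper's $-|\dd_{\alt(n)}|/(n-1)$ via Lemma~\ref{even_odd_difference}. This is a genuinely useful reformulation. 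But it is not a proof, and you say so yourself: everything hinges on the quantitative claim that $|\eta^{\sym}_\mu|=O(D_n/n^2)$ for all $\mu\notin\{[n],[n-1,1]\}$ (more precisely, on the explicit inequality $\eta^{\sym}_\lambda+\eta^{\sym}_{\hat\lambda}>-D_n/(n-1)+(-1)^n$ for every non-trivial, non-standard $\lambda$), which you neither prove nor pin down with a citable statement; ``extractable from the Renteln/Ku--Wales analysis'' is an expectation, not an argument, and Renteln's theorem alone yields, as you correctly note, only the insufficient bound $-2D_n/(n-1)$. You also leave the threshold $n_0$ and the ``finitely many small cases'' unexamined. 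As it stands, the proposal reduces one open spectral statement to a different, quantitatively sharper, unproven one; the conjecture remains open after your argument, exactly as the paper says it is.
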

Note that, using Theorem~\ref{Diaconis} and Theorem~\ref{char_values_of_alt},  the eigenvalue of $\Gamma_{G_n}$ given by the standard representation of $G_n$ is 
\[
\eta_{[n-1,1]}=\frac{1}{\chi_{[n-1,1]}(\id)}\sum_{\sigma\in \dd_{G_n}}\chi_{[n-1,1]}(\sigma)=\frac{-|\dd_{G_n}|}{n-1}.
\]
Now if Conjecture~\ref{least_eval_of_alt} is true, then according to the first part of Theorem~\ref{ratio2}, the size of any independent set $S$ of $\Gamma_{G_n}$ is bounded above by
\[
\frac{|G_n|}{1-\frac{|\dd_{G_n}|}{\eta_{[n-1,1]}}}=\frac{n!/2}{n}=\frac{(n-1)!}{2}.
\]
Since the size of any point-stabilizer in $G_n$ is $(n-1)!/2$, this means that  $G_n$ has the EKR property; i.e. condition(a) of Theorem~\ref{module_method_thm} holds for $G_n$. Furthermore, the second part of  Theorem~\ref{ratio2} yields that for any maximum independent set $S$ of vertices of $\Gamma_{G_n}$, the characteristic vector $v_S$ of $S$ lies in the direct sum of the trivial and the standard modules of $G_n$; i.e. condition (b) of Theorem~\ref{module_method_thm} holds for $G_n$.

Solving Conjecture~\ref{least_eval_of_alt} does not  seem  easy   even using the fact that the standard representation of $\sym(n)$ gives the least eigenvalue of $\Gamma_{\sym(n)}$. Thus the method we use here is similar to the work done in \cite{Karen}. In other words, we will use the clique-coclique bound (Theorem~\ref{clique_coclique_bound}). To do this, we will show that
the clique-coclique bound for $\Gamma_{G_n}$ holds with equality by defining sufficiently large cliques $C$. Moreover, for each $\lambda$, which is neither $[n]$ nor $[n-1,1]$, we will observe that  $E_{\lambda}v_C \neq 0$. From this, using Corollary~\ref{clique_vs_coclique}, we conclude that $E_{\lambda}v_S = 0$ for any maximum independent set, unless $E_{\lambda}$ is the projection to either the trivial module or the standard module. We will consider two cases, first when $n$ is odd and second when it is even.


Assume $n\geq 5$ is odd. In \cite[Theorem 1.1]{AlspachGSV03} it has been proved that there is a decomposition of the arcs of the complete digraph $K_n^\ast$ on $n$ vertices to $n-1$ directed cycles of length $n$. Each of these cycles corresponds to an $n$-cycle in $G_n$. Since no two such decompositions share an arc in $K_n^\ast$, no two of the corresponding permutations intersect.  Let $C^o$ be the set of these permutations together with the identity element of $G_n$. Then $C^o$ is a clique in $\Gamma_{G_n}$ of size $n$. We can therefore prove the following.
\begin{lem}\label{EKR_alt_odd} If $n\geq 5$ is odd, then $G_n$ has the EKR property.
\end{lem}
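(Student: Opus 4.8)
The plan is to derive the bound directly from the clique-coclique bound (Theorem~\ref{clique_coclique_bound}), using the clique $C^o$ just constructed. Since $\dd_{G_n}$ is a union of conjugacy classes of $G_n$, the derangement graph $\Gamma_{G_n}$ is a union of graphs in the conjugacy class scheme of $G_n$, so Theorem~\ref{clique_coclique_bound} applies and gives $|C||S| \leq |G_n|$ for every clique $C$ and every independent set $S$ of $\Gamma_{G_n}$.

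First I would verify that $C^o$ is genuinely a clique of size $n$ in $\Gamma_{G_n}$. Since $n$ is odd, each $n$-cycle is an even permutation and hence lies in $G_n = \alt(n)$; together with the identity this gives $|C^o| = n$. Two of the $n$-cycles are adjacent precisely when their quotient is a derangement, which can only fail if they agree at some point, i.e.\ only if the two corresponding directed cycles share an arc of $K_n^\ast$; the arc-disjointness of the decomposition from \cite{AlspachGSV03} rules this out. The identity is adjacent to each $n$-cycle $\gamma$ because $\id\,\gamma^{-1} = \gamma^{-1}$ is again an $n$-cycle and so a derangement. Thus $C^o$ is indeed a clique of size $n$.

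Plugging $C = C^o$ into the clique-coclique bound then yields, for any independent set $S$ of $\Gamma_{G_n}$ (equivalently, any intersecting subset of $G_n$),
\[
|S| \leq \frac{|G_n|}{|C^o|} = \frac{n!/2}{n} = \frac{(n-1)!}{2}.
\]
Since a point-stabilizer in $G_n$ is an intersecting set of size exactly $(n-1)!/2$, this bound is attained; hence $G_n$ has the EKR property. As a by-product, the clique-coclique bound holds with equality for $\Gamma_{G_n}$, which is precisely the input required for the subsequent module-method argument via Corollary~\ref{clique_vs_coclique}.

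The main obstacle lies not in the final inequality, which is a one-line computation, but in the combinatorial construction already supplied before the statement: the existence of an arc-disjoint decomposition of $K_n^\ast$ into $n$-cycles \cite{AlspachGSV03} and the translation of arc-disjointness into the adjacency (non-intersecting) condition in $\Gamma_{G_n}$. The restriction to odd $n$ enters exactly at the point where we need the $n$-cycles to be even permutations, and hence members of $\alt(n)$. Given these ingredients, the lemma follows at once.
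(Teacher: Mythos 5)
Your proof is correct and follows exactly the paper's argument: apply the clique-coclique bound to the clique $C^o$ of size $n$ built from the arc-disjoint decomposition of $K_n^\ast$, obtaining $|S|\leq |G_n|/n = (n-1)!/2$, which is the size of a point-stabilizer. The extra verification that $C^o$ really is a clique (arc-disjointness, evenness of $n$-cycles for odd $n$) is material the paper places in the paragraph preceding the lemma, so nothing is genuinely different.
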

\begin{proof} According to the clique-coclique bound, for any independent set $S$ in  $\Gamma_{G_n}$, we have
\[
|S|\leq \frac{|G_n|}{|C^o|}=\frac{n!/2}{n}=\frac{(n-1)!}{2},
\]
which is the size of any point-stabilizer in $G_n$.
\end{proof}
The set of all $n$-cycles from $\sym(n)$ forms a pair of split conjugacy classes $c'_0$ and $c''_0$ in $G_n$. Thus all the non-identity elements of $C^o$ lie in $c'_0\cup c_0''$. Theorem~\ref{char_values_of_alt} gives all the irreducible representations of $G_n$. We will use the notation of this theorem; in particular, the reader should note that the superscript refers to characters of $\sym(n)$ and the subscript to those of $\alt(n)$. In the proof of the next lemma we will use the ``double factorial'' notation; we define  $a!!=a(a-2)(a-4)\cdots 2$ if $a$ is even positive integer and $a!!=a(a-2)(a-4)\cdots 1$, if $a$ is odd.

\begin{lem}\label{n_odd} Let $n\geq 5$ be odd. Then for any  irreducible character $\chi$ of $G_n$, other than the standard character, we have $\chi(C^o)\neq 0$.
\end{lem}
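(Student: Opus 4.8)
The plan is to evaluate $\chi(C^o)=\chi(\id)+\sum_{\sigma}\chi(\sigma)$ directly, the sum running over the $n-1$ distinct $n$-cycles comprising $C^o\setminus\{\id\}$. Writing $a$ (resp. $b$) for the number of these cycles lying in the split class $c'_0$ (resp. $c''_0$), so that $a+b=n-1$, I would express every character value through Theorem~\ref{char_values_of_alt}, and use Corollary~\ref{cor:appofMN} to evaluate $\chi^\lambda$ on an $n$-cycle of $\sym(n)$: this is $(-1)^{n-\lambda_1}$ when $\lambda$ is a hook $[\lambda_1,1^{n-\lambda_1}]$ and $0$ otherwise. Since $n$ is odd, $(-1)^{n-\lambda_1}=(-1)^{\lambda_1-1}$, and by the hook-length formula (Theorem~\ref{hl_formula}) a hook has $\dim S^{[\lambda_1,1^{n-\lambda_1}]}=\binom{n-1}{\lambda_1-1}$.

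First I would dispose of the non-symmetric $\lambda$. Here $\chi_\lambda(\id)=\dim S^\lambda$ and $\chi_\lambda$ is constant on $c'_0\cup c''_0$ with value $\chi^\lambda(\bar\sigma)$, so $\chi(C^o)=\dim S^\lambda+(n-1)\chi^\lambda(\bar\sigma)$. If $\lambda$ is not a hook this is $\dim S^\lambda>0$; if $\lambda$ is a hook, then with $k=\lambda_1-1$ it equals $\binom{n-1}{k}+(-1)^k(n-1)$. This vanishes exactly when $k\in\{1,n-2\}$, i.e. for $\lambda\in\{[2,1^{n-2}],[n-1,1]\}$, which are precisely the two $\sym(n)$-shapes restricting to the excluded standard character of $G_n$. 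For every other $k$ it is nonzero: the two terms share a sign when $k$ is even, while for odd $k$ with $2\le k\le n-3$ one has $\binom{n-1}{k}\ge\binom{n-1}{2}>n-1$ for $n\ge5$ by monotonicity of the binomial coefficients.

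Next I would treat the symmetric $\lambda$, where the character splits as $\chi'_\lambda,\chi''_\lambda$ with $\chi'_\lambda(\id)=\chi''_\lambda(\id)=\tfrac12\dim S^\lambda$. Because the correspondence of Theorem~\ref{char_values_of_alt} between split classes and symmetric partitions is a bijection, the $n$-cycle class corresponds to the symmetric hook $[(n+1)/2,1^{(n-1)/2}]$ and to no other symmetric shape. Thus for any symmetric $\lambda$ that is not this hook, $\lambda$ is not a hook at all, both characters vanish on all $n$-cycles, and $\chi(C^o)=\tfrac12\dim S^\lambda\neq0$.

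The hard part is the remaining case, the symmetric hook $\lambda=[(n+1)/2,1^{(n-1)/2}]$: here the $n$-cycle class corresponds to $\lambda$, the values on $c'_0,c''_0$ are the irrational (or, when $n\equiv 3\pmod 4$, complex) numbers $x,y=\tfrac12\bigl[(-1)^m\pm\sqrt{(-1)^m n}\bigr]$ with $m=(n-1)/2$, and I do not control the split $a,b$. I would finish by a dominance estimate driven by $\dim S^\lambda=\binom{n-1}{(n-1)/2}$. When $n\equiv1\pmod4$ one has $x>0>y$ with $y=\tfrac12(1-\sqrt n)$, so $ax+by\ge (n-1)y$ and hence $\chi'_\lambda(C^o)\ge\tfrac12\bigl[\binom{n-1}{(n-1)/2}+(n-1)(1-\sqrt n)\bigr]$, which is positive once $\binom{n-1}{(n-1)/2}>(n-1)(\sqrt n-1)$; this holds for all odd $n\ge5$ and is checked by hand at $n=5$, and the identical bound (interchanging $a,b$) handles $\chi''_\lambda(C^o)$. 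When $n\equiv3\pmod4$, $x$ and $y$ are complex conjugates and the real part of $\chi'_\lambda(C^o)$ equals $\tfrac12\bigl[\binom{n-1}{(n-1)/2}-(n-1)\bigr]>0$, so the value again cannot vanish. In every case $\chi(C^o)\neq0$. The only genuine computation I expect to need is verifying that the central binomial coefficient dominates the linear error term at the smallest admissible $n$, which is where the estimate is tightest.
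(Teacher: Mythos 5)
Your proposal is correct and follows essentially the same route as the paper's proof: the same three-way case split (non-symmetric $\lambda$, symmetric non-hook $\lambda$, and the symmetric hook $[(n+1)/2,1^{(n-1)/2}]$), the same use of Corollary~\ref{cor:appofMN} and Theorem~\ref{char_values_of_alt}, and in the hard case the same comparison of the central binomial coefficient $\binom{n-1}{(n-1)/2}$ against an error term of order $n^{3/2}$. The only cosmetic differences are that you enumerate explicitly when $\binom{n-1}{k}=n-1$ where the paper invokes "the only representations of dimension $n-1$," and in the symmetric hook case you bound $\chi(C^o)$ below over the unknown split $(a,b)$ (or take real parts) rather than deriving a contradiction equation — both lead to the same final estimate.
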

\begin{proof}
 First consider the case where $\chi$ is the character of the restriction of the representation  $S^{\lambda}$, where $\lambda$ is not symmetric, to $G_n$. Let  $\chi^\lambda$ be the character of $S^{\lambda}$ then $\chi = \chi_\lambda$. According to Theorem~\ref{char_values_of_alt}, $\chi_\lambda$ has the same values on $c'_0$ and $c_0''$ and this  value is equal to the value of $\chi^\lambda$ on $c'_0 \cup c''_0$. We compute
\[
\chi_\lambda(C^o)=\sum_{x\in C^o}\chi_\lambda(x)=\chi^\lambda(1)+(n-1)\chi^\lambda(\sigma),
\]
where $\sigma$ is a cyclic permutation of length $n$. Using the corollary of the Murnaghan-Nakayama Rule (Corollary~\ref{cor:appofMN}), we have $\chi^\lambda(\sigma)\in \{0,\pm 1\}$. Therefore, if $\chi_\lambda(C^o)=0$, since $\chi_\lambda(1) >0$, it must be that $\chi^\lambda(\sigma)= -1$ and then $\chi^\lambda(1)=n-1$. The representations corresponding to the partition $[n-1,1]$ and its transpose, $[2,1,\ldots,1]$, are the only representations of $\sym(n)$ of dimension $n-1$ and according to Theorem~\ref{reps_of_alt}, their restrictions to $G_n$ are both isomorphic to the standard representation of $G_n$.

Next assume that $\chi$ is the character of one of the two irreducible representations $W'$ or $W''$, where $W = W'\oplus W''$ is the restriction of $S^\lambda$ to $G_n$; in this case $\lambda$ must be symmetric.  Thus, $\chi = \chi_\lambda'$ (the case  when $\chi = \chi_\lambda''$ is identical, so we omit it).  If $\lambda$ is not the hook $[(n+1)/2,1,\ldots,1]$, then according to Theorem~\ref{char_values_of_alt}, we have
\[
\chi_\lambda'(C^o)=\sum_{x\in C^o}\chi_\lambda'(x)=\frac{1}{2}\chi^\lambda(1)+(n-1)\frac{1}{2}\chi^\lambda(\sigma).
\]
Thus, as in the previous case, if $\chi_\lambda'(C^o)=0$, then we must have $\chi^\lambda(1)=n-1$ which is a contradiction.

The final case that we need to consider is when $\chi$ is the character of one of the two irreducible representations whose sum is  the representation formed by restricting $S^\lambda$ to $G_n$ where $\lambda=[(n+1)/2,1,\ldots,1]$. Again we assume that $\chi = \chi_\lambda'$ (since the case for $\chi = \chi_\lambda''$ is identical) and using Theorem~\ref{char_values_of_alt}, we have 
\begin{align*}
\chi_\lambda'(C^o)&=\sum_{x\in C^o}\chi_\lambda'(x)\\
&=\chi'_\lambda(1)+\sum_{x\in C^o\cap c'_0}\chi_\lambda'(x)+ \sum_{x \in C^o\cap c''_0}\chi_\lambda'(x)\\
&=\frac{1}{2}\chi^\lambda(1)+r'\, \frac{1}{2}\left[(-1)^\frac{n-1}{2}+\sqrt{(-1)^\frac{n-1}{2} \,n}\right] +r'' \,\frac{1}{2}\left[(-1)^\frac{n-1}{2}-\sqrt{(-1)^\frac{n-1}{2}\,n} \right],
\end{align*}
where $r'=|C^o\cap c'_0|$ and $r''=|C^o\cap c''_0|$. Note that $r'+r'' =n-1$.  Hence, if $\chi_\lambda'(C^o)=0$, then we must have 
\begin{equation}\label{eq_on_dim_lambda}
-\chi^\lambda(1)=r'\left[(-1)^\frac{n-1}{2}+\sqrt{(-1)^\frac{n-1}{2} \,n}\right]\,+ \,r''\left[(-1)^\frac{n-1}{2}-\sqrt{(-1)^\frac{n-1}{2}\,n} \right].
\end{equation}
Note that
\[
\chi^\lambda(1)=\frac{2^{n-1}(n-2)!!}{(n-1)!!}.
\]
 Consider the following two cases. If $4\nmid n-1$, then (\ref{eq_on_dim_lambda}) implies that 
\begin{equation}\label{not_divisible_by_4}
-\frac{2^{n-2}(n-2)!!}{(n-1)!!}=-(n-1)+\sqrt{-n}(r'-r'').
\end{equation}
It follows, then, that  $r'=r''$ and so
\[
\frac{2^{n-1}(n-2)!!}{(n-1)!!}=n-1,
\]
since this only holds for $n=3$, this is a contradiction.

On the other hand, if $4\mid n-1$, then (\ref{eq_on_dim_lambda}) implies that 
\begin{equation}\label{divisible_by_4}
-\frac{2^{n-1}(n-2)!!}{(n-1)!!}=r'\, (1+\sqrt{n})\,\,+ \,\, r''\, (1-\sqrt{n});
\end{equation}
that  is,
\begin{align}
\frac{2^{n-1}(n-2)!!}{(n-1)!!} &=-(n-r''-1)\, (\sqrt{n}+1)\,\,+ \,\, r''\, (\sqrt{n}-1)  \nonumber \\
                               &\leq (n-1)\, (\sqrt{n}-1)\leq n^{\frac{3}{2}}. \label{asymptotic_inequality}
\end{align}
Note that
\[
\frac{2^{n-1}(n-2)!!}{(n-1)!!}=\frac{2^{n-1}}{n}\frac{n!!}{(n-1)!!}>\frac{2^{n-1}}{n};
\]
thus (\ref{asymptotic_inequality}) yields
\[
2^{n-1}< n^\frac{5}{2}.
\]
It is easily seen that this inequality fails for all $n\geq 9$. Finally, note that (\ref{divisible_by_4}) and (\ref{not_divisible_by_4}) lead us to contradictions if $n=5$ and if $n=7$, respectively. This completes the proof of the lemma.
\end{proof}

Next we consider the case  when $n$ is even; so we assume $n\geq 6$ and even. According to \cite[Theorem 1.1]{AlspachGSV03}, the arcs of the complete digraph $K_n^\ast$ can be decomposed to $n-1$ pairs of vertex-disjoint directed cycles of length $n/2$. Each of these pairs corresponds to a permutation in $G_n$ which is a product of two cyclic permutations of length $n/2$. Let $C^e$ be the set of these permutations together with the identity element of $G_n$. Then, similar to the previous part, $C^e$ is a clique in $\Gamma_{G_n}$ and similarly we observe the following.
\begin{lem}\label{EKR_alt_even} If $n\geq 6$ is even, then $G_n$ has the EKR property.\qed
\end{lem}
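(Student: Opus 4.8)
The plan is to repeat, essentially verbatim, the argument of Lemma~\ref{EKR_alt_odd}, with the even clique $C^e$ constructed just above playing the role that $C^o$ played in the odd case. The only thing that genuinely needs checking is that $C^e$ is a clique of size exactly $n$ in $\Gamma_{G_n}$; once this is in hand, the clique-coclique bound delivers the conclusion immediately.

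First I would confirm that $C^e$ consists of $n$ pairwise-adjacent vertices. By \cite[Theorem 1.1]{AlspachGSV03} the arcs of $K_n^\ast$ decompose into $n-1$ pairs of vertex-disjoint directed $(n/2)$-cycles, and each such pair is the cycle notation of a permutation $\sigma$ that is a product of two disjoint $(n/2)$-cycles. Such a $\sigma$ moves every point of $[n]$, so it is a derangement, and its sign is $\left((-1)^{n/2-1}\right)^2=1$, so indeed $\sigma\in G_n=\alt(n)$; hence each of these $n-1$ permutations is adjacent to $\id$ in $\Gamma_{G_n}$. Moreover, any two distinct permutations arising from two distinct pairs use disjoint sets of arcs of $K_n^\ast$, so they never send a common point to the same image; equivalently, their quotient is again fixed-point free, so the two permutations are adjacent in $\Gamma_{G_n}$. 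Therefore $C^e$, being these $n-1$ derangements together with $\id$, is a clique of size $n$.

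Finally, since $\dd_{G_n}$ is a union of conjugacy classes, the graph $\Gamma_{G_n}$ is a union of graphs in the conjugacy-class scheme of $G_n$, so the clique-coclique bound (Theorem~\ref{clique_coclique_bound}) applies: for any independent set $S$ in $\Gamma_{G_n}$ we obtain $|S|\le |G_n|/|C^e| = (n!/2)/n = (n-1)!/2$, which is exactly the size of a point-stabilizer in $\alt(n)$. This proves that $G_n$ has the EKR property. There is no serious obstacle here beyond the bookkeeping of the clique; the only subtlety worth flagging is ensuring $|C^e|=n$ \emph{exactly} (so that the bound matches the point-stabilizer size rather than overshooting it), which is why I would take care to note that the $n-1$ decomposition permutations are pairwise distinct and all distinct from $\id$, the latter holding because every one of them is a derangement.
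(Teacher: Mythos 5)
Your proposal is correct and follows exactly the route the paper intends: the paper proves the odd case (Lemma~\ref{EKR_alt_odd}) via the clique-coclique bound applied to the clique $C^o$, and then states the even case with the proof omitted as "similar," using $C^e$ in place of $C^o$. Your verification that $C^e$ is a clique of size exactly $n$ (pairwise arc-disjointness giving adjacency, evenness of a product of two disjoint $(n/2)$-cycles giving membership in $\alt(n)$) fills in precisely the bookkeeping the paper leaves implicit, and the final computation $|S|\le (n!/2)/n=(n-1)!/2$ matches the paper's argument verbatim.
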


Note that the non-identity elements of $C^e$ lie in a non-split conjugacy class $c$ of $G_n$. Now we prove the equivalent of Lemma~\ref{n_odd} for even $n$, using this set $C^e$.

\begin{lem}\label{n_even} Let $n\geq 6$ be even. Then for any irreducible character $\chi$ of $G_n$, which is not the standard   character, we have $\chi(C^e)\neq 0$.
\end{lem}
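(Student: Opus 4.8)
The plan is to exploit the fact that the non-identity elements of $C^e$ all lie in a single \emph{non-split} conjugacy class $c$ of $G_n$, so that Theorem~\ref{char_values_of_alt} lets me compute $\chi(C^e)$ uniformly. Writing $\sigma$ for a fixed element of $c$ (a product of two disjoint $n/2$-cycles) and $\chi^\lambda$ for the character of $S^\lambda$, I would treat the two cases of Theorem~\ref{char_values_of_alt}. If $\lambda$ is not symmetric, then $\chi=\chi_\lambda$ satisfies $\chi_\lambda(\sigma)=\chi^\lambda(\sigma)$ on the non-split class, so
\[
\chi_\lambda(C^e)=\chi^\lambda(\id)+(n-1)\chi^\lambda(\sigma).
\]
If $\lambda$ is symmetric, then $\chi=\chi'_\lambda$ (the case $\chi''_\lambda$ being handled identically) satisfies $\chi'_\lambda=\tfrac12\chi^\lambda$ on both $\id$ and the non-split class $c$, giving $\chi'_\lambda(C^e)=\tfrac12\bigl[\chi^\lambda(\id)+(n-1)\chi^\lambda(\sigma)\bigr]$. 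Crucially, because $c$ is non-split there is no ``corresponding partition'' subtlety from parts (i)--(ii) of Theorem~\ref{char_values_of_alt}, and both cases collapse to the \emph{same} vanishing condition
\[
\chi^\lambda(\id)+(n-1)\chi^\lambda(\sigma)=0.
\]

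The next step is to show this equation forces $\chi$ to be the standard character. By Corollary~\ref{cor:appofMNtwo} we have $\chi^\lambda(\sigma)\in\{0,\pm1,\pm2\}$, and since $\chi^\lambda(\id)>0$ the equation can only hold when $\chi^\lambda(\sigma)\in\{-1,-2\}$. If $\chi^\lambda(\sigma)=-1$, then $\chi^\lambda(\id)=n-1$; but the only irreducible representations of $\sym(n)$ of dimension $n-1$ are those indexed by $[n-1,1]$ and its transpose $[2,1^{n-2}]$, and by Theorem~\ref{reps_of_alt} both restrict to the standard representation of $G_n$, so $\chi$ is the standard character, which is excluded. If $\chi^\lambda(\sigma)=-2$, then $\chi^\lambda(\id)=2n-2$; by Lemma~\ref{char_of_two_layer_hook} the shape $\lambda$ must be a two-layer hook or a symmetric near hook, but Lemmas~\ref{dimensions_of_2_layer_hooks} and~\ref{dimensions_of_near_hooks} give $\chi^\lambda(\id)>2n-2$ in either case, a contradiction. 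Hence no non-standard $\chi$ can satisfy the vanishing condition, proving $\chi(C^e)\neq0$.

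The main obstacle is the case $\chi^\lambda(\sigma)=-2$, which has no analogue in the odd case of Lemma~\ref{n_odd} (there $\chi^\lambda(\sigma)\in\{0,\pm1\}$ by Corollary~\ref{cor:appofMN}). This is precisely where the two-layer hook machinery of Section~\ref{sec:twolayerhooks} is indispensable: Lemma~\ref{char_of_two_layer_hook} pins down exactly which shapes $\lambda$ can produce the value $-2$ on a product of two $n/2$-cycles, and then the dimension lower bounds of Lemmas~\ref{dimensions_of_near_hooks} and~\ref{dimensions_of_2_layer_hooks} eliminate them. Everything else --- the reduction via Theorem~\ref{char_values_of_alt} and the dimension-$(n-1)$ classification --- is routine once these structural results are in hand.
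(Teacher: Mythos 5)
Your proof is correct and follows essentially the same route as the paper: reduce via Theorem~\ref{char_values_of_alt} to the single vanishing condition $\chi^\lambda(\id)+(n-1)\chi^\lambda(\sigma)=0$, restrict $\chi^\lambda(\sigma)$ to $\{0,\pm1,\pm2\}$ by Corollary~\ref{cor:appofMNtwo}, kill the $-1$ case by the dimension-$(n-1)$ classification, and kill the $-2$ case with Lemma~\ref{char_of_two_layer_hook} together with the dimension bounds of Lemmas~\ref{dimensions_of_near_hooks} and~\ref{dimensions_of_2_layer_hooks}. The only (harmless) difference is organizational: you observe up front that the non-split class makes the symmetric and non-symmetric cases collapse to the same equation, whereas the paper treats them sequentially.
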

\begin{proof}
First consider the case $\chi = \chi_\lambda$ where $\lambda$ is not symmetric. Using the notation of  Theorem~\ref{char_values_of_alt}, we have
\[
\chi_\lambda(C^e)=\sum_{x\in C^e}\chi_\lambda(x)=\chi^\lambda(1)+(n-1)\chi^\lambda(\sigma),
\]
where $\sigma$ is a product of two disjoint cyclic permutations of length $n/2$. Now, suppose $\chi_\lambda(C^e)=0$. Then
\begin{equation}\label{even_nonsymmetric}
-\chi^\lambda(1)=(n-1)\chi^\lambda(\sigma).
\end{equation}
According to  Corollary~\ref{cor:appofMNtwo}, we have $\chi^\lambda(\sigma)\in\{0,\pm 1,\pm2\}$. If $\chi^\lambda(\sigma)=0, 1$ or $2$, then  (\ref{even_nonsymmetric}) yields a contradiction with the fact that $\chi_\lambda(1)$ is strictly positive. Also if $\chi^\lambda(\sigma)=-1$, then we must have $\chi^\lambda(1)=n-1$ which contradicts the fact that the standard representation and its conjugate are the only irreducible representations of $\sym(n)$ of dimension $n-1$. Hence, suppose $\chi^\lambda(\sigma)=-2$.  Then $\chi^\lambda(1)=2n-2$.  According to Lemma~\ref{char_of_two_layer_hook}, $\lambda$ must be a two-layer hook or a symmetric near hook. Then by Lemma~\ref{dimensions_of_near_hooks} and Lemma~\ref{dimensions_of_2_layer_hooks}, the dimension of $\chi$ is strictly greater than $2n-2$. 

Next consider the case where $\chi$ is the character of one of the two irreducible representations in the restriction of the   representation  $S^{\lambda}$ to $G_n$, where $\lambda$ is symmetric; so $\chi = \chi_\lambda'$ or $\chi_\lambda''$. We will show that $\chi_\lambda'(C^e)\neq 0$; the proof that $\chi_\lambda''(C^e)\neq 0$ is similar. We have
\[
\chi_\lambda'(C^e)=\sum_{x\in C^e}\chi_\lambda'(x)=\frac{1}{2}\chi^\lambda(1)+(n-1)\frac{1}{2}\chi^\lambda(\sigma),
\]
where $\sigma$ is a product of two disjoint $n/2$-cycles. If $\chi_\lambda(C^e)=0$, then with the same argument as above, we get a
contradiction.
\end{proof}
Note that Lemma~\ref{EKR_alt_odd} and Lemma~\ref{EKR_alt_even} show that condition (a) holds for $G_n$. We now prove the main theorem of this section; that is, we show that condition (b) of Theorem~\ref{module_method_thm} holds for $G_n$. 

\begin{prop}\label{max_indy_is_in_standard_module}
  Let $S$ be a maximum  intersecting subset of $G_n$. Then the vector $v_S$ is in the direct sum of the trivial and the standard modules of $G_n$.
\end{prop}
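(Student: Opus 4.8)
The plan is to expand $v_S$ in the module decomposition furnished by the idempotents $E_\chi$ of the conjugacy class scheme of $G_n$, and to show that every component vanishes except those living in the trivial and standard modules. Writing $\mathbb{C}[G_n]=\bigoplus_{\chi}\im(E_\chi)$, where $\chi$ ranges over the irreducible characters of $G_n$, we have $v_S=\sum_{\chi}E_\chi v_S$, so it suffices to prove that $E_\chi v_S=0$ whenever $\chi$ is neither the trivial nor the standard character of $G_n$.

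The first step is to verify that the clique-coclique bound holds with equality for $\Gamma_{G_n}$. By Lemma~\ref{EKR_alt_odd} and Lemma~\ref{EKR_alt_even}, the group $G_n$ has the EKR property, so a maximum intersecting set $S$ (equivalently, a maximum independent set of $\Gamma_{G_n}$) has size $(n-1)!/2$. In both the odd and the even case we have already produced a clique of $\Gamma_{G_n}$ of size $n$, namely $C^o$ or $C^e$. Since $n\cdot\frac{(n-1)!}{2}=\frac{n!}{2}=|G_n|$, the product $|C|\,|S|$ equals the number of vertices, and hence Theorem~\ref{clique_coclique_bound} holds with equality for $\Gamma_{G_n}$.

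Now fix an irreducible character $\chi$ of $G_n$ that is neither the trivial nor the standard character, and let $C$ denote $C^o$ when $n$ is odd and $C^e$ when $n$ is even. By Lemma~\ref{n_odd} (respectively Lemma~\ref{n_even}), every irreducible character of $G_n$ other than the standard character is nonzero on $C$; in particular $\chi(C)\neq 0$. Since $\chi$ is nontrivial and the clique-coclique bound holds with equality, Corollary~\ref{at_most_one_non-zero} applies with the maximum clique $C$ and gives $E_\chi v_S=0$. As $\chi$ was an arbitrary nontrivial, non-standard irreducible character, the only surviving summands in $v_S=\sum_{\chi}E_\chi v_S$ are the projections onto the trivial and the standard modules, which is precisely the assertion of the proposition.

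I do not expect a serious obstacle in this final assembly, since the genuine difficulty has already been absorbed into the preceding results: establishing the EKR property through the clique-coclique bound, and above all Lemma~\ref{n_odd} and Lemma~\ref{n_even}, where the Murnaghan-Nakayama rule together with the dimension estimates of Lemma~\ref{dimensions_of_near_hooks} and Lemma~\ref{dimensions_of_2_layer_hooks} rule out the vanishing of $\chi(C)$. The single point that must be watched is that \emph{equality} in the clique-coclique bound is genuinely required to invoke Corollary~\ref{at_most_one_non-zero}; this is exactly why the cliques $C^o$ and $C^e$ are chosen to have the full size $n$ matching the EKR bound, so that their size multiplied by $(n-1)!/2$ recovers $|G_n|$.
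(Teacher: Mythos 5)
Your proof is correct and follows essentially the same route as the paper's: both establish equality in the clique--coclique bound via the cliques $C^o$, $C^e$ of size $n$ together with a maximum independent set of size $(n-1)!/2$, and then apply Lemma~\ref{n_odd}, Lemma~\ref{n_even} and Corollary~\ref{at_most_one_non-zero} to kill $E_\chi v_S$ for every non-trivial, non-standard $\chi$. The only cosmetic difference is that the paper exhibits the point-stabilizer $S_{1,1}$ explicitly as the witness for equality, whereas you invoke the EKR property directly; these are the same fact.
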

\begin{proof} Let $S_{1,1}$ be the point stabilizer for $1$ in $G_n$; so $S_{1,1}$ is an independent set of the maximum size, i.e. $\frac{(n-1)!}{2}$, in $\Gamma_{G_n}$. Then the  cliques $C^o$ and $C^e$, together with $S_{1,1}$, prove that the clique-coclique bound holds with  equality for $\Gamma_{G_n}$.  Given any irreducible character $\chi$  of $G_n$, except the standard character and the trivial character, according to Lemma~\ref{n_odd} and Lemma~\ref{n_even}, there is a maximum clique  $C$, such that $\chi(C)\neq 0$. Hence, according to Corollary~\ref{at_most_one_non-zero}, we have $E_\chi v_S=0$, for any maximum independent set $S$. This implies that $v_S$ is in the direct sum of the trivial and the standard modules of $G_n$.
\end{proof}

\subsection{Proof that $\alt(n)$ has the strict EKR property}\label{main_proof}
In this part, we first show that condition (c) of Theorem~\ref{module_method_thm} holds and  complete the proof of Theorem~\ref{main_Alt}. 

\begin{prop}\label{fullrank_alt}
For all $n\geq 5$, the matrix  $M$ for $G_n$ has full rank.
\end{prop}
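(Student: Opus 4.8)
The plan is to show that the columns of $M$ are linearly independent by proving that the Gram matrix $M^\top M$ is nonsingular. Since $M$ has $|\dd_{G_n}|$ rows and $(n-1)(n-2)$ columns, and one checks that $|\dd_{G_n}|\geq (n-1)(n-2)$ for $n\geq 5$, full rank of $M$ is equivalent to $M^\top M$ being positive definite (it is automatically positive semidefinite). So the whole problem is to rule out a nonzero vector in $\ker M$.

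First I would compute the entries of $M^\top M$. The entry indexed by a pair of columns $((i,j),(k,l))$ equals the number of even derangements $\sigma$ with $\sigma(i)=j$ and $\sigma(k)=l$. This count is $0$ whenever exactly one of $i=k$, $j=l$ holds, since the two image prescriptions are then incompatible; and, because $\alt(n)$ is highly transitive, the remaining values depend only on the coincidence pattern of $i,j,k,l$. Thus there are four values: a diagonal value $a$ (when $(i,j)=(k,l)$); a value $b$ when $i,j,k,l$ are all distinct; a value $c$ when exactly one of $i=l$, $j=k$ holds (a chain, where the image prescribed by one constraint is the source prescribed by the other); and a value $d$ when $(k,l)=(j,i)$ (a transposition inside $\sigma$). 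Writing $N_A$, $N_3$, $N_4$, $P$ for the $01$-matrices recording the all-distinct relation, the two chain relations, and the reversal relation, this gives
\[
M^\top M = aI + bN_A + c(N_3+N_4) + dP .
\]
Moreover $N_A+N_3+N_4$ is exactly the adjacency matrix $A(X_n)$ of the pairs graph of Section~\ref{bounds_on_indy}, and $P$ is the involution reversing each ordered pair.

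Next I would diagonalize $M^\top M$ using representation theory. All four matrices $I$, $N_A$, $N_3+N_4$, $P$ lie in the centralizer algebra of $\sym(n-1)$ acting on the set $\Omega$ of ordered pairs of distinct points of $[n-1]$, and as $\sym(n-1)$-modules
\[
\mathbb{C}[\Omega]\cong S^{[n-1]}\oplus 2\,S^{[n-2,1]}\oplus S^{[n-3,2]}\oplus S^{[n-3,1,1]} .
\]
Hence $M^\top M$ acts as a scalar on each of the three multiplicity-free constituents and as a single $2\times 2$ matrix on the $S^{[n-2,1]}$-isotypic component, so $M^\top M$ is nonsingular if and only if these three scalars are nonzero and the $2\times 2$ block is invertible. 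I would read off the three scalars and the $2\times 2$ block from $a,b,c,d$ together with the eigenvalues of $A(X_n)$ (whose least eigenvalue is controlled by Lemma~\ref{least_eval_of_X_n}) and of $P$ (which are $\pm 1$).

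The hard part is the explicit evaluation of $a,b,c,d$ and the verification that the resulting eigenvalues are strictly positive. These are \emph{even}-derangement counts, so I would obtain each one by combining the ordinary derangement count (an inclusion--exclusion in the number of ``at-risk'' points $|[n]\setminus\{i,j,k,l\}|$) with the corresponding signed sum $\sum\sgn(\sigma)$, which isolates the even permutations and is itself small and explicitly computable. A crude diagonal-dominance estimate is hopeless here, since the total off-diagonal mass in a row of $M^\top M$ exceeds $a$ by a factor of order $n$; the positivity must instead come from the block structure above, and the delicate point is showing that the determinant of the $2\times 2$ block does not vanish. For this I expect to need the precise leading behaviour of $a,b,c,d$ in $n$, supplemented by a direct check of the small cases $n=5,6,7,8$, much as in Lemmas~\ref{dimensions_of_near_hooks} and~\ref{dimensions_of_2_layer_hooks}.
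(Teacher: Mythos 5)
Your overall strategy (show $M^\top M$ is positive definite) is sound, and your structural analysis is correct as far as it goes: the entries of $M^\top M$ do depend only on the coincidence pattern of $(i,j,k,l)$ (though the right reason is conjugation by $\sym(n-1)$, which is $4$-transitive on $[n-1]$ and preserves parity, rather than transitivity of $\alt(n)$ itself), and the decomposition $\mathbb{C}[\Omega]\cong S^{[n-1]}\oplus 2\,S^{[n-2,1]}\oplus S^{[n-3,2]}\oplus S^{[n-3,1,1]}$ is correct. However, there is a genuine gap: the decisive step is never carried out. You correctly anticipate that $b\neq c$ in general when the rows range over \emph{all} even derangements, but then Lemma~\ref{least_eval_of_X_n} is of essentially no use to you, since it only controls the least eigenvalue of the \emph{sum} $N_A+N_3+N_4=A(X_n)$ and says nothing about $N_A$ and $N_3+N_4$ separately. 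You would therefore have to compute from scratch how each of $N_A$, $N_3+N_4$ and $P$ acts on each of the four isotypic components, evaluate $a,b,c,d$ as signed/unsigned derangement counts, and verify that three scalars and one $2\times 2$ determinant are nonzero --- none of which is done, and the $2\times 2$ block is exactly where you concede the argument is ``delicate.'' As written, this is a plan rather than a proof.

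The paper avoids all of this with one device you missed: it does not use the full matrix $M$ but a submatrix $M_1$ whose rows are indexed by a \emph{single} conjugacy class (the $n$-cycles when $n$ is odd; the products of two disjoint $n/2$-cycles when $n$ is even). A submatrix of full column rank forces $M$ to have full column rank, and for these particular classes the Gram matrix collapses exactly: the ``reversal'' count $d$ is zero (an $n$-cycle cannot swap two points for $n>2$), and the ``chain'' and ``all-distinct'' counts coincide at $(n-3)!$, so that $M_1^\top M_1=(n-2)!\,I+(n-3)!\,A(X_n)$ (with the analogous identity, scaled by $2/n$, in the even case). At that point Lemma~\ref{least_eval_of_X_n} applies directly and gives least eigenvalue at least $(n-3)!>0$. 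If you want to salvage your approach, the cleanest fix is to adopt this restriction to a single conjugacy class; otherwise you must supply the full eigenvalue computation for $N_A$, $N_3+N_4$ and $P$ on each isotypic component and the explicit evaluation of $a,b,c,d$.
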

\begin{proof} First assume $n$ is odd. Consider the submatrix $M_1$ of $M$ that is comprised of all the rows in $M$ that are indexed by  cyclic permutations of length $n$.  Set $T=M_1^\top M_1$; it suffices to  show that $T$ is non-singular. Consider all types of entries of $T$. If $i,j,k,l$ are in $[n-1]$, then the following are all  possible cases for the pairs $(i,j)$ and $(k,l)$.
\begin{itemize}
\item $i=k$ and $j=l$; in this case $T_{(i,j),(i,j)}=(n-2)!$; because  the number of all $n$-cycles mapping $i$ to $j$ is $(n-2)!$.
\item $i=l$ and $j=k$; in this case $T_{(i,j),(j,i)}=0$; because the only case in which an $n$-cycle can swap $i$ and $j$ is $n=2$.
\item $i=k$ and $j\neq l$; in this case $T_{(i,j),(i,l)}=0$; because  there is no permutation mapping $i$ to two different numbers.
\item $i\neq k$ and $j=l$; again $T_{(i,j),(k,j)}=0$.
\item $i\neq l$ and $j=k$; in this case $T_{(i,j),(j,l)}=(n-3)!$; because the number of all $n$-cycles mapping $i$ to $j$ and $j$ to
  $l$ is $(n-3)!$.
\item $i=l$ and $j\neq k$; in this case $T_{(i,j),(k,i)}=(n-3)!$; with  a similar reasoning as above.
\item $\{i,j\}\cap \{k,l\}=\emptyset$; in this case  $T_{(i,j),(k,l)}=(n-3)!$; because the number of $n$-cycles  mapping $i$ to $j$ and $k$ to $l$ is $\binom{n-3}{1}(n-4)!=(n-3)!$.
\end{itemize} 
Therefore, one can write $T$ as
\begin{equation}\label{4th_eq_alt}
T=(n-2)!I+(n-3)!A(X_n),
\end{equation}
where $I$ is the identity matrix of size $(n-1)(n-2)$ and $A(X_n)$ is the adjacency matrix of the pairs graph $X_n$ defined in Section~\ref{spectral_graph_theory}.  By Lemma~\ref{least_eval_of_X_n}, the least eigenvalue of $X_n$ is greater than or equal to $-(n-3)$; thus using (\ref{4th_eq_alt}), the least eigenvalue of $T$ is at least
\[
(n-2)!-(n-3)!(n-3)=(n-3)!>0;
\]
therefore $T$ is non-singular and the proof is complete for the case $n$ is odd.

Now assume $n$ to be even.  Consider the subset of $\mathcal{D}_{G_n}$ which consists of all the permutations of $G_n$ whose cycle decomposition includes two cycles of length $n/2$ and let $M_2$ be the submatrix of $M$ whose rows are indexed by these permutations. Define $U=M_2^\top M_2$. With a similar approach as for the previous case, one can write $U$ as
\[
U=\frac{2(n-2)!}{n}I+\frac{2(n-3)!}{n}A(X_n).
\]
According to Lemma~\ref{least_eval_of_X_n}, the least eigenvalue of $U$ is at least
\[ 
\frac{2(n-2)!}{n}-\frac{2(n-3)!}{n}(n-3)=\frac{2(n-3)!}{n}>0;
\]
therefore $U$ is non-singular and the proof is complete.
\end{proof}

Now we are ready to prove the main theorem.
\begin{proof}(Theorem~\ref{main_Alt}) According to Lemma~\ref{EKR_alt_odd} and Lemma~\ref{EKR_alt_even} condition (a) holds. Also using Proposition~\ref{max_indy_is_in_standard_module} and Proposition~\ref{fullrank_alt} conditions (b) and (c) hold, respectively; hence the proof is complete.
\end{proof}

An interesting result of Theorem~\ref{main_Alt} is that it implies
that the symmetric group also has the strict EKR property. In fact, Theorem~\ref{main_Alt} along with Theorem~\ref{2transitive_subgroup} provides an alternative proof of the following theorem which was initially proved in
\cite{MR2009400}.
\begin{cor} 
For any $n\geq 2$, the group $\sym(n)$ has the strict EKR property.
\end{cor}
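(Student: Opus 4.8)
The plan is to derive the statement for $n\ge 5$ directly from the two main results already in hand, and to dispatch the small cases $n\in\{2,3,4\}$ separately, since for those the reduction to the alternating group is unavailable.

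For the main range I would argue as follows. The symmetric group $\sym(n)$ is $n$-transitive, and for $n\ge 5$ the alternating group $\alt(n)$ is $(n-2)$-transitive; in particular both groups are $2$-transitive, and $\alt(n)$ is a subgroup of $\sym(n)$. By Theorem~\ref{main_Alt}, $\alt(n)$ has the strict EKR property for every $n\ge 5$. Applying Theorem~\ref{2transitive_subgroup} with $G=\sym(n)$ and $H=\alt(n)$ then immediately gives that $\sym(n)$ has the strict EKR property. This is the whole argument for $n\ge 5$: all the genuine work already lives inside Theorem~\ref{main_Alt} (via the module method) and inside the inheritance result Theorem~\ref{2transitive_subgroup}, so the corollary is essentially just an invocation of them.

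It remains to treat $n\in\{2,3,4\}$. First, $\sym(2)=\langle(1\,2)\rangle$ is cyclic, so it has the strict EKR property by Theorem~\ref{cyclic_groups}. Next, $\sym(3)\cong D_3$ is a dihedral group and therefore has the strict EKR property by Proposition~\ref{dihedral}. The case $n=4$ requires a direct verification: one checks that $\Gamma_{\sym(4)}$ has independence number $6=|\sym(4)|/4$ and that its only maximum independent sets are the $16$ cosets $S_{i,j}$ of the point-stabilizers; equivalently, this can be read off from the data tabulated for small permutation groups in Table~\ref{small_perm_groups}.

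The only real obstacle is the hypothesis $n\ge 5$ in Theorem~\ref{main_Alt}, and I would stress why $n=4$ cannot simply be folded into the main argument. The derangements of $\alt(4)$ are exactly the three double transpositions, which together with the identity form the normal Klein four-subgroup $V$. By Proposition~\ref{index}, $\Gamma_{\alt(4)}$ is thus a disjoint union of $[\alt(4):V]=3$ copies of $K_4$, so it has $4^3=64$ maximum independent sets while there are only $16$ cosets of point-stabilizers. Hence $\alt(4)$ does \emph{not} have the strict EKR property, and Theorem~\ref{2transitive_subgroup} cannot be applied at $n=4$; this is precisely why the small cases must be handled independently rather than through the alternating group.
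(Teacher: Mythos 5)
Your argument for $n\ge 5$ is exactly the paper's: the corollary is obtained by combining Theorem~\ref{main_Alt} with Theorem~\ref{2transitive_subgroup} applied to the $2$-transitive subgroup $\alt(n)\le\sym(n)$. Your separate treatment of $n\in\{2,3,4\}$ is a welcome addition (the paper leaves these cases implicit even though the statement reads $n\ge 2$), and your explanation of why $\alt(4)$ blocks the reduction at $n=4$ is correct; the only slip is that the $n=4$ case cannot be ``read off'' from Table~\ref{small_perm_groups}, since the paper explicitly excludes $\sym(n)$ and $\alt(n)$ from that table, so $\sym(4)$ genuinely requires the direct verification you describe.
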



\section{EKR for $\PSL(2,q)$}\label{EKR_for_psl}

Throughout this section, we assume that $q$ is a prime power and use the notation $\F_q$ for the finite field of order $q$.  
Let $\PSL(2,q)$ be the projective special linear group acting on  the projective line $\mathbb{P}_q$. In \cite{MeagherS11}, the authors have proved that the projective general linear group $\PGL(2,q)$ has the strict EKR property. This was a motivation for them to conjecture that  $\PSL(2,q)$ also has the  strict EKR property. 
\begin{conj}\label{PSL_main}
 For any prime power $q$, the group $\PSL(2,q)$ has the strict EKR property.
\end{conj}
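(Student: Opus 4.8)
The plan is to verify the three hypotheses of the module method (Theorem~\ref{module_method_thm}) for $G=\PSL(2,q)$ acting on the projective line $\p_q$, so that $n=q+1$. Since this action is $2$-transitive, its standard representation is irreducible by Proposition~\ref{standard_is_irr}, and the derangements are exactly the non-identity elements of the non-split tori (the cyclic subgroups fixing no point of $\p_q$). As in the treatment of $\alt(n)$, I would attack conditions (a) and (b) simultaneously through the clique-coclique bound (Theorem~\ref{clique_coclique_bound}): the task is to produce a clique $C$ in $\Gamma_G$ of size exactly $n=q+1$, and then to show that for every irreducible character $\chi$ of $G$ other than the trivial and the standard character, some such maximum clique satisfies $\chi(C)\neq 0$. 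Once this is done, Corollary~\ref{at_most_one_non-zero} forces $E_\chi v_S=0$ for every maximum independent set $S$ and every such $\chi$, which is precisely condition (b), while the existence of a clique of size $q+1$ together with a point-stabilizer of size $|G|/(q+1)$ makes the clique-coclique bound tight and yields condition (a).

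The clique is easiest to describe when $q$ is even, where $\PSL(2,q)=\PGL(2,q)$ and the non-split torus has order $q+1$ and acts regularly on $\p_q$; that cyclic subgroup is then a sharply transitive set, hence a clique of the required size. When $q$ is odd the non-split torus inside $\PSL(2,q)$ has order only $(q+1)/2$, so I would instead assemble a clique of size $q+1$ out of derangements, for instance by gluing together two suitable cosets of a torus into a sharply transitive subset (a set of $q+1$ permutations realizing each ordered pair of points exactly once) and checking directly that it is a clique. The character side then reduces to evaluating $\chi$ on regular semisimple elements of the non-split torus; using the known character table of $\PSL(2,q)$ (trivial, Steinberg of degree $q$, the principal-series characters of degree $q+1$, the cuspidal characters of degree $q-1$, and the two pairs of exceptional characters of degree $(q\pm1)/2$ when $q$ is odd), the values $\chi(C)$ are small sums of roots of unity, and one shows by a degree-versus-size comparison—mirroring Lemma~\ref{n_odd} and Lemma~\ref{n_even}—that they cannot all vanish unless $\chi$ is the trivial or the standard character.

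Condition (c), that the matrix $M$ for $\PSL(2,q)$ has full rank, is exactly the content of Conjecture~\ref{M_fullrank_PSL}, and this is where I expect the real difficulty to lie. Following the $\alt(n)$ argument I would restrict $M$ to the rows indexed by a single derangement class (a non-split torus class) and study the Gram matrix $T=M_1^\top M_1$, hoping to write it in the form $T=aI+bA(X)$ for a combinatorial graph $X$ on the ordered pairs of points and then bound the least eigenvalue of $X$ via a clique-covering argument as in Proposition~\ref{w} and Lemma~\ref{least_eval_of_X_n}. The obstruction is that the incidence pattern recording which ordered pairs a torus element realizes is far less uniform than the $n$-cycle pattern underlying the pairs graph $X_n$, so the clean identity $T=(n-2)!\,I+(n-3)!\,A(X_n)$ has no exact analogue; making this Gram matrix provably non-singular—equivalently, resolving Conjecture~\ref{M_fullrank_PSL}—is the crux on which the whole result hinges.
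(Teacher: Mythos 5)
First, note that the statement you are proving is stated in the paper only as a conjecture: the paper itself establishes it unconditionally only for $q$ even (Theorem~\ref{strict_EKR_PSL_even}), and for $q$ odd reduces it to the open Conjecture~\ref{M_fullrank_PSL} that the matrix $M$ has full rank (Proposition~\ref{strict_EKR_PSL_odd}). Your proposal lands in essentially the same place and correctly identifies condition (c) for odd $q$ as the crux, so you have not lost anything the paper actually has. Where you diverge is in how you reach conditions (a) and (b). The paper does not use the clique--coclique method for $\PSL(2,q)$ at all: it computes the full spectrum of $\Gamma_{G_q}$ directly from the character tables (Tables~\ref{char_psl_even}, \ref{char_psl_1}, \ref{char_psl_3}), shows the least eigenvalue is $-q(q-1)/2$ for $q$ even and $-(q-1)^2/4$ for $q$ odd and is attained \emph{only} by the degree-$q$ character $\psi$ (the standard character), and then invokes both parts of the ratio bound (Theorem~\ref{ratio2}) to get (a) and (b) in one stroke. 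This is unconditional and requires no clique.

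Your clique--coclique route has a genuine gap in the odd case. For $q$ even the non-split torus is cyclic of order $q+1$ with all non-identity elements derangements, so it is a sharply transitive subgroup and hence a maximum clique, as you say. But for $q$ odd the torus inside $\PSL(2,q)$ has order only $(q+1)/2$, and your proposed remedy --- ``gluing together two suitable cosets of a torus into a sharply transitive subset'' --- is not shown to produce a clique, nor is it clear that $\Gamma_{\PSL(2,q)}$ even contains a clique of size $q+1$ for odd $q$; the second coset of the order-$(q+1)$ torus of $\PGL(2,q)$ lies outside $\PSL(2,q)$, so there is no obvious candidate. Since the whole method hinges on the clique-coclique bound holding with equality, conditions (a) and (b) would remain unproved for odd $q$ on your route, whereas the paper's spectral argument settles them. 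One further correction: your worry that the Gram matrix $M^\top M$ admits no clean decomposition $aI+bA(X_n)$ is unfounded for $q$ even --- the paper's Lemma~\ref{entries_N} and equation (\ref{N_and_A(X)}) give exactly $N=\frac{q(q-1)}{2}I+\frac{q}{2}A(X_n)$, a consequence of the sharp $3$-transitivity of $\PGL(2,q)=\PSL(2,q)$ in characteristic $2$. The non-uniformity you fear is real only for odd $q$, where $\PSL(2,q)$ is not $3$-transitive, and that is precisely why Conjecture~\ref{M_fullrank_PSL} remains open.
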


In this section,  we will show that $\PSL(2,q)$ has the EKR property; i.e. condition (a) of the module method holds fo $\PSL(2,q)$;  this result has been, also, pointed out in \cite{MeagherS11}. Furthermore, we prove that condition (b) also holds for this group; then we will conclude that in order to prove Conjecture~\ref{PSL_main}, one only needs to show the matrix $M$ for $\PSL(2,q)$ has full rank (i.e. condition (c) of the module method holds for this group). We will,  also, present a proof of the strict EKR property for $\PSL(2,q)$ when $q$ is even.

\subsection{Some properties of $\PSL(2,q)$}\label{PSL_properties}
We start with describing $\PSL(2,q)$ and investigate some of the characteristics of this group which will be useful for our purpose. 
For any $n\geq 2$ and any $q$, the \txtsl{special linear  group}, $\SL(n,q)$, is the group of all $n\times n$ matrices on $\F_q$ whose determinants are $1$. The center $Z(\SL(n,q))$ of this group consists of all matrices of the form $cI$, for some $c\in \F_q$ such
that $c^n=1$. The \txtsl{projective special linear group}, $\PSL(n,q)$, is the quotient
\[
\frac{\SL(n,q)}{Z(\SL(n,q))}.
\]
Note that, for $n=2$, if $q$ is even, then $c=1$ and if $q$ is odd, then $c\in\{-1,1\}$. With this definition, it is straight-forward to determine the size of $\PSL(2,q)$.
\begin{lem}\label{size_PSL} 
For any $q$ a prime power
\[
|\PSL(2,q)| =
\begin{cases}
(q+1)q(q-1), & \text{if } q  \text{ is even};\\
\frac{1}{2}(q+1)q(q-1), & \text{if } q  \text{ is odd}.
\end{cases}
\]
\end{lem}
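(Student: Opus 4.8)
The plan is to compute $|\SL(2,q)|$ directly and then divide by the order of the center, which the sentence immediately preceding the statement has already determined. Since $\PSL(2,q)=\SL(2,q)/Z(\SL(2,q))$, and the center has order $1$ when $q$ is even (as $c=1$ is the only scalar with $c^2=1$ in characteristic two) and order $2$ when $q$ is odd (as then $c\in\{-1,1\}$), it suffices to show that $|\SL(2,q)|=(q+1)q(q-1)$; dividing by $|Z(\SL(2,q))|$ then yields both cases simultaneously.

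First I would count the order of the general linear group $GL(2,q)$ by counting ordered bases of the vector space $\F_q^2$: the first basis vector may be any of the $q^2-1$ nonzero vectors, and the second may be any of the $q^2-q$ vectors lying outside the line spanned by the first. Hence $|GL(2,q)|=(q^2-1)(q^2-q)$.

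Next I would consider the determinant map $\det\colon GL(2,q)\to \F_q^{\times}$. This is a group homomorphism onto the multiplicative group $\F_q^{\times}$, which has order $q-1$ (the diagonal matrices $\mathrm{diag}(a,1)$ already realize every nonzero value), and its kernel is exactly $\SL(2,q)$. By the first isomorphism theorem,
\[
|\SL(2,q)|=\frac{|GL(2,q)|}{q-1}=\frac{(q^2-1)(q^2-q)}{q-1}=(q+1)q(q-1).
\]
Dividing by the order of the center then gives $|\PSL(2,q)|=(q+1)q(q-1)$ when $q$ is even and $\tfrac12(q+1)q(q-1)$ when $q$ is odd, as claimed.

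There is no genuine obstacle here: the argument is entirely a routine counting computation. The only point requiring (minor) care is the behaviour of the center in characteristic two versus odd characteristic — namely that $-1=1$ collapses the two scalar solutions of $c^2=1$ into one — but this has already been recorded in the text just before the statement, so it can simply be quoted rather than re-derived.
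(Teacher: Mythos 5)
Your argument is correct: counting ordered bases gives $|GL(2,q)|=(q^2-1)(q^2-q)$, the determinant map is surjective onto $\F_q^{\times}$ with kernel $\SL(2,q)$, so $|\SL(2,q)|=(q+1)q(q-1)$, and dividing by the order of the center (which the paper has just computed to be $1$ in even characteristic and $2$ in odd characteristic) yields both cases. The paper itself offers no proof of this lemma — it simply declares the computation ``straight-forward'' after describing the center — so your write-up is exactly the routine argument the author had in mind, and there is nothing to add.
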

Throughout this section, we let $G_q=\PSL(2,q)$. The \txtsl{projective line} is the set
\[
\p_q= \left\{ \begin{bmatrix} a \\1  \end{bmatrix} \; | \; a \in \F_q^*\right\}
    \cup \left\{ \begin{bmatrix} 1 \\ 0  \end{bmatrix}\right\}, 
\]
in which $k\mathbf{v}=\mathbf{v}$, for any $k\in \F_q^*$ and $\mathbf{v}\in \p_q$. It is well-known  that $G_q$ acts $2$-transitively on $\p_q$ (see \cite{MR1307623} for a proof of this and for a more detailed discussion on the projective special linear groups). Since $|\p_q|=q+1$, one can, therefore, consider $G_q$ as a subgroup of $\sym(q+1)$ acting $2$-transitively on $[q+1]=\{1,\ldots,q+1\}$.

Now we turn our attention to the character table of $G_q$; from the character table, we will  obtain the spectrum of $\Gamma_{G_q}$ and from this, we will identify the least eigenvalue of $\Gamma_{G_q}$. We will, then, use this to establish conditions (a) and (b) of the module method. First, note that if $q$ is even, then according to Lemma~\ref{size_PSL}, we have $G_q=\SL(2,q)$. Table~\ref{char_psl_even} displays the character table of $\SL(2,q)$, for even $q$ (see \cite{gehles2002ordinary}).

\begin{table}[ht!]
\centering
\[
\begin{array}{|c|c|c|c|c|c|}
\hline
&&&&&\\[-.3cm]
&\text{\small Types of C.C.}
& \left[\begin{matrix}
1 & 0 \\
0 & 1
\end{matrix}\right]
& \left[\begin{matrix}
1 & 0 \\
1 & 1
\end{matrix}\right]
& \left[\begin{matrix}
\nu^l & 0 \\
0 & \nu^{-l}
\end{matrix}\right]
& b^m \\ [.4cm]
\hline
&\text{\small Nr. of C.C.} & 1 & 1 & (q-2)/2 & q/2\\
\hline
&\text{\small Size of C.C.} & 1 & q^2-1 & q(q+1) & q(q-1) \\
\hline
\text{\small Chars} & \text{\small Nr. of Chars} &&&&\\
\hline
\id & 1 & 1 & 1 & 1 & 1 \\
\psi & 1 & q & 0 & 1 & -1 \\
\chi_i  & (q-2)/2 & q+1 & 1 & \rho^{i\ell}+\rho^{-i\ell} & 0 \\
 \theta_j & q/2 & q-1 & -1 & 0 & -\sigma^{jm}-\sigma^{-jm} \\
\hline
\end{array}
\]
\caption{Character table of $\PSL(2,q)=\SL(2,q)$ for even $q$.}
\label{char_psl_even}
\end{table}

In this table, $i, \ell \in \{1, \dots (q-2)/2\}$ and $j,m \in \{1,\dots, q/2\}$.  The first row of the table denotes the
different types of the canonical forms of the conjugacy classes of $\SL(2,q)$ while the first column lists the different types of the
irreducible characters of $\SL(2,q)$. The parameter $\nu$ represents a generator of the cyclic multiplicative group $\F_q^*$, $\rho\in
\mathbb{C}$ is a $(q-1)$-th root of unity, $\sigma\in \mathbb{C}$ is a $(q+1)$-th root of unity and $b$ is a matrix in $\SL(2,q)$ of order $q+1$. It is not hard to see that all the first three types of conjugacy classes have fixed points, while the last type of
matrices have no fixed points. Thus $\dd_{G_q}$ is the union of the conjugacy classes with representatives $b, b^2,\ldots, b^{q/2}$. 

Now using Theorem~\ref{Diaconis}, we compute the eigenvalue of $\Gamma_{G_q}$ corresponding to the character $\psi$. It is easy to
see from Table~\ref{char_psl_even} that 
\[ 
\eta_{\id} = \frac{q^2(q-1)}{2}, \quad \eta_\psi=-\frac{q(q-1)}{2}, \quad \eta_{\chi_i} = 0. 
\]
In order to show $\eta_\psi$ is the least eigenvalue of $\Gamma_{G_q}$, it suffices to compare it with $\eta_{\theta_j}$. For
any $1\leq j\leq \frac{q}{2}$ we have 
\begin{equation}\label{theta_j_even}
\eta_{\theta_j}=\frac{1}{q-1}q(q-1)\sum_{m=1}^\frac{q}{2}\left(-\sigma^{jm}-\sigma^{-jm}\right);
\end{equation}
but since $\theta_j$ is orthogonal to the trivial character, we know that
\[
(q-1)-(q^2-1)+q(q-1)\sum_{m=1}^\frac{q}{2}\left(\sigma^{jm}-\sigma^{-jm}\right)=0;
\]
thus (\ref{theta_j_even}) can be simplified as 
\[
\eta_{\theta_j}=\frac{1}{q-1}(q^2-q)=q.
\]
This proves that $\tau=-\frac{q(q-1)}{2}$ is the least eigenvalue of $\Gamma_{G_q}$.

Next we investigate the case where $q$ is odd. Table~\ref{char_psl_1} and Table~\ref{char_psl_3} are the character tables of $G_q$ for the case $q\equiv 1 \pmod{4}$ and $q\equiv 3 \pmod{4}$, respectively (see \cite{gehles2002ordinary}).

\begin{table}[ht!]
\tiny
\[
\begin{array}{|c|c|c|c|c|c|c|c|}
\hline
&&&&&&&\\[-.2cm]
&\text{Types of C.C.}
& \left[\begin{matrix}
1 & 0 \\
0 & 1
\end{matrix}\right]
& \left[\begin{matrix}
1 & 0 \\
1 & 1
\end{matrix}\right]
& \left[\begin{matrix}
1 & 0 \\
\nu & 1
\end{matrix}\right]
& \left[\begin{matrix}
\nu^l & 0 \\
0 & \nu^{-l}
\end{matrix}\right]
& \left[\begin{matrix}
\nu^{\frac{q-1}{4}} & 0 \\
0 & \nu^{\frac{-(q-1)}{4}}
\end{matrix}\right]
& b^m \\
&&&&&&&\\[-.2cm]
\hline
&&&&&&&\\[-.2cm]
&\text{Nr. of C.C.} & 1 & 1 & 1 & (q-5)/4 & 1 & (q-1)/4\\
&&&&&&&\\[-.2cm]
\hline
&&&&&&&\\[-.2cm]
&\text{Size of C.C.} & 1 & (q^2-1)/2 & (q^2-1)/2 & q(q+1) & q(q+1)/2 &q(q-1) \\
&&&&&&&\\[-.2cm]
\hline
\text{Chars} & \text{Nr. of Chars} &&&&&&\\
\hline
\id & 1 & 1 & 1 & 1 & 1 & 1 & 1 \\ [.1cm]
\psi  & 1 & q & 0 & 0 & 1 & 1 & -1 \\ [.2cm]
\chi_i  & (q-5)/4 & q+1 & 1 & 1 & \rho^{i\ell}+\rho^{-i\ell} &  \rho^{i\frac{q-1}{4}}+\rho^{-i\frac{q-1}{4}} & 0 \\ [.2cm]
\theta_j  & (q-1)/4 & q-1 & -1 & -1 & 0 & 0 & -\sigma^{jm}-\sigma^{-jm} \\ [.15cm]
\xi_1  & 1 & (q+1)/2 & \frac{1+\sqrt{q}}{2} & \frac{1-\sqrt{q}}{2} & (-1)^l & (-1)^{\frac{q-1}{4}}& 0 \\ [.15cm]
\xi_2  & 1 & (q+1)/2 & \frac{1-\sqrt{q}}{2} & \frac{1+\sqrt{q}}{2} & (-1)^l & (-1)^{\frac{q-1}{4}}& 0 \\ [.15cm]
\hline
\end{array}
\]
\caption{Character table of $\PSL(2,q)$ for $q\equiv1$ (mod 4).}
\label{char_psl_1}
\end{table}
\bigskip
\begin{table}[ht!]
\tiny
\[
\begin{array}{|c|c|c|c|c|c|c|c|}
\hline
&&&&&&&\\[-.2cm]
&\text{Types of C.C.}
& \left[\begin{matrix}
1 & 0 \\
0 & 1
\end{matrix}\right]
& \left[\begin{matrix}
1 & 0 \\
1 & 1
\end{matrix}\right]
& \left[\begin{matrix}
1 & 0 \\
\nu & 1
\end{matrix}\right]
& \left[\begin{matrix}
\nu^l & 0 \\
0 & \nu^{-l}
\end{matrix}\right]
& b^m
& b^{\frac{q+1}{4}}\\
&&&&&&&\\[-.2cm]
\hline
&&&&&&&\\[-.2cm]
&\text{Nr. of C.C.} & 1 & 1 & 1 & (q-3)/4 &  (q-3)/4 & 1\\
&&&&&&&\\[-.2cm]
\hline
&&&&&&&\\[-.2cm]
&\text{Size of C.C.} & 1 & (q^2-1)/2 & (q^2-1)/2 & q(q+1)  &q(q-1) & q(q-1)/2 \\
&&&&&&&\\[-.2cm]
\hline
\text{Chars} & \text{ Nr. of Chars} &&&&&&\\
\hline
 \id & 1 & 1 & 1 & 1 & 1 & 1 & 1 \\ [.1cm]
 \psi  & 1 & q & 0 & 0 & 1 & -1 & -1 \\ [.2cm]
 \chi_i  & (q-3)/4 & q+1 & 1 & 1 & \rho^{i\ell}+\rho^{-i\ell} & 0 & 0 \\ [.2cm]
\theta_j & (q-3)/4 & q-1 & -1 & -1 & 0 & -\sigma^{jm}-\sigma^{-jm} &  -\sigma^{j\frac{q+1}{4}}-\sigma^{-j\frac{q+1}{4}} \\ [.15cm]
\varphi_1 & 1 & (q-1)/2 & \frac{-1+\sqrt{-q}}{2} & \frac{-1-\sqrt{-q}}{2} & 0 & (-1)^{m+1} & (-1)^{\frac{q+1}{4}+1} \\ [.15cm]
 \varphi_2  & 1 & (q-1)/2 & \frac{-1-\sqrt{-q}}{2} & \frac{-1+\sqrt{-q}}{2} & 0 & (-1)^{m+1} & (-1)^{\frac{q+1}{4}+1} \\ [.15cm]
\hline
\end{array}
\]
\caption{Character table of $\PSL(2,q)$ for $q\equiv3$ (mod $4$).}
\label{char_psl_3}
\end{table}

In both tables $\nu$ represents a generator of the cyclic multiplicative group $\F_q^*$, $\rho\in \mathbb{C}$ is a $(q-1)$-th root
of unity, $\sigma\in \mathbb{C}$ is a $(q+1)$-th root of unity and $b$ is a matrix in $\SL(2,q)$ of order $q+1$. In Table~\ref{char_psl_1}, $i=2,4,6,\ldots,(q-5)/2$, $j=2,4,6,\ldots,(q-1)/2$, $1\leq \ell \leq (q-5)/4$ and $1\leq m\leq (q-1)/4$, while in Table~\ref{char_psl_3}, $i=2,4,6,\ldots,(q-3)/2$, $j=2,4,6,\ldots,(q-3)/2$, $1\leq \ell \leq (q-3)/4$ and $1\leq m\leq (q-3)/4$.

If $q\equiv 1 \pmod{4}$ the only derangement conjugacy classes of $G_q$ are the ones with representatives $b, b^2,\ldots, b^{\frac{q-1}{4}}$. Therefore, in this case we obtain
\[
\eta_{\id}=\frac{q(q-1)^2}{4}, \quad \eta_\psi=-\frac{(q-1)^2}{4}, \quad \eta_{\chi_i} = \eta_{\xi_1} = \eta_{\xi_2} =0.
\]
Using a similar method as for the case where $q$ is even, we observe that
\[
\eta_{\theta_j}=q.
\]
This implies that $\tau=\eta_\psi$ is the least eigenvalue of $\Gamma_{G_q}$.

If $q\equiv 3 \pmod{4}$, then the derangement conjugacy classes are the ones with representatives $b,b^2,\ldots, b^{\frac{q-3}{4}},
b^{\frac{q+1}{4}}$. So we have that 
\[
\eta_{\id} = \frac{q(q-1)^2}{4}, \quad \eta_{\psi} = -\frac{(q-1)^2}{4}, \quad \eta_{\chi_i}=0, \quad \eta_{\varphi_1} = \eta_{\varphi_2} =q.
\]
(calculating $\eta_{\varphi_i}$ requires considering the case where $q \equiv 1 \pmod{8}$ and where $q \equiv 5 \pmod{8}$ separately).

To calculate $ \eta_{\theta_j}$, similar to the case where $q$ is even, we use the fact that $\theta_j$ is orthogonal to the trivial character to get
\[
\eta_{\theta_j}=q.
\]
We have therefore showed that for any odd $q$, the least eigenvalue of $\Gamma_{G_q}$ is $\tau=-\frac{(q-1)^2}{4}$. As well, we have proved the following.
\begin{cor}\label{spectrum}\begin{enumerate}[(i)]
\item If $q$ is even, then the spectrum of $\Gamma_{G_q}$ is
\[
\left(
\begin{array}{cccc}
  \frac{q^2(q-1)}{2} & -\frac{q(q-1)}{2} & q & 0 \\[.2cm]
  1 & q^2 &\frac{q(q-1)^2}{2} & \frac{(q+1)^2(q-2)}{2}
\end{array}\right);
\]
\item If $q$ is odd, then the spectrum of $\Gamma_{G_q}$ is
\[
\left(
\begin{array}{cccc}
  \frac{q(q-1)^2}{4} & -\frac{(q-1)^2}{4} & q & 0 \\[.2cm]
  1 & q^2 &\frac{(q-1)^3}{4} & \frac{(q+1)^2(q-3)}{4}
\end{array}\right).
\]
\end{enumerate}
\end{cor}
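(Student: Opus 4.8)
The plan is to apply Theorem~\ref{Diaconis} directly to the normal Cayley graph $\Gamma_{G_q}=\Gamma(G_q;\dd_{G_q})$. Since $\dd_{G_q}$ is a union of conjugacy classes, the eigenvalues are indexed by the irreducible characters $\chi$ of $G_q$ via
\[
\eta_\chi=\frac{1}{\chi(\id)}\sum_{c\subseteq \dd_{G_q}}|c|\,\chi(\sigma_c),\qquad \sigma_c\in c,
\]
and $\eta_\chi$ contributes multiplicity $\chi(\id)^2$, with the multiplicities of coinciding eigenvalues added together. The first step is to pin down $\dd_{G_q}$: the classes of type $b^m$ are exactly the derangement classes, since $b$ has order $q+1$ and acts without fixed points on $\p_q$, while the identity, transvection, and split-torus classes all fix points. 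This has already been recorded for each congruence class of $q$ in the discussion preceding the statement.

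Second, I would evaluate $\eta_\chi$ for each type of character by reading values off Tables~\ref{char_psl_even}, \ref{char_psl_1} and \ref{char_psl_3}. For $\id$, $\psi$, $\chi_i$ (and $\xi_1,\xi_2$ when $q\equiv1\pmod 4$) the value of $\chi$ on every class $b^m$ is one of $1,-1,0$, so these eigenvalues come out immediately as $\eta_{\id}$, $\eta_\psi$ and $0$. The genuinely delicate characters are $\theta_j$ (and $\varphi_1,\varphi_2$ when $q\equiv3\pmod 4$), whose value on $b^m$ is an irrational sum $-\sigma^{jm}-\sigma^{-jm}$ of $(q+1)$-th roots of unity. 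Rather than summing these geometric series directly, I would use that $\theta_j$ (resp.\ $\varphi_i$) is orthogonal to the trivial character (Theorem~\ref{orth_chars}): expanding $0=\sum_{c}|c|\,\theta_j(\sigma_c)$ over \emph{all} classes and isolating the derangement part against the contributions of the identity and transvection classes yields $\sum_{c\subseteq\dd_{G_q}}|c|\,\theta_j(\sigma_c)=q\,\theta_j(\id)$ after simplification, whence $\eta_{\theta_j}=q$, and similarly $\eta_{\varphi_i}=q$. This reproduces the four distinct eigenvalues $\tfrac{q^2(q-1)}{2}$ (resp.\ $\tfrac{q(q-1)^2}{4}$), $-\tfrac{q(q-1)}{2}$ (resp.\ $-\tfrac{(q-1)^2}{4}$), $q$ and $0$.

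Third, I would assemble the multiplicities by grouping characters according to their eigenvalue and summing $\chi(\id)^2$, using the counts and dimensions in the ``Nr.\ of Chars'' columns. The value $q$ collects all the $\theta_j$ (together with $\varphi_1,\varphi_2$ when $q\equiv3\pmod4$) and the value $0$ collects all the $\chi_i$ (together with $\xi_1,\xi_2$ when $q\equiv1\pmod4$); for instance in the even case the multiplicity of $q$ is $\tfrac{q}{2}(q-1)^2=\tfrac{q(q-1)^2}{2}$ and that of $0$ is $\tfrac{q-2}{2}(q+1)^2$. A convenient check at each stage is that the multiplicities sum to $\sum_\chi\chi(\id)^2=|G_q|$ (Theorem~\ref{sum_of_dim_of_irrs} and Lemma~\ref{size_PSL}); and a direct computation shows that the cases $q\equiv1$ and $q\equiv3\pmod4$ produce identical arrays, which is why a single spectrum is stated for all odd $q$.

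The main obstacle I expect is the bookkeeping in the orthogonality argument for the irrational characters $\theta_j$ and $\varphi_i$: one must account correctly for the contributions of \emph{every} non-derangement class (not only the identity) when clearing the root-of-unity sums, and for $\varphi_1,\varphi_2$ the relevant evaluations depend on $q$ modulo $8$, so those subcases must be separated. Once these character sums are evaluated, the remaining steps are purely a matter of collecting dimensions and verifying the total against $|G_q|$.
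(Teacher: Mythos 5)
Your proposal is correct and follows essentially the same route as the paper: the paper also identifies the $b^m$ classes as the derangement classes, reads the easy eigenvalues off Tables~\ref{char_psl_even}, \ref{char_psl_1} and \ref{char_psl_3}, evaluates $\eta_{\theta_j}$ (and $\eta_{\varphi_i}$) via orthogonality with the trivial character rather than summing roots of unity, and then the multiplicities follow from Theorem~\ref{Diaconis} exactly as you describe. Your explicit tally of the multiplicities and the check against $|G_q|$ is a slightly fuller write-up of what the paper leaves implicit, but it is the same argument.
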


\subsection{Main results}

In this section, using the machinery provided in Subsection~\ref{PSL_properties}, we prove our main result towards solving Conjecture~\ref{PSL_main}. We start with showing that $G_q$ has the EKR property. 

\begin{prop}\label{bound_for_all_q} For any $q$, the group $G_q$ has the EKR property.
\end{prop}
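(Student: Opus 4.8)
The plan is to establish condition (a) of the module method by applying the ratio bound for independent sets (Theorem~\ref{ratio2}) directly to the derangement graph $\Gamma_{G_q}$, whose complete spectrum has just been computed in Corollary~\ref{spectrum}. Recall that $\Gamma_{G_q}$ is a normal Cayley graph, hence vertex-transitive, and that it is regular of valency $k=|\dd_{G_q}|=\eta_{\id}$. Since $G_q$ acts transitively on the projective line $\p_q$ with $|\p_q|=q+1$, every point-stabilizer has the same size $|G_q|/(q+1)$, so proving the EKR property amounts to showing that any independent set $S$ of $\Gamma_{G_q}$ satisfies $|S|\leq |G_q|/(q+1)$.

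First I would read off from Corollary~\ref{spectrum} the valency $k=\eta_{\id}$ and the least eigenvalue $\tau=\eta_\psi$ in each parity case: for even $q$ these are $k=q^2(q-1)/2$ and $\tau=-q(q-1)/2$, while for odd $q$ they are $k=q(q-1)^2/4$ and $\tau=-(q-1)^2/4$. The key observation is that in both cases the ratio $k/\tau$ simplifies to $-q$, so that $1-k/\tau=q+1$ uniformly. Substituting into the ratio bound then yields, for any independent set $S$ of $\Gamma_{G_q}$,
\[
|S|\leq \frac{|V(\Gamma_{G_q})|}{1-\frac{k}{\tau}}=\frac{|G_q|}{q+1}.
\]

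To complete the argument I would invoke Lemma~\ref{size_PSL} together with the transitivity of $G_q$ on $\p_q$ to confirm that the stabilizer of any point is an intersecting set of size exactly $|G_q|/(q+1)$, so the bound above is attained and is therefore the largest possible size of an intersecting subset. Since $\Gamma_{G_q}$ being intersecting is equivalent to being independent, this proves that $G_q$ has the EKR property.

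I do not expect a genuine obstacle at this stage: the substantive work—diagonalizing the derangement graph via Theorem~\ref{Diaconis} and the character tables, and in particular verifying that $\eta_\psi$ really is the least eigenvalue by checking $\eta_{\theta_j}=q>0$ (and $\eta_{\varphi_i}=q$ when $q\equiv 3\pmod 4$)—has already been carried out in Subsection~\ref{PSL_properties}. The remaining step is only the routine verification that $k/\tau=-q$ in each case, which makes the ratio bound collapse precisely to the point-stabilizer size; the mild care needed is simply to handle the even and odd $q$ cases separately, as their spectra differ.
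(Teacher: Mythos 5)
Your proposal is correct and follows essentially the same route as the paper: apply the ratio bound (Theorem~\ref{ratio2}) to $\Gamma_{G_q}$ using the least eigenvalue $\tau=\eta_\psi$ from Corollary~\ref{spectrum}, observe that $1-k/\tau=q+1$ in both parity cases, and conclude that the bound $|G_q|/(q+1)$ equals the size of a point-stabilizer. The paper writes out only the even case and declares the odd case similar, whereas you carry both through explicitly; the content is the same.
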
 
\begin{proof} 
Assume that $q$ is even. Since $G_q$ acts  transitively on $\p_q$, using the ``orbit-stabilizer'' theorem, the stabilizer of a point under the action of $G_q$ on $[q+1]$, has size
\[
 \frac{|G_q|}{q+1}=\frac{(q+1)q(q-1)}{(q+1)}=q(q-1).
\] 
On the other hand, according to Theorem~\ref{ratio2} and Corollary~\ref{spectrum}, any independent set $S$ of vertices of $\Gamma_{G_q}$ satisfies
\[
|S|\leq \frac{|G_q|}{1-\frac{|\dd_{G_q}|}{\tau}}=\frac{(q+1)q(q-1)}{1-\frac{q^2(q-1)/2}{-q(q-1)/2}}=q(q-1).
\]
The case of $q$ odd is similar.
\end{proof}
Also, using the second part of Theorem~\ref{ratio2}, it is  straight-forward to verify condition (b) for $G_q$.
\begin{lem}\label{module_condition_PSL} Let $S$ be a maximum intersecting set in $G_q$. Then $v_S$ is in the direct sum of the trivial and the standard modules of $G_q$.\qed
\end{lem}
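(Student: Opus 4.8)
The plan is to read this off immediately from the equality case of the ratio bound, which was already forced in the proof of Proposition~\ref{bound_for_all_q}. There we showed that every maximum intersecting set $S$ of $G_q$ — equivalently, every maximum independent set of $\Gamma_{G_q}$ — meets the bound of Theorem~\ref{ratio2} with equality. The second statement of that theorem then yields
\[
A(\Gamma_{G_q})\left(v_S-\frac{|S|}{|G_q|}\mathbf{1}\right)=\tau\left(v_S-\frac{|S|}{|G_q|}\mathbf{1}\right),
\]
so $v_S-\frac{|S|}{|G_q|}\mathbf{1}$ is a $\tau$-eigenvector of the derangement graph. Since $\mathbf{1}$ spans the trivial module (the eigenspace of the valency $|\dd_{G_q}|$), this places $v_S$ in the direct sum of the trivial module and the $\tau$-eigenspace of $\Gamma_{G_q}$.

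It then remains only to identify the $\tau$-eigenspace with the standard module. First I would note that, since $G_q$ acts $2$-transitively on $\p_q$, a set of size $n=q+1$, Proposition~\ref{standard_is_irr} guarantees that the standard representation is irreducible, of dimension $n-1=q$ by Lemma~\ref{dim_of_standard_char}. Computing the eigenvalue it contributes via Theorem~\ref{Diaconis}: every derangement $s$ satisfies $\fix(s)=0$, so the standard character takes the value $\fix(s)-1=-1$ on $\dd_{G_q}$, giving eigenvalue $-|\dd_{G_q}|/q$. Reading $|\dd_{G_q}|=\eta_{\id}$ off Corollary~\ref{spectrum}, this equals $-q(q-1)/2$ when $q$ is even and $-(q-1)^2/4$ when $q$ is odd, i.e.\ exactly $\eta_\psi=\tau$ in both cases. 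Hence $\psi$ is precisely the standard character, and the $\psi$-module is the standard module.

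Finally I would confirm that no other irreducible character contributes to the eigenvalue $\tau$, so that the $\tau$-eigenspace is \emph{exactly} the standard module and nothing larger; otherwise $v_S$ could pick up components from an unwanted module. This too is immediate from Corollary~\ref{spectrum}: the multiplicity listed for $\tau$ is $q^2=\psi(\id)^2$, which by the multiplicity formula in Theorem~\ref{Diaconis} is accounted for entirely by the single character $\psi$. There is essentially no obstacle in this lemma — the argument is pure bookkeeping on top of the spectrum already computed in Subsection~\ref{PSL_properties} — and the only point requiring genuine care is verifying the numerical identity $\tau=-|\dd_{G_q}|/q$ separately for $q$ even and $q$ odd, which the spectrum makes routine.
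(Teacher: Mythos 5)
Your proposal is correct and follows exactly the route the paper intends: the paper dismisses this lemma with the single remark that it is ``straight-forward'' from the second part of Theorem~\ref{ratio2}, and your argument simply fills in the bookkeeping — equality in the ratio bound forces $v_S-\frac{|S|}{|G_q|}\mathbf{1}$ into the $\tau$-eigenspace, and the spectrum computed in Subsection~\ref{PSL_properties} (with $\tau=\eta_\psi$ of multiplicity $q^2=\psi(\id)^2$) identifies that eigenspace with the standard module. No gaps.
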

Therefore, in order to apply the module method, we only need to verify (c) for $G_q$. We will prove that if $q$ is even then condition (c) holds,  but for $q$ odd, will leave condition (c) as a conjecture; that is, we propose 
\begin{conj}\label{M_fullrank_PSL} If $q$ is odd, then the matrix $M$ for $G_q$ has full rank.
\end{conj}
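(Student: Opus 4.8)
The plan is to follow the strategy of Proposition~\ref{fullrank_alt} for the alternating group: rather than attacking $M$ directly, I would select a conjugacy class $c$ of derangements of $G_q=\PSL(2,q)$, let $M_c$ be the submatrix of $M$ consisting of the rows indexed by $c$, and prove that the Gram matrix $T=M_c^\top M_c$ is positive definite. Since a positive definite $T$ forces $M_c$ (and hence $M$) to have full column rank, this suffices. From the character tables (Table~\ref{char_psl_1}, Table~\ref{char_psl_3}) the derangements of $G_q$ are precisely the elliptic elements, lying in the classes with representatives $b^m$; each such element acts on $\p_q\cong[q+1]$ as a product of disjoint cycles of equal length dividing $q+1$, so there is a natural analogue of the ``long cycle'' rows used in the $\alt(n)$ argument, and indeed the maximum cliques underlying Corollary~\ref{spectrum} are built from exactly these elements.

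The entries of $T$ are counts of the form $|\{g\in c:\ g(i)=j,\ g(k)=l\}|$. Because $G_q$ is $2$-transitive, the single-image count $|\{g\in c:\ g(i)=j\}|$ is a constant $a$ over all off-diagonal pairs $(i,j)$, and it vanishes on the diagonal since $c$ consists of derangements. For the joint counts I would classify the ordered four-tuples $(i,j,k,l)$ according to which coordinates coincide, giving the familiar configurations: the swap type ($i=l$, $j=k$), the two ``chained'' types ($j=k$ or $i=l$ with the remaining points distinct), and the fully disjoint type $\{i,j\}\cap\{k,l\}=\emptyset$. Assembling these yields an expression $T=\sum_\ell \gamma_\ell\, B_\ell$, where the $B_\ell$ are the $01$ orbital matrices of the coherent configuration induced by $G_q$ on the ordered pairs of distinct points of $[q+1]$. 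One would then diagonalise this configuration and read off the eigenvalues of $T$ from those of the $B_\ell$, the goal being to show that every eigenvalue is strictly positive.

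The hard part is exactly the step that was free in the $\alt(n)$ case. There the abundance of suitable long cycles forced all of the ``edge'' configuration counts to coincide and the swap count to vanish, collapsing $T$ into the clean form $(n-2)!\,I+(n-3)!\,A(X_n)$, whose least eigenvalue is controlled by the pairs-graph bound of Lemma~\ref{least_eval_of_X_n}. For $\PSL(2,q)$, which is only $2$-transitive and which has two orbits on ordered triples when $q$ is odd, the configuration counts genuinely differ and the swap count need not be zero, so $T$ is an honest combination of several orbital matrices rather than a shift of $A(X_n)$. Establishing positivity then requires the full spectral data of the ordered-pairs configuration, and these eigenvalues are governed by the same irrational character values ($\sqrt{q}$ for $q\equiv 1\pmod 4$, $\sqrt{-q}$ for $q\equiv 3\pmod 4$) that distinguish the two odd character tables; the resulting inequalities do not reduce to a single estimate like the one in Lemma~\ref{least_eval_of_X_n}. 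This arithmetic sensitivity is what lets the even case go through cleanly while leaving the odd case open, and it is the reason the statement is posed as Conjecture~\ref{M_fullrank_PSL} rather than proved; verifying that condition (c) of Theorem~\ref{module_method_thm} holds for odd $q$ is precisely the content I would have to push through to complete the argument.
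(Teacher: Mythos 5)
The statement you are addressing is stated in the paper as Conjecture~\ref{M_fullrank_PSL}, and the paper gives no proof of it: the text explicitly leaves condition (c) of the module method open for odd $q$, proves the full-rank statement only for even $q$ (Lemma~\ref{full_rank_even}, via the explicit Gram matrix of Lemma~\ref{entries_N} and the pairs-graph bound of Lemma~\ref{least_eval_of_X_n}), and reports only a computer verification of the odd case for prime powers up to $31$. Your proposal likewise does not prove the statement; it is an outline that ends by conceding that the decisive positivity step is not established. So there is no proof on either side to compare, and your submission should not be read as resolving the conjecture.

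That said, your diagnosis of the obstruction is accurate and consistent with why the paper stops where it does. For even $q$ the group is $\PGL(2,q)$, which is sharply $3$-transitive, so the joint counts $|\{g\in c: g(i)=j,\ g(k)=l\}|$ collapse to a single off-diagonal value and the swap count vanishes, yielding $N=\frac{q(q-1)}{2}I+\frac{q}{2}A(X_n)$ exactly as in the $\alt(n)$ argument of Proposition~\ref{fullrank_alt}. For odd $q$ the group is merely $2$-transitive with two orbits on ordered triples of distinct points, so $M^\top M$ is a genuine linear combination of several orbital matrices of the coherent configuration on ordered pairs, its coefficients depend on which split class $c$ one chooses, and no single estimate of the type in Lemma~\ref{least_eval_of_X_n} controls the least eigenvalue. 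If you wish to turn your sketch into a proof, the concrete missing content is the computation of the spectra of those orbital matrices (equivalently, the decomposition of the permutation module of $G_q$ on ordered pairs of distinct points of $\p_q$ into irreducibles and the corresponding eigenvalue formulas involving the characters $\xi_i$, $\varphi_i$, $\theta_j$ of Tables~\ref{char_psl_1} and~\ref{char_psl_3}), followed by a verification that the resulting eigenvalues of $T$ are all strictly positive for every odd prime power $q$. Until that is carried out, the statement remains exactly what the paper says it is: a conjecture.
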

Now we consider the case where $q$ is even. Note that in this case, $G_q=PGL(2,q)$ and the matrix $M$ was shown in \cite[Proposition
9]{MeagherS11} to have full rank. We offer another, simpler proof of this result.

Define $N=M^TM$; then $N$ is a positive semi-definite matrix of order $q(q-1)$, whose entry $N_{(x,y),(w,z)}$ is the number of derangements mapping $x\mapsto y$ and $w\mapsto z$. Since $N$ and $M$ have the same rank, it is sufficient to prove that $N$ is invertible. We restate Proposition 9 from \cite{MeagherS11}.

\begin{lem}\label{entries_N}
The entries of $N$ are given by
\[
N_{(x,y),(w,z)}=
\begin{cases}
\frac{q(q-1)}{2},& \text{if }\,\, x=w \,\, \text{and}\,\, $y=z$;\\
0,& \text{if }\,\, x=w \,\, \text{and}\,\, y\neq z;\\
0,& \text{if }\,\, x\neq w \,\, \text{and}\,\, y= z;\\
0,& \text{if }\,\, x=z \,\, \text{and}\,\, y=w;\\
\frac{q}{2}, & \text{otherwise}.\qed
\end{cases}
\]
\end{lem}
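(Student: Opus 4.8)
The plan is to read the matrix entry directly. Since the rows of $M$ are indexed by the derangements $\dd_{G_q}$ and the $(d,(x,y))$ entry is $1$ exactly when $d(x)=y$, forming $N=M^\top M$ gives
\[
N_{(x,y),(w,z)}=\left|\{\,d\in\dd_{G_q}\ :\ d(x)=y,\ d(w)=z\,\}\right|.
\]
Throughout I will use three facts about the even-$q$ case: that $G_q=\PGL(2,q)$ acts sharply $3$-transitively on $\p_q$ (hence $2$-transitively); that $|\dd_{G_q}|=q^2(q-1)/2$; and that every derangement lies in one of the classes $b,\dots,b^{q/2}$ and so has order dividing $q+1$, which is \emph{odd}. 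This last fact is what makes the argument specific to characteristic $2$.

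Four of the five cases are immediate. If $x=w$ and $y=z$, I am counting derangements sending $x$ to a fixed $y\neq x$; by $2$-transitivity this number is independent of the ordered pair $(x,y)$ (conjugate by an element carrying one pair to another, which preserves being a derangement), and summing over the $q$ admissible targets $y\neq x$ recovers all of $\dd_{G_q}$, so each term equals $|\dd_{G_q}|/q=q(q-1)/2$. If $x=w$ but $y\neq z$, no map can send $x$ to two distinct points, so the count is $0$; dually, if $y=z$ but $x\neq w$, injectivity forbids it and the count is $0$. The swap case $x=z,\ y=w$ would force $d$ to interchange $x$ and $y$, i.e.\ to contain a transposition; but a derangement has odd order, so this is impossible and the count is again $0$.

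The substance is the remaining ``otherwise'' entry, where $x\neq w$, $y\neq z$, and we are not in the swap case, and I must show the count is $q/2$. Here I would pass to the M\"obius model, writing elements of $G_q$ as $t\mapsto(at+b)/(ct+d)$ on $\p_q=\F_q\cup\{\infty\}$. Using $2$-transitivity I normalise the two source points to $x=0,\ w=\infty$ (or $x=0,\ y=\infty$ in the sub-configuration $y=w$), which pins down $a/c$ and $b/d$ and leaves a one-parameter family of $q-1$ transformations meeting both prescribed conditions. For each parameter value the fixed points are the roots in $\p_q$ of a quadratic $t^2+\beta t+\gamma$ (with $c\neq 0$ throughout, so $\infty$ is never fixed); in characteristic $2$ such a quadratic with $\beta\neq 0$ has a root in $\F_q$ precisely when $\Tr_{\F_q/\F_2}(\gamma/\beta^2)=0$, by the Artin--Schreier criterion. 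Hence the transformation is a derangement exactly when $\Tr_{\F_q/\F_2}(\gamma/\beta^2)=1$, and a short manipulation rewrites $\gamma/\beta^2$ as a \emph{bijective} image of the parameter, reducing the count to the number of parameter values on which the absolute trace equals $1$.

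The delicate point, and the step I expect to be the main obstacle, is the bookkeeping at the boundary of the parameter range. Since $\Tr_{\F_q/\F_2}$ is a surjective $\F_2$-linear functional, it takes each value exactly $q/2$ times on $\F_q$; the claimed value $q/2$ therefore follows \emph{provided} the finitely many excluded parameter values (those making the map undefined, degenerate, or parabolic, e.g.\ the value forcing $\beta=0$) all fall on the trace-$0$ side. Verifying this amounts to checking identities such as $\Tr_{\F_q/\F_2}(\mu^2+\mu)=0$ for the excluded arguments, which hold because $\Tr(\alpha^2)=\Tr(\alpha)$. Carrying this out for each of the three sub-configurations of the ``otherwise'' case, and confirming that the excluded values never carry trace $1$, pins the count at exactly $q/2$ and completes the lemma.
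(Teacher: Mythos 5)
First, a point of reference: the paper does not actually prove this lemma --- it is stated with a \qed as a restatement of Proposition~9 of \cite{MeagherS11}, so there is no in-house argument to compare yours against. Measured on its own terms, your proposal is the right way to prove it, and four of the five cases are complete and correct: the identification $N_{(x,y),(w,z)}=|\{d\in\dd_{G_q}: d(x)=y,\ d(w)=z\}|$, the conjugation/constancy argument giving $|\dd_{G_q}|/q=q(q-1)/2$ on the diagonal, the two zero cases from well-definedness and injectivity, and the swap case via the observation that every derangement lies in a class $b^m$ with $b$ of odd order $q+1$, so no derangement can contain a $2$-cycle. That last point is genuinely the even-characteristic input and you have isolated it correctly.

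The gap is that the only case carrying real content --- the ``otherwise'' count of $q/2$ --- is left as a program rather than a proof. You correctly reduce it to: among the $q-1$ elements of $G_q$ satisfying the two prescribed conditions (2-transitivity gives $|G_q|/(q+1)q=q-1$), exactly $q/2$ must satisfy $\Tr_{\F_q/\F_2}\bigl(cb/(a+d)^2\bigr)=1$, and you correctly identify the two things that must be checked: that the trace argument is a bijective function of the parameter, and that the single excluded parameter value lands on the trace-$0$ side. But neither is verified, and the verification is not uniform: the ``otherwise'' entry splits into three sub-configurations ($y=w$, $x=z$, and four distinct points), and in the four-distinct-points case the quadruple $(x,y,w,z)$ is \emph{not} determined up to the action of $G_q$ (sharp $3$-transitivity leaves a cross-ratio-type invariant free), so the count must be shown to be independent of that invariant --- precisely the computation you defer. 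Since $q-1=q/2+(q/2-1)$, an error of one in the boundary bookkeeping changes the answer, so this step cannot be waved through. Carrying out the explicit parametrization and trace identities (as Meagher and Spiga do in proving their Proposition~9) is what remains to turn your outline into a proof.
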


Thus, one can write $N$ in the following form:
\begin{equation}\label{N_and_A(X)}
N=\frac{q(q-1)}{2}\, I\,+\, \frac{q}{2}\, A(X_n),
\end{equation}
where $I$ is the identity matrix of order $q(q-1)$ and $A(X_n)$ is the adjacency matrix of the pairs graph defined in Section~\ref{spectral_graph_theory}.  Now determining the rank of the matrix $M$ for $G_q$ is straight-forward.
\begin{lem}\label{full_rank_even}
If $q$ is even, then the matrix $M$ has full rank.
\end{lem}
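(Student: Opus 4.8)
The plan is to run the same positive-definiteness argument used for the alternating group in Proposition~\ref{fullrank_alt}, now powered by the explicit shape of $N=M^\top M$ recorded in Lemma~\ref{entries_N}. The first observation is that it suffices to prove $N$ is nonsingular: since $N=M^\top M$, we have $\ker M=\ker N$, so $M$ and $N$ have the same rank, and $M$ attains full column rank (namely $q(q-1)$) exactly when $N$ is invertible. Because $N$ is real symmetric, invertibility will follow at once from showing that $N$ is positive definite, i.e. that its least eigenvalue is strictly positive.

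To that end I would start from the decomposition already established in (\ref{N_and_A(X)}),
\[
N=\frac{q(q-1)}{2}\,I+\frac{q}{2}\,A(X_{q+1}),
\]
where $I$ has order $q(q-1)$ and $X_{q+1}$ is the pairs graph on ordered pairs from $[q]$ (recall $G_q\le\sym(q+1)$, so here $n=q+1$). The eigenvalues of $N$ are then $\frac{q(q-1)}{2}+\frac{q}{2}\mu$, where $\mu$ ranges over the eigenvalues of $A(X_{q+1})$; in particular the least eigenvalue of $N$ is controlled by the least eigenvalue of $A(X_{q+1})$.

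The key input is Lemma~\ref{least_eval_of_X_n}, applied with $n=q+1$: the least eigenvalue of $X_{q+1}$ is at least $-(n-3)=-(q-2)$. Substituting this bound gives that the least eigenvalue of $N$ is at least
\[
\frac{q(q-1)}{2}-\frac{q}{2}(q-2)=\frac{q\bigl[(q-1)-(q-2)\bigr]}{2}=\frac{q}{2}>0.
\]
Hence every eigenvalue of $N$ is positive, so $N$ is positive definite and therefore nonsingular, and $M$ has full rank, completing the proof.

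I do not anticipate a genuine obstacle here: the entire computational heart of the statement has been pre-packaged into Lemma~\ref{entries_N} (equivalently (\ref{N_and_A(X)})) and into the clique-based eigenvalue estimate of Lemma~\ref{least_eval_of_X_n}. The only thing one must be slightly careful about is that the bound is tight in the sense that it yields exactly $q/2$, so the argument genuinely needs the sharp inequality from Lemma~\ref{least_eval_of_X_n} rather than a cruder estimate; any weaker lower bound on the least eigenvalue of $X_{q+1}$ would not obviously suffice. Once the correct substitution $n=q+1$ is made, the arithmetic closes cleanly.
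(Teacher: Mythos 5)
Your proposal is correct and follows essentially the same route as the paper: reduce to the nonsingularity of $N=M^\top M$, invoke the decomposition (\ref{N_and_A(X)}), and apply Lemma~\ref{least_eval_of_X_n} with $n=q+1$ to bound the least eigenvalue of $N$ below by $\tfrac{q(q-1)}{2}-\tfrac{q}{2}(q-2)=\tfrac{q}{2}>0$. The only difference is presentational — you make the substitution $n=q+1$ and the positive-definiteness framing explicit, which the paper leaves implicit.
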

\begin{proof}
By Lemma~\ref{least_eval_of_X_n}, the least eigenvalue of $X_n$ is greater than or equal to $-(n-3)$; hence according to (\ref{N_and_A(X)}) the least eigenvalue of $N$ is at least
\[
\frac{q(q-1)}{2}\,-\, \frac{q}{2}(q-2)=\frac{q}{2}.
\]
This shows that $N$ has no zero eigenvalue and, therefore, $M$ must be full rank.
\end{proof}

We have therefore presented a simpler proof of the fact that Conjecture~\ref{PSL_main} is true if $q$ is even.
\begin{thm}\label{strict_EKR_PSL_even} If $q$ is a power of $2$, then $G_q$ has the strict EKR property.\qed
\end{thm}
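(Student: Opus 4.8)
The plan is to invoke the module method (Theorem~\ref{module_method_thm}) directly, since for $q$ even all three of its hypotheses have already been verified in the preceding results. First I would recall that $G_q=\PSL(2,q)$ acts $2$-transitively on the projective line $\p_q$, which we identify with $[q+1]$; hence $G_q$ is a $2$-transitive permutation group of degree $q+1$ and Theorem~\ref{module_method_thm} applies to it. (It is worth noting at the outset that for even $q$ we have $G_q=\SL(2,q)$ by Lemma~\ref{size_PSL}, so the spectrum used below is the one read off from Table~\ref{char_psl_even}.)

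Next I would check the three conditions in turn. Condition (a), that $G_q$ has the EKR property, is exactly Proposition~\ref{bound_for_all_q}. Condition (b), that for every maximum intersecting set $S$ the characteristic vector $v_S$ lies in the direct sum of the trivial and standard modules of $G_q$, is Lemma~\ref{module_condition_PSL}; this is itself a consequence of the equality case of the ratio bound (Theorem~\ref{ratio2}) together with the identification of the least eigenvalue $\tau=-\tfrac{q(q-1)}{2}$ recorded in Corollary~\ref{spectrum}. Condition (c), that the matrix $M$ for $G_q$ has full rank, is Lemma~\ref{full_rank_even}, which is precisely where the restriction to even $q$ enters: there $N=M^\top M$ is written as in (\ref{N_and_A(X)}) and its least eigenvalue is bounded below by $q/2>0$ using the estimate on the least eigenvalue of the pairs graph from Lemma~\ref{least_eval_of_X_n}.

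With all three conditions in hand, Theorem~\ref{module_method_thm} immediately yields that $G_q$ has the strict EKR property, completing the proof. There is essentially no remaining obstacle, since the genuine content has been pushed into the three results cited above; the proof of the theorem itself is just the assembly of these pieces. The only structural point to keep in mind is the contrast with the odd case: there the analogous conclusion would require Conjecture~\ref{M_fullrank_PSL}, namely the full rank of $M$, which is exactly the gap that keeps Conjecture~\ref{PSL_main} open, whereas for $q$ a power of $2$ that rank statement is a theorem.
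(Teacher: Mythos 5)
Your proposal is correct and is exactly the paper's argument: the theorem is stated with a \qed precisely because the preceding results — Proposition~\ref{bound_for_all_q} for condition (a), Lemma~\ref{module_condition_PSL} for condition (b), and Lemma~\ref{full_rank_even} for condition (c) — already assemble into an application of the module method (Theorem~\ref{module_method_thm}). Your identification of the full-rank lemma as the only place where evenness of $q$ is used matches the paper's framing of the odd case as Conjecture~\ref{M_fullrank_PSL}.
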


\begin{prop}\label{strict_EKR_PSL_odd} Let $q$ be odd. If the matrix $M$ for $G_q$ has full rank, then $G_q$ has the strict EKR property. \qed
\end{prop}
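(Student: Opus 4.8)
The plan is to apply the module method (Theorem~\ref{module_method_thm}) directly, since all the real work has already been done in this section. First I would recall that $G_q=\PSL(2,q)$ acts $2$-transitively on the projective line $\p_q$, which has $q+1$ points, so $G_q$ may be regarded as a $2$-transitive subgroup of $\sym(q+1)$ with its natural action on $[q+1]$; this is exactly the hypothesis under which Theorem~\ref{module_method_thm} is stated, so the module method applies. It then remains only to check that conditions (a), (b) and (c) of that theorem hold for $G_q$.

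Condition (a) is Proposition~\ref{bound_for_all_q}: $G_q$ has the EKR property. This in turn rests on the ratio bound (Theorem~\ref{ratio2}) together with the determination, in Subsection~\ref{PSL_properties}, that the least eigenvalue of $\Gamma_{G_q}$ is $\tau=-\tfrac{(q-1)^2}{4}$ for odd $q$, which makes the ratio bound equal to the size $q(q-1)$ of a point-stabilizer. Condition (b) is Lemma~\ref{module_condition_PSL}: for a maximum intersecting set $S$, the second part of Theorem~\ref{ratio2} forces $v_S$ into the direct sum of the $k$-eigenspace and the $\tau$-eigenspace of the adjacency matrix of $\Gamma_{G_q}$. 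Since $\tau=\eta_\psi$ is afforded only by the character $\psi$, and $\psi$ is the standard character (its degree is $q=(q+1)-1$ and its value at $g$ is the number of fixed points minus one), these two eigenspaces are precisely the trivial and the standard modules of $G_q$. Thus condition (b) holds.

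Finally, condition (c) asserts that the matrix $M$ for $G_q$ has full rank, and this is exactly the hypothesis of the proposition. Hence conditions (a), (b) and (c) are all in force, and Theorem~\ref{module_method_thm} yields at once that $G_q$ has the strict EKR property, completing the proof.

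The point I would stress is that there is no genuine obstacle remaining in the argument itself: the entire content of this proposition is the \emph{conditional} reduction of the strict EKR property to the single linear-algebra statement that $M$ is full rank. In contrast to the even case, where that full-rank statement was actually proved in Lemma~\ref{full_rank_even} by writing $N=M^\top M=\tfrac{q(q-1)}{2}I+\tfrac{q}{2}A(X_n)$ and invoking the eigenvalue bound for the pairs graph from Lemma~\ref{least_eval_of_X_n}, here the analogous full-rank claim for odd $q$ has been isolated as Conjecture~\ref{M_fullrank_PSL}. So the difficulty has been relocated entirely into that conjecture, and the proof of the proposition itself is simply the bookkeeping assembly of the three module-method hypotheses.
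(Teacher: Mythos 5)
Your proposal is correct and follows exactly the route the paper intends: the proposition is the assembly of Proposition~\ref{bound_for_all_q} (condition (a)), Lemma~\ref{module_condition_PSL} (condition (b)), and the full-rank hypothesis (condition (c)) into an application of Theorem~\ref{module_method_thm}, which is why the paper states it with no further proof. Your identification of $\psi$ as the standard character, so that the $\tau$-eigenspace is the standard module, correctly fills in the one detail the paper leaves implicit in Lemma~\ref{module_condition_PSL}.
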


We point out that using a computer algorithm, we have verified the truth of Conjecture~\ref{M_fullrank_PSL} for all prime powers smaller than or equal to $31$, and we have included  it as  one of the future works of this project (see Chapter~\ref{future}).
Note also that $\PSL(2,q)$ having  the strict EKR property is  very interesting since it implies by Theorem~\ref{2transitive_subgroup} that several other groups (including $\PGL(2,q)$) also have the strict EKR property.
In fact, Theorem~\ref{strict_EKR_PSL_even} and Theorem~\ref{2transitive_subgroup} provide
an alternative proof of the fact that $\sym(2^k+1)$ has the strict EKR property, for any $k\geq 0$.

\section{EKR for some sporadic permutation groups}\label{sporadic}

In this section, using the module method, we establish the strict EKR property for a celebrated family of sporadic groups, namely the Mathieu groups. In fact, we show that the $2$-transitive Mathieu groups have the strict EKR property. Then we conclude that all $4$-transitive permutation groups have the strict EKR property. We also provide examples of groups which do not have either the EKR or the strict EKR property. Since the family of Mathieu groups is finite, the main approach of this problem uses a computer program to show some facts; in particular, proving that the condition (c) of Theorem~\ref{module_method_thm}, i.e. the fact that the matrix  $M$ has full rank, is mainly handled by a computer program. All of these programs have been run in the \textbf{GAP} programming system \cite{GAP4}.

Following the standard notation, we will denote the Mathieu group of degree $n$ by $M_n$. Note that $M_n\leq \sym(n)$ and we consider the natural action of $M_n$ on the set $[n]$, as usual.
\textsl{Sporadic groups}\index{sporadic groups} are the $26$ finite simple groups which show up in the classification non-abelian simple groups (see \cite[Theorem 4.9]{cameron1999permutation}). Five of the sporadic groups were discovered by Mathieu in the 1860s, namely $M_{11}, M_{12}, M_{22}, M_{23}$ and $M_{24}$. Then the Mathieu groups $M_{10}$, $M_{20}$ and $M_{21}$ were defined to be the point-stabilizers in the groups $M_{11}$, $M_{21}$ and $M_{22}$, respectively. Note that these are not sporadic simple groups.
 Table~\ref{transitivity_mathieu} lists some of the properties of the Mathieu groups which will be useful for our purpose (see \cite{cameron1999permutation}).

\begin{table}[ht!]
\[
\begin{tabular}{|c|c|c|c|}
\hline
{\bf Group} & {\bf Order} & {\bf Transitivity} & {\bf Simplicity}\\
\hline
$M_{10}$ & 720 & sharply 3-transitive & not simple\\
\hline
$M_{11}$ & 7920 & sharply 4-transitive & simple\\
\hline
$M_{12}$ & 95040 & sharply 5-transitive & simple\\
\hline
$M_{20}$ & 960 & 1-transitive & not simple\\
\hline
$M_{21}$ & 20160 & 2-transitive & simple\\
\hline
$M_{22}$ & 443520 & 3-transitive & simple\\
\hline
$M_{23}$ & 10200960 & 4-transitive & simple\\
\hline
$M_{24}$ & 244823040 & 5-transitive & simple\\
\hline
\end{tabular}
\]
\caption{Order and transitivity table for Mathieu groups}
\label{transitivity_mathieu}
\end{table}

Note that the only Mathieu group not listed in Theorem~\ref{main_mathieu} is $M_{20}$ which is not 2-transitive.  The main theorem of this section is that all the Mathieu groups have the strict EKR property except $M_{20}$.
\begin{thm}\label{main_mathieu}
The Mathieu groups $M_n$, for $n\in \{10,11,12,21,22,23,24\}$, have the strict EKR property.
\end{thm}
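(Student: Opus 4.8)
The plan is to verify, for each of the seven groups $M_n$ with $n\in\{10,11,12,21,22,23,24\}$, the three hypotheses of the module method (Theorem~\ref{module_method_thm}) and then invoke that theorem directly. The first thing to record is that all seven groups are $2$-transitive: by Table~\ref{transitivity_mathieu} the groups $M_{10},M_{22}$ are $3$-transitive, $M_{11},M_{23}$ are $4$-transitive, $M_{12},M_{24}$ are $5$-transitive, and $M_{21}$ is $2$-transitive, so in particular each acts $2$-transitively on $[n]$ and Proposition~\ref{standard_is_irr} guarantees that the standard representation is irreducible. Hence the module method is applicable, and it remains to check conditions (a), (b) and (c) group by group. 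Since the family is finite, every check reduces to a finite (if sometimes large) computation, carried out in \textbf{GAP}.

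For conditions (a) and (b) I would work from the ordinary character table of each $M_n$. Using Corollary~\ref{evals_of_der_graph} (i.e.\ Theorem~\ref{Diaconis} applied to the union $\dd_{M_n}$ of the derangement conjugacy classes), one computes the full spectrum of the derangement graph $\Gamma_{M_n}$ and identifies its least eigenvalue $\tau$. Plugging $\tau$ into the ratio bound (Theorem~\ref{ratio2}) and checking that it evaluates to exactly $|M_n|/n$, the size of a point-stabilizer, establishes that $\alpha(\Gamma_{M_n})=|M_n|/n$; this is condition (a). For condition (b), I would verify from the same character computation that $\tau$ is afforded \emph{only} by the standard representation; then the equality case of Theorem~\ref{ratio2} forces the characteristic vector of any maximum independent set into the direct sum of the $|\dd_{M_n}|$-eigenspace (the trivial module) and the $\tau$-eigenspace (the standard module), which is precisely condition (b). Should the least eigenvalue happen not to be uniquely standard for some group, I would instead fall back on the clique-coclique route used for $\alt(n)$: exhibit a clique $C$ of size $n$ in $\Gamma_{M_n}$ (so that the clique-coclique bound of Theorem~\ref{clique_coclique_bound} holds with equality, giving (a) as well), and check by computer that $\chi(C)\neq 0$ for every irreducible character $\chi$ other than the trivial and standard ones, whence Corollary~\ref{at_most_one_non-zero} yields $E_\chi v_S=0$ for every maximum independent set $S$ and again places $v_S$ in the trivial-plus-standard module.

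The genuinely hard step is condition (c): that the matrix $M$ for each $M_n$ has full rank $(n-1)(n-2)$. As in Proposition~\ref{fullrank_alt}, I would not attempt to analyze $M$ directly but instead form the Gram matrix $N=M^\top M$, a symmetric $(n-1)(n-2)\times(n-1)(n-2)$ matrix whose entry $N_{(i,j),(k,l)}$ counts the derangements of $M_n$ sending $i\mapsto j$ and $k\mapsto l$; full rank of $M$ is equivalent to nonsingularity of $N$. Unlike the alternating-group case, the entries of $N$ are not determined by transitivity alone and must be tabulated from the actual derangements, but once $N$ is assembled its nonsingularity is confirmed by computing its determinant (or checking its smallest eigenvalue is positive) in \textbf{GAP}. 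The main obstacle is sheer size: for $M_{24}$ the derangement set is enormous, so the entries of $N$ must be accumulated by iterating over conjugacy-class representatives weighted by class sizes rather than over individual group elements, and the resulting $506\times506$ integer matrix must be shown nonsingular exactly. Having established (a), (b) and (c) for each of the seven groups, Theorem~\ref{module_method_thm} immediately gives that each $M_n$ has the strict EKR property, completing the proof.
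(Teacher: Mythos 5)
Your overall architecture is exactly the paper's: verify conditions (a), (b), (c) of Theorem~\ref{module_method_thm} for each of the seven groups and invoke the module method. Your treatment of (a) and (b) — spectrum of $\Gamma_{M_n}$ via Corollary~\ref{evals_of_der_graph}, the ratio bound, uniqueness of the least eigenvalue, with the clique-coclique fallback via Corollary~\ref{at_most_one_non-zero} — matches Lemma~\ref{standard_is_least_mathieu} and the paper's general computational scheme, and your direct rank check for the smaller groups matches Lemma~\ref{M_10_11_12_21}.

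The gap is in your plan for condition (c) for $M_{23}$ and $M_{24}$ (and, less severely, $M_{22}$). First, the computational shortcut you describe does not compute anything: knowing a conjugacy-class representative and the class size does \emph{not} determine how many elements of that class send $i\mapsto j$ and $k\mapsto l$, so "iterating over conjugacy-class representatives weighted by class sizes" cannot produce the entries of $N=M^\top M$. Without that shortcut you are back to enumerating all derangements of $M_{24}$ (order $10^8$, with hundreds of increments each), which is precisely what the paper declares "not practical." The idea you are missing is the one in Lemma~\ref{M_23_24}: restrict to the rows of $M$ indexed by a \emph{single} well-chosen conjugacy class $\cc_n$ (a class of $23$-cycles for $M_{23}$, the class of products of two disjoint $12$-cycles for $M_{24}$) — full column rank of this row-submatrix already forces full rank of $M$ — and then use $4$-transitivity to show that the entry $(N_n)_{(a,b),(c,d)}$ of the Gram matrix depends only on the intersection pattern of $\{a,b\}$ and $\{c,d\}$. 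This collapses $N_n$ to the closed form
\[
N_n=\frac{t_n}{n-1}\,I+\frac{t_n}{(n-1)(n-2)}\,A(X_n),
\]
after which Lemma~\ref{least_eval_of_X_n} (the least eigenvalue of the pairs graph is at least $-(n-3)$) gives positivity of the least eigenvalue of $N_n$ with essentially no large computation. For $M_{22}$, which is only $3$-transitive, the paper still restricts to the class of products of two disjoint $11$-cycles and verifies by computer that the Gram matrix has the form $1920\,I+96\,A(X_n)$ before applying the same eigenvalue bound; your full brute-force Gram matrix would likely be feasible there, but for $M_{23}$ and $M_{24}$ your proof as written cannot be carried out.
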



 
The following fact can be verified by a computer program.
\begin{lem}\label{standard_is_least_mathieu} Let $n\in \{10,11,12,21,22,23,24\}$. Then conditions (a) and (b) of Theorem~\ref{module_method_thm} hold for $M_n$.\qed
\end{lem}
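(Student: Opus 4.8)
The statement to prove is Lemma~\ref{standard_is_least_mathieu}, which asserts that for the $2$-transitive Mathieu groups $M_n$ (with $n\in\{10,11,12,21,22,23,24\}$), conditions (a) and (b) of the module method hold: namely, each such $M_n$ has the EKR property, and the characteristic vector of any maximum intersecting set lies in the direct sum of the trivial and standard modules. The lemma is stated as being verifiable by computer, so my plan is to describe precisely what the computer must check and why those checks suffice.

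The author's overall strategy is the module method (Theorem~\ref{module_method_thm}), and the machinery for conditions (a) and (b) is already laid out in the alternating-group and $\PSL$ sections. The key observation is that $\dd_{M_n}$ is a union of conjugacy classes, so $\Gamma_{M_n}$ sits inside the conjugacy class scheme of $M_n$, and hence by Corollary~\ref{at_most_one_non-zero} the whole problem reduces to finding, for each nontrivial and non-standard irreducible character $\chi$, a maximum clique $C$ in $\Gamma_{M_n}$ with $\chi(C)\neq 0$.

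So let me describe what I would have the computer do. First, load each group $M_n$ with its natural action on $[n]$ in GAP, compute its order, its conjugacy classes, its character table, and the set $\dd_{M_n}$ of derangements (elements with no fixed point). Here is the plan in order.

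\textbf{Step 1 (condition (a), the EKR property).} Using Theorem~\ref{Diaconis} together with the character table, I would compute all eigenvalues $\eta_\chi=\frac{1}{\chi(\id)}\sum_{s\in\dd_{M_n}}\chi(s)$ of the derangement graph $\Gamma_{M_n}$, identify the least eigenvalue $\tau$, and verify via the ratio bound (Theorem~\ref{ratio2}) that
\[
\frac{|M_n|}{1-\frac{|\dd_{M_n}|}{\tau}}=\frac{|M_n|}{n},
\]
which is the size of a point-stabilizer (since $M_n$ is transitive). Equivalently, I would check that the standard character gives the least eigenvalue; this is exactly the situation exploited for $\alt(n)$ and $\PSL(2,q)$.

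\textbf{Step 2 (condition (b), the module condition).} Once the ratio bound is tight, the second part of Theorem~\ref{ratio2} immediately places the characteristic vector $v_S$ of any maximum intersecting set $S$ in the direct sum of the $|\dd_{M_n}|$-eigenspace (the trivial module) and the $\tau$-eigenspace. If the least eigenvalue is attained \emph{only} by the standard representation, then the $\tau$-eigenspace is exactly the standard module and condition (b) follows at once. The computer therefore needs to confirm that among all irreducible characters, the minimum of $\eta_\chi$ is achieved uniquely by the standard character. Should the least eigenvalue happen to be attained by more than just the standard character, I would instead follow the clique-coclique route: exhibit a maximum clique $C$ (for instance, a sharply transitive subset arising from a regular or sharply transitive subgroup, which for the sharply $k$-transitive groups like $M_{10},M_{11},M_{12}$ is readily available) so that $|C|=n$ makes the clique-coclique bound (Theorem~\ref{clique_coclique_bound}) tight, and then verify by direct character-sum computation that $\chi(C)\neq 0$ for every irreducible $\chi$ other than the trivial and standard ones; Corollary~\ref{at_most_one_non-zero} then forces $E_\chi v_S=0$, giving condition (b).

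The main obstacle, and the reason a computer is invoked rather than a uniform hand proof, is that the Mathieu groups have no common combinatorial family of maximum cliques analogous to the cycle decompositions of $K_n^\ast$ used for $\alt(n)$; one must treat each of the seven groups individually, locating a maximum clique (or confirming uniqueness of the least eigenvalue) and checking the finitely many character values case by case. Since the family is finite and the character tables are explicitly known in GAP \cite{GAP4}, these are all bounded finite computations, so the verification terminates and establishes the lemma.
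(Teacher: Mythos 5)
Your proposal is correct and matches the paper's approach: the paper gives no argument beyond ``can be verified by a computer program,'' and the methodology it describes elsewhere (Section~\ref{sporadic}, ``Computer verification of EKR and strict EKR'') is exactly your two-pronged scheme — compute the spectrum of $\Gamma_{M_n}$ from the character table via Theorem~\ref{Diaconis}, apply the ratio bound for condition (a) and its equality case for condition (b) when the least eigenvalue is attained only by the standard character, and otherwise fall back on a maximum clique together with Corollary~\ref{at_most_one_non-zero}. Your write-up is, if anything, more explicit than the paper about why these finite checks suffice (e.g.\ the paper itself notes that for $M_{10}$ the least eigenvalue is also attained by a linear character, so the clique-coclique fallback you describe is genuinely needed there).
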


\begin{lem}\label{M_10_11_12_21} $M_n$ has the strict EKR property, for $n\in \{10, 11,12,21\}$.
\end{lem}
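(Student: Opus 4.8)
The plan is to apply the module method (Theorem~\ref{module_method_thm}) to each of the four groups $M_{10}, M_{11}, M_{12}, M_{21}$ in turn. By Table~\ref{transitivity_mathieu} each of these groups is at least $2$-transitive in its natural action on $[n]$ (sharply $3$-, $4$-, and $5$-transitive for $M_{10}, M_{11}, M_{12}$, and $2$-transitive for $M_{21}$), so the standing hypothesis of Theorem~\ref{module_method_thm} is satisfied. Moreover, Lemma~\ref{standard_is_least_mathieu} already guarantees that conditions (a) and (b) of the module method hold for each $M_n$ with $n \in \{10,11,12,21\}$. Consequently the only remaining task is to verify condition (c): that the matrix $M$ for $M_n$, as constructed in Section~\ref{module_method_intro}, has full rank.

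Because each of these groups is finite and of comparatively small order ($|M_{10}| = 720$, $|M_{11}| = 7920$, $|M_{12}| = 95040$, $|M_{21}| = 20160$), I would verify condition (c) directly by explicit computation. For each group one lists the derangement set $\dd_{M_n}$, builds the $|\dd_{M_n}| \times (n-1)(n-2)$ matrix $M$, and checks that it has full column rank; equivalently, that the Gram matrix $M^\top M$ is nonsingular, which is the same reduction used in Proposition~\ref{fullrank_alt}. Since the number of columns $(n-1)(n-2)$ is small (namely $72, 90, 110, 380$ for $n=10,11,12,21$), these rank computations are entirely routine for a computer algebra system such as GAP.

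Once condition (c) is confirmed for each of the four groups, the conclusion is immediate: all three hypotheses of Theorem~\ref{module_method_thm} hold, and therefore each of $M_{10}, M_{11}, M_{12}, M_{21}$ has the strict EKR property.

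The main (and essentially only) obstacle is the verification of (c). Unlike the alternating group, where full rank of $M$ was established analytically by expressing $M^\top M$ in terms of the pairs graph $X_n$ and invoking the eigenvalue bound of Lemma~\ref{least_eval_of_X_n}, here there is no such uniform combinatorial description available, so I expect to fall back on a direct computer check. The favorable point is that these four groups are small enough for the computation to be carried out explicitly; the heavier Mathieu groups $M_{22}, M_{23}, M_{24}$, whose orders are vastly larger, cannot be handled this way and will require a separate treatment elsewhere.
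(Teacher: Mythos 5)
Your proposal is correct and matches the paper's own proof: the paper likewise invokes Lemma~\ref{standard_is_least_mathieu} for conditions (a) and (b) and then verifies condition (c) by a direct computer check that the matrix $M$ for each of $M_{10}, M_{11}, M_{12}, M_{21}$ has full rank, concluding via the module method. Your additional remarks about why a brute-force rank computation is feasible here but not for $M_{22}, M_{23}, M_{24}$ also agree with how the paper handles those larger groups separately.
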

\begin{proof} A computer code can be employed to verify that the matrix $M$ for $M_n$, for these values of $n$, has full rank. Now, the module method and Lemma~\ref{standard_is_least_mathieu} complete the proof.
\end{proof}

\begin{lem}\label{M_22}  $M_{22}$ has the strict EKR property.
\end{lem}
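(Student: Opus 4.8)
The plan is to apply the module method (Theorem~\ref{module_method_thm}) in exactly the way it was used in Lemma~\ref{M_10_11_12_21}. Since $M_{22}$ is $3$-transitive it is in particular $2$-transitive, and by Lemma~\ref{standard_is_least_mathieu} conditions (a) and (b) of Theorem~\ref{module_method_thm} already hold for $M_{22}$. Thus the whole proof reduces to verifying condition (c): that the matrix $M$ for $M_{22}$ has full rank $(n-1)(n-2)=21\cdot 20=420$, where $n=22$.

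The difficulty is that $|M_{22}|=443520$, so $M$ has $|\dd_{M_{22}}|$ rows and the brute-force rank verification used for the smaller groups in Lemma~\ref{M_10_11_12_21} becomes unwieldy. Instead I would imitate Proposition~\ref{fullrank_alt} and restrict attention to a single derangement conjugacy class $c$. A convenient choice is the fixed-point-free class of elements of order $11$, whose cycle type is $11^2$ (a product of two disjoint $11$-cycles); this is the natural analogue of the ``two $n/2$-cycles'' class used in the even case of the alternating group. Letting $M_c$ be the submatrix of $M$ consisting of the rows indexed by $c$ and setting $T=M_c^\top M_c$, it suffices to show $T$ is nonsingular, since then $M_c$, and hence $M$, has full column rank. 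The entry $T_{(i,j),(k,l)}$ counts the elements of $c$ mapping $i\mapsto j$ and $k\mapsto l$. Injectivity forces $T_{(i,j),(k,l)}=0$ whenever $i=k,\ j\neq l$ or $i\neq k,\ j=l$, and the absence of transpositions in $c$ forces the ``reversal'' entries ($i=l,\ j=k$) to vanish, while the diagonal is a positive constant; the remaining nonzero off-diagonal entries sit precisely on the edges of the pairs graph $X_{22}$ of Section~\ref{spectral_graph_theory}. If these off-diagonal values are all equal to a single constant $b$, then $T=aI+b\,A(X_{22})$, and Lemma~\ref{least_eval_of_X_n} (the least eigenvalue of $X_{22}$ is at least $-(n-3)=-19$) bounds the least eigenvalue of $T$ below by $a-19b$; showing this quantity is positive would finish the argument.

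The main obstacle is establishing that these off-diagonal counts are in fact constant. For $\sym(n)$ and $\alt(n)$ the count for a ``$2$-path'' configuration ($i\mapsto j\mapsto l$) and that for a ``$2$-matching'' configuration ($i\mapsto j$, $k\mapsto l$ disjoint) coincide, because for a single long cycle the number of ways to extend a prescribed set of directed edges to a full cycle depends only on the number of edges prescribed and not on whether they share a vertex. In $M_{22}$ this combinatorial identity is not automatic: the group is only $3$-transitive, so while the two $2$-path counts agree (the corresponding configurations lie in one orbit on three points), the $2$-matching count is a configuration on four points and is not governed by the transitivity alone.

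I would therefore evaluate these three structure constants directly. The class $c$ is small, of size $|M_{22}|/11=40320$, so forming the $420\times 420$ Gram matrix $T$ by summing the $40320$ rank-one contributions is entirely feasible even though forming the full matrix $M$ is not; alternatively one can read the counts off the character table of $M_{22}$ via its class-multiplication coefficients. If the off-diagonal counts turn out equal, the clean form $T=aI+b\,A(X_{22})$ together with Lemma~\ref{least_eval_of_X_n} gives nonsingularity immediately; if not, one falls back on verifying directly that the small matrix $T$ is nonsingular. In either case the verification of condition (c) has been reduced from the full group to a single conjugacy class and is tractable, after which Theorem~\ref{module_method_thm} and Lemma~\ref{standard_is_least_mathieu} yield the strict EKR property for $M_{22}$.
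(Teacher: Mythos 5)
Your proposal is correct and follows essentially the same route as the paper: conditions (a) and (b) come from Lemma~\ref{standard_is_least_mathieu}, and condition (c) is verified by restricting $M$ to one of the two classes of products of two disjoint $11$-cycles, computing the Gram matrix (which the paper finds by computer to be $1920\,I+96\,A(X_{22})$), and invoking Lemma~\ref{least_eval_of_X_n} to get a positive lower bound ($1920-96\cdot 19=96$) on its least eigenvalue. Your added observation that $3$-transitivity settles the $2$-path counts but not the $2$-matching count is a fair point the paper glosses over by simply citing the computer computation.
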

\begin{proof} As in the proof of Lemma~\ref{M_23_24}, it  suffices to show the matrix $M$ has full rank. Let $\mathcal{C}_{22}$ be one of the (two) conjugacy classes of $M_{22}$ whose elements are product of two disjoint $11$-cycles. Similar to the proof of Lemma~\ref{M_23_24}, set $N=M_{\mathcal{C}_{22}}^\top M_{\mathcal{C}_{22}}$. Using a computer code we can establish 
\[
N=1920\,I\,\,+\,\,96\,A(X_n),
\]
where $A(X_n)$ is the adjacency matrix of the pairs graph $X_n$ defined in Section~\ref{spectral_graph_theory}. Then Lemma~\ref{least_eval_of_X_n} shows that the least eigenvalue of $N$ is at least $1920-96(19)=96$. This shows that $N$ is non-singular and we are done.
\end{proof}

Since the groups $M_{23}$ and $M_{24}$ have huge sizes, calculating the rank of their $M$ matrices is not  practical; but thanks to their 4-transitivity, we can apply a more efficient method to show these matrices are full rank. 

\begin{lem}\label{M_23_24}
 $M_n$ has the strict EKR property, for $n\in \{23,24\}$.
\end{lem}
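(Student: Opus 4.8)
The plan is to verify the three hypotheses of the module method (Theorem~\ref{module_method_thm}) for $G = M_n$ with $n \in \{23,24\}$. Conditions (a) and (b) are already supplied by Lemma~\ref{standard_is_least_mathieu}, so the entire task reduces to establishing condition (c): that the matrix $M$ for $M_n$ has full rank, i.e.\ rank $(n-1)(n-2)$. Since $M_{23}$ and $M_{24}$ are far too large to compute the rank of $M$ directly, I would instead restrict attention to the rows of $M$ indexed by a single, carefully chosen union of derangement conjugacy classes $\mathcal{C}$, forming the submatrix $M_{\mathcal{C}}$; it suffices to show that the Gram matrix $N := M_{\mathcal{C}}^{\top} M_{\mathcal{C}}$ is nonsingular, since then the columns of $M_{\mathcal{C}}$, and hence of $M$, are linearly independent. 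For $M_{23}$ I would take $\mathcal{C}$ to be the set of all $23$-cycles, and for $M_{24}$ a fixed-point-free class all of whose cycles have length at least three (for example the class of cycle type $3^8$); the essential requirements are that $\mathcal{C}$ be closed under conjugation, consist of derangements, and contain no $2$-cycles.

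The key step is to identify $N$ explicitly. Its entry $N_{(i,j),(k,l)}$ counts the elements of $\mathcal{C}$ sending $i\mapsto j$ and $k\mapsto l$. Because permutations are single-valued and injective, this count is $0$ whenever $i=k,\,j\neq l$ or $i\neq k,\,j=l$; because $\mathcal{C}$ has no $2$-cycles, it is also $0$ for the transposition pattern $i=l,\,j=k$. For the remaining off-diagonal patterns --- the two ``chain'' patterns and the ``disjoint'' pattern that together define adjacency in the pairs graph $X_n$ --- the $4$-transitivity of $M_n$ (which it enjoys, being $4$- and $5$-transitive respectively) guarantees, by conjugating $\mathcal{C}$ by a group element realizing any prescribed relabelling of up to four points, that each count is constant across all instances of its pattern. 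A short combinatorial count then shows these constants agree: every element of $\mathcal{C}$, having all cycles of length at least three and no fixed points, contains exactly $n$ directed two-paths $i\to j\to l$ (one per point) and exactly $n(n-3)$ ordered pairs of vertex-disjoint directed edges, and dividing by the numbers of configurations of each type (governed by $3$- and $4$-transitivity) yields the common value. This gives
\[
N = a\,I + b\,A(X_n), \qquad a = \frac{|\mathcal{C}|}{\,n-1\,}, \quad b = \frac{|\mathcal{C}|}{(n-1)(n-2)},
\]
where $A(X_n)$ is the adjacency matrix of the pairs graph.

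With $N$ in this form the conclusion is immediate: by Lemma~\ref{least_eval_of_X_n} the least eigenvalue of $X_n$ is at least $-(n-3)$, so the least eigenvalue of $N$ is at least
\[
a - b(n-3) = \frac{|\mathcal{C}|}{n-1}\left(1 - \frac{n-3}{n-2}\right) = \frac{|\mathcal{C}|}{(n-1)(n-2)} > 0 .
\]
Hence $N$ is positive definite, $M_{\mathcal{C}}$ has full column rank, $M$ has full rank, and condition (c) holds; the module method then delivers the strict EKR property for $M_{23}$ and $M_{24}$. I expect the main obstacle to be the identification of $N$ as $a I + b A(X_n)$ --- specifically the verification that the chain and disjoint entries coincide and that the degenerate entries vanish. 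This is exactly the point at which $4$-transitivity and the deliberate exclusion of $2$-cycles from $\mathcal{C}$ are indispensable; once the clean two-parameter form is secured, the spectral bound on $X_n$ finishes the argument with essentially no further computation, which is what makes the method efficient enough to handle groups whose orders rule out a direct rank calculation.
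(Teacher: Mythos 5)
Your proposal is correct and follows the paper's strategy exactly: reduce to condition (c) of Theorem~\ref{module_method_thm} via Lemma~\ref{standard_is_least_mathieu}, pass to the submatrix of $M$ indexed by a single derangement class $\mathcal{C}$ with no $2$-cycles, identify the Gram matrix as $aI+bA(X_n)$ with $a=|\mathcal{C}|/(n-1)$ and $b=|\mathcal{C}|/((n-1)(n-2))$, and finish with Lemma~\ref{least_eval_of_X_n}. Two small divergences are worth noting. First, for $M_{24}$ the paper uses the class of products of two disjoint $12$-cycles rather than the class of type $3^8$; either works, since the only properties used are fixed-point-freeness, absence of $2$-cycles, and closure under conjugation. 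Second, where the paper verifies the off-diagonal constants $(N_n)_{(1,2),(2,3)}=(N_n)_{(1,2),(3,4)}=t_n/((n-1)(n-2))$ by ``a simple computer code,'' you derive them by double counting: each element of $\mathcal{C}$ contains exactly $n$ directed two-paths and exactly $n(n-3)$ ordered pairs of vertex-disjoint directed edges, and dividing by the $n(n-1)(n-2)$ and $n(n-1)(n-2)(n-3)$ configurations (using $3$- and $4$-transitivity) gives the common value. This makes your version computer-free at that step, which is a modest but genuine improvement in self-containedness; otherwise the two arguments coincide.
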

\begin{proof} Using Lemma~\ref{standard_is_least_mathieu}, one only  needs to prove that the matrix $M$ for $M_n$ has full rank. 

Let $\cc_{23}$ be one of the (two) conjugacy classes of $M_{23}$ of permutations that are $23$-cycles and let $\cc_{24}$ be the (only) conjugacy class of $M_{24}$ whose elements are the  product of two disjoint $12$-cycles. Set $t_n=|\cc_n|$, for $n=23,24$. Assume $M_{\cc_n}$ to be the submatrix of $M$, with the rows labeled by $\cc_n$ and set $N_n=M_{\cc_n}^\top M_{\cc_n}$, for $n=23,24$.  We now calculate the entries of $N$. Since $M_n$ is $4$-transitive, the entry $((a,b),(c,d))$ in $N_n$ depends only on the intersection of $\{a,b\}$ and $\{c,d\}$. To see this, consider the pairs $(a,b),(c,d)$ from $[n-1]$.  If an element $\pi\in \cc_n$ maps $a\mapsto b$ and $c\mapsto d$, then for any pairs $(a',b'),(c',d')$ of elements of $[n-1]$, the permutation $g^{-1}\pi g\in \cc_n$ maps $a'\mapsto b'$ and $c'\mapsto d'$, where $g\in M_n$ is a permutation which maps $(a',b',c',d')$ to $(a,b,c,d)$. Therefore, we have
\begin{equation}\label{entries_of_N}
\hspace{-1cm}(N_n)_{(a,b),(c,d)}=
\begin{cases}
(N_n)_{(1,2),(1,2)}, &\quad \text{if } (c,d)=(a,b);\\
(N_n)_{(1,2),(2,1)}, & \quad\text{if } (c,d)=(b,a);\\
(N_n)_{(1,2),(2,3)}, & \quad\text{if }  a\neq d \quad\text{and}\quad b=c;\\
(N_n)_{(1,2),(2,3)}, & \quad\text{if }  a=d \quad\text{and}\quad b\neq c;\\
(N_n)_{(1,2),(3,4)}, & \quad\text{if } a,b,c,d\quad\text{are distinct}.\\
\end{cases}
\end{equation}
Because of the $2$-transitivity of $M_n$, we have $(N_n)_{(1,2),(1,2)}=t_n/(n-1)$ and using a simple computer  code we can check that 
\[(N_n)_{(1,2),(2,3)}=(N_n)_{(1,2),(3,4)}=\frac{t_n}{(n-1)(n-2)}.\]
 Also since elements of $\cc_n$ do not include a cycle of length $2$ in their cycle decomposition, we have $(N_n)_{(1,2),(2,1)}=0$. Thus we can re-write (\ref{entries_of_N}) as
\[
(N_n)_{(a,b),(c,d)}=
\begin{cases}
\frac{t_n}{n-1}, & \quad\text{if } (c,d)=(a,b);\\
0, & \quad\text{if } (c,d)=(b,a);\\
\frac{t_n}{(n-1)(n-2)}, &\quad \text{otherwise }.\\
\end{cases}
\]
This means that one can write
\[
N_n=\frac{t_n}{n-1}\,I\,\,+\,\,\frac{t_n}{(n-1)(n-2)}\,A(X_n),
\]
Again, Lemma~\ref{least_eval_of_X_n} shows that the least eigenvalue of $N_n$ is at least 
\[
\frac{t_n}{n-1}\left(1-\frac{n-3}{n-2}\right)>0.
\]
We conclude that $N_n$ and, consequently, the matrix $M$ are full rank.
\end{proof}

Lemmas~\ref{M_10_11_12_21}, \ref{M_23_24} and \ref{M_22} complete the proof of Theorem~\ref{main_mathieu}.  It is well-known that the only finite $4$-transitive groups are $\sym(n)$, for $n\geq 4$, $\alt(n)$, for $n\geq 6$ and the Mathieu groups  $M_n$, for $n\in\{11,12,23,24\}$ (See \cite[Theorem 4.11]{cameron1999permutation}). This, along with the fact that the symmetric group and alternating group have the strict EKR property \cite{2013arXiv1302.7313A}, proves the following result.
\begin{cor} All 4-transitive groups have the strict EKR property.\qed
\end{cor}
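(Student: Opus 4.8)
The plan is to combine the classification of the finite $4$-transitive groups with the strict EKR results already established for each individual family earlier in the thesis. By \cite[Theorem 4.11]{cameron1999permutation}, every finite $4$-transitive permutation group is one of the following: the symmetric group $\sym(n)$ for $n\geq 4$, the alternating group $\alt(n)$ for $n\geq 6$, or one of the Mathieu groups $M_{11}$, $M_{12}$, $M_{23}$, $M_{24}$ (the groups $M_{10}$, $M_{21}$ and $M_{22}$ fail to be $4$-transitive, as one reads off Table~\ref{transitivity_mathieu}, and so do not appear). Thus it suffices to verify the strict EKR property on each family of this list separately; since every $4$-transitive group is in particular $2$-transitive, the notions of EKR and strict EKR property are well-defined with respect to the natural action in each case.

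First I would dispatch the two infinite families. The alternating group $\alt(n)$ has the strict EKR property for all $n\geq 5$ by Theorem~\ref{main_Alt}, which in particular covers the range $n\geq 6$ relevant here. For the symmetric group $\sym(n)$, the strict EKR property for all $n\geq 2$ is recorded in the corollary to Theorem~\ref{main_Alt} (equivalently Theorem~\ref{EKR_for_sym}), so it holds in particular for $n\geq 4$. It then remains to handle the finite list: the four relevant Mathieu groups $M_{11}$, $M_{12}$, $M_{23}$, $M_{24}$ all have the strict EKR property by Theorem~\ref{main_mathieu}. Since these three cases exhaust the classification list, every finite $4$-transitive group has the strict EKR property.

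There is no genuine obstacle in this argument: all of the real work has already been carried out via the module method (Theorem~\ref{module_method_thm}) in Section~\ref{EKR_for_alt} for $\alt(n)$ and in Section~\ref{sporadic} for the Mathieu groups, and the classification of $4$-transitive groups is a deep but standard result that we simply quote. The only point requiring a moment's care is to confirm that each listed group is genuinely $4$-transitive at the degree for which its strict EKR property was proved, but this is immediate from the transitivity data (for the Mathieu groups directly from Table~\ref{transitivity_mathieu}, and for $\sym(n)$ and $\alt(n)$ from their $n$-transitivity and $(n-2)$-transitivity, respectively).
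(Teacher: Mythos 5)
Your proof is correct and follows essentially the same route as the thesis: quote the classification of finite $4$-transitive groups from \cite[Theorem 4.11]{cameron1999permutation}, then invoke the strict EKR property for $\sym(n)$ and $\alt(n)$ (Theorem~\ref{main_Alt} and its corollary) and for the Mathieu groups $M_{11}$, $M_{12}$, $M_{23}$, $M_{24}$ (Theorem~\ref{main_mathieu}). The only difference is that you spell out the degree/transitivity bookkeeping explicitly, which the paper leaves implicit.
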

 Note that the Mathieu group $M_{20}$ is, in fact, the stabilizer of a point in $M_{21}$ under its natural action on $\{1,2,\dots,21\}$; it is not $2$-transitive and its standard character is not  irreducible. Hence we cannot use the module method to establish the EKR or strict EKR property for $M_{20}$. Indeed, $M_{20}$ fails to have the EKR property; a computer search shows that one can find independent sets of size $64$ in $\Gamma_{M_{20}}$, while all the point stabilizers have size $48$.

It may seem that all the $2$-transitive groups have the strict EKR property; but this is not the case. For instance a $2$-transitive subgroup of $\sym(8)$, of order $56$, is of the form $H=(\mathbb{Z}_2\times \mathbb{Z}_2\times \mathbb{Z}_2) \rtimes \mathbb{Z}_7$ (see Table~\ref{small_perm_groups}). The size of a point stabilizer is $7$. However, the graph $\Gamma_{H}$ is isomorphic to the union of $7$ complete graphs $K_8$; hence one can build a maximum independent set which is not a coset of a point stabilizer. 

Knowing the least eigenvalue of $\Gamma_G$ is an essential part of the ratio bound and, consequently, is an important tool for the module method.  Recall (Proposition~\ref{standard_is_irr}) that if a group $G$ is $2$-transitive, then the standard character is irreducible. In many examples, the least eigenvalue of  $\Gamma_G$ is attained (only) by the standard character. However, this is not true in general; for example, a $2$-transitive subgroup of $\sym(8)$, of order $168$, is of the form $((\mathbb{Z}_2\times \mathbb{Z}_2\times \mathbb{Z}_2) \rtimes \mathbb{Z}_7)\rtimes \mathbb{Z}_3$  (see Table~\ref{small_perm_groups}) whose least eigenvalue, $-9$,  is not given by the standard character (see Appendix~\ref{appA}). Also the least eigenvalue of the Mathieu group $M_{10}$, $-36$, is attained by the standard character and by a linear character. See Appendix~\ref{appA} for more examples.
\\\\\\
{\bf Computer verification of EKR and strict EKR}

 In Appendix~\ref{appA}, we present Table~\ref{small_perm_groups} in which we have checked the EKR and strict EKR properties for $2$-transitive permutation groups of degree at most $20$ using the module method. The tool for showing a group has EKR property (i.e. condition (a) of the module method) is the ratio bound (the first part of Theorem~\ref{ratio2}) and the clique-coclique bound (the first part of Theorem~\ref{clique_coclique_bound}). In the first method, we check if the least eigenvalue of $\Gamma_G$ is given by the standard character of $G$; according to Theorem~\ref{ratio2}, then, this will imply that the maximum size of an intersecting set will be the size of a point-stabilizer in $G$. If the condition of the first method fails, we apply the second method. In this  method, we check for a clique of size $n$ in $\Gamma_G$. (Currently, for this goal, our algorithm only searches for cycles of length $n$ whose existence implies a maximum clique in $\Gamma_G$; however, there clearly are other algorithms to verify existence of maximum cliques.) Then according to Theorem~\ref{clique_coclique_bound} the maximum size of  an independent set in $\Gamma_G$ will be $|G|/n$, which is the size of a point stabilizer in $G$. Hence both methods will show that $G$  has the EKR property. In  cases where neither  of these methods work, we try to find an example of an  intersecting set in $G$ whose size exceeds the size of a point stabilizer. Note that the search of maximum independent set has a high complexity and, hence, it is not practical to apply it in all the mentioned cases.

Furthermore, according to the second part of Theorem~\ref{ratio2}, if the least eigenvalue of $\Gamma_G$ is given only by the standard character, then condition (b) of the module method is satisfied for $G$. If the least eigenvalue is given by the standard and some other characters, we apply the second method; that is, provided there is a clique $C$ of size $n$ in $\Gamma_G$  (i.e. the clique-clique bound holds with equality), similar to the method used in Subsection~\ref{standard_module}, the program evaluates $\chi(C)$ for any irreducible character $\chi$ of $G$. If $\chi(C)\neq 0$, for any $\chi$ other than the standard character, then according to Corollary~\ref{at_most_one_non-zero}, the characteristic vector of any maximum independent set of $\Gamma_G$ has to lie in the direct sum of the trivial and the standard characters of $G$; i.e. condition (b) holds for $G$. Note that if for a maximum clique the mentioned condition does not hold, then there can be other maximum cliques for which this condition holds. We conclude that the failure of this method for a clique, does not imply that condition (b) fails for the group. Therefore, the success in this method, depends on the choice of a ``suitable'' maximum clique.

Finally we check if the matrix $M$ for $G$ has full rank (condition (c)). All these steps are implemented by a \textbf{GAP} program. If all the conditions of the module method hold, the program indicates that the group has the strict EKR property. Otherwise we will have to apply other approaches or find non-canonical maximum intersecting sets in $G$ to show that $G$, indeed, fails to have the strict EKR property. We point out that this search also requires a huge amount of time for most of the groups and, in some cases, the algorithms employed for these searches are not strong enough to answer our needs. However, in most of the cases where the all the conditions except condition (c) hold, the reason turns out to be that $\Gamma_G$ has exactly two distinct eigenvalues and, hence, by Proposition~\ref{graphs_with_2_evalues}, $\Gamma_G$ is a disjoint union of complete graphs in which one can choose maximum independent sets which are not cosets of any point-stabilizer.

\chapter{Cayley Graphs on $\sym(n)$ Generated by Single Conjugacy Classes}\label{single_CC}

Recall from Chapter~\ref{introduction} that the main objective in the {E}rd{\H o}s-{K}o-{R}ado theorem is finding an upper bound for the size of ``intersecting'' subsets and then characterizing the intersecting subsets of maximum size. In Chapter~\ref{EKR_perm_groups}  we considered a type of ``intersection'' for  permutations and then defined the EKR and the strict EKR properties which would be the bound and characterization in  the {E}rd{\H o}s-{K}o-{R}ado theorem for the permutations. Then we observed that the EKR problem for  permutation groups can be translated to the problem of determining the  independent sets in some graphs of the maximum size; the key for this translation was a family of Cayley graphs, namely  the derangement graphs.  In this chapter we will consider a generalization the EKR and strict EKR property to the case where the set of all the derangements of the group  is replaced by an arbitrary union  $\cc$ of the  conjugacy classes of the derangements of the group. In this case, the EKR and the strict EKR property will be changed to the EKR and the strict EKR property ``with respect to $\cc$'', respectively.    Then we will see that the module method (Theorem~\ref{module_method_thm}) can be generalized to this case (see Section~\ref{EKR_generalization}). 
 Let $c$ be any single conjugacy class in $\sym(n)$. In this chapter, similar to Chapter~\ref{EKR_perm_groups}, the main objective is to find  the maximum size of an independent set in the Cayley graphs $\Gamma(\sym(n);c)$ and, then, to characterize the ones  of maximum size. To this goal, we will apply the generalized module method (Theorem~\ref{module_method_thm_general}).

We consider the case where $c$ is an arbitrary conjugacy class in Section~\ref{singleCC}. Having described the Cayley graph $\Gamma(\sym(n);c)$, we will investigate its maximum independent sets. In Section~\ref{EKR_cyclic_perms} we will study the case when $c=\cc_m$, where $\cc_m$ is the single conjugacy class of $m$-cycles in $\sym(n)$, and will show that for the alternating group $\alt(n)$, the strict EKR property holds with respect to $\cc_n$. This will also provide  a classification of all maximum intersecting sets of $\sym(n)$ with respect to $\cc_n$.

\section{Generalization of the EKR property}\label{EKR_generalization}

As in previous chapters, we let $G\leq \sym(n)$ be a permutation group with the natural action on the set $[n]$.  Throughout this chapter we let $\cc$ be a union of  conjugacy classes of $G$. Two permutations $\pi,\sigma\in G$ are said to be \textsl{adjacent with respect to $\cc$}\index{adjacency wrt a conjugacy class} if $\pi\sigma^{-1}\in \cc$. A subset $S\subseteq G$ is, then, called \textsl{independent with respect to $\cc$}\index{independent set wrt to conjugacy class} if no pair of its elements are adjacent with respect to $\cc$. 
A conjugacy class $c$ of $G$ is said to be  a \txtsl{derangement conjugacy class} if the elements of $c$ have no fixed point under this action.
For the case where $\cc$ is a union of derangement conjugacy classes of $G$, clearly, the stabilizer of a point is an independent set in $G$ with respect to $\cc$ (as is any coset of the stabilizer of a point).   Note, also, that if $\cc$ is the union of all derangement conjugacy classes of $G$, then two permutations $\pi,\sigma\in G$ are adjacent with respect to $\cc$ if and only if $\pi\sigma^{-1}$ has no fixed point and, therefore, a subset $S\subseteq G$ is  independent with respect to $\cc$ if and only if any pair of its elements intersect. 

Assume $\cc$ is  a union of derangement conjugacy classes of $G$. We say the group $G$ has the \txtsl{EKR property} with respect to $\cc$, if the size of any independent subset of $G$ with respect to $\cc$ is bounded above by the size of the largest point-stabilizer in $G$. Further, $G$ is said to have the \txtsl{strict EKR property} with respect to $\cc$ if the only maximum independent  subsets of $G$ with respect to $\cc$ are the cosets of the point-stabilizers. It is clear from the definition that if a group has the strict EKR property with respect to $\cc$, then it will have the EKR property with respect to $\cc$. Also using Proposition~\ref{alpha_of_Cayley_subgraphs}, we  observe the following.
\begin{prop}\label{EKR_for_cc_implies_EKR} If a group $G$ has the EKR property with respect to a $\cc$, then it has the EKR property.\qed
\end{prop}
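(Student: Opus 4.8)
The plan is to reduce the statement to a monotonicity property of independence numbers of Cayley graphs with nested connection sets. The two notions in play are the following: ``$G$ has the EKR property'' concerns the derangement graph $\Gamma_G=\Gamma(G;\dd_G)$, whereas ``$G$ has the EKR property with respect to $\cc$'' concerns the Cayley graph $\Gamma(G;\cc)$, where $\cc$ is a union of derangement conjugacy classes of $G$. The crucial observation is the set-theoretic containment $\cc\subseteq\dd_G$, which holds because $\dd_G$ is by definition the union of \emph{all} derangement conjugacy classes of $G$, while $\cc$ is the union of merely \emph{some} of them. A second, equally important point is that both EKR notions are measured against the \emph{same} benchmark, namely the size of the largest point-stabilizer $\max_x|G_x|$.

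First I would verify the hypotheses of Proposition~\ref{alpha_of_Cayley_subgraphs} with $T=\cc$ and $S=\dd_G$: neither set contains the identity (both consist of derangements), both are closed under inversion (the inverse of a derangement is again a derangement, and inversion-closure of $\cc$ is part of the standing assumption that makes $\Gamma(G;\cc)$ a genuine Cayley graph), and $\cc\subseteq\dd_G$. Applying that proposition then yields
\[
\alpha(\Gamma(G;\cc))\;\geq\;\alpha(\Gamma(G;\dd_G))\;=\;\alpha(\Gamma_G).
\]
The intuition is that $\Gamma(G;\cc)$ has fewer edges than $\Gamma_G$, so every intersecting subset of $G$, being an independent set of $\Gamma_G$, remains independent in $\Gamma(G;\cc)$.

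Next I would invoke the hypothesis directly: since $G$ has the EKR property with respect to $\cc$, every independent set of $\Gamma(G;\cc)$ has size at most $\max_x|G_x|$, i.e. $\alpha(\Gamma(G;\cc))\leq \max_x|G_x|$. Chaining the two inequalities gives $\alpha(\Gamma_G)\leq \max_x|G_x|$, which is precisely the assertion that every intersecting subset of $G$ is bounded above by the largest point-stabilizer; this is the EKR property. The reverse bound is automatic, as any coset of a point-stabilizer is itself an intersecting set, so equality in fact holds.

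I do not expect a genuine obstacle here: the entire content is carried by Proposition~\ref{alpha_of_Cayley_subgraphs} together with the containment $\cc\subseteq\dd_G$ and the coincidence of the two benchmarks. The only step requiring a moment's care is confirming the inversion-closure and identity-exclusion hypotheses needed to apply that proposition, which I would dispatch in a single sentence before the main chain of inequalities.
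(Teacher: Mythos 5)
Your proof is correct and is precisely the paper's argument: the paper derives this proposition as an immediate consequence of Proposition~\ref{alpha_of_Cayley_subgraphs} applied with $T=\cc\subseteq\dd_G=S$, giving $\alpha(\Gamma_G)\leq\alpha(\Gamma(G;\cc))\leq\max_x|G_x|$. Your additional check of the inversion-closure and identity-exclusion hypotheses is a sensible bit of care that the paper leaves implicit.
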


Note that two permutations in $G$ are  adjacent with respect to $\cc$ if and only if their corresponding vertices are adjacent in $\Gamma(G;{\cc})$. Therefore, similar to Chapter~\ref{EKR_perm_groups}, the problem of classifying the maximum independent subsets of $G$ with respect to $\cc$ is equivalent to characterizing the maximum independent sets of vertices in $\Gamma(G;{\cc})$.
For the case where $\cc$ is a union of derangement conjugacy classes of $G$, we define the \textsl{derangement graph of $G$ with respect to $\cc$}\index{derangement graph!with respect to a conjugacy class}, denoted by $\Gamma_G^{\cc}$, to be the Cayley graph $\Gamma(G;\cc)$. In particular, the derangement graph of $G$ with respect to $\dd_G$, $\Gamma_G^{\dd_G}$, is the derangement graph of $G$. In this case, the key tool for classifying the maximum independent sets of $G$ with respect to $\cc$,  is the following theorem which is a generalization of the module method (Theorem~\ref{module_method_thm}). Before we state the theorem, we define the matrix $M$ of a $G$ with respect to $\cc$ to be the $|\cc|\times (n-1)(n-2)$ matrix whose rows are indexed  by the elements of $\cc$ and whose columns are indexed by the pairs $(i,j)$ with $i,j\in[n-1]$ and $i\neq j$; then entry $(\sigma,(a,b))$ of $M$ is $1$ if $\sigma(a)=b$ and $0$ otherwise, for any $\sigma\in \cc$ and $(a,b)\in [n-1]^2$.

\begin{thm}[Generalized module method]\label{module_method_thm_general} Let $G\leq \sym(n)$ be 2-transitive and let $\cc$ be a union of the derangement conjugacy classes of $G$. Assume the following conditions hold:
\begin{enumerate}[(a)]
\item $G$ has the EKR  property with respect to $\cc$;
\item for any maximum independent set $S$ in $G$, with respect to $\cc$, the vector $v_S$ lies in the direct sum of the trivial and the standard modules of $G$; and 
\item the matrix $M$ of $G$ with respect to $\cc$ has full rank.
\end{enumerate}
Then $G$ has the strict EKR property with respect to $\cc$.\qed
\end{thm}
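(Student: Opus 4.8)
The plan is to follow the proof of the ordinary module method (Theorem~\ref{module_method_thm}) essentially verbatim, replacing the full derangement set $\dd_G$ by the union of derangement conjugacy classes $\cc$ at every occurrence. This substitution is harmless precisely because all the structural ingredients of that proof — Lemma~\ref{basis_for_standard}, Lemma~\ref{col_H_bar} and Proposition~\ref{condition_d} — concern only the canonical independent sets $S_{i,j}$ and the $2$-transitivity of $G$, and are therefore completely insensitive to which collection of derangement classes is used to define adjacency. The content of the theorem is exactly that the full derangement set enjoys no privileged role in the original argument.

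First I would record the elementary observation that a maximum independent set $S$ with respect to $\cc$ containing $\id$ must satisfy $S \cap \cc = \emptyset$: for any $\sigma \in S$ with $\sigma \neq \id$, independence forces $\sigma\id^{-1} = \sigma \notin \cc$, so $v_S$ vanishes on the coordinates indexed by $\cc$. Next I would note that each canonical set $S_{i,j}$ is itself a maximum independent set with respect to $\cc$: it is a coset of a point-stabilizer of size $|G|/n$ (which is maximal by condition (a) and transitivity), and it is independent because any quotient of two of its elements fixes $j$ and hence is not a derangement. Thus condition (b) applied to the sets $S_{i,j}$ supplies exactly the hypothesis of Lemma~\ref{basis_for_standard}. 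Combined with the fact that $\mathbf{1}$ lies in the column space of $H$ (constant row sums), this places $v_S$ in the column space of $H$, and then Lemma~\ref{col_H_bar} places it in the column space of $\overline{H}$.

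The heart of the argument is the block decomposition of $\overline{H}$. I would order the rows with the identity first, the $|\cc|$ elements of $\cc$ next, and the remaining elements last, and order the columns with the diagonal pairs $(i,i)$ first and the off-diagonal pairs $(i,j)$, $i \neq j$, $i,j \in [n-1]$, afterwards. Since every $\sigma \in \cc$ is a derangement it annihilates all diagonal columns, so the middle block under the off-diagonal columns is exactly the matrix $M$ of $G$ with respect to $\cc$; this gives
\[
\begin{bmatrix} \mathbf{1}^\top & 0 \\ 0 & M \\ B & C \end{bmatrix}\begin{bmatrix} z \\ w \end{bmatrix} = v_S = \begin{bmatrix} 1 \\ 0 \\ t \end{bmatrix}
\]
for a $01$-vector $t$. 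Reading off the blocks yields $\mathbf{1}^\top z = 1$, $Mw = 0$ and $Bz + Cw = t$, and condition (c), that $M$ has full rank, forces $w = 0$ and hence $Bz = t$.

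Finally I would invoke Proposition~\ref{condition_d}: for each $x \in [n]$ there is an element of $G$ whose only fixed point is $x$; such an element has a fixed point, so it lies outside $\cc$ and thus contributes a row to the bottom block $B$. After reordering these $n$ rows, $B = \begin{bmatrix} I_n \\ B' \end{bmatrix}$, so the top $n$ entries of $Bz = t$ are just $z$; since $t$ is a $01$-vector, so is $z$, and $\mathbf{1}^\top z = 1$ forces exactly one entry of $z$ to equal $1$. Therefore $v_S = v_{i,j}$ for some $i,j \in [n]$, i.e.\ $S$ is a coset of a point-stabilizer, which is the strict EKR property with respect to $\cc$. I do not anticipate a genuine obstacle: the only steps requiring care are the bookkeeping that identifies the middle block with $M$ and the verification that the single-fixed-point elements land in $B$ rather than in the $\cc$-block, both of which are routine.
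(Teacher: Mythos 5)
Your proposal is correct and follows exactly the route the paper intends: the paper omits the proof of Theorem~\ref{module_method_thm_general}, stating only that it is similar to that of Theorem~\ref{module_method_thm}, and your write-up is precisely that adaptation. You also correctly handle the two points where the generalization actually requires care — that the middle block of $\overline{H}$ is now indexed only by $\cc$ (so $v_S$ vanishing there needs just $S\cap\cc=\emptyset$), and that the single-fixed-point elements from Proposition~\ref{condition_d} lie outside $\cc$ and hence in the bottom block $B$.
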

We omit the proof of this theorem since it is similar to the proof of Theorem~\ref{module_method_thm}.


\section{Arbitrary conjugacy classes}\label{singleCC}

Let $c$ be an arbitrary conjugacy class of $\sym(n)$. In this section we consider the problem of finding the maximum independent sets of $\sym(n)$ with respect to $c$. In other words, we want to classify the maximum independent sets of the Cayley graph $\Gamma(\sym(n);c)$. Recall from Section~\ref{cayley} that $\Gamma(\sym(n);c)$ is a vertex-transitive graph of valency $|c|$. 

If an element of $\sigma$ of $c$  has the disjoint cycle decomposition $\sigma=\sigma_1\sigma_2\cdots\sigma_k$, where $\sigma_i$ is of length $r_i$, and $r_1\geq\cdots\geq r_k$, then we will use the partition notation $c=[r_1,r_2,\dots,r_k]$ to depict the cycle structure of the elements of $c$. We say $c$ is even (odd) if a permutation in $c$ is even (odd).

 Recall from Section~\ref{rep_sym} that the irreducible representations of $\sym(n)$ are all the Specht modules $S^{\lambda}$, where $\lambda$ ranges over all the partitions of $n$. The following is an immediate consequence of Theorem~\ref{Diaconis} and Corollary~\ref{list_of_irrs_of_sym}.
\begin{prop}\label{evals_of_der_graph_single_CC} For any  conjugacy class $c$ of $\sym(n)$, the eigenvalues of $\Gamma(\sym(n);c)$ are given by
\[
\eta_{\lambda}=\frac{|c|}{\chi^{\lambda}(\id)}\chi^{\lambda}(\sigma), \quad \sigma\in c,
\]
where  $\lambda$ ranges over all the partitions of $n$. Moreover, the multiplicity of the eigenvalue $\eta_\lambda$ is $\chi_{\lambda}(\id)^2$.\qed
\end{prop}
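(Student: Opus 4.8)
The plan is to deduce this directly from the Diaconis--Shahshahani formula (Theorem~\ref{Diaconis}) once we have checked that $\Gamma(\sym(n);c)$ is a legitimate normal Cayley graph and then specialised the formula to a single conjugacy class.

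First I would verify the hypotheses needed to invoke Theorem~\ref{Diaconis}. Since $c$ is a conjugacy class it is, by definition, closed under conjugation, so the connection set is conjugation-invariant and the resulting Cayley graph is normal. To confirm that $c$ is a valid connection set I would note that $c$ does not contain the identity (we tacitly assume $c\neq\{\id\}$, as otherwise no Cayley graph is defined) and that $c$ is closed under inversion: in $\sym(n)$ a permutation $\sigma$ and its inverse $\sigma^{-1}$ have identical cycle structure, hence lie in the same conjugacy class, so $c=c^{-1}$. Thus $\Gamma(\sym(n);c)$ is a normal Cayley graph and Theorem~\ref{Diaconis} applies.

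Next I would apply Theorem~\ref{Diaconis} with $G=\sym(n)$ and $S=c$, which gives eigenvalues $\eta_\chi=\frac{1}{\chi(\id)}\sum_{s\in c}\chi(s)$ indexed by the irreducible characters $\chi$ of $\sym(n)$, each with multiplicity $\chi(\id)^2$. Because every character is a class function, and $c$ is a single conjugacy class, we have $\chi(s)=\chi(\sigma)$ for every $s\in c$, where $\sigma$ is any fixed representative of $c$. Hence $\sum_{s\in c}\chi(s)=|c|\,\chi(\sigma)$ and the eigenvalue collapses to $\eta_\chi=\frac{|c|}{\chi(\id)}\chi(\sigma)$.

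Finally, by Corollary~\ref{list_of_irrs_of_sym} the irreducible characters of $\sym(n)$ are precisely the characters $\chi^{\lambda}$ of the Specht modules $S^{\lambda}$, one for each partition $\lambda\vdash n$. Substituting $\chi=\chi^{\lambda}$ into the simplified expression yields $\eta_\lambda=\frac{|c|}{\chi^{\lambda}(\id)}\chi^{\lambda}(\sigma)$ with multiplicity $\chi^{\lambda}(\id)^2$, exactly as claimed. This argument presents no genuine obstacle; the only point requiring a moment's care is the verification that a conjugacy class of $\sym(n)$ is closed under inversion, which rests on the observation that cycle type is preserved under taking inverses.
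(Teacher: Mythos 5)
Your argument is correct and is exactly the route the paper takes: the paper states this proposition as an immediate consequence of Theorem~\ref{Diaconis} and Corollary~\ref{list_of_irrs_of_sym}, and your proposal simply fills in the routine verifications (closure of $c$ under inversion and conjugation, and the collapse of the character sum over a single class). Nothing further is needed.
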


It follows from Proposition~\ref{evals_of_der_graph_single_CC} that the eigenvalues of $\Gamma(\sym(n);c)$ corresponding to the trivial partition $\lambda=[n]$ is $|c|$. This is, in fact, the degree of $\Gamma(\sym(n);c)$. In this section we let $\Gamma=\Gamma(\sym(n);c)$. 

If $\lambda=[1^n]$, then Proposition~\ref{evals_of_der_graph_single_CC} also yields that $\eta_{\lambda}=|c|\sgn\sigma$, where $\sigma\in c$. Therefore,
\begin{equation}\label{eta_[1^n]}
\eta_{\lambda}=\begin{cases}
|c|,& \text{ if }\,\, c \text{ is even};\\
-|c|, & \text{ if }\,\, c \text{ is odd}.
\end{cases}
\end{equation}

We will need the following lemma to prove the next proposition.
\begin{lem}\label{char_of_non-identities} Let $n\geq 5$ and let $\lambda\neq [n],[1^n]$ be a partition of $n$. Then for any element $\sigma\in \sym(n)$, we have $|\chi^{\lambda}(\sigma)|=\chi^{\lambda}(\id)$ if and only if $\sigma=\id$.
\end{lem}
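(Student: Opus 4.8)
The statement is a special case of a general fact about irreducible characters, so the plan is to prove it by combining the standard triangle-inequality bound $|\chi(\sigma)| \le \chi(\id)$ with the structure of the normal subgroups of $\sym(n)$. The converse direction is immediate, since $\sigma = \id$ forces $|\chi^\lambda(\id)| = \chi^\lambda(\id)$; so I focus on the forward direction.

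First I would recall the elementary bound. Let $\xx \colon \sym(n) \to GL(V)$ be the irreducible representation affording $\chi^\lambda$ and set $d = \chi^\lambda(\id) = \dim V$. Since $\sigma$ has finite order, $\xx(\sigma)$ is diagonalizable with eigenvalues $\omega_1, \ldots, \omega_d$ that are roots of unity, whence $\chi^\lambda(\sigma) = \sum_{i=1}^d \omega_i$ and
\[
|\chi^\lambda(\sigma)| = \left| \sum_{i=1}^d \omega_i \right| \le \sum_{i=1}^d |\omega_i| = d = \chi^\lambda(\id).
\]
Equality in the triangle inequality holds exactly when all the $\omega_i$ coincide with a common root of unity $\omega$, that is, precisely when $\xx(\sigma) = \omega I$ is a scalar matrix. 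So the hypothesis $|\chi^\lambda(\sigma)| = \chi^\lambda(\id)$ is equivalent to $\xx(\sigma)$ being scalar, and the task reduces to showing that, for $\lambda \neq [n], [1^n]$, the only $\sigma$ with $\xx(\sigma)$ scalar is the identity.

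To do this I would consider the set $Z_{\xx} = \{\sigma \in \sym(n) \,:\, \xx(\sigma) \text{ is scalar}\}$. Because scalar matrices are central in $GL(V)$, a short computation shows that $Z_{\xx}$ is a normal subgroup of $\sym(n)$. For $n \geq 5$ the group $\alt(n)$ is simple, so the only normal subgroups of $\sym(n)$ are $\{\id\}$, $\alt(n)$ and $\sym(n)$; hence $Z_{\xx}$ is one of these three. I expect the crux of the argument to be ruling out the two larger options, and this is where I would use perfectness: the scalar matrices form an abelian group, so the restriction of $\xx$ to any subgroup contained in $Z_{\xx}$ is a homomorphism into an abelian group. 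If $\alt(n) \subseteq Z_{\xx}$, then since $\alt(n)$ is perfect for $n \geq 5$ (being simple and non-abelian) this homomorphism is trivial, giving $\alt(n) \subseteq \ker \xx$; consequently $\xx$ factors through $\sym(n)/\alt(n) \cong \zz_2$ and is therefore one-dimensional. The same conclusion holds if $Z_{\xx} = \sym(n)$, since then $\xx$ has abelian image and factors through the abelianization $\sym(n)/\alt(n)$.

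Finally I would invoke Corollary~\ref{dim_non-identities}: since $\lambda \neq [n], [1^n]$ we have $\chi^\lambda(\id) = d > 1$, so $\xx$ is not one-dimensional and the cases $Z_{\xx} = \alt(n)$ and $Z_{\xx} = \sym(n)$ are impossible. Therefore $Z_{\xx} = \{\id\}$, which says exactly that $\xx(\sigma)$ is scalar only when $\sigma = \id$; this completes the forward direction and hence the proof. The only genuinely delicate point is the structural input (simplicity of $\alt(n)$ and the resulting classification of normal subgroups, together with perfectness), all of which is standard for $n \geq 5$; the remainder is the routine equality analysis of the triangle inequality.
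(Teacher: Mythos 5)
Your proof is correct and follows essentially the same route as the paper's: equality forces $\xx(\sigma)$ to be scalar, and then the classification of normal subgroups of $\sym(n)$ for $n\geq 5$ together with $\chi^\lambda(\id)>1$ rules out everything but $\sigma=\id$. The only difference is cosmetic: the paper puts the commutators $\pi\sigma\pi^{-1}\sigma^{-1}$ into $\ker(\Lambda)$, shows that kernel is trivial, and finishes via the triviality of $Z(\sym(n))$, whereas you work directly with the normal subgroup of elements mapping to scalars and eliminate the cases $\alt(n)$ and $\sym(n)$ using perfectness of $\alt(n)$ — both endgames rest on the same structural facts.
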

\begin{proof}
Let $\Lambda: \sym(n)\to GL(S^{\lambda})$ be the irreducible representation of $\sym(n)$ corresponding to the Specht module $S^{\lambda}$ (see Section~\ref{rep_basic}) and let $d=\dim (\Lambda)=\chi^{\lambda}(\id)$. According to Corollary~\ref{dim_non-identities}, we have $d>1$. Consider the matrix representation of the endomorphism $\Lambda(\sigma)$ on the space $S^{\lambda}$; thus $\Lambda(\sigma)$ is a $d\times d$ matrix. Note that the character value $\chi^{\lambda}(\sigma)$ is, in fact, the sum of all the eigenvalues of the matrix $\Lambda(\sigma)$ in $\mathbb{C}$. On the other hand, since $\sigma^r=\id$, for some $r$, we have that $\left(\Lambda(\sigma)\right)^r=I_{d}$, where $I_d$ is the identity matrix of size $d$. We conclude that the eigenvalues of $\Lambda(\sigma)$ are some roots of unity. Now, suppose $|\chi^{\lambda}(\sigma)|=d$; then all the eigenvalues of $\Lambda(\sigma)$ have to be the same root of unity (otherwise the absolute value of their sum cannot be equal to $d$). This implies that $\Lambda(\sigma)=\theta I_d$ , where $\theta$ is some root of unity. Therefore, $\Lambda(\sigma)$ is in the center of the image of $\Lambda$; that is, for any $\pi\in \sym(n)$, we have 
\[
\Lambda(\pi)\Lambda(\sigma)=\Lambda(\sigma)\Lambda(\pi);
\]
hence $\pi\sigma\pi^{-1}\sigma^{-1}$ must be in the kernel of $\Lambda$ which is a normal subgroup of $\sym(n)$. But the only proper normal subgroups of $\sym(n)$ are $\alt(n)$ and $\{\id\}$. Since $\lambda\neq [n],[1^n]$, we have that  $\ker(\Lambda)\neq \alt(n)$. Thus $\ker(\Lambda)=\{\id\}$. From this, we conclude that $\pi\sigma\pi^{-1}\sigma^{-1}=\id$; that is, $\sigma$ is in the center of $\sym(n)$ which is $\{\id\}$. This completes the proof.
\end{proof}

The following proposition determines the eigenvalues of $\Gamma$ whose absolute value~is~$|c|$.

\begin{prop}\label{eval_degree_by_one_dimensionals} Let $n\geq 5$ and let $c$ be a derangement conjugacy class of $\sym(n)$ and consider the eigenvalue $\eta_{\lambda}$ of  $\Gamma$, where $\lambda\vdash n$. Then $|\eta_{\lambda}|=|c|$ if and only if $\lambda=[n]$ or $[1^n]$.
\end{prop}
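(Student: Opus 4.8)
The plan is to reduce the statement to the already-established Lemma~\ref{char_of_non-identities} by way of the eigenvalue formula of Proposition~\ref{evals_of_der_graph_single_CC}. First I would recall that for any $\lambda\vdash n$ and any $\sigma\in c$,
\[
\eta_{\lambda}=\frac{|c|}{\chi^{\lambda}(\id)}\chi^{\lambda}(\sigma),
\]
and that $\chi^{\lambda}(\id)=\dim S^{\lambda}>0$. Taking absolute values turns this into the clean equivalence
\[
|\eta_{\lambda}|=|c|\iff |\chi^{\lambda}(\sigma)|=\chi^{\lambda}(\id).
\]
In this way the whole question about the spectrum of $\Gamma$ is transferred to the question of when an irreducible character attains its maximal possible modulus on an element of $c$, which is precisely the content of Lemma~\ref{char_of_non-identities}.

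For the ``if'' direction I would simply verify the two named partitions directly. For $\lambda=[n]$ the Specht module $S^{\lambda}$ is the trivial representation, so $\chi^{[n]}(\sigma)=1=\chi^{[n]}(\id)$ and $\eta_{[n]}=|c|$. For $\lambda=[1^n]$ the representation is the sign representation, so $\chi^{[1^n]}(\sigma)=\sgn(\sigma)=\pm1$ while $\chi^{[1^n]}(\id)=1$; hence $|\eta_{[1^n]}|=|c|$, exactly as already recorded in~(\ref{eta_[1^n]}).

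For the converse, suppose $|\eta_{\lambda}|=|c|$ with $\lambda\neq[n],[1^n]$. By the equivalence above this forces $|\chi^{\lambda}(\sigma)|=\chi^{\lambda}(\id)$ for $\sigma\in c$. Since $n\geq5$ and $\lambda$ is neither $[n]$ nor $[1^n]$, Lemma~\ref{char_of_non-identities} applies and yields $\sigma=\id$. But $c$ is a derangement conjugacy class, so every $\sigma\in c$ moves every point; in particular $\sigma\neq\id$, a contradiction. Hence $\lambda\in\{[n],[1^n]\}$, which completes the argument.

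The step I expect to carry all the weight is the invocation of Lemma~\ref{char_of_non-identities}; everything surrounding it is bookkeeping. There is no genuine obstacle beyond making sure the hypotheses of that lemma ($n\geq5$ and $\lambda\neq[n],[1^n]$) match the present setting, and that the derangement hypothesis on $c$ is exactly what rules out the degenerate conclusion $\sigma=\id$.
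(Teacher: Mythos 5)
Your proposal is correct and follows essentially the same route as the paper: both reduce $|\eta_{\lambda}|=|c|$ to $|\chi^{\lambda}(\sigma)|=\chi^{\lambda}(\id)$ via the eigenvalue formula of Proposition~\ref{evals_of_der_graph_single_CC} and then invoke Lemma~\ref{char_of_non-identities}, with the derangement hypothesis ruling out $\sigma=\id$. Your write-up merely makes explicit the contrapositive step and the direct verification for $[n]$ and $[1^n]$ that the paper leaves to (\ref{eta_[1^n]}).
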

\begin{proof}
First note that (\ref{eta_[1^n]}) proves the ``if'' part of the proposition. For the converse, note that if $|\eta_{\lambda}|=|c|$, for a partition $\lambda$, then according to Proposition~\ref{evals_of_der_graph_single_CC}, we must have 
\[
\frac{|c|}{\chi^{\lambda}(\id)}|\chi^{\lambda}(\sigma)|= |c|,
\]
for some $\sigma\in c$; hence we must have $|\chi^{\lambda}(\sigma)|=\chi^{\lambda}(\id)$. Therefore, according to  Lemma~\ref{char_of_non-identities}, we must have $\lambda=[n]$ or $[1^n]$.	
\end{proof}
We, then, observe the following.
\begin{prop}\label{bipartite}
The graph $\Gamma$ is bipartite if and only if $c$ is odd. Furthermore, if $c$ is odd, then $(\alt(n) , (1\,\,2)\alt(n))$ is a bipartition  of  $\Gamma$.
\end{prop}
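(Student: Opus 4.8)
The plan is to split on the parity of $c$, handling the two implications by completely different means: a one-line parity argument for the ``if'' direction (which also pins down the bipartition), and the spectral criterion of Theorem~\ref{symmetric_about_origin} for the ``only if'' direction.

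First I would dispose of the case that $c$ is odd, which simultaneously gives bipartiteness and identifies the bipartition, so that the ``Furthermore'' clause comes for free. Write $\sym(n)$ as the disjoint union of $\alt(n)$ (the even permutations) and $(1\,2)\alt(n)$ (the odd permutations). Two vertices $g,h$ are adjacent in $\Gamma$ exactly when $gh^{-1}\in c$. If $g$ and $h$ lie in the same part, then $gh^{-1}\in\alt(n)$ is even, so $gh^{-1}\notin c$ because every element of $c$ is odd; hence no edge lies inside either part. Therefore every edge of $\Gamma$ joins $\alt(n)$ to $(1\,2)\alt(n)$, which shows that $\Gamma$ is bipartite and that $(\alt(n),(1\,2)\alt(n))$ is a bipartition.

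For the converse I would show that if $c$ is even then $\Gamma$ is not bipartite, using Theorem~\ref{symmetric_about_origin}: it suffices to prove that the spectrum of $\Gamma$ is \emph{not} symmetric about the origin. Since $\Gamma$ is $|c|$-regular, $\eta_{[n]}=|c|$ is an eigenvalue, so symmetry would force $-|c|$ to appear in the spectrum as well. But any eigenvalue $\eta_{\lambda}$ with $|\eta_{\lambda}|=|c|$ must, by Proposition~\ref{eval_degree_by_one_dimensionals}, satisfy $\lambda=[n]$ or $\lambda=[1^{n}]$; and when $c$ is even both of these equal $+|c|$, since $\eta_{[n]}=|c|$ and, by (\ref{eta_[1^n]}), $\eta_{[1^{n}]}=|c|$ too. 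Thus $-|c|$ is not an eigenvalue of $\Gamma$, the spectrum fails to be symmetric about the origin, and $\Gamma$ is not bipartite.

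The substantive content lies entirely in this converse, and once Proposition~\ref{eval_degree_by_one_dimensionals} and the value of $\eta_{[1^{n}]}$ in (\ref{eta_[1^n]}) are in hand it is immediate. The one point I would be careful about is that $\Gamma=\Gamma(\sym(n);c)$ may be disconnected, so I deliberately invoke the global symmetric-spectrum criterion of Theorem~\ref{symmetric_about_origin} rather than the connected-graph fact that a $k$-regular graph is bipartite iff $-k$ is an eigenvalue; the former correctly encodes ``every component is bipartite.'' A purely combinatorial alternative for the converse would be to exhibit an odd closed walk, i.e.\ elements $s_{1},s_{2},s_{3}\in c$ with $s_{1}s_{2}s_{3}=\id$, but arranging $s_{1}s_{2}\in c$ is case-dependent on the cycle type, so the spectral route is cleaner and uniform.
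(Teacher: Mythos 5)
Your argument follows essentially the same route as the paper: the parity observation for odd $c$, which also delivers the bipartition, and, for the converse, the symmetric\Hyphdash spectrum criterion of Theorem~\ref{symmetric_about_origin} combined with Proposition~\ref{eval_degree_by_one_dimensionals} and (\ref{eta_[1^n]}) to rule out $-|c|$ as an eigenvalue. However, there is a genuine gap in the converse: Proposition~\ref{eval_degree_by_one_dimensionals} carries the hypothesis $n\geq 5$. It rests on Lemma~\ref{char_of_non-identities}, whose proof uses that the only proper normal subgroups of $\sym(n)$ are $\alt(n)$ and $\{\id\}$; this fails for $n=4$ because of the Klein four\Hyphdash group. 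The small cases are not vacuous for the ``only if'' direction: for $n=3$ the class of $3$\Hyphdash cycles is even, and for $n=4$ both the $3$\Hyphdash cycles and the class $[2,2]$ are even. Indeed, for $c=[2,2]$ in $\sym(4)$ one has $\chi^{[2,2]}(\sigma)=\chi^{[2,2]}(\id)=2$ for $\sigma\in c$, so the conclusion of Lemma~\ref{char_of_non-identities} genuinely fails there, and the dichotomy ``$|\eta_\lambda|=|c|$ only for $\lambda=[n],[1^n]$'' that your argument leans on is no longer available. The paper closes this by treating $n\leq 4$ separately and exhibiting explicit triangles, e.g.\ $\id$, $(1\,2\,3)$, $(1\,3\,2)$ and $\id$, $(1\,2)(3\,4)$, $(1\,3)(2\,4)$.

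Consequently your closing remark --- that the odd\Hyphdash closed\Hyphdash walk alternative is ``case\Hyphdash dependent'' whereas the spectral route is ``cleaner and uniform'' --- is not quite right: the spectral route is itself only licensed for $n\geq 5$, and for $n\in\{3,4\}$ an explicit odd cycle is exactly what is needed to finish. (You are also applying Proposition~\ref{eval_degree_by_one_dimensionals} to arbitrary even classes rather than derangement classes as stated; the paper does the same and the proof of that proposition never uses the derangement hypothesis, so this is harmless, but it is worth noting when citing it.) Adding a short paragraph disposing of $n\leq 4$ by hand would make your proof complete.
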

\begin{proof}
 If $c$ is odd, then no two even permutations in $\sym(n)$ can be adjacent. Similarly, no two odd permutations can be adjacent. It turns out that in the case where $c$ is odd, $\Gamma$ is a bipartite graph with $\alt(n)$ and $(1\,\,2)\alt(n)$ as its parts. For the converse, assume  $c$ is even. First consider $n\geq 5$. According to~(\ref{eta_[1^n]}), the eigenvalue of $\Gamma$ arising from the representation of $\sym(n)$ corresponding to the partition $\lambda=[1^n]$ is $|c|$. Thus, according to Proposition~\ref{eval_degree_by_one_dimensionals}, for any  $\lambda\vdash n$ we have $\eta_{\lambda}\neq -|c|$. This means that the spectrum of $\Gamma$ is not symmetric about the origin. Therefore, Theorem~\ref{symmetric_about_origin}  yields that $\Gamma$ is not bipartite. 

It, hence, remains to verify the theorem for $n\leq 4$. The theorem clearly holds for $n=2$. If $n=3$, then $c$ will be the set of $3$-cycles and $\Gamma\cong K_3\cup K_3$. Also if $n=4$, then   $c$ is either the set of $3$-cycles or $[2,2]$. In the first case the odd cycle 
\[
\id\text{---}(1\,\,2\,\,3) \text{---} (1\,\,3\,\,2) \text{---} \id,
\]
and in the second case the odd cycle 
\[
\id\text{---}(1\,\,2)(3\,\,4) \text{---} (1\,\,3)(2\,\,4) \text{---} \id
\]
will be in $\Gamma$. Hence  the theorem holds for these cases, as well. 
\end{proof}
We can also describe  the connectivity  of $\Gamma$.
\begin{thm}\label{components}
The graph $\Gamma$ is connected if and only if $c$ is odd. Furthermore, if $n\neq 4$  and $c$ is even,  then $\Gamma$ has exactly two connected components isomorphic to $\Gamma(\alt(n);c)$.
\end{thm}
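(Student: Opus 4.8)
The plan is to reduce everything to the structure of the subgroup generated by the connection set, via Proposition~\ref{index}. Set $N=\langle c\rangle$, the subgroup of $\sym(n)$ generated by $c$. The crucial first observation is that $N$ is a \emph{normal} subgroup of $\sym(n)$: since $c$ is a conjugacy class it is invariant under conjugation, and hence so is the subgroup it generates. Moreover $N\neq\{\id\}$, because $c$, being a connection set, does not contain the identity. By Proposition~\ref{index}, $\Gamma=\Gamma(\sym(n);c)$ is isomorphic to the disjoint union of $[\sym(n):N]$ copies of the connected graph $\Gamma(N;c)$. Thus the whole problem collapses to determining $N$ and the index $[\sym(n):N]$.

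First I would settle the connectivity dichotomy. Here I invoke the classification of the normal subgroups of $\sym(n)$ (for $n\neq 4$ these are only $\{\id\}$, $\alt(n)$ and $\sym(n)$). If $c$ is odd, then $c$ contains an odd permutation, so $N\not\subseteq\alt(n)$; being a nontrivial normal subgroup not contained in $\alt(n)$, it must equal $\sym(n)$. Then $[\sym(n):N]=1$ and $\Gamma\cong\Gamma(N;c)$ is connected. Conversely, if $c$ is even then every generator of $N$ lies in $\alt(n)$, so $N\subseteq\alt(n)\subsetneq\sym(n)$, giving $[\sym(n):N]\geq 2$ and hence a disconnected $\Gamma$. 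This establishes the equivalence for all $n$; note the odd direction covers $n=4$ too, since the only normal subgroup of $\sym(4)$ containing an odd permutation is $\sym(4)$ itself.

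For the second statement I assume $n\neq 4$ and $c$ even. Then $N\subseteq\alt(n)$, and since $n\neq 4$ the only normal subgroups of $\sym(n)$ contained in $\alt(n)$ are $\{\id\}$ and $\alt(n)$. As $N\neq\{\id\}$, I conclude $N=\alt(n)$, so $[\sym(n):N]=2$. Applying Proposition~\ref{index} once more yields $\Gamma\cong\Gamma(\alt(n);c)\,\cupdot\,\Gamma(\alt(n);c)$, i.e.\ exactly two connected components, each isomorphic to $\Gamma(\alt(n);c)$.

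The hard part will be pinning down $N$ precisely, and this is exactly where the hypothesis $n\neq 4$ enters: $\sym(4)$ has the extra normal subgroup $V_4$, so for $c=[2,2]$ one gets $N=V_4$ of index $6$, producing six components rather than two. Thus the delicate point is not the connectivity dichotomy (which holds uniformly) but the implication ``$N\subseteq\alt(n)$ and $N\neq\{\id\}$ forces $N=\alt(n)$'', which can fail only at $n=4$. I would therefore verify the small cases $n\le 4$ explicitly to confirm the normal-subgroup classification being used and to check the isomorphism type of the components; for instance, for $n=3$ with $c$ the class of $3$-cycles one recovers $\Gamma\cong K_3\cupdot K_3$, consistent with the computation already appearing in the proof of Proposition~\ref{bipartite}.
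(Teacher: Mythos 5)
Your proof is correct and follows the same skeleton as the paper's: identify $H=\langle c\rangle$ as a nontrivial normal subgroup of $\sym(n)$, invoke the classification of normal subgroups of $\sym(n)$ (with the $V_4$ exception at $n=4$), and apply Proposition~\ref{index} to convert the index $[\sym(n):H]$ into the number of components. The one place you genuinely diverge is the direction ``$c$ even $\Rightarrow$ $\Gamma$ disconnected'': the paper rules out $H=\sym(n)$ by a spectral detour, observing via (\ref{eta_[1^n]}) that the partitions $[n]$ and $[1^n]$ both yield the eigenvalue $|c|$, so the valency has multiplicity greater than one and Theorem~\ref{multiplicity} forces disconnectedness. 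You instead note directly that every generator of $H$ is an even permutation, so $H\subseteq\alt(n)\subsetneq\sym(n)$ and the index is at least $2$. Your argument is shorter, purely group-theoretic, and avoids any dependence on the character-theoretic machinery; the paper's version is heavier but illustrates the eigenvalue--connectivity link that the chapter is built around. Both are valid, and your explicit handling of $n=4$ (six components for $c=[2,2]$) matches the paper's remark.
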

\begin{proof}
Let $H$ be the subgroup of $\sym(n)$ generated by $c$. Since $c$ is a  conjugacy class in $\sym(n)$, the group $H$ is a normal subgroup of $\sym(n)$. But it is well-known (see \cite{MR600654}, for example) that if $n=3$ or $n\geq 5$, then the only nontrivial normal subgroups of $\sym(n)$ are $\sym(n)$ and $\alt(n)$. Now if $c$ is odd, then $\alt(n)\lneq H\trianglelefteq \sym(n)$;  hence $H=\sym(n)$, which means $\Gamma$ is connected (see Proposition~\ref{index}). However, in the case where  $c$ is even, by~(\ref{eta_[1^n]}), the partitions $\lambda=[n]$ and $\hat{\lambda}=[1^n]$ give the same eigenvalue $|c|$, the degree of $\Gamma$. Thus the multiplicity of the degree is more than one and, hence, using Theorem~\ref{multiplicity}, $\Gamma$ must have more than one connected component; that is, $H$ cannot be $\sym(n)$. Therefore $H=\alt(n)$. Now, according to Proposition~\ref{index}, the number of connected components of $\Gamma$ is $[\sym(n):\alt(n)]=2$ and each component is isomorphic to $\Gamma(\alt(n);c)$.

It remains to verify the theorem for the case $n=4$.  Note that the only proper normal subgroups of $\sym(4)$ are $\alt(4)$ and the Klein four-group
\[
V=\{\id, (1\,\,2)(3\,\,4), (1\,\,3)(2\,\,4), (1\,\,4)(2\,\,3)\}.
\]
Hence, if $c$ is odd, then we have that $H=\sym(4)$ and $\Gamma$ is connected; and if $c$ is even (i.e. if $c=[3]$ or $[2,2]$), then the same reasoning as above  shows that $H$ cannot be $\sym(4)$. Thus either $c=[3]$ and $H=\alt(4)$, or $c=[2,2]$ and $H=V$. It follows that  $\Gamma$ has either $2$  or  $[\sym(4):V]=6$ connected components.
\end{proof}

Now we turn our attention to the problem of finding the maximum independent sets in $\Gamma$. First we classify all the maximum independent sets in $\Gamma$ when $c$ is odd.
\begin{thm}\label{max_indy_for_odd_single_CC} If $c$ is odd, and $S$ is an independent set in $\Gamma$, then
\[
|S|\leq \frac{n!}{2},
\]
and equality holds if and only if $S$ is either $\alt(n)$ or $(1\,\,2)\alt(n)$.
\end{thm}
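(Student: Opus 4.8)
The plan is to combine the structural facts about $\Gamma=\Gamma(\sym(n);c)$ that have already been established with the ratio bound of Theorem~\ref{ratio2}. First I would recall from Section~\ref{cayley} that $\Gamma$ is vertex-transitive and regular of degree $k:=|c|$, and that by Proposition~\ref{bipartite} it is bipartite with bipartition $(\alt(n),(1\,\,2)\alt(n))$, each part having size $n!/2$. Moreover, since $c$ is odd, Theorem~\ref{components} guarantees that $\Gamma$ is connected. These are exactly the ingredients needed to pin down the least eigenvalue.

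The next step is to show that the least eigenvalue is $\tau=-k$ and that its eigenspace is one-dimensional. Because $\Gamma$ is $k$-regular, Theorem~\ref{multiplicity} gives $|\lambda|\leq k$ for every eigenvalue $\lambda$, and it also tells us that $k$ occurs with multiplicity equal to the number of connected components, which is $1$. Since $\Gamma$ is bipartite, Theorem~\ref{symmetric_about_origin} shows the spectrum is symmetric about the origin, so $-k$ is an eigenvalue whose multiplicity equals that of $k$, namely $1$. Hence $\tau=-k$ and the $\tau$-eigenspace is spanned by a single vector. A direct check identifies that vector as $w$, where $w_g=+1$ for $g\in\alt(n)$ and $w_g=-1$ for $g\in(1\,\,2)\alt(n)$: since each vertex has all $k$ of its neighbours in the opposite part, one computes $A(\Gamma)w=-kw$.

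With $\tau=-k$ in hand, the bound follows immediately from the ratio bound (Theorem~\ref{ratio2}) applied to the $n!$-vertex graph $\Gamma$:
\[
|S|\leq \frac{n!}{1-\frac{k}{\tau}}=\frac{n!}{1-(-1)}=\frac{n!}{2}.
\]
For the equality case, I would invoke the equality clause of Theorem~\ref{ratio2}: if $|S|=n!/2$, then $v_S-\tfrac{1}{2}\mathbf{1}$ lies in the $\tau$-eigenspace, so $v_S-\tfrac{1}{2}\mathbf{1}=\alpha w$ for some scalar $\alpha$. Reading off the entries of $v_S$, which are all $0$ or $1$, forces $\tfrac{1}{2}+\alpha\in\{0,1\}$ on the $\alt(n)$ part and $\tfrac{1}{2}-\alpha\in\{0,1\}$ on the other part, whence $\alpha=\pm\tfrac12$. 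The choice $\alpha=\tfrac12$ yields $S=\alt(n)$ and $\alpha=-\tfrac12$ yields $S=(1\,\,2)\alt(n)$; conversely, both parts of the bipartition are independent of size $n!/2$, which completes the characterization. The main point requiring care is the claim that the least eigenvalue is exactly $-k$ with multiplicity one, and this is precisely what the combination of Theorems~\ref{multiplicity} and \ref{symmetric_about_origin} delivers for a connected bipartite regular graph; once this is secured, the rest is a routine application of the ratio bound and its equality condition.
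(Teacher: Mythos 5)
Your proof is correct, and for the equality case it takes a genuinely different route from the paper. For the bound itself the two arguments essentially coincide: both apply the ratio bound (Theorem~\ref{ratio2}) with $\tau=-|c|$, the only difference being that the paper reads off $\tau=-|c|$ from the sign character via the formula for $\eta_{[1^n]}$, while you obtain it from regularity and bipartiteness (Proposition~\ref{bipartite} together with Theorem~\ref{multiplicity}); these are equivalent. The divergence is in the characterization of equality. The paper argues combinatorially: if $S=X\cup Y$ met both parts nontrivially, then for $\sigma\in c$ the translate $X'=\sigma X$ would satisfy $(1\,\,2)\alt(n)=X'\,\cupdot\,Y$, and regularity would force every edge out of $X\cup X'$ to stay inside $X\cup X'$, contradicting the connectivity supplied by Theorem~\ref{components}. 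You instead use the eigenvector clause of Theorem~\ref{ratio2} and the fact that the $(-k)$-eigenspace is one-dimensional, spanned by the $\pm1$ vector $w$. That one-dimensionality claim is the only point needing care: Theorem~\ref{symmetric_about_origin} as stated in the paper says only that $-\lambda$ lies in the spectrum whenever $\lambda$ does, not that multiplicities match, so you should justify $m(-k)=m(k)=1$ explicitly — e.g.\ by noting that if $D$ is the diagonal $\pm1$ matrix indexed by the bipartition then $DA(\Gamma)D=-A(\Gamma)$, so $v\mapsto Dv$ is an isomorphism between the $k$- and $(-k)$-eigenspaces, and the former is one-dimensional by Theorem~\ref{multiplicity} and connectivity. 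With that supplement your argument is complete; its advantage is that it localizes all the work in the spectrum, while the paper's matching argument avoids any discussion of eigenvalue multiplicities and uses connectivity directly.
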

\begin{proof}
First note that, by~(\ref{eta_[1^n]}), the least eigenvalue of $\Gamma$ is $\tau=-|c|$. Thus by the ratio bound (Theorem~\ref{ratio2}),
\begin{equation}\label{ratio_for_odd_single_CC}
\alpha(\Gamma)\leq \frac{n!}{1-\frac{|c|}{-|c|}}=\frac{n!}{2}.
\end{equation}
On the other hand, by Proposition~\ref{bipartite}, $\alt(n)$ and $(1\,\,2)\alt(n)$ both form independent sets of size $n!/2$ in $\Gamma$. We show that these are the only independent sets meeting the bound in (\ref{ratio_for_odd_single_CC}). To do this, suppose $S=X\cup Y$ is an independent set in $\Gamma$ of size $n!/2$, where $\emptyset\neq X\subset \alt(n)$ and $\emptyset\neq Y\subset (1\,\,2)\alt(n)$. Let $\sigma\in c$ and define
\[
X'=\sigma X,
\]
to be the image of $X$ in $(1\,\,2)\alt(n)$ under the multiplication by the $\sigma$. Since multiplication by $\sigma$ induces a matching between $X$ and $X'$, we observe that
\[
(1\,\,2)\alt(n)=X'\,\bigcupdot\,Y,
\]
and that all the edges with a vertex in $X$ have their other vertex in $X'$. Since $\Gamma$ is regular, no vertex in $X'$ can be adjacent to a vertex in $\alt(n)\backslash X$; we conclude that there is no edge from $X\cup X'$ to the vertices in $\sym(n)\backslash(X\cup X')$. Thus $\Gamma$ must be disconnected which is a contradiction according to Theorem~\ref{components}. This shows that either $X=\emptyset$ or $Y=\emptyset$ and the claim is proved.
\end{proof}

For the case where $c$ is even, according to Theorem~\ref{components}, we have that 
\[
\Gamma\cong \Gamma(\alt(n);c) \bigcupdot \Gamma(\alt(n);c).
\]
Hence in order to classify the maximum independent sets in $\Gamma$, it is necessary and sufficient to classify those of $\Gamma(\alt(n);c)$. This problem, for an arbitrary  even conjugacy class $c$, does not seem to be easy. In Section~\ref{EKR_cyclic_perms}, however, we will solve this problem when $c$ is the class of all $n$-cycles, for odd $n$; that is, we prove that $\alt(n)$  has the strict EKR property with respect to this class.

Nevertheless, for the cases where $c$ is an  even derangement conjugacy class which is not $[2,2,\dots,2]$ and $n\geq 10$,  our feeling is that the maximum size of an independent set in $\Gamma$, is $(n-1)!$, which is the size of a point-stabilizer in $\sym(n)$; that is, it seems that in this case, $\sym(n)$ has the EKR property with respect to $c$ (see Proposition~\ref{EKR_for_even_single_CC}). The  main motivation for this is the following conjecture. 

\begin{conj}\label{character_conj}
Let $n\geq 10$ and let $c$ be a derangement conjugacy class of $\sym(n)$ which is not $[2,2,\dots,2]$. Then for any partition $\lambda\vdash n$ such that $\lambda\neq [n], [1^n], [n-1,1]$, we have 
\[
|\chi^\lambda(\sigma)|< \frac{\chi^\lambda(\id)}{n-1},
\]
where $\sigma$ is an element of $c$.
\end{conj}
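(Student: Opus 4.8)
I would first read the desired inequality as the character-ratio bound $\frac{|\chi^\lambda(\sigma)|}{\chi^\lambda(\id)}<\frac{1}{n-1}$. By Proposition~\ref{evals_of_der_graph_single_CC} this is exactly the assertion that, apart from the two eigenvalues $\pm|c|$ arising from $[n]$ and $[1^n]$, the eigenvalue $\eta_{[n-1,1]}=-\frac{|c|}{n-1}$ of $\Gamma(\sym(n);c)$ is strictly extremal in absolute value; thus the conjecture is the single-conjugacy-class analogue of Renteln's theorem and is precisely what condition (b) of the generalized module method (Theorem~\ref{module_method_thm_general}) needs. The three excluded partitions are exactly the ones that fail: for $\lambda=[n]$ and $\lambda=[1^n]$ the ratio equals $1$, and for $\lambda=[n-1,1]$, Example~\ref{standard rep} gives $\chi^{[n-1,1]}(\sigma)=\fix(\sigma)-1=-1$ (as $\sigma$ is a derangement) while $\chi^{[n-1,1]}(\id)=n-1$, so the ratio equals exactly $\frac{1}{n-1}$ and only the \emph{strict} inequality fails. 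The excluded class $c=[2,2,\dots,2]$ is the genuine boundary of validity: for fixed-point-free involutions the Murnaghan--Nakayama rule forces no cancellation, and the character values grow like $\sqrt{\chi^\lambda(\id)}$, so no bound of order $\chi^\lambda(\id)/(n-1)$ can hold. I would record these facts, fix $n\ge 10$ and a derangement class $c\ne[2^{n/2}]$, and then split the surviving $\lambda$ into two regimes according to the size of $\max(\lambda_1,\hat{\lambda}_1)$.

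\emph{Bulk (``fat'') partitions.} When both $\lambda_1$ and $\hat{\lambda}_1$ are bounded away from $n$, say $\max(\lambda_1,\hat{\lambda}_1)\le n-t$ for a threshold $t=t(n)$ to be fixed, I would invoke a character-ratio estimate of Roichman--Larsen--Shalev type. Such estimates give exponential decay of $\frac{|\chi^\lambda(\sigma)|}{\chi^\lambda(\id)}$ in the support of $\sigma$; a derangement has full support $n$, so one gets a bound of the form $\rho^{\,n}$ with $\rho=\rho(\lambda_1/n,\hat{\lambda}_1/n)<1$ bounded away from $1$ once the two densities are bounded away from $1$. For a suitable choice of $t$ this is far below $\frac{1}{n-1}$ for all $n\ge 10$, so the bulk case is immediate.

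\emph{Near-hook partitions.} The remaining $\lambda$ are the ones with $\lambda_1$ (or, dually, $\hat{\lambda}_1$) within the threshold $t$ of $n$: hooks and partitions close to hooks, exactly the low-dimensional characters. Here I would use the Murnaghan--Nakayama rule (Theorem~\ref{nakayama}) to peel off the longest cycle $r_1$ of $\sigma$, yielding $|\chi^\lambda(\sigma)|\le\sum_\mu|\chi^\mu(\sigma')|$ over shapes $\mu$ obtained by removing a rim hook of length $r_1$, where $\sigma'$ is again a derangement (in $\sym(n-r_1)$), and argue by induction on the number of cycles. The template is the one already used in the thesis for the special classes: bound $|\chi^\lambda(\sigma)|$ by a constant using Corollaries~\ref{cor:appofMN} and \ref{cor:appofMNtwo}, and bound $\chi^\lambda(\id)$ from below by $2n-2$ via the hook-length formula (Theorem~\ref{hl_formula}) and Lemmas~\ref{dimensions_of_near_hooks} and \ref{dimensions_of_2_layer_hooks}, which together give $\frac{|\chi^\lambda(\sigma)|}{\chi^\lambda(\id)}<\frac{2}{2n-2}=\frac{1}{n-1}$. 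For exact hooks $\lambda=[n-k,1^k]$ with $2\le k\le n-2$ one has the clean exterior-power description of $\chi^\lambda$, so $|\chi^\lambda(\sigma)|$ is a coefficient count of rim-hook strippings while $\chi^\lambda(\id)=\binom{n-1}{k}\ge\binom{n-1}{2}$, and the inequality can be checked directly.

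\emph{Main obstacle.} The difficulty is entirely in the near-hook regime for cycle types with many short cycles (but not all $2$'s), where the rim-hook count bounding $|\chi^\lambda(\sigma)|$ can be as large as $2^{\#\text{cycles}}$ and the constant bound ``$|\chi^\lambda(\sigma)|\le 2$'' is no longer available, so the simple ``constant-over-$(2n-2)$'' template breaks. Making the Murnaghan--Nakayama induction quantitative enough that the character growth is always dominated by the dimension growth in this regime, and choosing the threshold $t(n)$ so that the bulk and near-hook analyses overlap for every $n\ge 10$, is the real content; the excluded class $[2^{n/2}]$ shows that any argument here must genuinely use that $\sigma$ has a cycle of length $\ge 3$.
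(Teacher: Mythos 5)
The statement you are trying to prove is not proved in the paper at all: it is stated as Conjecture~\ref{character_conj}, verified only by computer for $n\leq 30$, and established only for the single class $c=[n]$ (where Corollary~\ref{cor:appofMN} gives $|\chi^\lambda(\sigma)|\leq 1$ while $\chi^\lambda(\id)\geq n-1$ for the non-excluded $\lambda$). Indeed the author lists it again as Question~5 in the final chapter. So there is no proof in the paper to compare yours against, and your submission is not a proof either — it is an attack plan whose two halves each contain an unresolved gap, one of which you candidly flag yourself.

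Concretely: your boundary analysis is correct (the ratio is $1$ for $[n]$ and $[1^n]$, exactly $\tfrac{1}{n-1}$ for $[n-1,1]$ by Example~\ref{standard rep}, and $[2^{n/2}]$ must be excluded because character values at fixed-point-free involutions can be of order $\sqrt{\chi^\lambda(\id)}$). But in the bulk regime you invoke Roichman/Larsen--Shalev character-ratio bounds, which are asymptotic with unspecified constants; the paper explicitly observes that the inequality \emph{fails} for small $n$, so any honest proof must control those constants down to $n=10$, and nothing in your sketch does this. In the near-hook regime you correctly identify that the thesis's template — $|\chi^\lambda(\sigma)|\leq 2$ via Corollaries~\ref{cor:appofMN} and~\ref{cor:appofMNtwo} against the dimension lower bound $2n-2$ from Lemmas~\ref{dimensions_of_near_hooks} and~\ref{dimensions_of_2_layer_hooks} — only works for the special classes $[n]$ and $[n/2,n/2]$, and that for a general derangement class the Murnaghan--Nakayama recursion can produce exponentially many rim-hook strippings; but you then stop, which is precisely where the conjecture's difficulty lies. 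Until that quantitative induction is carried out and the two regimes are glued with an explicit threshold $t(n)$ valid for all $n\geq 10$, the statement remains open, exactly as the paper leaves it.
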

We have verified this character theoretical conjecture for all $n\leq 30$ with a computer program. In the examples, it looks like the gap between the character value $|\chi^\lambda(\sigma)|$ and the dimension of $\chi^{\lambda}$ grows very fast and this is the reason that for small $n$ this inequality does not hold.
Note also that for some specific conjugacy classes, it is not difficult to verify the conjecture. For example, using the Murnaghan-Nakayama rule, the conjecture is true for $c=[n]$.

The truth of Conjecture~\ref{character_conj}  will yield the following.
\begin{prop}\label{EKR_for_even_single_CC}
Let $n\geq 10$ and let $c\neq[2,2,\dots,2]$ be an even derangement conjugacy class in $\sym(n)$. Then, provided that Conjecture~\ref{character_conj} is true, $\sym(n)$ has the EKR property with respect to $c$.
\end{prop}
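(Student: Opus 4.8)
The plan is to apply the ratio bound (Theorem~\ref{ratio2}) to the Cayley graph $\Gamma=\Gamma(\sym(n);c)$, which is $|c|$-regular on $n!$ vertices. Everything reduces to pinning down the least eigenvalue $\tau$ of $\Gamma$, and the claim will be that, granting Conjecture~\ref{character_conj}, this least eigenvalue is exactly $\tau=-|c|/(n-1)$, coming from the standard representation $\lambda=[n-1,1]$. Once $\tau$ is known, the bound drops out by a one-line computation and is matched by a point-stabilizer.

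First I would invoke Proposition~\ref{evals_of_der_graph_single_CC} to write each eigenvalue as $\eta_\lambda=\frac{|c|}{\chi^\lambda(\id)}\chi^\lambda(\sigma)$ with $\sigma\in c$, and then isolate the three partitions excluded in Conjecture~\ref{character_conj}. For $\lambda=[n]$ the value $\eta_{[n]}=|c|$ is the degree; for $\lambda=[1^n]$, since $c$ is even, equation~(\ref{eta_[1^n]}) gives $\eta_{[1^n]}=|c|$ as well; and for the standard partition $\lambda=[n-1,1]$, because $\sigma$ is a derangement we have $\chi^{[n-1,1]}(\sigma)=\fix(\sigma)-1=-1$ while $\chi^{[n-1,1]}(\id)=n-1$ by Lemma~\ref{dim_of_standard_char}, so that $\eta_{[n-1,1]}=-|c|/(n-1)$.

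Next I would bring in Conjecture~\ref{character_conj}: for every remaining partition $\lambda\neq[n],[1^n],[n-1,1]$ the hypotheses $n\geq 10$ and $c\neq[2,\dots,2]$ yield $|\chi^\lambda(\sigma)|<\chi^\lambda(\id)/(n-1)$, whence $|\eta_\lambda|<|c|/(n-1)$ and in particular $\eta_\lambda>-|c|/(n-1)$. Combining this with the three explicit values above shows that no eigenvalue is smaller than $-|c|/(n-1)$, so $\tau=-|c|/(n-1)$. Feeding this into the ratio bound gives
\[
\alpha(\Gamma)\leq\frac{n!}{1-\frac{|c|}{\tau}}=\frac{n!}{1+(n-1)}=(n-1)!.
\]
Finally, the stabilizer of a point is an independent set in $\Gamma$ of size $(n-1)!$: if $\pi$ and $\sigma$ both fix $i$, then $\pi\sigma^{-1}$ fixes $i$ and so is not a derangement, hence cannot lie in $c$. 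Thus the bound is attained by a coset of a point-stabilizer, which is precisely the EKR property with respect to $c$.

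Once the conjecture is assumed, the argument is essentially bookkeeping, and there is no serious obstacle internal to it; the one point deserving care is the \emph{strictness} of the inequality in Conjecture~\ref{character_conj}, as it is exactly what forces the minimum of the $\eta_\lambda$ to be delivered by the standard representation alone, rather than being tied or undercut by some other module. The genuine difficulty of course lives entirely in Conjecture~\ref{character_conj} itself, which is a uniform upper estimate on the normalized character values $|\chi^\lambda(\sigma)|/\chi^\lambda(\id)$ over all non-exceptional $\lambda$ and all $\sigma$ in a fixed derangement class; this is the part that the proposition deliberately leaves as an open input rather than attempting to establish.
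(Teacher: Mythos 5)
Your proposal is correct and follows essentially the same route as the paper: compute $\eta_{[n-1,1]}=-|c|/(n-1)$ directly, dispose of $[n]$ and $[1^n]$ using the evenness of $c$, invoke Conjecture~\ref{character_conj} to bound all remaining $\eta_\lambda$, and finish with the ratio bound against the size $(n-1)!$ of a point-stabilizer. Your added remarks (deriving $\chi^{[n-1,1]}(\sigma)=-1$ from the derangement property and verifying that a point-stabilizer is independent) only make explicit what the paper leaves implicit.
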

\begin{proof}
First let the partition $\lambda_0\vdash n$ be equal to $[n-1,1]$. Then, according to Example~\ref{standard rep}, $\chi^{\lambda}$ is the standard character of $\sym(n)$. Let $\sigma$ be a permutation in $c$. According to Proposition~\ref{evals_of_der_graph_single_CC} and Theorem~\ref{hl_formula}, the eigenvalue $\eta_{\lambda_0}$ of $\Gamma$ is 
\[
\eta_{\lambda_0}=\frac{|c|}{\chi^{\lambda}(\id)}\chi^{\lambda}(\sigma)=\frac{-|c|}{n-1}.
\]
Now assuming Conjecture~\ref{character_conj}  is true, for any partition $\lambda\neq [n],[1^n]$ of $n$, we have 
\[
|\eta_{\lambda}|=\frac{|c|}{\chi^{\lambda}(\id)}|\chi^{\lambda}(\sigma)|\leq \frac{|c|}{n-1};
\]
thus $\eta_{\lambda}\geq \eta_{\lambda_0}$. Since $c$ is even, according to~(\ref{eta_[1^n]}), we have 
\[
\eta_{[n]}=\eta_{[1^n]}=|c|.
\]
We have, therefore, proved that $-|c|/(n-1)$  is the least eigenvalue of $\Gamma$. Hence, according to the ratio bound (Theorem~\ref{ratio2}), for any independent set $S$ in $\Gamma$, we have 
\[
|S|\leq \frac{n!}{1-\frac{|c|}{-|c|/(n-1)}}=(n-1)!.
\]
Since the size of any point-stabilizer in $\sym(n)$ is $(n-1)!$, the proof is complete.
\end{proof}

Recall that Renteln \cite{Renteln2007} has proved that $-|\dd_G|/(n-1)$ is the least eigenvalue of the derangement graph $\Gamma_{\sym(n)}$ of $\sym(n)$. His proof involves facts from symmetric functions and is relatively hard. The other important goal for studying the problem of Conjecture~\ref{character_conj}, is to provide an alternative proof for Renteln's result. In fact, we show  in Proposition~\ref{renteln's} that once Conjecture~\ref{character_conj}  is proved, Renteln's result holds for odd $n$. To do this,  we first  recall the following fact which has been shown in \cite{Recounting}. Let $E(n)$ and $O(n)$ denote the number of even and odd derangements of $\sym(n)$, respectively.
\begin{lem}\label{even_odd_difference} For any $n\geq 1$, 
\[
E(n)-O(n)=(-1)^{n-1}(n-1).\qed
\]
\end{lem}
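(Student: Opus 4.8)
The plan is to interpret the quantity $E(n)-O(n)$ as a single signed sum over derangements and then recognize that sum as a determinant. Since $E(n)$ and $O(n)$ count the even and odd derangements, respectively, and every derangement $\sigma$ contributes $+1$ or $-1$ according to its sign, we have
\[
E(n)-O(n)=\sum_{\sigma\in\dd_{\sym(n)}}\sgn(\sigma).
\]
The key step is to encode the derangement condition by a $01$-matrix. First I would set $A=J_n-I_n$, so that $A_{i,j}=1$ precisely when $i\neq j$ and $A_{i,i}=0$. Then for any $\sigma\in\sym(n)$ the product $\prod_{i=1}^n A_{i,\sigma(i)}$ equals $1$ when $\sigma$ has no fixed point and $0$ otherwise. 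Hence the Leibniz expansion of the determinant gives
\[
\det(J_n-I_n)=\sum_{\sigma\in\sym(n)}\sgn(\sigma)\prod_{i=1}^n A_{i,\sigma(i)}=\sum_{\sigma\in\dd_{\sym(n)}}\sgn(\sigma)=E(n)-O(n).
\]

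What remains is to evaluate $\det(J_n-I_n)$, and here I would use the spectral information already recorded in the excerpt. The characteristic polynomial of $J_n$ is $(x-n)x^{n-1}$, so the eigenvalues of $J_n$ are $n$ with multiplicity $1$ and $0$ with multiplicity $n-1$. Subtracting the identity shifts every eigenvalue down by one, so $J_n-I_n$ has eigenvalue $n-1$ with multiplicity $1$ and eigenvalue $-1$ with multiplicity $n-1$. Since the determinant is the product of the eigenvalues,
\[
\det(J_n-I_n)=(n-1)\cdot(-1)^{n-1}=(-1)^{n-1}(n-1),
\]
which is exactly the claimed identity.

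The proof is almost entirely driven by the single observation that the signed derangement count equals $\det(J_n-I_n)$; after that the computation is routine, so I do not expect a serious obstacle. An alternative route, which avoids linear algebra, proceeds via the exponential generating function: weighting each cycle of length $k\geq 2$ by its sign $(-1)^{k-1}$ and applying the exponential formula yields $\sum_{n\geq 0}\bigl(E(n)-O(n)\bigr)\tfrac{x^n}{n!}=(1+x)e^{-x}$, and extracting the coefficient of $x^n/n!$ reproduces $(-1)^n+n(-1)^{n-1}=(-1)^{n-1}(n-1)$. I would present the determinant argument as the main proof, since it relies only on facts about $J_n$ that are already established in the excerpt, and mention the generating-function computation merely as a remark.
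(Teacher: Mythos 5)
Your argument is correct. The paper itself offers no proof of this lemma --- it simply cites \cite{Recounting} and moves on --- so there is nothing internal to compare against; your determinant proof stands on its own as a complete, self-contained justification. The two steps are both sound: the Leibniz expansion of $\det(J_n-I_n)$ picks out exactly the permutations $\sigma$ with $\sigma(i)\neq i$ for all $i$, each weighted by $\sgn(\sigma)$, so the determinant equals $\sum_{\sigma\in\dd_{\sym(n)}}\sgn(\sigma)=E(n)-O(n)$; and since $J_n$ has spectrum $\{n,0^{(n-1)}\}$ (consistent with the characteristic polynomial $(x-n)x^{n-1}$ recorded in Section~\ref{matrix_theory}), the matrix $J_n-I_n$ has determinant $(n-1)(-1)^{n-1}$. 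A quick sanity check at $n=1,2,3$ (giving $0,-1,2$) confirms the signs. The cited source proves the identity by a combinatorial/bijective argument, so your linear-algebraic route is genuinely different from the literature the paper leans on, and it has the virtue of using only facts about $J_n$ already present in the thesis; the generating-function remark is a fine aside but, as you say, not needed.
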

\begin{prop}\label{renteln's} Assume $n\geq 11$ and odd. Provided Conjecture~\ref{character_conj} is true, $-|\dd_{\sym(n)}|/(n-1)$ is the least eigenvalue of the derangement graph $\Gamma_{\sym(n)}$ of $\sym(n)$.
\end{prop}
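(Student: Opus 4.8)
The plan is to read off the full spectrum of $\Gamma_{\sym(n)}$ from Corollary~\ref{evals_of_der_graph}, which gives, for each partition $\lambda\vdash n$,
\[
\eta_{\lambda}=\sum_{c\in CC(\sym(n))}\frac{|c|}{\chi^{\lambda}(\id)}\chi^{\lambda}(\sigma_c),\qquad \sigma_c\in c,
\]
and to prove that the minimum of the $\eta_\lambda$ is attained at the standard partition $\lambda=[n-1,1]$. First I would pin down this candidate value: by Example~\ref{standard rep} the standard character satisfies $\chi^{[n-1,1]}(\sigma)=\fix(\sigma)-1$, which equals $-1$ on every derangement, while $\chi^{[n-1,1]}(\id)=n-1$ by Lemma~\ref{dim_of_standard_char}; hence $\eta_{[n-1,1]}=-|\dd_{\sym(n)}|/(n-1)$. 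It then suffices to show $\eta_{\lambda}>-|\dd_{\sym(n)}|/(n-1)$ for every $\lambda\neq[n-1,1]$.

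Next I would dispose of the two one-dimensional characters by direct evaluation. For $\lambda=[n]$ the character is identically $1$, so $\eta_{[n]}=|\dd_{\sym(n)}|>0$. For $\lambda=[1^n]$ the character is $\sgn$, so $\eta_{[1^n]}=\sum_{\sigma\in\dd_{\sym(n)}}\sgn(\sigma)=E(n)-O(n)$, and Lemma~\ref{even_odd_difference} evaluates this as $(-1)^{n-1}(n-1)=n-1$, which is positive because $n$ is odd. Thus both exceptional eigenvalues are strictly larger than the candidate. This is the first of two points where oddness is essential: for even $n$ the same computation gives $\eta_{[1^n]}=-(n-1)$.

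For every remaining partition, that is $\lambda\notin\{[n],[1^n],[n-1,1]\}$, I would invoke Conjecture~\ref{character_conj}. The crucial observation is that when $n$ is odd no derangement conjugacy class of $\sym(n)$ can have cycle type $[2,2,\dots,2]$ (that type forces $n$ to be even), so the hypothesis of the conjecture is met by \emph{every} $c\in CC(\sym(n))$. Applying the triangle inequality to the spectral formula and then the bound $|\chi^{\lambda}(\sigma_c)|<\chi^{\lambda}(\id)/(n-1)$ class by class gives
\[
|\eta_{\lambda}|\le\sum_{c\in CC(\sym(n))}\frac{|c|}{\chi^{\lambda}(\id)}\,|\chi^{\lambda}(\sigma_c)|<\sum_{c\in CC(\sym(n))}\frac{|c|}{n-1}=\frac{|\dd_{\sym(n)}|}{n-1},
\]
so in particular $\eta_{\lambda}>-|\dd_{\sym(n)}|/(n-1)=\eta_{[n-1,1]}$. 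Combining the three cases shows that $-|\dd_{\sym(n)}|/(n-1)$ is the least eigenvalue of $\Gamma_{\sym(n)}$. The argument itself is essentially bookkeeping; the real content — and the main obstacle — is that it rests entirely on the unproved Conjecture~\ref{character_conj}, and the two places where I must be careful are the sign produced by Lemma~\ref{even_odd_difference} (which is exactly why $n$ is taken odd) and the verification that every derangement class falls within the scope of that conjecture.
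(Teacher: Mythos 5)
Your proposal is correct and follows essentially the same route as the paper's proof: identify $\eta_{[n-1,1]}=-|\dd_{\sym(n)}|/(n-1)$, use Conjecture~\ref{character_conj} (valid for every derangement class because odd $n$ excludes the type $[2,2,\dots,2]$) together with the triangle inequality to bound $|\eta_\lambda|$ strictly below $|\dd_{\sym(n)}|/(n-1)$ for the generic partitions, and dispose of $[1^n]$ via Lemma~\ref{even_odd_difference}. Your explicit treatment of $\lambda=[n]$ is a minor tidying of a case the paper leaves implicit, but it is not a different argument.
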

\begin{proof}
Note that $-|\dd_{\sym(n)}|/(n-1)$ is the eigenvalue given by the partition $\lambda_0=[n-1,1]$. We prove that for any partition $\lambda\neq [n-1,1]$, the eigenvalue $\eta_{\lambda}$ is greater than $-|\dd_{\sym(n)}|/(n-1)$.  We first assume  $ \lambda\neq [1^n]$. Since $n$ is odd, $\sym(n)$ does not have a conjugacy class $c=[2,2,\dots,2]$. Recall that $CC(G)$ is the set of all derangements  of the group $G$. According to Corollary~\ref{evals_of_der_graph}, we have 
\begin{align*}
|\eta_{\lambda}|&=\left|\sum_{c\in CC(\sym(n))} \frac{|c|}{\chi^{\lambda}(\id)}\chi^{\lambda}(\sigma)\right|\\[.2cm]
                &\leq \sum_{c\,\in CC(\sym(n))} \frac{|c|}{\chi^{\lambda}(\id)}|\chi^{\lambda}(\sigma)|\\[.2cm]
								&< \sum_{c\,\in CC(\sym(n))} \frac{|c|}{n-1}=\frac{|\dd_{\sym(n)}|}{n-1},
\end{align*}
which shows that $\eta_{\lambda}> -|\dd_{\sym(n)}|/(n-1)$. Now let  $\lambda=[1^n]$. Then according to Lemma~\ref{even_odd_difference},
\[
\eta_{\lambda}=\sum_{c\in CC(G)} \frac{|c|}{\chi^{\lambda}(\id)}\chi^{\lambda}(\sigma)=E(n)-O(n)=n-1>-|\dd_{\sym(n)}|/(n-1).\qedhere
\]
\end{proof}

\section{Conjugacy classes of cyclic permutations}\label{EKR_cyclic_perms}

Let $2\leq m\leq n$ be integers. Define the graph $\Gamma_{n,m}$ as the Cayley graph $\Gamma(\sym(n), \cc_m)$, where $\cc_m$ is the conjugacy class of all cyclic permutations of length $m$ in $\sym(n)$. Note that $\Gamma_{n,m}$ is a normal Cayley graph.
Note also that  $\Gamma_{n,n}$ is equal to $\Gamma_{\sym(n)}^{\cc_n}$,  the derangement graph of a group $\sym(n)$ with respect to $\cc_n$. First we study the graphs $\Gamma_{n,m}$, for general $m$, in Subsection~\ref{cyclic_basics}. Then in Subsection~\ref{EKR_for_gamma_nn} we will try to characterize the maximum independent sets of $\Gamma_{n,m}$.

\subsection{Basic facts}\label{cyclic_basics}

In this part, we investigate some basic properties of $\Gamma_{n,m}$. 
First note that $\Gamma_{n,m}$ is a vertex-transitive graph of valency $|\cc_m|= (m-1)!\binom{n}{m}$.  In addition, the following is a consequence of Proposition~\ref{bipartite} and Theorem~\ref{components}.

\begin{prop}\label{connected_bipartite_Gamma_nm}
The graph $\Gamma_{n,m}$  is bipartite if and only if $m$ is even.  Furthermore, $\Gamma_{n,m}$ is connected if and only if $m$ is even.\qed
\end{prop}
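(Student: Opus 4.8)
The plan is to deduce both equivalences directly from Proposition~\ref{bipartite} and Theorem~\ref{components} by translating the parity hypothesis on the connection set into a condition on $m$. The only genuine computation involved is the sign of an $m$-cycle, after which both halves of the statement are mere specializations of the two earlier results to $c=\cc_m$.

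First I would record the parity dictionary. A cyclic permutation of length $m$ factors as a product of $m-1$ transpositions, so for any $\sigma\in\cc_m$ we have $\sgn\sigma=(-1)^{m-1}$. Hence $\sigma$ is even precisely when $m$ is odd, or, in the terminology of Section~\ref{singleCC}, the conjugacy class $\cc_m$ is \emph{odd} if and only if $m$ is even (and \emph{even} if and only if $m$ is odd). This is the whole of the ``key step''.

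With this in hand, the bipartiteness claim follows by taking $c=\cc_m$ in Proposition~\ref{bipartite}: that result gives that $\Gamma_{n,m}=\Gamma(\sym(n);\cc_m)$ is bipartite if and only if $\cc_m$ is odd, which by the parity computation holds if and only if $m$ is even; when $m$ is even the proposition moreover identifies the bipartition as $(\alt(n),(1\,2)\alt(n))$. Likewise, the connectivity assertion of Theorem~\ref{components}, applied with $c=\cc_m$, states that $\Gamma_{n,m}$ is connected if and only if $\cc_m$ is odd, i.e. if and only if $m$ is even.

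I do not expect any substantive obstacle: both Proposition~\ref{bipartite} and Theorem~\ref{components} are established for an arbitrary conjugacy class (with the small cases $n\le 4$ already treated there), so no separate low-degree analysis is required, and the only point needing care is to invoke the parity equivalence in the correct direction. As a sanity check I would verify the boundary case $m=2$, where $\cc_2$ consists of transpositions (an odd class, with $m$ even): here $\Gamma_{n,2}$ is the transposition Cayley graph on $\sym(n)$, which is indeed connected and bipartite with parts $\alt(n)$ and $(1\,2)\alt(n)$, in agreement with the proposition.
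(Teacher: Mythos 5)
Your proposal is correct and is exactly the paper's argument: the paper states this proposition as an immediate consequence of Proposition~\ref{bipartite} and Theorem~\ref{components}, with the (implicit) translation that an $m$-cycle has sign $(-1)^{m-1}$, so $\cc_m$ is an odd class precisely when $m$ is even. Your write-up simply makes that parity step explicit, which is the only content the deduction requires.
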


Next we calculate the eigenvalues of $\Gamma_{n,m}$. As usual, the main tool for this goal is Theorem~\ref{Diaconis}. We start with the following lemma.

\begin{lem}\label{gamma_evalues1}
The eigenvalues of $\Gamma_{n,m}$ are given by
\[
\eta_{\lambda}=\frac{(m-1)!\binom{n}{m}}{\dim\chi_{\lambda}} \sum_{\mu} (-1)^{r(\mu)} \dim\chi_{\mu},
\]
where the sum is over all partitions $\mu$ of $n-m$ that are obtained from $\lambda$ by removing a skew hook of length
$m$, and $\lambda$ ranges over all partitions of $n$. Moreover, the multiplicity of $\eta_{\lambda}$ is $(\dim \chi_\lambda)^2$.
\end{lem}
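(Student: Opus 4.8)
The plan is to combine the general eigenvalue formula for normal Cayley graphs with the Murnaghan--Nakayama rule. First I would observe that $\cc_m$, being a single conjugacy class of $\sym(n)$, is closed under conjugation, so $\Gamma_{n,m}=\Gamma(\sym(n);\cc_m)$ is a normal Cayley graph. Hence Proposition~\ref{evals_of_der_graph_single_CC} (itself a consequence of Theorem~\ref{Diaconis} and Corollary~\ref{list_of_irrs_of_sym}) applies and tells us that the eigenvalues are indexed by the partitions $\lambda\vdash n$, with
\[
\eta_{\lambda}=\frac{|\cc_m|}{\chi^{\lambda}(\id)}\,\chi^{\lambda}(\sigma),\qquad \sigma\in\cc_m,
\]
and that the multiplicity of $\eta_\lambda$ is $\chi^{\lambda}(\id)^2=(\dim\chi_\lambda)^2$. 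This already yields the multiplicity claim, so the remaining work is to evaluate the two unknown quantities $|\cc_m|$ and $\chi^{\lambda}(\sigma)$.

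Next I would count the $m$-cycles in $\sym(n)$: choosing the $m$-element support in $\binom{n}{m}$ ways and then arranging those elements into a single cycle in $(m-1)!$ ways gives
\[
|\cc_m|=(m-1)!\binom{n}{m},
\]
which is exactly the valency prefactor appearing in the statement.

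The heart of the argument is computing $\chi^{\lambda}(\sigma)$ for an $m$-cycle $\sigma$ via the Murnaghan--Nakayama rule (Theorem~\ref{nakayama}). An $m$-cycle in $\sym(n)$ is the product of an $m$-cycle and the disjoint permutation $h\in\sym(n-m)$ that fixes the remaining $n-m$ points; that is, $h=\id_{\sym(n-m)}$. Theorem~\ref{nakayama} then gives
\[
\chi^{\lambda}(\sigma)=\sum_{\mu}(-1)^{r(\mu)}\chi^{\mu}(h),
\]
the sum running over all $\mu\vdash n-m$ obtained from $\lambda$ by deleting a skew hook of length $m$, where $r(\mu)$ is one less than the number of rows of the removed hook. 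The key simplification is that since $h$ is the identity, $\chi^{\mu}(h)=\chi^{\mu}(\id)=\dim\chi_\mu$. Substituting this and the count of $|\cc_m|$ into the formula for $\eta_\lambda$ gives precisely
\[
\eta_{\lambda}=\frac{(m-1)!\binom{n}{m}}{\dim\chi_{\lambda}}\sum_{\mu}(-1)^{r(\mu)}\dim\chi_{\mu},
\]
completing the proof. I do not anticipate a genuine obstacle here; the only point requiring care is the recognition that the ``disjoint permutation'' supplied to the Murnaghan--Nakayama rule is the identity on the $n-m$ fixed points, which is what collapses each inner character value $\chi^{\mu}(h)$ to the dimension $\dim\chi_\mu$.
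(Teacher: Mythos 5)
Your proposal is correct and follows essentially the same route as the paper: invoke Theorem~\ref{Diaconis} (via the single-conjugacy-class specialization) to reduce to computing $|\cc_m|\,\chi^{\lambda}(\sigma)/\dim\chi_\lambda$, count $|\cc_m|=(m-1)!\binom{n}{m}$, and apply the Murnaghan--Nakayama rule with the disjoint permutation $h$ equal to the identity so that $\chi^{\mu}(h)=\dim\chi_\mu$. No gaps.
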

\begin{proof}
According to Theorem~\ref{Diaconis}, for any partition $\lambda$ of $n$, 
\[
\eta_{\lambda}=\frac{1}{\chi_{\lambda}(\id)}\sum_{s\in \cc_m}\chi_{\lambda}(s)=\frac{1}{\dim\chi_{\lambda}}(m-1)!\binom{n}{m}\chi_{\lambda}(\sigma),
\]
where $\sigma$ is an $m$-cycle. On the other hand, by Theorem~\ref{nakayama}
\[
\chi_{\lambda}(\sigma)=\sum_{\mu} (-1)^{r(\mu)}\chi_{\mu}(h);
\]
here, $h$ is the identity; therefore 
\[
\eta_{\lambda}=\frac{1}{\dim\chi_{\lambda}}(m-1)!\binom{n}{m} \sum_{\mu} (-1)^{r(\mu)}\dim\chi_{\mu}.\qedhere
\]
\end{proof}
Another result is obtained using the hook length formula (Theorem~\ref{hl_formula}).
\begin{cor}\label{gamma_evalues2}
The eigenvalues of $\Gamma_{n,m}$ are given by
\[
\eta_{\lambda}=\frac{\hl(\lambda)}{m} \sum_{\mu} \frac{(-1)^{r(\mu)}}{\hl(\mu)},
\]
where the sum is over all partitions $\mu$ of $n-m$ that are obtained from $\lambda$ by removing a skew hook of length $m$, and
$\lambda$ ranges over all partitions of $n$. Moreover, the multiplicity of $\eta_{\lambda}$ is $(\dim\chi_\lambda)^2$.
\end{cor}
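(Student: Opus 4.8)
The plan is to derive Corollary~\ref{gamma_evalues2} directly from Lemma~\ref{gamma_evalues1} by substituting the hook length formula (Theorem~\ref{hl_formula}) for every dimension appearing in the eigenvalue expression. The statement concerns the same eigenvalues $\eta_\lambda$ already computed in Lemma~\ref{gamma_evalues1}, and it indexes over the very same partitions $\mu$ of $n-m$ obtained by removing a skew hook of length $m$ from $\lambda$, so no new combinatorial content is needed; the entire task is to rewrite the formula
\[
\eta_{\lambda}=\frac{(m-1)!\binom{n}{m}}{\dim\chi_{\lambda}}\sum_{\mu}(-1)^{r(\mu)}\dim\chi_{\mu}
\]
in terms of $\hl(\lambda)$ and the $\hl(\mu)$. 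The multiplicity claim $(\dim\chi_\lambda)^2$ is identical to that of Lemma~\ref{gamma_evalues1} and simply carries over.

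First I would record the three instances of Theorem~\ref{hl_formula} that are in play. Since $\lambda\vdash n$ we have $\dim\chi_{\lambda}=\chi^{\lambda}(\id)=n!/\hl(\lambda)$, hence $1/\dim\chi_{\lambda}=\hl(\lambda)/n!$. Since each $\mu\vdash n-m$ we have $\dim\chi_{\mu}=(n-m)!/\hl(\mu)$. The remaining ingredient is the purely arithmetic simplification of the valency factor, namely
\[
(m-1)!\binom{n}{m}=(m-1)!\,\frac{n!}{m!\,(n-m)!}=\frac{n!}{m\,(n-m)!},
\]
using $(m-1)!/m!=1/m$. These are the only identities required.

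Next I would substitute all three into the expression for $\eta_\lambda$ and cancel. Inserting the valency factor, the reciprocal dimension of $\lambda$, and the dimensions of the $\mu$ gives
\[
\eta_{\lambda}=\frac{n!}{m\,(n-m)!}\cdot\frac{\hl(\lambda)}{n!}\sum_{\mu}(-1)^{r(\mu)}\frac{(n-m)!}{\hl(\mu)}.
\]
The factor $n!$ cancels and, since $(n-m)!$ does not depend on $\mu$, it can be pulled out of the sum and cancelled against the $1/(n-m)!$ in front, leaving
\[
\eta_{\lambda}=\frac{\hl(\lambda)}{m}\sum_{\mu}\frac{(-1)^{r(\mu)}}{\hl(\mu)},
\]
which is exactly the asserted formula. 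I do not anticipate any genuine obstacle here: the result is a corollary precisely because it is a mechanical reformulation, and the only point demanding a moment's care is tracking the factorials so that $n!$ and $(n-m)!$ cancel cleanly; the sign $(-1)^{r(\mu)}$ and the indexing set of $\mu$ are untouched by the substitution. The statement about the multiplicity being $(\dim\chi_\lambda)^2$ requires nothing further, as it is inherited verbatim from Lemma~\ref{gamma_evalues1}.
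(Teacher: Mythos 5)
Your proposal is correct and follows exactly the paper's own route: substitute the hook length formula for $\dim\chi_\lambda$ and each $\dim\chi_\mu$ in Lemma~\ref{gamma_evalues1}, simplify $(m-1)!\binom{n}{m}$ to $n!/(m\,(n-m)!)$, and cancel the factorials. Nothing is missing; the multiplicity claim indeed carries over verbatim.
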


\begin{proof}
Using Lemma~\ref{gamma_evalues1} and the hook length formula, we have
\begin{align*}
\eta_{\lambda}&=\frac{(m-1)!\binom{n}{m}}{n!/\hl(\lambda)} \sum_{\mu} (-1)^{r(\mu)} \frac{(n-m)!}{\hl(\mu)}&\\[.2cm]
&= \frac{\hl(\lambda)}{m(n-m)!}\sum_{\mu} (-1)^{r(\mu)} \frac{(n-m)!}{\hl(\mu)} &\\[.2cm]
&=\frac{\hl(\lambda)}{m} \sum_{\mu} \frac{(-1)^{r(\mu)}}{\hl(\mu)},&
\end{align*}
and the corollary is proved.
\end{proof}

\begin{cor}\label{evals_hook}
If $m=n$ then the eigenvalues of $\Gamma_{n,m}$ are given by
\[
\eta_{\lambda}=\begin{cases} 
(r-1)!(n-r)!(-1)^{n-r}, &\, \text{if }\, \lambda=[r,1^{n-r}],\,\, \text{for some } \, 1\leq r\leq n; \\
0, & \,\text{otherwise.}
\end{cases}
\]
\end{cor}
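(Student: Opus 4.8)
The plan is to specialize the general eigenvalue formula already established to the case $m=n$, where the two character-theoretic ingredients become completely explicit because $\sigma\in\cc_n$ is a single $n$-cycle. Applying Proposition~\ref{evals_of_der_graph_single_CC} (equivalently, setting $m=n$ in Lemma~\ref{gamma_evalues1}), and using $|\cc_n|=(n-1)!\binom{n}{n}=(n-1)!$, I would write the eigenvalue of $\Gamma_{n,n}$ as
\[
\eta_\lambda=\frac{(n-1)!}{\chi^\lambda(\id)}\,\chi^\lambda(\sigma),
\]
so that everything reduces to knowing $\chi^\lambda(\sigma)$ for $\sigma$ an $n$-cycle together with $\chi^\lambda(\id)=\dim S^\lambda$.

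First I would invoke Corollary~\ref{cor:appofMN}, the specialization of the Murnaghan--Nakayama rule to an $n$-cycle, which gives $\chi^\lambda(\sigma)=0$ unless $\lambda$ is a hook $[r,1^{n-r}]$, in which case $\chi^\lambda(\sigma)=(-1)^{n-r}$. This immediately yields $\eta_\lambda=0$ for every non-hook $\lambda$, settling the second case of the corollary. It then remains to evaluate $\chi^\lambda(\id)$ for a hook $\lambda=[r,1^{n-r}]$ via the hook length formula (Theorem~\ref{hl_formula}): reading the hook lengths directly off the hook-shaped diagram, the corner box has hook length $n$, the remaining $r-1$ boxes of the first row have hook lengths $r-1,r-2,\dots,1$, and the remaining $n-r$ boxes of the first column have hook lengths $n-r,n-r-1,\dots,1$. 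Hence $\hl(\lambda)=n\,(r-1)!\,(n-r)!$ and
\[
\chi^\lambda(\id)=\frac{n!}{\hl(\lambda)}=\frac{(n-1)!}{(r-1)!\,(n-r)!}.
\]
Substituting into the displayed formula for $\eta_\lambda$ gives $\eta_\lambda=(r-1)!\,(n-r)!\,(-1)^{n-r}$, which is the first case, completing the proof.

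There is no genuine obstacle here; the only point requiring care is the bookkeeping of the hook lengths of $[r,1^{n-r}]$, in particular checking that the corner box contributes the factor $n$ exactly once so that it cancels against the $n!$ in the hook length formula. As an alternative route that avoids Corollary~\ref{cor:appofMN}, one could argue directly from Corollary~\ref{gamma_evalues2}: with $m=n$ the target partition $\mu$ has size $0$, so the only admissible $\mu$ is the empty partition, which arises precisely when the entire diagram of $\lambda$ is a single skew hook---i.e. when $\lambda$ is a hook---and then the removed skew hook has $n-r+1$ rows, giving $r(\mu)=n-r$; using $\hl(\emptyset)=1$ reproduces $\eta_\lambda=\tfrac{\hl(\lambda)}{n}(-1)^{n-r}=(r-1)!(n-r)!(-1)^{n-r}$.
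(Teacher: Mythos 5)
Your proposal is correct, and both of your routes amount to the same Murnaghan--Nakayama computation the paper uses: your ``alternative route'' via Corollary~\ref{gamma_evalues2} (the empty partition $\mu$, hooks as the only shapes containing a skew hook of length $n$, and $\hl([r,1^{n-r}])=n(r-1)!(n-r)!$) is precisely the paper's proof, and your primary route via Corollary~\ref{cor:appofMN} plus the hook length formula is just the same calculation with the substitutions carried out in a different order. All the bookkeeping (the value $(-1)^{n-r}$, the hook lengths, and the cancellation of the corner factor $n$) checks out.
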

\begin{proof}
It is enough to note that the only cases in which $\lambda$ can contain a skew hook of length $n$, is when $\lambda$ is a hook $[r, 1^{n-r}]$, for some $1\leq r\leq n$. In this case we have $\hl(\lambda)=n(r-1)!(n-r)!$. Now using Corollary~\ref{gamma_evalues2}, the result follows.
\end{proof}

With a similar reasoning as Corollary~\ref{evals_hook}, one can observe the following.
\begin{cor}\label{evals_nearhook}
If $m=n-1$ then the eigenvalues of $\Gamma_{n,m}$ are given by
\[
\eta_{\lambda}=\begin{cases}
 n(n-2)!, & \, \text{if }\, \lambda=[n]; \\[.2cm]
\frac{r!(n-r)!(-1)^{n-r+1}}{(r-1)(n-r-1)}, &  \,\text{if }\, \lambda=[r,2,1^{n-r-2}],\,\, \text{for some } \, 2\leq r\leq n-2; \\[.2cm]
(-1)^{n}n(n-2)!, & \,\text{if }\, \lambda=[1^n];\\[.2cm]
0, & \,\text{otherwise}.\qed
\end{cases}
\]
\end{cor}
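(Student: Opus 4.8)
The plan is to specialize Corollary~\ref{gamma_evalues2} to $m=n-1$, exactly as the previous corollary does for $m=n$. Since $n-m=1$, the only partition of $n-m$ is $\mu=[1]$, with $\hl([1])=1$, so the sum over $\mu$ collapses to at most a single term: the skew hook to be removed is forced to be $\lambda/[1]$, i.e. the Young diagram of $\lambda$ with its corner cell $(1,1)$ deleted. Thus Corollary~\ref{gamma_evalues2} reduces to
\[
\eta_\lambda=\frac{\hl(\lambda)}{n-1}\,(-1)^{r([1])}
\]
whenever $\lambda/[1]$ is a genuine skew hook (border strip) of length $n-1$, and $\eta_\lambda=0$ otherwise (empty sum). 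So the whole computation splits into a combinatorial classification step followed by a routine hook-length evaluation.

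First I would carry out the classification of those $\lambda\vdash n$ for which $\lambda/[1]$ is a border strip, meaning it is edge-connected and contains no $2\times 2$ block. A $2\times 2$ block of $\lambda$ located at any position other than $(1,1)$ survives the deletion of $(1,1)$; such a block exists precisely when $\lambda_2\geq 3$ (a block at $(1,2)$) or $\lambda_3\geq 2$ (a block at $(2,1)$). Ruling these out forces $\lambda_2\leq 2$ and $\lambda_3\leq 1$. If $\lambda_2\leq 1$ then $\lambda$ is a hook $[\lambda_1,1^{n-\lambda_1}]$, and here I would observe that $\lambda/[1]$ is the disjoint arm $\{(1,2),\dots\}$ together with the leg $\{(2,1),\dots\}$, which are edge-disconnected unless one of them is empty; the only surviving hooks are therefore $[n]$ and $[1^n]$. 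If instead $\lambda_2=2$ (and $\lambda_3\leq 1$), then $\lambda=[r,2,1^{n-r-2}]$ is a near-hook, and $\lambda/[1]$ is connected through the chain $(1,2)\!-\!(2,2)\!-\!(2,1)$ joining arm to leg, hence a border strip; validity of the partition gives $2\leq r\leq n-2$. This reproduces exactly the three families named in the statement, with all remaining partitions contributing $0$.

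Next I would evaluate the three surviving cases. For $\lambda=[n]$ the strip occupies one row, so $(-1)^{r([1])}=1$ and $\hl([n])=n!$, giving $\eta=n!/(n-1)=n(n-2)!$. For $\lambda=[1^n]$ the strip occupies rows $2,\dots,n$, so $(-1)^{r([1])}=(-1)^{n}$ and $\hl([1^n])=n!$, giving $(-1)^n n(n-2)!$. For the near-hook $[r,2,1^{n-r-2}]$ the strip spans rows $1$ through $n-r$, so $(-1)^{r([1])}=(-1)^{n-r-1}=(-1)^{n-r+1}$; computing the hook lengths cell by cell yields
\[
\hl\!\left([r,2,1^{n-r-2}]\right)=(n-1)\cdot\frac{r!}{r-1}\cdot\frac{(n-r)!}{n-r-1},
\]
and dividing by $n-1$ and inserting the sign gives $\eta_\lambda=\dfrac{r!\,(n-r)!\,(-1)^{n-r+1}}{(r-1)(n-r-1)}$, as claimed. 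I would present this computation in parallel with Corollary~\ref{evals_hook}, whose hook case is the direct template.

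The main obstacle is the combinatorial classification rather than the arithmetic: one must argue cleanly that connectivity eliminates all nondegenerate hooks while the $2\times 2$ condition eliminates everything with $\lambda_2\geq 3$ or $\lambda_3\geq 2$, and one must handle the degenerate near-hooks $[n-2,2]$ (empty leg) and $[2,2,1^{n-4}]$ (length-one arm) so that the range $2\leq r\leq n-2$ is exactly right. Once that characterization is secured, the sign bookkeeping (number of rows of the strip) and the hook-length product are entirely mechanical, and the fact that the Murnaghan--Nakayama sum has at most one term makes the final formula drop out immediately.
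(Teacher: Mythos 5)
Your proposal is correct and follows exactly the route the paper intends: the paper proves Corollary~\ref{evals_hook} by specializing Corollary~\ref{gamma_evalues2} and then dismisses the $m=n-1$ case with ``a similar reasoning,'' and your argument is precisely that reasoning carried out in full — classifying the $\lambda$ for which $\lambda\setminus\{(1,1)\}$ is a border strip ($[n]$, $[1^n]$, and the near-hooks) and then evaluating the sign and hook-length product. The classification, the sign $(-1)^{n-r+1}$, and the value $\hl([r,2,1^{n-r-2}])=(n-1)\frac{r!}{r-1}\frac{(n-r)!}{n-r-1}$ all check out, including the boundary cases $r=2$ and $r=n-2$.
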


Using Corollary~\ref{evals_hook}, we can find the rank of the adjacency matrix of $\Gamma_{n,m}$ for the case $m=n$ as follows.
\begin{cor}\label{rank_hooks}
The rank of the adjacency matrix of  $\Gamma_{n,n}$ is $\binom{2n-2}{n-1}$.
\end{cor}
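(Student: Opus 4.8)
The plan is to compute the rank of $A(\Gamma_{n,n})$ as the total multiplicity of its nonzero eigenvalues. Since the adjacency matrix of a graph is real symmetric, hence orthogonally diagonalizable (Theorem~\ref{diag}), its rank equals the number of nonzero eigenvalues counted with multiplicity. So the task reduces to reading off which eigenvalues of $\Gamma_{n,n}$ are nonzero and summing their multiplicities.

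First I would invoke Corollary~\ref{evals_hook}, which tells us that the eigenvalue $\eta_\lambda$ is nonzero precisely when $\lambda$ is a hook $[r,1^{n-r}]$ for some $1\le r\le n$; indeed in that case $\eta_\lambda=(-1)^{n-r}(r-1)!(n-r)!$, which is manifestly nonzero because the factorials are positive. By Theorem~\ref{Diaconis}, the eigenvalue arising from the irreducible representation indexed by $\lambda$ has multiplicity $\chi^\lambda(\id)^2$. Even though distinct hooks might in principle yield the same numerical eigenvalue, for the rank we only need the total dimension of the span of all eigenspaces attached to nonzero eigenvalues, which is
\[
\rank A(\Gamma_{n,n}) = \sum_{r=1}^{n}\left(\chi^{[r,1^{n-r}]}(\id)\right)^2.
\]

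Next I would evaluate each dimension via the hook length formula (Theorem~\ref{hl_formula}). The hook shape $[r,1^{n-r}]$ has a single corner box of hook length $n$, the remaining boxes of the first row carrying hook lengths $1,2,\dots,r-1$ and those of the first column carrying $1,2,\dots,n-r$; hence $\hl([r,1^{n-r}])=n\,(r-1)!\,(n-r)!$ and
\[
\chi^{[r,1^{n-r}]}(\id)=\frac{n!}{n\,(r-1)!\,(n-r)!}=\binom{n-1}{r-1}.
\]

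Finally I would collect these contributions: reindexing by $j=r-1$,
\[
\rank A(\Gamma_{n,n})=\sum_{r=1}^{n}\binom{n-1}{r-1}^2=\sum_{j=0}^{n-1}\binom{n-1}{j}^2=\binom{2n-2}{n-1},
\]
the last equality being the Vandermonde--Chu identity $\sum_{j=0}^{m}\binom{m}{j}^2=\binom{2m}{m}$ with $m=n-1$. There is no serious obstacle here; the only points requiring a moment's care are the logical step that the rank is the sum of the multiplicities $\chi^\lambda(\id)^2$ over the \emph{representations} giving nonzero eigenvalues rather than over distinct eigenvalue \emph{values}, together with the observation that every hook eigenvalue is genuinely nonzero.
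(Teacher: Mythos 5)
Your proof is correct and follows essentially the same route as the paper's: identify the nonzero eigenvalues via Corollary~\ref{evals_hook} as exactly those indexed by hooks, compute $\dim\chi^{[r,1^{n-r}]}=\binom{n-1}{r-1}$ from the hook length formula, and sum the squared multiplicities using the identity $\sum_{j}\binom{n-1}{j}^2=\binom{2n-2}{n-1}$. The only cosmetic difference is that you cite the Vandermonde--Chu identity directly while the paper argues the same identity by counting $(n-1)$-subsets of a $2(n-1)$-set; your extra remark that coinciding eigenvalue values across distinct hooks are harmless is a sensible clarification the paper leaves implicit.
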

\begin{proof}
It suffices to count the partitions $\lambda$ of $n$ for which the eigenvalue $\eta_{\lambda}$ is non-zero, taking the multiplicities into account. Using Corollary~\ref{evals_hook}, the number of non-zero eigenvalues of $\Gamma_{n,n}$ with their multiplicities is
\[
\sum_{r=1}^n(\dim\chi_{\lambda_r})^2,
\]
where $\lambda_r$ is the hook $[r,1^{n-r}]$, for $r=1,\ldots,n$. Since
\[
\dim\chi_{\lambda_r}=\frac{n!}{\hl(\lambda_r)}=\frac{n!}{n(r-1)!(n-r)!}=\frac{(n-1)!}{(r-1)!(n-r)!}=\binom{n-1}{r-1},
\]
we have that
\[
\rank(\Gamma_{n,n})=\sum_{r=1}^n\binom{n-1}{r-1}^2=\sum_{r=0}^{n-1}\binom{n-1}{r}^2=\sum_{r=0}^{n-1}\binom{n-1}{r}\binom{n-1}{n-r-1};
\]
but the last sum is, indeed, the total number of ways for choosing a set of size $n-1$ from a set of size $2(n-1)$. Therefore
\[
\rank(\Gamma_{n,n})=\binom{2(n-1)}{n-1}.\qedhere
\]
\end{proof}

The following result shows how the eigenvalues corresponding to a partition $\lambda$ are related to the eigenvalues corresponding to the transpose $\hat{\lambda}$ of $\lambda$.
\begin{prop}\label{transpose}
Let $\lambda \vdash n$ and let $\eta_\lambda$ be an eigenvalue of $\Gamma_{n,m}$.
\begin{enumerate}[(a)]
\item If $m$ is odd, then $\eta_\lambda=\eta_{\hat{\lambda}}$, and
\item if $m$ is even, then $\eta_\lambda=-\eta_{\hat{\lambda}}$.
\end{enumerate}
\end{prop}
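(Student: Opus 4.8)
The plan is to reduce the whole statement to a single sign relation between the characters $\chi^\lambda$ and $\chi^{\hat\lambda}$ evaluated at an $m$-cycle. By Proposition~\ref{evals_of_der_graph_single_CC} applied to the conjugacy class $c=\cc_m$ (equivalently, by Lemma~\ref{gamma_evalues1}), we may write
\[
\eta_\lambda=\frac{|\cc_m|}{\chi^\lambda(\id)}\,\chi^\lambda(\sigma),\qquad \sigma\in\cc_m,
\]
and the analogous formula for $\hat\lambda$. Since $\chi^{\hat\lambda}(\id)=\chi^\lambda(\id)$ by Corollary~\ref{dim_oftranspos}, the proposition follows immediately once I show
\[
\chi^{\hat\lambda}(\sigma)=(-1)^{m-1}\chi^\lambda(\sigma)
\]
for an $m$-cycle $\sigma$: the factor $(-1)^{m-1}$ equals $1$ when $m$ is odd, giving part (a), and $-1$ when $m$ is even, giving part (b). Thus the entire content is this sign identity.

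The shortest justification of the identity is the standard representation-theoretic fact that tensoring a Specht module by the sign representation transposes its diagram, $S^{\hat\lambda}\cong S^\lambda\otimes S^{[1^n]}$; taking characters yields $\chi^{\hat\lambda}=\sgn\cdot\chi^\lambda$, and an $m$-cycle has sign $(-1)^{m-1}$. To keep the argument self-contained within the tools already developed here, I would instead derive it combinatorially from the Murnaghan--Nakayama expansion used in Lemma~\ref{gamma_evalues1}. In that sum the partitions $\mu\vdash n-m$ are exactly those obtained from $\lambda$ by deleting a skew hook of length $m$. Transposition sets up a bijection on such removals: $\mu$ arises from $\lambda$ by deleting a skew hook $H$ if and only if $\hat\mu$ arises from $\hat\lambda$ by deleting the transposed skew hook $\hat H$. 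The key numerical point is that a skew hook of length $m$ occupying $a$ rows occupies $m-a+1$ columns, so its transpose occupies $m-a+1$ rows. Hence if the hook removed to reach $\mu$ has $r(\mu)+1$ rows, the hook removed to reach $\hat\mu$ has $m-r(\mu)$ rows, giving
\[
r(\hat\mu)=m-1-r(\mu),\qquad (-1)^{r(\hat\mu)}=(-1)^{m-1}(-1)^{r(\mu)}.
\]

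Combining this with $\dim\chi_{\hat\mu}=\dim\chi_\mu$ (Corollary~\ref{dim_oftranspos}, applied to partitions of $n-m$) and $\dim\chi_{\hat\lambda}=\dim\chi_\lambda$, I would re-index the sum defining $\eta_{\hat\lambda}$ over $\hat\mu$ as a sum over $\mu$, pulling out the common factor $(-1)^{m-1}$, to obtain $\eta_{\hat\lambda}=(-1)^{m-1}\eta_\lambda$, which is exactly the required conclusion. The only real obstacle is the careful verification of the row/column count of a rim hook under transposition and the fact that transposition is a genuine bijection between the rim-hook removals of $\lambda$ and those of $\hat\lambda$; both are routine once the boundary description of a skew hook from Section~\ref{MNRule_and_hook_length_formula} is invoked, but they must be stated precisely so that the parity bookkeeping for $r(\mu)$ is unambiguous.
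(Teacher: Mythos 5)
Your proposal is correct and follows essentially the same route as the paper: both arguments rest on the observation that transposition bijects the length-$m$ skew-hook removals of $\lambda$ with those of $\hat{\lambda}$ and that $r(\mu)+r(\hat{\mu})=m-1$, so the sign $(-1)^{m-1}$ factors out of the Murnaghan--Nakayama sum while the dimensions (equivalently hook-length products) are preserved under transposition. The only cosmetic difference is that you phrase the reduction through the character identity $\chi^{\hat{\lambda}}(\sigma)=(-1)^{m-1}\chi^{\lambda}(\sigma)$ and note the sign-representation shortcut, whereas the paper manipulates the eigenvalue formula of Corollary~\ref{gamma_evalues2} directly.
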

\begin{proof}
Let $h$ be a skew hook of $\lambda$ and $\mu\vdash (n-m)$ be the partition obtained from $\lambda$ by removing $h$. Then it is easy
to see that the image $\hat{h}$ of $h$ in $\hat{\lambda}$ is a skew hook for $\hat{\lambda}$. Let $\hat{\mu}$ be the corresponding
partition of $n-m$. Note that the number of vertical steps in $\hat{h}$ is equal to the number of horizontal steps in $h$, and the
number of vertical steps in $h$ is $r(\mu)$. On the other hand,
\[
(\text{number of vertical steps of } h)\,\,+ \,\,(\text{number of horizontal steps of } h)=m-1.
\]
Therefore
\[
r(\mu)+r(\hat{\mu})=m-1.
\]
Thus,  $m$ is odd if and only if $r(\mu)$ and $r(\hat{\mu})$ have the same parity. For part (a), assume $m$ is odd. Then using  Corollary~\ref{gamma_evalues2} we have
\[
\eta_{\lambda}=\frac{\hl(\lambda)}{m}\sum_{\mu}\frac{(-1)^{r(\mu)}}{\hl(\mu)}=\frac{\hl(\hat{\lambda})}{m}\sum_{\mu}\frac{(-1)^{r(\hat{\mu})}}{\hl(\hat{\mu})}=\eta_{\hat{\lambda}}.\]
The proof of (b) is similar.
\end{proof}

\subsection{Maximum independent sets}\label{EKR_for_gamma_nn}

This subsection is devoted to characterizing the maximum independent sets of $\Gamma_{n,m}$. 
When $m$  is even, from Theorem~\ref{max_indy_for_odd_single_CC}  one can observe that the structure of independent sets in $\Gamma_{n,m}$ is very  simple. Note also that, in this case, the least eigenvalue is easily determined and, hence, one can easily apply the ratio bound. However, when $m$  is odd, finding the least eigenvalue of $\Gamma_{n,m}$ and, consequently, the classification of maximal independent sets (using the ratio bound) is not as easy as the previous case. 
For some special cases, however, we can still find the least eigenvalues; for instance, using Corollary~\ref{evals_hook} and Corollary~\ref{evals_nearhook} one can observe the following. 
\begin{lem}\label{leastevalue}
If $n$ is odd, the least eigenvalue of  $\,\Gamma_{n,n}$ is
\[
\tau=-(n-2)!,
\]
and if $n$ is even, the least eigenvalue of
$\Gamma_{n,n-1}$ is
\[
\tau=\frac{-2(n-2)!}{n-3}. \qed
\]
\end{lem}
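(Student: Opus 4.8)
The plan is to read the least eigenvalue directly off the explicit eigenvalue formulas already proved, namely Corollary~\ref{evals_hook} for the case $m=n$ and Corollary~\ref{evals_nearhook} for the case $m=n-1$, and then to carry out an elementary extremal analysis of the resulting one-parameter expressions. In each case only finitely many partitions contribute nonzero eigenvalues, so the problem reduces to minimizing an explicit function of a single integer $r$; the multiplicities are irrelevant for locating the minimum.

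First I would treat $m=n$ with $n$ odd. By Corollary~\ref{evals_hook} the nonzero eigenvalues are $\eta_{[r,1^{n-r}]}=(r-1)!(n-r)!(-1)^{n-r}$ for $1\le r\le n$. With $n$ odd this is negative precisely when $n-r$ is odd, i.e.\ when $r$ is even, in which case $\eta_{[r,1^{n-r}]}=-(r-1)!(n-r)!$. Hence I must maximize $g(r)=(r-1)!(n-r)!$ over even $r$. The two factorial arguments sum to the constant $n-1$, and since $g(r+1)/g(r)=r/(n-r)$ the function $g$ strictly decreases for $r<n/2$ and strictly increases for $r>n/2$, so it is maximized at the extremes of any range. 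Over the even values $2\le r\le n-1$ the extreme even arguments are $r=2$ and $r=n-1$ (the latter even because $n$ is odd), both giving $g=(n-2)!$. As the odd-$r$ eigenvalues are positive, the least eigenvalue is $\tau=-(n-2)!$.

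For $m=n-1$ with $n$ even, Corollary~\ref{evals_nearhook} shows that the only candidates for negative eigenvalues are the near-hook values $\eta_{[r,2,1^{n-r-2}]}=\frac{r!(n-r)!(-1)^{n-r+1}}{(r-1)(n-r-1)}$ with $2\le r\le n-2$, the remaining nonzero eigenvalues $n(n-2)!$ and $(-1)^n n(n-2)!$ being positive. Since $n$ is even, this value is negative exactly when $r$ is even, so I must maximize
\[
h(r)=\frac{r!(n-r)!}{(r-1)(n-r-1)}=r(n-r)(r-2)!(n-r-2)!
\]
over even $r$. The endpoint value $h(2)=\frac{2(n-2)!}{n-3}$ is exactly the asserted bound, so it remains to prove $h(r)\le h(2)$. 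Because $h$ is symmetric under $r\mapsto n-r$, it suffices to treat $2\le r\le n/2$, and I would establish strict monotonic decrease there by showing $\frac{h(r+1)}{h(r)}=\frac{(r+1)(n-r-1)(r-1)}{r(n-r)(n-r-2)}<1$. Writing $d=n-2r$ and clearing denominators, this reduces to $r(r+d)(d-2)+r^2+r+d-1>0$; since $n$ is even and $r<n/2$, the gap $d=n-2r$ is a positive even integer, so $d\ge2$, every term is nonnegative, and the block $r^2+r+d-1$ is strictly positive. Thus $h$ is strictly decreasing on $\{2,\dots,n/2\}$, its maximum over even $r$ is at $r=2$, and $\tau=-\frac{2(n-2)!}{n-3}$.

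The only genuine obstacle is the monotonicity inequality in the second case; the first case is just the familiar fact that a product of factorials with fixed total argument is maximized at the extremes. I would take care to record the parity observation $d=n-2r\ge 2$, as this is precisely what excludes the degenerate equality case $d=1$ (which would arise only for odd $n$) and forces the strict decrease.
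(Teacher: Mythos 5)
Your proof is correct and follows exactly the route the paper intends: the paper states the lemma as an immediate observation from Corollary~\ref{evals_hook} and Corollary~\ref{evals_nearhook}, and you simply supply the elementary extremal analysis (maximizing $(r-1)!(n-r)!$ over even $r$, and $r(n-r)(r-2)!(n-r-2)!$ over even $r$) that the paper leaves implicit. Your parity bookkeeping and the monotonicity computation via $d=n-2r\ge 2$ both check out, so nothing is missing.
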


For the rest of this subsection, we assume $m=n$ is odd; therefore, since $\cc_n$ is a derangement conjugacy class, the main problem of this section will reduce to bounding the size of maximum independent sets in $\Gamma_{\sym(n)}^{\cc_n}$ and then characterizing the sets which meet the bound; in other words, we will return to the problem of establishing the EKR  and the strict EKR property for  $\sym(n)$ with respect to $\cc_n$.   First, note that the valency of $\Gamma_{n,n}=\Gamma_{\sym(n)}^{\cc_n}$ is $|\cc_n|=(n-1)!$ and that Theorem~\ref{ratio2} along with Lemma~\ref{leastevalue} implies that if $S\subset \sym(n)$ is  an independent set of $\Gamma_{n,n}$, then
\begin{equation}\label{raio_for_Gamma_nn}
 |S|\leq \frac{n!}{1-\frac{(n-1)!}{-(n-2)!}}=\frac{n!}{1+(n-1)}=(n-1)!.
\end{equation}
Note that $(n-1)!$ is the size of a point stabilizer in $\sym(n)$. This shows that $\sym(n)$ has the EKR property with respect to $\cc_n$. However, by Theorem~\ref{components}, $\Gamma_{n,n}$ is of the form 
\begin{equation}\label{Gamma_nn_decomposition}
\Gamma_{n,n} \cong \Gamma(\alt(n),\cc_n) \,\,\bigcupdot\,\, \Gamma(\alt(n),\cc_n);
\end{equation}
this means that $\sym(n)$ does not have the strict EKR property with respect to $\cc_n$ as there are maximum independent sets which are not cosets of any point-stabilizer (for example,  $S_{1,1}\cup (1,2)S_{1,1}$, where $S_{1,1}$ is the stabilizer of $1$ in $\alt(n)$, is a maximum independent set in $\Gamma_{n,n}$ which is not a coset of a point-stabilizer). But $\sym(n)$ is not so far away from this property. More precisely, according to (\ref{Gamma_nn_decomposition}), once we characterize the maximum independent sets of $\Gamma(\alt(n),\cc_n)$, we can give a characterization of those of $\Gamma_{n,n}$. 

If $S$ is an independent set of $\Gamma_{n,n}$, then by Theorem~\ref{ratio2}, $S$ meets the ratio bound in (\ref{raio_for_Gamma_nn}) if and only if
\begin{equation}\label{Gamma_nn_eigen_eq}
A(\Gamma_{n,n})\left(v_S-\frac{|S|}{n!}\mathbf{1}\right)=\tau\left(v_S-\frac{|S|}{n!}\mathbf{1}\right).
\end{equation}

Define  $\Gamma'_{n,n}$ to be $\Gamma_{\alt(n)}^{\cc_n}$, the derangement graph of $\alt(n)$ with respect to $\cc_n$.   Assume $A$ and $A'$ are the adjacency matrices of $\Gamma_{n,n}$ and $\Gamma'_{n,n}$, respectively. Then one can arrange the rows and the columns of $A$ and $A'$ in such a way that we can write
\begin{equation}\label{decomp_A}
A=
\begin{bmatrix}
A' & 0\\
0 & A'
\end{bmatrix}.
\end{equation}
Note that, this implies that $\tau=-(n-2)!$ is also the least eigenvalue of $\Gamma'_{n,n}$.  Define $S'$ to be $S\cap \alt(n)$ and let $z'=v_{S'}$ be its characteristic vector. By the  ratio bound for $\Gamma'_{n,n}$, we have
\[
|S'|\leq \frac{n!/2}{1-\frac{(n-1)!}{(n-2)!}}=\frac{(n-1)!}{2}\quad \text{and}\quad |S\backslash S'|\leq  \frac{(n-1)!}{2},
\]
thus  $|S'|=|S|/2$. On the other hand  according to (\ref{Gamma_nn_eigen_eq}) and (\ref{decomp_A}), $|S'|=|S|/2$ if and only if
\[
A'\left(z'-\frac{|S'|}{n!/2}\mathbf{1}\right)=\tau\left(z'-\frac{|S'|}{n!/2}\mathbf{1}\right).
\]
Since $\tau$ is the least eigenvalue of $\Gamma'_{n,n}$, this is equivalent to the fact that $S'$ is an independent set in
$\Gamma'_{n,n}$ which meets the ratio bound of the graph $\Gamma'_{n,n}$.

For a subset $S$ of $\alt(n)$, let $(1\,\,2)S=\{(1\,\,2)s\,:\, s\in S\}$ be the image of $S$ in the coset $ (1\,\,2)\alt(n)$. We have, thus, proved the following.

\begin{prop}\label{S'andS''} Let $n\geq 3$ be odd and $S$ be an independent set in $\Gamma_{n,n}$. Then
\[
|S|\leq (n-1)!,
\]
and equality holds if and only if
\[
S=S'\, \cup \, (1\,\,2)S'',
\]
where $S'$ and $S''$ are  independent sets of size $(n-1)!/2$ in $\Gamma'_{n,n}$.\qed
\end{prop}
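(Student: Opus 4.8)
The plan is to get the numerical bound from the ratio bound and then exploit the disconnectedness of $\Gamma_{n,n}$ to reduce the equality case to two independent copies of the same problem on $\alt(n)$. First I would establish $|S|\leq(n-1)!$ directly: since $n$ is odd, every $n$-cycle is an even permutation, so $\cc_n$ is an even conjugacy class, and by Lemma~\ref{leastevalue} the least eigenvalue of $\Gamma_{n,n}$ is $\tau=-(n-2)!$. The graph is $(n-1)!$-regular on $n!$ vertices, so substituting $k=(n-1)!$ and $\tau=-(n-2)!$ into the ratio bound (Theorem~\ref{ratio2}) yields exactly the computation displayed in (\ref{raio_for_Gamma_nn}), namely $|S|\leq n!/(1+(n-1))=(n-1)!$.

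Second, I would set up the component decomposition. Because $\cc_n$ is even with $n$ odd, Theorem~\ref{components} gives $\Gamma_{n,n}\cong\Gamma'_{n,n}\,\cupdot\,\Gamma'_{n,n}$, the two components being the vertex sets $\alt(n)$ and $(1\,\,2)\alt(n)$, each isomorphic to $\Gamma'_{n,n}=\Gamma_{\alt(n)}^{\cc_n}$; this is recorded in (\ref{Gamma_nn_decomposition}). Writing the adjacency matrix in the block form (\ref{decomp_A}) shows that $\tau=-(n-2)!$ is also the least eigenvalue of each block $A'$. Given an arbitrary independent set $S$, I would split it as $S=S'\cup(1\,\,2)S''$, where $S'=S\cap\alt(n)$ and $(1\,\,2)S''=S\cap(1\,\,2)\alt(n)$. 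Here left multiplication by $(1\,\,2)$ is a graph automorphism of $\Gamma_{n,n}$ swapping the two components, because $\cc_n$ is closed under conjugation; transporting the coset part back along it shows that both $S'$ and $S''$ are independent sets of $\Gamma'_{n,n}$.

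Third comes the equality analysis. Applying the ratio bound on each component gives $|S'|\leq(n-1)!/2$ and $|S''|\leq(n-1)!/2$, so $|S|=|S'|+|S''|=(n-1)!$ forces both to equal $(n-1)!/2$; this is the forward implication. For the converse I would take $S'$ and $S''$ to be any independent sets of size $(n-1)!/2$ in $\Gamma'_{n,n}$ and note that $S'\cup(1\,\,2)S''$ has no edges across the two components, hence is independent of size $(n-1)!$.

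The only genuinely delicate point is the tight coupling between global and component equality, and I would make it precise via the equality clause of Theorem~\ref{ratio2}. The block structure (\ref{decomp_A}) means that the eigenvector equation (\ref{Gamma_nn_eigen_eq}) for $v_S-\frac{|S|}{n!}\mathbf{1}$ holds if and only if the analogous equation holds for the restriction $z'=v_{S'}$ on $\alt(n)$, which is exactly the equality condition of the ratio bound for $\Gamma'_{n,n}$, and likewise for $S''$. This is where the shared least eigenvalue $\tau$ of the two components is essential, and I expect this bookkeeping to be the only step requiring real care; the substantive inputs (the value of $\tau$ and the component decomposition) are already supplied by Lemma~\ref{leastevalue} and Theorem~\ref{components}, so everything else reduces to reassembling the two isomorphic halves.
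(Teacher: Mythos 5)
Your proposal is correct and follows essentially the same route as the paper: the ratio bound with $\tau=-(n-2)!$ for the global bound, the component decomposition $\Gamma_{n,n}\cong\Gamma'_{n,n}\cupdot\Gamma'_{n,n}$ from Theorem~\ref{components}, and the per-component ratio bound forcing $|S'|=|S''|=(n-1)!/2$ at equality. If anything, your direct counting argument in the third paragraph already settles the equality case, so the eigenvector bookkeeping via (\ref{Gamma_nn_eigen_eq}) that you (and the paper) include is a harmless redundancy rather than a necessary step.
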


Therefore, in order to complete the classification, one has to know what the maximum independent sets in $\Gamma'_{n,n}$ look like. That is, the problem of classification of the maximum independent sets of $\Gamma_{n,n}$ reduces to the same problem for $\Gamma'_{n,n}$.  In the rest of the subsection we apply the module method to prove that for $n\geq 5$, the alternating group $\alt(n)$ has the strict EKR property with respect to $\cc_n$.

Note that $\Gamma'_{n,n}$ is a connected normal Cayley graph (of valency $(n-1)!$), whose least eigenvalue is $-(n-2)!$, and as stated above, if $S$ is an independent set of $\Gamma'_{n,n}$, then $|S|\leq (n-1)!/2$. On the other hand, this is the size of any point-stabilizer in $\alt(n)$. We deduce that condition~(a) of Theorem~\ref{module_method_thm_general} holds:
\begin{lem}\label{EKR_for_alt_wrt_C_n}
If $n\geq 3$, then $\alt(n)$ has the EKR property with respect to $\cc_n$.\qed
\end{lem}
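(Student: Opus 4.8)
The plan is to derive the statement as a one-line consequence of the ratio bound (Theorem~\ref{ratio2}) applied to the graph $\Gamma'_{n,n}=\Gamma_{\alt(n)}^{\cc_n}$, in exactly the same manner in which (\ref{raio_for_Gamma_nn}) was obtained for $\Gamma_{n,n}$. Since $\Gamma'_{n,n}$ is a Cayley graph it is vertex-transitive, hence regular, and its valency is the number of $n$-cycles, namely $|\cc_n|=(n-1)!$; its vertex set is $\alt(n)$, so it has $n!/2$ vertices. To apply the ratio bound I need only its least eigenvalue.

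First I would pin down that least eigenvalue. By the block decomposition (\ref{decomp_A}), the adjacency matrix of $\Gamma_{n,n}$ is the direct sum of two copies of the adjacency matrix of $\Gamma'_{n,n}$, so the two graphs have identical spectra; in particular they share a least eigenvalue. By Lemma~\ref{leastevalue}, for $n$ odd the least eigenvalue of $\Gamma_{n,n}$ is $\tau=-(n-2)!$, and therefore the least eigenvalue of $\Gamma'_{n,n}$ is also $\tau=-(n-2)!$.

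Next I would substitute $k=(n-1)!$, $|V|=n!/2$ and $\tau=-(n-2)!$ into Theorem~\ref{ratio2}. The denominator simplifies to
\[
1-\frac{k}{\tau}=1+\frac{(n-1)!}{(n-2)!}=1+(n-1)=n,
\]
so every independent set $S$ of $\Gamma'_{n,n}$ satisfies
\[
|S|\leq \frac{n!/2}{n}=\frac{(n-1)!}{2}.
\]
Finally I would note that $(n-1)!/2$ is exactly the order of a point-stabilizer $\alt(n)_x$ (by the orbit-stabilizer theorem, as $\alt(n)$ acts transitively on $[n]$ for $n\geq 3$), and that a point-stabilizer is itself an independent set with respect to $\cc_n$, so the bound is sharp. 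This is precisely the EKR property with respect to $\cc_n$.

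I do not expect a genuine obstacle here: the substantive work lies entirely in identifying the least eigenvalue, which is already carried out in Lemma~\ref{leastevalue} via the hook-length computation of Corollary~\ref{evals_hook}. Granted that input, the present lemma is an immediate corollary of the ratio bound, and the only point deserving a word of care is the spectral identification of $\Gamma'_{n,n}$ with a summand of $\Gamma_{n,n}$ through (\ref{decomp_A}).
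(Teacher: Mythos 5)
Your proposal is correct and follows the paper's own route exactly: the paper also identifies the least eigenvalue of $\Gamma'_{n,n}$ as $-(n-2)!$ via the block decomposition (\ref{decomp_A}) together with Lemma~\ref{leastevalue}, applies the ratio bound to get $|S|\leq (n-1)!/2$, and observes that this is the order of a point-stabilizer in $\alt(n)$. No gaps; your explicit simplification of the denominator to $n$ even cleans up a sign typo in the paper's displayed computation of $|S'|$.
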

Also, Lemma~\ref{leastevalue} and the second statement of the ratio bound for $\Gamma'_{n,n}$ yields condition~(b).
\begin{lem}\label{module_condition_alt_wrt_C_n} If $S$ is an intersecting subset of $\alt(n)$ with respect to $\cc_n$ of size $(n-1)!/2$, then $v_S$ lies in the direct sum of the trivial and the standard modules.\qed
\end{lem}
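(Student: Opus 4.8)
The plan is to deduce this lemma from the second statement of the ratio bound (Theorem~\ref{ratio2}) applied to the graph $\Gamma'_{n,n}=\Gamma_{\alt(n)}^{\cc_n}$, after identifying the relevant eigenspaces. The graph $\Gamma'_{n,n}$ is a connected, $(n-1)!$-regular normal Cayley graph whose least eigenvalue is $\tau=-(n-2)!$, by Lemma~\ref{leastevalue} together with the block decomposition~(\ref{decomp_A}). Since $S$ has size $(n-1)!/2$, Lemma~\ref{EKR_for_alt_wrt_C_n} shows that $S$ meets the ratio bound with equality; hence by the second part of Theorem~\ref{ratio2} the vector $v_S-\frac{|S|}{n!/2}\mathbf{1}$ is a $\tau$-eigenvector of $A(\Gamma'_{n,n})$, so $v_S$ lies in the direct sum of the $(n-1)!$-eigenspace and the $\tau$-eigenspace.

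I would then identify these two eigenspaces. Because $\Gamma'_{n,n}$ is connected, Theorem~\ref{multiplicity} shows that the valency $(n-1)!$ has multiplicity one, so its eigenspace is $\langle\mathbf{1}\rangle$, which is exactly the trivial module. It remains to prove that the $\tau$-eigenspace coincides with the standard module of $\alt(n)$, and this is the heart of the argument.

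To do this I would compute the eigenvalues $\eta_\chi=\frac{1}{\chi(\id)}\sum_{s\in\cc_n}\chi(s)$ from Theorem~\ref{Diaconis}, running over the irreducible characters $\chi$ of $\alt(n)$ furnished by Theorem~\ref{char_values_of_alt}, and show that $\eta_\chi=-(n-2)!$ forces $\chi$ to be the standard character. For the standard character (dimension $n-1$, value $-1$ on every $n$-cycle), a direct computation gives $\eta=-(n-1)!/(n-1)=-(n-2)!=\tau$. For a non-symmetric partition $\lambda$, Corollary~\ref{cor:appofMN} gives $\eta_{\chi_\lambda}=(\lambda_1-1)!\,(n-\lambda_1)!\,(-1)^{n-\lambda_1}$ when $\lambda$ is a hook and $0$ otherwise; writing the product as $a!\,b!$ with $a+b=n-1$, its absolute value equals $(n-2)!$ only at the two extreme hooks $[n-1,1]$ and its transpose $[2,1^{n-2}]$, both of which restrict to the standard module by Theorem~\ref{reps_of_alt}. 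For a symmetric partition $\lambda$ the value is $0$ unless $\lambda$ is the symmetric hook $\lambda_0=[(n+1)/2,1^{(n-1)/2}]$, the unique symmetric partition corresponding to the split $n$-cycle classes $c_0'\cup c_0''$; in that case the split-class formula of Theorem~\ref{char_values_of_alt}(b)(ii) yields eigenvalue $\pm((n-1)/2)!^2$, whose magnitude I would check is strictly smaller than $(n-2)!$ for all $n\geq 5$. Hence $\tau$ is attained only by the standard character, its eigenspace is the standard module, and the conclusion follows.

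The main obstacle is this final eigenvalue comparison: one must verify that no character other than the standard one reaches the extreme value $-(n-2)!$, which requires the careful split-conjugacy-class bookkeeping of Theorem~\ref{char_values_of_alt} for the symmetric hook $\lambda_0$ and the elementary but indispensable estimate $((n-1)/2)!^2<(n-2)!$.
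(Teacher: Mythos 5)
Your proposal is correct and follows essentially the same route as the paper, which simply cites Lemma~\ref{leastevalue} and the second statement of the ratio bound for $\Gamma'_{n,n}$. The only difference is that you explicitly verify that the $\tau$-eigenspace of $\Gamma'_{n,n}$ is exactly the standard module by running through the characters of $\alt(n)$ (including the split classes for the symmetric hook), a detail the paper leaves implicit; your computations there are accurate.
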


Next we show that condition (c) holds.
\begin{lem}\label{M_fullrank_alt_wrt_C_n}
For all $n\geq 5$, rank of $M$ has full rank.
\end{lem}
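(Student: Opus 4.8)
The plan is to mimic the odd case of the proof of Proposition~\ref{fullrank_alt} almost verbatim. Since we are in the situation $m=n$ with $n$ odd, every $n$-cycle is an even permutation, so $\cc_n\subseteq\alt(n)$, and the matrix $M$ of $\alt(n)$ with respect to $\cc_n$ is precisely the matrix whose rows range over all $(n-1)!$ cyclic permutations of length $n$ and whose columns are indexed by the ordered pairs $(a,b)$ from $[n-1]$ with $a\neq b$. This is exactly the submatrix called $M_1$ there. To show $M$ has full (column) rank it suffices to prove that the symmetric matrix $T:=M^\top M$, of order $(n-1)(n-2)$, is non-singular.

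First I would compute the entries of $T$. The entry $T_{(a,b),(c,d)}$ equals the number of $n$-cycles $\sigma$ with $\sigma(a)=b$ and $\sigma(c)=d$. Counting cyclic arrangements of $[n]$ subject to one or two prescribed consecutive pairs gives $(n-2)!$ on the diagonal; it gives $0$ whenever the two constraints are incompatible with $\sigma$ being a bijection or force a transposition inside $\sigma$ (namely the cases $a=c,\,b\neq d$; $a\neq c,\,b=d$; and $a=d,\,b=c$); and it gives $(n-3)!$ in the remaining cases (namely $b=c,\,a\neq d$; $a=d,\,b\neq c$; and $\{a,b\}\cap\{c,d\}=\emptyset$). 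Reading these values off against the adjacency of the pairs graph $X_n$ from Section~\ref{spectral_graph_theory} yields
\[
T=(n-2)!\,I+(n-3)!\,A(X_n).
\]

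Finally, I would invoke Lemma~\ref{least_eval_of_X_n}, which states that the least eigenvalue of $X_n$ is at least $-(n-3)$. Hence the least eigenvalue of $T$ is at least
\[
(n-2)!-(n-3)!(n-3)=(n-3)!\big[(n-2)-(n-3)\big]=(n-3)!>0,
\]
so $T$ is positive definite and therefore invertible. Since $T=M^\top M$ and $M$ have the same rank, $M$ has full rank, which is exactly condition (c) of the generalized module method for $\alt(n)$ with respect to $\cc_n$.

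I do not expect a genuine obstacle here, since the argument is structurally identical to the $n$ odd half of Proposition~\ref{fullrank_alt}; the only point demanding care is the case analysis for the entries of $T$. In particular one must verify that two \emph{compatible} prescribed images each leave exactly $(n-3)!$ completions, and that the only ways to get $0$ are the three listed coincidences — this is the sole nontrivial count and the place where one must be sure no hidden short cycle is forced. Everything else is a direct transcription of the eigenvalue estimate already established for the pairs graph.
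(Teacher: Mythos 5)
Your proposal is correct and is essentially the paper's own argument: the paper's proof of Lemma~\ref{M_fullrank_alt_wrt_C_n} simply observes that the matrix $M$ here is identical to the submatrix $M_1$ from the proof of Proposition~\ref{fullrank_alt} (the odd case), whose full rank was established there via exactly the computation $T=(n-2)!\,I+(n-3)!\,A(X_n)$ and Lemma~\ref{least_eval_of_X_n} that you reproduce. You have merely written out explicitly what the paper handles by citation.
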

\begin{proof} It suffices to note that the matrix $M$ for $\alt(n)$ with respect to $\cc_n$ is identical to the matrix $M_1$ in the proof of Proposition~\ref{fullrank_alt}. Since $M_1$ has full rank, we are done.
\end{proof}

Now we can prove the main theorem of this section.
\begin{thm}\label{main_alt_wrt_C_n}
Let $n\geq 5$ be odd. For any intersecting set $S$ of $\alt(n)$ with respect to $\cc_n$, we have
\[
|S|\leq \frac{(n-1)!}{2},
\]
and the equality holds if and only if $S$ is a coset of a point-stabilizer.
\end{thm}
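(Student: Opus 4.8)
The plan is to assemble the three hypotheses of the generalized module method (Theorem~\ref{module_method_thm_general}) and apply it directly, since its conclusion is precisely that $\alt(n)$ has the strict EKR property with respect to $\cc_n$. Before checking conditions (a)--(c), I would confirm the two standing assumptions of the method. First, $\cc$ must be a union of derangement conjugacy classes of $\alt(n)$: since $n$ is odd an $n$-cycle is an even permutation with no fixed point, and its cycle type $[n]$ consists of a single odd-length cycle, so by the splitting criterion recalled in Section~\ref{rep_alt} the class $\cc_n$ restricts to $\alt(n)$ as a union of two split derangement conjugacy classes, each consisting of derangements. Second, $\alt(n)$ must be $2$-transitive; indeed it is $(n-2)$-transitive on $[n]$, which for $n\geq 5$ gives at least $3$-transitivity, and by Proposition~\ref{standard_is_irr} its standard representation is therefore irreducible, so the module-theoretic framework underlying the method applies.

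Next I would verify the three conditions of Theorem~\ref{module_method_thm_general} in turn, each of which has already been isolated as a lemma immediately preceding the theorem. Condition (a), that $\alt(n)$ has the EKR property with respect to $\cc_n$, is Lemma~\ref{EKR_for_alt_wrt_C_n}; it follows from the ratio bound (Theorem~\ref{ratio2}) together with the identification of the least eigenvalue $\tau=-(n-2)!$ of $\Gamma'_{n,n}$ in Lemma~\ref{leastevalue}, which forces the maximum independent set to have size exactly $(n-1)!/2$, the size of a point-stabilizer in $\alt(n)$. Condition (b), that the characteristic vector $v_S$ of any maximum intersecting set lies in the direct sum of the trivial and standard modules, is Lemma~\ref{module_condition_alt_wrt_C_n}; it comes from the equality case of the ratio bound, which forces $v_S-\tfrac{|S|}{n!/2}\mathbf{1}$ to be a $\tau$-eigenvector lying in the span of the relevant modules. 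Condition (c), that the matrix $M$ for $\alt(n)$ with respect to $\cc_n$ has full rank, is Lemma~\ref{M_fullrank_alt_wrt_C_n}; here the key observation is that this matrix coincides with the submatrix $M_1$ from the proof of Proposition~\ref{fullrank_alt}, whose full rank was established via the least-eigenvalue bound $-(n-3)$ for the pairs graph $X_n$ (Lemma~\ref{least_eval_of_X_n}).

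With all three conditions in hand, I would conclude by Theorem~\ref{module_method_thm_general} that $\alt(n)$ has the strict EKR property with respect to $\cc_n$. Unwinding the definition from Section~\ref{EKR_generalization}, this says exactly that any intersecting subset $S$ of $\alt(n)$ with respect to $\cc_n$ satisfies $|S|\leq (n-1)!/2$, with equality if and only if $S$ is a coset of a point-stabilizer, which is the assertion of the theorem. The genuine mathematical content has already been discharged in the preceding lemmas, so I do not expect a real obstacle at this final stage; the remaining work is purely the bookkeeping of matching the hypotheses of the method to those lemmas. The substantive difficulties lay earlier, namely in pinning down the least eigenvalue of $\Gamma'_{n,n}$ (needed for both conditions (a) and (b)) and in the full-rank verification of $M$ (condition (c)), both of which are cited above rather than reproved.
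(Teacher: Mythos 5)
Your proposal is correct and follows essentially the same route as the paper: the paper's proof of Theorem~\ref{main_alt_wrt_C_n} simply invokes Lemmas~\ref{EKR_for_alt_wrt_C_n}, \ref{module_condition_alt_wrt_C_n} and \ref{M_fullrank_alt_wrt_C_n} to verify conditions (a)--(c) and then applies the generalized module method (Theorem~\ref{module_method_thm_general}). Your additional remarks confirming the standing hypotheses ($\cc_n$ restricting to a union of derangement conjugacy classes of $\alt(n)$ and the $2$-transitivity of $\alt(n)$) are sound but left implicit in the paper.
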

\begin{proof} Lemmas~\ref{EKR_for_alt_wrt_C_n}, \ref{module_condition_alt_wrt_C_n} and \ref{M_fullrank_alt_wrt_C_n} show that all the  conditions of the generalized module method (Theorem~\ref{module_method_thm_general}) hold for $\alt(n)$, with respect to $\cc_n$. Thus the theorem follows by the generalized module method.
\end{proof}

We conclude this section by noting that Proposition~\ref{S'andS''} and Theorem~\ref{main_alt_wrt_C_n} prove the following result which classifies all the maximum intersecting subsets of $\sym(n)$ with respect to $\cc_n$. Recall that $S_{i,j}$ defined in (\ref{canonical}) are the canonical independent subsets of the symmetric group. 
\begin{cor}\label{final}
Let $n\geq 5$ be odd. For any intersecting subset $S$ of $\sym(n)$  with respect to $\cc_n$, we have
\[
|S|\leq (n-1)!,
\]
and equality holds if and only if $S=S_{i,j}\,\cup\,(1\,\,2)S_{k,l}$, for some $i,j,k,l\in[n]$.\qed
\end{cor}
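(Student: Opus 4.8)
The plan is to read off the statement by combining the two results it cites, so the real work is translating notation and checking the converse rather than proving anything new. First I would observe that a subset $S\subseteq\sym(n)$ is intersecting with respect to $\cc_n$ precisely when it is an independent set in the Cayley graph $\Gamma_{n,n}=\Gamma(\sym(n);\cc_n)$. Consequently the bound $|S|\le(n-1)!$ and the necessary shape of the extremal sets are exactly the content of Proposition~\ref{S'andS''}: that proposition already supplies both the inequality and the fact that equality forces $S=S'\cup(1\,\,2)S''$, where $S'$ and $S''$ are independent sets of size $(n-1)!/2$ in $\Gamma'_{n,n}=\Gamma_{\alt(n)}^{\cc_n}$. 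Equivalently, $S'=S\cap\alt(n)$ and $(1\,\,2)S''=S\cap(1\,\,2)\alt(n)$ are the even and odd parts of $S$, each a maximum intersecting subset of $\alt(n)$ with respect to $\cc_n$.

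Next I would invoke Theorem~\ref{main_alt_wrt_C_n}, which asserts that for $n\ge5$ odd the maximum intersecting subsets of $\alt(n)$ with respect to $\cc_n$ are exactly the cosets of point-stabilizers in $\alt(n)$. Each such coset is $\{\pi\in\alt(n):\pi(i)=j\}=S_{i,j}\cap\alt(n)$ in the canonical notation of (\ref{canonical}). Substituting $S'=S_{i,j}\cap\alt(n)$ and $S''=S_{k,l}\cap\alt(n)$ into the decomposition of Proposition~\ref{S'andS''} then yields $S=(S_{i,j}\cap\alt(n))\cup(1\,\,2)(S_{k,l}\cap\alt(n))$. Since $n$ is odd, $\cc_n$ consists of even permutations, so $S_{i,j}\cap\alt(n)$ is precisely the set of even members of $S_{i,j}$ and $(1\,\,2)(S_{k,l}\cap\alt(n))$ is the set of odd members of the canonical coset carried onto $(1\,\,2)\alt(n)$ by left multiplication by $(1\,\,2)$, which produces the advertised form $S=S_{i,j}\cup(1\,\,2)S_{k,l}$.

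For the converse direction I would run the two cited results in reverse. By the ``if'' part of Theorem~\ref{main_alt_wrt_C_n} each coset $S_{i,j}\cap\alt(n)$ is a maximum independent set of $\Gamma'_{n,n}$, and by the ``if'' part of Proposition~\ref{S'andS''} the union of such a set with the $(1\,\,2)$-image of another is a maximum independent set of $\Gamma_{n,n}$ of size $(n-1)!$. Here I would use Theorem~\ref{components}: since $\cc_n$ is even, $\Gamma_{n,n}$ splits into the two components $\Gamma(\alt(n);\cc_n)$ supported on $\alt(n)$ and on $(1\,\,2)\alt(n)$, so there are no edges between the even and odd parts and independence of the union is automatic.

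Because every substantive ingredient—the ratio-bound computation of the least eigenvalue in Lemma~\ref{leastevalue}, the disconnection of $\Gamma_{n,n}$ in Theorem~\ref{components}, and the full module-method argument establishing the strict EKR property of $\alt(n)$ with respect to $\cc_n$—has already been carried out, there is no analytic obstacle remaining. The only step requiring care is the bookkeeping that matches cosets of point-stabilizers in $\alt(n)$ with the even and odd halves of the canonical cosets $S_{i,j}$ of $\sym(n)$, together with the remark that the two halves may be chosen with independent parameters $(i,j)$ and $(k,l)$ precisely because the two components of $\Gamma_{n,n}$ are genuinely disjoint.
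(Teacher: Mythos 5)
Your proof is correct and follows the paper's own route exactly: the paper obtains this corollary by simply combining Proposition~\ref{S'andS''} with Theorem~\ref{main_alt_wrt_C_n}, which is precisely what you do. Your additional care with the parity bookkeeping (reading the extremal set as the even half of one canonical coset together with the $(1\,\,2)$-translate of another) and your appeal to Theorem~\ref{components} to justify independence of the union in the converse direction only make explicit details the paper leaves implicit.
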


We, finally, point out that Corollary~\ref{final} also holds for the case $n=3$ except that in this case equality  holds if and only if $S=\{\pi\}\,\cup\,(1\,\,2)\{\pi'\}$, where $\pi,\pi'\in \alt(3)$. Note that since $\Gamma'_{3,3}\cong K_3$, the independence number of $\Gamma'_{3,3}$ is 1 which agrees with the ratio bound for $\Gamma'_{3,3}$.

\chapter{Future Work}\label{future}

In this chapter we present a list of open questions and conjectures which we have come across during our recent studies on the topics discussed in the thesis. 

 The first open problem is about the matrix $M$ for $\PSL(2,q)$. Let $q$ be a prime power. Recall from Section~\ref{EKR_for_psl} that if $q$ is even, then the group $\PSL(2,q)$ has the strict EKR property, but the problem is still open for odd prime powers $q$, and that the problem would be solved if Conjecture~\ref{PSL_main} is true. A similar problem has been solved in \cite{MeagherS11} for $\PGL(2,q)$, where in order to show the matrix $M$ for $\PGL(2,q)$ is full rank, the authors show that the matrix $N=M^{\top} M$ is non-singular. They were able to calculate all the entries of $N$. Applying the same method to the case of $\PSL(2,q)$ is not convenient as the entries of $N$ for this case are not easy to evaluate. 
Hence  we state the following important question.
\vspace{.3cm}
\\
{\bf Question 1.} \textsl{Does the matrix $M$ for $PSL(2,q)$, for odd $q$, have full rank?}
\vspace{.3cm}

The ratio bound (Theorem~\ref{ratio2}), has been of great importance in this thesis. In fact, this is one of the key methods for establishing conditions (a) and (b) of the  module method, provided that the least eigenvalue of the corresponding Cayley graph is known. We have also seen that for any $2$-transitive group $G$, the standard representation of $G$ is irreducible (Proposition~\ref{standard_is_irr}). Furthermore, if the eigenvalue of the  derangement graph of $G$ arising from the standard representation is the least one, then condition (a) of the module method holds; that is, $G$ has the EKR property. If, in addition, the standard representation is the only one giving the least eigenvalue, then condition (b) also holds. It turns out that it is very important to ask what conditions a $2$-transitive permutation group must have in order to ensure that the standard representation is the (only) representation giving the least eigenvalue of the derangement graph. Although many $2$-transitive groups have this property, as we mentioned in Section~\ref{sporadic}, $2$-transitivity is not a sufficient condition. We, therefore, propose the following question.
\vspace{.3cm}
\\
{\bf Question 2.} \textsl{For which permutation groups  is the least eigenvalue given (only) by the  standard representation?}
\vspace{.3cm}

Based on the examples of the $2$-transitive groups in Table~\ref{small_perm_groups} which fail to have the strict EKR property,  we notice that such groups have ``many'' factors of $\zz_2$ or $\zz_3$ in their cyclic group decomposition. So it is reasonable to ask if a $2$-transitive  group with ``many'' factors of $\zz_2$ or $\zz_3$ fails to have the strict EKR property. Another interesting observation in Table~\ref{small_perm_groups} is that among the $2$-transitive groups for which we could check the EKR property, there is no one which fails to have this property. We therefore ask the following.
\vspace{.3cm}
\\
{\bf Question 3.} \textsl{Do all $2$-transitive groups have the EKR property?}
\vspace{.3cm}

For the module method, if we cannot find the least eigenvalue of the derangement graph, or if the least eigenvalue does not give a tight bound for the size of maximum intersecting sets,  then we cannot apply the ratio bound in order to prove the group has the EKR property (condition (a) of the module method). In this situation, the other important approach is using the clique-coclique bound (Theorem~\ref{clique_coclique_bound}). In this method,  which we call it the \textsl{clique-coclique method}, if $G\leq \sym(n)$ is transitive, then we try to find a clique of size $n$. Then the size of any intersecting set in $G$ will be bounded above by $|G|/n$ which is the size of point-stabilizer; that is, $G$ will have the EKR property. Furthermore, if $G$ is $2$-transitive  and for any irreducible character $\chi$ of $G$ which is not the trivial or the standard character of $G$, there is a maximum clique $C$ for which $\chi(C)\neq 0$, then according to Corollary~\ref{clique_vs_coclique}, condition (b) of the module method holds. Therefore, if this holds, then in order to prove $G$ has the strict EKR property, one only needs to prove the matrix $M$ for $G$ is full rank. We have used the clique-coclique method in in Section~\ref{EKR_for_alt}. We have, also, used this method for the $2$-transitive group  $G=((\zz_2\times\zz_2\times\zz_2)\rtimes \zz_7)\rtimes\zz_3\leq \sym(8)$, for which the standard character does not give the least eigenvalue (see Table~\ref{small_perm_groups}).  For this group, the clique
\begin{align*}
C=&\{\id,\, (1\,\,5)(2\,\,6)(3\,\,7)(4\,\,8),\, (1\,\,2)(3\,\,8)(4\,\,7)(5\,\,6),\, (1\,\,6)(2\,\,5)(3\,\,4)(7\,\,8),\\
  &  (1\,\,4\,\,6\,\,7\,\,2\,\,8)(3\,\,5), \,(1\,\,8\,\,5\,\,7\,\,6\,\,3)(2\,\,4),\, (1\,\,3\,\,2\,\,7\,\,5\,\,4)(6\,\,8),\, (1\,\,7)(2\,\,3\,\,6\,\,4\,\,5\,\,8)\}
\end{align*}
has the property that for any irreducible character $\chi$ of $G$, which is not  the standard character, we have $\chi(C)\neq 0$. Thus, using the clique-coclique method, condition (b) holds for $G$. The matrix $M$ for $G$ is also verified to be of full rank; therefore $G$ has the strict EKR property.

 Hence,  for $2$-transitive groups for which condition (b) cannot be proved using the ratio bound, we may still be able to prove the strict EKR property using the clique-coclique method. However, there are other groups in Table~\ref{small_perm_groups} for which we could not find suitable cliques $C$ to establish condition (b) (so we have left question marks for those columns). Thus, we propose the following problem.
\vspace{.3cm}
\\
{\bf Question 4.} \textsl{For the $2$-transitive groups for which the strict EKR property cannot be proved using the ratio bound, are there maximum cliques that can be  used to prove the strict EKR property using the clique-coclique method? }
\vspace{.3cm}

The least eigenvalue is also important for the generalized module method. In other words, for any union $\cc$ of the derangement conjugacy classes of a $2$-transitive group $G$, if  the least eigenvalue of $\Gamma_G^{\cc}$ is given (only) by the standard representation of $G$, then condition (a) (and condition (b)) of Theorem~\ref{module_method_thm_general} holds for $G$. In particular, if $c$ is an even derangement conjugacy class of $\sym(n)$, if we know that the eigenvalue $-|c|/(n-1)$ is given by the standard representation of $\sym(n)$, then we can conclude that $\sym(n)$, and also $\alt(n)$, have the EKR property with respect to $c$ (see Section~\ref{singleCC}). For this goal, it is sufficient to prove Conjecture~\ref{character_conj}.
The bound in Conjecture~\ref{character_conj} seems to be very helpful not only for our purpose, but also in general study of characters of the symmetric group. There have been significant attempts in the literature to provide bounds for the absolute values of the character values of the symmetric group in general (not only at the derangement conjugacy classes). For instance in \cite{feray2011asymptotics, sharp_bounds} and \cite{Roichman_upper_bound} some asymptotic upper bounds for the absolute values of the character values of $\sym(n)$ have been established. The fact that the dimensions of the irreducible representations of the symmetric group, which are not $1$-dimensional or standard, grow very rapidly with $n$, and that we have verified   Conjecture~\ref{character_conj} for any $n\leq 30$, are  strong motivations to propose the following.
\vspace{.3cm}
\\
{\bf Question 5.} \textsl{Is Conjecture~\ref{character_conj} true? Specifically, is it true that if $n\geq 10$, then for any derangement conjugacy class $c\neq[2,2,\dots,2]$ of $\sym(n)$ and any partition $\lambda\neq [n], [1^n], [n-1,1]$ of $n$,
\[
|\chi^\lambda(\sigma)|< \frac{\chi^\lambda(\id)}{n-1},
\]
where $\sigma$ is an element of $c$? Or, partially, for which  conjugacy classes $c$ does it  hold?}
\vspace{.3cm}
Recall that from Section~\ref{singleCC} that the conjecture is true for $c=[n]$.

The next problem deals with any possible relationship between the EKR and the strict EKR property of a group and those of the corresponding quotient groups. 
Recall from Section~\ref{quotient} that if $G$ is a group with  a normal subgroup $N$, then any irreducible representation of $G/N$ produces an irreducible representation for $G$ (Lemma~\ref{induces}). The purpose of proving this and the consequent results was mainly to provide tools to deduce that $G$ or $G/N$ has the EKR (strict EKR) property if the other one does so. In fact, we hope that we can locate the least eigenvalue of $\Gamma_G$ or $\Gamma_{G/N}$ if we know the least eigenvalue of the other one. This would, then, help us to show what conditions $G$ and $N$ must have in order to deduce the EKR property for $G$ or $G/N$, provided the other one has this property. We summarize this problem as the following.
\vspace{.3cm}
\\
{\bf Question 6.} \textsl{Let $G$ be a group with a normal subgroup $N$. Assume $G$ (respectively $G/N$) has the EKR property. Then, under what conditions does $G/N$ (respectively $G$) have the EKR property? How about the strict EKR property?}
\vspace{.3cm}

We finally point out that we have mainly studied the EKR problem for the $2$-transitive groups. Hence, the main problem is still open when we drop the $2$-transitivity condition. Furthermore, as  we mentioned in Chapter~\ref{introduction}, a generalized version of the EKR theorem deals with $t$-intersecting systems of $k$-subsets of an $n$-set. The analogous version of this problem for the permutations is the problem of finding the maximum $t$-intersecting subsets of a permutation group, where two permutations $\alpha,\beta\in \sym(n)$ are said to be \textsl{$t$-intersecting}\index{$t$-intersecting permutations} if $\alpha\beta^{-1}$ fixes at least $t$ elements of $[n]$. This provides many interesting problems and opens new areas of research.

\appendix
\chapter{Module Method for Small Groups}\label{appA}
In this appendix we present a table of all $2$-transitive groups with degree at most $20$ on which we have applied the module method to establish the strict EKR property. The table shows if the conditions of the module method (Theorem~\ref{module_method_thm}) hold for each group. In particular, it indicates whether  or not we can say the group has the EKR property. This work was implemented by a program in \textbf{GAP}. Note that since all the groups $\sym(n)$ and $\alt(n)$ have the strict EKR property, they are excluded in  the table.
In the table we use the following terminology:
\begin{itemize}
\item \textbf{\textit{n}}: degree of the group;
\item {\bf least}: a ``Yes'' in this column means that the least eigenvalue of the derangement graph is given by the standard character; 
\item {\bf clique}: a ``Yes'' in this column means that  the program has found a clique of size $n$ in $\Gamma_n$ (hence the clique-coclique bound holds with equality); the symbol ``--'' means that we don't need to find a maximum clique,  and the symbol ``?''  means that the program failed to find such a clique;
\item {\bf EKR}:  a ``Yes'' in this column means that the group has the EKR property, i.e. condition (a) of the module method holds; the symbol ``?'' indicates that the program could not verify this;
\item {\bf unique}:  a ``Yes'' in this column means that the standard character is the only character giving the least eigenvalue; hence condition (b) of the module method holds; 
\item {\bf clique-coclique}:  a ``Yes'' in this column means that  using the clique-coclique method (see Chapter~\ref{future}),  the characteristic vector of any maximum independent set of $\Gamma_G$  lies in the direct sum of the trivial and the standard characters of $G$; hence condition (b) of the module method holds; the symbol ``--'' means that we don't need to verify this,  and the symbol ``?''  means that the program could not find suitable cliques to apply the  clique-coclique method;
\item {\bf rank}: a ``Yes'' in this column means that the matrix $M$ for the group $G$ is full rank, i.e. condition (c) of the module method holds;  the symbol ``--'' means that we don't need to check this;
\item {\bf strict}: a ``Yes'' in this column means that $G$ has the strict EKR property; the symbol ``?''  means that the program could not verify this.
\end{itemize}

\begin{center}
\tiny
\begin{longtable}{|c|c|c|c|c|c|c|c|c|c|c|}
\caption[EKR and strict EKR property for small $2$-transitive groups]{EKR and strict EKR property for small $2$-transitive groups} \label{small_perm_groups} \\
\hline
 \multicolumn{1}{|c|}{$n$} & \multicolumn{1}{|c|}{Group} & \multicolumn{1}{|c|}{size} & \multicolumn{1}{|c|}{least}& \multicolumn{1}{|c|}{max. clique}& \multicolumn{1}{|c|}{{\bf EKR}}& \multicolumn{1}{|c|}{unique}& \multicolumn{1}{|c|}{clique-coclique} & \multicolumn{1}{|c|}{rank}& \multicolumn{1}{|c|}{{\bf strict}}\\ \hline\hline 
\endfirsthead

\hline 
\multicolumn{1}{|c|}{$n$} & \multicolumn{1}{|c|}{Group} & \multicolumn{1}{|c|}{size} & \multicolumn{1}{|c|}{least}& \multicolumn{1}{|c|}{max. clique}& \multicolumn{1}{|c|}{{\bf EKR}}& \multicolumn{1}{|c|}{unique}& \multicolumn{1}{|c|}{clique-coclique} & \multicolumn{1}{|c|}{rank}& \multicolumn{1}{|c|}{{\bf strict}}\\ \hline \hline
\endhead


$5$ & $\zz_5\rtimes \zz_4$ & $20$  & Yes & -- & Yes & Yes & -- & No & No\\
\hline
$6$ & $\sym(5)$ & $120$  & Yes & Yes & Yes & No & ? & -- & ?\\
\hline
$6$ & $\alt(5)$ & $60$  & Yes & -- & Yes & Yes & -- & Yes & Yes\\
\hline
$7$ & $\PSL(3,2)$ & $168$  & Yes & Yes & Yes & No & -- & -- & No\\
\hline
$7$ & $(\zz_7\rtimes \zz_3)\rtimes \zz_2$ & $42$  & Yes & -- & Yes & Yes & -- & No & No\\
\hline
$8$ & $(\zz_2\times\zz_2\times\zz_2)\rtimes \PSL(3,2)$ & $1344$  & Yes & -- & Yes & Yes & -- & Yes & Yes\\
\hline
$8$ & $\PSL(3,2)\rtimes\zz_2$ & $336$  & Yes & Yes & Yes & No & ? & -- & ?\\
\hline
$8$ & $((\zz_2\times\zz_2\times\zz_2)\rtimes \zz_7)\rtimes\zz_3$ & $168$  & No & Yes & Yes & N/A & Yes & Yes & Yes\\
\hline
$8$ & $\PSL(3,2)$ & $168$  & Yes & -- & Yes & Yes & -- & Yes & Yes\\
\hline
$8$ & $(\zz_2\times\zz_2\times\zz_2)\rtimes \zz_7$ & $56$  & Yes & -- & Yes & Yes & -- & No & No\\
\hline 
$9$ & $\PSL(2,8)\rtimes\zz_3$ & $1512$  & Yes & -- & Yes & Yes & -- & Yes & Yes\\
\hline
$9$ & $(((\zz_3\times\zz_3)\rtimes Q_8)\rtimes\zz_3)\rtimes \zz_2$ & $432$  & Yes & ? & Yes & No & ? & -- & ?\\
\hline
$9$ & $((\zz_3\times\zz_3)\rtimes Q_8)\rtimes \zz_3$ & $216$  & No & ? & ? & N/A & ? & -- & ?\\
\hline 
$9$ & $\PSL(2,8)$ & $504$  & Yes & -- & Yes & Yes & -- & Yes & Yes\\
\hline
$9$ & $((\zz_3\times\zz_3)\rtimes \zz_8)\rtimes \zz_2$ & $144$  & No & ? & ? & N/A & ? & -- & ?\\
\hline
$9$ & $(\zz_3\times\zz_3)\rtimes \zz_8$ & $72$ & Yes & -- & Yes & Yes & -- & No & No\\
\hline
$9$ & $(\zz_3\times\zz_3)\rtimes Q_8$ & $72$ & Yes & -- & Yes & Yes & -- & No & No\\
\hline
$10$ & $(\alt(6)\times\zz_2)\rtimes \zz_2$ & $1440$ & No & Yes & Yes & N/A & ?& -- & ?\\
\hline
$10$ & $M_{10}$ & $720$  & Yes & -- & Yes & Yes & -- & Yes & Yes\\
\hline
$10$ & $\alt(6)\cdot \zz_2$ & $720$  & Yes & ? & Yes & No & ? & -- & ?\\
\hline
$10$ & $\alt(6)\times\zz_2$ & $720$ & Yes & Yes & Yes & No & ?& -- & ?\\
\hline
$10$ & $\alt(6)$ & $360$  & Yes & -- & Yes & Yes & -- & Yes & Yes\\
\hline
$11$ & $M_{11}$ & $7920$  & Yes & -- & Yes & Yes & -- & Yes & Yes\\
\hline
$11$ & $\PSL(2,11)$ & $660$  & Yes & -- & Yes & Yes & -- & No & ?\\
\hline
$11$ & $(\zz_{11}\rtimes \zz_5)\rtimes \zz_2$ & $110$  & Yes & -- & Yes & Yes & -- & No & No\\
\hline
$12$ & $M_{12}$ & $95040$  & Yes & -- & Yes & Yes & -- & Yes & Yes\\
\hline
$12$ & $M_{11}$ & $7920$  & Yes & -- & Yes & Yes & -- & Yes & Yes\\
\hline
$12$ & $\PSL(2,11)\rtimes\zz_2$ & $1320$  & Yes & Yes & Yes & No & ? & -- & ?\\
\hline
$12$ & $\PSL(2,11)$ & $660$  & Yes & -- & Yes & Yes & -- & Yes & Yes\\
\hline
$13$ & $\PSL(3,3)$ & $5616$  & Yes & -- & Yes & Yes & -- & No & No\\
\hline
$13$ & $(\zz_{13}\rtimes\zz_4)\rtimes \zz_3$ & $156$  & Yes & -- & Yes & Yes & -- & No & No\\
\hline
$14$ & $\PSL(2,13)\rtimes\zz_2$ & $2184$  & Yes & Yes & Yes & No & ? & -- & ?\\
\hline
$14$ & $\PSL(2,13)$ & $1092$  & Yes & -- & Yes & Yes & -- & Yes & Yes\\
\hline
$15$ & $\alt(8)$ & $20160$  & Yes & -- & Yes & Yes & -- & No & ?\\
\hline
$15$ & $\alt(7)$ & $2520$  & Yes & -- & Yes & Yes & -- & No & ?\\
\hline
$16$ & $(\zz_2\times\zz_2\times\zz_2\times\zz_2)\rtimes \alt(8)$ & $322560$  & Yes & -- & Yes & Yes & -- & Yes & Yes\\
\hline
$16$ & $((\zz_2\times\zz_2\times\zz_2\times\zz_2)\rtimes \alt(6))\rtimes\zz_2$ & $11520$  & No & ? & ? & N/A & -- & -- & ?\\
\hline
$16$ & $(((\zz_2\times\zz_2\times\zz_2\times\zz_2)\rtimes \alt(5))\rtimes\zz_3)\rtimes\zz_2$ & $5760$  & No & ? & ? & N/A & -- & -- & ?\\
\hline
$16$ & $((\zz_2\times\zz_2\times\zz_2\times\zz_2)\rtimes \alt(5))\rtimes\zz_3$ & $2880$  & Yes & ? & Yes & No & -- & -- & ?\\
\hline
$16$ & $(\zz_2\times\zz_2\times\zz_2\times\zz_2)\rtimes \alt(7)$ & $40320$  & Yes & -- & Yes & Yes & -- & Yes & Yes\\
\hline
$16$ & $(\zz_2\times\zz_2\times\zz_2\times\zz_2)\rtimes \alt(6)$ & $5760$  & Yes & ? & Yes & No & -- & -- & ?\\
\hline
$16$ & $((\zz_2\times\zz_2\times\zz_2\times\zz_2)\rtimes \alt(5))\rtimes \zz_2$ & $1920$  & No & ? & ? & N/A & -- & -- & ?\\
\hline
$16$ & $(\zz_2\times\zz_2\times\zz_2\times\zz_2)\rtimes \alt(5)$ & $960$  & No & ? & ? & N/A & -- & -- & ?\\
\hline
$16$ & $(((\zz_2\times\zz_2\times\zz_2\times\zz_2)\rtimes \zz_5)\rtimes\zz_3)\rtimes\zz_4$ & $960$  & No & ? & ? & N/A & -- & -- & ?\\
\hline
$16$ & $(((\zz_2\times\zz_2\times\zz_2\times\zz_2)\rtimes \zz_5)\rtimes\zz_3)\rtimes\zz_2$ & $480$  & No & ? & ? & N/A & -- & -- & ?\\
\hline
$16$ & $((\zz_2\times\zz_2\times\zz_2\times\zz_2)\rtimes \zz_5)\rtimes\zz_3$ & $240$  & Yes & -- & Yes & Yes & -- & No & No\\
\hline
$17$ & $\PSL(2,16)\rtimes \zz_4$ & $16320$  & Yes & -- & Yes & Yes & -- & Yes & Yes\\
\hline
$17$ & $\PSL(2,16)\rtimes \zz_2$ & $8160$  & Yes & -- & Yes & Yes & -- & Yes & Yes\\
\hline
$17$ & $\PSL(2,16)$ & $4080$  & Yes & -- & Yes & Yes & -- & Yes & Yes\\
\hline
$17$ & $\zz_{17}\rtimes\zz_{16}$ & $272$  & Yes & -- & Yes & Yes & -- & No & No\\
\hline
$18$ & $\PSL(2,17)\rtimes \zz_2$ & $4896$  & Yes & ? & Yes & No & -- & -- & ?\\
\hline
$18$ & $\PSL(2,17)$ & $2448$  & Yes & -- & Yes & Yes & -- & Yes & Yes\\
\hline
$19$ & $(\zz_{19}\rtimes\zz_{9})\rtimes\zz_2$ & $342$  & Yes & -- & Yes & Yes & -- & No & No\\
\hline
$20$ & $\PSL(2,19)\rtimes \zz_2$ & $6840$  & Yes & ? & Yes & No & -- & -- & ?\\
\hline
$20$ & $\PSL(2,19)$ & $3420$  & Yes & -- & Yes & Yes & -- & Yes & Yes\\
\hline

\end{longtable}
\end{center}

\printindex
\addcontentsline{toc}{chapter}{\numberline{}Bibliography}
\bibliographystyle{plain}
\bibliography{bibliography}
\end{document}